\patchcmd{\epigraph}{\@epitext{#1}}{\itshape\@epitext{#1}}{}{}
\newcommand{\nocontentsline}[3]{}
\newcommand{\tocless}[2]{\bgroup\let\addcontentsline=\nocontentsline#1{#2}\egroup}
\numberwithin{equation}{section}
\newtheorem{theorem}{Theorem}[section]
\newtheorem{lemma}[theorem]{Lemma}
\newtheorem{proposition}[theorem]{Proposition}
\newtheorem{corollary}[theorem]{Corollary}
\newtheorem{remark}[theorem]{Remark}
\newtheorem{definition}[theorem]{Definition}
\theoremstyle{definition}
\renewcommand{\tilde}{\widetilde}          
\DeclareMathSymbol{\leqslant}{\mathalpha}{AMSa}{"36} 
\DeclareMathSymbol{\geqslant}{\mathalpha}{AMSa}{"3E} 
\DeclareMathSymbol{\eset}{\mathalpha}{AMSb}{"3F}     
\renewcommand{\leq}{\;\leqslant\;}                   
\renewcommand{\geq}{\;\geqslant\;}                   
\newcommand{\dd}{\text{\rm d}}             
\newcommand{\C}{\mathbb{C}}
\newcommand{\D}{\mathbb{D}}
\newcommand{\R}{\mathbb{R}}
\newcommand{\Z}{\mathbb{Z}}
\newcommand{\N}{\mathbb{N}}
\newcommand{\Q}{\mathbb{Q}}
\newcommand{\E}{\mathds{E}}
\renewcommand{\P}{\mathds{P}}
\newcommand{\ind}{\mathds{1}}
\newcommand{\cjd}{\rangle}
\newcommand{\cjg}{\langle}
\newcommand{\hf}{\frac{_1}{^2}}
\def\M{\mathbf{M}}
\def\l{\mathbf{l}}
\def\k{\mathbf{k}}
\newcommand{\pl}{\partial}
\newcommand{\bbar}{\overline}
\newcommand{\mc}{\mathcal}
\newcommand{\la}{\lambda}
\newcommand{\til}{\widetilde}
\def\eps{\varepsilon}
\def\T{\mathbb{T}}
\def\bi{\begin{itemize}}
\def\ei{\end{itemize}}
\def\bnum{\begin{enumerate}}
\def\enum{\end{enumerate}}
\def\<#1{\langle #1 \rangle}
\def\M{\mathbf{M}}
\newcommand{\caA}{{\mathcal A}}
\newcommand{\caB}{{\mathcal B}}
\newcommand{\caD}{{\mathcal D}}
\newcommand{\caE}{{\mathcal E}}
\newcommand{\caF}{{\mathcal F}}
\newcommand{\caG}{{\mathcal G}}
\newcommand{\caH}{{\mathcal H}}
\newcommand{\caN}{{\mathcal N}}
\newcommand{\caO}{{\mathcal O}}
\newcommand{\caP}{{\mathcal P}}
\newcommand{\caS}{{\mathcal S}}
\newcommand{\caT}{{\mathcal T}}
\newcommand{\caZ}{{\mathcal Z}}
\def\indic{\operatorname{1\hskip-2.75pt\relax l}}
\author{Colin Guillarmou}
\address{Universit\'e Paris-Saclay, CNRS,  Laboratoire de math\'ematiques d'Orsay, 91405, Orsay, France.}
\email{colin.guillarmou@math.u-psud.fr}
\author{Antti Kupiainen}
\address{University of Helsinki, Department of Mathematics and Statistics}
\email{antti.kupiainen@helsinki.fi}
\author{R\'emi Rhodes}
\address{Aix-Marseille University, Institut de Math\'ematiques de Marseille (I2M), and Institut Universitaire de France (IUF)}
\email{remi.rhodes@univ-amu.fr}
\author{Vincent Vargas}
\address{LAGA Universit\'e Sorbonne Paris Nord, CNRS, UMR 7539, F-93430, Villetaneuse, France}
\email{vargas@math.univ-paris13.fr}
\title{Conformal bootstrap in Liouville Theory}    
\begin{document}
 
 \begin{abstract}
The conformal bootstrap hypothesis is a powerful idea in theoretical physics which has led to spectacular predictions in the context of critical phenomena. It postulates an explicit expression for the correlation functions  of a conformal field theory in terms of its 3-point correlation functions. In this paper we give the first mathematical proof of the conformal bootstrap hypothesis in the context of Liouville theory, a 2-dimensional conformal field theory studied since the eighties in theoretical physics and constructed recently by F. David and the three last authors using probability theory. The proof is based on a probabilistic  construction of the Virasoro algebra highest weight modules through spectral analysis of an associated self adjoint operator akin to harmonic analysis on non compact Lie groups but in an infinite dimensional setup. 
 
  \end{abstract}
 
 \maketitle
 
 \epigraph{``We developed a general  approach  to CFTs, something like complex analysis in the quantum domain. It worked very well in the various problems of statistical mechanics but the Liouville theory remained unsolved."}{--- \textup{Alexander Polyakov}, From Strings to Quarks (2008)}
 
\tableofcontents

\section{Introduction}\label{sec:intro}
\subsection{Overview}\label{sec:over}
 
There are essentially two approaches to Quantum Field Theory (QFT) in the physics literature.  In the first approach the quantum fields are (generalized) functions   $\hat\caO(t,x)$ on the space-time $\R^{d+1}$  ($d=3$ for the Standard Model of physics) taking values in operators acting in a Hilbert space $\caH$ of physical states. Matrix elements of products of quantum fields 
at different points $\langle \psi|\prod_{k=1}^N \hat\caO_k(t_k,x_k)|\psi\rangle$ where $\psi\in \caH$ (the ``vacuum" state) are (generalized) functions  on $\R^{N(d+1)}$.  Physical principles (positivity of energy) imply that these matrix elements should have an analytic continuation to the {\it Euclidean} domain where $t_k\in i\R$ and be given there as correlation functions of {\it random fields} $\caO(y)$ defined on $y\in \R^{n}$ where $n=d+1$. 
 In the second approach,  based on the so-called path integral approach due   to Feynman, these correlation functions are formally given as integrals over  a  space of generalized functions  on  $\R^{n}$, called fields, with the formal integration measure given explicitly  as a Gibbs measure with potential a functional of  the fields.  The Euclidean formulation serves also as a setup for the theory of second order phase transitions in statistical mechanics systems where now $n\leq 3$. In this case one expects  the correlation functions to possess an additional symmetry under the conformal transformations of $\R^{n}$ and the QFT is now a Conformal Field Theory (CFT).
 
In practice most of the information on QFT obtained by physicists has been perturbative, namely given in terms of a formal power series expansion in parameters perturbing a Gaussian measure (and pictorially described by Feynman diagrams). In CFT however there is another, nonperturbative,  approach going under the name of {\it Conformal bootstrap}. In this approach one postulates a set of special {\it primary} fields $\caO_\alpha(y)$ (or operators  $\caO_\alpha(t,x)$ in the Hilbert space formulation) whose  correlation functions transform as conformal tensors under the action of the conformal group. Furthermore one postulates a rule called the {\it operator product expansion} allowing to expand the product of two primary fields inside a correlation function (or a product of two quantum fields in the  Hilbert space formulation)  as a sum running over a subset of primary fields called the {\it spectrum} with explicit coefficients depending on the three point correlation functions, the so called  {\it structure constants} of the CFT (see Section \ref{dozzboot}). Hence  the correlation functions of a CFTs are determined in the bootstrap approach if one knows  its spectrum and the structure constants.  

In the case of two dimensional conformal field theories ($d=1$ or $n=2$ above)  the conformal symmetry constrains the possible CFTs particularly strongly and Belavin, Polyakov and Zamolodchikov \cite{BPZ} (BPZ from now on) showed the power of the bootstrap hypothesis by producing explicit expressions for the correlation functions of a large family of CFTs of interest to statistical physics among which the CFT that is believed to coincide with the scaling limit of  critical Ising model.  In a nutshell,  {BPZ} argued that 
 one could parametrize  CFTs by a unique parameter $c$  called the central charge and they found the correlation functions for certain rational values of $c$ where the number of primary fields is finite (the minimal models).
During the last decade the bootstrap approach has also led to spectacular predictions of critical exponents in the    three dimensional case \cite{PoRyVi}. The reader may consult \cite{Gaw} for some background on 2d CFTs.

 Giving a rigorous mathematical meaning to these two approaches and relating them has been a huge challenge for mathematicians. On the axiomatic level the transition from the operator theory on Hilbert space to the Euclidean probabilistic theory was understood early on and for the converse the crucial concept of {\it reflection positivity} was isolated \cite{OS1,OS2}. Reflection positivity is a property of the probability law underlying the random fields that allows for a construction of a canonical Hilbert space where operators representing the symmetries of the theory act. Reflection positivity is one of the crucial inputs in the present paper. 
 
 However on a more concrete level of explicit examples of QFTs mathematical progress has been slower.
  The (Euclidean) path integral approach was addressed by constructive field theory in dimensions $d+1\leq 4$ using probabilistic methods but detailed information has been restricted to the cases that are small perturbations of a Gaussian measure. In particular the 2d CFTs have been beyond this approach so far. A different probabilistic approach to conformal invariance has been developed during the past twenty years following the introduction by Schramm \cite{Sch} of random curves called Schramm-Loewner evolution (SLE). This approach, centred around the geometric description of critical models of statistical physics, has led to exact statements on the interfaces of percolation or the critical Ising model; following the introduction of SLE and the work of Smirnov, probabilists also managed to justify and construct the CFT correlation functions of the scaling limit of the 2d Ising model  
 \cite{CS,CHI15} (see also the review \cite{Pel} for the construction of CFT correlations via SLE observables).

 Making a mathematical  theory of the BPZ approach  triggered in the 80's and 90's intense research in the field of  
 Vertex Operator Algebras (VOA for short)  introduced by Borcherds \cite{borcherds} and Frenkel-Lepowsky-Meurman \cite{FLM89}  (see also the book \cite{Hu} and the article \cite{HuKo} for more recent developments on this formalism).  Even if the theory of VOA was quite successful to rigorously formalize numerous CFTs, the approach suffers certain limitations at the moment. First, correlations are defined as formal power series (convergence issues are not tackled in the first place and are often difficult); second, many fundamental CFTs have still not been formalized within this approach, among which the CFTs with uncountable collections of primary fields and in particular Liouville conformal field theory (LCFT in short)  studied in this paper. Moreover, the theory of VOA, which is based on axiomatically implementing the operator product expansion point of view of physics, does not elucidate the link to the the path integral approach or to the models of statistical physics at critical temperature (if any).

 In their seminal work, BPZ were  in fact motivated by the quest to compute the correlation functions in LCFT, which had been introduced a  few years before by Polyakov under the form of a path integral in his approach to bosonic string theory \cite{Pol}. Although BPZ failed to carry out the bootstrap program for LCFT\footnote{See Polyakov's citation \cite{Pol1} at the beginning of this paper.}, this   was successfully implemented later  in the physics literature by Dorn, Otto, Zamolodchikov and Zamolodchikov \cite{DoOt,ZaZa}. Since then, LCFT has appeared in the physics literature in a wide range of fields including random planar maps (discrete quantum gravity, see the review \cite{kos}) and the supersymmetric Yang-Mills theory (via the AGT correspondence \cite{AGT}). Recently, there has been a large effort in probability theory  to make sense of Polyakov's path integral formulation of LCFT within the framework of random conformal geometry and the scaling limit of random planar maps: see \cite{LeGall,Mier,MS1,MS2,MS3,DDDF,DFGPS,MG} for the construction of a random metric space describing (at least at the conjectural level) the limit of random planar maps and \cite{DMS,NS} for exact results on their link with LCFT\footnote{It is beyond the scope of this introduction to state and comment all the exciting results that have been obtained recently in this flourishing field of probability theory.}. In particular, the three last authors of the present paper  in collaboration with F. David \cite{DKRV} have constructed   
the path integral formulation of LCFT on the Riemann sphere using probability theory. This was extended to higher genus surfaces in \cite{DRV,GRV}. In this paper, we will be concerned with LCFT on the Riemann sphere.
 
LCFT depends on two parameters $\gamma \in (0,2)$ and $\mu>0$\footnote{The case $\mu=0$ is different and corresponds to Gaussian Free Field theory; for the study of a related model, see Kang-Makarov \cite{KM}.}. 
In this paper we prove that for all $\gamma \in (0,2)$ and $\mu>0$ the probabilistic construction of LCFT satisfies the hypothesis of the bootstrap approach envisioned in \cite{BPZ}. In particular we determine the spectrum of LCFT and prove the bootstrap formula  for the 4-point function in terms of the structure constants that were identified by the last three authors in \cite{dozz}. 
 LCFT for $\gamma\in (0,2)$ is a highly nontrivial CFT with an uncountable family of primary fields and a nontrivial OPE 
 and we believe the proof of conformal bootstrap in this case provides the first  nontrivial  test case where a mathematical justification for this beautiful idea from physics has been achieved. Let us emphasize that determining  the spectrum and the structure constants of LCFT  is the cornerstone of the deep link between LCFT and representation theory via exact formulae for correlation functions. Indeed they  lead to exact formulae for n point correlation functions on the sphere  and pave the way towards understanding   bootstrap formulae for higher genus surfaces \cite{GKRV}.

In a nutshell, our proof uses reflection positivity to construct a representation of the semigroup of  
dilations in $\C$ 
on an explicit Hilbert space associated to LCFT. Its generator,  
called the  \emph{Hamiltonian} of LCFT, is a   self-adjoint  unbounded operator. It has the form of a Schr\"odinger operator acting in the $L^2$-space of an infinite dimensional Gaussian measure   with a non trivial potential which  is a positive function for $\gamma \in (0,\sqrt{2})$ and a measure   for $\gamma \in [\sqrt{2},2)$.  
We perform the spectral analysis of this operator  to determine the spectrum of LCFT \footnote{The importance of understanding the spectral analysis of the Hamiltonian of LCFT was stressed by Teschner in \cite{Tesc1} which was an inspiration for us.} and show that the bootstrap formula for the 4 point correlations functions  can be seen as a Plancherel formula with respect to its spectral decomposition. We obtain exact expressions for the generalized eigenfunctions of the Hamiltonian by deriving   conformal Ward identities for LCFT correlation functions  which reflect the underlying  symmetry algebra of LCFT (i.e. the Virasoro algebra).

\vskip 3mm

\subsection{Probabilistic approach of Liouville CFT}  Let us start with the physicists formulation of LCFT. It is a theory of a random (generalized) function $\phi:\C\to \R$ called the Liouville field. One is interested in averages of functionals $F$ of  $\phi$ formally given by the path integral
\begin{equation}\label{phydef}
\langle F \rangle_{\gamma,\mu}  := \int F(\phi)e^{-  S_L(\phi) } \text{\rm D} \phi, 
\end{equation}
where $S_L$ is the  Liouville action functional
$$S_L(\phi):= \frac{1}{\pi}\int_{\mathbb{C}}\big(  |\partial_z \phi (z) |^2  + \pi \mu e^{\gamma \phi(z)  } \big)\, \dd z$$
and where $ \dd z$ denotes the Lebesgue measure on $\C$. It
depends on two parameters $\gamma \in (0,2)$ and $\mu>0$ (called cosmological constant). The notation  $ \text{\rm D} \phi$ refers to a formal ``Lebesgue measure" on the space of functions $\phi: \C \to \R $ obeying the asymptotic  $$\phi(z) \underset{|z| \to \infty}{\sim} -2 Q  \ln |z|$$ with $Q=\frac{2}{\gamma}+\frac{\gamma}{2}$. This asymptotic is  formally required by conformal invariance whereby the field $\phi$ should be thought to be defined on the Riemann sphere $\hat \C= \C \cup \lbrace \infty \rbrace$.
The physically  interesting expectations in LCFT are the n-point correlation functions 
\begin{equation}\label{phyvertex}
\langle \prod_{i=1}^n V_{\alpha_i}(z_i)\rangle_{\gamma, \mu}
\end{equation} 
of the exponentials  of the Liouville field $V_\alpha(z)=e^{\alpha\phi(z)}$ called ``vertex operators" in physics.
In \eqref{phyvertex} the points $z_i \in \C$ are distinct and  $\alpha_i \in \C$.

The recent work \cite{DKRV} gave a rigorous mathematical meaning to the correlation functions \eqref{phyvertex} via probability theory as we now describe. As usual the quadratic part of the action functional can be interpreted in terms of a Gaussian Free Field (GFF).
We consider the  GFF $X$ on $ \C$ with the following covariance kernel
\begin{equation}\label{chatcov}
\E [ X(z)X(z')] =\ln\frac{1}{|z-z'|}+\ln|z|_++\ln|z'|_+:=G(z,z')
\end{equation}
with $|x|_+=\max(|x|,1)$. 
This GFF can be defined as a random generalized function on a suitable probability space  $( \Omega, \Sigma, \P)$ (with expectation $\E[.]$): see Section \ref{sub:gff}. As is readily checked with the covariance \eqref{chatcov} $ X(z)\stackrel{law}= X(1/z)$ so that the GFF is naturally defined on the Riemann sphere $\hat\C=\C\cup\{\infty\}$. Furthermore it defines  $\P$ almost surely an element in  the space of distributions $\caD'(\hat\C)$.

The second ingredient needed for making sense of the path integral  is the following Gaussian multiplicative chaos measure (GMC, originally introduced by Kahane \cite{cf:Kah})
\begin{equation}\label{GMCintro}
M_\gamma (\dd z):=  \underset{\epsilon \to 0} {\lim}  \; \; e^{\gamma X_\epsilon(z)-\frac{\gamma^2}{2} \E[X_\epsilon(z)^2]} \frac{\dd z}{|z|_+^4}
\end{equation}
where $X_\epsilon= X \ast \theta_\epsilon$ is the mollification of $X$ with an  approximation   $(\theta_\epsilon)_{\epsilon>0}$ of the Dirac mass $\delta_0$; indeed, one can show that the limit \eqref{GMCintro} exists in probability in the space of Radon measures on $\hat\C$ and that the limit does not depend on the mollifier $\theta_\eps$: see \cite{RoV, review, Ber} for example.   The condition $\gamma \in (0,2)$ stems from the fact that the random measure $M_\gamma$ is different from zero if and only if $\gamma \in (0,2)$. 
With these definitions the rigorous definition of the Liouville field $\phi$ is
\begin{align}\label{liouvillefield}
\phi(z)= c+X(z)-2Q \ln |z|_+
\end{align}
  and the expectation \eqref{phydef} for $F$ continuous and non negative on $\caD'(\hat\C)$ is defined as 
 \footnote{In the recent paper \cite{dozz}, the authors used a convention where the RHS is multiplied by $2$.}
\begin{equation}\label{FL1}
\langle F(\phi) \rangle_{\gamma,\mu}
 =  \:  \int_\R   e^{ -2Qc}\E[ F(c+X-2Q \ln |.|_+)
  e^{ -\mu e^{\gamma c}M_\gamma(\C)}]\,\dd c.
\end{equation}  
The variable $c\in\R$ 
stems from the fact that in the path integral \eqref{phydef} one wants to include also the constant functions on $\C$ which are not captured by the GFF. Indeed, it is needed to 
 ensure conformal invariance of LCFT \cite{DKRV}.  We remark also that the expectation $\langle\cdot\rangle_{\gamma,\mu}$ is {\it not} a probability measure as $\langle 1\rangle_{\gamma,\mu}=\infty$  \cite{DKRV}. 

Following \cite{DKRV}, the $n$-point correlations \eqref{phyvertex} can be defined for \emph{real} valued $\alpha_i$ via the following limit 
\begin{equation}\label{deflimintro}
\langle \prod_{i=1}^n V_{\alpha_i}(z_i)\rangle_{\gamma, \mu}:= \underset{\epsilon \to 0} {\lim}  \:  \langle \prod_{i=1}^n V_{\alpha_i,\epsilon}(z_i)\rangle_{\gamma, \mu} 
\end{equation}
where $z_1,\dots,z_n\in \C$ are distinct, 
\begin{equation}\label{Vregul}
V_{\alpha, \epsilon}(z)= |z|_+^{-4\Delta_\alpha}e^{\alpha c}e^{\alpha X_\epsilon  (z)-\frac{\alpha^2}{2}\E [X_\epsilon  (z)^2]} 
\end{equation}  
 and $\Delta_\alpha$ is called the \emph{conformal weight} of $V_\alpha$
 \begin{align}\label{deltaalphadef}
\Delta_\alpha=\frac{\alpha}{2}(Q-\frac{\alpha}{2}), \ \ \ \alpha\in\C.
\end{align} 
The limit \eqref{deflimintro} exists and is non trivial if and only if the following bounds hold 
\begin{equation}\label{Seibergintro}
 \sum_{i=1}^n \alpha_i >2Q, \quad \quad \quad  \alpha_i <Q, \; \; \forall i=1,\dots,n  \quad \quad \quad \quad  ({\bf Seiberg \: bounds}).
\end{equation}
One of the main results of  \cite{DKRV} is that the limit \eqref{deflimintro} admits the following representation in terms of the moments of GMC
\begin{equation}\label{probarepresentation}
\langle \prod_{i=1}^n V_{\alpha_i}(z_i)\rangle_{\gamma, \mu}=   \gamma^{-1} \left (  \prod_{1 \leq j<j' \leq n}  \frac{1}{|z_j-z_{j'}|^{\alpha_j \alpha_{j'}}} \right ) \mu^{-s} \Gamma(s) \E[ Z^{-s}  ]
\end{equation}
where $s=\frac{\sum_{i=1}^n \alpha_i-2Q}{\gamma}$, $\Gamma$ is the standard Gamma function and (recall that $|x|_+=\max(|x|,1)$)
\begin{equation*}
Z= \int_{\mathbb{C}}  \left (  \prod_{i=1}^n  \frac{|x|_+^{\gamma \alpha_i}}{|x- z_i|^{\gamma \alpha_i}} \right )    M_\gamma (\dd x).
\end{equation*}
We stress that the formula \eqref{probarepresentation} is valid for correlations with $n \geq 3$, which can be seen at the level of  the Seiberg bounds \eqref{Seibergintro}\footnote{In fact, one can extend the probabilistic construction \eqref{probarepresentation} a bit beyond the Seiberg bounds but the extended bounds  also imply $n \geq 3$. We will not discuss these extended bounds in this paper.}. 
 Also it was proved  in \cite[Th 3.5]{DKRV} that these correlation functions 
are {\it conformally covariant}. More precisely, if $z_1, \cdots, z_n$ are $n$ distinct points in $\C$  then for a M\"obius map $\psi(z)= \frac{az+b}{cz+d}$ (with $a,b,c,d \in \C$ and $ad-bc=1$) 
 \begin{equation}\label{KPZformula}
\langle \prod_{i=1}^n V_{\alpha_i}(\psi(z_i))    \rangle_{\gamma, \mu}=  \prod_{i=1}^n |\psi'(z_i)|^{-2 \Delta_{\alpha_i}}     \langle \prod_{k=1}^n V_{\alpha_i}(z_i)     \rangle_{\gamma, \mu}.
\end{equation}  
Because of relation \eqref{KPZformula}, the vertex operators are \emph{primary fields} in the language of CFT. The M\"obius covariance implies in particular that the three point functions are determined up to a constant, called the {\it structure constant}, which we write as
\begin{align}\label{struconst}
 \langle  V_{\alpha_1}(0)  V_{\alpha_2}(1)V_{\alpha_3}(\infty)   \rangle_{\gamma, \mu}:=\lim_{|u|\to \infty} |u|^{4 \Delta_{\alpha_3}} \langle  V_{\alpha_1}(0)  V_{\alpha_2}(1)V_{\alpha_3}(u)   \rangle_{\gamma, \mu}.
\end{align}
Similarly the four point function can be reduced to a function of one complex variable defined by
\begin{align}\label{4pointinfty} 
 \langle  V_{\alpha_1}(0)  V_{\alpha_2}(z) V_{\alpha_3}(1) V_{\alpha_4}(\infty)  \rangle_{\gamma,\mu}  :=& \lim_{|u|\to \infty} |u|^{4 \Delta_{\alpha_4}}\langle  V_{\alpha_1}(0)  V_{\alpha_2}(z) V_{\alpha_3}(1) V_{\alpha_4}(u)  \rangle_{\gamma,\mu}.
\end{align}

Let us now turn to the conformal bootstrap approach to LCFT.

\subsection{DOZZ formula and Conformal Bootstrap.} \label{dozzboot}

In theoretical physics conformal field theory is a quantum field theory with conformal group symmetry. In particular one then postulates existence of primary fields whose correlation functions transform covariantly under conformal maps (M\"obius transformations in the two dimensional case). The 
basic physical axiom of conformal field theory apart from this covariance    is the  {\it operator product expansion} (OPE for short). Denoting the primary fields still as $V_\alpha(x)$ for $x\in\R^n$ then,
in very loose terms and cutting many corners, the OPE is the identity 
\begin{align}\label{OPE}
V_\alpha(x)V_{\alpha'}(x')=\sum_{\beta\in\caS}C^{\beta}_{\alpha\alpha'}(x,x',\nabla_x)V_\beta(x)
\end{align}
 where $\caS$  labels a special set of primary fields  called the {\it spectrum} of the CFT  and the $C^{\beta}_{\alpha,\alpha'}$ are differential operators {\it completely determined } by the conformal weights of the fields $V_\alpha, V_{\alpha'},V_\beta$ and linear in the structure constants $C_{\alpha\alpha'\beta}$ (defined in general analogously to \eqref{struconst}). The identity \eqref{OPE} is assumed to hold once inserted in the correlation functions. Obviously a repeated application of the OPE allows to express an $n$-point function of the CFT as a sum of products of the structure constants.  Hence to  ``solve a CFT" one needs to find its structure constants and spectrum. To find these, the following approach where the 4-point function\footnote{In $d>2$ the four point function is a function of two complex variables.}  \eqref{4pointinfty}
plays a fundamental role has been extremely fruitful. Applying the OPE \eqref{OPE} in \eqref{4pointinfty}  to $\alpha_1$ and  $\alpha_2$  leads to  a quadratic expression in the structure constants.  Applying the OPE instead to  $\alpha_2$ and  $\alpha_3$ yields another quadratic expression and equating the two produces a quadratic equation for the structure constants. Varying $\alpha_1,\dots,\alpha_4$ results in a set of quadratic equations labeled by 4-tuples of $\alpha_i\in\caS$ (for LCFT see  \eqref{crossingsymmetry1}).
 These 4-point bootstrap equations pose strong constraints on the spectrum and the structure constants and have been used to great effect e.g. in the case of the 3-dimensional Ising model \cite{Ry1,Ry2}. In two dimensions their study in the case when one of the fields $V_{\alpha_i}$ is a so-called degenerate field led in \cite{BPZ} to the discovery of the {\it minimal models} (2d Ising model among them) and their spectra and structure constants.

The motivation for the BPZ paper \cite{BPZ} was to solve the bootstrap equations for LCFT  but in this they were  unsuccessful \cite{Pol1}. The spectrum of LCFT was conjectured  in \cite{ct, bct, gn} to be {\it continuous} $\caS=Q+i\R_+$ and an explicit formula (see Appendix \ref{dozz}) for the structure constants, the  DOZZ formula,  was postulated by Dorn, Otto, Zamolodchikov and Zamolodchikov \cite{DoOt,ZaZa}. A derivation based on the degenerate 4-point bootstrap equations was subsequently given by Teschner \cite{Tesc} and further evidence for the formula was given in \cite{Tesc1}, \cite{Tesc2}. The DOZZ expression  which we denote by $C_{\gamma,\mu}^{{\rm DOZZ}} (\alpha_1,\alpha_2,\alpha_3 )$  is analytic in the variables $\alpha_1,\alpha_2,\alpha_3 \in \C$ (with a countable number of  poles) and one is led to expect that it coincides with the probabilistic expression \eqref{struconst}  on the domain of validity of the probabilistic construction, i.e. for real $\alpha_1,\alpha_2,\alpha_3$ satisfying the Seiberg bounds \eqref{Seibergintro}. This is indeed the case: in a  recent work \cite{dozz}, the last three authors  proved that the probabilistically constructed $3$-point correlation functions satisfy the DOZZ formula:
\begin{equation*}
 \langle  V_{\alpha_1}(0)  V_{\alpha_2}(z) V_{\alpha_3}(\infty)  \rangle_{\gamma,\mu} =\frac{1}{2}C_{\gamma,\mu}^{{\rm DOZZ}} (\alpha_1,\alpha_2,\alpha_3 )\footnote{The $\frac{1}{2}$ factor here is a general global constant which can be absorbed in the definition of the probablistic construction: see footnote associated to \eqref{FL1}.}.
 \end{equation*} 

Given that the spectrum is $\caS=Q+i\R_+$, the formula \eqref{OPE} (where the sum becomes an integral) leads formally to the following bootstrap conjecture for the 4 point correlation functions
\begin{align}
& \cjg  V_{\alpha_1}(0)  V_{\alpha_2}(z)  V_{\alpha_3}(1) V_{\alpha_4}(\infty)\cjd^{{\rm Boot}}_{\gamma,\mu}   \nonumber \\
& =  \frac{1}{8 \pi}\int_{0}^\infty  C_{\gamma,\mu}^{{\rm DOZZ}}(\alpha_1, \alpha_2, Q- iP  )  C_{\gamma,\mu}^{{\rm DOZZ}}(Q+ iP, \alpha_3, \alpha_4  ) |z|^{2(\Delta_{Q+iP}-\Delta_{\alpha_1}-\Delta_{\alpha_2})}  |  \mathcal{F}_P (z) |^2 {\rm d}P \label{4pointidentity}
\end{align}
where $\mathcal{F}_P$ are holomorphic functions in $z$ called  (spherical) \emph{conformal blocks}. The conformal blocks are universal in the sense that they only depend on the conformal weights $\Delta_{\alpha_i}$ and $\Delta_{Q+iP}$  
and    the central charge $c_L=1+6Q^2$ of LCFT,  i.e. 
$  \mathcal{F}_P (z)= \mathcal{F}(c_L,\Delta_{\alpha_1},\Delta_{\alpha_2},\Delta_{\alpha_3},\Delta_{\alpha_4},\Delta_{Q+iP},z)$; see   
Subsection \ref{sub:defblock} for the exact definition.

The second hypothesis on the bootstrap approach i.e. the fact that the OPE may be applied in the two ways explained above goes under the name of {\it crossing symmetry}. More specifically, it is the conjecture that  the following  identity holds for real $z \in (0,1)$
\begin{align}
& \int_{\R^+} C_{\gamma,\mu}^{ \mathrm{DOZZ}}( \alpha_1,\alpha_2, Q-iP  )  C_{\gamma,\mu}^{ \mathrm{DOZZ}}( \alpha_3,\alpha_4, Q+iP  ) |z|^{2(\Delta_{Q+iP}-\Delta_{\alpha_1}-\Delta_{\alpha_2})} |\mathcal{F}_P (z)|^2 dP  \nonumber \\
& = \int_{\R^+}  C_{\gamma,\mu}^{ \mathrm{DOZZ}}( \alpha_3,\alpha_2, Q-iP  )  C_{\gamma,\mu}^{ \mathrm{DOZZ}}( \alpha_1,\alpha_4, Q+iP  ) |1-z|^{2(\Delta_{Q+iP}-\Delta_{\alpha_3}-\Delta_{\alpha_2})} |\tilde{\mathcal{F}}_P (1-z)|^2 dP \label{crossingsymmetry1}
\end{align}
where $\tilde{\mathcal{F}}_P $ is obtained from $\mathcal{F}_P$ by flipping the parameter $\alpha_1$ with $\alpha_3$. As explained above this identity is a very strong constraint in LCFT.

\subsection{Main result on conformal bootstrap}

 In this paper, we justify rigorously the bootstrap approach to LCFT by constructing the spectral representation  of LCFT; as an output, we prove the bootstrap and crossing  formulas described in the previous subsection. 

To understand our approach to the spectrum of LCFT it is useful to draw the analogy with the harmonic analysis on a Lie group $G$ and in particular the Plancherel identity on $L^2(G)$. For a compact $G$, $L^2(G)$  is decomposed into a direct sum of irreducible (highest weight) representations of $G$ whereas in the non-compact case also a continuous family of representations (a direct integral) appears. This decomposition is related to the spectral decomposition of a self-adjoint operator (the Laplacian) acting on the Hilbert space $L^2(G)$.   
In 2d CFT,  Osterwalder-Schrader's method of reflection positivity  provides a canonical Hilbert space where   the symmetry 
algebra of 2d CFT, the \emph{Virasoro algebra}, acts.  The role of the Laplacian  is played by a self adjoint operator, a special element in the Virasoro algebra called the Hamiltonian of the CFT. In the case of ``compact CFTs" (examples being the minimal models of BPZ) the spectrum of this operator is discrete whereas in case of ``non-compact CFTs" the spectrum is continuous.  Our proof is based on finding the spectral resolution of  this operator using scattering theory and representation theory of the Virasoro algebra, leading to a Plancherel type identity as a rigorous version of the OPE.

The main result of this paper is the following theorem proving that the conformal bootstrap formula \eqref{4pointidentity} holds  
for the probabilistic construction of the $4$-point function: 
\begin{theorem}\label{bootstraptheoremintro}
Let  $\gamma \in (0,2)$ and $\alpha_i<Q$ for all $i  \in \llbracket 1,4\rrbracket$. Then the following identity holds for $\alpha_1+\alpha_2 >Q$ and $\alpha_3+\alpha_4>Q$
\begin{equation}
\label{bootstrapidentityintro}
 \langle  V_{\alpha_1}(0)  V_{\alpha_2}(z) V_{\alpha_3}(1) V_{\alpha_4}(\infty)  \rangle_{\gamma,\mu} =\cjg V_{\alpha_1}(0)  V_{\alpha_2}(z)  V_{\alpha_3}(1) V_{\alpha_4}(\infty) \cjd^{{\rm Boot}}_{\gamma,\mu}.
 \end{equation}
 \end{theorem}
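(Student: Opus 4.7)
The plan is to realize the left-hand side of \eqref{bootstrapidentityintro} as a matrix element of the dilation semigroup acting on a Hilbert space produced by reflection positivity, and then to insert the spectral resolution of its generator. Concretely, cut the Riemann sphere into two pieces by a circle $|w|=r$ with $|z|<r^2<1$; the insertions at $0$ and $z$ lie in the inner disk while those at $1$ and $\infty$ lie in the outer one. Using the probabilistic representation \eqref{probarepresentation} and the Markov/domain decomposition property of the GFF, the $4$-point function can be rewritten as
\begin{equation*}
\langle V_{\alpha_1}(0)V_{\alpha_2}(z)V_{\alpha_3}(1)V_{\alpha_4}(\infty)\rangle_{\gamma,\mu}
= \bigl\langle \Psi^{\mathrm{in}}_{\alpha_1,\alpha_2}(z)\,,\, e^{-t\mathbf{H}}\,\Psi^{\mathrm{out}}_{\alpha_3,\alpha_4}\bigr\rangle_{\mathcal H},
\end{equation*}
where $\mathcal H$ is the Hilbert space attached to the unit circle by the Osterwalder--Schrader reconstruction, $\mathbf H$ is the generator of the dilation semigroup $e^{-t\mathbf H}$ (essentially a Schr\"odinger operator on the Cameron--Martin space of the GFF perturbed by the boundary GMC potential coming from the Liouville interaction), and the states $\Psi^{\mathrm{in}}$, $\Psi^{\mathrm{out}}$ are obtained by integrating out the Liouville field inside (resp.\ outside) the unit disk conditionally on its trace on the circle.

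\textbf{Spectral decomposition.} The first main step is to perform the spectral analysis of $\mathbf H$ and show that it is self-adjoint with purely absolutely continuous spectrum $[Q^2/2,\infty)$, parametrised by the primary weights $\Delta_{Q+iP}$ with $P\in\mathbb R_+$. I would proceed by scattering theory: compare $\mathbf H$ with its free counterpart (the Ornstein--Uhlenbeck Hamiltonian on the infinite-dimensional Gaussian space) in which the Liouville interaction is turned off, construct generalised eigenfunctions $\Psi_{Q+iP}$ by Feynman--Kac perturbation, and deduce a Plancherel identity of the form
\begin{equation*}
\|\Phi\|_{\mathcal H}^2 = \frac{1}{8\pi}\int_0^\infty \sum_{\nu}\bigl|\langle\Phi,\Psi_{Q+iP,\nu}\rangle_{\mathcal H}\bigr|^2\,dP,
\end{equation*}
summing over Virasoro descendants $\nu$ of the primary $\Psi_{Q+iP}$. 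Inserting this resolution of identity into the matrix element above converts the $4$-point function into a sum/integral of products $\langle\Psi^{\mathrm{in}}_{\alpha_1,\alpha_2}(z),\Psi_{Q+iP,\nu}\rangle\,\langle\Psi_{Q+iP,\nu},\Psi^{\mathrm{out}}_{\alpha_3,\alpha_4}\rangle$, weighted by the semigroup eigenvalue $|z|^{2(\Delta_{Q+iP}-\Delta_{\alpha_1}-\Delta_{\alpha_2})}$ that comes from the radial-time exponential.

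\textbf{Identification of structure constants and conformal blocks.} The second main step is to identify each diagonal overlap $\sum_\nu \langle\Psi^{\mathrm{in}}_{\alpha_1,\alpha_2}(z),\Psi_{Q+iP,\nu}\rangle\,\langle\Psi_{Q+iP,\nu},\Psi^{\mathrm{out}}_{\alpha_3,\alpha_4}\rangle$ with the product of DOZZ factors times $|\mathcal F_P(z)|^2$. For this I would derive Virasoro Ward identities for LCFT correlation functions by differentiating under the path integral the variation of the GFF under holomorphic vector fields on $\mathbb C$, and use these identities to show that the primary overlaps $\langle V_{\alpha_1}(0)V_{\alpha_2}(z)\Psi_{Q+iP}\rangle$ are determined by the $3$-point function at $P$ multiplied by a universal holomorphic series in $z$; the same series, by BPZ's algebraic uniqueness of the highest weight Virasoro module, is exactly the conformal block $\mathcal F_P(z)$ defined in Subsection \ref{sub:defblock}. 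The primary $3$-point coefficient is the probabilistic structure constant \eqref{struconst}, which by the DOZZ theorem of \cite{dozz} coincides (up to the normalisation discussed there) with $C^{\mathrm{DOZZ}}_{\gamma,\mu}$. Putting these two steps together yields exactly the right-hand side of \eqref{4pointidentity}.

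\textbf{Main obstacle.} The hardest part is unquestionably the spectral theory of $\mathbf H$: in contrast with finite-dimensional Schr\"odinger operators, here the potential is a GMC measure on an infinite-dimensional Gaussian space (a genuine measure, not a function, when $\gamma\in[\sqrt 2,2)$), so standard Kato or Feynman--Kac techniques must be adapted, and the construction of bounded generalised eigenfunctions $\Psi_{Q+iP}$ together with their completeness and a sharp Plancherel measure requires a careful scattering analysis in the cusp-like direction $c\to-\infty$ of the zero mode. Secondary difficulties are the convergence of the descendant series in the conformal block (which requires quantitative control of the Ward identities on the Verma module of weight $\Delta_{Q+iP}$) and the exchange between the $r\to 1$ (or $t\to 0$) limit in the semigroup representation and the spectral integral, which I would handle using the Seiberg bounds $\alpha_i<Q$ and $\alpha_1+\alpha_2>Q$, $\alpha_3+\alpha_4>Q$ to guarantee integrability in $P$ and regularity of the boundary states $\Psi^{\mathrm{in/out}}$ in $\mathcal H$.
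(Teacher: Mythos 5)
Your proposal follows the paper's strategy faithfully in its broad outline—reflection positivity to produce a Hilbert space, the dilation semigroup and its generator $\mathbf H$, scattering theory for the spectral resolution, and Ward identities to identify the overlaps—so this is essentially the correct route. There are, however, two substantive gaps where the proposal glosses over steps that the paper handles with specific and non-obvious ideas.

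First, you write that the Ward identities would "show that the primary overlaps $\langle V_{\alpha_1}(0)V_{\alpha_2}(z)\Psi_{Q+iP}\rangle$ are determined by the $3$-point function at $P$," but the eigenfunctions $\Psi_{Q+iP}$ constructed by scattering theory on the spectrum line $\alpha=Q+iP$ do \emph{not} satisfy the Seiberg bounds and therefore do not admit a direct probabilistic representation amenable to Gaussian integration by parts. The paper closes this gap by proving that $\alpha\mapsto\Psi_{\alpha,\nu,\tilde\nu}$ extends holomorphically to a connected region of $\{{\rm Re}\,\alpha\le Q\}$ containing both $Q+i\R_+$ and a real half-line $(-\infty, Q-A)$ (Proposition~\ref{holomorphiceig}), that on the real half-line the eigenfunction is obtained by intertwining with the free theory via $\Psi_{\alpha,\nu,\tilde\nu}=\lim_{t\to\infty} e^{(2\Delta_\alpha+|\nu|+|\tilde\nu|)t}e^{-t\mathbf{H}}\Psi^0_{\alpha,\nu,\tilde\nu}$ (Proposition~\ref{descconvergence}), and that the Ward identities can there be derived for the LCFT correlators (Proposition~\ref{proofward}) and then analytically continued back to the spectrum line. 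Without this mechanism the overlaps cannot be linked to $C^{\mathrm{DOZZ}}_{\gamma,\mu}$.

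Second, the convergence of the conformal block series $\sum_n\beta_n z^n$ is not established by "quantitative control of the Ward identities on the Verma module," which would require estimates the paper never proves and which are not known to be available. Instead, the paper derives convergence from the spectral decomposition itself: in the diagonal case $\alpha_3=\alpha_2$, $\alpha_1=\alpha_4$ the partial sums $\sum_{n\le N}\beta_n z^n$ are nonnegative and monotone in $N$, so the limit exists a.e.\ in $P$ because their weighted $L^1$-norm in $P$ equals a finite $4$-point function; the general case then follows from the Cauchy--Schwarz inequality $|\beta_n(\Delta_1,\Delta_2,\Delta_3,\Delta_4)|\le\frac{1}{2}\bigl(\beta_n(\Delta_1,\Delta_2,\Delta_2,\Delta_1)+\beta_n(\Delta_4,\Delta_3,\Delta_3,\Delta_4)\bigr)$. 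Your proposal would have to discover or replicate this argument to complete the proof.
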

 The conditions $\alpha_1+\alpha_2>Q$ and  $\alpha_3+\alpha_4>Q$ are essential. Indeed, if $\alpha_1+\alpha_2<Q$ the analytic continuation of \eqref{4pointidentity} from $\alpha_i \in \mathcal{S}$ requires adding an extra term (cf. the discussion on the so-called discrete terms in \cite{ZaZa}).  
The second  main input to the bootstrap hypothesis, namely the crossing symmetry conjecture \eqref{crossingsymmetry1},
 follows directly from our work since one has by conformal covariance \eqref{KPZformula} of the probabilistic construction of the correlations 
\[
 \langle  V_{\alpha_1}(0)  V_{\alpha_2}(z) V_{\alpha_3}(1) V_{\alpha_4}(\infty)  \rangle_{\gamma,\mu}= \langle  V_{\alpha_3}(0)  V_{\alpha_2}(1-z) V_{\alpha_1}(1) V_{\alpha_4}(\infty)  \rangle_{\gamma,\mu}
\]
whereby we get the following  corollary: 
\begin{corollary}
The bootstrap construction of LCFT satisfies crossing symmetry for $\gamma \in (0,2)$.
\end{corollary}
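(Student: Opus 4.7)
The plan is to deduce the corollary directly from Theorem \ref{bootstraptheoremintro} together with the conformal covariance of the probabilistic four-point function \eqref{KPZformula}. The key observation is that the M\"obius transformation $\psi(w)=1-w$ satisfies $\psi'(w)=-1$ (so $|\psi'|=1$) and sends the tuple $(0,z,1,\infty)$ to $(1,1-z,0,\infty)$. Applying \eqref{KPZformula} therefore yields the equality
\[
 \langle V_{\alpha_1}(0) V_{\alpha_2}(z) V_{\alpha_3}(1) V_{\alpha_4}(\infty)\rangle_{\gamma,\mu}=\langle V_{\alpha_1}(1) V_{\alpha_2}(1-z) V_{\alpha_3}(0) V_{\alpha_4}(\infty)\rangle_{\gamma,\mu},
\]
and the right-hand side is, after reordering the vertex operators (which commute as functions of the Liouville field), exactly $\langle V_{\alpha_3}(0)V_{\alpha_2}(1-z)V_{\alpha_1}(1)V_{\alpha_4}(\infty)\rangle_{\gamma,\mu}$, as noted in the paragraph preceding the corollary.

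Next, I would apply Theorem \ref{bootstraptheoremintro} to each side of this identity. Under the assumptions $\alpha_1+\alpha_2>Q$, $\alpha_3+\alpha_4>Q$ and $\alpha_2+\alpha_3>Q$, $\alpha_1+\alpha_4>Q$ (both pairs of Seiberg-type inequalities being needed to invoke the theorem in the two orderings), the left-hand side equals its bootstrap expression given by \eqref{4pointidentity} with parameters $(\alpha_1,\alpha_2,\alpha_3,\alpha_4)$ and variable $z$, while the right-hand side equals the analogous bootstrap expression with parameters $(\alpha_3,\alpha_2,\alpha_1,\alpha_4)$ and variable $1-z$. By the definition of $\tilde{\mathcal{F}}_P$ given after \eqref{4pointidentity}, the conformal block corresponding to the second expression is precisely $\tilde{\mathcal{F}}_P(1-z)$. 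Equating the two bootstrap expressions yields \eqref{crossingsymmetry1}.

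The argument is essentially a formal consequence and contains no genuine obstacle beyond Theorem \ref{bootstraptheoremintro} itself; the only point requiring attention is to check that the range of parameters for which the corollary is claimed is contained in the intersection of the parameter ranges where Theorem \ref{bootstraptheoremintro} applies in both channels, and to verify that no spurious discrete terms appear under the swap $(\alpha_1,\alpha_3)\leftrightarrow(\alpha_3,\alpha_1)$ (which is automatic since the Seiberg conditions $\alpha_i<Q$ are symmetric in the indices and the constraint $\alpha_1+\alpha_2>Q$ becomes $\alpha_3+\alpha_2>Q$ after relabeling, matching the hypothesis needed for the $t$-channel bootstrap). Finally, extension outside the joint range $\alpha_1+\alpha_2>Q$, $\alpha_2+\alpha_3>Q$ would require analytic continuation and the inclusion of discrete contributions, but this is not part of the present corollary.
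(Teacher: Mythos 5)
Your proof is correct and follows essentially the same route as the paper: the conformal covariance \eqref{KPZformula} under the M\"obius map $\psi(w)=1-w$ gives the identity between the two orderings of the four-point function, and applying Theorem \ref{bootstraptheoremintro} to each side yields \eqref{crossingsymmetry1}. The paper states this tersely, while you spell out the bookkeeping of the parameter constraints (both pairs of Seiberg-type bounds) and the derivative factor $|\psi'|=1$, which are indeed the points the reader must verify.
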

This result seems to be very hard to prove directly, however 
let us also mention that Teschner has given strong arguments in \cite{Tesc1} in that direction. 

 \begin{remark}
We have stated the bootstrap conjecture as the statements \eqref{4pointidentity} and \eqref{crossingsymmetry1} since these are the crucial relations following from the OPE axiom \eqref{OPE}  used by physicists to study CFTs. However, as explained above  the OPE axiom \eqref{OPE} leads also to a recursive computation of the $n$-point correlation functions for all $n$. 
Likewise, the approach in this paper can be extended  to $n>4$ by a $n-3$-fold application of the Plancherel identity. Hence the spectral analysis of the LCFT Hamiltonian is the crucial result of this paper. In order to keep the length of this paper reasonable, we will discuss these generalisations  elsewhere, in particular for the case of LCFT on the complex torus where the n-point formuli are mathematically quite appealing  \cite{GKRV}. 
\end{remark}

\subsection{Conformal blocks and relations with the AGT conjecture.} 
 Let us mention that it is not at all obvious that the bootstrap definition of the four point correlation function, i.e. the right hand side of  \eqref{bootstrapidentityintro}, exists for real $\alpha_i$ satisfying the condition $ \alpha_i<Q$ for all $i  \in  \llbracket 1,4\rrbracket$ along with $\alpha_1+\alpha_2 >Q$ and $\alpha_3+\alpha_4>Q$. Indeed, first the conformal blocks are defined via a series expansion 
 \begin{equation}\label{blocksintro}
 \mathcal{F}_P(z)= \sum_{n=0}^\infty \beta_n z^n
 \end{equation}
 where the coefficients $\beta_n$, which have a strong representation theoretic content, are non explicit 
 : see \eqref{expressionbeta} for the exact definition of $\beta_n$. Hence, it is not obvious that the series \eqref{blocksintro} converges for $|z|<1$. Second, it is not clear that the integral in $P\in \R^+$ of expression \eqref{4pointidentity} is convergent. As a matter of fact, in the course of the proof of Theorem \ref{bootstraptheoremintro}, we establish both that the radius of convergence\footnote{We acknowledge here an argument that was given to us by Slava Rychkov in private communication.} of \eqref{blocksintro} is $1$ for almost all $P$ and that the integral \eqref{4pointidentity} makes sense. 
 To the best of our knowledge, the proof of the convergence of the conformal blocks is new and we expect that the result holds for all $P$, although we do not need such a strong statement for our purpose. Let us mention here the recent work 
 \cite{GRSS} which establishes a probabilistic formula involving moments of a GMC type variable for the conformal blocks  of LCFT on the complex torus 
  thereby proving the existence of the torus blocks for all values of the relevant parameters. 
 Convergence of conformal blocks defining series is also topical in physics, see \cite{rych1,rych2,rych3}.

The AGT correspondence \cite{AGT} between $4d$ supersymmetric Yang-Mills theory and the bootstrap construction of LCFT 
conjectures  that $\mathcal{F}_P(z)$ coincides with special cases of Nekrasov's partition function \cite{Ne04}. 
In particular this leads to  an explicit formula for $\beta_n$ in \eqref{blocksintro}. However, even admitting this conjecture, it remains difficult to show that the radius of convergence in \eqref{blocksintro}  is $1$: 
see for instance 
\cite{FLM18}. 
The AGT conjecture has  been proved  as an identity between formal power series in the case of the torus in 
 \cite{Ne} following the works 
 \cite{MO, SV} 
 but this does not address the issue of convergence. 
 See also \cite{FL, AFLT} 
 for arguments in the physics literature which support the AGT conjecture on the torus or the Riemann sphere.  \\

\textbf{Acknowledgements.} C. Guillarmou acknowledges that this project has received funding from the European Research Council (ERC) under the European Union’s Horizon 2020 research and innovation programme, grant agreement No 725967. A. Kupiainen is supported by the Academy of Finland and ERC Advanced Grant 741487. R. Rhodes is partially supported by the Institut Universitaire de France (IUF). 
The authors wish to thank Zhen-Qing Chen, Naotaka Kajino for discussions on Dirichlet forms, Ctirad  Klimcik and Yi-Zhi Huang for explaining the links with Vertex Operator Algebras, Slava Rychkov for fruitful discussions on the convergence of conformal blocks, Alex Strohmaier, Tanya Christiansen and Jan Derezinski for discussions on the scattering part and Baptiste Cercl\'e for comments on  earlier versions of this manuscript.

\section{Outline of the proof}\label{sec:outline}
 In this section  we give  an informal summary of the proof of the bootstrap formula with pointers to precise definitions and statements.

\subsection{Reflection positivity}\label{outline:OS}

The LCFT expectation \eqref{FL1} is an expectation in a positive measure but it has another positivity property called reflection positivity (or Osterwalder-Schrader positivity) that allows us to express the correlation functions of LCFT in terms of a scalar product in the {\it physical Hilbert space} of LCFT. This task is carried out in Section \ref{sec:ospos} and outlined now. The construction of the Hilbert space is based on an involution acting on observables $F$. For this,  we consider   the reflection at the unit circle  $ \theta: \hat\C\to\hat\C$ 
\begin{align}
 \theta(z)=1/\bar{z}
\label{thetadef}
\end{align}  
 which maps the unit disk $\D$ to its complement $\D^c$. We promote it to an operator $\Theta$  acting on observables $F \mapsto \Theta F$ by
 \begin{align}\label{Thetadef}
 (\Theta F) (\phi):=F(\phi\circ \theta-2Q\ln|\cdot|).
\end{align}
This allows us to define a sesquilinear form acting on a set  $\mathcal{F}_\D$ of observables $F$ that depend only on the restriction $\phi|_\D$ of $\phi$ to the unit disk (i.e. $F(\phi)=F(\phi|_\D)$) by
\begin{equation}\label{osform:intro}
(F,G)_\D:=\langle \Theta F(\phi) \overline{G(\phi)}\rangle_{\gamma,\mu},\quad  F,G\in \mathcal{F}_\D.
\end{equation}
Note that $ \Theta F$ is an observable depending  on the restriction $\phi|_{\D^c}$ of $\phi$ to the complementary disc  $\mathbb{D}^c$ so that the scalar product of two observables on the disk $\mathbb{D}$ is given by the LCFT expectation of their product when one of them is reflected to $\mathbb{D}^c$. Reflection positivity is the statement that the sesquilinear form \eqref{osform:intro} is nonnegative:
\begin{equation*}
(F,F)_\D\geq 0
\end{equation*}
see Proposition \ref{OS1}.
 The canonical Hilbert space $\caH_\D$ of LCFT is  defined as the completion of $ \mathcal{F}_\D$, quotiented out by the null set   $\caN_0=\{F\in \caF_\D \,|\, (F,F)_{\D}=0\}$, with respect to the sesquilinear form \eqref{osform:intro}. This space can be realized in more concrete terms as follows.
 
\subsection{Hilbert space of LCFT}\label{outline:hilb}
 The space $\caH_\D$ can be realized as an $L^2$ space on  a set of fields on the equatorial circle $\T=\partial\D$ using the domain Markov property of the GFF. To do this let $\varphi=X|_\T$ be the restriction of the GFF to the unit circle. $\varphi$ can be realized (see Section \ref{sub:gff})  as a (real valued) random  Fourier series
\begin{equation}\label{GFFcircle0}
\varphi(\theta)=\sum_{n\not=0}\varphi_ne^{in\theta} 
\end{equation}
with $\varphi_{n}=\frac{1}{2\sqrt{n}}(x_n+iy_n)$ for $n>0$ where $x_n,y_n$ are i.i.d. unit Gaussians.
$\varphi$ can be understood as   a random element in a Sobolev space  $W^{s}(\T)$ with $s<0$  (see \eqref{outline:ws}) and 
as the coordinate function in the probability space $\Omega_\T=(\R^2)^{\N^\ast}$ equipped with a cylinder set sigma algebra and a Gaussian probability measure $\P$ (see \eqref{Pdefin}). The GFF $X$ \eqref{chatcov} can now be decomposed (Section \ref{sub:gff}) as an independent sum
  \begin{equation}\label{decomposeGFF}
 X\stackrel{{\rm law}}= P\varphi+ X_\D+X_{\D^c}
 \end{equation} 
  where $P\varphi$ is the harmonic extension of $\varphi$ to $\hat\C$  and $X_\D,X_{\D^c}$ are two independent GFFs on $\D$  and $\D^c$  with Dirichlet boundary conditions. In Proposition \ref{OS1} we show that there is a unitary map $U:\caH_\D\to  L^2(\R\times \Omega_\T)$ given by 
   \begin{align}\label{udeff0}
 (UF)(c,\varphi)=
 e^{-Qc}\E_\varphi[ F(c+X)e^{-\mu e^{\gamma c}M_\gamma(\D)}], 
\end{align}
where $\E_\varphi$ is expectation over $X_\D$ in the decomposition $X|_\D=X_\D+P\varphi$. Hence   $\caH_\D$ can by identified with $L^2(\R\times \Omega_\T)$. The operator $U$ then allows us to write the 4 point function in terms of the scalar product $\langle\cdot|\cdot\rangle_2$  in $L^2(\R\times \Omega_\T)$, namely, we have for $|z|<1$ 
\begin{equation}\label{4point_via_U}
\langle  V_{\alpha_1}(0)  V_{\alpha_2}(z) V_{\alpha_3}(1) V_{\alpha_4}(\infty)  \rangle_{\gamma,\mu}=\big\langle U \big(V_{\alpha_1}(0)V_{\alpha_2}(z)\big) \,\big|\, U \big(V_{\alpha_4}(0)V_{\alpha_3}(1)\big)  \big\rangle_{2}.
\end{equation}
The bootstrap formula is then obtained by 
expanding this scalar product along the eigenstates of a self adjoint operator $\mathbf{H}$ on $L^2(\R\times \Omega_\T)$, the LCFT Hamiltonian to which we now turn.


\subsection{Hamiltonian of Liouville theory}

 For $q\in \C$ with $|q|\leq 1$,   the dilation map $z\in \C\to s_q(z)=qz$ maps the unit disc to itself and it gives rise to a map $S_q:\mathcal{F}_\D\to \mathcal{F}_\D$ by
 \begin{align}\label{dilationS}
 S_q F(\phi):=F(\phi\circ s_q+Q\ln |q|)
\end{align}
 for $F\in \mathcal{F}_\D$. In Proposition \ref{dilationsemi}  we show that these operators satisfy $S_qS_{q'}=S_{qq'}$ and $q\mapsto S_q$ descends to  a strongly continuous semigroup on $\caH_\D$. Taking $q=e^{-t}$ for $t\geq 0$, we get a contraction semigroup  on $L^2(\R\times \Omega_\T)$ via the unitary map $U$ in \eqref{udeff0}, given by the relation 
\begin{align}\label{lsemig}
e^{-t{\bf H}}=US_{e^{-t}}U^{-1}.
\end{align} 
  The generator of the semi-group is a positive self-adjoint operator $\bf H$ with domain $\mathcal{D}(\mathbf{H})\subset L^2(\R\times \Omega_\T)$,  called  the {\it Hamiltonian of Liouville theory}. 
  
  To get a more concrete representation for $\bf H$ we use  a probabilistic Feynman-Kac representation for $e^{-t{\bf H}}$.  It is based on the well known fact that the GFF $X(e^{-t+i\theta})$ for $t\geq 0$ can be realized as  a continuous Markov process  $$t\mapsto X(e^{-t+i\cdot})=(B_t,\varphi_t(\cdot))\in W^{s}(\T)
  $$ 
  with $s<0$. Here we decomposed $W^{s}(\T)=\R\oplus W^{s}_0(\T)$ where $ W^{s}_0(\T)$ has the $n\neq 0$ Fourier components. In this decomposition $B_t$ is a standard Brownian motion and  the process $\varphi_t$   gives rise to an Ornstein-Uhlenbeck semigroup  on $W^{s}_0(\T)$ (i.e.the  harmonic components evolve as independent OU processes), whose  generator is  a positive self-adjoint operator $ \mathbf{P}$. The operator $ \mathbf{P}$ is essentially an infinite sum of harmonic oscillators (see \eqref{hdefi}   for the exact definition). This leads to the expression for the Hamiltonian  when $\mu=0$  
  \begin{equation}\label{H0def}
\mathbf{H}^0:=\mathbf{H}|_{\mu=0}=  -\frac{1}{2}\pl_c^2 + \frac{1}{2} Q^2+ \mathbf{P} 
\end{equation}
defined on an appropriate domain  where the first two terms come from the Brownian motion, see Section \ref{sec:GFFCFT} for further  details. 

Using the formula \eqref{udeff0} we then deduce 
a Feynman-Kac formula for the LCFT semigroup in  Proposition \ref{prop:FK}) which then
 formally gives $\bf H$ as a "Schr\"odinger operator"
 \begin{equation}\label{Hdef}
\mathbf{H}=  
\mathbf{H}^0+\mu e^{\gamma c}V(\varphi)
\end{equation}
 where the potential is formally given by
 \begin{equation}\label{VVV}
V(\varphi)=\int_0^{2\pi} e^{ \gamma \varphi(\theta)- \frac{\gamma^2}{2}  \E[ \varphi(\theta)^2  ]
} \dd\theta .
\end{equation}
In Section \ref{sub:bilinear}, we give the rigorous definition of \eqref{Hdef} which actually is quite subtle and nonstandard. The operator \eqref{Hdef} is defined as the Friedrichs extension of an associated quadratic form.  For $\gamma\in [0,\sqrt{2})$  $V$ is a well defined random variable defined as the total mass of a GMC measure on $\T$ associated to $\varphi$.  
 For  $\gamma \in [\sqrt{2},2)$ however,  this GMC measure vanishes identically and  $V$ can no longer be considered as a multiplication operator: it has to be understood as a measure on some subspace of $L^2(\Omega_\T)$ but it still gives rise to a positive operator. 

We mention here that the operator \eqref{Hdef} in the absence of the $c$-variable, i.e. the operator $ \mathbf{P} +\mu V$, was first studied in \cite{hoegh} and was shown to be essentially self-adjoint on an appropriate domain provided $\gamma \in (0,1)$, condition under which $V$ is a random variable in $L^2(\Omega_\T)$.

\subsection{Spectral resolution of the Hamiltonian}\label{outline:subspectral}

One of the main mathematical inputs  in our proof comes from stationary {\it scattering theory}, see Section \ref{sec:scattering}. It is based on the observation that the potential $e^{\gamma c}V$ vanishes as $c\to -\infty$ so that $\mathbf{H}$-eigenstates should be reconstructed from their asymptotics at $c\to-\infty$, region over which they should behave like $\mathbf{H}^0$-eigenstates.

The spectral analysis of $\mathbf{H}^0$ is simple. The spectrum $\{0<\la_1<\dots\la_k<\dots\}$ of the operator $ \mathbf{P}$ is given by the natural numbers $\la_k=k$ and each eigenvalue has finite multiplicity, see Section \ref{sec:fock}.  The corresponding eigenspace $\ker (P-\la_k)$ is spanned by a family $(h_{jk})_{j=0,\dots,J(k)}$ of finite products of Hermite polynomials in the Fourier components $(\varphi_n)_{n\in\Z^\ast}$ in \eqref{GFFcircle0}, providing 
an orthonormal basis $\{h_{jk}\}_{k\in\N,j\leq J(k)}$ of $L^2(\Omega_\T)$. The spectrum of $\mathbf{H}^0$ is then readily seen to be absolutely continuous and given by the half-line $[\tfrac{Q^2}{2},+\infty)$
with a complete set of  generalized eigenstates 
\begin{align}\label{h0ev}
\Psi^0_{Q+iP,jk}(c,\varphi)=e^{iPc}h_{jk}(\varphi),  
\end{align}
with $P\in\R$, $k\in\N$, $j\leq J(k)$ and eigenvalue $\frac{1}{2}(Q^2+P^2)+\la_k=2\Delta_{Q+iP}+\la_k$ where $\Delta_{Q+iP}$ is the conformal weight \eqref{deltaalphadef}. 

In physics, the spectrum of $\mathbf{H}$ was first elaborated by Curtright and Thorn \cite{ct} (see also Teschner \cite{Tesc1} for a nice discussion of the scattering picture). 
To explain the intuition, let us consider a toy model where only the $c$-variable enters, namely the operator  $\frac{1}{2}(-\pl_c^2+Q^2)  +\mu e^{\gamma c}$ on $L^2(\R)$. 
It is a Schr\"odinger operator with potential tending to $0$ as $c\to -\infty$ and to $+\infty$ as $c\to +\infty$. Thus for   $c\to -\infty$  the eigenfunctions should tend to eigenfunctions of the $\mu=0$ problem i.e. to a  linear combination of  $e^{\pm iPc}$. Indeed, the operator  
has a complete set of  generalized eigenfunctions $f_P$, $P\in\R^+$, with asymptotics
\begin{equation*}
f_P(c) \sim  
e^{ iPc}+R(P)e^{ -iPc} \ \ \textrm{ as } c\to -\infty
\end{equation*}
and $f_P(c)\to 0$ as $c\to +\infty$. These eigenfunctions  describe scattering of waves from a wall (the exponential potential $e^{\gamma c}$ acts as a wall when $c\to+\infty$) and  $R(P)$ is an explicit coefficient called the reflection coefficient\footnote{This coefficient is a simplified version of the (quantum) reflection coefficient which appears in the proof of the DOZZ formula \cite{KRV,dozz}.}.  

For the operator $\bf H$ we expect the same intuition to hold: an eigenfunction $\Psi(c,\varphi)$ of $\bf H$ should, as $c\to -\infty$, tend to an eigenfunction of ${\bf H}^0$ of the same eigenvalue. 
 Indeed   we prove (see Theorem \ref{spectralmeasure}) which uses a different labelling)  that to each ${\bf H}^0$ eigenvector  $\Psi^0_{Q+iP,jk}$ in \eqref{h0ev} there is a corresponding ${\bf H}$ eigenvector $\Psi_{Q+iP,jk}$ with the same eigenvalue 
 \begin{align}\label{heigenv}
 {\bf H} \Psi_{Q+iP,jk}=(2\Delta_{Q+iP}+\lambda_k)\Psi_{Q+iP,jk},
\end{align}
and these form a complete set for $P\in\R_+$, $k\in\N$, $j\leq J(k)$. Applying the spectral decomposition of ${\bf H}$ to \eqref{4point_via_U} using this basis leads to
\begin{align}\label{4point_via_U1}
& \langle  V_{\alpha_1}(0)  V_{\alpha_2}(z) V_{\alpha_3}(1) V_{\alpha_4}(\infty)  \rangle_{\gamma,\mu}=\\
&\frac{1}{2\pi}
\sum_{k\in \N}\sum_{j=0}^{J(k)}\int_0^\infty \cjg U \big(V_{\alpha_1}(0)V_{\alpha_2}(z)\big)\, |\, \Psi_{Q+iP,jk}\cjd_2 \cjg \Psi_{Q+iP,jk}\, |\, U \big(V_{\alpha_4}(0)V_{\alpha_3}(1)\big)\cjd_2 \dd P.\nonumber
\end{align}
Proving this spectral resolution for $\bf H$ is the technical core of the paper and  involves a considerable amount of work (the whole Section \ref{sec:scattering}). 
The main difficulty comes from the fact that the potential $V$ appearing in ${\bf H}$ acts on the $L^2$-space of an infinite dimensional space $\R\times\Omega_\T$  and moreover $V$ is not even a function for $\gamma\in[\sqrt{2},2)$ as discussed in the previous subsection. This weak regularity and unboundedness of the potential  make the problem quite non-standard.

\subsection{Analytic continuation} 
The spectral resolution \eqref{4point_via_U1} is not yet the bootstrap formula. Indeed, it holds under quite general assumptions on $V$.
To get the bootstrap formula from  \eqref{4point_via_U1}  we need to  connect the scalar products in \eqref{4point_via_U1} to the DOZZ 3-point functions $C_{\gamma,\mu}^{ \mathrm{DOZZ}}( \alpha_1,\alpha_2, Q-iP  )$   and $C_{\gamma,\mu}^{ \mathrm{DOZZ}}( \alpha_3,\alpha_4, Q+iP )$ respectively.  The probabilistic 3-point function $C_{\gamma,\mu}( \alpha_1,\alpha_2, \alpha  )$ is defined only for $\alpha$ real and satisfying the Seiberg bounds $\alpha_1<Q$, $\alpha_2<Q$, $\alpha<Q$, $\alpha_1+\alpha_2+\alpha>2Q$ and $C_{\gamma,\mu}^{ \mathrm{DOZZ}}( \alpha_1,\alpha_2, Q-iP  )$ is an analytic continuation of this probabilistic expression. Our strategy then is to analytically continue the eigenfunction $\Psi_{Q+iP,jk}$ to real values of $Q+iP$ so that we can use the probabilistic construction of the LCFT and in particular its conformal invariance to derive identities that allow to determine the  above scalar products.

In Section, \ref{sec:scattering} we show that the eigenfunctions can be analytically continued to $ \Psi_{\alpha,jk}$ which are analytic in $\alpha$ in a connected region which contains both the spectrum line $\alpha\in Q+i\R_+$ and a half-line $]-\infty,A_k]\subset\R$ for some $A_k<Q$. This analytic continuation  still satisfies the eigenvalue problem \eqref{heigenv} (with $Q+iP$ replaced by $\alpha$) 
but  it has exponential growth in the $c$ variable as $c\to -\infty$  and  it does not contribute to the spectral resolution (compare with eigenfunctions $e^{ac}$ of the operator $-\partial_c^2$ which are associated to the $L^2$ spectrum only for $a\in i\R$).

 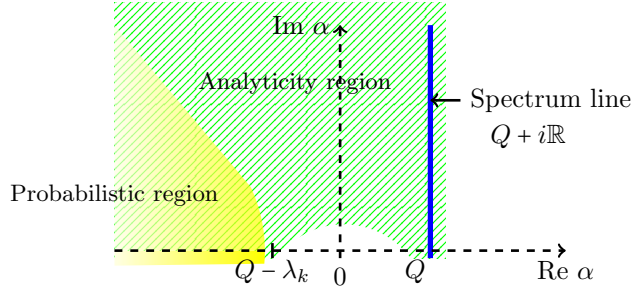
\begin{figure}[h]
 \begin{tikzpicture}[xscale=1,yscale=1]  
\newcommand{\A}{(0,-0.1) rectangle (4.4,3.3)};
\newcommand{\Ci}{(4,-0.2) arc (30:150:1.1) -- (3,-0.2)};
\fill[pattern=north east lines, pattern color=green] \A;
\fill[white] \Ci;
\node[below] at (4,0) {$Q$};
\draw[line width=1pt] (2.1,-0.1) -- (2.1,0.1) ;
\draw [line width=2pt,color=blue](4.2,-0.1) --   (4.2,3);
\draw[line width=1pt,<-] (4.2,2) -- (4.6,2)node[right]{Spectrum line } ;
\node[below] at (5.5,1.8) {$Q+i\R$};
\shade[left color=yellow!10,right color=yellow,opacity=0.7] (2,0) arc (0:25:2) -- (0,3) -- (0,0) -- (2,0);
\shade[left color=yellow!10,right color=yellow] (2,0) arc (0:-5:2) -- (0,-0.2) -- (0,0) -- (2,0);
\draw[style=dashed,line width=1pt,->] (3,-0.1) node[below]{$0$} -- (3,3) node[left]{{\rm Im} $\alpha$};
\draw[style=dashed,line width=1pt,->] (0,0) -- (6,0)node[below]{{\rm Re} $\alpha$};
\node[below] at (0,1) {{\small Probabilistic region}};
\node[below] at (2.4,2.5) {{\small Analyticity region}};
\node[below] at (2.1,0) {$Q-\lambda_{k}$};
\end{tikzpicture}
\caption{Analytic continuation of eigenstates and probabilistic region.}
\label{art:intro}
\end{figure}
 
 The crucial point we show is that the analytically continued $\mathbf{H}$-eigenfunctions $  \Psi_{\alpha,jk}$ can be obtained by {\it intertwining} the $\mathbf{H}^0$-eigenfunctions   for $\alpha$ in some complex neighborhood of the half-line $]-\infty,A_k]$ that we call {\it the probabilistic region} (see Figure \ref{art:intro}):
 \begin{equation}\label{intertwining:intro}
  \Psi_{\alpha,jk}=\lim_{t\to\infty} e^{(2\Delta_\alpha+\lambda_{k})t}e^{-t\mathbf{H}}\Psi^{0}_{\alpha,jk} \quad \quad\quad \text{\textbf{(Intertwining)}}
  \end{equation}
 where $\Psi^{0}_{\alpha,jk}=e^{(\alpha-Q)c}h_{jk}$ are the analytically continued eigenfunctions of $\mathbf{H}^0$ (see Proposition \ref{descconvergence} which has a different labelling). Furthermore, combining \eqref{intertwining:intro} with the 
  Feynman-Kac formula one finds  (Section \ref{sub:HW}) for $\la_k=0$ (then $J(0)=0$, $h_{00}(\varphi)=1$ and we denote $\Psi_{\alpha,0}$ for $\Psi_{\alpha,00}$)  that 
  \begin{align}
    \Psi_{\alpha,0}=U(V_\alpha(0))\label{urelation}
\end{align}
   where $U$ is the unitary map \eqref{udeff0}. Hence the ``non-spectral'' eigenfunctions $ \Psi_{\alpha,0}$ for $\alpha\in\R$ have a probabilistic interpretation in LCFT whereas the ``spectral'' eigenfunctions $\Psi_{Q+ip,0}$ do not have one. In the physical literature the correspondence between local fields and states in the Hilbert space is called the state-operator correspondence and in LCFT this correspondence is broken. The spectral states  $\Psi_{Q+iP,0}$ do not correspond to local fields and they are called {\it macroscopic states} whereas the non-normalizable states   $\Psi_{\alpha,0}$ correspond to the local field $V_\alpha$ and are called  {\it microscopic states}, see \cite{seiberg} for a lucid discussion.

 The relation \eqref{urelation}  leads to
\begin{equation}\label{4point_via_U2}
 \cjg  U(\prod_{i=1}^nV_{\alpha_i}(z_i)) \, |\,    \Psi_{\alpha,0}\cjd_2 
=
\langle  \big ( \prod_{i=1}^nV_{\alpha_i}(z_i) \big ) V_\alpha(\infty)  \rangle_{\gamma,\mu}
\end{equation}  
for $\{\alpha_i\},\alpha$ satisfying the Seiberg bounds (the scalar product in \eqref{4point_via_U2} still makes sense since the vector on the left has sufficient decay in the $c$-variable to counter the exponential  increase of $\Psi_{\alpha,0}$). Note in particular that \eqref{4point_via_U2} is defined only if $n$ is large enough since the Seiberg bound requires $\sum_i\alpha_i>2Q-\alpha>2Q-A_0$. To understand the case $k>0$ we need to discuss the conformal symmetry of LCFT.
 
\subsection{Conformal Ward Identities}   

The  symmetries of conformal field theory are encoded in an infinite dimensional Lie algebra, the Virasoro algebra. In the case of GFF this means that 
the  Hilbert space $L^2(\R\times\Omega_\T)$ carries a representation of two commuting Virasoro algebras with generators $\{{\bf L}^0_n\}_{n\in\Z}$ and  $\{\tilde {\bf L}^0_n\}_{n\in\Z}$ (see Section \ref{repth}). In particular  the
 GFF Hamiltonian is given by ${\bf H}^0={\bf L}^0_0+\tilde{\bf L}^0_0$ and the    ${\bf H}^0$ eigenstates $\Psi_{Q+iP,jk}^0$ for fixed $P$ can be organized to a highest weight representation of these algebras. In concrete terms the highest weight state $\Psi_{Q+iP,0}^0$ satisfies  
 \begin{equation}
\mathbf{L}_0^0 \Psi^{0}_{Q+iP,0} =\widetilde{\mathbf{L}}_0^0\Psi^{0}_{Q+iP,0}=\Delta_{Q+iP}\Psi^{0}_{Q+iP,0} ,\quad \quad \mathbf{L}_n^0\Psi^{0}_{Q+iP,0}=\widetilde{\mathbf{L}}_n^0\Psi^{0}_{Q+iP,0}=0,\ \ \ n>0,
\end{equation}
and  given two non-increasing sequences of  positive  integers $\nu= (\nu_1,\dots,\nu_k) $ and $\tilde{\nu}=(\tilde\nu_1,\dots,\tilde\nu_l) $, $k,l\in \N$ and setting 
$\mathbf{L}_{-\nu}^0=\mathbf{L}_{-\nu_k}^0 \cdots \, \mathbf{L}_{-\nu_1}^0 $ and $\tilde{\mathbf{L}}_{-\tilde \nu}^0=\tilde{\mathbf{L}}_{-\tilde\nu_j}^0 \cdots\, \tilde{\mathbf{L}}_{-\tilde\nu_1}^0$
the states 
\begin{align}\label{psibasis:intro}
\Psi^0_{Q+iP,\nu, \tilde\nu}:=\mathbf{L}_{-\nu}^0\tilde{\mathbf{L}}_{-\tilde\nu}^0  \: \Psi_{Q+iP,0},
\end{align}
are eigenstates of ${\bf H}^0$ of eigenvalue $E=2\Delta_{Q+iP}+\sum_i\nu_i+\sum_j\tilde\nu_j$ and, for fixed $E$, span that eigenspace. Thus at each eigenspace there is a nonsingular matrix $M(Q+iP)$ relating the vectors $\Psi^0_{Q+iP,\nu, \tilde\nu}$ and $\Psi^0_{Q+iP,jk}$ and furthermore we show (Proposition \ref{prop:mainvir0}) that this matrix is analytic in the variable $\alpha=Q+iP$. Setting
\begin{align*}
 \Psi_{\alpha, \nu,\tilde\nu}:=\sum_{k,j} M(\alpha)_{\nu,\tilde\nu;jk} \Psi_{\alpha,jk}
\end{align*}
the vectors $ \Psi_{Q+iP, \nu,\tilde\nu}$ are a complete set of generalized eigenvectors of $\bf H$ and they can be used in the identity \eqref{4point_via_U1} (with appropriate Gram matrices), see Section \ref{sectionproofbootstrap}.  Furthermore 
 $ \Psi_{\alpha, \nu,\tilde\nu}$  provides an analytic continuation of  $ \Psi_{Q+iP, \nu,\tilde\nu}$    
   to the half line $\alpha<Q-A$ for some $A>0$ and it is intertwined there with the corresponding vector $ \Psi^0_{\alpha, \nu,\tilde\nu}$ by the relation \eqref{intertwining:intro}.

The bootstrap formula \eqref{4pointidentity} is a consequence of the following fundamental identity
(see Proposition \ref{Ward})
 \begin{align}\label{Ward3:intro}
\langle \Psi_{Q+iP, \nu,\tilde\nu}\, |\, U(V_{\alpha_1}(0)V_{\alpha_2}(z)) \rangle_2 &=d(\alpha_1,\alpha_2,\nu,\tilde\nu)C^{ \mathrm{DOZZ}}_{\gamma,\mu}( \alpha_1,\alpha_2, Q+iP  ) \bar{z}^{|\nu|} z^{|\tilde{\nu}|}  |z|^{2 (\Delta_{Q+iP}-\Delta_{\alpha_1}-\Delta_{\alpha_2})}   
 \end{align}
where the function $d$ is an explicit function of the parameters that will contribute
to the conformal blocks. To prove \eqref{Ward3:intro} we consider the scalar product $\langle \Psi_{Q+iP, \nu,\tilde\nu}\, |\, U(\prod_{i=1}^nV_{\alpha_i}(z_i))\rangle_2 $ where $z_i\in\D$ and  $\sum_i\alpha_i>Q+A$ and analytically continue it from $\alpha=Q+iP$ ($P \in \R_+$) to $\alpha\in (2Q-\sum_i\alpha_i,Q-A)$. For such $\alpha$ we prove the Conformal Ward Identity (see Proposition \ref{proofward})
 \begin{align}\label{wardintroduction}
 \cjg  U(\prod_{i=1}^nV_{\alpha_i}(z_i)) \, |\,    \Psi_{\alpha,\nu,\tilde\nu}\cjd_2 
&=\caD(\boldsymbol{\alpha},\alpha,\nu,\tilde\nu)   \langle \prod_{i=1}^n V_{\alpha_i}  (z_i)  V_\alpha(\infty)\rangle_{\gamma,\mu}  
 \end{align}
where $\boldsymbol{\alpha}=(\alpha_1, \dots, \alpha_n)$ and $\caD(\boldsymbol{\alpha},\alpha,\nu,\tilde\nu)$ is an explicit partial differential operator in the variables $z_i$.  Using \eqref{4point_via_U2}, continuing back to $\alpha=Q+iP$ and taking $\alpha_i\to 0$ for $i>2$ we then deduce (the complex conjugate of) \eqref{Ward3:intro}.

The proof of \eqref{wardintroduction} occupies the whole  Section \ref{sec:proba}. It is based on  a representation of the states $\Psi^0_{\alpha,\nu,\tilde\nu}$ in terms of $V_\alpha(0)$ and the Stress-Energy-Tensor (SET) in Proposition \ref{def_contour}. Let us briefly explain this for $\nu=n, \tilde\nu=\emptyset$ ie for 
the state ${\bf L}^0_{-n}\Psi^0_{\alpha,0}$. We have $\Psi^0_{\alpha,0}=U_0V_\alpha(0)$ where $U_0$ is the map \ref{udeff0} for $\mu=0$. The SET is
 given in the GFF theory by the field
 \begin{equation}\label{SET:intro}
T(z):=Q\partial_{z}^2X(z)-(\partial_zX(z))^2+\E[(\partial_zX(z))^2],
  \end{equation}
defined through regularization and limit. Then we prove
\begin{align*}
{\bf L}^0_{-n}\Psi^0_{\alpha,0}=\frac{1}{2\pi i}\oint z^{1-n}U_0(T(z)V_\alpha(0))dz
\end{align*}
where the integration contour circles the origin in $\D$. Plugging this identity to 
  the intertwining  relation \eqref{intertwining:intro} and using the Feynman-Kac formula one ends up with a contour integral of the SET insertion in LCFT correlation function. This is analyzed by Gaussian integration by parts and results in the Ward identity.

\subsection{Organization of the paper}  
 
The paper is organized as follows. In Section \ref{sec:ospos}, we will introduce the relevant material on the Gaussian Free Field and explain the construction of the Hilbert space as well as the quantization of dilations; the Liouville Hamiltonian is then defined as the generator of dilations. This section uses the concept of reflection positivity. In Section \ref{sec:GFFCFT}, we study the dynamics induced by dilations in the GFF theory (i.e. $\mu=0$) and recall the basics of representation theory related to the GFF. In Section \ref{sec:LCFTFQ}, we study in more details  the Liouville Hamiltonian: we establish the Feynman-Kac formula for the associated semigroup   and identify its quadratic form, which allows us to use scattering theory to diagonalize the Liouville Hamiltonian in Section \ref{sec:scattering}. In Sections \ref{sec:proba} and \ref{subproofward}, we will prove the Conformal Ward identities for the correlations of LCFT: in a way, this can be seen as an identification of the eigenstates of the Liouville Hamiltonian.  In Section  \ref{sectionproofbootstrap}, we will prove the main result of the paper Theorem \ref{bootstraptheoremintro} using the material proved in the other sections. Finally, in the appendix, we will recall the DOZZ formula and gather auxiliary results (analyticity  of vertex operators).

\subsection{Notations and conventions}\label{notation} 
We gather here the frequently used notations:

\vskip 2mm
\noindent $\langle . \rangle_{\gamma,\mu}$ denotes the LCFT expectation \eqref{FL1}.
\vskip 1mm
\noindent $X,\phi,\varphi$ denote respectively the GFF \eqref{chatcov}, the Liouville field \eqref{liouvillefield} and the GFF on $\T$ \eqref{GFFcircle0}.
\vskip 1mm
\noindent $(\Omega_\T,\P_\T)$ denotes the probability space \eqref{omegat} with measure \eqref{Pdefin}.
\vskip 1mm
\noindent $L^p(\Omega_\T)$ for $p\geq 1$: complex valued functions  $\psi(\varphi)$ with norm  $\|\cdot\|_{L^p(\Omega_\T)}$.

\vskip 1mm \noindent  $L^p(\R \times \Omega_\T)$ for $p\geq 1$: complex valued functions  $\psi(c,\varphi)$ with norm  $\|\cdot\|_{p}$.
\vskip 1mm \noindent $\langle \cdot | \cdot \rangle_{L^2(\Omega_\T)}$: scalar product in $L^2(\Omega_\T)$.
\vskip 1mm \noindent $\langle \cdot | \cdot\rangle_2$:
 scalar product in $L^2(\R \times \Omega_\T)$.
 \vskip 1mm \noindent $e^{\alpha \rho(c)}L^p(\R \times \Omega_\T)$. 
 Weighted $L^p$-space  equipped with the norm $ \|f\|_{e^{\alpha \rho(c)}L^p}:=  \|e^{-\alpha \rho(c)} f\|_p$.
 \vskip 1mm \noindent $C^\infty(D)$ denotes the set of smooth functions on the domain $D$.  
 \vskip 1mm \noindent $C^\infty_c(D)$ denotes the set of smooth functions  with compact support in $D$.
\vskip 1mm \noindent
$(.,.)_\D$ denotes the scalar product \eqref{osform}  associated to reflexion positivity
\vskip 1mm \noindent  $\caH_\D$: Hilbert space  associated to  $(.,.)_\D$.
\vskip 1mm \noindent 
$\langle f,g \rangle _\T:=\int_0^{2 \pi }f(\theta)g(\theta) d\theta $ denotes the scalar product on $L^2(\T)$.\vskip 1mm \noindent $\langle f,g \rangle _\D:=\int_\D f(x)g(x) dx $ denotes the scalar product in   $L^2(\D)$.

\vskip 1mm \noindent All sesquilinear forms are  linear in their first argument, antilinear in their second one.

\vskip 1mm \noindent For integral operators  on some measure space $(M,\mu)$ we use the notation $(Gf)(x)=\int G(x,y)f(y)\mu(dy)$.

\vskip 1mm \noindent GFF quantities as opposed to corresponding LCFT ones will carry a  subscript or superscript $0$: for example the Hamiltonians are  ${\bf H}^0$ (GFF) and  ${\bf H}$ (LCFT).

\section{Reflection positivity} \label{sec:ospos}
In this section we prove the reflection positivity of the LCFT and  explain the isometry $U$  \eqref{udeff0} mapping the LCFT observables to states in the Hilbert space $L^2(\R \times \Omega_\T)$ as well as the semigroup  in \eqref{lsemig}. We start by a discussion of the various GFF's.

\subsection{Gaussian Free Fields}\label{sub:gff}
 We will now define the fields entering the decomposition \eqref{decomposeGFF}.
 
 \subsubsection{GFF on the unit circle $\T$} 
 Given two independent sequences of i.i.d. standard Gaussians $(x_n)_{n\geq 1}$ and $(y_n)_{n\geq 1}$, the GFF on the unit circle is the random  Fourier series
\begin{equation}\label{GFFcircle}
\varphi(\theta)=\sum_{n\not=0}\varphi_ne^{in\theta} 
\end{equation}
where  for $n>0$ 
\begin{align}\label{varphin}
\varphi_n:=\frac{1}{2\sqrt{n}}(x_n+iy_n) ,\ \ \ \varphi_{-n}:=\bbar\varphi_{n}.
\end{align} 
Let $W^s(\T)\subset \C^\Z$ be the set of sequences s.t.
\begin{equation}\label{outline:ws}
\|\varphi\|_{W^s(\T)}^2:=\sum_{n\in\Z}|\varphi_n|^2(|n|+1)^{2s} <\infty.
\end{equation}
One can easily check  that $\E[\|\varphi\|_{W^s(\T)}^2]<\infty$ for any $s<0$ so that the series \eqref{GFFcircle} defines a random element in $W^s(\T)$. Moreover, by a standard computation, one can check that it is a centered Gaussian field  with covariance kernel given by 
\begin{equation}
\E[\varphi({\theta})\varphi({\theta'})]=\ln\frac{1}{|e^{i\theta}-e^{i\theta'}|}.
\end{equation}
We will view $\varphi$ as the coordinate function of the  probability space   
\begin{align}\label{omegat}
  \Omega_\T=(\R^{2})^{\N^*}
\end{align}
 which is equipped with the cylinder sigma-algebra $
 \Sigma_\T=\mathcal{B}^{\otimes \N^*}$, where $\mathcal{B}$  stands for the Borel sigma-algebra on $\R^2$ and the   product measure 
 \begin{align}\label{Pdefin}
 \P_\T:=\bigotimes_{n\geq 1}\frac{1}{2\pi}e^{-\frac{1}{2}(x_n^2+y_n^2)}\dd x_n\dd y_n.
\end{align}
Here $ \P_\T$ is supported on $W^s(\T)$ for any $s<0$ in the sense that $ \P_\T(\varphi\in W^s(\T))=1$.

\subsubsection{Harmonic extension of the  GFF on $\T$} 
 
 The next ingredient we need for the decomposition of the GFF \eqref{decomposeGFF} is the harmonic extension  $P\varphi$ of the  circle GFF  defined on $z\in\D$ by
 \begin{equation}\label{harmonic}
(P\varphi)(z) =\sum_{n\geq 1}(\varphi_n z^n+\bar \varphi_n \bar z^n)
\end{equation}
and on $z\in\D^c$ by $(P\varphi)(1/\bar z)$ so that we have
\begin{align*}
P\varphi= (P\varphi)\circ\theta
\end{align*}
where $\theta$ is the reflection in the unit circle \eqref{thetadef}.
$P\varphi$ is a.s. a smooth field in the complement of the unit circle with covariance kernel given for $z,u\in\D$ 
\begin{align*}
\E[(P\varphi)(z)(P\varphi)(u)]=\hf\sum_{n>0}\frac{1}{n}((z\bar u)^n+(\bar z u)^n)=-\ln|1-z\bar u|
\end{align*}
and  for $z\in\D$, $u\in\D^c$
\begin{align*}
\E[(P\varphi)(z)(P\varphi)(u)]=-\ln|1-z/ u|.
\end{align*}

\subsubsection{Dirichlet GFF on the unit disk} 
 The Dirichlet GFF $X_\D$ on the unit disk  $\D$ is the centered Gaussian distribution (in the sense of Schwartz) with covariance kernel $G_\D$ given by
\begin{equation}\label{dirgreen}
G_\D(x,x'):=\E[X_\D(x)X_\D(x')]=\ln\frac{|1-x\bar x'|}{|x-x'|}.
\end{equation}
Here, $G_\D$ is the Green function of the negative of the Laplacian   $\Delta_\D$  with Dirichlet boundary condition on $\T=\partial\D$ and 
 $X_\D$ can be realized 
 as an expansion in eigenfunctions of  $\Delta_\D$ with Gaussian coefficients. However,  it will be convenient for us to use another  realization based on the following observation. Let, for $n\in\Z$
  \begin{align*}
X_n(t)=\int_0^{2 \pi} e^{-in\theta}X_\D(e^{-t+i\theta})\tfrac{d\theta}{2\pi}.
\end{align*}
 Then we deduce from \eqref{dirgreen}
\begin{align}\label{ipp}
\E[X_n(t)X_m(t')]=\left\{
\begin{array}{ll} \tfrac{1}{2|n|}\delta_{n,-m}(e^{-|t-t'||n|}-e^{-(t+t')|n|})&\, n\neq 0\\
t\wedge t'&\, n=m=0
 \end{array} \right..
\end{align}
Thus $\{X_n\}_{n\geq 0}$ are independent Gaussian processes with $X_{-n}=\bar X_n$ and $X_0$ is Brownian motion. We can and will realize them in a probability space $(\Omega_\D,\Sigma_\D,\P_\D)$ s.t. $X_n(t)$ have
continuous sample paths. Then for fixed $t$
\begin{align}\label{XDdef}
X_\D(e^{-t+i\theta})=\sum_{n\in\Z} X_n(t)e^{in\theta}.
\end{align}
takes values in $W^s(\T)$ for $s<0$ a.s. (defined in \eqref{outline:ws}) and we can take the map $t\in\R^+\mapsto X_\D(e^{-t+i\cdot})\in W^s(\T)$ to be continuous a.s. in $\Omega_\D$. Decompose $W^{s}(\T)=\R\oplus W_0^{s}(\T) $ where $f\in W_0^{s}(\T)$ has zero average: $\int f(\theta)d\theta=0$. Then 
 \begin{align*}
 X_\D(e^{-t+i\cdot})=(B_t, Y_t)
 \end{align*}
 where $B_t$ is Brownian motion and  $Y_t(\theta)=\sum_{n\neq 0} Y_n(t)e^{in\theta}$ is a continuous process in  $W^{s}_0(\T)$, independent of $B_t$.
 The Dirichlet GFF $X_{\D^c}$ on the complement $\D^c$ of $\D$ can be constructed in the same way (in a probability space $(\Omega_{\D^c},\Sigma_{\D^c},\P_{\D^c})$) and we  have the relation in law 
\begin{align}\label{xdirref}
X_{\D^c}\stackrel{{\rm law}}=X_{\D}\circ\theta
\end{align}
 or in other words $X_{\D^c}(e^{t+i\cdot})\stackrel{{\rm law}}=X_{\D}(e^{-t+i\cdot})$, $t\geq 0$. 

\subsubsection{GFF on the Riemann sphere} 

One can check that adding the covariances in the previous subsections we get that the field $X$ defined by \eqref{decomposeGFF} has the covariance
\begin{equation}\label{hatGformula}
\E [ X(x)X(y)] 
=\ln\frac{1}{|x-y|}+\ln|x|_++\ln|y|_+
\end{equation}
which coincides with \eqref{chatcov}.  In the sequel, we suppose that the GFF on the Riemann sphere $X$ is defined on a probability space $(\Omega, \Sigma, \P)$ (with expectation $\E[.]$) where $\Omega= \Omega_\T \times \Omega_\D \times \Omega_{\D^c} $, $\Sigma= \Sigma_\T \otimes \Sigma_\D \otimes \Sigma_{\D^c}$ and $\P$ is a product measure $\P=\P_\T\otimes \P_{\D}\otimes \P_{\D^c}$. At the level of random variables, the GFF decomposes as the sum of three independent variables
 \begin{equation}\label{decomposGFF}
 X= P\varphi+ X_\D+X_{\D^c}
 \end{equation} 
 where $P\varphi$ is the harmonic extension of the GFF restricted to the circle $\varphi=X|_\T$ defined on $ (\Omega_\T, \Sigma_\T,\P_\T)$ and $X_\D,X_{\D^c}$ are two independent GFFs on $\D$  and $\D^c$  with Dirichlet boundary conditions defined respectively on  the probability spaces $ (\Omega_\D, \Sigma_\D,\P_\D)$ and $ (\Omega_{\D^c}, \Sigma_{\D^c}, \P_{\D^c})$\footnote{With a slight abuse of notations, we will assume that these spaces are canonically embedded in the product space $(\Omega,\Sigma)$ and we will identify them with the respective images of the respective embeddings.}. We will write  $\E_\varphi[\cdot ]$ for conditional expectation with respect to the GFF on the circle $\varphi$ (instead of $:=\E[\cdot | \Sigma_\T]$).   We will view $X$ in two ways in what follows:  as a   process $X_t\in W^s(\T)$ ($s<0$)
\begin{align}\label{def:xt}
X_t(\theta)=X_\D(e^{-t+i\theta})1_{t>0}+X_{\D^c}(e^{-t+i\theta})1_{t<0}+(P\varphi)(e^{-|t|+i\theta}).
\end{align}
and as a random element in $\caD'(\hat\C)$.
In the sequel, we will denote for $t\geq 0$: 
\begin{equation}\label{def:phit}
\varphi_t(\theta):=P\varphi(e^{-t+i\theta}) +Y_t(\theta).
\end{equation}

\subsection{Reflection positivity}

 Let  $\D=\{|z|<1\}$ be the unit disk. Recall the definition of the Liouville field \eqref{liouvillefield} which is given on $\D$ by $\phi(z)=c+X(z)=c+X_\D(z)+P\varphi(z)$.  Let  $\caA_\D$ be the sigma-algebra on $\R\times\Omega$ generated by the maps $\phi\mapsto , \langle \phi,g\rangle _\D$ for $g\in C_0^{\infty}(\D)$ and we recall the notation  $\langle \phi,g\rangle _\D=\int_\D g(z)\phi(z)dz$. Let $\caF_\D$ be the set of 
 $\caA_\D$-measurable functions with values in $\R$.
 
 
 For $F,G\in \caF_\D$ such that the following quantities make sense (see below), we define  (recall \eqref{Thetadef})
\begin{equation}\label{osform}
(F,G)_\D:=\langle \Theta F(\phi) \overline{G(\phi)}\rangle_{\gamma,\mu}.
\end{equation}
Reflection positivity is the statement that this bilinear form is non-negative, namely $(F,F)_\D\geq 0$. In what follows, we will study this statement separately for the GFF theory ($\mu=0$) and for LCFT.

\subsubsection{Reflection positivity of the GFF}

Here we assume $\mu=0$. Let $F,G\in \caF_\D$ be nonnegative. The sesquilinear form \eqref{osform} becomes at $\mu=0$   
\begin{align}\label{oszero}
(F,G)_{\D,0}=\int_\R e^{-2Qc}\E  [ (\Theta F)(\phi)\overline{G(\phi)}] dc=\int_\R e^{-2Qc}\E [ F(c+X^{(2)})\overline{G(c+X^{(1)})}] \dd c
\end{align}
where we denoted  $X^{(i)}=X^{(i)}_\D+P\varphi$ with $X^{(1)}_\D= X_\D$ and $X^{(2)}_\D= X_{\D^c} \circ \theta$ which are two independent GFFs in the unit disk.  
Hence by independence of $X_\D^{(i)}$ 
\begin{align}\nonumber
(F,G)_{\D,0}&=\int_\R e^{-2Qc}\E [ \E_\varphi [ F(c+X^{(2)}_\D+P\varphi) ]  \overline{ \E_\varphi [ G(c+X^{(1)}_\D+P\varphi) ] } ]\dd c\nonumber
\\
&=\langle U_0F | U_0G\rangle_2 \label{oszero1}
\end{align}
where   the map $U_0$ is defined by 
 \begin{align}\label{uxerodef}
 (U_0F)(c,\varphi)=e^{-Qc}\E_\varphi [ F(c+X_\D+P\varphi)]
\end{align}
and we recall $\E_\varphi$ denotes expectation over $X_\D$.
Such a map is well defined on nonnegative $F\in  \caF_\D$ and extended to   $\caF_\D^{0,\infty}$, which is defined as the space of  $F\in  \caF_\D$ such that  $U_0|F|<\infty$   $dc\otimes\P_\T$-almost everywhere. Let  $\caF_\D^{0,2}=\{F\in \caF_\D^{0,\infty}\,|\, \|U_0F\|_2<\infty\}$. 

\begin{proposition}\label{OS} 
The sesquilinear form \eqref{oszero} extends to  $\caF_\D^{0,2}$. This extension is non negative
  \begin{align}\label{scalar10} 
\forall F\in\caF_\D^{0,2},\quad\quad (F,F)_{\D,0}\geq 0.
 \end{align}
 Let $\overline{\caF_\D^{0,2}/\caN_0^0}$ be the Hilbert space completion of the pre-Hilbert space $\caF_\D^{0,2}/\caN_0^0$ with $\caN_0^0:=\{F\in \caF_\D^{0,2}\,|\, (F,F)_{\D,0}=0\}$. The map $U_0$  
 in \eqref{uxerodef}  descends to a unitary map $U_0 :\overline{\caF_\D^{0,2}/\caN_0^0}\to L^2(\R \times\Omega_\T)$.

 \end{proposition}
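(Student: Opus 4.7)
The whole proposition reduces to establishing the identity
\begin{equation}\label{masterid}
(F,G)_{\D,0} = \langle U_0 F \,|\, U_0 G\rangle_2, \qquad F,G\in\caF_\D^{0,2},
\end{equation}
after which positivity, the characterization of the null set, and the descent of $U_0$ to an isometry on the completion follow mechanically. I would first verify \eqref{masterid} on bounded nonnegative cylinder observables $F,G$, where all manipulations are manifestly justified. Using the decomposition $X = P\varphi + X_\D + X_{\D^c}$ from \eqref{decomposGFF}, the fact that $P\varphi\circ\theta = P\varphi$ (by definition of the harmonic extension), and the equality in law $X_{\D^c}\circ\theta \stackrel{\rm law}{=} X_\D^{(2)}$ with $X_\D^{(2)}$ an independent copy of $X_\D$ (a consequence of \eqref{xdirref}), one rewrites $(F,G)_{\D,0}$ exactly as the integral on the right-hand side of \eqref{oszero}. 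Conditioning on $\varphi$ and exploiting the conditional independence of the two Dirichlet copies given $\varphi$ then factorizes the inner expectation into a product of the two conditional expectations appearing in \eqref{uxerodef}, yielding \eqref{masterid}. Extension to general $F,G\in\caF_\D^{0,2}$ is by splitting $F = F_+ - F_-$, truncating at height $N$, and passing to the limit with monotone convergence on the left and dominated convergence on the right, the dominating function coming from the hypothesis $U_0|F|, U_0|G|\in L^2$ via Cauchy--Schwarz.

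Granted \eqref{masterid}, positivity is immediate since $(F,F)_{\D,0} = \|U_0 F\|_2^2 \ge 0$; moreover $\caN_0^0 = \caF_\D^{0,2}\cap\ker U_0$, so $U_0$ descends to an injective linear isometry from $\caF_\D^{0,2}/\caN_0^0$ into $L^2(\R\times\Omega_\T)$ which extends by continuity to the Hilbert space completion.

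The last --- and principal --- step is to show that this isometry is surjective. The key ingredient I would use is that the boundary trace $\phi|_\T$ is $\caA_\D$-measurable after completion: for any $h\in C^\infty(\T)$, the pairings $\langle \phi, \rho_\epsilon^h\rangle_\D$ with $\rho_\epsilon^h\in C_c^\infty(\D)$ an approximate Dirac along $\T$ carrying the weight $h$ converge in $L^2$ to $\int_\T h(\theta)(c+\varphi(\theta))\,d\theta$, because the variance of $\langle X_\D, \rho_\epsilon^h\rangle_\D$ tends to zero by the vanishing of the Dirichlet Green function $G_\D$ on $\T$. Hence the mean and zero-mean components of $\phi|_\T$ give $\caA_\D$-measurable maps $\phi\mapsto c(\phi)\in\R$ and $\phi\mapsto\varphi(\phi)\in\Omega_\T$. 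Given $\psi\in L^2(\R\times\Omega_\T)$, setting
\[
F_\psi(\phi) := e^{Qc(\phi)}\,\psi\big(c(\phi),\varphi(\phi)\big),
\]
one checks that under $\phi = c + X_\D + P\varphi$ we have $(c(\phi),\varphi(\phi)) = (c,\varphi)$ independently of $X_\D$, so the conditional expectation in $U_0$ acts trivially and $(U_0 F_\psi)(c,\varphi) = e^{-Qc}e^{Qc}\psi(c,\varphi) = \psi(c,\varphi)$, while $U_0|F_\psi| = |\psi|\in L^2$ places $F_\psi$ in $\caF_\D^{0,2}$. I expect the main obstacle to lie precisely in this last step, specifically in the careful $L^2$-convergence argument that makes the boundary trace construction $\caA_\D$-measurable; everything else is an unwinding of definitions combined with standard approximation arguments.
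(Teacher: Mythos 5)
Your approach is essentially the paper's, and the key technical idea --- extracting the boundary trace $\phi|_\T$ as a limit of $\caA_\D$-measurable pairings, with the variance of $\langle X_\D,\cdot\rangle_\D$ going to zero because the Dirichlet Green function $G_\D$ vanishes at $\T$ --- is precisely what drives the paper's argument. The structural difference is presentational: the paper proves density by taking a specific family of cylinder functions $\rho(\langle c+X,g_\epsilon\rangle_\D)e^{\langle c+X,f_\epsilon\rangle_\D - \frac12\langle f_\epsilon, G_\D f_\epsilon\rangle_\D}$ (with $g$ rotation-invariant and $f$ of zero angular mean so that $\langle X_\D,g\rangle_\D$ and $\langle X_\D,f\rangle_\D$ decouple), shows $U_0$ of these converges to $e^{-Qc}\rho(c)e^{\langle\varphi,h\rangle_\T}$, and invokes density of the span of such functions in $L^2(\R\times\Omega_\T)$; you instead write down an explicit preimage $F_\psi$ for arbitrary $\psi\in L^2$ and claim it lies in $\caF_\D^{0,2}$. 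Your route is a shade slicker but carries one bookkeeping caveat you should be explicit about: $\caF_\D$ in the paper is defined as the set of strictly $\caA_\D$-measurable functions, and your $F_\psi$ is only measurable with respect to the $\P$-\emph{completion} of $\caA_\D$. To land back in $\caF_\D^{0,2}$ one must replace $F_\psi$ by an $\caA_\D$-measurable version equal to it $\P$-a.s. (this exists precisely because the boundary trace is measurable modulo null sets, as your approximate-Dirac argument shows), and then note that this modification is invisible to $U_0$ and to the $(\cdot,\cdot)_{\D,0}$-norm. With that clause inserted, your construction is correct; the paper's cylinder-function density argument avoids the issue entirely by never leaving the literal $\caA_\D$-measurable class, at the modest cost of an extra rotation-invariance/zero-mean computation to decouple the smoothing kernels. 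Compare also Remark 3.2 in the paper, which records exactly the extension of $U_0$ to boundary-trace functionals that your argument implicitly uses.
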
 
 
 \proof
By \eqref{oszero1} $U_0$ descends to an isometry on $\caF_\D^{0,2}/\caN_0^0$ so we need to show it is onto.  We take $F$ of the form 
\begin{align}\label{denseset}
F(c+X)=\rho(\langle c+X,g\rangle _\D)e^{\langle c+X,f\rangle _\D-\hf \langle f,G_\D f\rangle_\D}.
\end{align}
with  $\rho\in C_0^\infty(\R)$ and  $g,f\in C_0^{\infty}(\D)$ with  the further conditions that
 $g$ is rotation invariant i.e. $g(re^{i\theta})=g(r)$, and  that $\int_0^{2\pi} f(re^{i\theta})d\theta=0$ for all $r\in [0,1]$.
Then  
 $ \langle c,f \rangle _\D=0$ and $\langle P\varphi,g \rangle_\D=0$ and we get
\begin{align}\nonumber
(U_0 F)(c,\varphi)&=e^{-Qc}e^{ \langle P\varphi,f \rangle_\D}\E [ \rho(c+\langle X_\D,g\rangle_\D)e^{\langle X_\D,f\rangle_\D-\hf \langle f,G_\D f\rangle_\D} ]\\&=e^{-Qc}e^{\langle P\varphi,f\rangle_\D}\E [ \rho(c+\langle X_\D,g\rangle_\D)]\label{denseset1}
\end{align}
where we observed that $\langle X_\D,g\rangle_\D$ and $\langle X_\D,f\rangle_\D$ are independent as their covariance vanishes. Indeed, by rotation invariance of $g$,  the function $\mathcal{O}(r,\theta):= \int_\D g(x)  G_\D(x,r e^{i\theta})\dd x$  does not depend on $\theta$ hence 
\begin{align*}
\E[  \langle X_\D,g\rangle_\D  \langle X_\D,f\rangle_\D  ] & = \int_{\D} \int_{\D} G_\D(x,y) g(x) f(y) \dd y   \\
& = \int_{0}^1 r   \int_0^{2 \pi}  f(re^{i \theta} ) \mathcal{O}(r,\theta) \dd \theta  \dd r   \\
& = \int_{0}^1 r \mathcal{O}(r,0)  \int_0^{2 \pi}  f(re^{i \theta} ) \dd \theta   \dd r=0.
\end{align*}


Let  $h\in C^\infty(\T)$, $f_\epsilon\in C_0^{\infty}(\D)$ and $g_\epsilon$ be given by $g_\epsilon(re^{i\theta})=\epsilon^{-1}\eta(\frac{1-r}{\epsilon})$, 
$f_\epsilon= hg_\epsilon$ 
where $\eta$ is a smooth bump with support on $[1,2]$ and total mass one. Then 
$\lim_{\epsilon\to 0} \langle P\varphi,f_\epsilon \rangle_\D= \langle \varphi,h \rangle_\T$ and  $\lim_{\epsilon\to 0}\E (\langle X_\D,g_\epsilon\rangle_\D^2)=0$
so that 
\begin{align*}
\lim_{\epsilon\to 0}(U_0 F_\epsilon)(c,\varphi)=e^{-Qc}\rho(c)e^{ \langle \varphi,h \rangle_\T}
\end{align*}
where the convergence is in  $L^2(\R \times \Omega_\T)$. 
Thus the functions $e^{-Qc}\rho(c)e^{\langle \varphi,h \rangle _\T}
$ are in the image of $U_0$ for all $\rho\in   C_0^\infty(\R)$ and $h\in C^\infty(\T)$. Since the linear span of these is dense in $L^2(\R \times \Omega_\T)$ the claim follows. \qed

\begin{remark}\label{uoext}
Note that this argument shows that $U_0$ extends from $\caF_\D^{0,2}$ to functionals of form
$F(c+X_{|\T})$ and then  $$(U_0F)(c,\varphi)=e^{-Qc}F(c+\varphi).$$
\end{remark}

\subsubsection{Reflection positivity of LCFT}

Next we want to show reflection positivity for the LCFT expectation \eqref{FL1} with $\mu>0$. The GMC measure $ M_\gamma$ defined in \eqref{GMCintro} can also  be constructed as the martingale limit 
\begin{equation}
M_\gamma(\dd x)=\lim_{N\to\infty}e^{ \gamma X_N(x)-\tfrac{\gamma^2}{2}\E[X_N(x)^2]  }|x|_+^{-4}\, \dd x.
\end{equation}
where in $X_N$ we cut off the series \eqref{XDdef}  and \eqref{harmonic} defining $X_\D^{(i)}$ and $P\varphi$ respectively to finite number of terms $|n|\leq N$. We claim that
\begin{align*}
M_\gamma(\hat\C)=M^{(1)}_\gamma(\D)+ M^{(2)}_\gamma(\D)
\end{align*}
where $M^{(i)}_\gamma$ are the GMC measures of the fields $X^{(i)}=X^{(i)}_\D+P\varphi$, $i=1,2$.  Indeed, we take the limit $N\to\infty$ in
\begin{align*}
\int_{\D^c}e^{ \gamma X_N(x)-\tfrac{\gamma^2}{2}\E[X_N(x)^2]  }|x|^{-4}\,\dd x =\int_{\D^c}e^{ \gamma X^{(2)}_N(\frac{1}{\bar x})-\tfrac{\gamma^2}{2}\E[X^{(2)}_N(\frac{1}{\bar x})^2]  }|x|^{-4}\,\dd x=\int_{\D}e^{ \gamma X^{(2)}_N(x)-\tfrac{\gamma^2}{2}\E[X^{(2)}_N(x)^2]  }\,\dd x.
\end{align*}
Thus, for nonnegative $F,G\in \caF_\D$   
\begin{align*}
(F,G)_\D=\langle \Theta F\overline{ G}\rangle_{\gamma, \mu}=\langle U_0(IF)\, | \, U_0(IG)\rangle_2
\end{align*}
where 
\begin{align*}
I=e^{-\mu e^{\gamma c}M_\gamma(\D)}.
\end{align*}
Let  $ \caF_\D^\infty$  be the space of $F\in  \caF_\D$ such that  $U_0(|F|I)<\infty$   $dc\otimes\P_\T$-almost everywhere. Let  $\caF_\D^{2}=\{F\in \caF_\D^{\infty}\,|\, \|U_0(FI) \|_2<\infty\}$.   From the above considerations, we arrive at:

\begin{proposition}\label{OS1} 
The sesquilinear form \eqref{osform} extends to $\caF_\D^2$, is nonnegative and given by
  \begin{align}\label{scalar1} 
(F,G)_\D=\langle UF \,|\, UG \rangle _{2}
 \end{align}
 for all $F,G\in\caF_\D^2$ where 
 \begin{align}\label{udeff}
 (UF)(c,\varphi)=(U_0(FI))(c,\varphi)=e^{-Qc}\E_\varphi [ F(c+X)e^{-\mu e^{\gamma c}M_\gamma(\D)}], 
\end{align}
$X=X_\D+P\varphi$ and $M_\gamma$ is its GMC measure. Define $\caN_0:=\{F\in \caF_\D^2\,|\, (F,F)_\D=0\}$. Then $U$ descends to a unitary map 
 $$U :\caH_\D\to L^2(\R \times\Omega_\T)$$
 with $\caH_\D:=\overline{\caF_\D^2/\caN_0}$ (the completion with respect to $(.,.)_\D$). 
 \end{proposition}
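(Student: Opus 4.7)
The plan is to reduce the proof to Proposition \ref{OS} by observing that $U$ factors as $U_0$ composed with multiplication by the GMC weight $I = e^{-\mu e^{\gamma c} M_\gamma(\D)}$, so that reflection positivity for LCFT is inherited from that of the GFF theory.

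First I would establish the decomposition $M_\gamma(\hat\C) = M^{(1)}_\gamma(\D) + M^{(2)}_\gamma(\D)$ where $M^{(i)}_\gamma$ denotes the GMC of $X^{(i)} = X^{(i)}_\D + P\varphi$. As sketched just before the statement, this comes from applying the change of variable $x \mapsto 1/\bar x$ to the truncated approximations, using $X(x) = X^{(2)}(1/\bar x)$ for $x \in \D^c$ and the Jacobian $|x|^{-4}$ cancelling the weight $|x|_+^{-4}$ appearing in the GMC definition outside $\D$, then passing to the martingale limit. With this in hand, for nonnegative $F, G \in \caF_\D$ and after unwinding \eqref{Thetadef} (using $|z|_+ = 1$ on $\D$) to obtain $(\Theta F)(\phi) = F(c + X^{(2)})$ on $\D$, the LCFT expectation splits as
\begin{equation*}
(F,G)_\D = \int_\R e^{-2Qc}\,\E\big[F(c+X^{(2)})\,e^{-\mu e^{\gamma c}M^{(2)}_\gamma(\D)}\cdot\overline{G(c+X^{(1)})\,e^{-\mu e^{\gamma c}M^{(1)}_\gamma(\D)}}\big]\,dc.
\end{equation*}
Conditioning on $\varphi$ and exploiting the conditional independence (and identical conditional law) of $X^{(1)}$ and $X^{(2)}$ then yields $(F,G)_\D = \langle UF\,|\,UG\rangle_2$. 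Non-negativity $(F,F)_\D = \|UF\|_2^2 \geq 0$ is immediate, and extending the formula from nonnegative $F, G$ to all of $\caF_\D^2$ is standard: decompose $F, G$ into positive/negative and real/imaginary parts and use the a.e.\ finiteness together with the $L^2$ bound built into the definition of $\caF_\D^2$.

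The identity $(F,G)_\D = \langle UF\,|\,UG\rangle_2$ shows that $U$ descends to an isometry $\caF_\D^2/\caN_0 \to L^2(\R \times \Omega_\T)$, which extends to an isometry $\caH_\D \to L^2(\R \times \Omega_\T)$ by completion. For surjectivity, I would appeal to Proposition \ref{OS}: the family $F_0$ constructed in its proof has $U_0 F_0$ dense in $L^2(\R \times \Omega_\T)$. Setting $F := F_0 \cdot I^{-1}$, one checks that $F \in \caF_\D^2$ and that $UF = U_0(F \cdot I) = U_0(F_0)$, so the image of $U$ contains the dense image of $U_0$ and $U$ is therefore unitary.

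The delicate point is this last step, since $I^{-1} = e^{\mu e^{\gamma c} M_\gamma(\D)}$ is an \emph{unbounded} functional and one must verify both that $F_0 \cdot I^{-1}$ is a bona fide $\caA_\D$-measurable functional of $c + X$ and that it lies in $\caF_\D^2$. The latter is automatic, since the reweighting by $I$ inside $U$ exactly cancels $I^{-1}$, giving $\|U(F_0 I^{-1})\|_2 = \|U_0 F_0\|_2 < \infty$. For the former, $M_\gamma(\D)$ is a.s. finite for $\gamma \in (0, 2)$ and measurable with respect to $X|_\D$, while $c$ can be recovered from $c+X$ via a vanishing circle average of $\varphi$; a truncation argument (replacing $I^{-1}$ by $\min(I^{-1}, N)$ and passing to the limit inside $U$) handles any remaining measurability issues cleanly.
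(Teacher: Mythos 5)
Your proposal is correct and follows essentially the same route as the paper: establish the GMC decomposition $M_\gamma(\hat\C) = M^{(1)}_\gamma(\D) + M^{(2)}_\gamma(\D)$, derive $(F,G)_\D = \langle UF\,|\,UG\rangle_2$ by conditioning on $\varphi$, and obtain surjectivity from $U(I^{-1}F) = U_0 F$ together with the surjectivity of $U_0$ established in Proposition \ref{OS}. The paper's proof is just the one-line observation about $U(I^{-1}F) = U_0 F$; your extra discussion of the measurability of $I^{-1}$ is harmless but unnecessary, since $e^{\gamma c}M_\gamma(\D)$ is already $\caA_\D$-measurable as the limit of $\int_\D e^{\gamma \phi_\epsilon(z) - \frac{\gamma^2}{2}\E X_\epsilon(z)^2}\dd z$, a functional of $\phi|_\D$ alone, so no truncation or circle-average reconstruction is needed.
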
 

\proof
We need to show $U$ is onto. This follows from $U(I^{-1}F)=U_0F$ and the fact that $U_0$ is onto.\qed

\begin{remark}
From Remark \ref{uoext} we conclude  that $U$ extends from $\caF_\D$ to functionals $F(c+X_{|\T})$ for which 
$$(UF)(c,\varphi)=F(c+\varphi)\times (U1)(c+\varphi)
$$
or, in other words, for $f\in L^2(\R \times \Omega_\T)$
\begin{align}\label{Uinverse}
U^{-1}f=(U1)^{-1}f.
\end{align}
\end{remark}

\subsection{Dilation Semigroup}
Recall the action of the dilation map \eqref{dilationS} on $\caF_\D$.
The reason for the $Q \ln |q|$-factor is the  M\"obius invariance 
 property of LCFT \cite{DKRV}
\begin{proposition}\label{int:mobius} Let $\psi:\hat\C\to\hat\C$ be a M\"obius map and  let $F$ be a functional on $\caD'(\hat{\C})$ so that $\langle|F(\phi)|\rangle_{\gamma,\mu}<\infty$. Then 
\begin{align}
\langle F(\phi \circ \psi +Q \ln |\psi'|)\rangle_{\gamma,\mu}=\langle F(\phi)\rangle_{\gamma,\mu}.
\label{moobi}
\end{align}
\end{proposition}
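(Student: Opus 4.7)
The statement is the probabilistic version of the invariance of the Liouville path integral under the change of coordinates $z\mapsto \psi(z)$, with $\phi$ transforming as a Liouville field of background charge $Q$. My plan follows the DKRV strategy \cite{DKRV}, splitting the argument into a Gaussian change of variables for the GFF and a change of variables for the Lebesgue integral defining the GMC, together with an algebraic cancellation that pinpoints the value $Q=\tfrac{2}{\gamma}+\tfrac{\gamma}{2}$.

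First I would reduce, by a density argument and dominated convergence using the $L^1$ bounds on $M_\gamma$ recalled in Section \ref{sub:gff}, to bounded continuous $F$ depending on the regularised field $X_\epsilon:=X\ast\theta_\epsilon$ and the regularised mass $M_{\gamma,\epsilon}(dz):=e^{\gamma X_\epsilon(z)-\tfrac{\gamma^2}{2}\E[X_\epsilon(z)^2]}|z|_+^{-4}dz$, passing to the limit $\epsilon\to 0$ at the end. To transform the GFF, I would use the Möbius identity $|\psi(x)-\psi(y)|=|\psi'(x)|^{1/2}|\psi'(y)|^{1/2}|x-y|$ to compute
\[
\E[X(\psi x)X(\psi y)] - \E[X(x)X(y)] = F_\psi(x)+F_\psi(y), \qquad F_\psi(x):=\ln|\psi(x)|_+-\ln|x|_+-\tfrac12\ln|\psi'(x)|.
\]
Introducing the Gaussian random variable $m_\psi:=\int_0^{2\pi}X(\psi(e^{i\theta}))\tfrac{d\theta}{2\pi}$ and exploiting the vanishing circular mean $\int_0^{2\pi}G(x,e^{i\theta})\tfrac{d\theta}{2\pi}=0$ of the covariance \eqref{chatcov}, a direct computation shows that $X^\psi(z):=X(\psi(z))-m_\psi$ has the same covariance as $X$, so $X\circ\psi\stackrel{\mathrm{law}}{=}X^\psi+m_\psi$ with $X^\psi\stackrel{\mathrm{law}}{=}X$.

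Substituting this decomposition on the right-hand side of \eqref{moobi} and shifting $c\mapsto c-m_\psi$ under the $\P$-expectation (by Fubini, since $c$ is a deterministic parameter) turns the weight $e^{-2Qc}$ into $e^{2Qm_\psi}e^{-2Qc}$, the Liouville interaction into $e^{-\mu e^{\gamma c}e^{-\gamma m_\psi}M^\psi_\gamma(\C)}$, and the argument of $F$ into $c+X^\psi-2Q\ln|\psi(\cdot)|_++Q\ln|\psi'|$, where $M^\psi_\gamma$ is the GMC of $X\circ\psi$. It then remains to perform the change of variables $w=\psi(z)$ in the integral defining $M^\psi_\gamma(\C)$: the Jacobian $|\psi'(z)|^2\,dz=dw$, together with the conversion $|z|_+^{-4}\to |w|_+^{-4}$ and the Wick-renormalisation correction $F_\psi$ inherited from Step~2, produces a cocycle of $\ln|\psi'|$ and $\ln|\psi|_+/|\cdot|_+$ factors that must combine with the deterministic drift in the argument of $F$ and with the $e^{2Qm_\psi}$ and $e^{-\gamma m_\psi}$ prefactors.

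The main obstacle, and the heart of the computation, is to check that all of these contributions cancel exactly, reconstructing on the right the field $\phi=c+X-2Q\ln|\cdot|_+$ together with its GMC mass $M_\gamma(\C)$; this cancellation holds precisely because of the relation $\gamma Q=\tfrac{\gamma^2}{2}+2$, i.e.\ $Q=\tfrac{2}{\gamma}+\tfrac{\gamma}{2}$, which is built into the definition of $Q$. A secondary subtlety is that $X^\psi$ and $m_\psi$ are jointly Gaussian but not independent, so the $c$-shift must be performed before any decoupling; this however causes no issue since the shift affects only the deterministic variable $c$. Taking $\epsilon\to 0$ with the uniform integrability of both sides yields the claimed identity \eqref{moobi}.
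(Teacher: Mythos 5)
The paper does not prove this proposition itself; it is cited to \cite{DKRV}, where Möbius invariance of the LCFT expectation is established in the course of proving conformal covariance of the vertex correlations (Theorem~3.5 there). Your plan follows that route, and your ingredients are essentially the correct ones: the Möbius identity for the change in the covariance, the decomposition $X\circ\psi=X^\psi+m_\psi$ with $X^\psi\stackrel{\mathrm{law}}{=}X$, the shift of the zero mode $c$, the change of variable in the GMC, and the algebraic cancellation driven by $\gamma Q=\tfrac{\gamma^2}{2}+2$.

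There is, however, one step you under-state to the point of being misleading: you write that the lack of independence between $X^\psi$ and $m_\psi$ ``causes no issue since the shift affects only the deterministic variable $c$.'' In fact the $c$-shift alone does not close the argument. After shifting $c\mapsto c-m_\psi$ you are left with a factor $e^{2Qm_\psi}$ inside the $\P$-expectation, a residual deterministic drift $-2QF_\psi$ in the argument of $F$ (coming from $-2Q\ln|\psi\cdot|_++Q\ln|\psi'|=-2QF_\psi-2Q\ln|\cdot|_+$), and a GMC density carrying the weight $|\psi u|_+^{-4}|\psi'(u)|^{2}e^{-\gamma^2 F_\psi(u)}$ rather than $|u|_+^{-4}$; none of these cancel by inspection. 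The mechanism that removes them is precisely the coupling you dismiss: applying the Cameron--Martin/Girsanov formula to $e^{2Qm_\psi}$ shifts $X^\psi\mapsto X^\psi+2Q\,\E[m_\psi X^\psi(\cdot)]=X^\psi+2Q(F_\psi-c_\psi)$, where $c_\psi=\int_0^{2\pi}F_\psi(e^{i\theta})\tfrac{d\theta}{2\pi}$. This is what kills the $-2QF_\psi$ drift in the argument of $F$, multiplies the GMC integrand by $e^{(2\gamma Q-\gamma^2)F_\psi}=e^{4F_\psi}$ so that it recombines with the Jacobian to give back exactly $|u|_+^{-4}$, and leaves only constants $e^{\pm 2Q^2c_\psi}$, $e^{\pm 2\gamma Qc_\psi}$ that a final deterministic translation $c\mapsto c+2Qc_\psi$ absorbs. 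Without invoking the cross-covariance $\E[m_\psi X^\psi(u)]=F_\psi(u)-c_\psi$, the cancellation you assert simply does not occur. Two minor points: you write ``right-hand side'' of \eqref{moobi} where you mean the left-hand side, and you conflate the GMC of $X\circ\psi$ with that of $X^\psi$; these differ by the factor $e^{\gamma m_\psi-\gamma^2 F_\psi}$, and keeping that distinction explicit is what makes the $e^{\pm\gamma m_\psi}$ bookkeeping consistent.
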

 
We have then:

\begin{proposition}  \label{dilationsemi} The map $S_{q}$ descends to a contraction   $S_{q}:\caH_\D\to \caH_\D$:  
\begin{align}\label{conttra}
\forall F\in \caH_\D,\quad \quad (S_{q}F,S_{q}F)_\D\leq (F,F)_\D.
\end{align}
The adjoint  of $S_q$ is $S_q^\ast=S_{\bar q}$ i.e. for all $F,G\in \caH_\D$
\begin{align}
(S_q F, G)_\D=(F,S_{\bar q}G)_\D.
\label{adjointsq}
\end{align}
Finally the map $q\in\D\mapsto S_{q}$ is strongly continuous and satisfies  the group property
\begin{align}
S_{q}S_{q'}=S_{qq'}
\label{gropupprpo}
\end{align}
so that  $q\in\D\mapsto S_{q}$ is a strongly continuous contraction semigroup.
\end{proposition}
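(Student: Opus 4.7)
The statement breaks into four claims: the semigroup identity \eqref{gropupprpo}, the adjoint relation \eqref{adjointsq}, the contraction \eqref{conttra}, and strong continuity. I would establish them in that order (since contraction uses the adjoint), with the main technical challenge being an a priori bound needed in the contraction step.

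The semigroup identity \eqref{gropupprpo} is purely algebraic: substituting twice into \eqref{dilationS} and using $s_q\circ s_{q'}=s_{qq'}$ together with $\log|q|+\log|q'|=\log|qq'|$ immediately gives $S_qS_{q'}F=S_{qq'}F$ on $\caF_\D$, which descends to $\caH_\D$.

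For the adjoint \eqref{adjointsq}, the plan is to apply the M\"obius invariance \eqref{moobi} to $(S_qF,G)_\D=\langle\Theta(S_qF)\bar G\rangle_{\gamma,\mu}$ with $\psi(z)=\bar q z$, which satisfies $|\psi'|=|q|$, so the substitution $\phi\mapsto \phi\circ s_{\bar q}+Q\log|q|$ preserves the expectation. The crucial geometric input is the identity $s_{\bar q}\circ\theta\circ s_q=\theta$, which follows from $\theta\circ s_q=s_{1/\bar q}\circ\theta$. Writing out $\Theta(S_qF)(\phi\circ s_{\bar q}+Q\log|q|)$ and using this identity, the constants $\pm Q\log|q|$ cancel and the composition reduces to $(\Theta F)(\phi)$. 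The other factor becomes $\overline{S_{\bar q}G(\phi)}$ by definition of $S_{\bar q}$, giving $(S_qF,G)_\D=(F,S_{\bar q}G)_\D$.

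For the contraction \eqref{conttra}, combining the adjoint identity with the semigroup property gives $\|S_qF\|_\D^2=(F,S_{|q|^2}F)_\D$, so it suffices to prove $(F,S_rF)_\D\leq\|F\|_\D^2$ for real $r\in(0,1)$. The plan is to apply Cauchy--Schwarz for the non-negative sesquilinear form $(\cdot,\cdot)_\D$:
\begin{equation*}
(F,S_rF)_\D^2\leq \|F\|_\D^2\,\|S_rF\|_\D^2=\|F\|_\D^2\,(F,S_{r^2}F)_\D,
\end{equation*}
and iterate to obtain $(F,S_rF)_\D^{2^n}\leq \|F\|_\D^{2(2^n-1)}(F,S_{r^{2^n}}F)_\D$. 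Taking the $2^n$-th root and letting $n\to\infty$ yields the contraction provided $(F,S_{r'}F)_\D$ stays bounded as $r'\to 0^+$. This a priori estimate is the main obstacle: I would prove it using \eqref{udeff} together with the domain Markov property of $X$ across the circle $\partial s_{r'}(\D)$, the conformal invariance of the Dirichlet GFF, and the KPZ change-of-variable formula $e^{\gamma c}M_\gamma(s_{r'}(\D))=e^{\gamma(c+Q\log r')}M_\gamma^{X\circ s_{r'}}(\D)$; after decomposing $\tilde\varphi_{r'}:=X|_{\partial s_{r'}(\D)}$ into its Gaussian zero mode $N\sim\caN(0,-\log r')$ and a $\P_\T$-distributed non-zero part, and absorbing the constants, one obtains an explicit integral-kernel representation for $US_{r'}U^{-1}$. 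Together with the bound $e^{-\mu e^{\gamma c}M_\gamma(A_{r'})}\leq 1$ for the annulus $A_{r'}=\D\setminus s_{r'}(\D)$, this gives the required bound on $\|S_{r'}F\|_\D$.

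Finally, strong continuity is reduced via $U$ to continuity of $q\mapsto US_qF$ in $L^2(\R\times\Omega_\T)$. On a dense subspace of smooth cylindrical functionals of $\phi$ supported in a fixed compact subset of $\D$, this follows from sample-path continuity of $X$ away from $\T$ and dominated convergence applied to the explicit formula \eqref{udeff}; the extension to all of $\caH_\D$ is then immediate from the contraction just proved.
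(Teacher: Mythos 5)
Your overall plan matches the paper's proof: prove the adjoint identity via M\"obius invariance with $\psi=s_{\bar q}$ (and the geometric identity $s_{\bar q}\circ\theta\circ s_q=\theta$), deduce $\|S_qF\|^2_\D=(F,S_{|q|^2}F)_\D$, iterate Cauchy--Schwarz, and reduce strong continuity to a dense subspace where a concrete computation applies.  The place where you diverge is the a priori bound closing the Cauchy--Schwarz iteration.  You propose to control $(F,S_{r'}F)_\D$ as $r'\to 0^+$ by working out an explicit integral-kernel representation for $US_{r'}U^{-1}$ using the domain Markov property, conformal invariance of the Dirichlet GFF, and a KPZ change-of-variables.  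The paper has a much slicker argument: it first observes, by Cauchy--Schwarz applied to the conditional expectation $\E_\varphi$, that $(G,G)_\D\leq \langle G^2\rangle_{\gamma,\mu}$ for every $G\in\caF_\D$, and then invokes M\"obius invariance once more to get $\langle (S_rF)^2\rangle_{\gamma,\mu}=\langle F^2\rangle_{\gamma,\mu}$.  This immediately yields $\|S_{|q|^{2^k}}F\|_\D^{2^{-k}}\leq\langle F^2\rangle_{\gamma,\mu}^{2^{-k-1}}\to1$, with no GFF or GMC computation at all.  Your route would presumably also close the gap, but at noticeably greater cost.  Note also a step you did not make explicit: the clean bound from the iteration is only obtained for those $F\in\caF_\D$ with $\langle F^2\rangle_{\gamma,\mu}<\infty$ (respectively, for which the requisite a priori bound holds), and one must show such $F$ are dense in $(\caF_\D,\|\cdot\|_\D)$; the paper handles this by a truncation argument ($F_R=F\mathbf{1}_{|F|<R}$) combined with the just-proved bound.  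Your strong-continuity argument (reduce to a dense set, then upgrade by uniform contractivity) is the same scheme as the paper's; the paper's specific choice of dense test functions is $F=GI^{-1}$ so that $UF=U_0G$ and the $\mu$-dependent factor reduces to $e^{-\mu e^{\gamma c}M_\gamma(\D\setminus|q|\D)}$, whose convergence to $1$ is elementary, whereas your proposed dense class of cylinder functionals would require a slightly more careful treatment of the $e^{-\mu e^{\gamma c}M_\gamma(\D)}$ factor, but this is a matter of taste rather than a gap.
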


\begin{proof}

Let us start with \eqref{adjointsq}.
 It suffices to consider $F,G\in \caF^2_\D$ real.
 By definition
 \begin{align*}
    (S_q F, G)_\D =\langle  F( \phi\circ\theta\circ s_q+ Q \ln |q| -2Q\ell\circ s_q )   G( \phi) \rangle_{\gamma,\mu}:=\langle  \tilde F( \phi )   G( \phi) \rangle_{\gamma,\mu}
\end{align*}
 where $\ell(z):=\ln|z|$. Applying Proposition \ref{int:mobius} with $\psi=s_{\bar q}$ we get
  \begin{align*}
  \langle  \tilde F( \phi )   G( \phi) \rangle_{\gamma,\mu}= \langle  \tilde F( \phi \circ s_{\bar q}+Q\ln |q|)   G( \phi \circ s_{\bar q}+Q\ln |q|) \rangle_{\gamma,\mu}
\end{align*}
But 
  \begin{align*}
  \tilde F( \phi \circ s_{\bar q}+Q\ln |q|)  =F( \phi\circ s_{\bar q}\circ\theta\circ s_q+2Q\ln|q|-2Q\ell\circ s_q)=F(\phi\circ\theta-2Q\ell)
\end{align*}
and therefore $ \langle  \tilde F( \phi )   G( \phi) \rangle_{\gamma,\mu}= ( F, S_{\bar q}G)_\D$ as claimed.

The group property \eqref{gropupprpo} is obvious.

To prove the contraction, denote for  $F\in \caF_\D$,  the seminorm $\|F\|_\D:=(F,F)_\D^\hf$. Then we have 
 \begin{align*}
 \|S_{q}F\|_\D&=
 (S_{q}F,S_{q}F)_\D^\hf=(F,S_{|q|^{2}}F)_\D^\hf\leq \|F\|^\hf_\D\|S_{{|q|^{2}}}F\|^\hf_\D.
 \end{align*}
 Iterating this inequality we obtain
  \begin{align*}
 \|S_{q}F\|_\D\leq  \|F\|_\D^{1-2^{-k}}\|S_{|q|^{2k}}F\|_\D^{{2^{-k}}}.
 \end{align*}
Recall that
 \begin{align*}
 (G,G)_\D=
 \langle U_0(IG) | U_0(IG) \rangle _{2}=
\int_\R e^{-2Qc}\E[ \E_\varphi[ IG] ^2]dc
\end{align*}
 and then by Cauchy-Schwartz applied to $\E_\varphi[ . ]$
  \begin{align*}
\E [\E_\varphi [ IG ]^2 ]= \E[\E_\varphi[I^{\hf}G I^{\hf}]^{2}]\leq \E [ \E_\varphi [ IG^{2} ]\E_\varphi[ I ] ]
\end{align*}
 so that
  \begin{align*}
 (G,G)_\D\leq \langle U_{0}(IG^{2}) | U_{0}I\rangle_2=\langle U G^{2}| U 1\rangle_2 =\langle G^{2}\rangle_{\gamma,\mu}.
\end{align*}
Hence
 \begin{align*}
 \|S_{q}F\|_\D\leq  \|F\|_\D^{1-2^{-k}}\langle (S_{|q|^{2k}}F)^{2}\rangle_{\gamma,\mu}^{{2^{-k-1}}}= \|F\|_\D^{1-2^{-k}}\langle F^{2}\rangle_{\gamma,\mu}^{{2^{-k-1}}}
 \end{align*}
where we used  again the M\"obius invariance of $\langle \cdot\rangle_{\gamma,\mu}$.  Taking $k\to\infty$ we conclude $ \|S_{q}F\|_\D
 \leq \|F\|_\D$ for 
$F\in \caF_\D$ which satisfy $ \langle F^2\rangle_{\gamma,\mu}<\infty$. Such $F$ form a  dense set in $\caF_\D$. Indeed, let $F\in \caF_\D$ with $\|F\|_\D<\infty$ and let $F_R=F1_{|F|<R}$. Then $ \langle F_R^2\rangle_{\gamma,\mu}<\infty$ and
\begin{align*}
\|F-F_R\|_\D^2=\|F1_{|F|\geq R}\|_\D^2\leq \|F\|_\D^2\|1_{|F|\geq R}\|_\D^2
\end{align*}
and $\|1_{|F|\geq R}\|_\D^2=\langle 1_{F\geq R}\Theta 1_{|F|\geq R}\rangle\leq \frac{1}{R^2}\langle |F|\theta| F|\rangle\to 0$ as $R\to\infty$.

Hence \eqref{conttra} holds for all $F\in\caF_\D$ with $\|F\|_\D<\infty$. This implies $S_q$ maps the null space $\caN_0$ to $\caN_0$ and thus  $S_q$ extends to $\caH_\D$ so that \eqref{conttra} holds.

Finally to prove strong continuity, by the semigroup property it suffices to prove it at $q=1$ and by the contractive property we need to prove it only on a dense set.  Since
\begin{align*}
 \|S_{q}F-F\|_\D^{2}= \|S_{q}F\|_\D^{2}+ \|F\|_\D^{2}- (S_{q}F,F)_\D-(F,S_{q}F)_\D\leq 2\|F\|_\D^{2}- (S_{q}F,F)_\D-(F,S_{q}F)_\D
\end{align*}
it suffices to prove $(S_{q}F,F)_\D\to (F,F)_\D$ as $q\to 1$ on a dense set of $F$.
Take  $F=GI^{-1}$  so that $UF=U_0G$. Then 
\begin{align*}
(F,S_{q}F)_\D
=\int_\R e^{-2Qc}\E( \Theta GG
e^{-\mu e^{\gamma c}M_{\gamma}(\D\setminus|q|\D)})dc
\end{align*}
which converges as $q\to 1$ to $(F,F)_\D$ (use $\P (M_{\gamma}(\D\setminus|q|\D)>\epsilon)\to 0$ as $q\to 1$).  
\end{proof} 
In particular we can form two one-parameter (semi) groups from $S_q$. Taking $q=e^{-t}$ we define $T_{t}=S_{e^{-t}}$. Then $T_{t+s}=T_{t}T_{s}$ so $T_{t}$ is a strongly continuous contraction semigroup on the Hilbert space $\caH_\D$. Hence by the Hille-Yosida theorem
\begin{align}\label{hstar}
US_{e^{-t}}U^{-1}=e^{-t{\bf H_\ast}}
\end{align}
where the
 generator ${\bf H_\ast}$  (in the case $\mu=0$, we will write  ${\bf H}_\ast^0$) is a positive self-adjoint operator with domain $\caD({\bf H}_\ast)$ consisting of $\psi\in L^2(\R \times\Omega_\T)$ such that $\lim_{t\to 0}\frac{1}{t}(e^{-t{\bf H_\ast}}-1)\psi$ exists in $L^2(\R \times\Omega_\T)$. The operator $\bf{H_\ast}$ is the {\it Hamiltonian} of LCFT. Taking $q=e^{i\alpha}$ we get that $\alpha\mapsto S_{e^{i\alpha}}$ is a strongly continuous unitary group so that by Stone's theorem
$$
US_{e^{i\alpha}}U^{-1}=e^{i\alpha \Pi_\ast}
$$
where $\Pi_\ast$ is the self adjoint {\it momentum} operator of LCFT.  As we will have no use for $ \Pi_\ast$ in this paper we will concentrate on ${\bf H_\ast}$ from now on. Let us emphasize here that it is defined in the full range $\gamma \in (0,2)$. One of our next tasks will be to show that for $\gamma \in (0,2)$: ${\bf H_\ast}={\bf H}$, where the Hamiltonian ${\bf H}$ will be defined as the Friedrichs extension of \eqref{Hdef}.

\section{Gaussian Free Field: dynamics and CFT aspects} \label{sec:GFFCFT}
 
\subsection{Fock space and harmonic oscillators}\label{sec:fock}

The Hilbert space $L^2(\Omega_\T,\P_\T)$ (denoted from now on by  $L^2(\Omega_\T)$) has the structure of Fock space. Let $\caP\subset L^2(\Omega_\T)$ (resp. $ \mathcal{S}\subset L^2(\Omega_\T)$) be the linear span of the functions of the form $F(x_1,y_1, \cdots, x_N,y_N)$ for some $N \geq 1$ where $F$ is a polynomial on $\R^{2N}$ (resp.   $F\in C^\infty((\R^2)^N)$ with at most polynomial growth at infinity for $F$ and its derivatives). Obviously $\mathcal{P}\subset\mathcal{S}$ and they are both dense in $L^2(\Omega_\T)$.

On $\mathcal{S}$ we define the annihilation and  creation operators
\begin{align}\label{crea}
\mathbf{X}_n&=\partial_{x_n},\ \ \ \mathbf{X}_n^\ast=-\partial_{x_n}+x_n,\\
\mathbf{Y}_n&=\partial_{y_n},\ \ \ \mathbf{Y}_n^\ast=-\partial_{y_n}+y_n.
\end{align}
They are formally adjoint of each other   (see e.g. \cite[VIII.11]{rs1}  for more about the closure of these operators, which we will not need here)  and form a representation of the algebra of canonical commutation relations on $\mathcal{S}$:
\begin{align}\label{ccr}
[\mathbf{X}_n,\mathbf{X}_m^\ast]=\delta_{nm}=[\mathbf{Y}_n,\mathbf{Y}_m^\ast]
\end{align} 
with other commutators vanishing. The operator $ \mathbf{P}$ is then given on  $\mathcal{S}$ as
 \begin{align}\label{hdefi}
 \mathbf{P}=\sum_{n=1}^\infty n(\mathbf{X}_n^\ast \mathbf{X}_n+\mathbf{Y}_n^\ast \mathbf{Y}_n)
\end{align}
(only finite number of terms in the sum contributes when acting on $\mathcal{S})$ and extends uniquely to an unbounded 
self-adjoint positive operator on $L^2(\Omega_\T)$: this follows from the fact that we can find a complete system of eigenfunctions in $\caP$, as described now.  Let $\mathcal{N}$ be the set of  non-negative integer valued sequences with only a finite number of non null integers, namely ${\bf k}=(k_1,k_2,\dots)\in \mathcal{N}$ iff ${\bf k}\in \N^{\N_+}$ and $k_n=0$ for all $n$ large enough. For   $\bf k,\bf l  \in\mathcal{N}$ define the polynomials (here $1\in  L^2(\Omega_\T)$  is the constant function)
\begin{align}\label{fbasishermite}
\hat{\psi}_{{\bf k}{\bf l}}=\prod_n ( \mathbf{X}_n^\ast)^{k_n}( \mathbf{Y}_n^\ast)^{l_n}1 \in \caP.
\end{align}
Equivalently, $\hat{\psi}_{{\bf k}{\bf l}}= \prod_n {\rm He}_{k_n}(x_n) {\rm He}_{l_n}(y_n)$ where $({\rm He}_k)_{k \geq 0}$ are the standard Hermite polynomials. Then, using \eqref{ccr}, one checks that these are eigenstates of $\bf P$:
 \begin{align}\label{fbasis2}
 \mathbf{P}\hat{\psi}_{{\bf k}{\bf l}}=(|{\bf k}|+|{\bf l}|)
 \hat{\psi}_{{\bf k}{\bf l}}=\lambda_{{\bf k}{\bf l}}\hat{\psi}_{{\bf k}{\bf l}}.
\end{align}
where we use the notations
\begin{equation}\label{firstlength}
|{\bf k}|:=\sum_{n=1}^\infty nk_n,\quad \lambda_{{\bf k}{\bf l}}:=|{\bf k}|+|{\bf l}|
\end{equation}
 for ${\bf k},{\bf l}\in\caN$.
It is also well known that the family $\{\psi_{{\bf k}{\bf l}}=\hat{\psi}_{{\bf k}{\bf l}}/\|\hat{\psi}_{{\bf k}{\bf l}}\|_{L^2(\Omega_\T)}\}$ (where $\|\cdot \|_{L^2(\Omega_\T)}$ is the standard norm in $L^2(\Omega_\T)$) forms an orthonormal basis of $L^2(\Omega_\T)$. Finally we claim

\begin{proposition}\label{semigrouppt}
The operator $\mathbf{P}$ generates a strongly continuous semigroup of self-adjoint contractions $(e^{-t\mathbf{P}})_{t\geq 0}$ on $L^2(\Omega_\T) $  with  probabilistic representation, for $t\geq 0$, 
$$\forall f\in L^2(\Omega_\T),\quad e^{-t\mathbf{P}}f=\E_\varphi[f(\varphi_t)] $$
with $(\varphi_t)_{t\geq 0}$ the process defined by \eqref{def:phit}.
\end{proposition}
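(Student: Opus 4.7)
The plan is to exploit the fact (just established) that the Hermite basis $\{\psi_{\mathbf{k}\mathbf{l}}\}$ diagonalizes $\mathbf{P}$ with non-negative eigenvalues $\lambda_{\mathbf{k}\mathbf{l}}=|\mathbf{k}|+|\mathbf{l}|$, and then to verify by a direct probabilistic computation on this basis that the Markov semigroup induced by $\varphi_t$ has the same eigenvalues and eigenfunctions. Since $\mathbf{P}$ is self-adjoint and positive on $L^2(\Omega_\T)$, the functional calculus immediately furnishes a strongly continuous semigroup of self-adjoint contractions $e^{-t\mathbf{P}}$ characterized by $e^{-t\mathbf{P}}\psi_{\mathbf{k}\mathbf{l}}=e^{-t\lambda_{\mathbf{k}\mathbf{l}}}\psi_{\mathbf{k}\mathbf{l}}$; strong continuity at $t=0$ follows by density of finite linear combinations of the $\psi_{\mathbf{k}\mathbf{l}}$ together with dominated convergence applied to the Parseval expansion. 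So the real content is the probabilistic identification.

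Next I would define $T_t f(\varphi):=\E_\varphi[f(\varphi_t)]$ a priori on bounded measurable $f$. Conditionally on $\varphi$, the Fourier components of $\varphi_t$ are independent: for each $n>0$, the pair $(2\sqrt{n}\,\mathrm{Re}\,\varphi_n(t),\,2\sqrt{n}\,\mathrm{Im}\,\varphi_n(t))$ equals $(e^{-nt}x_n+\sqrt{1-e^{-2nt}}\,\tilde x_n,\,e^{-nt}y_n+\sqrt{1-e^{-2nt}}\,\tilde y_n)$ in law, where $(\tilde x_n,\tilde y_n)$ are independent standard Gaussians. This is read off from the covariance \eqref{ipp} of $Y_n=X_n$ combined with the harmonic extension $P\varphi$, whose $n$th mode contracts by $e^{-nt}$; the total conditional variance is $\tfrac{1}{2n}(1-e^{-2nt})\cdot 2n/2n$, matching the stationary variance of $x_n,y_n$. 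In particular $\varphi_t$ is stationary under $\P_\T$, so Jensen gives $\|T_t f\|_{L^2(\Omega_\T)}\leq \|f\|_{L^2(\Omega_\T)}$, and $T_t$ extends to a contraction on $L^2(\Omega_\T)$; the Markov property of the underlying independent OU coordinates gives $T_{t+s}=T_tT_s$.

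The key computation is then Mehler's formula applied coordinatewise: for a one-dimensional OU process with rate $n$ and standard stationary measure, the Hermite polynomial ${\rm He}_k$ is an eigenfunction, $\E[{\rm He}_k(e^{-nt}x+\sqrt{1-e^{-2nt}}\,\tilde x)]={\rm He}_k(x)e^{-nkt}$ where $\tilde x$ is an independent standard Gaussian. Applying this to each factor in \eqref{fbasishermite} and using independence of the Fourier modes yields
\begin{equation*}
T_t\hat\psi_{\mathbf{k}\mathbf{l}}=\prod_n e^{-nk_nt}e^{-nl_nt}\,\hat\psi_{\mathbf{k}\mathbf{l}}=e^{-t(|\mathbf{k}|+|\mathbf{l}|)}\hat\psi_{\mathbf{k}\mathbf{l}},
\end{equation*}
which coincides with $e^{-t\mathbf{P}}\hat\psi_{\mathbf{k}\mathbf{l}}$ by \eqref{fbasis2}. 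Since both $T_t$ and $e^{-t\mathbf{P}}$ are bounded operators on $L^2(\Omega_\T)$ agreeing on the total family $\{\hat\psi_{\mathbf{k}\mathbf{l}}\}$ whose closed linear span is all of $L^2(\Omega_\T)$, they are equal on the whole space.

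The only mildly delicate point is the identification of the noise structure of $\varphi_t$ as the product of independent OU processes of rates $n$; once this is in hand, Mehler's formula does all the rest in closed form. I would spell this out by verifying the covariance $\E[\varphi_n(t)\bbar{\varphi_m(s)}]=\tfrac{1}{2n}e^{-n|t-s|}\delta_{nm}$ for $n,m>0$, which is exactly the stationary OU covariance and suffices (together with joint Gaussianity) to conclude. No fixed-point or generator-comparison argument is needed because the eigenbasis already provides a complete dictionary between the analytic and the probabilistic descriptions of the semigroup.
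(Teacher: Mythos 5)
Your proposal is correct and follows essentially the same route as the paper's proof: diagonalize $\mathbf{P}$ in the Hermite basis, identify the conditional law of $\varphi_t$ given $\varphi$ as a product of independent Ornstein--Uhlenbeck processes with rates $n$, and then apply the Mehler/Hermite identity (the paper's \eqref{hermiteOU}, your ``Mehler's formula'') to each factor to conclude $\E_\varphi[\psi_{\mathbf{k}\mathbf{l}}(\varphi_t)]=e^{-\lambda_{\mathbf{k}\mathbf{l}}t}\psi_{\mathbf{k}\mathbf{l}}$. The only extra detail you add is the explicit Jensen-plus-stationarity argument for the $L^2$ contractivity of $T_t$, which the paper leaves implicit once both operators are identified on a total family.
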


\begin{proof}
The fact that $\mathbf{P}$ generates a  strongly continuous semigroup of  self-adjoint contractions results from the fact that $\mathbf{P}$ is self-adjoint and nonnegative.  Since $(\psi_{{\bf k}{\bf l}})_{{\bf k},{\bf l}}$ form an orthonormal basis of $L^2(\Omega_\T)$, it suffices to study the semigroup on this basis. Obviously $ e^{-t\mathbf{P}}\psi_{{\bf k}{\bf l}}=e^{-\lambda_{{\bf k}{\bf l}}t}\psi_{{\bf k}{\bf l}}$. Furthermore the decomposition \eqref{def:xt} together with the covariance structure \eqref{ipp} (and recalling the decomposition \eqref{GFFcircle}+\eqref{varphin} of the field $\varphi$)  entails that   the law of $\varphi_t$  (see  \eqref{def:phit}) conditionally on $\varphi$ is given by
$$\varphi_t(\theta)= \sum_{n>0}\frac{x_n(t)+iy_n(t)}{2\sqrt{n}}e^{in\theta}+\sum_{n<0}\frac{x_{-n}(t)-iy_{-n}(t)}{2\sqrt{-n}}e^{in\theta}$$
where $x_n(t),y_n(t)$ are independent Ornstein-Uhlenbeck processes. In particular  for each fixed $t$, there are two independent sequences of independent standard Gaussians $(\bar{x}_n)_n$ and $(\bar{y}_n)_n$ such that,  for $n\geq 1$
$$x_n(t)\stackrel{\text{law cond. on }\varphi}{=}e^{-nt}x_n+\sqrt{1-e^{-2tn} }\bar{x}_n, \quad\quad y_n(t)\stackrel{\text{law cond. on }\varphi}{=}e^{-nt}y_n+\sqrt{1-e^{-2tn} }\bar{y}_n.$$
Finally, we recall the following elementary result:  given $Y$  a standard Gaussian random variable, the standard Hermite polynomials $({\rm He}_k)_{k \geq 0}$ on $\R$, $x\in\R$ and $u,v\geq 0$ such that $u^2+v^2=1$ then
\begin{equation}\label{hermiteOU}
\E[{\rm He}_k(ux+vY)]=u^k{\rm He}_k(x).
\end{equation}
Using this lemma and our description of the law of $X_t$, it is then plain to deduce that
$$e^{-t\mathbf{P}}\psi_{{\bf k}{\bf l}}=\E_\varphi[\psi_{{\bf k}{\bf l}}(\varphi_t)] =e^{-\lambda_{{\bf k}{\bf l}}t}\psi_{{\bf k}{\bf l}}.$$
Hence our claim.
\end{proof}

\begin{remark}\label{eigP}
List the eigenvalues $\la= {|\bf k}|+{|\bf l}|$ of $\bf P$ in increasing order $\lambda_1<\lambda_2<\dots$ and let $P_i$ be the corresponding spectral projectors. Since each $\lambda_i$ is of finite multiplicity and $\lambda_i\to\infty$ as $i\to\infty$ the semigroup $e^{-t\bf P}=\sum_ie^{-t\bf \lambda_i}P_i$ and the resolvent $(z-{\bf P})^{-1}=\sum_i(z-\lambda_i)^{-1}P_i$ are compact if $t>0$ and $\Im z\neq 0$ since they are norm convergent limits of finite rank operators.
\end{remark}

\subsection{Quadratic form}

Introduce the bilinear form (with associated quadratic form still denoted by $\mathcal{Q}_0$)  
\begin{equation}\label{defQ0}
\forall u,v\in \mathcal{C},\quad \mathcal{Q}_0(u,v):=\tfrac{1}{2}\E\int_{\R} \Big( \pl_c u \pl_c \bar{v}+Q^2u\bar{v}+ 2( \mathbf{P} u)\bar{v} \Big)\dd c
\end{equation}
with
\begin{equation}\label{core}
\mathcal{C}=\mathrm{Span}\{ \psi(c)F\,|\,\psi\in C_c^\infty(\R)\text{ and }F\in\mathcal{S} \}.
\end{equation}
We claim
\begin{proposition}\label{FQ0:GFF}
The quadratic form \eqref{defQ0} is closable (and we still denote its closure by $\mathcal{Q}_0$ with domain $\mathcal{D}(\mathcal{Q}_0)$) and lower semibounded: $\mathcal{Q}(u)\geq Q^2\|u\|_2^2/2$.
It determines uniquely a self-adjoint operator $\mathbf{H}^0 $, called the \emph{Friedrichs extension},  with domain denoted by $\mc{D}(\mathbf{H}^0)$ such that:
$$\mc{D}(\mathbf{H}^0 )=\{u\in \mathcal{D}(\mathcal{Q}_0)\, |\, \exists C>0,\forall v\in \mathcal{D}(\mathcal{Q}_0),\,\,\, |\mc{Q}_0(u,v)|\leq C\|v\|_2\}$$
and for $u\in \mc{D}(\mathbf{H}^0)$, $\mathbf{H}^0  u$ is the unique element in $L^2(\R\times \Omega_\T)$  satisfying
$$\mc{Q}_0(u,v)=\langle \mathbf{H}^0  u|v\rangle_2 .$$
\end{proposition}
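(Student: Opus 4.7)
The plan is to identify $\mathcal{Q}_0$ with the form of a symmetric, lower semibounded operator on $\mathcal{C}$, and then invoke the Friedrichs extension theorem. First I would check that $\mathcal{C}$ is dense in $L^2(\R\times\Omega_\T)\simeq L^2(\R)\otimes L^2(\Omega_\T)$: the orthonormal basis $\{\psi_{{\bf k}{\bf l}}\}$ of $L^2(\Omega_\T)$ introduced in Section \ref{sec:fock} lies in $\mathcal{P}\subset\mathcal{S}$, so $\mathcal{S}$ is dense in $L^2(\Omega_\T)$; combined with density of $C_c^\infty(\R)$ in $L^2(\R)$, this yields density of the algebraic tensor product $\mathcal{C}$.

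Next I would introduce the symmetric formal operator
\begin{equation*}
\widetilde{\mathbf{H}}^0:=-\tfrac{1}{2}\partial_c^2+\tfrac{Q^2}{2}+\mathbf{P}
\end{equation*}
on $\mathcal{C}$. Because $\mathbf{P}$ acts diagonally on the Hermite basis $\{\psi_{{\bf k}{\bf l}}\}$ with finite eigenvalues $\lambda_{{\bf k}{\bf l}}$ and therefore preserves $\mathcal{S}$, and because $-\partial_c^2$ preserves $C_c^\infty(\R)$, one sees that $\widetilde{\mathbf{H}}^0$ maps $\mathcal{C}$ into itself, in particular into $L^2(\R\times\Omega_\T)$. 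Integrating by parts in $c$ (the compact support kills boundary terms) and using the symmetry of $\mathbf{P}$ on $\mathcal{S}$ (inherited from the self-adjointness of $\mathbf{P}$ on its Hermite eigenbasis) one obtains
\begin{equation*}
\mathcal{Q}_0(u,v)=\langle \widetilde{\mathbf{H}}^0 u\,|\,v\rangle_2,\qquad u,v\in\mathcal{C},
\end{equation*}
which in turn forces $\mathcal{Q}_0(u,v)=\overline{\mathcal{Q}_0(v,u)}$. Lower semiboundedness $\mathcal{Q}_0(u,u)\geq \tfrac{Q^2}{2}\|u\|_2^2$ then follows immediately, since each of the three terms appearing in $\mathcal{Q}_0(u,u)$ is nonnegative: $\int_\R|\partial_cu|^2\,\dd c\geq 0$ trivially, and $\langle \mathbf{P}u\,|\,u\rangle_2\geq 0$ is a consequence of the diagonalization \eqref{fbasis2} of $\mathbf{P}$.

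With these two ingredients in hand (symmetry and semiboundedness of $\widetilde{\mathbf{H}}^0$ on the dense domain $\mathcal{C}$), I would apply Friedrichs' extension theorem (see e.g.\ Reed--Simon, Theorem X.23, or the First Representation Theorem in Kato). This theorem produces a distinguished self-adjoint extension $\mathbf{H}^0$ of $\widetilde{\mathbf{H}}^0$ together with its closed form, and simultaneously asserts that the original form is closable with closure equal to the form of $\mathbf{H}^0$; the characterization of $\mathcal{D}(\mathbf{H}^0)$ stated in the proposition is precisely the standard operator-domain characterization extracted from the representation theorem. I do not expect any substantive obstacle: all the needed inputs (density of $\mathcal{C}$, invariance of $\mathcal{S}$ under $\mathbf{P}$, nonnegativity of $\mathbf{P}$) have already been established in Section \ref{sec:fock} and in the discussion preceding the proposition, so the proof reduces to verifying the hypotheses of a classical extension theorem.
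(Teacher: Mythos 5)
Your proof is correct, but it takes a genuinely different route to closability from the one the paper uses. You observe that on $\mathcal{C}$ the form may be written as $\mathcal{Q}_0(u,v)=\langle \widetilde{\mathbf{H}}^0 u\,|\,v\rangle_2$ with $\widetilde{\mathbf{H}}^0=-\tfrac12\partial_c^2+\tfrac{Q^2}{2}+\mathbf{P}$ a densely defined, symmetric, nonnegative operator preserving $\mathcal{C}$, and then invoke the operator version of Friedrichs' theorem (Reed--Simon Theorem X.23 / Kato's First Representation Theorem), which delivers closability and the self-adjoint extension in one stroke. Your verification that $\widetilde{\mathbf{H}}^0$ maps $\mathcal{C}$ into itself is the step that makes this cheap shortcut legitimate, and the domain characterization does indeed come packaged with the representation theorem, so the argument is complete. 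The paper, by contrast, never passes through the operator $\widetilde{\mathbf{H}}^0$: it proves closability directly by constructing the abstract completion of $(\mathcal{C},\mathcal{Q}_0)$, showing the canonical map $j$ into $L^2(\R\times\Omega_\T)$ is bounded, and then establishing injectivity of $j$ (the heart of closability) via the observation that if $u_n\to 0$ in $L^2$ while $(\partial_c u_n)$ and $(\mathbf{P}^{1/2}u_n)$ are Cauchy, then their limits must vanish by testing against $\mathcal{C}$; only after that does it apply the weaker "closed form $\Rightarrow$ self-adjoint operator" theorem. Your approach is shorter here, but the paper's hands-on closability argument is the one that generalizes: in Section \ref{sub:bilinear} the same completion/injectivity scheme is reused for the interacting form $\mathcal{Q}$, where for $\gamma\in[\sqrt2,2)$ the potential $V$ is not a function and $\mathcal{Q}$ does \emph{not} visibly arise as $\langle A u\,|\,v\rangle$ for a symmetric operator $A$ on $\mathcal{C}$, so the Friedrichs-from-an-operator shortcut you use here would not be available there.
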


\begin{proof} Recall that the closability of the quadratic form means that its completion with respect to the $ \mathcal{Q}_0$-norm embeds continuously and injectively in $L^2(\R\times \Omega_\T)$.
Its completion is the vector space consisting of equivalence classes of Cauchy sequences of 
$\mc{C}$ for the $\mc{Q}_0$-norm  under the equivalence relation 
$u\sim v$ iff $\mc{Q}_0(u_n-v_n)\to 0$ as $n\to \infty$. This space is a Hilbert space. Let us show that it   embeds injectively and continuously in $L^2(\R\times \Omega_\T)$ by the map $j: [u]\mapsto \lim_{n\to \infty}u_n$. Indeed, $u_n$ is Cauchy for $L^2(\R\times \Omega_\T)$ since $\|u_n-u_m\|^2_{2}\leq 2Q^{-2}\mc{Q}_0(u_n-v_n)$, it thus converges in $L^2(\R\times \Omega_\T)$. Moreover $\|\lim_{n}u_n\|_{2}^2\leq 2Q^{-2}\lim_{n}\mc{Q}_0(u_n)$ 
thus $j$ is bounded. Finally if $j([u])=0$, then for $(u_n)_n$ a representative Cauchy sequence of $[u]$, we have $u_n\to 0$ in $L^2(\R\times \Omega_\T)$ and using
\[ \frac{1}{2}\|\pl_c (u_n-u_m)\|_{2}^2+
\|{\bf P}^{1/2}(u_n-u_m)\|_{2}^2  \leq \mc{Q}_0(u_n-u_m,u_n-u_m),\] 
 one has the convergence in $L^2(\R\times \Omega_\T)$ of $\pl_cu_n\to v$ and ${\bf P}^{1/2}u_n\to w$ for some $v,w \in L^2(\R\times \Omega_\T)$. For each $\varphi \in \mc{C}$, we have as $n\to \infty$
\[ \cjg \pl_cu_n,\varphi\cjd_{2}=\cjg u_n,-\pl_c\varphi\cjd\to 0, \quad 
\cjg {\bf P}^{1/2}u_n, \varphi\cjd_2=\cjg u_n,{\bf P}^{1/2}\varphi\cjd_2\to 0,
\]
  thus $v=w=z=0$ by density of $\mc{C}$ in $L^2(\R\times \Omega_\T)$. This implies that $\mc{Q}_0(u_n)\to 0$ and thus $j$ is injective.
 
 Let us now consider the closure  $\mathcal{Q}_0$ with domain $\mathcal{D}(\mathcal{Q}_0)$. Obviously it is closed  and lower semi-bounded $\mathcal{Q}_0(u)\geq Q^2\|u\|_2^2/2$ so that the construction of the Friedrichs extension then follows from \cite[Theorem 8.15]{rs1}. 
\end{proof}

If we let $\mc{D}(\mc{Q}_0)'$ be the dual to $\mc{D}(\mc{Q}_0)$ (i.e. the space of bounded conjugate linear functionals on $\mc{D}(\mc{Q}_0)$), the injection $L^2(\R\times \Omega_\T)\subset \mc{D}(\mc{Q}_0)'$ is continuous and the operator ${\bf H}^0$ can be extended as a bounded isomorphism 
\[{\bf H}^0:\mc{D}(\mc{Q}_0)\to \mc{D}(\mc{Q}_0)'.\] 
We alsohave $\mc{D}({\bf H}^0)=\{ u\in\mc{D}(\mc{Q}_0)\,|\, {\bf H}^0u\in L^2(\R\times \Omega_\T)\}$ and $({\bf H}^0)^{-1}:L^2(\R\times \Omega_\T)\to \mc{D}({\bf H}^0)$ is bounded. Furthermore, by the spectral theorem, it generates a strongly continuous contraction semigroup of self-adjoint operators $(e^{-t \mathbf{H}^0 } )_{t\geq 0}$ on $L^2(\R\times\Omega_\T)$.

\subsection{Dynamics of the GFF}

The goal of this subsection is to prove the relation  ${\bf H}_\ast^0={\bf H}^0$,  i.e. we want to show
\begin{proposition} 
 For all $f\in L^2(\R \times\Omega_\T)$ and all $t\geq 0$
\begin{align}\label{u00identity}
U_0S_{e^{-t}}U_0^{-1}f=e^{-t{\bf H}^0}f=e^{-\frac{Q^2t}{2}}\E_\varphi[ f (c+B_t, \varphi_t)]
\end{align}
\end{proposition}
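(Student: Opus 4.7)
The plan is to prove the two equalities in \eqref{u00identity} separately. For the right-hand equality, I introduce the one-parameter family
\[ T_t f(c,\varphi):=e^{-Q^2 t/2}\,\E_\varphi[f(c+B_t,\varphi_t)]. \]
Since $B_t$ is the zero-mode of $X_\D(e^{-t+i\cdot})$ and is independent of both $Y_t$ and $\varphi$, and since by Proposition \ref{semigrouppt} the law of $\varphi_t=P\varphi(e^{-t+i\cdot})+Y_t$ realizes the semigroup $e^{-t\mathbf{P}}$ on $L^2(\Omega_\T)$, one has the tensor decomposition $T_t = e^{-Q^2 t/2}\bigl(e^{(t/2)\pl_c^2}\otimes e^{-t\mathbf{P}}\bigr)$ on $L^2(\R\times\Omega_\T)\simeq L^2(\R)\otimes L^2(\Omega_\T)$. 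Hence $(T_t)_{t\geq 0}$ is a strongly continuous self-adjoint contraction semigroup whose generator acts on the core $\mathcal{C}$ from \eqref{core} as $\tfrac{Q^2}{2}-\tfrac12\pl_c^2+\mathbf{P}$, and whose associated quadratic form on $\mathcal{C}$ is exactly $\mathcal{Q}_0$ of \eqref{defQ0}. Since $\mathcal{C}$ is a form core both for $\mathcal{Q}_0$ (by Proposition \ref{FQ0:GFF}) and for the form of this tensor-product generator (by density of $C_c^\infty(\R)$ in $H^1(\R)$ and of $\mathcal{S}$ in $\mathcal{D}(\mathbf{P}^{1/2})$ via the Hermite basis \eqref{fbasishermite}), the two closed forms coincide and so do their Friedrichs extensions, giving $T_t=e^{-t\mathbf{H}^0}$.

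It remains to identify $U_0 S_{e^{-t}}U_0^{-1}$ with $T_t$. By unitarity of $U_0$ and contractivity of $S_{e^{-t}}$ (Proposition \ref{dilationsemi}), it suffices to check the identity on a dense subset of $L^2(\R\times\Omega_\T)$. I take the family
\[ f(c,\varphi)=e^{-Qc}\rho(c)\,e^{\cjg\varphi,h\cjd_\T},\qquad \rho\in C_c^\infty(\R),\ h\in C^\infty(\T),\ \tfrac{1}{2\pi}\!\int_0^{2\pi}\!h(\theta)\,\dd\theta=0, \]
whose linear span is dense in $L^2(\R\times\Omega_\T)$ by the argument at the end of the proof of Proposition \ref{OS}. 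Using Remark \ref{uoext}, $U_0^{-1}f$ is represented by the functional $F(\phi):=\rho(c_\phi)\,e^{\cjg\phi|_\T,h\cjd_\T}$, where $c_\phi:=\tfrac{1}{2\pi}\!\int_0^{2\pi}\!\phi(e^{i\theta})\,\dd\theta$ (the zero-mean condition on $h$ absorbs the $e^{-Qc_\phi}$ coming from $U_0$). Applying the dilation gives $S_{e^{-t}}F(\phi)=F(\phi\circ s_{e^{-t}}-Qt)$, and the decompositions \eqref{def:xt}--\eqref{def:phit}, combined with the vanishing of the angular mean of $Y_t$ and of $P\varphi(e^{-t+i\cdot})$, yield that the zero mode of $(\phi\circ s_{e^{-t}}-Qt)|_\T$ is $c+B_t-Qt$ while its zero-mean component is exactly $\varphi_t$. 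The zero-mean condition on $h$ then gives
\[ S_{e^{-t}}F(\phi)=\rho(c+B_t-Qt)\,e^{\cjg\varphi_t,h\cjd_\T}. \]

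Finally, applying $U_0$ and using that $B_t$ is independent of $\varphi_t$ under $\E_\varphi$ yields
\[ U_0(S_{e^{-t}}F)(c,\varphi)=e^{-Qc}\,\E[\rho(c+B_t-Qt)]\cdot\E_\varphi\bigl[e^{\cjg\varphi_t,h\cjd_\T}\bigr], \]
whereas
\[ T_t f(c,\varphi)=e^{-Qc}\,e^{-Q^2 t/2}\,\E[e^{-QB_t}\rho(c+B_t)]\cdot\E_\varphi\bigl[e^{\cjg\varphi_t,h\cjd_\T}\bigr]. \]
These two quantities coincide by the Cameron--Martin identity $\E[e^{-QB_t-Q^2 t/2}G(B_t)]=\E[G(B_t-Qt)]$ applied to $G=\rho(c+\cdot)$, completing the verification on the dense set; density and the $L^2$-contractivity of both $U_0 S_{e^{-t}}U_0^{-1}$ and $T_t$ extend the equality to all of $L^2(\R\times\Omega_\T)$. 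The main obstacle in the argument is the careful bookkeeping of the $c_\phi$ weight implicit in $U_0^{-1}$ together with the additive shift $-Qt$ produced by the dilation, two effects that are reconciled precisely by the Girsanov/Cameron--Martin computation above; once this is in place, the identification of the generator with the Friedrichs extension of $\mathcal{Q}_0$ is a routine application of tensor-product semigroup theory.
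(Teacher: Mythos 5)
Your proof is correct and rests on the same two ingredients as the paper's: the splitting $X_\D(e^{-t+i\cdot})=B_t+Y_t$ together with Proposition \ref{semigrouppt} for the Ornstein–Uhlenbeck semigroup, and a Girsanov/Cameron–Martin shift to absorb the $Q$-drift produced by $S_{e^{-t}}$. The only presentational difference is that the paper computes $U_0S_{e^{-t}}U_0^{-1}f$ directly for general $f$ (using Remark \ref{uoext} implicitly) and then checks the Cauchy problem on $\mathcal C$, whereas you first identify $T_t$ as a tensor-product semigroup with generator the Friedrichs extension of $\mathcal Q_0$, and then verify $U_0S_{e^{-t}}U_0^{-1}=T_t$ on the explicit dense family $e^{-Qc}\rho(c)e^{\langle\varphi,h\rangle_\T}$; this is a legitimate, slightly more pedestrian route to the same conclusion, and both halves are accurately carried out (including the zero-mean reduction of $h$, which is harmless since $\langle\varphi,1\rangle_\T=0$).
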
 
\begin{proof}
Recalling \eqref{XDdef}, we have the independent sum
\begin{align*}
X_\D(e^{-t+i\theta})=B_{t}+Y_t(\theta)
\end{align*}
where $B_t$ is a Brownian motion and $Y_t$  has zero average on the circle.  We then have 
\begin{align*}  
(U_0S_{e^{-t}}U_0^{-1}f)(c,\varphi)=&e^{-Qc}\E_\varphi [e^{Q(c+B_t-Qt)}f(c+B_t-Qt,P\varphi(e^{-t+i\cdot})+Y_t(\cdot))]\\
 =&e^{-\frac{Q^2}{2}t}\E_\varphi [ f(c+B_t,P\varphi(e^{-t+i\cdot})+Y_t(\cdot))]
\end{align*}
where we have used the Girsanov transform to obtain the last equality.
Since $B_t$ and $Y_t$ are independent conditionally on $\varphi$, this last quantity is also equal to  $e^{-t(\frac{Q^2}{2}-\frac{\partial^2_c}{2})}e^{-t\mathbf{P}}f$ by using Proposition \ref{semigrouppt}.
Furthermore, for $f\in \mathcal{C}$, it is plain to see that the mapping $t\mapsto e^{-t(\frac{Q^2}{2}-\frac{\partial^2_c}{2})}e^{-t\mathbf{P}}f$ solves the Cauchy problem $\partial_tu=-\mathbf{H}^0u$ with $u(0)=f$. Hence $e^{-t(\frac{Q^2}{2}-\frac{\partial^2_c}{2})}e^{-t\mathbf{P}}f=e^{-t\mathbf{H}^0}f$.
\end{proof}

Finally, we have the simple:

\begin{proposition}\label{l2alpha} The following properties hold:
\begin{enumerate}
\item The measure $\dd c\times\P_\T$ is invariant for $e^{\frac{Q^2t}{2}}e^{-t{\bf H}^{0}}$.
\item $e^{-t{\bf H}^{0}}$ extends to a  continuous semigroup on $ L^p(\R \times \Omega_\T)$ for all $p\in [1,+\infty]$ with norm $e^{ -\frac{Q^2}{2}t}$ and it is strongly continuous for $p\in [1,+\infty)$.
\item
$e^{-t{\bf H}^{0}}$ extends to a strongly continuous semigroup on $e^{-\alpha c} L^2(\R \times \Omega_\T)$ for all $\alpha\in\R$ with norm $e^{(\frac{\alpha^2}{2}-\frac{Q^2}{2})t}$.
\end{enumerate}
\end{proposition}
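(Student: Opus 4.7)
The natural starting point is the Feynman--Kac representation \eqref{u00identity}, which expresses $e^{-t\mathbf{H}^0}f(c,\varphi) = e^{-Q^2t/2}\E_\varphi[f(c+B_t,\varphi_t)]$; the plan is to read off all three claims directly from this identity.

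For (1), I would integrate the right-hand side against $dc\otimes \P_\T$, use Fubini to pull the expectation outside, and then invoke the translation invariance of Lebesgue measure under the shift by $B_t$ together with the stationarity of $\P_\T$ under the Ornstein--Uhlenbeck dynamics of $\varphi_t$ (each Fourier mode is a stationary OU process whose invariant law is the prescribed standard Gaussian, as already observed in the proof of Proposition \ref{semigrouppt}). This yields the invariance of $dc\otimes \P_\T$ under $e^{Q^2t/2}e^{-t\mathbf{H}^0}$.

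For (2), Jensen's inequality applied to $|\E_\varphi[\,\cdot\,]|^p$, combined with (1), immediately gives $\|e^{-t\mathbf{H}^0}f\|_p\leq e^{-Q^2t/2}\|f\|_p$ for all $p\in[1,\infty]$. Sharpness comes from the constant function: $e^{-t\mathbf{H}^0}\mathbf{1} = e^{-Q^2t/2}\mathbf{1}$ handles $p=\infty$ directly, and for $p<\infty$ one tests against indicators $\mathbf{1}_{[-N,N]\times\Omega_\T}$ and lets $N\to\infty$. For strong continuity when $p<\infty$, I would argue by density of the core $\mathcal{C}$ of \eqref{core}: for $f\in\mathcal{C}$, the a.s. continuity of $t\mapsto(B_t,\varphi_t)$ and dominated convergence give $e^{-t\mathbf{H}^0}f\to f$ pointwise and then in $L^p$, after which the uniform bound extends the statement to all of $L^p$.

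For (3), the main obstacle is that a naive Cauchy--Schwarz estimate applied to the Feynman--Kac integral, after conjugating by $M_\alpha f:=e^{-\alpha c}f$ to move into $L^2$, produces only the exponent $e^{(\alpha^2 - Q^2/2)t}$, worse than the target $e^{(\alpha^2/2 - Q^2/2)t}$. To recover the sharp constant I would exploit the fact that the zero-mode variable $c$ and the nonzero Fourier modes driving $\varphi_t$ are independent: since $-\tfrac12\partial_c^2+\tfrac{Q^2}{2}$ and $\mathbf{P}$ act on independent tensor factors of $L^2(\R\times\Omega_\T)=L^2(\R)\otimes L^2(\Omega_\T)$ and commute, the semigroup factorizes as
\[ e^{-t\mathbf{H}^0} = e^{-Q^2t/2}\,\bigl(e^{t\partial_c^2/2}\bigr)\otimes e^{-t\mathbf{P}}. \]
The $L^2(\Omega_\T)$-factor $e^{-t\mathbf{P}}$ is contractive by Proposition \ref{semigrouppt}, so only the $c$-piece requires genuine work. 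A direct calculation shows that $M_\alpha^{-1}(e^{t\partial_c^2/2})M_\alpha$ acts on $L^2(\R)$ as convolution with the kernel $(2\pi t)^{-1/2}e^{-y^2/(2t)-\alpha y}$, whose Fourier symbol equals $e^{t(\alpha+ip)^2/2}$; its operator norm is $\sup_{p\in\R}e^{t(\alpha^2-p^2)/2}=e^{t\alpha^2/2}$, attained at $p=0$. Multiplying the three factors gives the announced constant. Strong continuity of the weighted semigroup is then inherited from each factor via the tensor product decomposition, the $c$-factor through pointwise convergence $e^{t(\alpha+ip)^2/2}\to 1$ as $t\to 0$ with uniform control on compact $p$-sets.
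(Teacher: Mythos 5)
Your proposal is correct and follows essentially the same route as the paper: part~(1) from the Feynman--Kac formula \eqref{u00identity} via invariance of Lebesgue measure under Brownian shift and of $\P_\T$ under the Ornstein--Uhlenbeck dynamics, part~(2) from Jensen plus~(1), and part~(3) by conjugating with multiplication by $e^{\mp\alpha c}$ to reduce to an $L^2$ estimate. The only cosmetic difference is in~(3): the paper records the conjugation identity $Ke^{-t\mathbf{H}^0}K^{-1}=e^{t(\alpha^2/2-\alpha\partial_c)}e^{-t\mathbf{H}^0}$ and reads off the norm from the unitarity of the translation $e^{-t\alpha\partial_c}$, whereas you split off the $c$-factor of the tensor product and compute the convolution kernel and Fourier symbol explicitly; these are the same computation in different clothing, and both yield the sharp constant $e^{t\alpha^2/2}$.
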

\begin{proof} 1) This is a consequence of \eqref{u00identity}: indeed the processes $B$ and $Y$ are independent and describe two dynamics for which the measures $\dd c$ and $\P$ are respectively invariant.
2) follows from \eqref{u00identity}, Jensen's inequality and the fact that $\dd c\otimes \P_\T$ is invariant for ${\bf H}^{0}$. 3)
The map $K:f\mapsto e^{ -\alpha c}f$ is unitary from  $L^2(\R \times \Omega_\T )\to e^{- \alpha  c} L^2(\R \times \Omega_\T)$. We have $Ke^{-t{\bf H}^{0}}K^{-1}=e^{t(\frac{\alpha^2}{2}- \alpha \partial_c)}e^{-t{\bf H}^{0}}$ which implies the claim.
\end{proof}

\begin{remark}\label{discreteH0}
Using the decomposition $L^2( \Omega_\T)=\bigoplus_{{\bf k,l}}\ker 
({\bf P}-\lambda_{{\bf k,l}})$, the operator ${\bf H}^0$ is unitarily equivalent to the 
direct sum $\bigoplus_{{\bf k,l}} (-\frac{1}{2}\pl_c^2+\frac{Q^2}{2} +\lambda_{{\bf k,l}})$, 
each of these operators being a shifted Laplacian on the real line $\R$. Consequently (using Fourier transform in $c$), 
${\bf H}^0$ has no $L^2$-eigenvalue, its spectrum is absolutely continuous and the family $(e^{iPc}\psi_{\bf k,l})_{P,{\bf k,l}}$ form a complete family of generalized eigenstates diagonalizing ${\bf H}^0$. 
\end{remark}

\subsection{Diagonalization of the free Hamiltonian using the Virasoro algebra}\label{repth}

We start by explaining the diagonalization of the free (i.e. non interacting) Hamiltonian $\mathbf{H}^0$ which corresponds to the case $\mu=0$ in \eqref{Hdef}. As explained in Remark \ref{discreteH0}, that can be done directly by using the orthonormal basis of Hermite polynomials $\psi_{{\bf kl}}$ of $L^2(\Omega_\T)$ combined  with the Plancherel formula for the Fourier transform on the real line: for each $u_1,u_2\in L^2(\R\times \Omega_\T)$, one has  
\begin{equation}\label{<F,G>usingH0}
\cjg u_1\,|\, u_2\cjd_2 =\frac{1}{2\pi} \sum_{{\bf k},{\bf l}\in \mc{N}}\int_\R  \cjg u_1\,|\, e^{iPc}\psi_{{\bf kl}}\cjd_2  \cjg  e^{iPc}\psi_{{\bf kl}}\,| \,u_2\cjd_2 \,\dd P.
\end{equation}
It will be useful however to use another basis for $L^2(\Omega_\T)$ which respects its  underlying complex analytic structure; this new basis, made up of $\mathbf{H}^0$-eigenstates, will be generated by the action on $L^2(\R\times \Omega_\T)$  of two commuting unitary representations of the Virasoro algebra (as motivated in the end of Subsection \ref{outline:subspectral}).  We follow below the Segal-Sugawara construction for the Fock representation of the Heisenberg algebra. Let us emphasize that the material we introduce  here is standard;  to keep the paper self-contained, we recall the main properties of the construction and just give sketches of the proofs  (see for instance \cite{gordon,kac} for more details). 

 \subsubsection{Fock representation of the Heisenberg algebra}
 
 We will work on the vector space 
 \begin{equation}\label{smoothexpgrowth}
 \mathcal{C}_\infty:=\mathrm{Span}\{ \psi(c)F\,|\,\psi\in C^\infty(\R)\text{ and }F\in\mathcal{S} \}.
 \end{equation}
(not to be confused with $\mathcal{C}$ which is a subset of $\mathcal{C}_\infty$) and use the  complex coordinates \eqref{varphin}, i.e. we denote for $n>0$
\begin{align*}
\partial_n:=\frac{\partial}{\partial\varphi_{n}}= \sqrt{n} (\partial_{x_n}-i \partial_{y_n}) \quad \text{ and }\quad \partial_{-n}:=\frac{\partial}{\partial\varphi_{-n}}= \sqrt{n} (\partial_{x_n}+i \partial_{y_n}).
\end{align*}
We define 
on $ \mathcal{C}_\infty$  the following operators for $n>0$: 
\begin{align*}
 \mathbf{A}_n&= \tfrac{i}{2}\partial_{n},\ \ \  \mathbf{A}_{-n}=\tfrac{i}{2}(\partial_{-n}-2n\varphi_{n})\\
\widetilde{\mathbf{A}}_n&= \tfrac{i}{2}\partial_{-n},\ \ \ \widetilde{\mathbf{A}}_{-n}=\tfrac{i}{2}(\partial_{n}-2n\varphi_{-n})\\
\mathbf{A}_0&=\widetilde{\mathbf{A}}_0=\tfrac{i}{2}(\partial_c+Q).
\end{align*}
Their restrictions to $\mathcal{C}$ are closable operators in  $L^2(\R\times \Omega_\T)$  satisfying (on their closed extension)
\begin{align}
 \mathbf{A}_n^\ast= \mathbf{A}_{-n},\ \ \ \widetilde{\mathbf{A}}_n^\ast=\widetilde{\mathbf{A}}_{-n}.
\label{adjoan}
\end{align}
 Furthermore 
 $ \mathbf{A}_n1=0$ and $\widetilde{ \mathbf{A}}_n1=0$ for  $n>0$. It is easy to see that the space $ \mathcal{C}_\infty$ is stable by the operators $\mathbf{A}_n,\widetilde{\mathbf{A}}_n$  and we have the commutation relations  on $ \mathcal{C}_\infty$
\begin{align}
[\mathbf{A}_n,
 \mathbf{A}_{m}]=\frac{_n}{^2}\delta_{n,-m}=[\widetilde{\mathbf{A}}_n,
\widetilde{\mathbf{A}}_{m}],\ \ 
[ \mathbf{A}_n,\widetilde{ \mathbf{A}}_m]=0.\label{com  mu}
\end{align}
Thus $ \mathbf{A}_n$ and $\widetilde{ \mathbf{A}}_n$  ($n>0$) are annihilation operators   and $  \mathbf{A}_{-n},\widetilde{ \mathbf{A}}_{-n}$ creation operators.  By identifying canonically  $\mathcal{S}$ (defined in the beginning of subsection \ref{sec:fock}) as a subspace of $ \mathcal{C}_\infty$, it is plain to check that   $\mathcal{S}$ is stable under the action of these operators. As before, let ${\bf k}, {\bf l} \in \mathcal{N}$ and define
the polynomials 
\begin{align}\label{fbasis}
\hat \pi_{{\bf k}{\bf l}}=
\prod_{n>0}  \mathbf{A}_{-n}^{k_n} \tilde{\mathbf{A}}_{-n}^{l_n}1.
\end{align}
Then $\hat \pi_{{\bf k}{\bf l}}$ and $\hat \pi_{{\bf k'}{\bf l'}}$ 
are orthogonal  if ${\bf k}\neq {\bf k'}$  or ${\bf l}\neq {\bf l'}$ and the $\hat \pi_{{\bf k}{\bf l}}$'s with $|{\bf k}|+|{\bf l}|=N$ 
span the eigenspace of ${\bf P}$ in $L^2(\Omega_\T)$ with eigenvalue $N$\footnote{Explicitly: $\hat \pi_{{\bf k}{\bf l}}=\prod_{n>0} (-in)^{k_n+l_n}  \varphi_{n}^{k_n}   \varphi_{-n}^{l_n}   +P(\varphi_{n},\varphi_{-n})$ where $P$ is a polynomial in $\varphi_{n},\varphi_{-n}$ spanned by monomials of the form $\prod_{n>0}   \varphi_{n}^{k'_n}   \varphi_{-n}^{l'_n} $ with $k'_n\leq k_n$, $l'_n,\leq l_n$  and  $\sum_{n>0} k'_n+l'_n<\sum_{n>0} k_n+l_n$. }. We denote $ \pi_{{\bf k}{\bf l}}:= \hat \pi_{{\bf k}{\bf l}}/ \|\hat \pi_{{\bf k}{\bf l}}\|_{L^2(\Omega_\T)} $ the normalized eigenvectors.


%

 \subsubsection{Segal-Sugawara construction}
Now we use the Fock representation of the Heisenberg algebra to construct the Virasoro representation.
We define the {\it normal ordered product} on $ \mathcal{C}_\infty$ by
 $:\!\mathbf{A}_n\mathbf{A}_m\!\!:\,=\mathbf{A}_n\mathbf{A}_m$ if $m>0$ and $\mathbf{A}_m\mathbf{A}_n$ if $n>0$ (i.e. annihilation operators are on the right) and then for all $n \in \Z$
\begin{align}\label{virassoro}
\mathbf{L}_n^0:=-i(n+1)Q\mathbf{A}_n+\sum_{m\in\Z}:\mathbf{A}_{n-m}\mathbf{A}_m:
\end{align}
\begin{align}\label{virassorotilde}
\widetilde{\mathbf{L}}_n^0:=-i(n+1)Q\widetilde{\mathbf{A}}_n+\sum_{m\in\Z}:\widetilde{\mathbf{A}}_{n-m}\widetilde{\mathbf{A}}_m:\,\,.
\end{align}
These operators are well defined on $ \mathcal{C}_\infty$ (since only a finite number of terms contribute) and their restrictions to $\mathcal{C}$ are closable operators satisfying (on their closed extensions) 
\begin{align}
(\mathbf{L}_n^0)^\ast=\mathbf{L}^0_{-n},\ \ \ (\widetilde{\mathbf{L}}_n^0)^\ast=\widetilde{\mathbf{L}}^0_{-n}.
\label{adjo}
\end{align}
Furthermore the vector space $ \mathcal{C}_\infty$ is stable under  $\mathbf{L}_n^0$ and $\widetilde{\mathbf{L}}_n^0$ for all $n \in \Z$; on $ \mathcal{C}_\infty$ the $\mathbf{L}_n^0$ satisfy the commutation relations of the {\it Virasoro Algebra} (see \cite[Prop 2.3]{kac}): 
\begin{align}\label{virasoro}
[\mathbf{L}_n^0,\mathbf{L}_m^0]=(n-m)\mathbf{L}_{n+m}^0+\frac{c_L}{12}(n^3-n)\delta_{n,-m}
\end{align}
where the central charge is
\begin{align*}
c_L=1+6Q^2.
\end{align*}
These commutation relations can be checked by using the fact that, on $ \mathcal{C}_\infty$, only finitely many terms contribute in \eqref{virasoro} and using the commutation relation \eqref{com mu}.
 $\widetilde{\mathbf{L}}_n^0$ satisfy the same commutation relations \eqref{virasoro} and commute with the $ \mathbf{L}_n^0$'s. Note also that 
 \begin{align}\label{L0def}
\mathbf{L}_{0}^0&=\tfrac{1}{4}(-\pl_c^2+Q^2)+2\sum_{n>0}\mathbf{A}_{-n}\mathbf{A}_n\\
\widetilde{\mathbf{L}}_{0}^0&=\tfrac{1}{4}(-\pl_c^2+Q^2)+2\sum_{n>0}\widetilde{\mathbf{A}}_{-n}\widetilde{\mathbf{A}}_n,
\label{L0def1}
\end{align}
so that one can easily check that the $\mu=0$ Hamiltonian $\mathbf{H}^0:=  -\frac{1}{2}\pl_c^2 + \frac{1}{2} Q^2+ \mathbf{P}$ has the following decomposition when restricted on $\mathcal{C}_\infty$  
$$\mathbf{H}^0=\mathbf{L}_{0}^0+
\widetilde{\mathbf{L}}_{0}^0.
$$ 
\begin{remark} In the terminology of representation theory, we have a {\it unitary representation} of two commuting  Virasoro Algebras on $L^2(\R \times \Omega_\T)$ (unitary in the sense that \eqref{adjo} holds) and this representation is reducible as we will see below by constructing stable sub-representations.
\end{remark}

\subsubsection{Diagonalizing $\mathbf{H}^0$ using the Virasoro representation }\label{sectionDiagvirasoro}
Now we explain how to construct the generalized eigenstates of the free Hamiltonian $\mathbf{H}^0$ using the families of operators $(\mathbf{L}_n^0)_n$ and $(\widetilde{\mathbf{L}}_n^0)_n$. 
Recall that, for $\alpha\in \C$, we have defined the function  
\begin{align}\label{psialphadef}
\Psi^0_\alpha(c,\varphi):=e^{(\alpha-Q)c}\in \mathcal{C}_\infty.
\end{align}
For $\alpha\in \C$, these are generalized eigenstates of ${\bf H}^0$: they never belong to $L^2(\R \times \Omega_\T)$ but rather to some weighted spaces $e^{\beta |c|}L^2(\R\times \Omega_\T)$ for $\beta>|{\rm Re}(\alpha)-Q|$, hence their name ``generalized eigenstates". We have
\begin{equation}\label{L0psialpha}
\begin{split}
\mathbf{L}_0^0\Psi^0_\alpha&=\widetilde{\mathbf{L}}_0^0\Psi^0_\alpha=\Delta_{\alpha}
\Psi^0_\alpha\\
\mathbf{L}_n^0\Psi^0_\alpha&=\widetilde{\mathbf{L}}_n^0\Psi^0_\alpha=0,\ \ \ n>0,
\end{split}
\end{equation}
where  $\Delta_\alpha$ is the conformal weight \eqref{deltaalphadef}.   In the language of representation theory (or in the CFT terminology),  $\Psi^0_\alpha$ is called {\it highest weight state} with highest weight $\Delta_{\alpha}$ for both algebras. Before defining the so-called descendants of $\Psi^0_\alpha$, we introduce the following definition:  
%
%
  
\begin{definition}\label{young}
A sequence of integers    $\nu= (\nu_i)_{i \geq 0}$ is called a Young diagram if the  mapping $i\mapsto\nu_i $ is non-increasing and if $\nu_i=0$ for $i$ sufficiently large. 
We denote by $\mathcal{T}$ the set of all Young diagrams. We will sometimes write $\nu=(\nu_i)_{i \in\llbracket 1,k\rrbracket}$ where $k$ is the last integer i such that $\nu_i>0$ and denote by $|\nu|:= \sum_{i \geq 1} \nu_i$ the length of the Young diagram\footnote{This length should not be confused with the length \eqref{firstlength} of a sequence of integers.}.  We set $\mc{T}_j:=\{\nu\in \mc{T}\, |\, |\nu|=j\}$ the set of Young diagrams of length $j$.
\end{definition}

Given two Young diagrams $\nu= (\nu_i)_{i \in [1,k]}$ and $\tilde{\nu}= (\tilde{\nu}_i)_{i \in [1,j]}$ we denote  
\begin{equation*}
\mathbf{L}_{-\nu}^0=\mathbf{L}_{-\nu_k}^0 \cdots \, \mathbf{L}_{-\nu_1}^0, \quad \quad \quad   \tilde{\mathbf{L}}_{-\tilde \nu}^0=\tilde{\mathbf{L}}_{-\tilde\nu_j}^0 \cdots\, \tilde{\mathbf{L}}_{-\tilde\nu_1}^0
\end{equation*}
and define
\begin{align}\label{psibasis}
\Psi^0_{\alpha,\nu, \tilde\nu}=\mathbf{L}_{-\nu}^0\tilde{\mathbf{L}}_{-\tilde\nu}^0  \: \Psi^0_\alpha,
\end{align}
with the convention that $\Psi^0_{\alpha,\emptyset, \emptyset}=\Psi^0_{\alpha}$.
The vectors $\Psi^0_{\alpha,\nu, \tilde\nu}$ are called the \emph{descendants} states of $\Psi^0_\alpha$. We gather in the following proposition their main properties

 \begin{proposition}\label{prop:mainvir0} The following holds:\\
1) For each pair of Young diagrams $\nu,\tilde{\nu}\in \mathcal{T}$,  the  descendant state $\Psi^0_{\alpha,\nu, \tilde\nu}$ can be written as   
\begin{align}\label{psibasis1}
\Psi^0_{\alpha,\nu, \tilde\nu}=\mathcal{Q}_{\alpha,\nu,\tilde \nu}\Psi^0_\alpha
\end{align}
where $\mathcal{Q}_{\alpha,\nu,\tilde\nu}\in \mathcal{P} $   is a polynomial.\\
2)  for all $\alpha \in \C$
\begin{equation*}
\mathbf{L}_0^0\Psi^0_{\alpha,\nu, \tilde\nu} = (\Delta_{\alpha}
+|\nu|)\Psi^0_{\alpha,\nu ,\tilde\nu},\ \ \  \tilde{\mathbf{L}}_0^0\Psi^0_{\alpha,\nu, \tilde\nu} = (\Delta_{\alpha}+|\tilde\nu|)\Psi^0_{\alpha,\nu ,\tilde\nu}
\end{equation*}
and thus since $\mathbf{H}^0=\mathbf{L}_0^0+\tilde{\mathbf{L}}_0^0$
\begin{equation*}
\mathbf{H}^0\Psi^0_{\alpha,\nu,\tilde{\nu}} = (2 \Delta_\alpha+|\nu|+|\tilde{\nu}| )\Psi^0_{\alpha,\nu,\tilde{\nu}}  .
\end{equation*}
3) {\bf Completeness}: the inner products of the descendant states obey
\begin{equation}\label{scapo}  
 \langle \mathcal{Q}_{2Q-\bar\alpha,\nu,\tilde\nu} | \mathcal{Q}_{\alpha,\nu',\tilde\nu'} \rangle_{L^2(\Omega_\T)}=\delta_{|\nu| ,|\nu'|}\delta_{|\tilde\nu| ,|\tilde\nu'|}F_{\alpha}(\nu,\nu')F_{\alpha}(\tilde\nu,\tilde\nu')
\end{equation} 
where each coefficient   $F_{\alpha}(\nu,\nu')$  is a  polynomial in  ${\alpha}$, called the {\it Schapovalov form}.  The functions $(\mathcal{Q}_{\alpha,\nu,\tilde \nu})_{\nu,\tilde\nu\in\mathcal{T}}$ are linearly independent for $$\alpha\nin \{{\alpha_{r,s}} \mid \,\,r,s\in \N^\ast ,rs \leq \max(|\nu|, |\tilde \nu |)\}\quad \quad\text{with }\quad {\alpha_{r,s}}=Q-r\frac{\gamma}{2}-s\frac{2}{\gamma}.$$
4)  the following holds
 \begin{equation}\label{defqalpha}
\begin{gathered}
\mathcal{Q}_{\alpha,\nu,\tilde{\nu}}: =   \sum_{\k,\l, |{\bf k}|+|{\bf l}|=N}M^{N}_{\alpha,\k\l,\nu\tilde\nu} \psi_{\k\l},
\end{gathered}
\end{equation}  
for some coefficients $M^{N}_{\alpha,\k\l,\nu\tilde\nu}$ polynomial in $\alpha\in \C$.\\
5) {\bf Spectral decomposition}: if $u_1,u_1\in L^2(\R\times\Omega_\T)$ then
\begin{align}\label{fcomplete}
\langle u_1\, |\,  u_2\rangle_{2}=\frac{1}{2 \pi}\sum_{\nu,\tilde\nu,\nu',\tilde\nu'\in \mc{T}}\int_\R \langle u_1\,|\,
\Psi^0_{Q+iP,\nu',\tilde{\nu}'} \rangle_{2} \langle \Psi^0_{Q+iP,\nu,\tilde{\nu}}\, |\, u_2\rangle _{2}F^{-1}_{P+iQ}(\nu,\nu')F^{-1}_{P+iQ}(\tilde\nu,\tilde\nu')\, \dd P.
\end{align}

   \end{proposition}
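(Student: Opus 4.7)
My plan is to address the five items in the order (1), (2), (4), (3), (5). Items (1), (2), (4) are short algebraic computations, while the substantive work lies in (3) (Shapovalov form) and (5) (change-of-basis into the Plancherel formula).

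For (1), I would first observe that in $\mathbf{L}_{-n}^0$ the only operator involving $\partial_c$ is $\mathbf{A}_0 = \tfrac{i}{2}(\partial_c + Q)$, and that $\mathbf{A}_0\Psi^0_\alpha = \tfrac{i\alpha}{2}\Psi^0_\alpha$. Hence on $\{P(\varphi)\Psi^0_\alpha : P\in\mathcal{P}\}$ the operator $\mathbf{L}_{-n}^0$ reduces to a differential operator in the $\varphi$-variables only (times $\Psi^0_\alpha$), and induction on the length of $\nu, \tilde\nu$ yields (1). For (2), the commutation relations $[\mathbf{L}_0^0, \mathbf{L}_{-n}^0] = n \mathbf{L}_{-n}^0$ together with the highest-weight identities \eqref{L0psialpha} give the eigenvalue by induction. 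For (4), I would observe that $\mathbf{H}^0 = -\tfrac12\partial_c^2 + \tfrac{Q^2}{2} + \mathbf{P}$ acts on $\mathcal{Q}_{\alpha,\nu,\tilde\nu}\Psi^0_\alpha$ as $2\Delta_\alpha \mathcal{Q}_{\alpha,\nu,\tilde\nu}\Psi^0_\alpha + (\mathbf{P}\mathcal{Q}_{\alpha,\nu,\tilde\nu})\Psi^0_\alpha$; comparing with the eigenvalue from (2) gives $\mathcal{Q}_{\alpha,\nu,\tilde\nu}\in\ker(\mathbf{P}-N)$ with $N = |\nu|+|\tilde\nu|$, which is spanned by the $\psi_{\k\l}$ with $|\k|+|\l|=N$. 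Polynomiality of $M^N_{\alpha,\k\l,\nu\tilde\nu}$ in $\alpha$ is read off from the proof of (1): each application of $\mathbf{A}_0$ contributes a linear factor in $\alpha$ and no other operator depends on $\alpha$.

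For (3), the cleanest approach is to introduce reduced operators $\mathcal{L}_n^\alpha, \widetilde{\mathcal{L}}_n^\alpha$ on $L^2(\Omega_\T)$ characterized by $\mathbf{L}_n^0(f(\varphi)\Psi^0_\alpha) = (\mathcal{L}_n^\alpha f)\Psi^0_\alpha$. These still satisfy the Virasoro commutation relations with central charge $c_L$, and $1\in L^2(\Omega_\T)$ becomes a highest weight vector at weight $\Delta_\alpha$. A direct computation on the generators $\mathbf{A}_n$ should give the adjoint relation $(\mathcal{L}_n^\alpha)^\ast = \mathcal{L}_{-n}^{2Q-\bar\alpha}$, the shift $\alpha\mapsto 2Q-\bar\alpha$ arising precisely to compensate for the non-skew-adjoint part of $\mathbf{A}_0$. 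Using this together with the commutativity of $\mathcal{L}$ and $\widetilde{\mathcal{L}}$, I would rewrite
\begin{align*}
\langle \mathcal{Q}_{2Q-\bar\alpha,\nu,\tilde\nu} \mid \mathcal{Q}_{\alpha,\nu',\tilde\nu'}\rangle_{L^2(\Omega_\T)} = \langle 1 \mid \mathcal{L}_\nu^\alpha \mathcal{L}_{-\nu'}^\alpha \widetilde{\mathcal{L}}_{\tilde\nu}^\alpha \widetilde{\mathcal{L}}_{-\tilde\nu'}^\alpha \cdot 1 \rangle_{L^2(\Omega_\T)}.
\end{align*}
Normal-ordering $\mathcal{L}_\nu^\alpha \mathcal{L}_{-\nu'}^\alpha$ using the Virasoro relations and applying the highest-weight property kills everything except a scalar $F_\alpha(\nu,\nu')\delta_{|\nu|,|\nu'|}$, polynomial in $\alpha$ by (4); the same manipulation on the tilde side contributes $F_\alpha(\tilde\nu,\tilde\nu')\delta_{|\tilde\nu|,|\tilde\nu'|}$. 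The linear independence statement is then the classical Kac determinant formula, which locates the zero set of $\det F_\alpha$ at level $N$ precisely at $\{\alpha_{r,s} : rs\leq N\}$.

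For (5), I would rewrite the standard Plancherel formula \eqref{<F,G>usingH0} in the descendant basis. For $\alpha = Q+iP$ one has $2Q-\bar\alpha = \alpha$, so (3) and (4) together assert that, away from the Lebesgue-null set $(Q+i\R)\cap\{\alpha_{r,s}\}$, the family $\{\mathcal{Q}_{Q+iP,\nu,\tilde\nu}\}_{|\nu|+|\tilde\nu|=N}$ is a basis of $\ker(\mathbf{P}-N)$ whose inverse Gram matrix factors as $F^{-1}_{Q+iP}(\nu,\nu')F^{-1}_{Q+iP}(\tilde\nu,\tilde\nu')$. Inserting this change of basis eigenspace-by-eigenspace into \eqref{<F,G>usingH0} then yields \eqref{fcomplete}. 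The main obstacles are the verification of the adjoint relation $(\mathcal{L}_n^\alpha)^\ast = \mathcal{L}_{-n}^{2Q-\bar\alpha}$, which requires careful bookkeeping of the $\mathbf{A}_0$-dependence, and the invocation of the Kac determinant to control the exceptional locus; once these are in place, the remainder of the argument is a formal spectral-theoretic manipulation.
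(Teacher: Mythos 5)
Your proposal tracks the paper's own proof very closely: you introduce the same reduced operators (the paper's $\mathbf{L}^{0,\alpha}_n$, which you call $\mathcal{L}^\alpha_n$) obtained by replacing $\mathbf{A}_0$ with the scalar $\tfrac{i\alpha}{2}$, verify the same adjoint relation $(\mathcal{L}^\alpha_n)^\ast=\mathcal{L}^{2Q-\bar\alpha}_{-n}$, normal-order using the Virasoro relations to show the Shapovalov form is a polynomial in $\Delta_\alpha$, invoke the Kac determinant for the exceptional locus, and conclude item (5) by inserting the change of basis into \eqref{<F,G>usingH0} eigenspace-by-eigenspace. The only departure from the paper is organizational: you establish item (4) before item (3) and cite it for the polynomiality of $F_\alpha$, whereas the paper proves (3) directly via the appendix lemma showing $\mathbf{L}^{0,\alpha}_\nu\mathbf{L}^{0,\alpha}_{-\nu'}1=\sum_k a_k(\mathbf{L}^{0,\alpha}_0)^k1$ with $\alpha$-independent $a_k$; both routes ultimately rest on the same observation that each $\mathcal{L}^\alpha_n$ is affine in $\alpha$, so the reordering is harmless.
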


\begin{proof}
The decomposition \eqref{psibasis1} can be obtained from the definition \eqref{psibasis} by using   that $[{\bf A}_n,e^{(\alpha-Q)c}]=[\tilde{{\bf A}}_n,e^{(\alpha-Q)c}]=0$ for all $n\not=0$, $[\pl_c,e^{(\alpha-Q)c}]=e^{(\alpha-Q)c}(\alpha-Q){\rm Id}$ and that finitely many applications of ${\bf A}_{n}$ and $\tilde{\bf A}_n$ to ${\bf 1}$ is a polynomial in $\mc{P}$.  
Actually, one can even show  that  
for $\nu$ a Young diagram
\begin{align*}
\mathbf{L}_{-\nu}^0 \Psi^0_\alpha=(\mathbf{L}^{0,\alpha}_{-\nu} 1)\Psi^0_\alpha
\end{align*}
where, for a Young diagram $\nu=( \nu_1, \cdots, \nu_k )$, we define $\mathbf{L}^{0,\alpha}_{\nu}:=\mathbf{L}^{0,\alpha}_{\nu_1}\dots \mathbf{L}^{0,\alpha}_{\nu_k}$ and the operators $\mathbf{L}^{0,\alpha}_n$, $n\in\Z$ act in $L^2(\Omega_\T)$ and are given by the expression \eqref{virassoro} where we replace ${\bf A}_0$ by $
\frac{i}{2}(\alpha+Q)$ i.e.
\begin{equation*}
\mathbf{L}^{0,\alpha}_n:= \left\{
\begin{array}{ll} i(\alpha-Q-nQ)\mathbf{A}_n+\sum_{m\neq n,0} \mathbf{A}_{n-m}\mathbf{A}_m
& \, n\neq 0\\
\frac{\alpha}{2}(Q-\frac{\alpha}{2})+2\sum_{m>0} \mathbf{A}_{-m}\mathbf{A}_m
&\, n=0
 \end{array} \right..
\end{equation*}
Hence on their closed extensions we have
\begin{equation}\label{adjoint}
(\mathbf{L}^{0,\alpha}_n)^\ast=\mathbf{L}^{0,2Q-\bar\alpha}_{-n}
\end{equation}
and $\mathbf{L}^{0,\alpha}_n$ satisfies \eqref{virasoro}.  The reader should notice here that the order in which the $\nu_j$ appear in previous definition ensures that $(\mathbf{L}^{0,2Q-\bar\alpha}_{\nu})^\ast=\mathbf{L}^{0,\alpha}_{-\nu}$. The operators $(\tilde{\mathbf{L}}^{0,\alpha}_{n})_n$ are defined in a similar fashion and commute with $(\mathbf{L}^{0,\alpha}_{n})_n$. Therefore $ \mathcal{Q}_{\alpha,\nu,\tilde\nu} =\mathbf{L}^{0,\alpha}_{-\nu}\tilde{\mathbf{L}}^{0,\alpha}_{-\tilde\nu}1$. Then 2) results from   \eqref{L0psialpha} and the commutation relations \eqref{virasoro} with $n=0$.

Since $\mathbf{L}^{0,\alpha}_\nu$ and $\tilde {\mathbf{L}}^{0,\alpha}_{\tilde{\nu}}$ commute we have
\begin{align}\label{QQ}
 \langle \mathcal{Q}_{2Q-\bar\alpha,\nu,\tilde\nu}\, |\, \mathcal{Q}_{\alpha,\nu',\tilde\nu'} \rangle_{L^2(\Omega_\T)}=\langle 1\,|\, \tilde{\mathbf{L}}^{0,\alpha}_{\tilde\nu}\tilde{\mathbf{L}}^{0,\alpha}_{-\tilde\nu'}\mathbf{L}^{0,\alpha}_{\nu}\mathbf{L}^{0,\alpha}_{-\nu'}1\rangle_{L^2(\Omega_\T)}.
\end{align}
Let us now compute the right-hand side. By Lemma \ref{LemAppendixVir}, one has for arbitrary $t_1, \dots, t_k \in \Z$ such that $t_1+ \cdots +t_k>0$  
\begin{equation}\label{sumpositive}
\mathbf{L}^{0,\alpha}_{t_1} \cdots \mathbf{L}_{t_k}^{0,\alpha}1=0, \quad \tilde{\mathbf{L}}^{0,\alpha}_{t_1} \cdots \tilde{\mathbf{L}}_{t_k}^{0,\alpha}1=0.
\end{equation}
 Therefore, if $|\nu| > |\nu'|$ or  $|\tilde{\nu}| > |\tilde{\nu}'|$ then we get that \eqref{QQ} is equal to $0$ by using \eqref{sumpositive}. The case  $|\nu| < |\nu'|$ or  $|\tilde{\nu}| < |\tilde{\nu}'|$ can be dealt similarly and yields $0$ also. Hence in what follows we suppose that $|\nu| = |\nu'|$ and $|\tilde \nu| = |\tilde{\nu}'|$.

Lemma \ref{LemAppendixVir1} establishes that for  $|\nu|=|\nu'|$ 
 \begin{equation}\label{decompL0}
 \mathbf{L}^{0,\alpha}_{\nu}{\bf L}^{0,\alpha}_{-\nu'}1=\sum_{k\geq 0}a_k(\mathbf{L}^{0,\alpha}_0)^k1
  \end{equation}
 where the coefficients $a_k$ are   determined by the algebra \eqref{virasoro} and are independent of $\alpha$. Since $(\mathbf{L}^{0,\alpha}_0)^k1=\Delta_{\alpha}^k$ and $\Delta_\alpha=\frac{\alpha}{2}(Q-\frac{\alpha}{2})$ we conclude 
 $$ 
 \mathbf{L}^{0,\alpha}_{\nu}{\bf L}^{0,\alpha}_{-\nu'}1=F_{\alpha}(\nu,\nu')
 $$ 
 where $F_{\alpha}(\nu,\nu')$  is a  polynomial in  $\Delta_\alpha$ and thus  in $\alpha$. 
 Repeating the argument for $\tilde{\bf L}^{0,\alpha}_{\tilde\nu}\tilde{\bf L}^{0,\alpha}_{-\tilde\nu'} 1$ yields \eqref{scapo}.
 
The determinant of the matrix  $(F_{\alpha}(\nu,\nu'))_{|\nu|=|\nu'|=N}$  is given by  the Kac determinant formula (see Feigin-Fuchs \cite{FF})
\begin{align}\label{}
\det (F_{\alpha}(\nu,\nu'))_{|\nu|=|\nu'|=N}=\kappa_N\prod_{r,s=1; \: rs \leq N}^N(\Delta_\alpha-\Delta_{\alpha_{r,s}})^{p(N-rs)}
\end{align}
where $\kappa_N$ does not depend   on $\alpha$ or $c_L$,  $p(M)$ is the number of Young Diagrams of length $M$ and
\begin{align*}
{\alpha_{r,s}}=Q-r\frac{\gamma}{2}-s\frac{2}{\gamma}.
\end{align*}

Now we turn to 4). The polynomials $\mathcal{Q}_{\alpha,\nu,\tilde\nu}$ with 
$|\nu|+|\tilde\nu|=N$ belong to  the space spanned by the basis  $\psi_{\bf kl}$  \eqref{fbasis2} (or equivalently the basis $\pi_{\bf kl}$ \eqref{fbasis}) above with $ 
|{\bf k}|+|{\bf l}|=N$. Let us also stress that item 3) entails that the space spanned by $\mathcal{Q}_{\alpha,\nu,\tilde\nu}$ with 
$|\nu|+|\tilde\nu|=N$ is exactly the same as $\psi_{\bf kl}$ with $ 
|{\bf k}|+|{\bf l}|=N$ as soon as $\alpha \not \in Q-\frac{\gamma}{2}\N^*-\frac{2}{\gamma}\N^*$. Hence the existence of a decomposition of the type \eqref{defqalpha}. The fact that the  coefficients of this decomposition are polynomials in $\alpha$ can be more easily seen in the basis $\pi_{\bf kl}$:  in that case, the coefficients are given by the scalar products $  \langle \mathcal{Q}_{\alpha,\nu,\tilde\nu} \,|\, \pi_{\k\l} \rangle_{L^2(\Omega_\T)}$ and one can use the expression \eqref{fbasis} together with the commutation relations
$$[{\bf A}_{n},{\bf L}^{0,\alpha}_m]=n{\bf A}^\alpha_{n+m}-\frac{i}{2}n(n+1)Q\delta_{n,-m}$$
where ${\bf A}^\alpha_0=\frac{i }{2}(\alpha+Q)$ and ${\bf A}^\alpha_n={\bf A}_n$ for $n\neq 0$ and   ${\bf L}^{0,\alpha}_01=\Delta_{\alpha} $. 4) follows.

Now, we specialise the above considerations to the case $\alpha=Q+iP$ with $P \in \R$. In this case, one has
\begin{equation}\label{scapointro}
 \langle \mathcal{Q}_{Q+iP,\nu,\tilde\nu} \,|\, \mathcal{Q}_{Q+iP,\nu',\tilde\nu'} \rangle _{L^2(\Omega_\T)}=\delta_{|\nu| ,|\nu'|}\delta_{|\tilde\nu| ,|\tilde\nu'|}F_{Q+iP}(\nu,\nu')F_{Q+iP}(\tilde\nu,\tilde\nu')
\end{equation} 
where the matrices $(F_{Q+iP}(\nu,\nu'))_{\nu,\nu'\in\mc{T}_j}$ are positive definite (hence invertible) for each $j\in\N$ and they depend  on ${P+iQ}$: more precisely they are polynomials in the weight $\Delta_{Q+iP}=\tfrac{1}{4}(P^2+Q^2)$ and in the central charge $c_L$. 
 Item 5) then follows from the representation \eqref{<F,G>usingH0} and decomposing the basis $\psi_{{\bf kl}}$ in terms of the new basis $\mc{Q}_{Q+iP,\nu,\nu'}$. \end{proof}


%


\section{Liouville CFT: dynamics and quadratic form} \label{sec:LCFTFQ}

The goal of this  section is to construct explicitly the quadratic form  associated to  ${\bf H}_\ast$ and to obtain a probabilistic representation of the semigroup. This probabilistic representation, i.e. a Feynman-Kac type formula, easily follows from the definition of ${\bf H}_\ast$, this is done in subsection \ref{Dynamics of LCFT} just below. The construction of the quadratic form is more subtle, especially in the case  $\gamma\in (\sqrt{2},2)$. Indeed we will construct the Friedrichs extension of the operator \eqref{Hdef} (restricted to appropriate domain) and we call it $\mathbf{H}$. Then we will show that ${\bf H}_\ast={\bf H}$. The main reason why the difficulty to construct   the quadratic form increases with $\gamma$ is due to the interpretation of the term $V$ in \eqref{Hdef}. For $\gamma\in (0,\sqrt{2})$ only, it makes sense as a GMC random variable. Indeed,  define the regularized field for $k\geq 0$
 \begin{equation}\label{GFFcirclecutoff}
\varphi^{(k)}(\theta)=\sum_{|n|\leq k}\varphi_ne^{in\theta} 
\end{equation}
 which is a.s. a smooth function. 
 Then the GMC random variable $V$ can be defined as the following  limit  
\begin{equation}\label{Vdefi}
V:=\lim_{k\to\infty}V^{(k )},\quad \quad V^{(k)}:=\int_0^{2\pi}e^{ \gamma \varphi^{(k)}(\theta)- \frac{\gamma^2}{2}  \E[ \varphi^{(k)}(\theta)^2  ]}  \dd\theta
\end{equation}
where the above limit exists $\P_\T$-almost surely and is non trivial for $\gamma\in (0,\sqrt{2})$, see \cite{cf:Kah,review,Ber} for instance on the topic, in which case $V\in L^{p}(\Omega_\T)$ for all $p<\frac{2}{\gamma^2}$. For $\gamma\in [\sqrt{2},2)$, the limit $V$ vanishes, in which case we will rather make sense of the multiplication operator $V$ as a measure  (singular with respect to $\P_\T$):  this case is more problematic, see   Subsection \ref {sub:bilinear}.

\subsection{Feynman-Kac formula}\label{Dynamics of LCFT}
  
 We consider the semigroup $e^{-t{\bf H}_\ast}$ and  start by writing some form of Feynman-Kac  formula for this semigroup.
\begin{proposition}{\bf (Feynman-Kac formula)}\label{prop:FK}
For $f\in L^2(\R\times\Omega_\T)$ we have
\begin{equation}\label{FKgeneral}
e^{-t\mathbf{H}_\star}f=e^{-\frac{Q^2t}{2}}\E_{\varphi}\big[ f(c+B_t,\varphi_t) e^{-\mu e^{\gamma c}\int_{\D_t^c} |z|^{-\gamma Q}M_\gamma (\dd z)}\big]
\end{equation}
where   $(c+B_t,\varphi_t)$ is the process on $W^{s}(\T)$ defined in subsection \ref{sub:gff}  and $\D_t^c:=\{z\in\D\, |\,  |z|>e^{-t}\}$.\\
In particular, for $\gamma\in (0,\sqrt{2})$
\begin{equation}\label{fkformula}
e^{-t\mathbf{H}_\star}f=e^{-\frac{Q^2t}{2}}\E_{\varphi}\big[ f(c+B_t,\varphi_t)e^{-\mu\int_0^t e^{\gamma (c +B_s) }V(\varphi_s)\dd s}\big].
\end{equation}
where 
$$V(\varphi_s):=\int_0^{2\pi}e^{ \gamma \varphi_s(\theta)- \frac{\gamma^2}{2}  \E[ \varphi_s(\theta)^2  ]}  \dd\theta.$$
\end{proposition}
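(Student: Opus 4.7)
The plan is to verify \eqref{FKgeneral} directly from the definition $e^{-t\mathbf{H}_\star}=US_{e^{-t}}U^{-1}$, combining the dilation formula \eqref{dilationS}, the definition \eqref{udeff} of $U$, and the Markov plus scale covariance properties of the GFF. By strong continuity of the semigroup it suffices to check the identity on vectors of the form $f=UF$ for $F\in\caF_\D^2$, in which case unfolding the definitions gives
\begin{equation*}
(US_{e^{-t}}F)(c,\varphi)=e^{-Qc}\E_\varphi\big[F\big((c-Qt)+X(e^{-t}\cdot)\big)\,e^{-\mu e^{\gamma c}M_\gamma(\D)}\big].
\end{equation*}

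The key geometric step would be to decompose the rescaled field. Using the Markov property of the GFF together with the covariance identity $\E[X(e^{-t}w)X(e^{-t}w')]=t+\ln(1/|w-w'|)$ for $w,w'\in\D$, I would write
\begin{equation*}
X(e^{-t}w)=B_t+P\varphi_t(w)+\hat X_\D(w),\quad w\in\D,
\end{equation*}
where $\hat X_\D$ is a Dirichlet GFF on $\D$ independent of $\varphi$ and of $(B_s,Y_s)_{s\leq t}$; the residual covariance after removing the harmonic extension of the boundary value $B_t+\varphi_t$ is exactly the Dirichlet Green function $\ln(|1-w\bar w'|/|w-w'|)$. Splitting $M_\gamma(\D)=M_\gamma(e^{-t}\D)+M_\gamma(\D_t^c)$ and performing the change of variables $z=e^{-t}w$ in the inner disk, the standard GMC scaling at the regularization level would yield
\begin{equation*}
M_\gamma(e^{-t}\D)=e^{-(2+\gamma^2/2)t}e^{\gamma B_t}\hat M_\gamma(\D)=e^{\gamma(B_t-Qt)}\hat M_\gamma(\D),
\end{equation*}
where $\hat M_\gamma$ is the GMC of $P\varphi_t+\hat X_\D$ and the identity $\gamma Q=2+\gamma^2/2$ produces the precise cancellation. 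This is exactly where the $Q\ln|q|$ shift built into \eqref{dilationS} plays its role.

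Conditioning on $(B_s,Y_s)_{s\leq t}$ and applying the definition \eqref{udeff} of $U$ to the inner expectation over the independent $\hat X_\D$ then gives
\begin{equation*}
\E_{\hat X_\D}\big[F(c+B_t-Qt+P\varphi_t+\hat X_\D)\,e^{-\mu e^{\gamma(c+B_t-Qt)}\hat M_\gamma(\D)}\big]=e^{Q(c+B_t-Qt)}f(c+B_t-Qt,\varphi_t),
\end{equation*}
so that
\begin{equation*}
(e^{-t\mathbf{H}_\star}f)(c,\varphi)=e^{-Q^2t}\E_\varphi\big[e^{QB_t}\,e^{-\mu e^{\gamma c}M_\gamma(\D_t^c)}\,f(c+B_t-Qt,\varphi_t)\big].
\end{equation*}
A Cameron-Martin/Girsanov shift $B\mapsto B+Q\cdot$ would then absorb the factor $e^{QB_t-Q^2t/2}$, turning $c+B_t-Qt$ into $c+B_t$ inside $f$; and since $X(e^{-s+i\theta})=B_s+\varphi_s(\theta)$ depends linearly on $B_s$, the same shift produces the extra multiplicative factor $e^{\gamma Qs}=|z|^{-\gamma Q}$ inside the annulus integral, giving \eqref{FKgeneral}. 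To deduce \eqref{fkformula} when $\gamma\in(0,\sqrt{2})$, I would parametrize the annulus by $z=e^{-s+i\theta}$ and use $X(z)=B_s+\varphi_s(\theta)$ with $\E[X(z)^2]=s+\E[\varphi_s(\theta)^2]$; once again the identity $\gamma Q=2+\gamma^2/2$ produces the exact cancellation $|z|^{-\gamma Q}M_\gamma(\dd z)=e^{\gamma B_s}e^{\gamma\varphi_s(\theta)-\frac{\gamma^2}{2}\E[\varphi_s(\theta)^2]}\,\dd\theta\,\dd s$, and the $\theta$-integration produces $V(\varphi_s)$.

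The main obstacle is justifying the GMC scale-covariance identity rigorously: one must track the defining regularization so that the additive variance shift $t$, the Jacobian $e^{-2t}$, and the $Q$-shift from $S_q$ combine exactly via $\gamma Q=2+\gamma^2/2$, and one must verify independence of $\hat X_\D$ from the filtration generated by $(B_s,Y_s)_{s\leq t}$. The extension from $f=UF$ to general $f\in L^2(\R\times\Omega_\T)$ then follows from the strong continuity of $(e^{-t\mathbf{H}_\star})_{t\geq 0}$ and the density of $U(\caF_\D^2)$ in $L^2(\R\times\Omega_\T)$ established in Proposition \ref{OS1}.
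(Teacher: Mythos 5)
Your proposal is correct and follows essentially the same route as the paper: both hinge on (i) the domain Markov decomposition of the GFF at the circle $|z|=e^{-t}$, (ii) the GMC scale-covariance under dilation, with the exponent $\gamma Q=2+\gamma^2/2$ producing the precise cancellation with the $Q\ln|q|$ shift in $S_q$, and (iii) a Cameron--Martin shift to absorb the drift $e^{QB_t}$ and transform $|z|^{-\gamma Q}$ into the annulus weight. The only difference is organizational: the paper works directly with the functional $U^{-1}f=(U1)^{-1}f$ (using that $U$ extends to boundary observables, valid for all $f\in L^2$), packaging the Markov/GMC step as the single conditional-expectation identity $\E[e^{-\mu e^{\gamma c}M_\gamma(\D_t)}\mid\mathcal{F}_{\D_t^c}]=S_{e^{-t}}(e^{Qc}U1)$; you instead verify the identity on vectors $f=UF$ with $F\in\caF_\D^2$ and appeal to density at the end, making the decomposition $X(e^{-t}w)=B_t+P\varphi_t(w)+\hat X_\D(w)$ explicit so that the inner expectation over $\hat X_\D$ reproduces $U$. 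Both are equally valid; the paper's bookkeeping via $(U1)^{-1}$ avoids the density step, while yours is perhaps more transparent about where the Markov property and the GMC Jacobian enter.
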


\begin{proof}
Let   $f\in L^2(\R\times\Omega_\T)$.  By definition we have $e^{-t\mathbf{H}_\star}F=US_{e^{-t}}U^{-1}f$, where,  from \eqref{Uinverse}, we have $U^{-1}f=(U1)^{-1} f$ with
\begin{align*}
(U1)(c,\varphi)=(U_0 e^{-\mu e^{\gamma c}M_\gamma(\D)})(c,\varphi).
\end{align*}
 Now we claim that we have for $t>0$
\begin{equation}\label{Vmarkov}
\E[e^{-\mu  e^{\gamma c} M_\gamma ( \D_t)}|\mathcal{F}_{\D^c_t}  ]= S_{e^{-t}}(e^{Qc}U1)
\end{equation}
where $\D_t=\{z\in\D;|z|<e^{-t}\}$. Indeed, by the domain Markov property of the GFF and conditionally  on  $\mathcal{F}_{\D^c_t}$, the law of $X$ inside $\D_t$ is given by the independent sum
$$X\stackrel{law}{=}P(s_{e^{-t}}X)(e^t\cdot )+X_\D(e^t\cdot).$$
As a consequence and using a change of variables by dilation, we get the relation conditionally on $\mathcal{F}_{\D^c_t} $
$$  M_\gamma(\D_t)\stackrel{law}{=}S_{e^{-t}}M_\gamma(\D),$$
which gives \eqref{Vmarkov}. Next, using \eqref{Vmarkov}, we get
 \begin{align*}
 e^{-t\mathbf{H}_\star}f=&US_{e^{-t}}U^{-1}f\\
 =&e^{-Qc}\E_\varphi\big[S_{e^{-t}}(U^{-1}f)e^{-\mu e^{\gamma c}M_\gamma ( \D)}\big]\\
  =&e^{-Qc}\E_\varphi\big[\E\big[S_{e^{-t}}(U^{-1}f)e^{-\mu  e^{\gamma c} M_\gamma ( \D)}|\mathcal{F}_{\D^c_{t}}\big]\big]\\
    =&e^{-Qc}\E_\varphi\big[ S_{e^{-t}}(U^{-1}f)e^{-\mu  e^{\gamma c} M_\gamma ( \D_t^c)}\E\big[e^{-\mu  e^{\gamma c}M_\gamma ( \D_t)}|\mathcal{F}_{\D^c_{t}}\big]\big]\\
      =&e^{-Qc}\E_\varphi\big[ S_{e^{-t}}(e^{Qc}f ) e^{-\mu  e^{\gamma c}M_\gamma ( \D^c_t)} \big].
 \end{align*}
We  complete the argument by applying the Cameron-Martin formula to $S_{e^{-t}}(e^{Qc})$ to get \eqref{FKgeneral}.

 In the case when $\gamma\in (0,\sqrt{2})$, we write the chaos measure in terms of the process $Z_t=(c+B_t,\varphi_t)$. The function $V:W^{s}(\T)\to \R^+$ given by 
$$V(\varphi)=\lim_{k\to\infty}\int_\T e^{\gamma \varphi^{(k)}(\theta)-\frac{\gamma^2}{2}\E[ \varphi^{(k)}(\theta)^2]}\dd\theta
$$
 is measurable   and we get that conditionally on $\varphi=\varphi_0$ (by making the change of variables   $dx=r dr d\theta=e^{-2s}dsd\theta$ with $r=|x|=e^{-s}$ and $-\frac{\gamma^2}{2} \mathbb{E}[B_s^2]-2s=-\gamma Qs$)
\[
e^{\gamma c}M_\gamma(\D_t^c)\stackrel{law}=\int_0^t e^{\gamma (c+B_s-Qs)}V(\varphi_s)\dd s.\qedhere
\]
\end{proof}
 As a consequence of this formula and similarly to $\bf H^0$ we have 
\begin{proposition}\label{l2alphamu} The following properties hold:
\begin{enumerate}
\item $e^{-t{\bf H}_\ast}$ extends to a  continuous semigroup on $ L^p(\R \times \Omega_\T)$ for all $p\in [1,+\infty]$ with norm $e^{ -\frac{Q^2}{2}t}$ and it is strongly continuous for $p\in [1,+\infty)$.
\item $e^{-t{\bf H}_\ast}$ extends to a strongly continuous semigroup on $e^{-\alpha c} L^2(\R \times \Omega_\T)$ for all $\alpha\in\R$ with norm $e^{(\frac{\alpha^2}{2}-\frac{Q^2}{2})t}$.
\end{enumerate}
\end{proposition}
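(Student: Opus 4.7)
The plan is to derive both statements from the Feynman-Kac formula \eqref{FKgeneral}, bootstrapping from the analogous statements for $\mathbf{H}^0$ in Proposition \ref{l2alpha}. The key observation is that since the factor $e^{-\mu e^{\gamma c}\int_{\D_t^c}|z|^{-\gamma Q}M_\gamma(dz)}$ appearing in \eqref{FKgeneral} is pointwise bounded by $1$, we have the pointwise domination
\begin{equation*}
|e^{-t\mathbf{H}_\star}f|(c,\varphi) \;\leq\; e^{-\frac{Q^2t}{2}}\E_\varphi\bigl[|f|(c+B_t,\varphi_t)\bigr] \;=\; \bigl(e^{-t\mathbf{H}^0}|f|\bigr)(c,\varphi).
\end{equation*}

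For Part 1, this domination combined with Proposition \ref{l2alpha}(2) immediately gives the norm bound $\|e^{-t\mathbf{H}_\star}f\|_p \leq e^{-Q^2 t/2}\|f\|_p$ for every $p\in[1,\infty]$. The semigroup property $e^{-(t+s)\mathbf{H}_\star}=e^{-t\mathbf{H}_\star}e^{-s\mathbf{H}_\star}$ on $L^p$ can be established either by density from its validity on $L^2$ (which already holds by \eqref{hstar}), or directly by the Markov property of the joint process $(c+B_t,\varphi_t)$ applied inside the Feynman-Kac formula. For strong continuity with $p<\infty$, I would split
\begin{equation*}
\|e^{-t\mathbf{H}_\star}f-f\|_p \;\leq\; \|e^{-t\mathbf{H}_\star}f - e^{-t\mathbf{H}^0}f\|_p + \|e^{-t\mathbf{H}^0}f-f\|_p,
\end{equation*}
the second term tending to $0$ by Proposition \ref{l2alpha}(2). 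For the first term, denoting $A_t(c,\varphi)=\mu e^{\gamma c}\int_{\D_t^c}|z|^{-\gamma Q}M_\gamma(dz)$, Feynman-Kac gives
\begin{equation*}
|e^{-t\mathbf{H}_\star}f - e^{-t\mathbf{H}^0}f| \;\leq\; e^{-\frac{Q^2 t}{2}}\E_\varphi\bigl[|f|(c+B_t,\varphi_t)(1-e^{-A_t})\bigr],
\end{equation*}
and since $A_t\to 0$ almost surely as $t\to 0$ with $1-e^{-A_t}\leq 1$, a standard dominated convergence argument on the dense subset of bounded functions with compact $c$-support yields convergence to $0$; the uniform bound of the first paragraph extends this to all of $L^p$.

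For Part 2, the cleanest route is conjugation: the map $K_\alpha: f\mapsto e^{-\alpha c}f$ is an isometry $L^2(\R\times\Omega_\T)\to e^{-\alpha c}L^2(\R\times\Omega_\T)$, so $\|e^{-t\mathbf{H}_\star}g\|_{e^{-\alpha c}L^2}=\|e^{\alpha c}e^{-t\mathbf{H}_\star}g\|_2$. Applying the pointwise domination and then the already-known norm of $e^{-t\mathbf{H}^0}$ on $e^{-\alpha c}L^2$ (Proposition \ref{l2alpha}(3)) gives
\begin{equation*}
\|e^{\alpha c}e^{-t\mathbf{H}_\star}g\|_2 \;\leq\; \|e^{\alpha c}e^{-t\mathbf{H}^0}|g|\|_2 \;\leq\; e^{(\frac{\alpha^2}{2}-\frac{Q^2}{2})t}\|g\|_{e^{-\alpha c}L^2},
\end{equation*}
which is the claimed bound. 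Strong continuity on $e^{-\alpha c}L^2$ follows from the same decomposition used in Part 1, invoking strong continuity of $e^{-t\mathbf{H}^0}$ on the weighted space from Proposition \ref{l2alpha}(3) together with the dominated convergence argument applied to $(1-e^{-A_t})$ (after multiplying by $e^{\alpha c}$, which is harmless since the convergence $A_t\to 0$ is pointwise in $c$).

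The substance of this proposition is essentially routine once Feynman-Kac and Proposition \ref{l2alpha} are available; the only mildly delicate point is verifying strong continuity for $p\neq 2$ at $t=0$, which is not inherited for free from the $L^2$ theory. I expect this step to reduce cleanly to dominated convergence via the $A_t\to 0$ estimate and the uniform operator norm bound just established.
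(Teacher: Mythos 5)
Your proof is correct and follows essentially the same route as the paper: use the Feynman--Kac representation to get the pointwise domination $|e^{-t\mathbf{H}_\star}f|\leq e^{-t\mathbf{H}^0}|f|$ (the paper cites $V\geq 0$ in \eqref{fkformula}; your citation of \eqref{FKgeneral} is the cleaner choice since it covers all $\gamma\in(0,2)$), and then pull the norm bounds from Proposition \ref{l2alpha}. The paper's two-sentence proof tacitly treats strong continuity as also "following" from Proposition \ref{l2alpha}, whereas you correctly flag that domination by a strongly continuous semigroup does not hand it over for free and supply the missing argument via the split through $e^{-t\mathbf{H}^0}f$ and dominated convergence on the factor $(1-e^{-A_t})$ over a dense class of good $f$; this is the right justification and a worthwhile addition.
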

\begin{proof}   Using in turn \eqref{fkformula} and $V\geq 0$, we see that $\|e^{-t\mathbf{H}_\star}f\|_p\leq \|e^{-t\mathbf{H}^0}|f|\|_p$ so that the claim 1) follows from Proposition \ref{l2alpha}. The same argument works for 2). 
\end{proof}

\subsection{Quadratic forms and the Friedrichs extension of ${\bf H}$}\label{sub:bilinear}
 Here we construct the quadratic forms associated to \eqref{Hdef} and   the Friedrichs extension of ${\bf H}$ in the case $\gamma\in (0,2)$.

Recall that the underlying measure on the space  $L^2(\R\times \Omega_\T)$ is  $\dd c\times \P_\T$, with   $\Omega_\T=(\R^{2})^{\N^*}$, $  \Sigma_\T=\mathcal{B}^{\otimes \N^*}$ (where $\mathcal{B}$  stands for the Borel sigma-algebra on $\R^2$) and the   probability measure $ \P_\T$ defined by  \eqref{Pdefin}. Also, recall the GFF on the unit circle $\varphi:\Omega \to W^{s}(\T)$ (with $s<0$)  defined by \eqref{GFFcircle}. Finally recall that $\mathcal{S}$ is the set of smooth functions depending on finitely many coordinates, i.e. of the form
$F(x_1,y_1, \dots,x_n,y_n)$  with $n\geq 1$ and $F\in C^\infty((\R^2)^n)$, with at most polynomial growth at infinity for $F$ and its derivatives.   $\mathcal{S}$ is dense in $L^2(\Omega_\T)$.

We will construct the quadratic form associated to $\mathbf{H}$  as a limit (in a suitable sense) of regularized quadratic forms associated with the regularized potential $V^{(k)}$.

 For $k\geq 1$ and $\mu> 0$, we introduce the bilinear form (with associated quadratic form still denoted by $\mathcal{Q}_k$)    
\begin{equation}\label{defQn}
\mathcal{Q}^{(k)}(u,v):=\tfrac{1}{2}\E\int_{\R} \Big( \pl_c u \pl_c \bar{v}+Q^2u\bar{v}+ 2( \mathbf{P} u)\bar{v}+2\mu e^{\gamma c}V^{(k)}u\bar{v}\Big)\dd c.
\end{equation}
Here   $u,v$ belong to the domain $\mathcal{D}(\mathcal{Q}^{(k)})$ of the quadratic form, namely the completion  for the $\mathcal{Q}^{(k)}$-norm in $L^2(\R\times \Omega_\T)$  of the space $\mathcal{C}$ defined by  \eqref{core}. Also, it is clear that $ \mathcal{D}(\mathcal{Q}^{(k)})$ embeds continuously and injectively in $L^2(\R\times \Omega_\T)$ (same argument as in the proof of Prop \ref{FQ0:GFF}). Also, since $V^{(k)}\geq 0$ it is clearly lower semi-bounded $\mathcal{Q}^{(k)}(u)\geq Q^2\|u\|_2^2/2$ so that the construction of the Friedrichs extension then follows from \cite[Theorem 8.15]{rs1}. It   determines uniquely a self-adjoint operator $\mathbf{H}^{(k)}$, called the \emph{Friedrichs extension},  with domain denoted by $\mc{D}(\mathbf{H}^{(k)})$ such that:
$$\mc{D}(\mathbf{H}^{(k)} )=\{u\in \mathcal{D}(\mathcal{Q}^{(k)}); \exists C>0,\forall v\in \mathcal{D}(\mathcal{Q}^{(k)}),\,\,\, \mc{Q}^{(k)}(u,v)\leq C\|v\|_2\}$$
and for $u\in \mc{D}(\mathbf{H}^{(k)})$, $\mathbf{H}^{(k)}  u$ is the unique element in $L^2(\R\times \Omega_\T)$  satisfying
$$\mc{Q}^{(k)}(u,v)=\langle \mathbf{H}^{(k)}  u|v\rangle_2 .$$
We also denote by  $\mathbf{R}_{\lambda}^{(k)}$ the associated resolvent family.  
The following result is rather standard but we found no reference corresponding exactly to our context so that we give a short proof.
\begin{proposition}\label{prop:fkhk}
The strongly continuous contraction semigroup $(e^{-t\mathbf{H}^{(k)}})_{t\geq 0}$ of self-adjoint operators on $L^2(\R\times\Omega_\T)$   obeys the Feynman-Kac formula 
\begin{equation}\label{fkformulaforhk}
e^{-t\mathbf{H}^{(k)}}f=e^{-\frac{Q^2t}{2}}\E_{\varphi}\big[ f(c+B_t,\varphi_t)e^{-\mu\int_0^t e^{\gamma (c +B_s) }V^{(k)}(\varphi_s)\dd s}\big],
\end{equation}
where we have, after decomposing the field $\varphi_s $ along its harmonics i.e. $\varphi_s(\theta):=\sum_{n\not=0}\varphi_{s,n}e^{in\theta}$,
   $$V^{(k)}(\varphi)=\int_0^{2\pi}e^{ \gamma \varphi^{(k)}_s(\theta)- \frac{\gamma^2}{2}  \E[ \varphi^{(k)}_s(\theta)^2  ]}  \dd\theta\quad\text{ with }\varphi^{(k)}_s(\theta):=\sum_{|n|\leq k,n\not=0}\varphi_{s,n}e^{in\theta}.$$  
\end{proposition}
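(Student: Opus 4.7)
The plan is to show that the right-hand side of \eqref{fkformulaforhk}, which I denote $P^{(k)}_t f$, defines a strongly continuous semigroup of self-adjoint contractions on $L^2(\R\times\Omega_\T)$, and then to identify its generator with the Friedrichs extension $\mathbf{H}^{(k)}$. The $L^2$-contraction $\|P^{(k)}_t f\|_2\leq e^{-Q^2t/2}\|f\|_2$ follows from $V^{(k)}\geq 0$ (so that $|P^{(k)}_t f|\leq e^{-t\mathbf{H}^0}|f|$) together with Proposition \ref{l2alpha}. The semigroup property is a direct consequence of the Markov property of $s\mapsto (c+B_s,\varphi_s)$ and the additivity of the functional in the exponent, while strong continuity at $t=0$ follows from dominated convergence using the a.s.\ continuity and finiteness of $s\mapsto \int_0^s e^{\gamma(c+B_u)}V^{(k)}(\varphi_u)\,\dd u$ (the latter because $V^{(k)}$ is a continuous function of finitely many coordinates of the continuous process $(\varphi_s)_s$). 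Self-adjointness then rests on the reversibility of the base dynamics with respect to $\dd c\otimes \P_\T$: Brownian motion is symmetric relative to $\dd c$, the Ornstein--Uhlenbeck processes driving the harmonics of $\varphi_t$ are reversible with respect to $\P_\T$ by Proposition \ref{semigrouppt}, and the path functional in the exponent is invariant under the time reversal $s\mapsto t-s$ (by the substitution $u=t-s$), yielding $\langle P^{(k)}_t f,g\rangle_2=\langle f,P^{(k)}_tg\rangle_2$.

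Next I would identify the infinitesimal generator $-\mathbf{G}^{(k)}$ of $(P^{(k)}_t)_{t\geq 0}$ with $-\mathbf{H}^{(k)}$. For $u\in\mathcal{C}$, It\^o's formula applied to $s\mapsto u(c+B_s,\varphi_s)\exp\bigl(-\mu\int_0^s e^{\gamma(c+B_v)}V^{(k)}(\varphi_v)\,\dd v\bigr)$ (justified because $u$ depends on only finitely many Gaussian coordinates, $\psi\in C_c^\infty(\R)$ controls the $c$-growth, and the path functional has finite moments) gives
\begin{equation*}
\lim_{t\to 0^+}\tfrac{1}{t}(P^{(k)}_t u-u)=-\Big(-\tfrac{1}{2}\pl_c^2+\tfrac{Q^2}{2}+\mathbf{P}+\mu e^{\gamma c}V^{(k)}\Big)u \quad \text{in }L^2(\R\times\Omega_\T).
\end{equation*}
Consequently $\mathcal{C}\subset\mathcal{D}(\mathbf{G}^{(k)})$ and $\langle \mathbf{G}^{(k)}u,v\rangle_2=\mathcal{Q}^{(k)}(u,v)$ for all $u,v\in\mathcal{C}$, so the closed quadratic form $\mathcal{E}_{\mathbf{G}^{(k)}}$ of $\mathbf{G}^{(k)}$ extends $\mathcal{Q}^{(k)}|_\mathcal{C}$. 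Since by construction $\mathcal{D}(\mathcal{Q}^{(k)})$ is the $\mathcal{Q}^{(k)}$-closure of $\mathcal{C}$, we obtain the inclusion $\mathcal{Q}^{(k)}\subseteq \mathcal{E}_{\mathbf{G}^{(k)}}$, hence an operator domination $\mathbf{G}^{(k)}\leq \mathbf{H}^{(k)}$ in the sense of quadratic forms.

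The main obstacle is the reverse inclusion, i.e.\ ruling out a strict form extension corresponding to a different self-adjoint realization. I would handle this by monotone truncation: for $N\geq 1$ set $V^{(k,N)}:=V^{(k)}\wedge N$, so that $\mathbf{H}^{(k,N)}:=\mathbf{H}^0+\mu e^{\gamma c}V^{(k,N)}$ is an unambiguous self-adjoint bounded perturbation of $\mathbf{H}^0$; for this bounded potential, Trotter's product formula together with the free Feynman--Kac identity \eqref{u00identity} shows that $e^{-t\mathbf{H}^{(k,N)}}$ coincides with the Feynman--Kac semigroup $P^{(k,N)}_t$ built from $V^{(k,N)}$. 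As $N\to\infty$, the forms $\mathcal{Q}^{(k,N)}$ increase monotonically to $\mathcal{Q}^{(k)}$, so Kato's monotone convergence theorem for nondecreasing sequences of closed forms implies that $\mathbf{H}^{(k,N)}$ converges to $\mathbf{H}^{(k)}$ in the strong resolvent sense and that $e^{-t\mathbf{H}^{(k,N)}}\to e^{-t\mathbf{H}^{(k)}}$ strongly in $L^2$. On the other hand, monotone convergence inside the Feynman--Kac integrand yields $P^{(k,N)}_t f\to P^{(k)}_t f$ in $L^2$ for every $f\in L^2$. Identifying the two limits gives $e^{-t\mathbf{H}^{(k)}}=P^{(k)}_t$, hence $\mathbf{G}^{(k)}=\mathbf{H}^{(k)}$, which is the desired conclusion.
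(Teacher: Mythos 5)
Your truncation step has a genuine error that breaks the argument as written. You set $V^{(k,N)}:=V^{(k)}\wedge N$ and declare that $\mathbf{H}^{(k,N)}=\mathbf{H}^0+\mu e^{\gamma c}V^{(k,N)}$ is ``an unambiguous self-adjoint bounded perturbation of $\mathbf{H}^0$'', then invoke ``Trotter's product formula \dots\ for this bounded potential''. But $\mu e^{\gamma c}V^{(k,N)}\leq \mu N e^{\gamma c}$ is still an \emph{unbounded} multiplication operator on $L^2(\R\times\Omega_\T)$ because of the explicit $e^{\gamma c}$ factor; truncating $V^{(k)}$ alone does not control the growth in the $c$-variable, which is exactly where the subtlety of the operator lies. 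Consequently the bounded-perturbation Trotter formula you appeal to does not apply to $\mathbf{H}^{(k,N)}$, and you are back to facing the same form-sum issue you were trying to avoid. The step can be repaired by truncating the whole operator, e.g.\ $W^{(k,N)}:=\min(\mu e^{\gamma c}V^{(k)},N)$; with that fix, $\mathbf{H}^0+W^{(k,N)}$ really is a bounded perturbation, its Feynman--Kac representation follows from Trotter with a bounded potential and \eqref{u00identity}, and your Kato monotone-convergence argument then converges to the operator associated to the form $\mathcal{Q}_0+\mathcal{Q}_{V^{(k)}}$ on $\mathcal{D}(\mathcal{Q}_0)\cap\mathcal{D}(\mathcal{Q}_{V^{(k)}})$ --- which still has to be identified with $\mathcal{D}(\mathcal{Q}^{(k)})$, the $\mathcal{Q}^{(k)}$-closure of $\mathcal{C}$.

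The paper's proof is considerably more direct and sidesteps the truncation issue altogether: it invokes \emph{Kato's strong Trotter product formula} \cite[Theorem S.21]{rs1}, which applies directly to the pair of unbounded self-adjoint operators $\mathbf{H}^0$ and $\mu e^{\gamma c}V^{(k)}$ as soon as their form domains have dense intersection (and this intersection equals $\mathcal{D}(\mathcal{Q}^{(k)})$). One then computes $(e^{-\frac{t}{n}\mathbf{H}^0}e^{-\frac{t}{n}\mu e^{\gamma c}V^{(k)}})^n f$ via \eqref{u00identity} to obtain the Feynman--Kac type quantity $e^{-Q^2t/2}\E_\varphi[f(c+B_t,\varphi_t)e^{-\mu R_t^{n,(k)}}]$ with $R_t^{n,(k)}$ a Riemann sum which converges a.s.\ (because the integrand $s\mapsto e^{\gamma(c+B_s)}V^{(k)}(\varphi_s)$ is continuous) to $\int_0^t e^{\gamma(c+B_s)}V^{(k)}(\varphi_s)\dd s$. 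This two-step argument renders your preliminary steps (the verification that $P^{(k)}_t$ is a self-adjoint contraction semigroup, and the It\^o-formula computation of its generator on $\mathcal{C}$) unnecessary, since Trotter already identifies the Friedrichs extension directly.
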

 
\begin{proof}
We use Kato's strong Trotter product formula (see \cite[Theorem S.21  page 379]{rs1}) applied to the self-adjoint operators  ${\bf H}_0$ and $e^{\gamma c}V^{(k)}$: since  the domain  $\mathcal{D}(\mathcal{Q}^{(k)})$ of the quadratic form $\mathcal{Q}^{(k)}$  is dense in $L^2(\R\times\Omega_\T)$ and satisfies $\mathcal{D}(\mathcal{Q}^{(k)})=\mathcal{D}(\mathcal{Q}_0)\cap \mathcal{D}(\mathcal{Q}_{V^{(k)}})$, where  $\mathcal{D}(\mathcal{Q}_{V^{(k)}})=\{f\mid \|e^{\gamma c/2}(V^{(k)})^{1/2}f\|_2<+\infty\}$ is the domain of the quadratic form associated to the operator of multiplication by $e^{\gamma c}V^{(k)}$,  we have the identity  
$$\lim_{n\to\infty} (e^{-\frac{t}{n}{\bf H}_0}e^{-\frac{t}{n}\mu e^{\gamma c}V^{(k)}})^n =e^{-t{\bf H}^{(k)}}$$
where the limit is understood in the strong sense (i.e. convergence in $L^2(\R\times\Omega_\T)$  when this relation is applied to   $f\in L^2(\R\times\Omega_\T)$). Now we compute the limit in the left-hand side. For  $f\in L^2(\R\times\Omega_\T)$  we have
\begin{equation}\label{trotter}
(e^{-\frac{t}{n}{\bf H}_0}e^{-\frac{t}{n}\mu e^{\gamma c}V^{(k)}})^n f=e^{-Q^2t/2}\E_{\varphi}\big[ f(c+B_t,\varphi_t)e^{-\mu R_t^{n,(k)}}\big]
\end{equation}
with $R_t^{n,(k)}$ the Riemann sum
$$R_t^{n,(k)}:=\frac{t}{n}\sum_{j=1}^n e^{\gamma (c +B_{jt/n}) }V^{(k)}(\varphi_{jt/n}) .$$
The right-hand side  of  \eqref{trotter} converges in $L^2(\R\times\Omega_\T)$ towards the same expression with $R_t^{n,(k)}$ replaced by $\int_0^t e^{\gamma (c +B_s) }V^{(k)}(\varphi_{s})\dd s$: indeed this can be established by using Jensen and the fact that, almost surely, the Riemann sum $R_t^{n,(k)}$ converges almost surely (for all fixed $c$) towards the integral $\int_0^t e^{\gamma (c +B_s) }V^{(k)}(\varphi_{s})\dd s$ since  the process $s\mapsto e^{\gamma (c +B_{s}) }V^{(k)}(\varphi_{s}) $ is continuous.  This provides the Feynman-Kac representation as claimed.
\end{proof}

Now our main goal is to construct  a quadratic form corresponding to the limit $k\to\infty$ of the quadratic forms $(\mathcal{Q}^{(k)},\mathcal{D}(\mathcal{Q}^{(k)}))$. For $u,v\in \mathcal{C}$, we define 
\begin{equation}\label{defQ2}
\mathcal{Q}(u,v):=\lim_{k\to\infty}\mathcal{Q}^{(k)}(u,v).
\end{equation}
Of course, when $\gamma\in (0,\sqrt{2})$, existence of the limit is trivial as $(V^{(k)})$ converges towards $V$ in $L^p(\Omega_\T)$ for $p<2/\gamma^2$ and $u\bar{v}\in C^\infty_c(\R;L^q(\Omega_\T))$ for any $q>1$. But treating the case $\gamma\in (\sqrt{2},2)$ as well requires another argument: the existence of the limit is guaranteed by the Girsanov transform, namely that for $u=u(x_1,y_1\dots,x_n,y_n)$, $v=v(x_1,y_1\dots,x_n,y_n)$ and $k\geq n$ the term involving $V^{(k)}$ in $\mathcal{Q}^{(k)}$ can be rewritten as 
\begin{equation}\label{girsuk}
\E\int_{\R} e^{\gamma c}V^{(k)}u\bar{v} \dd c=\E\int_{\R} \int_0^{2\pi}e^{\gamma c} u_{\rm shift}\bar{v}_{\rm shift} \dd c\dd \theta
\end{equation}
where the function $u_{\rm shift}$ (and similarly for $v$) is defined by
$$u_{\rm shift}(\theta,c, x_1,y_1\dots,x_n,y_n):=u \Big(c,x_1+\gamma \cos(\theta),y_1-\gamma \sin(\theta)\dots,x_n+\frac{\gamma}{\sqrt{n}}\cos(n\theta),y_n-\frac{\gamma}{\sqrt{n}}\sin(n\theta)\Big).$$
Hence the term $\E\int_{\R} e^{\gamma c}V^{(k)}u\bar{v} \dd c$ does not depend on $k\geq n$.
We also denote by $u_{\rm shift(N)}$ the same as above except that the first $N$ variables $(x_1,y_1,\dots,x_N,y_N)$ are shifted and other variables remain unchanged.

Also, we denote by $\mathbf{R}_{*,\lambda}$ the resolvent family   associated with  the Feynman-Kac semigroup $e^{-t\mathbf{H}_*}$. Let   $\mathcal{Q}_*$ denote the quadratic form associated to $\mathbf{H}_*$ with domain $\{u\in L^2; \lim_{t\to0}\langle u, \frac{u-e^{-t\mathbf{H}_*}u}{t}\rangle_2<\infty\}$. Using \cite[Section 1.4]{sznitman}, $\mathcal{Q}_*$ is closed. The following lemma is   fundamental: though the   limit \eqref{defQ2} makes sense for any value of $\gamma\in \R$, the fact that it can be related to the quadratic form $ \mathcal{Q}_*$ deeply relies on GMC theory, hence on the fact that $\gamma\in (0,2)$.

\begin{lemma}
For $u,v\in\mathcal{C}$, we have  $ \mathcal{Q}_*(u,v)= \mathcal{Q}(u,v)$. In particular, $ \mathcal{Q}$ is closable.
\end{lemma}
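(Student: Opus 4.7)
The strategy compares the Friedrichs-semigroup $e^{-t\mathbf{H}_*}$ with the regularized semigroups $e^{-t\mathbf{H}^{(k)}}$ through their Feynman--Kac representations, in order to identify the infinitesimal form on the core $\mathcal{C}$.

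Fix $u,v\in\mathcal{C}$ and let $N$ be large enough that both depend only on the Fourier coefficients $(x_j,y_j)_{1\leq j\leq N}$. The Girsanov identity \eqref{girsuk} shows that the potential contribution $\E\int_\R e^{\gamma c}V^{(k)}u\bar v\,\dd c$ is independent of $k$ for $k\geq N$, so the limit \eqref{defQ2} trivially exists and $\mathcal{Q}(u,v)=\mathcal{Q}^{(N)}(u,v)$. Since $V^{(N)}$ has moments of every order and $u\in\mathcal{C}$ is $C^\infty_c$ in $c$ with polynomial growth in $\varphi$, a direct verification gives $u\in\mathcal{D}(\mathbf{H}^{(N)})$, and therefore
\[
\mathcal{Q}(u,v)=\langle \mathbf{H}^{(N)}u,v\rangle_2 = \lim_{t\to 0^+}\frac{1}{t}\langle u - e^{-t\mathbf{H}^{(N)}}u,v\rangle_2.
\]
Hence the identity $\mathcal{Q}_*(u,v)=\mathcal{Q}(u,v)$ (together with $u\in\mathcal{D}(\mathcal{Q}_*)$) will follow at once from the semigroup comparison
\[
\lim_{t\to 0^+}\frac{1}{t}\langle (e^{-t\mathbf{H}^{(N)}}-e^{-t\mathbf{H}_*})u,v\rangle_2 = 0.
\]

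Rewriting the inner product above via the Feynman--Kac formulas \eqref{FKgeneral} and \eqref{fkformulaforhk}, and applying $|e^{-\mu x}-e^{-\mu y}|\leq \mu|x-y|$ for $x,y\geq 0$, the task reduces to showing
\[
\E\!\int_\R |u(c+B_t,\varphi_t)|\cdot |A_t-A_t^{(N)}|\cdot |\bar v(c,\varphi)|\,\dd c = o(t),
\]
where $A_t^{(N)}:=\int_0^t e^{\gamma(c+B_s)}V^{(N)}(\varphi_s)\,\dd s$ and $A_t:=e^{\gamma c}\int_{\D_t^c}|z|^{-\gamma Q}M_\gamma(\dd z)$. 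Using the Markov decomposition $X=P\varphi+X_\D$ on $\D$, the conditional expectations $\E_\varphi[A_t^{(N)}]$ and $\E_\varphi[A_t]$ can be computed explicitly: a conditional Gaussian manipulation on the OU dynamics $(\varphi_s)$ gives $\E_\varphi[A_t^{(N)}]/t \to e^{\gamma c}V^{(N)}(\varphi)$ as $t\to 0^+$, while the radial disintegration of $M_\gamma$ on the thin annulus $\D_t^c$ gives an expansion of $\E_\varphi[A_t]/t$ whose leading term, when integrated against $u\bar v$ in $(c,\varphi)$, coincides with the former; the equality of these two leading terms is exactly what the Girsanov identity \eqref{girsuk} encodes, namely that shifting the first $N$ Fourier coefficients of $\varphi$ by $\gamma$ is what substitutes $V^{(N)}$ inside the conditional expectation. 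The remaining corrections come from higher-order terms in the OU/Itô expansion and from the high-frequency modes $|n|>N$ of the GMC, which decorrelate at scale $s\sim 1/|n|$ and produce the required $o(t)$ remainder.

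Finally, the closability of $\mathcal{Q}$ is automatic from the equality on $\mathcal{C}$: $\mathcal{Q}_*$ is closed as the form of the nonnegative self-adjoint operator $\mathbf{H}_*$, and it extends $\mathcal{Q}$ from the dense subspace $\mathcal{C}$, so $\mathcal{Q}$ is closable with closure equal to the restriction of $\mathcal{Q}_*$ to the $\mathcal{Q}_*$-closure of $\mathcal{C}$. The genuine obstacle lies in the $o(t)$ remainder above, particularly delicate when $\gamma\in [\sqrt{2},2)$: in that regime the pointwise limit $V=\lim_N V^{(N)}$ vanishes $\P_\T$-almost surely, so the comparison between $\E_\varphi[A_t]$ and $\E_\varphi[A_t^{(N)}]$ cannot be carried out at the level of random variables but must be performed through the radial GMC integration weighted by $|z|^{-\gamma Q}$, making crucial use of the Girsanov shift described above.
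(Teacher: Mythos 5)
There is a genuine gap in your argument, and it sits in the very step you flag as "the genuine obstacle" without resolving it: the reduction to
\[
\E\!\int_\R |u(c+B_t,\varphi_t)|\cdot |A_t-A_t^{(N)}|\cdot |\bar v(c,\varphi)|\,\dd c = o(t).
\]
The estimate $|e^{-\mu x}-e^{-\mu y}|\leq\mu|x-y|$ is correct, but it is lossy in exactly the wrong place: it throws away the cancellation that makes the lemma true. The lemma rests on the identity (coming from the Girsanov/Markov property you cite) $\E_\varphi[\,u\bar v\,(V_t-t\,e^{\gamma c}V^{(N)}\!)]=o(t)$ --- a cancellation that happens only \emph{after} pairing against $u\bar v$, not at the level of the random variables themselves. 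Once the absolute value sits inside the expectation, this cancellation is irretrievably lost. Concretely, conditionally on $\varphi$ one has $A_t^{(N)}/t\to e^{\gamma c}V^{(N)}(\varphi)$ a.s., while $A_t/t\to e^{\gamma c}V(\varphi)$ (for $\gamma<\sqrt2$) or $A_t/t\to 0$ (for $\gamma\ge\sqrt2$, where the circle GMC vanishes). In either case $V\neq V^{(N)}$ a.s., so $|A_t-A_t^{(N)}|/t\to e^{\gamma c}\,|V-V^{(N)}|\neq 0$ a.s., and the left-hand side of your reduced claim is $\Theta(t)$, not $o(t)$. So the reduced statement is simply false, for all $\gamma\in(0,2)$, and the situation is even worse when $\gamma\ge\sqrt2$. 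The subsequent sketch ("conditional Gaussian manipulation'', "higher-order terms... decorrelate at scale $s\sim 1/|n|$'') never confronts this sign problem and therefore cannot rescue the argument.

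The paper's proof does not go through any comparison with $e^{-t\mathbf{H}^{(N)}}$. It expands
\[
D_t := e^{-\tfrac{Q^2t}{2}}\!\int\E\!\Big[u(c+B_t,\varphi_t)\,\bar v(c,\varphi)\,e^{-\mu e^{\gamma c}V_t}\Big]\dd c
\]
directly and shows $D_t=\langle u,v\rangle_2-t\mathcal Q(u,v)+o(t)$. The two-sided elementary bounds $xe^{-x}\le 1-e^{-x}\le x$ isolate the term $\E[\,u\bar v\, e^{\gamma c}V_t\,]$, and this is computed head-on by Girsanov: integrating $M_\gamma(\dd z)$ over the annulus and shifting each of the finitely many Fourier coordinates that $u,v$ depend on yields the integral of $u^r_{\mathrm{shift}}\bar v^r_{\mathrm{shift}}$, whose $r\to 1$, $t\to 0$ leading term is precisely $t\mu\int e^{\gamma c}\int_0^{2\pi}\E[u_{\mathrm{shift}}\bar v_{\mathrm{shift}}]\,\dd\theta\,\dd c = t\,\mathcal Q_{e^{\gamma c}V}(u,v)$, with GMC moment estimates and a Dini-type argument taking care of the error. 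No quantity of the form $|V-V^{(N)}|$ ever appears, and that is essential. Your first reduction (to a semigroup difference) is fine, as is the observation that $u\in\mathcal D(\mathbf H^{(N)})$; but the $o(t)$ claim with absolute values is the step that fails, and repairing it requires abandoning the absolute-value reduction and working with the signed quantity, essentially reproducing the paper's Girsanov computation.
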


\begin{proof}
Let $u,v\in\mathcal{C}$, say depending on the first $n$-th harmonics. From \eqref{FKgeneral}, we have to show that, as $t\to 0$, 
$$D_t:=e^{-\frac{Q^2t}{2}}\int\E\Big[   u(c+B_t,\varphi_t) v(c,\varphi)e^{-\mu e^{\gamma c}\int_{\D_t^c} |z|^{-\gamma Q}M_\gamma (\dd z)}\Big] \,\dd c=\langle u,v\rangle_2-t \mathcal{Q}(u,v)+o(t).$$
For notational simplicity and only in this proof, let us denote $V_t:=\int_{\D_t^c} |z|^{-\gamma Q}M_\gamma (\dd z)$. Then
\begin{align*}
D_t-\langle u,v\rangle_2=& \langle e^{-t\mathbf{H}_0}u-u,v\rangle_2+(1+o(1))\int\E\Big[   u(c+B_t,\varphi_t) v(c,\varphi)(e^{-\mu e^{\gamma c}V_t}-1)\Big] \,\dd c\\
=&-t\mathcal{Q}_0(u,v)+o(t)+(1+o(1))\int\E\Big[   (u(c+B_t,\varphi_t) -u(c,\varphi))v(c,\varphi)(e^{-\mu e^{\gamma c}V_t}-1)\Big] \,\dd c\\
&-(1+o(1))\int\E\Big[   u(c,\varphi)v(c,\varphi)(1-e^{-\mu e^{\gamma c}V_t})\Big] \,dc \\
=&:-t\mathcal{Q}_0(u,v)+o(t)+(1+o(1))D^1_t-(1+o(1))D^2_t.
\end{align*}
Let us show that 
\begin{equation}\label{D2t}
D_t^2=t(1+o(1))\mu\int e^{\gamma c}\int_0^{2\pi} \E\Big[   u_{\rm shift}(c,\varphi)v_{\rm shift}(c,\varphi)\Big] \,\dd c\dd\theta .
\end{equation} For this and even if it means decomposing $u,v$ into their positive/negative parts, we may assume that $u,v$ are non-negative. First, we use the inequality $1-e^{-x}\leq x$ for $x\geq 0$ to get
\begin{align*}
D_t^2\leq & \int e^{\gamma c}\E\Big[   u(c,\varphi)v(c,\varphi)\mu e^{\gamma c}V_t\Big] \,\d c\\
=& \mu\int e^{\gamma c}\int_0^{2\pi}\int_{e^{-t}}^1\E\Big[  e^{\gamma P\varphi(re^{i\theta})-\frac{\gamma^2}{2}\E[P\varphi(re^{i\theta})^2]} u(c,\varphi)v(c,\varphi)\Big] \,\dd c r^{1-\gamma Q}\,\dd r\dd\theta \\
=& \mu\int e^{\gamma c}\int_0^{2\pi}\int_{e^{-t}}^1\E\Big[   u^r_{\rm shift}(c,\varphi)v^r_{\rm shift}(c,\varphi)\Big] \,\dd c r^{1-\gamma Q}\,\dd r\dd\theta 
\end{align*}
where we have used the Girsanov transform in the last line and the function $ u^r_{\rm shift}$ is defined by 
$$u_{\rm shift}(\theta,c, x_1,y_1\dots,x_n,y_n):=u \big(c,x_1+a_1,y_1+b_1,\dots,x_n+a_n,y_n+b_n\big),$$
with $a_k:=  \frac{\gamma}{\sqrt{k}}r^k\cos(k\theta)$ and $b_k:= -\frac{\gamma}{\sqrt{k}}r^k\sin(k\theta)$. 
Because $u,v\in \mathcal{C}$, it is then plain to deduce that $D^2_t\leq t(1+o(1))\mu\int e^{\gamma c}\int_0^{2\pi} \E\Big[   u_{\rm shift}(c,\varphi)v_{\rm shift}(c,\varphi)\Big] \,\dd c\dd\theta $.
Second, we use the inequality $1-e^{-x}\geq x e^{-x}$ for $x\geq 0$  to get, using again the Girsanov transform,
\begin{align*}
D_t^2\geq & \int e^{\gamma c}\E\Big[   u(c,\varphi)v(c,\varphi)\mu e^{\gamma c}V_te^{-\mu e^{\gamma c}V_t}\Big] \,\d c\\
=& \mu\int e^{\gamma c}\int_0^{2\pi}\int_{e^{-t}}^1\E\Big[   u^r_{\rm shift}(c,\varphi)v^r_{\rm shift}(c,\varphi,re^{})e^{-\mu e^{\gamma c}V^{\rm shift}_t}\Big] \,\dd c r^{1-\gamma Q}\,\dd r\dd\theta 
\end{align*}
with $V^{\rm shift}_t(re^{i\theta}):=\int_{\D_t^c} |z|^{-\gamma Q}|z-re^{i\theta}|^{-\gamma^2}M_\gamma (\dd z)$. It remains to get rid of the exponential term in the last expectation.  So we split the above expectation in two parts by writing $e^{-\mu e^{\gamma c} V^{\rm shift}_t} =1+(e^{-\mu e^{\gamma c} V^{\rm shift}_t}  -1)$. The first part produces $ t(1+o(1))\mu\int e^{\gamma c}\int_0^{2\pi} \E\Big[   u_{\rm shift}(c,\varphi)v_{\rm shift}(c,\varphi)\Big] \,\dd c\dd\theta $ similarly as above. For the second part corresponding to $(e^{-\mu e^{\gamma c} V^{\rm shift}_t}  -1)$, we want to show that it is neglectable. For this,  we write
\begin{align}
\mu\int e^{\gamma c}&\int_0^{2\pi} \int_{e^{-t}}^1\E\Big[   u^r_{\rm shift}(c,\varphi)v^r_{\rm shift}(c,\varphi)(1-e^{-\mu e^{\gamma c}V^{\rm shift}_t})\Big] \,\dd c r^{1-\gamma Q}\,\dd r\dd\theta\label{fuck} \\
\leq &\mu\int e^{\gamma c}\int_0^{2\pi}\int_{e^{-t}}^1\E\Big[   u^r_{\rm shift}(c,\varphi)v^r_{\rm shift}(c,\varphi))^p\Big]^{1/p}\E\Big[  |1-e^{-\mu e^{\gamma c}V^{\rm shift}_t(re^{i\theta})}   |^q      \Big]^{1/q} \,\dd c r^{1-\gamma Q}\,\dd r\dd\theta\nonumber
\end{align}
where we have used  H\"{o}lder's inequality in the last line for any fixed conjugate exponents $p,q$. The first expectation   $c\mapsto \E\Big[   u^r_{\rm shift}(c,\varphi)v^r_{\rm shift}(c,\varphi))^p\Big]^{1/p}$  is bounded uniformly with respect to $r,c$ and has fixed compact support in $c$ (because $v$ has so), say $[-A,A]$ for some $A>0$. Now we claim that the family of functions $(c,r)\mapsto f_t(c,r):=\E\Big[  |1-e^{-\mu e^{\gamma c}V^{\rm shift}_t(re^{i\theta})}   |^q      \Big]^{1/q}$ (this quantity does not depend on $\theta$ by invariance in law under rotation) converges uniformly towards $0$ as $t\to 0$, from which one can easily deduce that the quantity \eqref{fuck} is $o(t)$. Since $f_t$ is increasing in $c$ hence monotonic for $c$ in $[-A,A]$) it is enough to prove this claim for a fixed $c$.  Next, recall \cite[Lemma 3.10]{DKRV} the condition of finiteness for integrals of GMC with singularities: for any $\alpha\in (0,Q)$, $p>0$ and $z_0\in\D$ 
\begin{equation}\label{conising}
\E\Big[\Big(\int_\D \frac{1}{|z-z_0|^{\gamma\alpha}}M_\gamma(\dd z)\Big)^p\Big]<+\infty \Leftrightarrow p<\tfrac{2}{\gamma}(Q-\alpha).
\end{equation} 
Taking $\alpha=\gamma$ and $z_0=re^{i\theta}$, the finiteness of the expectation above  implies that for fixed $r\in [\tfrac{1}{2},1]$ (and fixed $c$) $\lim_{t\to 0}f_t(c,r)=0$ by using dominated convergence. Now we claim that for fixed $t$ and $c$, the mapping $r\mapsto f_t(c,r)$ is continuous. To see this,  for each $\delta>0$  let us introduce the mapping $$r\in  [\tfrac{1}{2},1] \mapsto f_{t,\delta}(c,r):=\E\Big[  |1-e^{-\mu e^{\gamma c}V^{\rm shift}_{t,\delta}(re^{i\theta})}   |^q      \Big]^{1/q}$$ where $V^{\rm shift}_{t,\delta}$ is the potential with regularized   singularity at scale $\delta$
$$V^{\rm shift}_{t,\delta}(re^{i\theta}):=\int_{\D_t^c} |z|^{-\gamma Q}(|z-re^{i\theta}|\vee \delta)^{-\gamma^2}M_\gamma (\dd z).$$
Obviously, $f_{t,\delta}(c,r)$ is a continuous function of the variable $r$. We want to show that the family $(f_{t,\delta})_\delta$ converges uniformly towards $f_{t}$ as $\delta\to 0$. By using the triangular inequality
\begin{align*}
|f_{t,\delta}(c,r)-f_{t}(c,r)|\leq &\E\Big[|e^{-\mu e^{\gamma c}V^{\rm shift}_{t,\delta}(re^{i\theta})}  -e^{-\mu e^{\gamma c}V^{\rm shift}_{t}(re^{i\theta})}  |^q\Big]^{1/q}\\
\leq &\mu e^{\alpha\gamma c}\E\Big[|  V^{\rm shift}_{t}(re^{i\theta}) -V^{\rm shift}_{t,\delta}(re^{i\theta})   |^{\alpha q}\Big]^{1/q}.
\end{align*}
where we have used the inequality $|e^{-x}-e^{-y}|\leq |x-y|^{\alpha}$ for any arbitrary $\alpha\in (0,1]$ and $x,y\geq 0$. We fix $\alpha$ such that $\alpha q<\tfrac{2}{\gamma}(Q-\gamma)$ to make sure that the expectation is finite using the criterion \eqref{conising}. By invariance in law of $M_\gamma$ under translation, this quantity does not depend on $r$  and is less than  (in fact this is true when the singularity is at positive distance from the boundary: when the singularity approaches the boundary, the above quantity is strictly less than the bound below)
\begin{equation}\label{singsmallball}
C\E\Big[\Big(\int_{|z|\leq \delta }  |z|^{-\gamma^2}M_\gamma (\dd z)\Big)^{\alpha q}\Big]^{1/q}.
\end{equation}
By multifractal scaling (see the proof of  \cite[Lemma 3.10]{DKRV}), \eqref{singsmallball} is  equal  $C\delta ^{(\gamma (Q-\gamma)\alpha-\frac{\gamma^2}{2}\alpha^2q}$ and one can check that the exponent is positive under the condition $\alpha q<\tfrac{2}{\gamma}(Q-\gamma)$. This establishes the uniform convergence of $(f_{t,\delta})_\delta$   towards $f_{t}$ as $\delta\to 0$ over $r\in [\tfrac{1}{2},1]$. In conclusion, for fixed $c$, the family $(f_t(c,\cdot))_{t>0}$ is a family of continuous functions that decrease pointwise towards $0$ as $t\to 0$. Hence the convergence is uniform by the Dini theorem. So we have proved \eqref{D2t}. 

Similar arguments can be used to show that $D^1_t=o(t)$. Indeed, we use again the inequality $1-e^{-x}\leq x$ to get the bound
$$|D^1_t|\leq \mu\int e^{\gamma c}\E\Big[   |u(c+B_t,\varphi_t)-u(c,\varphi)|| v(c,\varphi)| V_t \Big] \,\dd c$$
and then the Girsanov transform and H\"{o}lder to obtain
\begin{align*}
|D^1_t|\leq &\mu\int_0^{2\pi}\int_{e^{-t}}^1\int e^{\gamma c}\E\Big[   |u^{\rm shift}(t,c+B_t,\theta,\varphi_t)-u^{\rm shift}(0,c,\theta,\varphi)|| v^{\rm shift}(0,c,\theta,\varphi)|   \Big] \,\dd \theta\dd r\dd c\\
\leq &\mu\int_0^{2\pi}\int_{e^{-t}}^1\int e^{\gamma c}\E\Big[   |u^{\rm shift}(t,c+B_t,\theta,\varphi_t)-u^{\rm shift}(0,c,\theta,\varphi)|^p\Big]^{1/p}\E[| v^{\rm shift}(0,c,\theta,\varphi)|^q]^{1/q} \,\dd \theta\dd r\dd c
\end{align*}
with 
$$u^{\rm shift}(t,c,\theta,x_1,y_1\dots,x_n,y_n):=u \Big(c,x_1+a_1,y_1+b_1,\dots,x_n+a_n,y_n+b_n\Big)$$
and
$$a_k:=\frac{\gamma k^{1/2}}{\pi}\int_0^{2\pi}\ln\frac{1}{|e^{-t+i\theta'}-e^{-r+i\theta}|}\cos(k\theta')\,\dd \theta',\quad b_k:=\frac{\gamma k^{1/2}}{\pi}\int_0^{2\pi}\ln\frac{1}{|e^{-t+i\theta'}-e^{-r+i\theta}|}\sin(k\theta)\,\dd \theta. $$
Again, the second expectation has compact support in $c$ whereas the second satisfies $\sup_{r\leq t} \E\Big[   |u^{\rm shift}(t,c+B_t,\theta,\varphi_t)-u^{\rm shift}(0,c,\theta,\varphi)|^p\Big]^{1/p}\to 0$ as $t\to 0$ since $u\in \mathcal{C}$  and $B_t$ and the first $n$-th harmonics of the field $(\varphi_t)_t$  are  continuous. One can then easily conclude.
\end{proof}

 From the above lemma, the quadratic form $(\mathcal{C},\mathcal{Q} )$  is closable.  Let $ \mathcal{D}(\mathcal{Q})$ be the completion of $ \mathcal{C}$ for the $\mathcal{Q}$-norm. The completion is the vector space consisting of equivalence classes of Cauchy sequences of 
$\mc{C}$ for the norm $\|u\|_{\mc{Q}}:=\sqrt{\mc{Q}(u,u)}$ under the equivalence relation 
$u\sim v$ iff $\|u_n-v_n\|_{\mc{Q}}\to 0$ as $n\to \infty$. This space is a Hilbert space. It can be identified with the closure of the space   of quadruples $\{(u,\partial_cu,\mathbf{P}^{1/2}u,u_{\rm shift});u\in \mathcal{C}\}$ in $(L^2)^3\times L^2(\R\times\Omega_\T\times [0,2\pi],e^{\gamma c}\dd c\otimes \P_\T\otimes \dd \theta)$. $ \mathcal{D}(\mathcal{Q})$ embeds injectively in $L^2$ as it is closed. Now we want to show that  
\begin{proposition}
For $\gamma\in (0,2)$, the quadratic form $( \mathcal{D}(\mathcal{Q}),\mathcal{Q})$  defines an operator $\mathbf{H}$ (the Friedrichs extension), which satisfies  $\mathbf{H}_*=\mathbf{H}$.
\end{proposition}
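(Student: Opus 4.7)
The preceding lemma yields $\mathcal{Q}_\ast|_{\mathcal{C}}=\mathcal{Q}|_{\mathcal{C}}$, so the closure $\overline{\mathcal{Q}}$ of $(\mathcal{Q},\mathcal{C})$ is a restriction of the closed form $\mathcal{Q}_\ast$; accordingly $\mathbf{H}=\mathbf{H}_\ast$ is equivalent to showing that $\mathcal{C}$ is a form core for $\mathcal{Q}_\ast$, i.e.\ that every $u\in\mathcal{D}(\mathcal{Q}_\ast)$ is the $L^2$-limit of a sequence $v_k\in\mathcal{C}$ with $\sup_k\mathcal{Q}(v_k)<\infty$. Indeed, any such sequence is bounded in the Hilbert space $\mathcal{D}(\overline{\mathcal{Q}})$, and a weakly convergent subsequence there is forced by the $L^2$-convergence to converge weakly to $u$, yielding $u\in\mathcal{D}(\overline{\mathcal{Q}})$. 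The plan is to obtain this approximation by using the frequency-truncated Hamiltonians $\mathbf{H}^{(k)}$ as an intermediate object.

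The first step is the strong resolvent convergence $\mathbf{H}^{(k)}\to\mathbf{H}_\ast$ as $k\to\infty$, which is equivalent to strong convergence of the semigroups on $L^2(\R\times\Omega_\T)$. Comparing the Feynman--Kac representations of Propositions \ref{prop:FK} and \ref{prop:fkhk}, and using that both exponential factors are bounded by $1$, the claim reduces by dominated convergence to
\[
\int_0^t e^{\gamma(c+B_s)}V^{(k)}(\varphi_s)\,ds \xrightarrow[k\to\infty]{} e^{\gamma c}\int_{\D_t^c}|z|^{-\gamma Q}M_\gamma(\dd z) \quad\text{in probability}.
\]
For $\gamma\in(0,\sqrt 2)$ this is a Fubini slicing combined with $L^p$-convergence of the spectral truncation $V^{(k)}$ to the 1D chaos $V(\varphi_s)$. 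For $\gamma\in[\sqrt 2,2)$ the 1D chaos vanishes and one must work directly with the 2D chaos on the annulus: after the polar change of variables performed in the proof of Proposition \ref{prop:FK}, the left-hand side is precisely a spectral regularization of $M_\gamma$ restricted to $\D_t^c$, and convergence in probability follows from the standard uniqueness theorems for 2D GMC (see e.g.\ \cite{RoV,review,Ber}) together with moment bounds of the form \eqref{conising} to secure uniform integrability in~$k$.

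Strong resolvent convergence is equivalent to Mosco convergence of the associated forms, which provides, for each $u\in\mathcal{D}(\mathcal{Q}_\ast)$, a recovery sequence $u_k\in\mathcal{D}(\mathcal{Q}^{(k)})$ with $u_k\to u$ in $L^2$ and $\mathcal{Q}^{(k)}(u_k)\to\mathcal{Q}_\ast(u)$. Let $\pi_k$ denote the orthogonal projection onto functions depending only on $c$ and on the first $k$ harmonics. Replacing $u_k$ by $\pi_k u_k$ preserves $L^2$-convergence (since $\|\pi_k u_k-u\|_2\leq \|u_k-u\|_2+\|\pi_k u-u\|_2\to 0$ by martingale convergence) and does not increase $\mathcal{Q}^{(k)}$: the $\mathcal{Q}_0$-part contracts because $\pi_k$ commutes with $\pl_c$ and with $\mathbf{P}$ (diagonal in the Hermite basis), and the potential term satisfies $\E[V^{(k)}|\pi_k u_k|^2]\leq \E[V^{(k)}|u_k|^2]$ by conditional Jensen since $V^{(k)}$ is $\pi_k$-measurable. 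A standard mollification and compactly-supported cutoff in the finitely many variables $(c,x_1,y_1,\dots,x_k,y_k)$ then produce $v_k\in\mathcal{C}$ depending on at most the first $k$ harmonics with $\|v_k-\pi_k u_k\|_{\mathcal{Q}^{(k)}}\to 0$. The crucial point is that for such $v_k$ the Girsanov stabilisation recorded above the proposition gives $\mathcal{Q}(v_k)=\mathcal{Q}^{(k)}(v_k)$, whence $(v_k)\subset\mathcal{C}$ with $v_k\to u$ in $L^2$ and $\sup_k\mathcal{Q}(v_k)<\infty$, which completes the proof via the weak-compactness argument of the first paragraph.

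The main difficulty lies in Step 1 for $\gamma\in[\sqrt 2,2)$, where the 1D chaos on the circle is identically zero and the Feynman--Kac integrand must be handled as a single 2D chaos object rather than sliced termwise. One must in particular verify that the spectral truncation $V^{(k)}$, once integrated along radii, belongs to the class of approximations covered by the GMC uniqueness theorems, and extract uniform integrability in $k$ sharp enough to pass to the limit inside $\E_\varphi[\cdot]$; this is achieved via the interplay between moment control of chaos with singularities and the a priori boundedness of the Feynman--Kac exponential by $1$. Once this convergence is secured, the contraction property of $\pi_k$ and the mollification step are routine, and the weak-compactness conclusion closes the argument.
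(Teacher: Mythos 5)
Your proposal is correct in its overall logic and arrives at the same conclusion, but it follows a genuinely different route from the paper's. The paper works directly with the resolvents: for a fixed $F\in\mathcal{C}$ it sets $u_k=\mathbf{R}_\lambda^{(k)}F$, shows via Feynman--Kac that $u_k\to u=\mathbf{R}_{*,\lambda}F$ in $L^2$, uses the a priori bound $\sup_k\mathcal{Q}^{(k)}(u_k)<\infty$ to extract weak limits of the quadruple $(u_k,\pl_c u_k,\mathbf{P}^{1/2}u_k,u_{k,\rm shift})$, and then a $\liminf/\limsup$ comparison in the limiting resolvent equation to upgrade weak to strong convergence in the $\mathcal{Q}$-norm, whence $u\in\mathcal{D}(\mathcal{Q})$ and the resolvents (hence the operators) agree. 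Your proof instead packages the same input ---strong resolvent convergence $\mathbf{H}^{(k)}\to\mathbf{H}_*$, established by the same Feynman--Kac comparison--- through the abstract machinery of Mosco convergence, then pushes the Mosco recovery sequence into $\mathcal{C}$ by a projection $\pi_k$ and mollification, and closes with the standard weak-compactness characterisation of form cores. What the Mosco route buys is a cleaner logical skeleton and an explicit treatment of a step that the paper leaves implicit: the passage from $u_k\in\mathcal{D}(\mathcal{Q}^{(k)})$ to an approximant in $\mathcal{C}$ whose $\mathcal{Q}$- and $\mathcal{Q}^{(k)}$-norms coincide. Your contraction argument for $\pi_k$ (commutation with $\pl_c$ and $\mathbf{P}$, conditional Jensen for the potential) and the Girsanov identity $\mathcal{Q}|_{\pi_k\mathcal{C}}=\mathcal{Q}^{(k)}|_{\pi_k\mathcal{C}}$ are exactly the right observations here, and the paper's argument tacitly relies on the same facts to justify that its $u_k$ lie in $\mathcal{D}(\mathcal{Q})$. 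The flip side is that you need to recall Mosco's theorem (equivalence of strong resolvent convergence and two-sided $\Gamma$-convergence), whereas the paper's hands-on resolvent argument is self-contained. Both proofs gloss over the same hard analytic point for $\gamma\in[\sqrt 2,2)$, namely that the angular-Fourier truncation yields an admissible GMC regularization so that $\int_0^t e^{\gamma(c+B_s)}V^{(k)}(\varphi_s)\,ds$ converges to the annular chaos integral; you acknowledge this and the paper simply cites "standard GMC theory", so this is not a defect specific to your approach.
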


\begin{proof}
Take $F\in \mathcal{C}$ and consider $u:=\mathbf{R}_{*,\lambda}F$ (resp. $u_k:=\mathbf{R}_{\lambda}^{(k)}F$) for $\lambda>0$. We will repeatedly use below the fact that for $k$ large enough $u_{k,{\rm shift}}=u_{k,{\rm shift}(k)}$ since $F\in \mathcal{C}$. Indeed, this follows from the Feynman-Kac formula (by Prop. \ref{prop:fkhk})
\begin{equation}\label{fkres}
u_k=\int_0^{\infty}e^{-\lambda t}e^{-t\mathbf{H}^{(k)}}f\,\dd t=\int_0^{\infty }e^{-(\lambda+\frac{Q^2}{2})t}\E_{\varphi}\big[ f(c+B_t,\varphi_t)e^{-\mu\int_0^t e^{\gamma (c +B_s) }V^{(k)}(\varphi_s)\dd s}\big]\,\dd t.
\end{equation}
With this expression, one can see that if $f$ depends on the first $N$ harmonics of the field $\varphi$ then for $k\geq N$ we have $u_{k,{\rm shift}}=u_{k,{\rm shift}(k)}$.

The first step of the proof is to observe that
$$u_k\to u \quad \text{as }k\to\infty \quad\text{in }L^2(\R\times\Omega_\T).$$  This follows from the Feynman-Kac representation \eqref{fkres}+\eqref{FKgeneral}  and  standard GMC theory that ensures  that $\int_0^t e^{\gamma (c +B_s) }V^{(k)}(\varphi_s)\dd s$ converges almost surely towards $e^{\gamma c}\int_{\D_t^c} |z|^{-\gamma Q}M_\gamma (\dd z)$ for  $\gamma\in (0,2)$.
Furthermore, since
$$\lambda\|u_k\|_2^2+\mathcal{Q}^{(k)}(u_k)=\langle F,u_k\rangle_2$$
we deduce using Cauchy-Schwartz in the r.h.s. that 
\begin{equation}\label{tight}
\sup_k\mathcal{Q}^{(k)}(u_k)<+\infty.
\end{equation}
This entails that the sequences $(\partial_cu_k)_k$ and $(\mathbf{P}^{1/2}u_k)_k$ weakly converge up to subsequences in $L^2(\R\times\Omega_\T)$ and, by \eqref{girsuk}, that $(u_{k,{\rm shift}})_k$ weakly converges in $e^{-\gamma c/2}L^2([0,2\pi]\times\R\times\Omega_\T)$. Strong convergence of $(u_k)_k$ towards $u$ and weak convergence  of $(\partial_cu_k)_k$ and $(\mathbf{P}^{1/2}u_k)_k$ implies that their respective weak limits must be $\partial_cu$ and $\mathbf{P}^{1/2}u$. The resolvent equation associated to $u_k$ reads for $v\in\mathcal{C}$
\begin{equation}
\lambda\langle u_k,v\rangle_2+\mathcal{Q}^{(k)}(u_k,v)=\langle F,v\rangle_2.
\end{equation}
Denote by $z$ a possible weak limit of $(u_{k,{\rm shift}})_k$   in $e^{-\gamma c/2}L^2([0,2\pi]\times\R\times\Omega_\T)$.
Passing to the limit in $k\to\infty$ (up to appropriate subsequence) produces
\begin{equation}\label{eqlimitQ}
\tfrac{1}{2}\E\int_{\R} \Big( \pl_c u \pl_c \bar{v}+(Q^2+2\lambda)u\bar{v}+ 2( \mathbf{P}^{1/2} u)\overline{\mathbf{P}^{1/2}v}+2\mu \int_0^{2\pi}e^{\gamma c}z \bar{v}_{\rm shift}\dd\theta\Big)\dd c=\langle F,v\rangle_2.
\end{equation}
Taking $v=u_k$ and passing to the limit as $k\to\infty$, we get
\begin{equation}
\tfrac{1}{2}\E\int_{\R} \Big( |\pl_c u|^2+(Q^2+2\lambda)|u|^2+ 2| \mathbf{P}^{1/2} u|^2+2\mu \int_0^{2\pi}e^{\gamma c}|z|^2\dd\theta\Big)\dd c=\langle F,u\rangle_2.
\end{equation}
By weak limit we have
\begin{align*}
\tfrac{1}{2}\E\int_{\R} &\Big( |\pl_c u|^2+(Q^2+2\lambda)|u|^2+ 2| \mathbf{P}^{1/2} u|^2+2\mu \int_0^{2\pi}e^{\gamma c}|z|^2\dd\theta\Big)\dd c\\
\leq &
\liminf_k\tfrac{1}{2}\E\int_{\R} \Big( |\pl_c u_k|^2+(Q^2+2\lambda)|u_k|^2+ 2| \mathbf{P}^{1/2} u_k|^2+2\mu \int_0^{2\pi}e^{\gamma c}| u_{k,{\rm shift}}|^2\dd\theta\Big)\dd c .
\end{align*}
Also
\begin{align*}
\tfrac{1}{2}\E\int_{\R} \Big( |\pl_c u_k|^2+(Q^2+2\lambda)|u_k|^2+ 2| \mathbf{P}^{1/2} u_k|^2+2\mu \int_0^{2\pi}e^{\gamma c}| u_{k,{\rm shift}}|^2\dd\theta\Big)\dd c =\langle F,u_k\rangle_2\to \langle F,u\rangle_2
\end{align*}
as $k\to\infty$. This shows that the weak convergence of the sequence  $(u_k,\partial_cu_k,\mathbf{P}^{1/2} u_k,u_{k,{\rm shift}})_k $ actually holds in the strong sense.  Also, it is easy to check from \eqref{eqlimitQ} that the limit is unique. This implies the convergence of $(u_k)_k$ in $\mathcal{Q}$-norm toward $u$, which thus  belongs to $ \mathcal{D}(\mathcal{Q})$.   Hence $ \mathbf{R}_{*,\lambda}$ maps $\mathcal{C}$ into $\mathcal{D}(\mathcal{Q})\subset L^2(\R\times\Omega_\T)$ and it coincides on $\mathcal{C}$ with the resolvent associated to $\mathcal{Q}$. Since $\mathcal{C}$ is dense in $L^2(\R\times\Omega_\T)$, this shows that both resolvent families coincide, hence their semigroups, quadratic forms and generators  too.
\end{proof}

Again, we stress that $\mc{D}({\bf H})=\{ u\in\mc{D}(\mc{Q})\,|\, {\bf H}u\in L^2(\R\times \Omega_\T)\}$ and ${\bf H}^{-1}:L^2(\R\times \Omega_\T)\to \mc{D}({\bf H})$ is bounded. Furthermore, by the spectral theorem, $\mathbf{H}$ generates a strongly continuous contraction semigroup of self-adjoint operators $(e^{-t \mathbf{H} } )_{t\geq 0}$ on $L^2(\R\times\Omega_\T)$.

If we let $\mc{D}(\mc{Q})'$ be the dual to $\mc{D}(\mc{Q})$ (i.e. the space of bounded conjugate linear functionals on $\mc{D}(\mc{Q})$), the injection $L^2(\R\times \Omega_\T)\subset \mc{D}(\mc{Q})'$ is continuous and the operator ${\bf H}$ can be extended as a bounded isomorphism 
\[{\bf H}:\mc{D}(\mc{Q})\to \mc{D}(\mc{Q})'.\]

\begin{remark}
By adapting some argument in \cite{rs1,rs2} one can prove that ${\bf H}$ is essentially self-adjoint on $\mathcal{D}({\bf H}^0)\cap \mathcal{D}(e^{\gamma c}V)$   for $\gamma\in (0,1)$, condition that ensures that the potential $V$ is in $L^2(\Omega_\T)$.
\end{remark}

\section{Scattering of the Liouville Hamiltonian}\label{sec:scattering}

In this section,   we develop the scattering theory for the operator $\mathbf{H}$ on $L^2(\R\times \Omega_\T)$ with underlying measure $dc\otimes \P_\T$ (where  $\mathbf{H}$ is the generator of the dilation semigroup studied above). This operator has continuous spectrum and cannot be diagonalized with a complete set of $L^2(\R\times \Omega_\T)$-eigenfunctions. 
We will rather use a stationary approach for this operator, in a way similar to what has been done in geometric scattering theory for manifolds with cylindrical ends in \cite{Gui, Mel}. The goal is to obtain a spectral resolution for $\mathbf{H}$ in terms of generalized  eigenfunctions, which  will be shown to be analytic in the spectral parameter. In other words, 
we search to write the spectral measure of $\mathbf{H}$ using these generalized eigenfunctions, which are similar to plane waves $(e^{i\la \omega.x})_{\la\in \R,\omega\in S^{n-1}}$ in Euclidean scattering for the Laplacian $\Delta_x$ on $\R^n$. In our case, the generalized eigenfunctions will be functions in weighted spaces of the form $e^{-\beta c_-}L^2(\R\times \Omega_\T)$ for $\beta>0$ with particular asymptotic expansions at $c=-\infty$. Let us explain briefly the simplest one, corresponding to the functions $\Psi_{\alpha}:=\Psi_{\alpha,{\bf 0},{\bf 0}}=U(V_\alpha(0))$ defined in the Introduction and represented probabilistically by \eqref{defvalpha} using the unitary map \eqref{udeff} when $\alpha<Q$ is real. 
For $\alpha \in (Q-\gamma/2,Q)$, they will be the only eigenfunctions of ${\bf H}$ in $e^{-\beta c_-}L^2(\R\times \Omega_\T)$ for $\beta>Q-\alpha$ satisfying 
\[ ({\bf H}-2\Delta_\alpha)\Psi_\alpha=0, \quad \Psi_{\alpha}=e^{(\alpha-Q)c}+ u, \quad \textrm{ with } u\in L^2(\R\times \Omega_\T).\]
Similarly, we will construct (in Section \ref{sub:holomorphic}) some eigenfunctions $\Psi_{\alpha,{\bf k},{\bf l}}$ of ${\bf H}$ with eigenvalue $2\Delta_\alpha+\la_{{\bf k}{\bf l}}$, with $\Psi_{\alpha,{\bf k},{\bf l}}|_{\R^+}\in L^2$ and asymptotic (for all $\beta\in(0,\gamma/2)$)
\[\Psi_{\alpha,{\bf k},{\bf l}}|_{\R^-}=e^{(\alpha-Q)c}\psi_{{\bf k}{\bf l}}+ u, \quad \textrm{ with }u\in e^{(\alpha-Q+\beta)c}L^2(\R^-\times \Omega_\T),\]
where we recall that $\la_{{\bf kl}}\in \N$ are the eigenvalues of $P$ defined in \eqref{firstlength} and the corresponding eigenfunctions $\psi_{{\bf kl}}$ of $P$ appear in \eqref{fbasishermite}. 
One way to construct the $\Psi_{\alpha}$, and similarly for $\Psi_{\alpha,{\bf k},{\bf l}}$, is to take the limit (see Proposition \ref{Pellproba})
\[ \Psi_\alpha= \lim_{t\to \infty}e^{-t{\bf H}}(e^{2t\Delta_\alpha}e^{(\alpha-Q)c}),\] 
where we observe that $e^{-t{\bf H}^0}e^{(\alpha-Q)c}=e^{-2t\Delta_\alpha}e^{(\alpha-Q)c}$ so that, formally speaking, $\Psi_\alpha$ is the limit of the intertwining $e^{-t{\bf H}}e^{t{\bf H}^0}(e^{(\alpha-Q})c)$ as $t\to +\infty$.
An alternative expression is to write them as 
\begin{equation}\label{Psialphawithresolvent} 
\Psi_\alpha= e^{(\alpha-Q)c}\chi(c)- ({\bf H}-2\Delta_\alpha)^{-1}({\bf H}-2\Delta_\alpha)(e^{(\alpha-Q)c}\chi(c))
\end{equation}
where $\chi\in C^\infty(\R)$ equal to $1$ near $-\infty$ and $0$ near $+\infty$ (see \eqref{definPellproba}  and Lemma \ref{firstPoisson}); here we notice that 
 $e^{(\alpha-Q)c}\chi(c)$ is not $L^2(\R\times \Omega_\T)$ but one can check that $({\bf H}-2\Delta_\alpha)(e^{(\alpha-Q)c}\chi(c))\in L^2$ if $\gamma > Q-\alpha$ (and $\gamma<1$) so that we can apply the resolvent ${\bf R}(\alpha):=({\bf H}-2\Delta_\alpha)^{-1}$ to it, and  $({\bf H}-2\Delta_\alpha)^{-1}({\bf H}-2\Delta_\alpha)(e^{(\alpha-Q)c}\chi(c))\in L^2$ is not equal to $(e^{(\alpha-Q)c}\chi(c))$.
Our goal will be to extend analytically these $\Psi_\alpha$ (and actually the whole family of generalized eigenstates denoted by $\Psi_{\alpha,\k,\l}$) to ${\rm Re}(\alpha)\leq Q$ and in particular to the line $\alpha\in Q+i\R$ corresponding to the spectrum of ${\bf H}$. To perform this, we see that we need to extend analytically the resolvent operator ${\bf R}(\alpha)$ to ${\rm Re}(\alpha)\leq Q$, which will be the main part of this section. In fact, we shall show that ${\bf R}(\alpha)$ extends analytically on an open set of a Riemann surface covering the complex plane, containing the real half-plane ${\rm Re}(\alpha)\leq Q$. We note that the functions $\Psi_{\alpha,\k,\l}$ will be expressed as the elements in the range of some Poisson operator denoted $\mc{P}(\alpha)$, mapping (some subspaces of) $L^2(\Omega_\T)$ to weighted spaces $e^{-\beta c_-}L^2(\R\times \Omega_T)$ for some $\beta>0$ depending on $\alpha$. The results proved here hold in some cases for geometric scattering in finite dimension \cite{Gui, Mel}, but we are not aware of some results of this type in quantum field theory where the base space is infinite dimensional. 
The main difficulty will be to deal with the fact that the perturbation $V$ is quite singular (even for $\gamma<\sqrt{2}$ where $V$ is a potential, it is not bounded) and the fact that the eigenfunctions of ${\bf P}$ (Hermite polynomials) have $L^p( \Omega_\T)$ norms which blow up 
very fast in terms of their eigenvalues.

In this section, we shall start by describing the resolvent of ${\bf H}$ in the \emph{probabilistic region} $\{{\rm Re}((\alpha-Q)^2)>\beta^2\}$ acting on weighted spaces $e^{\beta c_-}L^2$ for 
$\beta\in\R$ and deduce the construction of the $\Psi_{\alpha,\k,\l}$ in this region. Next, 
we will show that the resolvent ${\bf R}(\alpha)$ admits an analytic extension in a neighborhood of $\{{\rm Re}(\alpha)\leq Q\}$ (for $\alpha$ on some Riemann suface $\Sigma$). We shall use these results to prove the analytic continuation of the $\Psi_{\alpha,\k,\l}$ to ${\rm Re}(\alpha)\leq Q$ and we shall finally construct the scattering operator ${\bf S}(\alpha)$ in Section \ref{section_scattering} and write the spectral decomposition of ${\bf H}$ in terms of the $\Psi_{\alpha,\k,\l}$ in Theorem \ref{spectralmeasure} (written in terms of Poisson operator in this section).\\

In what follows, we will mostly consider the $L^2$ (or $L^p$) spaces on $\Omega_\T$ or on 
$\R\times \Omega_\T$ respectively equipped with the measure $\P_\T$ or $dc\otimes \P_\T$, which we will denote by $L^2(\Omega_\T)$ or $L^2(\R\times \Omega_\T)$ for short. When the space is omitted, i.e. we simply write $L^2$, this means that we consider $L^2(\R\times \Omega_\T)$: this will relieve notations in some latter part of the paper.
Recall that we denote by $\langle \cdot\,|\,\cdot\rangle_{2}$ the standard scalar product associated to $L^2(\R\times \Omega_\T, dc \otimes \P_\T)$ and $\|\cdot\|_2$ the associated norm; in general our scalar products will always be complex linear in the left component and anti-linear in the right component.
Given two normed vector space $E$ and $F$, the space of continuous linear mappings from $E$ into $F$ will be denoted by $\mathcal{L}(E,F)$ and when $E=F$ we will simply write $\mathcal{L}(E)$. The corresponding operator norms will be denoted by $\|\cdot\|_{\mathcal{L}(E,F)}$ or $\|\cdot\|_{\mathcal{L}(E)}$.

\subsection{The operators ${\bf H},{\bf H}^0, e^{\gamma c}V$}
The operator ${\bf H}$ is made up of several pieces.  The first piece is the operator $\mathbf{P}$ defined in \eqref{hdefi}, which is a self-adjoint non-negative unbounded operator on $L^2(\Omega_\T)$. It has discrete spectrum $(\la_{\k\l})_{\k,\l\in \mc{N}}$, but to simplify the indexing we shall order them in increasing order (without counting multiplicity) and denote them by 
\[ \sigma(\mathbf{P})=\{ \lambda_j \,|\, j\in\N, \lambda_{j}<\lambda_{j+1}\}.\] 
We denote by 
\begin{equation}
 E_k:=\{F\in L^2(\Omega_\T)\,|\, 1_{[0,\la_k]}(\mathbf{P})F=F\}=\bigoplus_{j\leq k}\ker(\mathbf{P}-\la_j)
\end{equation}
the sum of eigenspaces with eigenvalues less or equal to $\lambda_k$ and $\Pi_{k}: L^2(\Omega_\T)\to E_k$ the orthogonal projection. We also will use the important fact 
\begin{equation}\label{LpEk}
E_k \subset L^p(\Omega_\T) , \quad \forall p<\infty.
\end{equation}
The quadratic form associated to ${\bf H}$ is the form $\mc{Q}$ defined in Subsection  \ref{sub:bilinear}, with domain $\mc{D}(\mc{Q})$. 
We will consider the self-adjoint extension associated of ${\bf H}$ obtained using the quadratic form $\mc{Q}$. The operator ${\bf H}^0:\mc{D}({\bf H}^0)\to L^2$ corresponds to the case $\mu=0$,
its quadratic form has domain $\mc{D}(\mc{Q}_0)$ containing $\mc{D}(\mc{Q})$. 
We recall that 
\[ {\bf H}: \mc{D}(\mc{Q})\to \mc{D}'(\mc{Q}), \quad {\bf H}^0: \mc{D}(\mc{Q})\to \mc{D}'(\mc{Q})\]
if $\mc{D}'(\mc{Q})$ is the dual 
of $\mc{D}(\mc{Q})$.
We then define $e^{\gamma c}V:\mc{D}(\mc{Q})\to \mc{D}'(\mc{Q})$ by the equation 
\begin{equation}\label{definitionofH}
{\bf H}={\bf H}^0+e^{\gamma c}V= -\frac{1}{2} \pl_c^2+\frac{1}{2} Q^2+{\bf P}+e^{\gamma c}V. 
\end{equation} 
We will define the associated quadratic form 
\[\mc{Q}_{e^{\gamma c}V} :=\mc{Q}-\mc{Q}_0\]
defined on $\mc{D}(\mc{Q})$ and remark that 
\[ \mc{Q}_{e^{\gamma c}V}(u,u)=\int_{\R}e^{\gamma c}\mc{Q}_V(u,u)dc \]
for some well-defined quadratic form $\mc{Q}_V$, defined on a domain $\mc{D}(\mc{Q}_V)\subset L^2(\Omega_\T)$ containing $E_k$ for each $k\geq 0$.  

For our study of the  resolvent of ${\bf H}$, we need to introduce the orthogonal projection 
\[\Pi_k:L^2(\Omega_\T)\to E_k,\] 
 extended trivially in the $c$ variable as $\Pi_k:L^2(\R\times \Omega_\T)\to L^2(\R;E_k)$. 
\begin{lemma}\label{chiPikV}
Let $\chi\in L^\infty(\R)\cap C^\infty(\R)$ with support in $(-\infty,A)$ for some $A\in\R$ and $\chi'\in L^\infty(\R)$ . Then, for all $\beta\geq -\gamma/2$ and $\beta'\in\R$, the following operators are bounded
\[ \chi \Pi_k: \mc{D}(\mc{Q})\to \mc{D}(\mc{Q}) , \quad \chi \Pi_k: \mc{D}'(\mc{Q})\to \mc{D}'(\mc{Q})\]
\[
 \chi e^{\gamma c}V\Pi_k : e^{\beta'c}L^2(\R\times \Omega_\T)\to e^{(\beta'-\beta) c}\mc{D}'(\mc{Q}), 
\]
\[\chi e^{\gamma c}\Pi_k V: e^{(\beta-\beta') c}\mc{D}(\mc{Q})\to e^{-\beta' c}L^2(\R;E_k),\]
\[ \chi \Pi_k e^{\gamma c}V\Pi_k: e^{\beta'c}L^2(\R\times \Omega_\T)\to e^{(\beta'-\beta) c}L^2(\R;E_k).\]
If $\beta> -\gamma/2$ then one also has
\[\chi e^{\gamma c}V\Pi_k : e^{\beta'c}L^\infty(\R\times \Omega_\T)\to e^{(\beta'-\beta) c}\mc{D}'(\mc{Q}).\]
\end{lemma}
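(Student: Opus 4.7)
The lemma collects several mapping properties of the operators $\chi \Pi_k$, $\chi e^{\gamma c}V\Pi_k$, $\chi e^{\gamma c}\Pi_k V$ and $\chi \Pi_k e^{\gamma c}V\Pi_k$ between weighted $\mc{D}(\mc{Q})$, $L^2$ and $L^\infty$ spaces. My strategy rests on two structural observations: first, $E_k$ is finite dimensional and included in every $L^p(\Omega_\T)$ by \eqref{LpEk}, so all Hilbertian norms are equivalent on $E_k$ and the sesquilinear form $\mc{Q}_V$, although singular on a general element of $\mc{D}(\mc{Q})$ for $\gamma\in[\sqrt{2},2)$, is in fact bounded on $E_k$; second, $\chi$ is supported in $(-\infty,A)$, so any exponential factor $e^{\beta c}$ with $\beta\geq 0$ is harmless on $\mathrm{supp}(\chi)$.

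\textbf{Step 1: the a priori bound on $E_k$.} The key input is the estimate
\[ |\mc{Q}_V(F,G)| \leq C_k \|F\|_{L^2(\Omega_\T)} \|G\|_{L^2(\Omega_\T)}, \quad F, G\in E_k. \]
Using the Girsanov representation \eqref{girsuk}, $\mc{Q}_V(F,G) = \mu\int_0^{2\pi} \E[F_{\mathrm{shift}(\theta)}\overline{G_{\mathrm{shift}(\theta)}}]\,d\theta$. Since every $F\in E_k$ is a polynomial of bounded degree in the finitely many Gaussian variables $(x_j,y_j)_{j\leq \lfloor \lambda_k\rfloor}$, the shifted element $F_{\mathrm{shift}(\theta)}$ lies in the same finite dimensional subspace with coefficients depending on $\theta$ through bounded trigonometric polynomials. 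Equivalence of norms on that subspace yields $\|F_{\mathrm{shift}(\theta)}\|_{L^2(\Omega_\T)} \leq C_k \|F\|_{L^2(\Omega_\T)}$ uniformly in $\theta$, and the estimate follows by Cauchy--Schwarz. As a consequence, $\Pi_k V \Pi_k$ restricted to $E_k$ is a bounded operator of norm at most $C_k$.

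\textbf{Step 2: the six mapping properties.} Each claim reduces to a routine verification once Step 1 is in place. For the first claim, $v=\chi\Pi_k u$ satisfies $\pl_c v=\chi'\Pi_k u+\chi\Pi_k\pl_c u$ and $\mathbf{P}^{1/2}v=\chi\Pi_k\mathbf{P}^{1/2}u$ in $L^2$ (since $[\Pi_k,\mathbf{P}]=0$), while Step 1 bounds the potential piece by $\int e^{\gamma c}\mc{Q}_V(v,v)\,dc\leq C_k e^{\gamma A}\|\chi\|_\infty^2\|u\|_2^2$. The second claim is its dual since $\chi$ and $\Pi_k$ are real symmetric. For the third claim, Cauchy--Schwarz on $\mc{Q}_V$ combined with Step 1 gives $|\mc{Q}_V(\Pi_k u(c),v(c))|\leq C_k \|u(c)\|_{L^2(\Omega_\T)}\mc{Q}_V(v(c),v(c))^{1/2}$, and a second Cauchy--Schwarz in $c$ applied to $\int e^{(\gamma+\beta-\beta')c}\chi(c)\mc{Q}_V(\Pi_k u(c),v(c))\,dc$ splits off a factor $e^{\gamma c/2}\mc{Q}_V(v,v)^{1/2}$ absorbed by $\|v\|_{\mc{Q}}$ through $\int e^{\gamma c}\mc{Q}_V(v,v)dc\leq C\|v\|_{\mc{Q}}^2$, leaving $\int e^{(\gamma+2\beta)c}\chi(c)^2 dc$, which is finite on $(-\infty,A)$ precisely when $\gamma+2\beta\geq 0$, i.e.\ $\beta\geq -\gamma/2$. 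The fourth claim is the adjoint of the third. The fifth follows from $\|\Pi_k V\Pi_k u(c)\|_{L^2(\Omega_\T)}\leq C_k\|u(c)\|_{L^2(\Omega_\T)}$ (Step 1) together with integration against $e^{-2(\beta'-\beta)c}$, the resulting factor $e^{2(\gamma+\beta)c}\chi^2$ being bounded on $\mathrm{supp}(\chi)$ since $\gamma+\beta>0$ whenever $\beta\geq -\gamma/2$. The sixth proceeds like the third but with $\|u(c,\cdot)\|_{L^2(\Omega_\T)}\leq \|u\|_\infty$ (using that $\P_\T$ is a probability measure), converting the $c$-Cauchy--Schwarz into an $L^1$ integral $\int_{-\infty}^A e^{(\gamma+2\beta)c}\chi(c)\,dc$, which converges if and only if $\gamma+2\beta>0$, whence the strict inequality $\beta>-\gamma/2$.

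\textbf{Main obstacle.} No delicate analytical input is required; the genuine subtlety is Step 1, where one must justify that even in the range $\gamma\in[\sqrt{2},2)$, for which $V$ is only a singular measure and not a bounded multiplication operator on $L^2(\Omega_\T)$, its restriction to the finite dimensional space $E_k$ is nevertheless a bounded bilinear form. Once this is secured, the rest is bookkeeping of weights, and the threshold $\beta\geq -\gamma/2$ appears naturally from the Girsanov shift: each $n$-th harmonic coordinate is shifted by a quantity of size $\gamma/\sqrt{n}$, and the corresponding Radon--Nikodym derivative produces the factor $e^{\gamma c}$ that dictates the critical weight.
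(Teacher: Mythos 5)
Your proof is correct and follows essentially the same route as the paper's: both reduce to the boundedness of $\mc{Q}_V$ on the finite-dimensional space $E_k$, then apply Cauchy--Schwarz twice (once for $\mc{Q}_V$ and once in $c$, after writing $e^{(\gamma+\beta)c}=e^{\gamma c/2}\cdot e^{(\gamma/2+\beta)c}$), and your Step~1 usefully spells out via the Girsanov shift \eqref{girsuk} the $E_k$-bound $\mc{Q}_V(F)\leq C_k\|F\|^2_{L^2(\Omega_\T)}$ that the paper only asserts. One precision in your third claim: what must be finite is $\sup_{c\leq A}e^{(\gamma+2\beta)c}\chi(c)^2$ (a weight multiplying $\|e^{-\beta'c}u(c)\|^2_{L^2(\Omega_\T)}$ inside the $L^1_c$ integral), which is finite iff $\gamma+2\beta\geq 0$, whereas $\int_{-\infty}^A e^{(\gamma+2\beta)c}\chi(c)^2\,dc$ diverges at the endpoint $\gamma+2\beta=0$; your correct treatment of the sixth claim shows you already see this distinction, so only the phrasing of claim three needs adjusting.
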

\begin{proof}
For the first operator $\chi \Pi_k$, it suffices to check that for $u\in L^2((-\infty,A);E_k)$, $\mc{Q}_{e^{\gamma c}V}(u)\leq C_k\|u\|^2_{L^2}$. Since there is $C_k>0$ depending on $k$ such that for all $F\in E_k$, 
$\mc{Q}_{V}(F)\leq C_k\|F\|_{L^2(\Omega_\T)}^2$, we have 
\[ \mc{Q}_{e^{\gamma c}V}(u)\leq C_k \int_{-\infty}^A e^{\gamma c}\|u\|^2_{L^2(\Omega_\T)}dc\leq C_{k,A}\|u\|_{L^2(\R\times \Omega_\T)}^2.\] 
The extension of $\chi \Pi_k$ to $\mc{D}'(\mc{Q})$ follows by duality, using that $\Pi_k^*=\Pi_k$ on $L^2$. 
Next, we analyze $\chi e^{\gamma c}V\Pi_k$. It suffices to deal with the case $\beta'=0$ since $e^{\beta'c}$ commutes with $e^{\gamma c}V$. 
Using that for $F\in L^2(\R;E_k)$ and $\chi\in L^\infty(\R)$ with support in $\R^-$, we have by Cauchy-Schwarz that for all $u\in \mc{D}(\mc{Q})$
\[ \begin{split}
|\cjg \chi(c) e^{(\gamma+\beta) c}V F,u\cjd|\leq 
\int_{-\infty}^A \chi(c)e^{(\gamma +\beta) c} |\mc{Q}_V(F,u)| dc & \leq \|\chi\|_{L^\infty}
 \Big(\int_{-\infty}^A e^{\gamma c}\mc{Q}_V(u)dc\Big)^{\frac{1}{2}}
 \Big( \int_{-\infty}^Ae^{(\gamma+2\beta) c}\mc{Q}_V(F)dc\Big)^\frac{1}{2}\\
 \leq  C_{k,A}\|\chi\|_{L^\infty}\|F\|_{L^2}\mc{Q}(u)^{1/2} 
\end{split}\]
for some constant $C_{k,A}>0$, provided $\gamma+2\beta\geq 0$. The same argument also holds for $F\in L^\infty(\R;E_k)$.
Finally, $\chi \Pi_k e^{\gamma c}V=\chi e^{\gamma c} \Pi_k V$ makes sense as a map $e^{\beta c}\mc{D}(\mc{Q})\to e^{\beta c}\mc{D}'(\mc{Q})$ using the boundedness 
$\Pi_k: \mc{D}'(\mc{Q})\to \mc{D}'(\mc{Q})$. To prove that it actually maps to $L^2$ if $\gamma+2\beta>0$, we note that for all $u'\in \mc{C}$
\[ |\cjg \chi \Pi_k e^{\gamma c}Vu,u'\cjd|=|\cjg \chi e^{\gamma c}Vu,\Pi_k u'\cjd|=\Big| \int_{-\infty}^A
\chi(c)e^{\gamma c}\mc{Q}_{V}(u,\Pi_k(u'))dc\Big|\leq C_{k,A}\|\chi\|_{L^\infty}\mc{Q}(e^{-\beta c}u)^{1/2}\|u'\|_{L^2}\]
where the last bound is obtained as above by Cauchy-Schwarz and the bounds on $\mc{Q}_V$ on $E_k$.
For the last term $\chi \Pi_k V\Pi_k$, we take $u\in L^2(\R\times \Omega)$ and note that for $u'\in \mc{D}(\mc{Q})$
\[\begin{split} 
|\cjg e^{\beta c}\Pi_k V\Pi_k u,u'\cjd|=& |\cjg \chi e^{\beta c}\Pi_k V\Pi_k u, \Pi_k u'\cjd| 
\leq \Big| \int_{-\infty}^A
\chi(c)e^{(\gamma+\beta) c}\mc{Q}_{V}(\Pi_ku),\Pi_ku')dc\Big|\\
\leq &  \Big| \int_{-\infty}^A
\chi(c)e^{(\gamma+\beta) c}\mc{Q}_V(\Pi_k u)^{\frac{1}{2}}\mc{Q}_V(\Pi_k u')^{\frac{1}{2}}dc\Big|\leq C_{k,A}\|u\|_{L^2}\|u'\|_{L^2}
\end{split}\]
showing that $\|e^{\beta c}\Pi_k V\Pi_k u\|_2\leq C_{k,A}\|u\|_2$.
\end{proof}
First, we show a useful result for the spectral decomposition. 
\begin{lemma}\label{embedded}
The operator ${\bf H}$ does not have non-zero eigenvectors $u\in \mc{D}({\bf H})$. If $\gamma\in (0,1)$, the spectrum of $\mathbf{H}$ is given by $\sigma(\mathbf{H})=[\tfrac{Q^2}{2},\infty)$ and consists of essential spectrum.
\end{lemma}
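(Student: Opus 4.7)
\textbf{Plan for the proof of Lemma \ref{embedded}.} Both parts pivot on the elementary commutator identity
\[ [{\bf H}, \partial_c] = [{\bf H}^0, \partial_c]+[e^{\gamma c}V,\partial_c]= -\gamma\, e^{\gamma c}V,\]
valid as a sesquilinear form identity on $\mc{D}(\mc{Q})$ since ${\bf H}^0$ has no $c$-dependent coefficients. Part (1) uses this via a virial argument; part (2) adds an explicit Weyl sequence.

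\textbf{Part (1), absence of $L^2$-eigenvectors.} Suppose $u \in \mc{D}({\bf H})$ with ${\bf H}u = Eu$, $u\neq 0$, so $E \in \R$. Self-adjointness of ${\bf H}$, the reality of $E$, and the anti-self-adjointness of $\partial_c$ on $L^2(\R\times \Omega_\T)$ give formally
\[ -\gamma \int_\R e^{\gamma c}\mc{Q}_V\bigl(u(c,\cdot)\bigr)\,dc = \langle [{\bf H},\partial_c]u,u\rangle_2 = E\langle \partial_c u,u\rangle_2 - E\langle \partial_c u,u\rangle_2=0. \]
Each term in the integrand is non-positive, so $\mc{Q}_V(u(c,\cdot))=0$ for a.e.\ $c\in \R$. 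For $\gamma\in (0,\sqrt{2})$, $\mc{Q}_V(f)=\E[V|f|^2]$ and the strict positivity $V>0$ $\P_\T$-a.s.\ (non-triviality of the circle GMC) forces $u(c,\cdot)=0$ a.s.\ in $\varphi$; for $\gamma\in [\sqrt{2},2)$, the same conclusion follows from the strict positivity of $\mc{Q}_V$ on non-zero elements in its natural domain. The technical point is justifying the commutator identity for $u$ with only $\mc{D}({\bf H})$-regularity, since $\partial_c u$ need not lie in $\mc{D}({\bf H})$ a priori. I would handle this by mollification in $c$: if $\rho_\eps$ is a smooth bump on $\R$ and $u_\eps := \rho_\eps *_c u$, then $({\bf H}-E)u_\eps = [e^{\gamma c}V,\rho_\eps *_c]u$ tends to $0$ in $L^2$ as $\eps \to 0$ by Proposition \ref{l2alphamu}; testing the commutator identity against $\chi_R u_\eps$ for a $c$-cut-off $\chi_R$ and controlling the boundary terms via Lemma \ref{chiPikV} then yields the virial identity for $u$.

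\textbf{Part (2), spectrum for $\gamma\in (0,1)$.} The inclusion $\sigma({\bf H})\subseteq [\tfrac{Q^2}{2},\infty)$ is immediate from the lower bound $\mc{Q}(u)\geq (Q^2/2)\|u\|_2^2$ established in Section \ref{sub:bilinear}. For the reverse inclusion, given $E\geq Q^2/2$ write $E=(P^2+Q^2)/2$ with $P\geq 0$ and set, for $n$ large,
\[ u_n(c,\varphi):=n^{-1/2}\chi\Bigl(\frac{c-c_n}{n}\Bigr)e^{iPc}, \qquad c_n:=-An,\]
for a fixed $\chi\in C_c^\infty(\R)$ with $\|\chi\|_{L^2(\R)}=1$ and $A>0$ to be chosen. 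These functions depend only on $c$ (so ${\bf P}u_n=0$), satisfy $\|u_n\|_2=1$, and a direct computation gives
\[ \|({\bf H}^0-E)u_n\|_2=O(n^{-1}), \qquad \|e^{\gamma c}V u_n\|_2\leq \|V\|_{L^2(\Omega_\T)}\, e^{\gamma c_n+O(n)}\xrightarrow[n\to\infty]{}0,\]
the last bound using $V\in L^2(\Omega_\T)$, which holds precisely for $\gamma<1$ by the standard GMC moment estimate $V\in L^p(\Omega_\T)\Leftrightarrow p<2/\gamma^2$. Choosing $A$ large enough we obtain $\|({\bf H}-E)u_n\|_2\to 0$ while $u_n\rightharpoonup 0$ weakly in $L^2$ (their supports drift to $-\infty$), so Weyl's criterion places $E$ in $\sigma_{\rm ess}({\bf H})$. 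Combined with part (1), which rules out any point spectrum, the discrete spectrum is empty and hence $\sigma({\bf H})=\sigma_{\rm ess}({\bf H})=[\tfrac{Q^2}{2},\infty)$.

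\textbf{Main obstacle.} The principal technical difficulty is the rigorous justification of the virial identity in (1), especially for $\gamma\in [\sqrt{2},2)$ where $e^{\gamma c}V$ is not a multiplication operator by a function but a positive form associated to a GMC measure on a subspace of $L^2(\Omega_\T)$. The mollification/cut-off scheme above, together with the bilinear form machinery of Section \ref{sub:bilinear} and the boundedness estimates of Lemma \ref{chiPikV}, should make the argument rigorous in both ranges of $\gamma$.
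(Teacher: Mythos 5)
Your overall strategy matches the paper's: a virial/commutator identity to rule out eigenvectors, plus a drifting Weyl sequence with $V\in L^2(\Omega_\T)$ (valid for $\gamma<1$) to capture $[Q^2/2,\infty)$ as essential spectrum. There are, however, two genuine gaps.

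First, the regularity step. You correctly flag that $\partial_c u\in\mc{D}(\mc{Q})$ is the crux, but the mollification sketch is not a proof. You assert that $({\bf H}-E)(\rho_\eps*_c u)=[e^{\gamma c}V,\rho_\eps*_c]u\to 0$ in $L^2$ by Proposition~\ref{l2alphamu}; that proposition only controls $e^{-t{\bf H}}$ on weighted $L^p$ spaces and says nothing about the commutator of a singular potential with a convolution. The blow-up of $e^{\gamma c}V$ in $c$ is precisely what you need to tame, and the cited statement does not do so. The paper instead proves Lemma~\ref{cderiv} via discrete difference quotients $D_h u$, obtaining an \emph{a priori} $\mc{Q}$-bound on $D_h u$ (cf.\ \eqref{aprioriestQDh}) uniform in $h$ and showing $D_h u$ is Cauchy for the $\mc{Q}$-norm. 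That argument carries the precise shift-term correction \eqref{dirFbis}, which is exactly what produces the virial identity when tested against $u$; your proposal would need to reproduce a bound of that quality for the smoothed commutator, and as written it doesn't.

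Second, the conclusion for $\gamma\in[\sqrt{2},2)$. You invoke "strict positivity of $\mc{Q}_V$ on non-zero elements in its natural domain." This is not a known fact and not obviously true once $V$ ceases to be a function: for $u\in\mc{D}(\mc{Q})$ the object $u_{\rm shift}$ is defined as an abstract $L^2$-limit from the form closure, not pointwise as $u$ composed with a Cameron–Martin shift, so $\int e^{\gamma c}\int_0^{2\pi}\E|u_{\rm shift}|^2\,d\theta\,dc=0$ does not directly give $u=0$. The paper sidesteps this: the vanishing implies $\mc{Q}(u,v)=\mc{Q}_0(u,v)$ for all $v\in\mc{C}$, hence $u\in\mc{D}({\bf H}^0)$ is an $L^2$-eigenvector of ${\bf H}^0$; but ${\bf H}^0$ has purely absolutely continuous spectrum (Remark~\ref{discreteH0}), so $u=0$. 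That reduction is the clean way to close the $\gamma\geq\sqrt{2}$ case and the one you should adopt.
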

\begin{proof}  In the case $\gamma\in (0,1)$, the space $\mc{C}$ is included in $\mc{D}({\bf H})$. 
It is then easy to check that $\sigma(\mathbf{H})=[\tfrac{Q^2}{2},\infty)$ consists only of essential spectrum by using Weyl sequences $(e^{ipc}\chi(2^{-n}c)/\omega_n)_n\in\N$ where $\chi \in C_c^\infty(\R)$ have support in $[-\frac{3}{2},-1]$ and equal to $1$ on some interval, with $\omega_n=\|\chi( 2^{-n}\cdot)\|_{L^2(\R)}$.  

Let $u\in \mc{D}(\mathbf{H})$ such that $\mathbf{H}u=\la u$ with $\la \in [\tfrac{Q^2}{2},\infty)$. Then $  u\in \mc{D}(\mc{Q})$ (hence $\partial_cu\in L^2$), and it satisfies $\mathcal{Q}(u,v)=\langle \lambda u\, |\,v\rangle_2$ for all $v\in \mc{D}(\mc{Q})$.
Now we claim
\begin{lemma}\label{cderiv}
Assume we are given $f\in L^2$ such that $\partial_cf\in L^2$. Consider $u\in \mc{D}(\mc{Q})$ such that 
\begin{equation}\label{dirF}
\mathcal{Q}(u,v)=\langle   f\, |\, v\rangle_2,\quad \forall v\in \mc{D}(\mc{Q}).
\end{equation}
Then $\partial_cu\in \mc{D}(\mc{Q})$ and 
\begin{equation}\label{dirFbis}
\mathcal{Q}(\partial_cu,v)=\langle  \partial_c f\, |\, v\rangle_2-  \gamma   \int e^{\gamma c}\int_0^{2\pi}\E[ u_{\rm shift}v_{\rm shift}]\dd \theta\dd c,\quad \forall v\in \mc{D}(\mc{Q}).
\end{equation}
\end{lemma}
We postpone the proof of this lemma  and conclude first. Consider next \eqref{dirF} with $f=\lambda u$ and choose $v=\partial_c u\in \mc{D}(\mc{Q})$ to obtain $\mathcal{Q}(u,\partial_cu)=\langle   \lambda u\,|\, \partial_cu\rangle_2=0$. Also, choosing $v=u$  in \eqref{dirFbis} we obtain $\mathcal{Q}(\partial_cu,u)=\langle  \lambda \partial_c u\,|\, u\rangle_2-\gamma\int e^{\gamma c} \int_0^{2\pi}\E|u_{\rm shift}|^2\dd\theta \dd c$. These relations imply $\gamma\int e^{\gamma c} \int_0^{2\pi}\E|u_{\rm shift}|^2\dd\theta \dd c=0$. In the case when $\gamma\in (0,\sqrt{2})$ then $V$ exists as a fairly defined function and this relation translates into $\|e^{\gamma c/2}V^{1/2}u\|_2=0$. Hence $u=0$ as $V>0$ almost surely. In the general situation $\gamma\in (0,2)$, the argument is as follows: the relation  $\gamma\int e^{\gamma c} \int_0^{2\pi}\E|u_{\rm shift}|^2\dd\theta \dd c=0$ implies that $\mathcal{Q}(u,v)=\mathcal{Q}_0(u,v)$ for all $v\in \mathcal{C}$. Therefore, $u\in \mathcal{D}(\mathbf{H}_0)$ and $u$ is an eigenfunction $\mathbf{H}_0$, hence $u=0$ (cf. Remark \ref{discreteH0}). 
\end{proof}

{\it Proof of Lemma \ref{cderiv}.} For $h>0$, introduce the translation operator $T_h:L^2\to L^2$ by $T_hv:=v(c+h,\cdot)$ and the discrete derivative operator $D_h: L^2\to L^2$ by $D_hv:=(T_hv-v)/h$. Note that $T_h$ maps $ \mc{D}(\mc{Q})$ into itself, that $\|D_hv\|_2\leq \|\pl_cv\|_2 $, $D_hu_{\rm shift}=(D_hu)_{\rm shift}$ and we have the discrete IPP $\langle  D_hu \,|\,v\rangle_2=-\langle   u \,|\,D_{-h}v\rangle_2$ for all $u,v\in L^2$.  Now we can replace $v$ by 
$D_{-h}v$ in \eqref{dirF} and use discrete IPP to obtain
\begin{align}\label{derivFD}
\mathcal{Q}(D_hu,v)=&\langle  D_h f,v\rangle_2   -\int\int_0^{2\pi}\E[(T_he^{\gamma c}-e^{\gamma c})D_hu_{\rm shift}v_{\rm shift}]\dd \theta\dd c  \\
      & -\int\int_0^{2\pi}\E[D_h(e^{\gamma c} )u_{\rm shift}v_{\rm shift}]\dd \theta\dd c,\quad \forall v\in \mc{D}(\mc{Q}).
\end{align}
Next we choose $v=D_hu$ and, using  the inequality $ab \leq \tfrac{\epsilon}{2}a^2+ \tfrac{1}{2\epsilon}b^2 $ for arbitrary $\epsilon>0$, we obtain the a priori estimate  (for some constant $C>0$ depending only on $\gamma$)
\begin{equation}\label{aprioriestQDh}
\begin{split} 
\mathcal{Q}(D_hu,D_hu) \leq & \frac{1}{2}(\|D_hf\|_2^2+\|D_hu\|_2^2)-(e^{\gamma h}-1)\int\int_0^{2\pi}\E( e^{\gamma c}|D_hu_{\rm shift}|^2)\dd \theta\dd c 
\\
&+\frac{1}{2}\Big|\frac{e^{\gamma h}-1}{h}\Big|\Big(\epsilon^{-1}\int\int_0^{2\pi}\E( e^{\gamma c}|u_{\rm shift}|^2)\dd \theta\dd c +\epsilon\int\int_0^{2\pi}\E( e^{\gamma c}|D_hu_{\rm shift}|^2)\dd \theta\dd c \Big)\\
\leq &( C+\epsilon^{-1}) \big(\|\partial_c f\|_2^2+ \mathcal{Q}(u,u)\big)+C(\eps+h)\mathcal{Q}(D_hu,D_hu)
\end{split}\end{equation}
for all $h>0$ small, and therefore the last term can be absorded in the left hand side if $\eps,h>0$ are small enough.
Then, writing \eqref{derivFD} for $h$ and $h'$, subtracting and then choosing $v=D_hu-D_{h'}u$, we find 
\[ \mathcal{Q}(D_hu-D_{h'}u,D_hu-D_{h'}u) \leq C \big(\|D_hf-D_{-h'} f\|_2^2+ |h-h'|\mc{Q}(u,u)+ |h-h'|\mc{Q}(D_{(h-h')}u,D_{(h-h')}u)\big).\]
Using \eqref{aprioriestQDh} with $h$ replaced by $h-h'$ to bound the last term, we obtain that the sequence $(D_hu)_h$ is Cauchy for the $ \mathcal{Q}$-norm. Hence the limit  $\partial_cu$ belongs to $\mathcal{D}( \mathcal{Q})$.
\qed

\begin{remark}When $\gamma\in [1,2)$ the spectrum is also $[Q^2/2,\infty)$ and is made of essential spectrum, and this will follow from our analysis of the resolvent: in fact we will show for $\gamma \in (0,2)$ that the spectrum of ${\bf H}$ is absolutely continuous.
\end{remark}
  
\subsection{Resolvent of $\mathbf{H}$}
To describe the spectral measure of ${\bf H}$ and construct its generalized eigenfunctions, 
the main step is to understand the resolvent of $\mathbf{H}$ as a function of the spectral parameter, in particular when the spectral parameter approach the spectrum. 
Due to the fact that the spectrum of ${\bf H}$ starts at $Q^2/2$, it is convenient to use the spectral parameter $2\Delta_\alpha$ where $\Delta_{\alpha}= \frac{\alpha}{2}(Q-\frac{\alpha}{2})$ and $\alpha \in \C$. That way we have with $\alpha=Q+ip$
\[ {\bf H}-2\Delta_\alpha= -\tfrac{1}{2}\pl_c^2 + {\bf P}+e^{\gamma c}V-\tfrac{1}{2}p^2\]
where $p\in\R$ plays the role of a frequency: in particular $2\Delta_\alpha\in [Q^2/2,\infty)$ if and only if $p\in \R$. The half-plane $\{\alpha\in\C \,|\,{\rm Re}(\alpha)<Q\}$ is mapped by $\Delta_\alpha$ to the resolvent set $\C\setminus [Q^2/2,\infty)$ of ${\bf H}$ and will be called the \emph{physical sheet}. By the spectral theorem 
\[ \mathbf{R}(\alpha)=(\mathbf{H}-2\Delta_{\alpha})^{-1}: L^2(\R\times \Omega_\T)\to \mc{D}(\mathbf{H})\]
is bounded if ${\rm Re}(\alpha)<Q$. Our goal is to extend this resolvent up to the line ${\rm Re}(\alpha)=Q$ analytically, and we will actually do it in an even larger region. The price to pay 
is that $\mathbf{R}(\alpha)$ will not be bounded on $L^2$ but rather on certain weighted $L^2$ spaces, where the weights are $e^{\beta c}$ in the region $c\leq -1$, with $\beta\in \R$ tuned with respect to $\alpha$.

\subsubsection{Resolvent and propagator on weighted spaces in the probabilistic region.}
Our first task is to understand the resolvent on weighted spaces in a subregion of ${\rm Re}(\alpha)<Q$, that we call the \emph{probabilistic region} due to the fact that the resolvent can be written in terms of the semigroup $e^{-t{\bf H}}$.
 
Let $\rho:\R\to \R$ be a smooth non-decreasing function satisfying 
\[\rho(c)=c+a \textrm{ for }c\leq -1,\quad  \rho(c)=0 \textrm{ for }c\geq 0, \quad 0\leq \rho'\leq 1\]
for some $a\in \R$. We have for $\beta\geq 0$ the inclusion of weighted spaces
\[e^{\beta \rho(c)}L^2(\R\times \Omega_\T)\subset L^2(\R\times \Omega_\T)\subset e^{-\beta \rho(c)}L^2(\R\times \Omega_\T).\]
The weighted spaces $e^{\beta \rho(c)}L^2(\R\times \Omega_\T)$ are obviously Hilbert spaces with product $\cjg u,v\cjd_{e^{\beta \rho}L^2}:=\cjg e^{-\beta\rho}u,e^{-\beta \rho}v\cjd_2$.

\begin{lemma}\label{resolventweighted}
Let $\beta\in\R$. If ${\rm Re}((\alpha-Q)^2)>\beta^2$ and ${\rm Re}(\alpha)<Q$, the resolvent $\mathbf{R}(\alpha)=(\mathbf{H}-2\Delta_{\alpha})^{-1}$ extends to a bounded operator 
\begin{align*} 
&\mathbf{R}(\alpha): e^{-\beta \rho}L^2(\R\times \Omega_\T)\to e^{-\beta \rho}\mc{D}(\mathbf{H}),\\
&\mathbf{R}(\alpha): e^{-\beta \rho}\mc{D}'(\mc{Q})\to e^{-\beta \rho}\mc{D}(\mc{Q})
\end{align*}
which is analytic in $\alpha$ in this region. The operator $\mathbf{H}: e^{-\beta \rho}L^2\to e^{-\beta \rho}L^2$ is closed with domain $e^{-\beta \rho}\mc{D}(\mathbf{H})$, it is a bijective mapping  
$e^{-\beta \rho}\mc{D}(\mathbf{H})\to e^{-\beta \rho}L^2$ with inverse $\mathbf{R}(\alpha)$.
Moreover, for $\alpha\in (-\infty,Q)$ and $0\leq \beta<Q-\alpha$, the resolvent is bounded with norm $\|\mathbf{R}(\alpha)\|_{\mc{L}(e^{-\beta\rho}L^2)}\leq 2((\alpha-Q)^2-\beta^2)^{-1}$ and is equal to the integral
\begin{equation}\label{resolvvspropagator} 
\mathbf{R}(\alpha)=\int_0^{\infty} e^{-t\mathbf{H}+t2 \Delta_{\alpha}}dt
\end{equation}
where $e^{-t\mathbf{H}}$ is the semigroup on $e^{-\beta\rho}L^2$ obtained by Hille-Yosida theorem with norm \begin{equation}\label{normetHweight}
\forall t\geq 0, \quad \|e^{-t\mathbf{H}}\|_{\mc{L}(e^{-\beta\rho}L^2)}\leq e^{-t\frac{Q^2-\beta^2}{2}}.
\end{equation} 
The integral \eqref{resolvvspropagator} converges in $\mc{L}(e^{-\beta\rho}L^2)$ operator norm and $e^{-t\mathbf{H}}:e^{-\beta \rho}L^2\to e^{-\beta \rho}L^2$ extends the semigroup defined in   \eqref{hstar}. Finally, $e^{-t{\bf H}}:e^{\beta \rho}\mc{D}(\mc{Q})\to e^{\beta \rho}\mc{D}(\mc{Q})$ and $e^{-t{\bf H}}:e^{\beta \rho}\mc{D}'(\mc{Q})\to e^{\beta \rho}\mc{D}'(\mc{Q})$ are bounded and for each $\eps>0$ there is some constant $C_\eps>0$ such that for all $t>0$
\begin{equation}\label{normetHweight2}
\|e^{-t{\bf H}}\|_{\mc{L}(e^{\beta \rho}\mc{D}(\mc{Q}))}+\|e^{-t{\bf H}}\|_{\mc{L}(e^{\beta \rho}\mc{D}'(\mc{Q}))}\leq C_\epsilon e^{-t(\frac{Q^2-\beta^2}{2}-\eps)}.
\end{equation}
\end{lemma}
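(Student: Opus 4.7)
The plan is to conjugate $\mathbf{H}$ by the weight $e^{\beta\rho}$ so that the weighted problem becomes a problem on $L^2$. Since $\rho$ depends only on $c$, it commutes with $\mathbf{P}$ and with the potential $e^{\gamma c}V$, and only the $-\tfrac{1}{2}\partial_c^2$ piece is affected, giving
\[
\mathbf{H}_\beta := e^{-\beta\rho}\mathbf{H} e^{\beta\rho} = \mathbf{H} - \beta\rho'\partial_c - \tfrac{1}{2}\beta^2(\rho')^2 - \tfrac{1}{2}\beta\rho''
\]
as a map $\mc{D}(\mc{Q})\to \mc{D}'(\mc{Q})$; note $\mathbf{H}_\beta^*=\mathbf{H}_{-\beta}$. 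A one-line integration by parts on $\mc{C}$ gives
\[
\mathrm{Re}\,\cjg \mathbf{H}_\beta u,u\cjd_2 = \mc{Q}(u,u) - \tfrac{1}{2}\beta^2\|\rho' u\|_2^2 \geq \tfrac{Q^2-\beta^2}{2}\|u\|_2^2,
\]
since $|\rho'|\leq 1$ and $\mc{Q}(u,u)\geq \tfrac{Q^2}{2}\|u\|_2^2$, and the same lower bound holds for $\mathbf{H}_\beta^*$. By Lumer--Phillips, $\mathbf{H}_\beta - \tfrac{Q^2-\beta^2}{2}$ is then m-accretive on $L^2$, so it generates a strongly continuous semigroup $e^{-t\mathbf{H}_\beta}$ on $L^2$ with norm $\leq e^{-t(Q^2-\beta^2)/2}$; conjugating back by $e^{\beta\rho}$ produces the semigroup on $e^{-\beta\rho}L^2$ with the bound \eqref{normetHweight}. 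Agreement with the semigroup of \eqref{hstar} is checked on the common dense subspace of functions compactly supported in $c$.

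For real $\alpha<Q$ and $0\leq \beta<Q-\alpha$, one has $2\Delta_\alpha<(Q^2-\beta^2)/2$, so the Laplace integral \eqref{resolvvspropagator} converges in $\mc{L}(e^{-\beta\rho}L^2)$-norm to an operator of norm bounded by $2((\alpha-Q)^2-\beta^2)^{-1}$, and the usual semigroup identity identifies it with $(\mathbf{H}-2\Delta_\alpha)^{-1}$. For general complex $\alpha$ in the probabilistic region $\mathrm{Re}((\alpha-Q)^2)>\beta^2,\mathrm{Re}(\alpha)<Q$, I will drop the semigroup picture and use Lax--Milgram directly: the sesquilinear form $\mc{Q}_\beta(u,v)-2\Delta_\alpha\cjg u,v\cjd_2$ on $\mc{D}(\mc{Q})$ is still coercive (its real part dominates $(\tfrac{1}{2}\mathrm{Re}((\alpha-Q)^2)-\tfrac{1}{2}\beta^2)\|u\|_2^2+\tfrac{1}{2}\mc{Q}_{e^{\gamma c}V}(u)$ after a short Young-inequality argument absorbing the $\partial_c$ cross-term into $\mc{Q}_0$), hence it defines a bounded bijection $\mathbf{H}_\beta-2\Delta_\alpha:\mc{D}(\mc{Q})\to \mc{D}'(\mc{Q})$. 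Restricting the inverse to $L^2$ gives the desired extension to $\mc{D}(\mathbf{H})$, and analyticity in $\alpha$ follows from the analytic dependence of the form on $\alpha$ plus the uniform coercive bound on compacts of the region. Conjugating by $e^{\beta\rho}$ yields the two boundedness statements for $\mathbf{R}(\alpha)$, and bijectivity of $\mathbf{H}:e^{-\beta\rho}\mc{D}(\mathbf{H})\to e^{-\beta\rho}L^2$ is then immediate from the existence of its inverse.

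For \eqref{normetHweight2} on $e^{\beta\rho}\mc{D}(\mc{Q})$ and $e^{\beta\rho}\mc{D}'(\mc{Q})$, the strategy is first to prove the $\mc{D}(\mc{Q})$-bound and then dualize. Fix $\lambda>(Q^2-\beta^2)/2$ so that $\mathbf{H}_\beta+\lambda$ is an isomorphism $\mc{D}(\mc{Q})\to \mc{D}'(\mc{Q})$; it intertwines $e^{-t\mathbf{H}_\beta}$ on the two spaces, so the bound on one implies the bound on the other. Differentiating $t\mapsto \mathrm{Re}\,\mc{Q}(e^{-t\mathbf{H}_\beta}u, e^{-t\mathbf{H}_\beta}u)$, bounding the cross-terms coming from $\beta\rho'\partial_c$ by Cauchy--Schwarz in $\mc{Q}_0$ and absorbing them (this works because $\rho',\rho''$ are compactly supported in $c$ so they give $L^\infty$-bounded multipliers), I get a Gronwall-type inequality yielding $\mc{Q}(e^{-t\mathbf{H}_\beta}u)\leq e^{-t(Q^2-\beta^2-2\eps)}\mc{Q}(u)$ for any $\eps>0$; conjugating by $e^{\beta\rho}$ then gives \eqref{normetHweight2}. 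The main obstacle here is that in the regime $\gamma\in[\sqrt{2},2)$ the potential $e^{\gamma c}V$ is only a quadratic form, so all estimates must be phrased entirely at the level of $\mc{Q}$ and its dual, using only that $\mc{Q}_{e^{\gamma c}V}\geq 0$ and that the perturbation $\mathbf{H}_\beta-\mathbf{H}$ is a first-order differential operator in $c$ with compactly supported coefficients, which is relatively form-bounded with respect to $\mc{Q}_0$ with arbitrarily small bound.
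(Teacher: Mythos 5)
Your treatment of the first two thirds of the lemma — conjugation by the weight, the coercivity identity coming from the exact cancellation of $\tfrac{\beta}{2}\cjg\rho''u,u\cjd + \beta\,{\rm Re}\cjg\rho'\pl_c u,u\cjd$, Lax--Milgram for complex $\alpha$ with $\mathrm{Re}((\alpha-Q)^2)>\beta^2$, the semigroup on $e^{-\beta\rho}L^2$ via Lumer--Phillips/Hille--Yosida with rate $(Q^2-\beta^2)/2$, the Laplace transform identity for $\alpha$ real, and the identification $\mc{D}(e^{\beta\rho}\mathbf{H}e^{-\beta\rho})=\mc{D}(\mathbf{H})$ — is essentially the paper's own argument, up to inessential sign conventions on $\beta$, and is correct.

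The gap is in your proof of \eqref{normetHweight2}. You propose differentiating $t\mapsto\mc{Q}(e^{-t\mathbf{H}_\beta}u)$ and absorbing the cross-terms by Cauchy--Schwarz, but the resulting Gronwall inequality does not close with the rate you need. Concretely, writing $\mathbf{H}_\beta=\mathbf{H}+B$ with $B=-\beta\rho'\pl_c-\tfrac{\beta^2}{2}(\rho')^2-\tfrac{\beta}{2}\rho''$, one has
\begin{equation*}
\tfrac{d}{dt}\mc{Q}(u(t)) = -2\,\mathrm{Re}\cjg\mathbf{H}u,\mathbf{H}_\beta u\cjd_2 = -2\|\mathbf{H}u\|_2^2 - 2\,\mathrm{Re}\cjg\mathbf{H}u, Bu\cjd_2 ,
\end{equation*}
and the only a priori control on $\|\mathbf{H}u\|_2$ in terms of $\mc{Q}(u)$ is the one-sided bound $\|\mathbf{H}u\|_2^2\geq \tfrac{Q^2}{2}\mc{Q}(u)$. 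Optimizing the right-hand side over this constraint, using $\|Bu\|_2\leq \beta\|\pl_cu\|_2+C\|u\|_2\leq C'\mc{Q}(u)^{1/2}$ with $C'$ involving $\|\rho''\|_\infty$, leads to a decay rate governed by $C'$ rather than by $\beta^2$; depending on the size of $C'$ it can even fail to be negative. The heart of the matter is that the smoothing gain $L^2\to\mc{D}(\mc{Q})$ of the resolvent is only of order $c_z^{-1/2}$ (not $c_z^{-1}$), and this is too weak to feed a Gronwall or Hille--Yosida argument with the sharp rate. The paper circumvents this exactly: it first proves the resolvent bound $\|(\mathbf{H}-z)^{-1}\|_{\mc{L}(e^{-\beta\rho}\mc{D}(\mc{Q}))}\leq C_\eps\,c_z^{-1/2}$ in a region where $c_z>0$ (using both the real and imaginary parts of the sesquilinear form to control the mixed $\pl_c$-term), and then constructs $e^{-t\mathbf{H}}$ on $e^{-\beta\rho}\mc{D}(\mc{Q})$ via the holomorphic functional calculus
\begin{equation*}
e^{-t\mathbf{H}}=\frac{1}{2\pi i}\int_{\Gamma_+\cup\Gamma_-}e^{-tz}(\mathbf{H}-z)^{-1}\,dz
\end{equation*}
with a contour whose vertical portion sits at $\mathrm{Re}(z)=(Q^2-\beta^2)/2-\eps$ and whose oblique branches make the $c_z^{-1/2}$ bound integrable against $e^{-tz}$. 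The $\mc{D}'(\mc{Q})$-bound is then obtained by duality, as you anticipated. You should replace the Gronwall sketch by this resolvent-plus-contour argument (or supply an alternative that genuinely yields the $\beta$-dependent rate).
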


\begin{proof}
Consider the operator for $\beta\in \R$, acting on the space $\mc{C}$ (defined in \eqref{core}) and with value in $e^{\beta \rho}\mc{D}'(\mc{Q})$, 
\[ \mathbf{H}_{\beta}:= e^{\beta \rho(c)}\mathbf{H}e^{-\beta \rho(c)}=\mathbf{H} -\tfrac{\beta^2}{2}(\rho'(c))^2+\tfrac{\beta}{2} \rho''(c)+\beta \rho'(c)\pl_{c}.\]

Let $u\in \mc{C}$, then we have (using integration by parts)
\begin{equation}\label{coercive_estimate} 
\begin{split}
{\rm Re}\cjg \mathbf{H}_\beta u\, |\, u\cjd_2 & =\mc{Q}(u)-\tfrac{\beta^2}{2}\|\rho'u\|_2^2+\tfrac{\beta}{2} \cjg \rho''u\,|\, u\cjd_2
+\beta{\rm Re}(\cjg \rho'\pl_cu\,|\, u\cjd_2)\\
&= \mc{Q}(u)-\tfrac{\beta^2}{2}\|\rho'u\|_2^2\\
&\geq  \mc{Q}(u)-\tfrac{\beta^2}{2}\|u\|^2_2 \geq\tfrac{Q^2-\beta^2}{2}\|u\|^2_2+\tfrac{1}{2}\|\pl_cu\|^2_2+\|\mathbf{P}^{1/2}u\|^2_2+\mc{Q}_{e^{\gamma c}V}(u,u),
\end{split}
\end{equation} 
where $\mc{Q}$ was defined in Section \ref{sub:bilinear}.
Consider the sesquilinear form $\mc{Q}_{\alpha,\beta}(u,v):=\cjg (\mathbf{H}_{\beta}-2\Delta_\alpha)u\,|\,v\cjd_2$ defined on 
$\mc{C}$. We easily see that if $-{\rm Re}(2\Delta_{\alpha})>\tfrac{\beta^2-Q^2}{2}$, then 
\[\{u \in L^2(\R\times\Omega_\T)\,|\, \mc{Q}_{\alpha,\beta}(u,u)<\infty\}=\mc{D}(\mc{Q}).\]
Let $\mc{D}'(\mc{Q})$ be the dual of $\mc{D}(\mc{Q})$ (note that $L^2\subset \mc{D}'(\mc{Q})$).
By Lax-Milgram, if  $-{\rm Re}(2\Delta_{\alpha})>\tfrac{\beta^2-Q^2}{2}$, then for each $f'\in \mc{D}'(\mc{Q})$, there is a unique $u\in \mc{D}(\mc{Q})$ such that 
\begin{equation}\label{LaxMilram} 
\forall v\in \mc{D}(\mc{Q}), \quad \, \mc{Q}_{\alpha,\beta}(u,v) =f'(v), \quad \mc{Q}(u)^{1/2}\leq C'\|f'\|_{\mc{D}'(\mc{Q})}
\end{equation}
for all $v\in \mc{D}(\mc{Q})$, where $C'>0$ depends only on ${\rm Re}(2\Delta_{\alpha})$ and $\beta^2$. This holds in particular for the linear form $f':v\mapsto \cjg f\,|\, v\cjd_{2}$ 
with norm $\|f'\|_{\mc{D}'(\mc{Q})}\leq C\|f\|_2$ for some $C>0$ depending on ${\rm Re}(2\Delta_{\alpha})$ and $\beta^2$.
 We define $\til{\mathbf{R}}(\alpha)(e^{-\beta\rho}f):=e^{-\beta \rho}u$, 
this gives a bounded linear operator 
\begin{equation}\label{boundontildeR} 
\til{\mathbf{R}}(\alpha): e^{-\beta \rho}\mc{D}'(\mc{Q})\to e^{-\beta \rho}\mc{D}(\mc{Q})\subset e^{-\beta \rho}L^2
\end{equation}
inverting the bounded operator $e^{\beta\rho}(\mathbf{H}-2\Delta_{\alpha})e^{-\beta\rho}:\mc{D}(\mc{Q})\to \mc{D}'(\mc{Q})$. Moreover, by \eqref{coercive_estimate}, its weighted $L^2$-norm is bounded by
\begin{equation}\label{boundtildeRalpha}
\|\til{\mathbf{R}}(\alpha)\|_{\mc{L}(e^{-\beta\rho}L^2)}\leq 2({\rm Re}((\alpha-Q)^2-\beta^2))^{-1}=(\tfrac{Q^2-\beta^2}{2}-2{\rm Re}(\Delta_\alpha))^{-1}
\end{equation} 
Using $\mc{D}(\mc{Q})\subset e^{-\beta \rho}\mc{D}(\mc{Q})$ and the uniqueness property above,
this means that for $f\in L^2$, we have $\mathbf{R}(\alpha)f=\til{\mathbf{R}}(\alpha)f$ and thus $\til{\mathbf{R}}(\alpha)$ is a continuous extension of $\mathbf{R}(\alpha)$ to the Hilbert space $e^{-\beta \rho}L^2$. The analyticity in $\alpha$ comes from Lax-Milgram, but can also alternatively be obtained 
by Cauchy formula (for $\eps>0$ small)
\[ \til{\mathbf{R}}(\alpha)f= \frac{1}{2\pi i}\int_{|z-\alpha|=\eps} \frac{\til{\mathbf{R}}(z)f}{z-\alpha}dz\]
which holds for all $f\in \mc{C}$ (since $\til{\mathbf{R}}(\alpha)f=\mathbf{R}(\alpha)f$ for such $f$), and can then be extended to $e^{-\beta \rho}L^2$ by density of $\mc{C}$ in $e^{-\beta \rho}L^2$. The domain 
$\mc{D}(e^{\beta \rho}{\bf H}e^{-\beta \rho})=\{u\in \mc{D}(\mc{Q})\,|\,e^{\beta \rho}{\bf H}e^{-\beta \rho}u\in L^2\}$ of the operator $e^{\beta \rho}\bf{H}e^{-\beta \rho}$ is actually equal to 
$\mc{D}({\bf H})=\{u\in \mc{D}(\mc{Q})\,|\, {\bf H}u\in L^2\}$ since  
\begin{equation}\label{formuleHcommute}
e^{-\beta \rho}\mathbf{H}(e^{\beta \rho}u)=\mathbf{H}u -\tfrac{\beta^2}{2}(\rho'(c))^2u-\tfrac{\beta}{2} \rho''(c)u-\beta \rho'(c)\pl_{c}u
\end{equation} 
with $\rho'\in C_c^\infty(\R)$ (thus $\rho'(c)\pl_cu\in L^2$ for $u\in \mc{D}(\mc{Q})$). The operator ${\bf H}:e^{-\beta\rho}\mc{D}({\bf H})\to e^{-\beta\rho}L^2$ is thus closed.
By Hille-Yosida theorem, there is an associated bounded semigroup $e^{-t\mathbf{H}}$ on $e^{-\beta \rho}L^2$, and by density of $L^2\subset e^{-\beta \rho}L^2$ when $\beta\geq 0$, it is an extension of the $e^{-t\mathbf{H}}$ semigroup on $L^2$ .
Let us check that the resolvent can be written as an integral of the propagator.
For $f\in \mc{C}\subset L^2$, we have 
\[ 
\til{\mathbf{R}}(\alpha)f=\mathbf{R}(\alpha)f=\int_0^{\infty} e^{-t\mathbf{H}+t2\Delta_{\alpha}}f\, dt.
\]
By Hille-Yosida theorem and \eqref{boundontildeR}, we have $\|e^{-t\mathbf{H}}\|_{\mc{L}(e^{-\beta \rho}L^2)}\leq e^{-t\tfrac{Q^2-\beta^2}{2}}$, so that the integral above converges if $Q-\alpha>\beta$ (for $\alpha\in(-\infty,Q)$) as a bounded operator on $e^{-\beta\rho}L^2$, showing the desired claim by density of $\mc{C}$ in $e^{-\beta \rho}L^2$.

We conclude with a $e^{\beta \rho}\mc{D}'(\mc{Q})$ bound for $\tilde{{\bf R}}(\alpha)$ and 
$e^{-t{\bf H}}$. First, we note using \eqref{formuleHcommute} that for $u\in \mc{D}(\mc{Q})$
\[|{\rm Im}(\mc{Q}_{\alpha,\beta}(u,u))|\geq (|{\rm Im}(2\Delta_\alpha)|-\beta^2)\,\|u\|^2_2
-\tfrac{1}{4}\|\pl_cu\|^2_2,\]
which implies
\[|(\mc{Q}_{\alpha,\beta}(u,u)|\geq \frac{1}{\sqrt{2}}\Big(c_{\Delta_\alpha}\|u\|^2+\tfrac{1}{4}\|\pl_cu\|^2_2+\|\mathbf{P}^{1/2}u\|^2_2+\mc{Q}_{e^{\gamma c}V}(u,u)\Big).
\]
\[ c_{z}:= \min\Big(-2{\rm Re}(z)+2|{\rm Im}(z)|+\tfrac{Q^2-3\beta^2}{2}, -2{\rm Re}(z)+\tfrac{Q^2-\beta^2}{2}\Big).\]
This in turn gives that, provided ${\rm Re}(-2\Delta_\alpha)+|{\rm Im}(2\Delta_\alpha)|+\tfrac{Q^2-3\beta^2}{2}>0$,  $\tilde{{\bf R}}(\alpha):e^{-\beta \rho}L^2\to e^{-\beta \rho}L^2$ is also well-defined, analytic in $\alpha$ and bounded, and moreover satisfies for $|\Delta_\alpha|\gg 1$ and ${\rm Re}(\Delta_\alpha)\leq \frac{1}{2}|{\rm Im}(\Delta_\alpha)|$
\[ \|\tilde{{\bf R}}(\alpha)\|_{\mc{L}(e^{-\beta \rho}L^2)}\leq C|\Delta_\alpha|^{-1} \]
where $C>0$ is a uniform constant.
First, for each $\eps>0$, there is $C_\eps>0$ such that for all $f\in e^{-\beta \rho}\mc{D}(\mc{Q})$, $u:={\bf R}(\alpha)f$ 
\[\mc{Q}(u,u)\leq C_{\eps}{\rm Re}(\mc{Q}_{\alpha,\beta}(u,u))\leq C_{\eps}|\cjg u\,|\,f\cjd_2|\leq \frac{C_\eps}{c_{\Delta_\alpha}} \|f\|_2^2 \] 
if $c_{\Delta_\alpha}>0$, thus for $c_z>0$
\begin{equation}\label{boundH-zinverse}
\begin{split}
\|({\bf H}-z)^{-1}\|_{\mc{L}(e^{-\beta\rho}\mc{D}(\mc{Q}))}\leq & C_\eps^{1/2} c_z^{-1/2}\\
 \|({\bf H}-z)^{-1}\|_{\mc{L}(e^{-\beta\rho}L^2)}\leq & C_\eps c_z^{-1}.
 \end{split}
 \end{equation}
Let us consider a contour $\Gamma=\Gamma_0\cup \Gamma_+\cup \Gamma_-\subset \C$ with $a:=\tfrac{Q^2-\beta^2}{2}-\eps$, $\Gamma_{\pm}:=
a\pm iN+e^{\pm 3i\pi/8}\R^+\subset \C$ and $\Gamma_0=a+i[-N,N]$ for some $N>0$ large enough so that $c_z>0$ on $\Gamma$, and where $\Gamma$ oriented clockwise around $[a,+\infty)$. Using the holomorphic functional calculus, we have 
\[ e^{-t{\bf H}}=\frac{1}{2\pi i}\int_{\Gamma_+\cup \Gamma_-}e^{-tz}({\bf H}-z)^{-1}dz\]
and the integral converges both in $\mc{L}(e^{-\beta\rho}L^2)$ and $\mc{L}(e^{-\beta\rho}\mc{D}(\mc{Q}))$ using \eqref{boundH-zinverse}, with bound 
\[ \|e^{-t{\bf H}}\|_{e^{-\beta \rho}\mc{D}(\mc{Q})}\leq Ce^{-ta}\]
for some some $C$ depending only on $\eps>0$. Using duality, this gives \eqref{normetHweight2}.  
\end{proof}
In what follows, we will always write $\mathbf{R}(\alpha)$ for the resolvent, for both the operator acting on $L^2$ or acting on $e^{-\beta \rho}L^2$.

\begin{figure}
\centering
\begin{tikzpicture}
 \node[inner sep=10pt] (F1) at (0,0)
    {\includegraphics[width=.5\textwidth]{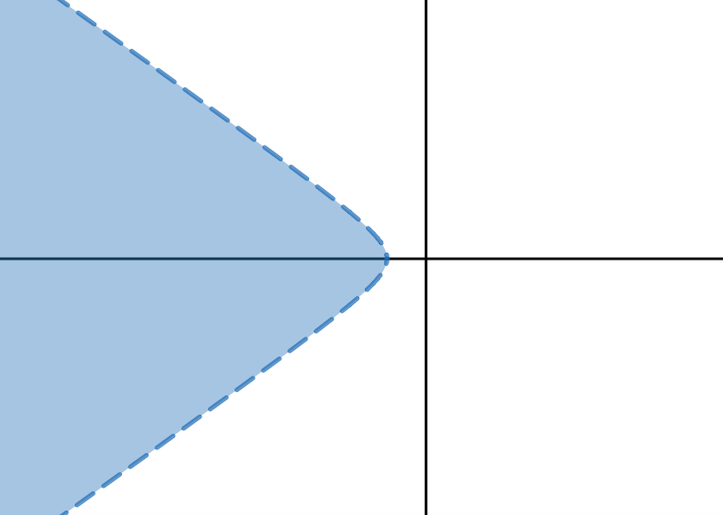}};
\node (F) at (1.7,2.3){ ${\rm Re}(\alpha)=Q$};
\node (F) at (-2.5,0.8){ ${\rm Re}((\alpha-Q)^2)>\beta^2$};
\end{tikzpicture}
  \caption{The blue region corresponds to the set of parameters $\alpha\in\C$ such that  ${\rm Re}((\alpha-Q)^2)>\beta^2$, i.e. region of validity of Lemma \ref{resolventweighted} (here $\beta=1$ on the plot).}
\end{figure}
\subsubsection{Poisson operator in the probabilistic region.}

For $\ell\in\N$,  we shall define the Poisson operator $\mc{P}_\ell(\alpha)$ in the resolvent set.
This operator is a way to construct the generalized eigenfunctions of ${\bf H}$: it takes an element $F\in E_\ell\subset L^2(\Omega_\T)$ and produces a function $u=\mc{P}_\ell(\alpha)F$ solving 
$({\bf H}-2\Delta_\alpha)u=0$ with a prescribed leading asymptotic in terms of $F$ as $c\to -\infty$. 

First, we explain in details our convention on square roots in $\C$ and 
since it will be important in the proof and to avoid confusions for the reader.
We will denote by $\sqrt{\cdot}$ the square root defined so that ${\rm Im}(\sqrt{z})>0$ if $z\in \C\setminus \R^+$, i.e. $\sqrt{re^{i\theta}}=\sqrt{r}e^{i\theta/2}$ for $\theta\in (0,2\pi)$ and $r>0$. In particular, one has $\sqrt{z^2}=z$ for ${\rm Im}(\sqrt{z})>0$ and this extends holomorphically to $z\in \C$. For $\la\in \R^+$, 
the function $\sqrt{z^2-\la}$ will be of special interest to us: it is well-defined and analytic
in $\{{\rm Im}(z)>0\}\cup (-\sqrt{\la},\sqrt{\la})$ 
(since $z^2\notin [\la,\infty)$ in this region) and it extends analytically in a
 neighborhood of the half lines $(-\infty,-\sqrt{\la})$ and $(\sqrt{\la},+\infty)$, for exemple in $z\in \sqrt{\la}+\R^+ e^{-i\theta}$  and $z\in -\sqrt{\la}+\R^+ e^{i(\pi+\theta)}$ for $\theta\in[0,\eps)$. With that convention, which will be used all along this Section on scattering for ${\bf H}$, we have $\sqrt{z^2-\la}>0$ if $z \in(\sqrt{\la},\infty)$ while $\sqrt{z^2-\la}<0$ if $z \in(-\infty,-\sqrt{\la})$. Later we will view these functions as holomorphic functions on a Riemann surface. 

We note the following elementary property, which will be useful in the following.

\begin{lemma}\label{racine}
For $z\in \C\setminus \R^+$, the following map is non-decreasing
\[ x \in\R^+ \mapsto {\rm Im}\sqrt{z-x}.\]
\end{lemma}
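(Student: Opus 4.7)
This is a short direct computation, so the plan is essentially to identify the right explicit formula for $\operatorname{Im}\sqrt{w}$ and differentiate. The only conceptual points to check are (i) that $\sqrt{z-x}$ stays within the domain where the branch convention is unambiguous as $x$ varies in $\R^+$, and (ii) the explicit formula $\operatorname{Im}(\sqrt{w})^2=(|w|-\operatorname{Re}(w))/2$ valid for this branch.

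First I would observe that under the hypothesis $z\in\C\setminus\R^+$, one has $z-x\in\C\setminus\R^+$ for every $x\geq 0$: if $\operatorname{Im}(z)\neq 0$ then $\operatorname{Im}(z-x)=\operatorname{Im}(z)\neq 0$, while if $\operatorname{Im}(z)=0$ then $\operatorname{Re}(z)\leq 0$ and $\operatorname{Re}(z-x)=\operatorname{Re}(z)-x\leq 0$. Thus $\sqrt{z-x}$ is well-defined by the paper's convention and, moreover, $\operatorname{Im}\sqrt{z-x}\geq 0$ throughout (with equality only on the negative real axis, where the formula gives $i\sqrt{x-\operatorname{Re}(z)}$, which is purely imaginary with positive imaginary part as soon as $\operatorname{Re}(z)<x$).

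Next, write $w=z-x=u+ib$ with $u=\operatorname{Re}(z)-x$ and $b=\operatorname{Im}(z)$ fixed, and set $\sqrt{w}=p+iq$ with $q\geq 0$. The relations $p^2-q^2=u$ and $p^2+q^2=|w|=\sqrt{u^2+b^2}$ yield the key identity
\[
\operatorname{Im}(\sqrt{z-x})^2 \;=\; q^2 \;=\; \tfrac{1}{2}\bigl(\sqrt{u^2+b^2}-u\bigr).
\]
Since $x\mapsto u=\operatorname{Re}(z)-x$ is strictly decreasing, and since
\[
\frac{d}{du}\Bigl(\tfrac{1}{2}\bigl(\sqrt{u^2+b^2}-u\bigr)\Bigr) \;=\; \tfrac{1}{2}\Bigl(\tfrac{u}{\sqrt{u^2+b^2}}-1\Bigr)\;\leq\;0,
\]
the function $x\mapsto \operatorname{Im}(\sqrt{z-x})^2$ is non-decreasing on $\R^+$. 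Combined with $\operatorname{Im}(\sqrt{z-x})\geq 0$, this immediately gives that $x\mapsto \operatorname{Im}\sqrt{z-x}$ itself is non-decreasing, proving the lemma.

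There is no real obstacle here; the only thing one has to be slightly careful about is the branch: the above formula $q^2=(|w|-u)/2$ is only compatible with the stated convention because the branch is chosen so that $\operatorname{Im}\sqrt{w}\geq 0$ on $\C\setminus\R^+$, which is exactly the paper's convention. Once that is noted, the monotonicity is an elementary one-line computation.
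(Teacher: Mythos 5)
Your proof is correct and takes essentially the same route as the paper's, which simply states ``It suffices to differentiate in $x$.'' You carry this out explicitly by passing to $\operatorname{Im}(\sqrt{z-x})^2=\frac{1}{2}(|z-x|-\operatorname{Re}(z-x))$ and differentiating; the paper's intended computation is to differentiate $\operatorname{Im}\sqrt{z-x}$ directly (yielding $\operatorname{Im}\bigl(-\tfrac{1}{2\sqrt{z-x}}\bigr)=\tfrac{\operatorname{Im}\sqrt{z-x}}{2|\sqrt{z-x}|^2}\geq 0$), but both are one-line verifications resting on the same observation that $\operatorname{Im}\sqrt{w}\geq 0$ on $\C\setminus\R^+$ for this branch.
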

\begin{proof}
It suffices to differentiate in $x$.
 \end{proof}
 
Let $\chi\in C^\infty(\R)$ be equal to $1$ in $(-\infty,a-1)$ and equal to $0$ in $(a,+\infty)$ for some $a\in (0,1/2)$, then for $\alpha=Q+ip$ with ${\rm Im}(p)>0$ we choose  
\begin{equation}\label{betaell}
\beta_\ell>\max_{j=0,\dots,\ell}{\rm Im}\sqrt{p^2-2\la_j}-\gamma/2={\rm Im}\sqrt{p^2-2\la_\ell}-\gamma/2, \,\quad \textrm{ and }\beta_\ell\geq 0.
\end{equation}
Then for ${\rm Re}((\alpha-Q)^2)>\beta_\ell^2$ we define 
\begin{equation}\label{definPellproba}\begin{gathered}
\mc{P}_\ell(\alpha):\left\{ \begin{array}{lll}
E_\ell=\oplus_{j=0}^\ell \ker (\mathbf{P}-\la_j)& \to & e^{-(\beta_\ell+\gamma/2) \rho}\mc{D}(\mc{Q})\\
 F= \sum_{0\leq j\leq \ell} F_j &\mapsto & \chi F_-(\alpha) -\mathbf{R}(\alpha)(\mathbf{H}-2\Delta_{\alpha})(\chi F_-(\alpha)),
\end{array}\right.\\
\textrm{ with }F_-(\alpha):=\sum_{j=0}^{\ell}F_je^{ic\sqrt{p^2-2\la_j}}.
\end{gathered}\end{equation}
We will show in the following Lemma that this definition makes sense by using Lemma \ref{resolventweighted}. Before going to the proof of it, recall that ${\rm Im}\sqrt{p^2-2\la_j}>0$ for ${\rm Re}(\alpha)<Q$ by Lemma \ref{racine}, and note that the condition ${\rm Re}((\alpha-Q)^2)>\beta_\ell^2$ implies that 
${\rm Im} \sqrt{p^2-2\la_j}\geq {\rm Im}(p)> \beta_\ell$ for all $j=0,\dots,\ell$. We then emphasize that the main 
reason for $\mc{P}_\ell(\alpha)F$ to be defined and non-trivial is that $\chi F_-(\alpha)\in 
e^{-(\beta_\ell+\gamma/2) \rho}\mc{D}(\mc{Q})\setminus e^{-\beta_\ell \rho}\mc{D}(\mc{Q})$ and $(\mathbf{H}-2\Delta_{\alpha})(\chi F_-(\alpha))\in e^{-\beta_\ell \rho}\mc{D}'(\mc{Q})$ so that $\mathbf{R}(\alpha)(\mathbf{H}-2\Delta_{\alpha})(\chi F_-(\alpha))$ is well-defined but not equal to $\chi F_-(\alpha)$.
\begin{lemma}\label{firstPoisson}
For each $\ell\in\N$, let $\beta_\ell\geq 0$, then the operator $\mc{P}_\ell(\alpha)$ is well-defined,  bounded and holomorphic in the region 
\begin{equation}\label{regionLemma4.4}
\Big\{\alpha=Q+ip\in\C \, |\, {\rm Re}(\alpha-Q)<0,\,{\rm Re}((\alpha-Q)^2)>\beta_\ell^2\, ,\,\, \beta_\ell>{\rm Im}\sqrt{p^2-2\la_\ell}-\gamma/2\Big\}
\end{equation} 
and it satisfies in $e^{-(\beta_\ell+\gamma/2) \rho}\mc{D}'(\mc{Q})$
\begin{equation}\label{solutionHu=0} 
(\mathbf{H}-2\Delta_{\alpha})\mc{P}_\ell(\alpha)=0,
\end{equation}
and in the region $c\leq -1$, one has the asymptotic behaviour, with $F_j:=(\Pi_j-\Pi_{j-1})F$,
\begin{equation}\label{expansionofPell}
\mc{P}_\ell(\alpha)F=\sum_{j=0}^{\ell}F_je^{ic\sqrt{p^2-2\la_j}}+
F_+(\alpha), \quad F_+(\alpha)\in e^{-\beta_\ell \rho}\mc{D}(\mc{Q}).
\end{equation} 
\end{lemma}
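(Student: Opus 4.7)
The plan is to verify that each ingredient in \eqref{definPellproba} lands in the expected weighted space, so that the resolvent $\mathbf{R}(\alpha)$ supplied by Lemma \ref{resolventweighted} can be applied, and then to read off the announced properties directly from the construction. First I would observe the key algebraic identity: writing $\alpha = Q+ip$ and using that $F_j\in \ker(\mathbf{P}-\lambda_j)$, a one-line computation gives
\[
\Big(\mathbf{H}^0-2\Delta_\alpha\Big) F_j e^{ic\sqrt{p^2-2\lambda_j}} = \Big(\tfrac{1}{2}(p^2-2\lambda_j) + \tfrac{Q^2}{2} + \lambda_j - 2\Delta_\alpha\Big)F_j e^{ic\sqrt{p^2-2\lambda_j}} = 0,
\]
because $2\Delta_\alpha=(Q^2+p^2)/2$. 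Consequently
\[
(\mathbf{H}-2\Delta_\alpha)(\chi F_-(\alpha)) = [\mathbf{H}^0,\chi]F_-(\alpha) + \chi e^{\gamma c}V F_-(\alpha) = -\tfrac{1}{2}\chi'' F_-(\alpha) - \chi'\partial_c F_-(\alpha) + \chi e^{\gamma c}V F_-(\alpha),
\]
where the first two terms are compactly supported in $c$ and take values in $E_\ell$, hence belong to $L^2\subset e^{-\beta_\ell\rho}\mc{D}'(\mc{Q})$.

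Next I would check the two weighted containments underlying the construction. For $c\leq -1$ we have $\rho(c)=c+a$, while $|e^{ic\sqrt{p^2-2\lambda_j}}| = e^{-c\,{\rm Im}\sqrt{p^2-2\lambda_j}}$; by Lemma \ref{racine} the maximum of ${\rm Im}\sqrt{p^2-2\lambda_j}$ over $j\leq \ell$ is attained at $j=\ell$, so the hypothesis $\beta_\ell > {\rm Im}\sqrt{p^2-2\lambda_\ell} - \gamma/2$ gives
\[ e^{(\beta_\ell+\gamma/2)\rho(c)}\,\chi(c)e^{ic\sqrt{p^2-2\lambda_j}}\sim e^{c((\beta_\ell+\gamma/2)-{\rm Im}\sqrt{p^2-2\lambda_j})}\to 0 \text{ as } c\to -\infty,\]
so $\chi F_-(\alpha)\in e^{-(\beta_\ell+\gamma/2)\rho}L^2(\R; E_\ell)$, which is contained in $e^{-(\beta_\ell+\gamma/2)\rho}\mc{D}(\mc{Q})$ since $E_\ell$ is finite dimensional and the quadratic form on $E_\ell$ is controlled. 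To handle the potential term I would invoke the bound
$\chi e^{\gamma c}V\Pi_\ell : e^{\beta'c}L^2\to e^{(\beta'-\beta)c}\mc{D}'(\mc{Q})$ from Lemma \ref{chiPikV}, applied with $\beta'$ slightly less than $-{\rm Im}\sqrt{p^2-2\lambda_\ell}$ so that $\chi F_-(\alpha)\in e^{\beta'c}L^2$, and $\beta = \beta'+\beta_\ell$; the constraint $\beta\geq -\gamma/2$ is compatible with $\beta' < -{\rm Im}\sqrt{p^2-2\lambda_\ell}$ precisely under the hypothesis on $\beta_\ell$, yielding $\chi e^{\gamma c}V F_-(\alpha)\in e^{-\beta_\ell\rho}\mc{D}'(\mc{Q})$.

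With these two facts in hand, Lemma \ref{resolventweighted} (whose hypothesis ${\rm Re}((\alpha-Q)^2)>\beta_\ell^2$ is met) ensures that $\mathbf{R}(\alpha)(\mathbf{H}-2\Delta_\alpha)(\chi F_-(\alpha))\in e^{-\beta_\ell\rho}\mc{D}(\mc{Q})$, so $\mc{P}_\ell(\alpha)F$ is well-defined and lies in $e^{-(\beta_\ell+\gamma/2)\rho}\mc{D}(\mc{Q})$. The eigenvalue equation \eqref{solutionHu=0} is immediate: applying $(\mathbf{H}-2\Delta_\alpha):e^{-\beta_\ell\rho}\mc{D}(\mc{Q})\to e^{-\beta_\ell\rho}\mc{D}'(\mc{Q})$ to the definition gives
\[
(\mathbf{H}-2\Delta_\alpha)\mc{P}_\ell(\alpha)F = (\mathbf{H}-2\Delta_\alpha)(\chi F_-(\alpha)) - (\mathbf{H}-2\Delta_\alpha)\mathbf{R}(\alpha)(\mathbf{H}-2\Delta_\alpha)(\chi F_-(\alpha)) = 0.
\]
The asymptotic expansion \eqref{expansionofPell} follows since $\chi=1$ for $c\leq a-1$, and we may take $F_+(\alpha):=-\mathbf{R}(\alpha)(\mathbf{H}-2\Delta_\alpha)(\chi F_-(\alpha))\in e^{-\beta_\ell\rho}\mc{D}(\mc{Q})$. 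Analyticity in $\alpha$ in the stated region follows from the analyticity of $\alpha\mapsto F_-(\alpha)$ (all the $\sqrt{p^2-2\lambda_j}$ are analytic there by the convention on the branch of $\sqrt{\cdot}$) and of $\alpha\mapsto \mathbf{R}(\alpha)$ on the relevant weighted spaces from Lemma \ref{resolventweighted}.

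The main obstacle here is the last weighted-space step: one needs a gain of $\gamma/2$ in the weight in order to absorb the slow exponential growth of $F_-(\alpha)$ into $e^{\gamma c}V$, and this is exactly what the $-\gamma/2$ in the threshold for $\beta$ in Lemma \ref{chiPikV} provides; everything else (algebraic identity, commutator, resolvent application, holomorphy) is then routine.
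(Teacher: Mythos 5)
Your proof is correct and takes essentially the same route as the paper: the algebraic identity $(\mathbf{H}^0-2\Delta_\alpha)F_-(\alpha)=0$, the commutator computation, the weighted-space estimates for $\chi F_-(\alpha)$ and $\chi e^{\gamma c}V F_-(\alpha)$, and then applying the weighted resolvent from Lemma \ref{resolventweighted}. The only cosmetic difference is that the paper carries out the Cauchy--Schwarz bound on $\mc{Q}_V$ inline, whereas you route it through Lemma \ref{chiPikV}; since that lemma's own proof is precisely the Cauchy--Schwarz estimate the paper performs, the two are interchangeable, and your parameter count ($\beta'\in[-\gamma/2-\beta_\ell,\,-{\rm Im}\sqrt{p^2-2\lambda_\ell})$ nonempty iff $\beta_\ell>{\rm Im}\sqrt{p^2-2\lambda_\ell}-\gamma/2$) matches the hypothesis \eqref{regionLemma4.4} exactly. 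One small imprecision: the inclusion $e^{-(\beta_\ell+\gamma/2)\rho}L^2(\R;E_\ell)\subset e^{-(\beta_\ell+\gamma/2)\rho}\mc{D}(\mc{Q})$ you invoke is not true in that generality (one also needs $\partial_c$-regularity), but for $\chi F_-(\alpha)$ this is automatic since it is smooth in $c$ with support in $(-\infty,a]$, so no real gap.
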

\begin{proof}
First we observe that $(\mathbf{H}_0-2\Delta_{\alpha})F_-(\alpha)=0$  thus
\[(\mathbf{H}-2\Delta_{\alpha})\chi F_-(\alpha)=
-\tfrac{1}{2}\chi''(c) F_-(\alpha)+e^{\gamma c}V\chi F_-(\alpha)-\chi'(c)\pl_c F_-(\alpha).\]
We note that $\chi',\chi''$ have compact support in $\R$, and also for each $u\in \mc{D}(\mc{Q})$ 
\[ \begin{split}
|\cjg \chi e^{\gamma c+\beta_\ell \rho}V F_-(\alpha),u\cjd|\leq 
\int_{\R^-} e^{\gamma c+\beta_\ell \rho} |\mc{Q}_V(F_-(\alpha),u)| dc & \leq 
 \Big(\int_{\R^-} e^{\gamma c}\mc{Q}_V(u)dc\Big)^{\frac{1}{2}}\Big( \int_{\R^-}e^{\gamma c}\mc{Q}_V(e^{\beta _\ell \rho}F_-)dc\Big)^\frac{1}{2}\\
 \leq  \mc{Q}(u)^{1/2}\mc{Q}_{e^{\gamma c}V}(e^{\beta _\ell \rho}F_-)^{\frac{1}{2}} 
\end{split}\]
and $\mc{Q}_{e^{\gamma c}V}(e^{\beta _\ell \rho}F_-)<\infty$ by using 
\eqref{regionLemma4.4} and the fact that $\mc{Q}_V(\psi)<\infty$ for all $\psi\in E_\ell$.
We obtain that 
\[\chi F_-(\alpha)\in e^{-(\beta_\ell+\gamma/2)\rho}\mc{D}(\mc{Q}),\quad (\mathbf{H}-2\Delta_{\alpha})\chi F_-(\alpha)\in e^{-\beta_\ell \rho}\mc{D}'(\mc{Q}).\]
This shows, using Lemma \ref{resolventweighted}, that $\mathbf{R}(\alpha)(\mathbf{H}-2\Delta_{\alpha})(\chi F_-(\alpha))$ is well-defined as an element of $e^{-\beta_\ell \rho}\mc{D}(\mathcal{Q})$, with holomorphic dependence in $\alpha$, provided ${\rm Re}((\alpha-Q)^2)>\beta_\ell^2$. By construction, it clearly also solves \eqref{solutionHu=0} in $e^{-(\beta_\ell+\gamma/2) \rho}\mc{D}'(\mc{Q})$.
\end{proof}
Note that the error term $F_+(\alpha)$ in \eqref{expansionofPell} is smaller than  the bigger term in $F_-(\alpha)$ but is not necessarily neglectible with respect to all terms of $F_-(\alpha)$.

\begin{figure}
\centering
\begin{tikzpicture}
 \tikzstyle{PR}=[minimum width=2cm,text width=3cm,minimum height=0.8cm,rectangle,rounded corners=5pt,draw,fill=red!30,text=black,font=\bfseries,text centered,text badly centered]
  \tikzstyle{NS}=[minimum width=2cm,text width=3cm,minimum height=0.8cm,rectangle,rounded corners=5pt,draw,fill=blue!20,text=black,font=\bfseries,text centered,text badly centered]
 \tikzstyle{PRfleche}=[->,>= stealth,thick,red!60]
  \tikzstyle{NSfleche}=[->,>= stealth,thick,blue!60]
 \node[inner sep=0pt] (F1) at (0,0)
    {\includegraphics[width=.4\textwidth]{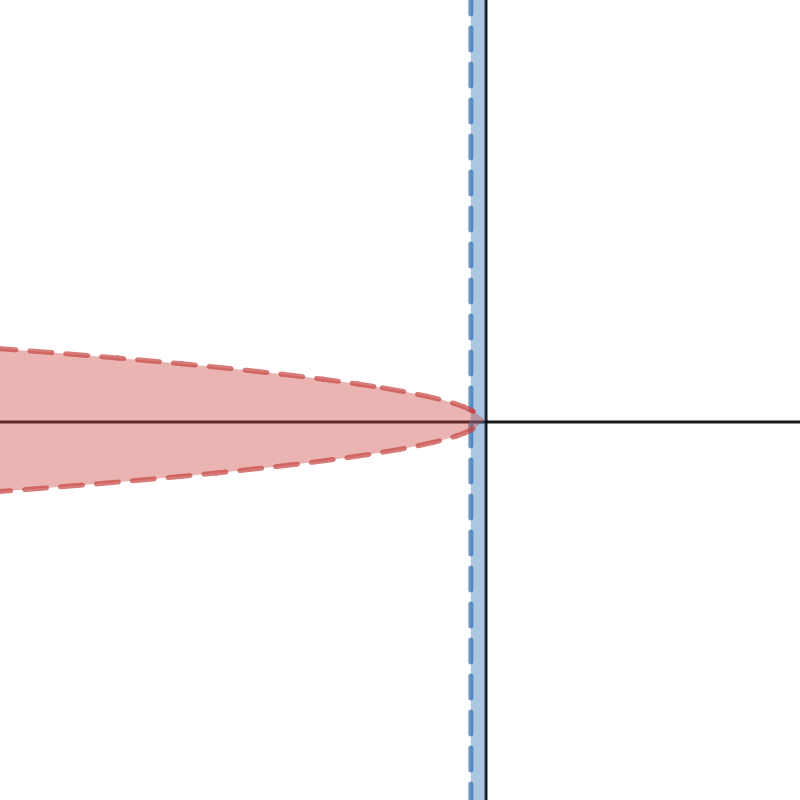}};
     \node[inner sep=0pt] (F2) at (8,0)
    {\includegraphics[width=.4\textwidth]{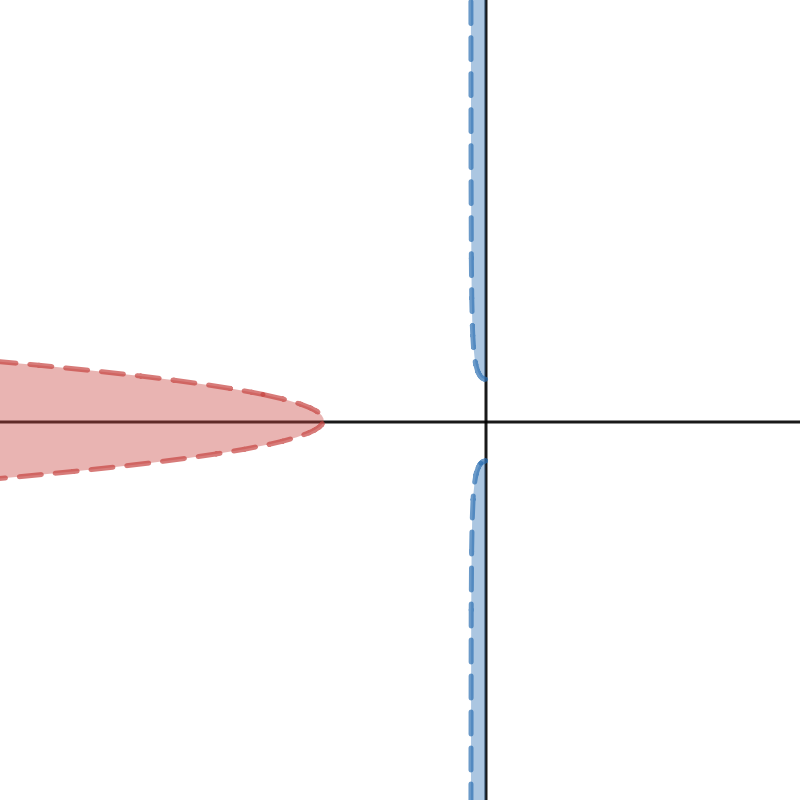}};
\node (F) at (1.7,2.3){ ${\rm Re}(\alpha)=Q$};
\node (G) at (9.7,2.3){ ${\rm Re}(\alpha)=Q$};
\node[PR] (P) at (3,-2) {Probabilistic regions \eqref{regionLemma4.4}};
\node[NS] (NSR) at (5,2) {Near spectrum regions \eqref{regionvalide}};
\node (P1) at (-2,-0.6) {};
\node (P2) at (6,-0.5) {};
\node (N1) at (0.7,3) {};
\node (N2) at (8.6,2) {};
\draw[PRfleche] (P) to [bend left=25] (P1);
\draw[PRfleche] (P)  to [bend right=25] (P2);
\draw[NSfleche] (NSR) to [bend right=25] (N1);
\draw[NSfleche] (NSR)  to [bend right=25] (N2);
\end{tikzpicture}
  \caption{Left picture: regions of definition of the Poisson operator $\mc{P}_0(\alpha)$ 
   (for the plot, we take $\gamma=1/2$ and $\beta_0$ is optimized to obtain the largest possible region). Right picture: regions of definition of the Poisson operator $\mc{P}_{\ell}(\alpha)$ with $\ell>0$ (for the plot we take $\la_\ell=4$,$\gamma=1/2$ and $\beta_\ell$ optimized). 
}
   \label{figure2}
\end{figure}


We also notice that where they are defined, we have for $j\geq 0, \ell\geq 0$
\begin{equation}\label{egalitePell}
\mc{P}_{\ell+j}(\alpha)|_{E_\ell}=\mc{P}_{\ell}(\alpha)|_{E_\ell}.
\end{equation}

In \eqref{definPellproba}, the definition of the operator $\mc{P}_\ell(\alpha)$ seemingly depends on the cutoff function $\chi$. In fact, we can show that this is not the case. We state a lemma below in this direction
\begin{lemma}\label{uniqPell}
For $\ell\in\N$, $\beta_\ell$ satisfying \eqref{betaell} and for ${\rm Re}((\alpha-Q)^2)>\beta_\ell^2$  the definition of the operator $\mc{P}_\ell(\alpha)_{|E_\ell} $ does not depend on $\chi$.
\end{lemma}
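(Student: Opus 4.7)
The plan is to take two admissible cutoffs $\chi_1,\chi_2$ as in \eqref{definPellproba}, form the corresponding Poisson operators $\mc{P}_\ell^{(1)}(\alpha)$ and $\mc{P}_\ell^{(2)}(\alpha)$, and verify that their difference vanishes. Writing $\eta := \chi_1-\chi_2$, that difference, applied to $F\in E_\ell$, reads
\[
\mc{P}_\ell^{(1)}(\alpha)F - \mc{P}_\ell^{(2)}(\alpha)F = \eta F_-(\alpha) - \mathbf{R}(\alpha)(\mathbf{H}-2\Delta_\alpha)\big(\eta F_-(\alpha)\big).
\]
Since both $\chi_i$ equal $1$ near $-\infty$ and $0$ near $+\infty$, the function $\eta$ has compact support in $\R$, which is the crucial feature that will allow the reduction to a plain resolvent identity.

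First I would check that $u := \eta F_-(\alpha)$ lies in $\mc{D}(\mc{Q}) \subset e^{-\beta_\ell \rho}\mc{D}(\mc{Q})$. Since each $F_j \in E_\ell \subset \bigcap_{p<\infty} L^p(\Omega_\T)$ (by \eqref{LpEk}) and the factors $e^{ic\sqrt{p^2-2\la_j}}$ are smooth and bounded on $\mathrm{supp}(\eta)$, the map $c\mapsto u(c,\cdot)$ is smooth, compactly supported in $c$, and takes values in the finite-dimensional eigenspace $E_\ell$. The contributions of $\pl_c u$, $u$ and $\mathbf{P}^{1/2}u$ to $\mc{Q}(u,u)$ are then trivially finite by compactness of the $c$-support and by $\mathbf{P}^{1/2}|_{E_\ell}\leq \sqrt{\la_\ell}\,\mathrm{Id}$. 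For the potential contribution I would reuse the bound $\mc{Q}_V(G,G)\leq C_\ell\|G\|_{L^2(\Omega_\T)}^2$ on $G\in E_\ell$ already established in the proof of Lemma \ref{chiPikV}, which gives
\[
\mc{Q}_{e^{\gamma c}V}(u,u) = \int_{\R} e^{\gamma c}\,\mc{Q}_V\big(u(c,\cdot),u(c,\cdot)\big)\,\dd c \leq C_\ell \int_{\mathrm{supp}(\eta)} e^{\gamma c}\|u(c,\cdot)\|_{L^2(\Omega_\T)}^2\,\dd c < \infty.
\]

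The last step would be to invoke Lemma \ref{resolventweighted}: in the region ${\rm Re}((\alpha-Q)^2)>\beta_\ell^2$, the operator $\mathbf{H}-2\Delta_\alpha$ is a bounded isomorphism $e^{-\beta_\ell \rho}\mc{D}(\mc{Q}) \to e^{-\beta_\ell \rho}\mc{D}'(\mc{Q})$ with two-sided inverse $\mathbf{R}(\alpha)$. Applied to $u \in \mc{D}(\mc{Q})$ this yields $\mathbf{R}(\alpha)(\mathbf{H}-2\Delta_\alpha) u = u$, so the displayed difference is zero and $\mc{P}_\ell^{(1)}(\alpha)F = \mc{P}_\ell^{(2)}(\alpha)F$, as desired.

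No substantial obstacle is expected here; the only mildly delicate point is the finiteness of the potential part of the quadratic form on $u$, which is handled by the $L^p$-integrability of eigenfunctions of $\mathbf{P}$ from \eqref{LpEk} together with the bound on $\mc{Q}_V$ restricted to $E_\ell$ already used in Lemma \ref{chiPikV}. Everything else is a direct application of the inversion property of the weighted resolvent proved in Lemma \ref{resolventweighted}.
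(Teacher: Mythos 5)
Your proposal is correct and follows essentially the same route as the paper's proof: both set $\eta=\chi_1-\chi_2$, note its compact support forces $\eta F_-(\alpha)\in\mc{D}(\mc{Q})$ (using the boundedness of $\mc{Q}_V$ on $E_\ell$ from the proof of Lemma \ref{chiPikV}), and then apply the two-sided invertibility of $\mathbf{H}-2\Delta_\alpha$ from Lemma \ref{resolventweighted} to conclude $\mathbf{R}(\alpha)(\mathbf{H}-2\Delta_\alpha)\eta F_-(\alpha)=\eta F_-(\alpha)$. The only cosmetic difference is that you phrase the inversion in the $\beta_\ell$-weighted spaces while the paper uses the unweighted $\mc{D}(\mc{Q})\to\mc{D}'(\mc{Q})$ isomorphism; both are consistent since the weighted resolvent extends the unweighted one.
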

\begin{proof} Pick two functions $\chi,\hat{\chi}$ satisfying our assumptions and denote by $\mc{P}^\chi_\ell(\alpha),\mc{P}^{\hat{\chi}}_\ell(\alpha) $ the corresponding Poisson operators. Set $d(\chi):=\chi-\hat{\chi}$. Then observe that for $F\in E_\ell$ 
$$\mc{P}^\chi_\ell(\alpha)F-\mc{P}^{\hat{\chi}}_\ell(\alpha) F=d(\chi)F_-(\alpha)-\mathbf{R}(\alpha)(\mathbf{H}-2\Delta_{\alpha})( d(\chi)F_-(\alpha)).$$
Now we note that  $d(\chi)F_-(\alpha)\in \mc{D}(\mc{Q})$ since $d(\chi)(c)=0$ for $c\notin (-1,a)$ for some $a>0$ and $\mc{Q}_{V}$ is bounded on $E_\ell$. 
Then $\mathbf{R}(\alpha)(\mathbf{H}-2\Delta_{\alpha})( d(\chi)F_-(\alpha))=d(\chi)F_-(\alpha)$ since $(\mathbf{H}-2\Delta_{\alpha}):\mc{D}(\mc{Q})\to \mc{D}'(\mc{Q})$ is an isomorphism, hence $\mc{P}^\chi_\ell(\alpha)F-\mc{P}^{\hat{\chi}}_\ell(\alpha) F=0$.
\end{proof}

We can also rewrite the construction of the Poisson operator using the propagator. 
\begin{proposition}\label{Pellproba}
The following properties hold true:\\
1) Let $\ell\in\N$ and let $F\in L^2(\Omega_\T)\cap \ker(\mathbf{P}-\la_j)$ for $j\leq \ell$.  
Then we have the identity 
\[\mc{P}_\ell(\alpha)F= \lim_{t\to +\infty}e^{t 2\Delta_\alpha}e^{-t\mathbf{H}}\Big(e^{ic\sqrt{p^2-2\la_j}}\chi(c)F\Big)\]
in $e^{-(\beta+\gamma/2) \rho}\mc{D}'(\mc{Q})$ provided
${\rm Re}((\alpha-Q)^2)>\beta^2$ with $\beta>{\rm Im}(\sqrt{p^2-2\la_j})-\gamma/2$ and $\alpha=Q+ip$. Furthermore if ${\rm Im}(\sqrt{p^2-2\la_j})>\gamma$ then we can take $\chi=1$ in the above statement.\\
2) Let    $F\in L^2(\Omega_\T)\cap \ker(\mathbf{P})$. 
If $\alpha\in\R$ with $\alpha <Q$, then  $dc\otimes \P$ almost everywhere
\[\mc{P}_0(\alpha)F= \lim_{t\to +\infty}e^{t 2\Delta_\alpha}e^{-t\mathbf{H}}\Big(e^{(\alpha-Q)c} F\Big).\]

\end{proposition}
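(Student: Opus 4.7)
The plan is to combine Duhamel's formula with the integral representation of the resolvent from Lemma \ref{resolventweighted}. For Part 1, set $\psi:=\chi(c)Fe^{ic\sqrt{p^2-2\la_j}}$. Since $F\in \ker(\mathbf{P}-\la_j)$, the function $Fe^{ic\sqrt{p^2-2\la_j}}$ is a formal eigenfunction of $\mathbf{H}^0$ at $2\Delta_\alpha$, and the short computation of Lemma \ref{firstPoisson} gives
\[ (\mathbf{H}-2\Delta_\alpha)\psi=-\tfrac{1}{2}\chi''(c)Fe^{ic\sqrt{p^2-2\la_j}}-\chi'(c)\pl_c(Fe^{ic\sqrt{p^2-2\la_j}})+e^{\gamma c}V\psi\in e^{-\beta\rho}\mc{D}'(\mc{Q}) \]
under our hypothesis on $\beta$. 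Duhamel's identity then reads
\[ u_t:=e^{t\cdot 2\Delta_\alpha}e^{-t\mathbf{H}}\psi=\psi-\int_0^t e^{s\cdot 2\Delta_\alpha}e^{-s\mathbf{H}}(\mathbf{H}-2\Delta_\alpha)\psi\, ds, \]
and by the weighted semigroup bound \eqref{normetHweight2}, for any $\eps>0$ small the integrand is controlled in $e^{-\beta\rho}\mc{D}'(\mc{Q})$ by $C_\eps e^{s(2{\rm Re}\Delta_\alpha-(Q^2-\beta^2)/2+\eps)}$, which is exponentially decaying thanks to ${\rm Re}((\alpha-Q)^2)>\beta^2$ (using the identity $(\alpha-Q)^2/2=Q^2/2-2\Delta_\alpha$). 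The integral therefore converges as $t\to\infty$ in $e^{-\beta\rho}\mc{D}'(\mc{Q})\hookrightarrow e^{-(\beta+\gamma/2)\rho}\mc{D}'(\mc{Q})$, and its value is identified with $\mathbf{R}(\alpha)(\mathbf{H}-2\Delta_\alpha)\psi$ by extending the formula $\mathbf{R}(\alpha)=\int_0^\infty e^{-s\mathbf{H}+s\cdot 2\Delta_\alpha}ds$ of Lemma \ref{resolventweighted} to complex $\alpha$ and to $e^{-\beta\rho}\mc{D}'(\mc{Q})$-valued integrands (both sides are holomorphic in $\alpha$ in the probabilistic region and agree on the real segment $\alpha\in(-\infty,Q)$ where Lemma \ref{resolventweighted} applies). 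The limit thus equals $\psi-\mathbf{R}(\alpha)(\mathbf{H}-2\Delta_\alpha)\psi=\mc{P}_\ell(\alpha)F$, which is the definition \eqref{definPellproba}. When ${\rm Im}\sqrt{p^2-2\la_j}>\gamma$, the same argument runs with $\chi\equiv 1$ because $\psi=Fe^{ic\sqrt{p^2-2\la_j}}$ then lies in $e^{-\beta\rho}L^\infty$ for some admissible $\beta$ and $(\mathbf{H}-2\Delta_\alpha)\psi=e^{\gamma c}V\psi\in e^{-\beta\rho}\mc{D}'(\mc{Q})$ by Lemma \ref{chiPikV}.

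For Part 2, $\alpha\in\R$ with $\alpha<Q$ gives $\sqrt{p^2-2\la_0}=i(Q-\alpha)$ and $e^{ic\sqrt{p^2-2\la_0}}=e^{(\alpha-Q)c}$, while $F\in\ker\mathbf{P}$ is a constant. Applying the Feynman--Kac formula \eqref{FKgeneral} to $e^{(\alpha-Q)c}F$ and performing the Girsanov transform $B_s\mapsto B_s+(\alpha-Q)s$ on the Brownian motion produces a factor $e^{t(Q-\alpha)^2/2}$ that exactly cancels $e^{t(2\Delta_\alpha-Q^2/2)}$, while inside the GMC the shift $X\mapsto X-(\alpha-Q)\log|z|$ turns $|z|^{-\gamma Q}M_\gamma(dz)$ into $|z|^{-\gamma\alpha}M_\gamma(dz)$. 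One ends up with
\[ e^{t\cdot 2\Delta_\alpha}e^{-t\mathbf{H}}(e^{(\alpha-Q)c}F)(c,\varphi)=e^{(\alpha-Q)c}F\,\tilde{\E}_\varphi\Big[e^{-\mu e^{\gamma c}\int_{\D_t^c}|z|^{-\gamma\alpha}M_\gamma(dz)}\Big], \]
and as $t\to\infty$, $\D_t^c\uparrow\D$, so monotone and dominated convergence give the a.e.\ limit $e^{(\alpha-Q)c}F\,\tilde{\E}_\varphi[e^{-\mu e^{\gamma c}\int_{\D}|z|^{-\gamma\alpha}M_\gamma(dz)}]$; finiteness of the limiting chaos integral for $\alpha<Q$ is \eqref{conising}. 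To identify this a.e.\ limit with $\mc{P}_0(\alpha)F$, combine Part 1 applied to $\chi(c)e^{(\alpha-Q)c}F$ (giving convergence to $\mc{P}_0(\alpha)F$ in $e^{-(\gamma/2)\rho}\mc{D}'(\mc{Q})$) with the $L^2$-vanishing of the complementary piece $e^{t\cdot 2\Delta_\alpha}e^{-t\mathbf{H}}((1-\chi)e^{(\alpha-Q)c}F)$, which holds because $(1-\chi)e^{(\alpha-Q)c}F\in L^2$ (as $\alpha<Q$) and $e^{t\cdot 2\Delta_\alpha}\|e^{-t\mathbf{H}}\|_{\mc{L}(L^2)}\leq e^{-t(Q-\alpha)^2/2}\to 0$; the a.e.\ limit must then coincide with $\mc{P}_0(\alpha)F$.

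The main obstacle is the extension of the resolvent identity $\mathbf{R}(\alpha)=\int_0^\infty e^{-s\mathbf{H}+s\cdot 2\Delta_\alpha}ds$ from the $e^{-\beta\rho}L^2$ setting of Lemma \ref{resolventweighted} to $e^{-\beta\rho}\mc{D}'(\mc{Q})$-valued integrands and complex spectral parameter in the full probabilistic region. This reduces to establishing the $\mc{D}'(\mc{Q})$-analogue of \eqref{normetHweight2} with the same exponential rate $(Q^2-\beta^2)/2$ and using holomorphy of both sides in $\alpha$, which is already largely encoded in the duality and Lax--Milgram construction of Lemma \ref{resolventweighted}. Careful bookkeeping of the weight gap $\gamma/2$ between $\psi\in e^{-(\beta+\gamma/2)\rho}\mc{D}(\mc{Q})$ and $(\mathbf{H}-2\Delta_\alpha)\psi\in e^{-\beta\rho}\mc{D}'(\mc{Q})$ will be essential, especially in the measure-valued regime $\gamma\in[\sqrt{2},2)$ where $V$ is only defined at the level of a quadratic form.
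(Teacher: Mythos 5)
Your Part 1 strategy is a genuinely different and more elegant route than the paper's. The paper's proof proceeds via a \emph{double} Duhamel: it first writes the free evolution $e^{-t\mathbf{H}_0}(\chi e^{i\nu_j c}F)$ using an explicit integration (equation \eqref{actionexp-tH_0}), then uses the perturbation series $e^{-t\mathbf{H}}=e^{-t\mathbf{H}_0}-\int_0^t e^{-(t-s)\mathbf{H}}e^{\gamma c}Ve^{-s\mathbf{H}_0}\,ds$, and carefully passes to the limit term-by-term. Your single Duhamel $e^{t2\Delta_\alpha}e^{-t\mathbf{H}}\psi=\psi-\int_0^t e^{s2\Delta_\alpha}e^{-s\mathbf{H}}(\mathbf{H}-2\Delta_\alpha)\psi\,ds$ is conceptually cleaner, but it must close two gaps that the paper's detour through $\mathbf{H}_0$ is precisely designed to avoid. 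First, $\psi=\chi F e^{ic\nu_j}$ lies in $e^{-(\beta+\gamma/2)\rho}\mc{D}(\mc{Q})$ but not in a weighted $\mc{D}(\mathbf{H})$ (since $e^{\gamma c}V\psi$ is only in a weighted $\mc{D}'(\mc{Q})$, not $L^2$); so differentiating $t\mapsto e^{-t\mathbf{H}}\psi$ and justifying your Duhamel formula requires a weak/form-domain argument that you do not state. Second, and more seriously (and as you acknowledge in your last paragraph), Lemma \ref{resolventweighted} gives $\mathbf{R}(\alpha)=\int_0^\infty e^{-s\mathbf{H}+s2\Delta_\alpha}\,ds$ only in $\mc{L}(e^{-\beta\rho}L^2)$-operator norm and for real $\alpha<Q$, so your analytic-continuation argument does not directly give the identity for $\mc{D}'(\mc{Q})$-valued sources and complex $\alpha$: you would still need the $L^2$-to-$\mc{D}'(\mc{Q})$ promotion for real $\alpha$ before continuing. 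A cleaner repair is to apply $(\mathbf{H}-2\Delta_\alpha)$ to the partial integral, observe it equals $g-e^{T2\Delta_\alpha}e^{-T\mathbf{H}}g\to g$, and use the bijectivity of $\mathbf{H}-2\Delta_\alpha:e^{-\beta\rho}\mc{D}(\mc{Q})\to e^{-\beta\rho}\mc{D}'(\mc{Q})$ from Lemma \ref{resolventweighted}, avoiding analytic continuation entirely. For the $\chi\equiv 1$ case, note that Lemma \ref{chiPikV} is stated only for cutoffs supported in $(-\infty,A)$; the reason ${\rm Im}\sqrt{p^2-2\la_j}>\gamma$ allows you to drop the cutoff is exactly that $e^{(i\nu_j+\gamma)c}\indic_{c>0}\in L^2(\R)$, which you should invoke directly rather than citing Lemma \ref{chiPikV} out of its hypotheses.

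Your Part 2 argument also differs from the paper's. The paper proves the pointwise convergence of the tail $e^{t2\Delta_\alpha}e^{-t\mathbf{H}}((1-\chi)e^{(\alpha-Q)c})$ via an explicit Feynman--Kac/Girsanov bound $\P(c+X_t(0)-(Q-\alpha)t\geq a-1)\to 0$, while you instead use that $(1-\chi)e^{(\alpha-Q)c}F\in L^2$ and the decay $e^{t2\Delta_\alpha}\|e^{-t\mathbf{H}}\|_{\mc{L}(L^2)}\leq e^{-t(Q-\alpha)^2/2}$: this is correct and arguably simpler. But you should be careful about identifying the a.e.\ (monotone) limit from your Feynman--Kac/Girsanov computation with the norm limit from Part~1: norm convergence in $e^{-(\gamma/2)\rho}\mc{D}'(\mc{Q})$ does not imply pointwise convergence; you need to first note that for $j=\ell=0$ the convergence in Part 1 actually holds in a weighted $L^2$ (as the paper asserts in its own Part 2 proof) and then pass to a subsequence to compare with the a.e.\ limit.
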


\begin{proof} We first prove 1). Recall that $\mathbf{H}=\mathbf{H}_0+e^{\gamma c}V$.
We have 
\[ (\mathbf{H}-2\Delta_\alpha)(\chi(c)e^{i\nu_j c}F)= \chi(c)e^{(i\nu_j +\gamma)c}V F  
-\tilde{\chi}(c)Fe^{i\nu_jc}\] 
where $\nu_j:=\sqrt{p^2-2\la_j}$ and $\tilde{\chi}(c):=\tfrac{1}{2}\chi''(c)+i\nu_j\chi'(c)\in C_c^\infty(\R)$, and this belongs to $e^{-\beta \rho}\mc{D}'(\mc{Q})$.
Using Lemma \ref{resolventweighted}, 
\[ \mc{P}_\ell(\alpha)F=\chi(c)e^{i\nu_j c}F-\mathbf{R}(\alpha)(e^{(i\nu_j+\gamma) c}V\chi F  -\tilde{\chi} e^{i\nu_jc}F)
\]
provided ${\rm Re}((\alpha-Q)^2)>\beta^2$ for any $\beta>{\rm Im}(\nu_j)-\gamma/2$. Noticing that the bound   \eqref{normetHweight} remains valid with $V=0$, we  can   make sense of 
$u(t):=e^{-t\mathbf{H}_0}(\chi(c)e^{i\nu_j c}F)$ as an element in $e^{-\beta_0 \rho}L^2(\R; E_j)$ for any $\beta_0>{\rm Im}(\nu_j)$. Then
\[ (\pl_t+2\Delta_\alpha)u(t)=e^{-t\mathbf{H}_0}(-\mathbf{H}_0+2\Delta_\alpha)(\chi(c)e^{i\nu_j c}F)=e^{-t\mathbf{H}_0}(e^{i\nu_j c}F \tilde{\chi}(c))
\]
thus we get by integrating in $t$
\begin{equation}\label{actionexp-tH_0} 
\begin{split}
e^{-t(\mathbf{H}_0-2\Delta_\alpha)}(\chi(c)e^{i\nu_j c}F)= & 
\chi(c)e^{i\nu_j c}F+\int_0^t e^{-s(\mathbf{H}_0-2\Delta_\alpha)}(e^{i\nu_jc}F\tilde{\chi}(c))ds\\
= & \chi(c)e^{i\nu_j c}F+(1- e^{-t(\mathbf{H}_0-2\Delta_\alpha)})\mathbf{R}_0(\alpha)(e^{i\nu_jc}F\tilde{\chi}(c))
\end{split}
\end{equation}
where $\mathbf{R}_0(\alpha):=(\mathbf{H}_0-2\Delta_\alpha)^{-1}$ is defined also on $e^{-\beta \rho}L^2$ by taking the proof of Lemma \ref{resolventweighted} in the case of the trivial potential $V=0$.  
We also note that $e^{-t\mathbf{H}_0}(e^{i\nu_jc}F\tilde{\chi}(c))$ and $e^{-t\mathbf{H}_0}\mathbf{R}_0(\alpha)(e^{i\nu_jc}F\tilde{\chi}(c))$ are in $L^2(\R; E_j)$ since $\mathbf{H}_0F=(Q^2/2+\la_j)F$  (and $\tilde{\chi}\in C_c^\infty(\R)$), i.e. all terms above are functions of $c$ with values in $E_j$.

We next claim that 
\[ e^{-t\mathbf{H}}(\chi(c)e^{i\nu_j c}F)=e^{-t\mathbf{H}_0}(\chi(c)e^{i\nu_j c}F)-\int_0^t 
e^{-(t-s)\mathbf{H}}e^{\gamma c}Ve^{-s\mathbf{H}_0}(\chi(c)e^{i\nu_j c}F)ds.\]
Indeed, first, all terms are well-defined: since $e^{-s\mathbf{H}_0}(\chi(c)e^{i\nu_j c}F)\in e^{-\beta_0 \rho} L^2(\R; E_j)\cap e^{-\beta_0 \rho} \mc{D}(\mc{Q})$ one has  $e^{\gamma c}Ve^{-s\mathbf{H}_0}(\chi(c)e^{i\nu_j c}F)\in e^{-\beta_0 \rho}\mc{D}'(\mc{Q})$, and we can then use \eqref{normetHweight2}. 
Then the identity above is obtained since both terms solve 
$(\pl_t+\mathbf{H})u(t)=0$ in $e^{-\beta_0 \rho}\mc{D}'(\mc{Q})$ with $u(0)=\chi(c)e^{i\nu_j c}F$ in $e^{-\beta_0 \rho}\mc{D}(\mc{Q})$.
By applying twice \eqref{actionexp-tH_0}, we thus obtain 
\[\begin{split}
e^{-t(\mathbf{H}-2\Delta_\alpha)}(\chi(c)e^{i\nu_j c}F)= & \chi(c)e^{i\nu_j c}F-e^{t2\Delta_\alpha}\int_0^t 
e^{-(t-s)\mathbf{H}}e^{\gamma c}Ve^{-s\mathbf{H}_0}(\chi(c)e^{i\nu_j c}F)ds\\
& + (1- e^{-t(\mathbf{H}_0-2\Delta_\alpha)})\mathbf{R}_0(\alpha)(e^{i\nu_jc}F\tilde{\chi}(c))\\
= & \chi(c)e^{i\nu_j c}F-\int_0^t 
e^{-(t-s)(\mathbf{H}-2\Delta_\alpha)}(e^{\gamma c}V\chi(c)e^{i\nu_j c}F)ds\\
 &+ (1- e^{-t(\mathbf{H}_0-2\Delta_\alpha)})\mathbf{R}_0(\alpha)(e^{i\nu_jc}F\tilde{\chi}(c)) \\ 
 & - \int_0^t 
e^{-(t-s)(\mathbf{H}-2\Delta_\alpha)}e^{\gamma c}V(1-e^{-s(\mathbf{H}_0-2\Delta_\alpha)})\mathbf{R}_0(\alpha)(e^{i\nu_jc}F\tilde{\chi}(c))ds.
\end{split}\]
Using \eqref{normetHweight} and \eqref{normetHweight2}  (applied with both $V\not=0$ and $V=0$)  and ${\rm Re}((\alpha-Q)^2)>\beta^2$, we have as bounded operators on respectively $e^{-\beta \rho}\mc{D}'(\mc{Q})$ and $e^{-\beta \rho}L^2$
\[ \begin{gathered}
\lim_{t\to +\infty}\int_0^t 
e^{-(t-s)(\mathbf{H}-2\Delta_\alpha)}ds=\lim_{t\to +\infty}(1-e^{-t(\mathbf{H}-2\Delta_\alpha)})\mathbf{R}(\alpha)=\mathbf{R}(\alpha)\\
\lim_{t\to +\infty} (1- e^{-t(\mathbf{H}_0-2\Delta_\alpha)})\mathbf{R}_0(\alpha)= \mathbf{R}_0(\alpha).
\end{gathered}\]
This gives in particular in $e^{-\beta \rho}\mc{D}'(\mc{Q})$
\[ \lim_{t\to +\infty}\int_0^t 
e^{-(t-s)(\mathbf{H}-2\Delta_\alpha)}(e^{\gamma c}V\chi(c)e^{i\nu_j c}F)ds=\mathbf{R}(\alpha)(e^{\gamma c}V\chi(c)e^{i\nu_j c}F).\]
Similarly one has in $e^{-\beta \rho}\mc{D}'(\mc{Q})$
\[ \lim_{t\to +\infty}\int_0^t 
e^{-(t-s)(\mathbf{H}-2\Delta_\alpha)}e^{\gamma c}V\mathbf{R}_0(\alpha)(e^{i\nu_jc}F\tilde{\chi}(c))ds=
\mathbf{R}(\alpha)e^{\gamma c}V\mathbf{R}_0(\alpha)(e^{i\nu_jc}F\tilde{\chi}(c)).\]
Finally we claim that 
\[\lim_{t\to +\infty}\int_0^t 
e^{-(t-s)(\mathbf{H}-2\Delta_\alpha)}e^{\gamma c}Ve^{-s(\mathbf{H}_0-2\Delta_\alpha)}\mathbf{R}_0(\alpha)(e^{i\nu_jc}F\tilde{\chi}(c))ds=0.\]
Indeed, we can apply Lebesgue theorem if one can show  
\[ \|e^{\gamma c}Ve^{-s(\mathbf{H}_0-2\Delta_\alpha)}\mathbf{R}_0(\alpha)(e^{i\nu_jc}F\tilde{\chi}(c))\|_{e^{\beta \rho}\mc{D}'(\mc{Q})}\leq e^{-\delta s}\]
for some $\delta>0$, since $\|e^{-(t-s)(\mathbf{H}-2\Delta_\alpha)}\|_{\mc{L}(e^{-\beta\rho}\mc{D}'(\mc{Q}))}\to 0$ by \eqref{normetHweight2}. But this estimate follows again from \eqref{normetHweight} with $V=0$ and the fact that $\mathbf{R}_0(\alpha)(e^{i\nu_jc}F\tilde{\chi}(c))\in e^{-\beta \rho}L^2(\R;E_j)$, which in turn  implies that 
\[e^{\gamma c}Ve^{-s(\mathbf{H}_0-2\Delta_\alpha)}\mathbf{R}_0(\alpha)(e^{i\nu_jc}F\tilde{\chi}(c))\in e^{\beta\rho}\mc{D}'(\mc{Q}).\] 
We have thus proved that 
\[\begin{split}
\lim_{t\to +\infty}e^{-t(\mathbf{H}-2\Delta_\alpha)}(\chi(c)e^{i\nu_j c}F)=  &\chi(c)e^{i\nu_j c}F-\mathbf{R}(\alpha)e^{\gamma c}V\big(e^{i\nu_jc}F\chi(c)+\mathbf{R}_0(\alpha)(e^{i\nu_jc}F\tilde{\chi}(c))\big)\\
& +\mathbf{R}_0(\alpha)(e^{i\nu_jc}F\tilde{\chi}(c)) .
\end{split}\]

We conclude by observing that  
\[\mathbf{R}(\alpha)(e^{\gamma c}V\chi e^{i\nu_jc}F  -\tilde{\chi} e^{i\nu_jc}F)=
-\mathbf{R}_0(\alpha)(e^{i\nu_jc}F\tilde{\chi})+\mathbf{R}(\alpha)e^{\gamma c}V\big(\chi e^{i\nu_jc}F+\mathbf{R}_0(\alpha)(e^{i\nu_jc}F\tilde{\chi}(c))\big),
\]
which can be established by applying $(\mathbf{H}-2\Delta_\alpha)$ to this equation and using the injectivity of 
$(\mathbf{H}-2\Delta_\alpha)$ on $e^{-\beta \rho} \mc{D}(\mc{Q})$ under our condition on $\alpha,\beta$. 

Notice that for ${\rm Im}(\sqrt{p^2-2\la_j})>\gamma$, we have $e^{(i\nu_j+\gamma)c}\mathbf{1}_{(0,+\infty)}(c) \in L^2(\R)$ so that the above argument applies with $\chi=1$. 

Now we prove 2). As $F\in   L^2(\Omega_\T)\cap \ker(\mathbf{P}-\la_0)$, we may assume  $F=1$ without loss of generality. With our assumptions, we can write $\alpha=Q+ip$ with  $p=i(Q-\alpha)$ and choose $(Q-\alpha)>\beta>(Q-\alpha)-\gamma$. By applying  1), we get that
\[\mc{P}_\ell(\alpha)1= \lim_{t\to +\infty}e^{t 2\Delta_\alpha}e^{-t\mathbf{H}}\Big(e^{(\alpha-Q)c}\chi(c) \Big)\]
in $e^{-\beta \rho}L^2$, hence $dc\otimes \P$ almost everywhere (up to extracting subsequence).  We have to show that we can replace $\chi$ by $1$. For this we will use the probabilistic representation \eqref{fkformula}: we have 
\begin{align*}
e^{t 2\Delta_\alpha}e^{-t\mathbf{H}}\Big(e^{(\alpha-Q)c}(1-\chi(c)) \Big)=&e^{-Qc} \E_\varphi\big[S_{e^{-t}} (e^{\alpha c}(1-\chi(c)))e^{-\mu M_\gamma ( \D_t^c)}\big]\\
=&e^{t 2\Delta_\alpha}e^{(\alpha-Q)c}\E_\varphi \big[e^{\alpha (X_t(0)-Qt)}(1-\chi(c+X_t(0)-Qt)))e^{-\mu M_\gamma ( \D_t^c)}\big].
\end{align*}
By Girsanov's transform this expression can be rewritten as
\begin{align*}
e^{t 2\Delta_\alpha}e^{-t\mathbf{H}}\Big(e^{(\alpha-Q)c}(1-\chi(c)) \Big)=&e^{(\alpha-Q)c}\E_\varphi \big[ \big(1-\chi(c+X_t(0)-(Q-\alpha)t)\big)\exp\big(-\mu \int_{ \D_t^c}|z|^{-\gamma\alpha} M_\gamma (dz)\big)\big].
\end{align*}
Recalling that $\chi=1$ on $(-\infty,a-1)$ and that $t\mapsto X_t(0)$ evolves as a Brownian motion independent of $\varphi$, then  estimating the exponential term by $1$ we obtain
\begin{align*}
\big|e^{t 2\Delta_\alpha}e^{-t\mathbf{H}}\Big(e^{(\alpha-Q)c}(1-\chi(c)) \Big)\big|\leq &e^{(\alpha-Q)c}\P\big( c+X_t(0)-(Q-\alpha)t\geq a-1\big).\end{align*}
The result easily follows from this estimate.
\end{proof}

\subsubsection{Meromorphic extension of the resolvent near the $L^2$ spectrum}
We denote by $Z$ the Riemann surface on which the functions $p\mapsto \sqrt{p^2-2\la_j}$ are single valued  for all $j$. This is a  ramified covering of $\C$ with ramification points $\{\pm\sqrt{2\la_j}\, |\, j\in\N\}$, and 
in which we  embed the region ${\rm Im}(p)>0$ that we call the \emph{physical sheet}. We will call $\pi: Z\to \C$ the projection of the covering. The construction of $Z$ can be done iteratively on $j$, as explained in Chapter 6.7 of Melrose's book \cite{Mel}. 
The map $p\mapsto \alpha:=Q+ip$ from ${\rm Im}(p)>0$ to ${\rm Re}(\alpha)<Q$ (now called the physical sheet for the variable $\alpha$) extends analytically as a map $Z\to \Sigma$ where $\Sigma$ is an isomorphic Riemann surface to $Z$ (it just amounts to a linear change of complex coordinates from $p$ to $\alpha$). We shall also denote by $\pi : \Sigma\to \C$ the projection.
Finally we choose a specific function $\chi$ of the form indicated  previously but we further impose that   $\chi\in C^\infty(\R)$ is equal to $1$ in $(-\infty,-1+\delta)$ and equal to $0$ in $(0,+\infty)$ (for some small $\delta>0$).

First, we recall the notation for the orthogonal projectors 
\[ \Pi_k= 1_{[0,\la_k]}(\mathbf{P}) : L^2(\Omega_\T,\P)\to L^2(\Omega_\T,\P)\] 
and we denote by $E_k$ their range (which are Hilbert spaces) and $E_k^\perp$ the range of $1-\Pi_k$. 
The goal of this section is to show the following:

\begin{proposition}\label{extensionresolvent}
Assume that $\gamma\in (0,\sqrt{2})$ and let $0\leq \beta<\gamma/2$. Then the following holds true:\\
1) The resolvent $\mathbf{R}(\alpha):=(\mathbf{H}-2\Delta_\alpha )^{-1}$ is bounded as a map 
$L^2(\R\times \Omega_\T)\to \mc{D}(\mathbf{H})$ for ${\rm Re}(\alpha)<Q$ and for $k\geq 0$ large enough, the operator
$(\mathbf{H}-2\Delta_{\pi(\alpha)})^{-1}$ admits a meromorphic continuation to the region 
\[\{\alpha=Q+ip \in \Sigma\,|\, |\pi(p)|^2\leq \la_k^{\frac{1}{4}}, \forall j\leq k, {\rm Im}\sqrt{p^2-2\la_j}>-\min(\beta,\gamma/2-\beta)
\}\] 
as a map 
\begin{align*}
\mathbf{R}(\alpha): e^{\beta \rho}L^2(\R\times \Omega_\T)\to e^{-\beta \rho}\mc{D}({\bf H}),
\end{align*}
and the residue at each pole is a finite rank operator. Moreover, for each $\psi\in C^\infty(\R)\cap L^\infty(\R)$ satisfying $\psi'\in L^\infty$ and ${\rm supp}(\psi)\subset (-\infty,A)$ for some $A\in \R$, 
\[\mathbf{R}(\alpha)(1-\Pi_k)\psi: \mc{D}'(\mc{Q})\to e^{-\beta \rho}\mc{D}(\mc{Q}).\] 
2) If $f\in e^{\beta \rho}\mc{D}'(\mc{Q})$ is supported in $c\in (-\infty,A)$ for some $A>0$ and is such that $\Pi_kf\in e^{\beta \rho}L^2(\R;E_k)$ 
and $(1-\Pi_k)f\in \mc{D}'(\mc{Q})$,  then for $\alpha$ as above and not a pole, one has in $c\leq 0$
\begin{equation}\label{asymptoticRf}
\mathbf{R}(\alpha)f=\sum_{j=0}^k a_j(\alpha,f)e^{-ic\sqrt{p^2-2\la_j}}+G(\alpha,f),
\end{equation}
with $a_j(\alpha,f)\in \ker (\mathbf{P}-\la_j)$, and $G(\alpha,f), \pl_cG(\alpha,f)\in e^{\beta\rho}L^2(\R^-; E_k)+L^2(\R^-; E_k^\perp)$, all depending meromorphically in $\alpha$ in the region they are defined.\\
3) There is no pole for $\mathbf{R}(\alpha)$ in $\{\alpha \in \Sigma\,|\, {\rm Re}(\alpha)\leq Q\}\setminus \cup_{j=0}^\infty \{Q\pm i\sqrt{2\la_j}\}$ and $Q\pm i\sqrt{2\la_j}$ can be at most a pole of order $1$.
\end{proposition}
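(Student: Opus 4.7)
The plan is to mimic the meromorphic continuation of the resolvent in scattering theory on manifolds with cylindrical ends (as in \cite{Gui,Mel}), adapted to the infinite-dimensional setting. The workhorse is the second resolvent identity
\[
\mathbf{R}(\alpha) = \mathbf{R}_0(\alpha) - \mathbf{R}_0(\alpha)\, e^{\gamma c}V\, \mathbf{R}(\alpha), \qquad \mathbf{R}_0(\alpha) := (\mathbf{H}^0 - 2\Delta_\alpha)^{-1},
\]
which formally yields $\mathbf{R}(\alpha) = (I + \mathbf{R}_0(\alpha) e^{\gamma c}V)^{-1} \mathbf{R}_0(\alpha)$. First I would describe $\mathbf{R}_0(\alpha)$ explicitly: the decomposition $L^2(\R\times \Omega_\T) = \bigoplus_j L^2(\R; \ker(\mathbf{P}-\la_j))$ reduces $\mathbf{H}^0 - 2\Delta_\alpha$ on each summand to $-\tfrac12 \pl_c^2 + \la_j - \tfrac{p^2}{2}$, whose resolvent kernel is
\[
R_0^{(j)}(c,c';\alpha) = \frac{i}{\sqrt{p^2-2\la_j}}\, e^{i|c-c'|\sqrt{p^2-2\la_j}}.
\]
By the construction of $\Sigma$ and Lemma \ref{racine}, each $R_0^{(j)}$ lifts to a single-valued meromorphic function on $\Sigma$, and defines a bounded operator $e^{\beta\rho}L^2 \to e^{-\beta\rho}L^2$ in the range where ${\rm Im}\sqrt{p^2-2\la_j} > -\beta$. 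For indices $j > k$ when $|\pi(p)|^2 \leq \la_k^{1/4}$, this imaginary part is of order $\sqrt{\la_j}$, so $(I-\Pi_k)\mathbf{R}_0(\alpha)$ is uniformly bounded without needing weights; only the finitely many modes $j \leq k$ contribute the threshold structure.

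The main obstacle is showing that $\mathbf{R}_0(\alpha)\, e^{\gamma c}V$ is compact (up to an analytic invertible factor) on the appropriate weighted space, since in infinite dimensions no Rellich embedding is available. The idea is to exploit two simultaneous localizations: the weight $e^{\gamma c}$ localizes in $c$ near $-\infty$, and (for $\gamma \in (0,\sqrt{2})$) the potential $V$ lies in $L^p(\Omega_\T)$ for some $p>1$, so that the boundedness statements of Lemma \ref{chiPikV} apply. One factors $I + \mathbf{R}_0(\alpha)\, e^{\gamma c}V = \bigl(I+(I-\Pi_k)\mathbf{R}_0(\alpha)\, e^{\gamma c}V\bigr)\bigl(I + A(\alpha)\bigr)$ modulo analytic invertible terms: the piece involving $(I-\Pi_k)\mathbf{R}_0(\alpha)$ has uniformly small norm for $k$ large and is thus invertible by a Neumann series, while the remaining piece $A(\alpha)$, built from $\Pi_k \mathbf{R}_0(\alpha)\Pi_k$ composed with $e^{\gamma c}V$ and suitable cutoffs, maps into finite-dimensional $E_k$-valued functions of $c$ with $c$-localized support, hence is compact by Ascoli-Arzel\`a. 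The analytic Fredholm theorem then provides the meromorphic inverse of $I + A(\alpha)$, and agreement with $\mathbf{R}(\alpha)$ on the physical sheet $\{{\rm Re}(\alpha) < Q\}$ (via Lemma \ref{resolventweighted}) fixes the extension uniquely.

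Parts (2) and (3) follow by unpacking the parametrix. For (2), the asymptotic \eqref{asymptoticRf} comes from the factorization $R_0^{(j)}(c,c';\alpha) = \frac{i}{\sqrt{p^2-2\la_j}}\, e^{-ic\sqrt{p^2-2\la_j}}\, e^{ic'\sqrt{p^2-2\la_j}}$ valid for $c \leq c'$ (using that $f$ has support bounded above), propagated through the Fredholm inverse; the coefficients $a_j(\alpha,f)$ inherit meromorphy. For (3), no poles exist in $\{{\rm Re}(\alpha) < Q\}$ by Lemma \ref{resolventweighted}. A pole on $\{{\rm Re}(\alpha) = Q\} \setminus \{Q \pm i\sqrt{2\la_j}\}_j$ would produce a nontrivial generalized eigenfunction $u \in e^{-\beta\rho}L^2$ with the asymptotic of (2) containing only outgoing modes; a Green's identity (boundary pairing) applied to $(\mathbf{H}-2\Delta_\alpha)u = 0$ then forces the non-evanescent coefficients $a_j$ to vanish, placing $u$ in $L^2 \cap \mc{D}(\mathbf{H})$ where Lemma \ref{embedded} excludes it. Finally, at each threshold $\alpha = Q\pm i\sqrt{2\la_j}$, the simple factor $1/\sqrt{p^2-2\la_j}$ in $R_0^{(j)}$ combined with the Fredholm identity bounds the pole order by one.
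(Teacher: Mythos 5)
The plan rests on the claim that $\mathbf{R}_0(\alpha)\,e^{\gamma c}V$ is bounded on a suitable weighted space and then, after extracting an analytic invertible factor, compact. This fails, and the error traces to a reversal of the behaviour of the weight: $e^{\gamma c}$ does \emph{not} localize near $c\to-\infty$, it vanishes there and blows up as $c\to+\infty$. The free resolvent $\mathbf{R}_0(\alpha)$ gives no decay to counter this growth at $+\infty$ once $\alpha$ is near the spectrum line ${\rm Re}(\alpha)=Q$: for $j\le k$ the restricted kernel of $\Pi_k\mathbf{R}_0(\alpha)\Pi_k$ is bounded in $c,c'$ on the line (and even grows on the continued sheet), so pairing it against $e^{\gamma c'}$ over $c'>0$ diverges, and the weight $\rho$ appearing in the statement is supported in $c\le 0$ and does nothing at $+\infty$. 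There is also a form-domain mismatch, independent of decay: $e^{\gamma c}V$ maps $\mc{D}(\mc{Q})\to\mc{D}'(\mc{Q})$, and $\mc{D}'(\mc{Q})$ strictly contains $\mc{D}'(\mc{Q}_0)$, which is the largest space on which $\mathbf{R}_0(\alpha)$ acts, so $\mathbf{R}_0(\alpha)e^{\gamma c}V$ is not even well-defined on the quadratic-form level. In short, the second resolvent identity with the \emph{free} resolvent used globally in $c$ cannot launch the Fredholm argument, because the potential $e^{\gamma c}V$ forms a confining wall at $c\to+\infty$ that the free dynamics does not see.

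What the paper does, and what is genuinely needed, is a three-region parametrix glued with cutoffs $\chi,\hat\chi$. The region $c\ge -1$ is covered by the resolvent $\mathbf{R}_+=(\mathbf{H}-\tfrac{Q^2}{2}+1)^{-1}$ of the \emph{full} interacting operator at a fixed off-spectrum point (Lemma~\ref{regimec>-1}); the confining wall is precisely what makes that error term quasi-compact there. The region $c\le 0$ is split along $L^2(\Omega_\T)=E_k\oplus E_k^\perp$: the low modes use the explicit one-dimensional Green function with Dirichlet condition at $c=0$ (Lemma~\ref{modelres}), while the high modes use the Friedrichs extension of the full $\mc{Q}$ restricted to $E_k^\perp$ with Dirichlet at $c=0$ (Lemma~\ref{LemmaRk}), again not the free form. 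Your sketches of parts (2) and (3) — extracting the asymptotics from the kernel factorization, and excluding embedded eigenvalues via the boundary pairing of Lemma~\ref{boundarypairing} combined with Lemma~\ref{embedded}, with the threshold order controlled by a Laurent-expansion argument — do agree with the paper, but they lean on a parametrix that you have not actually built.
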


To prove this Proposition, we will construct parametrices for the operator 
$\mathbf{H}-2\Delta_\alpha =\mathbf{H}-\tfrac{Q^2+p^2}{2}$ in several steps and will split the argument. More concretely, we will search for some bounded model operator $\til{\bf R}(\alpha):e^{\beta \rho}L^2(\R\times \Omega_\T)\to e^{-\beta \rho}\mc{D}(\mc{Q})$, holomorphic in $\alpha$ in the desired region of $\Sigma$, such that 
\[ (\mathbf{H}-2\Delta_\alpha)\til{\bf R}(\alpha)=1-{\bf K}(\alpha)\]
where ${\bf K}(\alpha) \in \mc{L}(e^{\beta \rho}L^2(\R\times \Omega_\T))$ is an analytic family of compact operators with $1-{\bf K}(\alpha_0)$ invertible at some $\alpha_0$ belonging to the physical sheet. Then the Fredholm analytic theorem will imply that $(1-{\bf K}(\alpha))^{-1}$ exists as a meromorphic family and 
${\bf R}(\alpha):=\til{\bf R}(\alpha)(1-{\bf K}(\alpha))^{-1}$ gives us the desired meromorphic extension of ${\bf R}(\alpha)$. Our strategy will be based on that method with slights modifications. The continuous spectrum of ${\bf H}$ near frequency $(Q^2+p^2)/2\in \R^+$ will come only from finitely many eigenmodes of $\mathbf{P}$, namely those $\la_j$ for which $2\la_j \leq p^2$. This suggests, in order to construct the approximation $\til{{\bf R}}(\alpha)$ to split 
the modes of ${\bf P}$ depending on the value of ${\rm Im}(\alpha-Q)$. The parametrix will be constructed in three steps as follows:
\begin{itemize}
\item First, we deal with the large eigenmodes for the operator 
$\mathbf{P}$ in the region $c\in (-\infty,0]$ of $L^2(\R \times \Omega_\T)$. We will show that this part does not contribute to the continuous spectrum at frequency $(Q^2+p^2)/2$, and we shall obtain a parametrix for that part by energy estimates. 
\item Then we consider the region $c\geq -1$ where we shall show that the model 
operator in that region (essentially ${\bf H}$ on $L^2([-1,\infty)\times\Omega_\T)$ with Dirichlet condition at $c=0$) has compact resolvent, providing a compact operator for the parametrix of that region.
\item Finally, we will deal with the $c\leq 0$ region corresponding to eigenmodes of $\mathbf{P}$ of order  $\mc{O}(|p|^2)$, where there is \emph{scattering} at $c=-\infty$ for frequency $(Q^2+p^2)/2$, producing continuous spectrum. The parametrix for this part is basically the exact inverse of ${\bf H_0}-2\Delta_\alpha$ restricted to finitely many modes of ${\bf P}$. 
\end{itemize}
 
For $s\geq 0$ and $I\subset \R$ an open interval, we will denote by $H^s(I; L^2(\Omega_\T))$ the Sobolev space of order $s$ in the $c$-variable
\[H^s(I; L^2(\Omega_\T)):=\{ u\in L^2(I\times \Omega_\T)\, |\, \forall j\leq \ell, \xi\mapsto \cjg \xi\cjd^{s}\mc{F}(u)(\xi)\in L^2(I\times \Omega_\T)\}\]
where $\mc{F}$ denotes Fourier transform in $c$, and similarly $H_0^s(I;L^2(\Omega_\T))$ will be the completion of $C_c^\infty(I;L^2(\Omega_\T))$
with respect to the norm $\|\cjg \xi\cjd^s \mc{F}(u)\|_{L^2(\R;L^2(\Omega_\T))}$.\\

\noindent\textbf{1) Large $\mathbf{P}$ eigenmodes in the region $c\leq 0$.}  
Let $\chi\in C^\infty(\mathbb{R},[0,1])$ which satisfies $\chi(c)=1$ for $c\leq -1+\delta$ for some $\delta\in (0,\tfrac{1}{2})$ and $\chi(c)=0$ in $[-1/2,\infty)$ and  $\tilde{\chi}\in C^\infty(\R,[0,1])$ such that $\tilde{\chi}=1$ on the support of 
$\chi$ and ${\rm supp}(\tilde{\chi})\subset \R^-$, and we now view these functions as multiplication operators by $\chi(c)$ on the spaces $e^{\beta \rho}L^2(\R^-\times\Omega_\T)$.
We will first show the following 
\begin{lemma}\label{LemmaRk}
1) There is a constant $C>0$ depending only on $|\tilde{\chi}'|_{\infty}, |\tilde{\chi}''|_{\infty}$ such that for each $k\in\N$, there is 
a bounded operator
\[\mathbf{R}_k^\perp(\alpha): L^2(\R;E_k^\perp)\to L^2(\R^-;E_k^\perp)\]
holomorphic in $\alpha=Q+ip\in\C$ in the region 
$\{{\rm Re}(\alpha)<Q\}\cup \{|\alpha-Q|^2\leq \la_k\}$,   
with 
\begin{align*}
& \tilde{\chi}\mathbf{R}_k^\perp(\alpha) \chi: L^2(\R\times\Omega_\T)
\to L^2(\R;E_k^\perp)\cap\mc{D}({\bf H}),
& \tilde{\chi}\mathbf{R}_k^\perp(\alpha) \chi: \mc{D}'(\mc{Q})\to \mc{D}(\mc{Q})\cap L^2(\R;E_k^\perp)
\end{align*} 
bounded, so that
\begin{equation}\label{firstparamlem}
(\mathbf{H}-\tfrac{Q^2+p^2}{2})\tilde{\chi}\mathbf{R}_k^\perp(\alpha)(1-\Pi_k)\chi=(1-\Pi_k)\chi+\mathbf{L}_{k}^\perp(\alpha)+\mathbf{K}_{k}^\perp(\alpha)\quad \text{ and }\quad \tilde{\chi}\mathbf{R}_k^\perp(\alpha)\Pi_k\chi=0
\end{equation}
with 
$\mathbf{L}_{k}^\perp(\alpha): \mc{D}'(\mc{Q})\to L^2(\R^-;E_k^\perp)$
and 
 $\mathbf{K}_{k}^\perp(\alpha):\mc{D}'(\mc{Q})\to e^{\beta \rho}L^2(\R;E_k)$ bounded and holomorphic in
$\alpha$ as above for each $0<\beta<\gamma/2$. 
In the region where
$|p^2|\leq \la_k$, 
one has the bound
\begin{equation}\label{boundonK1}
\|\mathbf{L}_{k}^\perp(\alpha)\|_{\mc{L}(L^2)}\leq C\la_k^{-1/2} 
\end{equation}
and $\mathbf{K}_{k}^\perp(\alpha)$ is compact as a map $L^2(\R\times\Omega_\T)\to 
e^{\beta \rho}L^2(\R;E_k)$.\\

2) Let $\beta\in\R$, then in the region ${\rm Re}((\alpha-Q)^2)>\beta^2-2\la_k+1$, the operator 
$\mathbf{R}_k^\perp(\alpha):e^{-\beta \rho} \mc{D}'(Q)\to e^{-\beta \rho}(L^2(\R^-;E_k^\perp)\cap \mc{D}(\mc{Q}))$ is a bounded holomorphic family, 
$\mathbf{K}_k^\perp(\alpha): e^{-\beta \rho}L^2(\R\times \Omega_\T)\to e^{-\beta \rho}L^2(\R^-;E_k)$ 
is a compact holomorphic family, $\mathbf{
L}_k^\perp(\alpha):e^{-\beta \rho}L^2(\R\times \Omega_\T)\to e^{-\beta \rho}L^2(\R^-;E_k^\perp)$ is bounded analytic with norm
\[\| \mathbf{L}_k^\perp(\alpha)\|_{\mc{L}(e^{-\beta \rho}L^2)}\leq \frac{C(1+|\beta|)}{\sqrt{{\rm Re}((\alpha-Q)^2)+2\la_k-\beta^2}}\]
for some $C>0$ depending only on $|\tilde{\chi}'|_\infty$ and $|\tilde{\chi}''|_\infty$.
\end{lemma}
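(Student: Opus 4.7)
The plan is to construct $\mathbf{R}_k^\perp(\alpha)$ directly as the Dirichlet resolvent of the free Hamiltonian $\mathbf{H}^0$ restricted to $E_k^\perp$ on the half-line $\R^-$, then to measure how close it is to inverting $\mathbf{H}-2\Delta_\alpha$ after sandwiching by the cutoffs. Concretely, on the space $H^1_0(\R^-;E_k^\perp)\cap L^2(\R^-;\mathcal{D}(\mathbf{P}^{1/2}))$ I would introduce the sesquilinear form
\[ \mc{Q}_{\alpha,k}(u,v)=\tfrac{1}{2}\cjg \pl_c u, \pl_c v\cjd + \cjg \mathbf{P}^{1/2}u,\mathbf{P}^{1/2}v\cjd + \tfrac{Q^2-Q^2-p^2}{2}\cjg u,v\cjd.\]
Because $\mathbf{P}\geq \la_{k+1}$ on $E_k^\perp$ and because $\la_{k+1}-\la_k/2\gtrsim \la_k$ for $k$ large (the eigenvalues of $\mathbf{P}$ are integers), one has $\Re\,\mc{Q}_{\alpha,k}(u,u)\geq \tfrac{1}{2}\|\pl_cu\|^2+c\la_k\|u\|^2$ uniformly for $|p|^2\leq \la_k$, and the same coercivity is trivial if $\Re(\alpha)<Q$ (where $\Re(p^2)<0$). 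Lax–Milgram then defines $\mathbf{R}_k^\perp(\alpha)$ holomorphically on the claimed region, with image in $L^2(\R^-;E_k^\perp)\cap H^1_0\cap \mc{D}(\mathbf{P}^{1/2})$ and operator norm $O(\la_k^{-1})$ on the $L^2\to L^2$ level.

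Next I would compute, for $f\in L^2(\R\times\Omega_\T)$, writing $u:=\mathbf{R}_k^\perp(\alpha)(1-\Pi_k)\chi f$,
\[ (\mathbf{H}-\tfrac{Q^2+p^2}{2})\tilde{\chi}u \;=\; \tilde{\chi}(\mathbf{H}^0-\tfrac{Q^2+p^2}{2})u+[\mathbf{H}^0,\tilde{\chi}]u+\tilde{\chi}e^{\gamma c}Vu.\]
Since $(\mathbf{H}^0-\tfrac{Q^2+p^2}{2})u=(1-\Pi_k)\chi f$ on $E_k^\perp$ and $\tilde{\chi}\chi=\chi$, the first term produces the main term $(1-\Pi_k)\chi$. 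The commutator $[\mathbf{H}^0,\tilde{\chi}]=-\tfrac12 \tilde{\chi}''-\tilde{\chi}'\pl_c$ preserves $E_k^\perp$ and is bounded by $|\tilde{\chi}'|_\infty+|\tilde{\chi}''|_\infty$ times $\|\pl_c u\|_2+\|u\|_2$, hence by $O(\la_k^{-1/2})\|f\|_2$ via coercivity; this piece goes into $\mathbf{L}_k^\perp$. For the potential term I split $\tilde{\chi}e^{\gamma c}Vu=\tilde{\chi}e^{\gamma c}(1-\Pi_k)Vu+\tilde{\chi}e^{\gamma c}\Pi_k Vu$: the first summand is bounded from $L^2$ into $e^{\beta\rho}L^2(\R;E_k^\perp)$ by Lemma 4.2 combined with $u\in \mathcal{D}(\mc{Q})$ and the bound $\mathcal{Q}_V(u)\leq C\|\mathbf{P}^{1/2}u\|_2^2$ on high modes (again yielding the $\la_k^{-1/2}$ factor because $\mathbf{P}\geq\la_{k+1}$), and it is placed in $\mathbf{L}_k^\perp$; the second has image in the finite-dimensional $E_k$-part multiplied by $\tilde{\chi}e^{\gamma c}$, which has compact $c$-support, and I would place it in $\mathbf{K}_k^\perp$.

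Compactness of $\mathbf{K}_k^\perp:L^2\to e^{\beta\rho}L^2(\R;E_k)$ follows because the image lies in functions supported in $\mathrm{supp}(\tilde{\chi})$ with values in the finite-dimensional $E_k$ and because $\mathbf{R}_k^\perp(\alpha)$ produces $H^1$ regularity in $c$, so Rellich--Kondrachov on the compact interval yields compactness into $L^2$ (with the weight $e^{\beta\rho}$ being irrelevant on a compact support). Holomorphy in $\alpha$ of all three operators is inherited from $\mathbf{R}_k^\perp(\alpha)$, which itself is holomorphic by the Lax–Milgram construction applied to the analytic family $\mc{Q}_{\alpha,k}$.

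For part 2, I would conjugate by $e^{\beta\rho}$: the operator $e^{\beta\rho}(\mathbf{H}^0-\tfrac{Q^2+p^2}{2})e^{-\beta\rho}$ acquires a first-order term $\beta\rho'\pl_c$ and a zeroth-order term $-\tfrac{\beta^2}{2}(\rho')^2+\tfrac{\beta}{2}\rho''$, and on $E_k^\perp$ the analogue of \eqref{coercive_estimate} gives $\Re\,\mc{Q}_{\alpha,k,\beta}(u,u)\geq \tfrac{1}{2}\|\pl_cu\|^2+(\la_{k+1}-\tfrac{\beta^2+\Re(p^2)}{2})\|u\|^2$, which is coercive precisely when $\Re((\alpha-Q)^2)>\beta^2-2\la_k+1$ (for $k$ large so that the shift $\la_{k+1}-\la_k\geq 1$ dominates). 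Lax–Milgram then yields the weighted bound with the announced dependence, and the $(1+|\beta|)$ factor comes from the $\beta\rho'$ commutator term that one absorbs by $ab\leq \eps a^2+\eps^{-1}b^2$. I expect the main technical nuisance — but not a conceptual obstacle — to be the potential term $\tilde{\chi}e^{\gamma c}V$ on $E_k^\perp$: one must verify that the quadratic form $\mc{Q}_V$ restricted to $E_k^\perp\cap \mc{D}(\mathbf{P}^{1/2})$ is controlled by $\|\mathbf{P}^{1/2}u\|_2^2$ with a constant independent of $k$, for which one uses the Gaussian hypercontractivity bounds on Hermite polynomials together with the $L^p$ integrability of $V$ for $p<2/\gamma^2$ (here $\gamma<\sqrt{2}$ so $p>1$ is available).
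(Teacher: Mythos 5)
There is a genuine gap, and it is at the point you yourself flag as a ``technical nuisance.'' You take $\mathbf{R}_k^\perp(\alpha)$ to be the resolvent of the \emph{free} operator $\mathbf{H}^0$ on $E_k^\perp$ with Dirichlet condition (your sesquilinear form $\mc{Q}_{\alpha,k}$ has no $\mc{Q}_V$ term). Then, upon applying the full $\mathbf{H}$, you must control $\tilde{\chi}e^{\gamma c}V u$ for $u=\mathbf{R}_k^\perp(\alpha)(1-\Pi_k)\chi f$, and your plan requires the form bound $\mc{Q}_V(u)\leq C\|\mathbf{P}^{1/2}u\|_2^2$ on $E_k^\perp$ with a $k$-independent constant. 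That estimate is not established anywhere in the paper and should not be expected: the operator $\mathbf{P}=\sum_n n(\mathbf{X}_n^\ast\mathbf{X}_n+\mathbf{Y}_n^\ast\mathbf{Y}_n)$ weights Fourier modes and \emph{not} chaos degree, so a function like $\varphi_{N}$ lies in $E_k^\perp$ for $N>\la_k$ while living in the first Wiener chaos, and hypercontractivity gives no help. More to the point, if $\mc{Q}_V$ were $\mathbf{P}$-form-bounded, the entire construction of $\mathbf{H}$ as a Friedrichs extension in Section~\ref{sub:bilinear} would be trivialized; the whole difficulty of the potential — a GMC mass which is not even a function for $\gamma\geq\sqrt2$ — is that it is \emph{not} relatively form-bounded by $\mathbf{P}$. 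Consequently the range of your free $\mathbf{R}_k^\perp(\alpha)$ is not visibly contained in $\mc{D}(\mathbf{H})$, and you cannot isolate a $\la_k^{-1/2}$-small error from the $(1-\Pi_k)V$ contribution.

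The paper's proof avoids this entirely by including the potential in the operator whose resolvent is being used. It defines $\mc{Q}_k^\perp$ as the restriction of the \emph{full Liouville form} $\mc{Q}$ (including $\mc{Q}_{e^{\gamma c}V}$) to $\mc{D}(\mc{Q})\cap L^2(\R^-;E_k^\perp)$, and $\mathbf{H}_k^\perp$ is its Friedrichs extension. The operator $\mathbf{R}_k^\perp(\alpha)$ is then the genuine resolvent of $\mathbf{H}_k^\perp$, so $(\mathbf{H}_k^\perp-2\Delta_\alpha)u=(1-\Pi_k)\chi f$ holds exactly, with no $V$-error on $E_k^\perp$ at all. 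The remaining discrepancy between $\mathbf{H}$ and $\mathbf{H}_k^\perp$ is captured in the commutation identity
\[
\mathbf{H}\,\iota(\chi u)=\iota(\mathbf{H}_k^\perp\chi u)+\chi e^{\gamma c}\Pi_k\big(V\,\iota(u)\big),
\]
whose only error term couples into the \emph{finite-dimensional} space $E_k$, where $\chi\Pi_k V$ and its relatives are bounded by Lemma~\ref{chiPikV}. This produces exactly the finite-rank-valued $\mathbf{K}_k^\perp(\alpha)$, while $\mathbf{L}_k^\perp(\alpha)$ consists only of the pure commutator $[\pl_c^2,\tilde{\chi}]$, for which the $\la_k^{-1/2}$ bound does follow from coercivity as you describe. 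So the decomposition of the error into $\mathbf{L}_k^\perp$ and $\mathbf{K}_k^\perp$ is analogous in spirit to yours, but the crucial structural choice — whether $V$ lives inside $\mathbf{H}_k^\perp$ or appears as a perturbation — is what removes the missing estimate, and your version of that choice leaves the missing estimate essential and unproved.
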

\begin{proof}
Let us define $\mc{Q}_k^\perp$ the restriction of $\mc{Q}$ to the closed subspace 
\[ \mc{D}(\mc{Q}_k^\perp):=\mc{D}(\mc{Q})\cap L^2(\R^-;E_k^\perp)=\mc{D}(\mc{Q})\cap \ker \Pi_k\cap \ker r_{\R^+}\]
where $r_{\R^\pm}:L^2(\R\times \Omega_\T)\to L^2(\R^\pm\times \Omega_\T)$ is the restriction for $c\geq 0$.
Note that this is a closed form and is thus the quadratic form of a unique self-adjoint operator 
${\bf H}_k^\perp$, which maps $\mc{D}(\mc{Q}_k^\perp)\to \mc{D}'(\mc{Q}_k^\perp)$ and has a domain 
$\mc{D}(\mathbf{H}_k^\perp)\subset \mc{D}(\mc{Q}_k^\perp)$. 
For $u\in \mc{D}(\mc{Q}_k^\perp)$ we have $\|{\bf P}^{1/2}u\|_{2}^2\geq \la_k\|u\|^2_2$, thus  for each $\eps\geq 0$
\begin{equation}\label{boundsob}
\mc{Q}(u)\geq   \tfrac{1}{2}\|\pl_cu\|_{L^2}^2+( \tfrac{Q^2}{2}+(1-\eps)\la_k)\|u\|_{L^2}^2+\eps\|\mathbf{P}^{1/2}u\|^2_{L^2}+\mc{Q}_{e^{\gamma c}V}(u,u)
\end{equation}
and therefore the quadratic form $\mc{Q}_k^\perp(u)$ is bounded below 
by $\frac{\la_k}{2}\|u\|^2_{L^2}$ on $\mc{D}(\mc{Q}_k^\perp)$ (if $\epsilon$ is chosen small enough).
There is a natural injection $\iota: \mc{D}(\mc{Q}_k^\perp)\to \mc{D}(\mc{Q})$ which we view as an inclusion. 
Let $\chi,\til{\chi}\in C^\infty(\R)$ be equal to $1$ in $c<-1/2$ and supported in $(-\infty,0)$. Let $\til{\chi},\chi$ defined as $\chi$ but equal to $1$ on the support of $\chi$. We view $\chi$ as the operator of multiplication by $\chi(c)$ on $L^2(\R\times \Omega_\T)$, and similarly for $\til{\chi}$.
Let us check that $\iota\chi=\chi\iota: \mc{D}({\bf H}_k^\perp)\to \mc{D}({\bf H})$ and for $u\in \mc{D}({\bf H}_k^\perp)$ 
\begin{equation}\label{identite_entre_H} 
{\bf H}\iota(\chi u) = \iota ({\bf H}_k^\perp \chi u)+\chi e^{\gamma c}\Pi_k(V\iota(u))
\end{equation}   
(note that $\chi u\in \mc{D}({\bf H}_{k}^\perp)$ if $u\in \mc{D}({\bf H}_{k}^\perp)$ since ${\bf H}_{k}^\perp \chi u=\chi {\bf H}_{k}^\perp u-\frac{1}{2}\chi''u-\chi'\pl_cu$).
Here we use Lemma \ref{chiPikV}, which insures that $\chi e^{\gamma c}\Pi_k(V\iota(u))$ makes sense.
First, we have for $u\in \mc{D}(\mc{Q}_k^\perp)$ that $\iota(u)=(1-\Pi_k)\iota(u)$. 
For such $u$ and for $v\in \mc{D}(\mc{Q})$, one gets
\[ \begin{split}
\mc{Q}(\iota (\chi u),v)=& \tfrac{1}{2}\cjg \pl_c(1-\Pi_k)\chi u,\pl_c(1-\Pi_k)\til{\chi} v\cjd_{L^2(\R^-\times \Omega_\T)}+
\tfrac{Q^2}{2}\|(1-\Pi_k)\chi u,(1-\Pi_k)\til{\chi}v\cjd_{L^2(\R^-\times \Omega_\T)}+\\
& +\cjg {\bf P}^{1/2}(1-\Pi_k)\chi u,{\bf P}^{1/2}(1-\Pi_k)\til{\chi} v\cjd_{L^2(\R^-\times \Omega_\T)}+\int_{\R}\chi(c) e^{\gamma c}(\mc{Q}_V(u,(1-\Pi_k)v)+\mc{Q}_V(u,\Pi_kv))dc\\
=& \mc{Q}_{k}^\perp(\chi u,(1-\Pi_k)\til{\chi}v)+\int_{\R}e^{\gamma c}\mc{Q}_V(\chi u,\Pi_kv)\, dc\\
=& \cjg {\bf H}_k^\perp \chi u,(1-\Pi_k)\til{\chi}v\cjd_{2}+\int_{\R}e^{\gamma c}\mc{Q}_V(\chi u,\Pi_kv)\, dc=\cjg {\bf H}_k^\perp \chi u,v\cjd_{L^2(\R^-\times \Omega_\T)}+\int_{\R}e^{\gamma c}\mc{Q}_V(\chi u,\Pi_kv)\, dc
\end{split}\]
where in the last line, $\til{v}:=(1-\Pi_k)\til{\chi}v$ is viewed as an element in $\mc{D}(\mc{Q}_k^\perp)$. As in the proof of Lemma \ref{chiPikV}, there is a uniform constant $C_k$ depending only on $k,\gamma$ such that 
\[ \Big|\int_{\R^-}e^{\gamma c}\mc{Q}_V(u,\Pi_kv)\, dc\Big|\leq C_k|\mc{Q}_{e^{\gamma c}V}(i(u))|^{1/2} \|v\|_{2} \]
which implies that $\iota(\chi u)\in \mc{D}({\bf H})$ since $|\cjg {\bf H}_k^\perp \chi u,v\cjd_{L^2(\R^-\times \Omega_\T)}|\leq \|{\bf H}_k^\perp \chi u\|_2\|v\|_2$. Now we also notice that 
\[\int_{\R}e^{\gamma c}\mc{Q}_V(\chi u,\Pi_kv)\, dc= \cjg e^{\gamma c}V\chi \iota(u),\Pi_k \til{\chi}v\cjd_2\]
where the pairing makes sense since $e^{\gamma c}V\chi \iota(u)\in \mc{D}'(\mc{Q})$ and $\Pi_k\til{\chi}v\in \mc{D}(\mc{Q})$ by Lemma \ref{chiPikV}. Moreover, using $\til{\chi}\chi=\chi$ and $\Pi_k^*=\Pi_k$ this term is also equal to 
\[\cjg e^{\gamma c}V\chi \iota(u),\Pi_k \til{\chi}v\cjd_2=\cjg \chi e^{\gamma c}\Pi_k(V\iota(u)),v\cjd_2.\]
This shows the formula \eqref{identite_entre_H}.

The spectrum of ${\bf H}_k^\perp$ is contained in $[ \tfrac{Q^2}{2}+\la_k,\infty)$ due to \eqref{boundsob}. 
It will be said to have Dirichlet condition at $c=0$, 
by analogy with the Laplacian on finite dimensional manifolds. 
The resolvent $\mathbf{R}_k^\perp(\alpha):=(\mathbf{H}_k^\perp-\tfrac{Q^2+p^2}{2})^{-1}$ (with $\alpha=Q+ip$) is well defined as a bounded map
\begin{align*}
&\mathbf{R}_k^\perp(\alpha): L^2(\R^-;E_k^\perp)\to \mc{D}(\mathbf{H}_k^\perp)
&{\bf R}_k^\perp(\alpha): \mc{D}'(\mc{Q}_k^\perp)\to \mc{D}(\mc{Q}_k^\perp)
\end{align*}
if $p\in\C$ is such that $p^2\notin [2\la_k,\infty)$, with $L^2$
norm 
\begin{equation}\label{boundRkalpha} 
\|{\bf R}_k^\perp(\alpha)\|_{\mc{L}(L^2)}\leq 2/\la_k \textrm{ for }|p|^2\leq \la_k.
\end{equation}
It is also holomorphic in $\alpha$ in $\{{\rm Re}(\alpha)<Q\}\cup \{|\alpha-Q|^2<\la_k\}$.
We also define the dual map $\iota^T:\mc{D}'(\mc{Q})\to\mc{D}'(\mc{Q}_k^\perp)$ (which also maps $L^2(\R\times \Omega_\T)\to L^2(\R^-;E_k^\perp)$. We get 
\begin{align*}
& \iota \chi \mathbf{R}_k^\perp(\alpha)\iota^T(1-\Pi_k): L^2(\R^-\times\Omega_\T)\to \mc{D}(\mathbf{H})\cap L^2(\R;E_k^\perp)
& \iota\mathbf{R}_k^\perp(\alpha)\iota^T:\mc{D}'(\mc{Q})\to \mc{D}(\mc{Q})
\end{align*}
with the same properties. To avoid heavy notations, we shall now remove the $\iota,\iota^T$ maps in the notations so that we view $\chi \mathbf{R}_k^\perp(\alpha)(1-\Pi_k)$ as map $L^2(\R^-\times\Omega_\T)\to \mc{D}(\mathbf{H})\cap L^2(\R;E_k^\perp)$.

Using \eqref{boundsob} with $u={\bf R}_k^\perp(\alpha)f$ we also obtain that
\begin{equation} 
\label{boundplcRk}
\|\pl_c {\bf R}_k^\perp(\alpha)\|_{\mc{L}(L^2)}\leq \sqrt{\frac{4}{\la_k}}, \textrm{ for }|p|^2\leq \la_k/2.
\end{equation} 
Now we fix $\chi,\til{\chi}$ as defined before the Lemma.
Thus, using $\Pi_k{\bf H}_0={\bf H}_0\Pi_k$ and \eqref{identite_entre_H}, we get for $\alpha=Q+ip$ \[\begin{split}(\mathbf{H}-\tfrac{Q^2+p^2}{2})\tilde{\chi}\mathbf{R}_k^\perp(\alpha)(1-\Pi_k)\chi=& (1-\Pi_k)\chi-\tfrac{1}{2}[\pl_c^2,\tilde{\chi}]\mathbf{R}_k^\perp(\alpha)(1-\Pi_k)\chi+
e^{\gamma c}\tilde{\chi}\Pi_k V\mathbf{R}_k^\perp(\alpha)\chi (1-\Pi_k)\\
=:& (1-\Pi_k)\chi+\mathbf{L}_k^\perp(\alpha)+\mathbf{K}_k^\perp(\alpha).
\end{split}\]
Since $[\pl_c^2,\tilde{\chi}]$ is a first order operator with compact support in $c$ commuting with $\Pi_k$, we notice that $\mathbf{L}_k^\perp(\alpha):\mc{D}'(\mc{Q})\to L^2(\R^-;E_k^\perp)$ and
we can use \eqref{boundplcRk}, \eqref{boundRkalpha} to deduce that there is $C>0$ depending only on $|\tilde{\chi}'|_{\infty}, |\tilde{\chi}''|_{\infty}$ such that 
\[\|\mathbf{L}_k^\perp(\alpha)\|_{\mc{L}(L^2)}\leq C\la_k^{-1/2} \]
 as long as  $|p^2|\leq \la_k$.  Let us now deal with ${\bf K}_k^\perp(\alpha)$. 
First, notice that $\mathbf{K}_k^\perp(\alpha)$ maps $\mc{D}'(\mc{Q})$ to $e^{\gamma \rho/2}L^2(\R^-;E_k)$: indeed,
\[\mathbf{R}_k^\perp(\alpha)(1-\Pi_k)\chi: \mc{D}'(\mc{Q})\to \mc{D}(\mc{Q}), \quad  e^{\gamma c}\tilde{\chi}\Pi_k V: \mc{D}(\mc{Q})\to e^{\gamma \rho/2}L^2(\R^-;E_k)\]
using  Lemma \ref{chiPikV} with $\beta=\beta'=-\gamma/2$.
We would like to prove some regularization property in $c$ to deduce that $\mathbf{K}_k^\perp(\alpha)$ is compact on $L^2$ (or some weighted $L^2$ space). First, we have 
\[ \pl_c {\bf H}_k^\perp = {\bf H}_k^\perp\pl_c +\gamma e^{\gamma c}(1-\Pi_k)V(1-\Pi_k)\]
thus applying ${\bf R}_k^\perp(\alpha)$ on the left and right:
\[ {\bf R}_k^\perp(\alpha)\pl_c=\pl_c{\bf R}_k^\perp(\alpha)+\gamma {\bf R}_k^\perp(\alpha)e^{\gamma c}V{\bf R}_k^\perp(\alpha).\]
Here the last term really is ${\bf R}_k^\perp(\alpha)\iota^Te^{\gamma c}V\iota {\bf R}_k^\perp(\alpha)$, viewed as a bounded map $L^2(\R^-;E_k^\perp)\to \mc{D}(\mc{Q}_k^\perp)$ using $e^{\gamma c}V:\mc{D}(\mc{Q})\to \mc{D}'(\mc{Q})$.
Therefore (using $[\pl_c,V]=0$)
\[ \begin{split}
\pl_c{\bf K}_k^\perp(\alpha)=& \gamma {\bf K}_k^\perp(\alpha)+e^{\gamma c}\til{\chi}'\Pi_k V{\bf R}_k^\perp(\alpha)(1-\Pi_k)\chi\\
& +\til{\chi}e^{\gamma c}\Pi_k V{\bf R}_k^\perp(\alpha) \pl_c\chi-\gamma e^{\gamma c}\til{\chi}\Pi_k V{\bf R}_k^\perp(\alpha)e^{\gamma c}V{\bf R}_k^\perp(\alpha)\chi.
\end{split}\]
Just as above, the first two terms map $\mc{D}'(\mc{Q})$ to $e^{\gamma \rho/2}L^2(\R;E_k)$.
Next, $\pl_c \chi: L^2(\R\times \Omega_\T)\to \mc{D}'(\mc{Q})$, ${\bf R}_k^\perp(\alpha): \mc{D}'(\mc{Q})\to \mc{D}(\mc{Q})$ and 
$\til{\chi}e^{\gamma c}\Pi_k V: \mc{D}(\mc{Q}) \to L^2(\R\times \Omega_\T)$ are bounded (using again Lemma \ref{chiPikV}), so 
\[\tilde{\chi} e^{\gamma c}\Pi_k V{\bf R}_k^\perp(\alpha) \pl_c\chi: L^2(\R\times \Omega_\T)\to 
e^{\gamma \rho/2}L^2(\R; E_k)\]
is bounded. Finally, by Lemma \ref{chiPikV}
\[ \til{\chi}e^{\gamma c}\Pi_k V{\bf R}_k^\perp(\alpha): \mc{D}'(\mc{Q}_k^\perp)\to e^{\gamma \rho/2}L^2(\R;E_k), \quad 
e^{\gamma c}V{\bf R}_k^\perp(\alpha)\chi : L^2\to \mc{D}'(\mc{Q}_k^\perp)\]
is bounded so 
\[e^{\gamma c}\til{\chi}\Pi_k V{\bf R}_k^\perp(\alpha)e^{\gamma c}V{\bf R}_k^\perp(\alpha)\chi: L^2(\R\times \Omega_\T)\to e^{\gamma \rho/2}L^2(\R; E_k)\]
is also bounded and therefore $\pl_c{\bf K}_k^\perp(\alpha): L^2(\R\times \Omega_\T)\to e^{\gamma \rho/2}L^2(\R; E_k)$. This shows that 
\[{\bf K}_k^\perp(\alpha): L^2(\R\times \Omega_\T)\to e^{\gamma\rho/2}H_0^1(\R;E_k).\]

It is then easy to see this is a compact operator on $L^2$ as announced since $e^{\gamma\rho/2}H_0^1(\R;E_k)$ injects compactly into $e^{\beta \rho}L^2(\R\times \Omega_\T)$ for $\beta<\gamma/2$ by using that $E_k$ has finite dimension. We conclude that 
${\bf K}_k^\perp(\alpha)$ is compact as a map $L^2(\R\times \Omega_\T)\to e^{\beta \rho}L^2(\R\times \Omega_\T)$ if $\beta<\gamma/2$.
Moreover $\mathbf{K}_k^\perp(\alpha), \mathbf{L}_k^\perp(\alpha)$ are holomorphic 
in $\alpha\in \C$ in the region $\{{\rm Re}(\alpha)<Q\}\cup \{|\alpha-Q|^2<\la_k\}$ since ${\bf R}_k^\perp(\alpha)$ is. 
This concludes the proof of 1).\\

Let us next consider the region $\{{\rm Re}(\alpha)\leq Q\}$, and we proceed as in Lemma \ref{resolventweighted}. Let $\mathbf{H}_{k,\beta}^\perp:=e^{\beta \rho}\mathbf{H}_k^\perp e^{-\beta \rho}$
for $\beta\in \R$ which is also given by 
\[ \mathbf{H}_{k,\beta}^\perp= \mathbf{H}_k^\perp+(1-\Pi_k)\big(-\tfrac{\beta^2}{2}(\rho'(c))^2+\tfrac{\beta}{2} \rho''(c)+\beta \rho'(c)\pl_{c}\big)\] 
and the associated sesquilinear form on $\mc{D}(\mc{Q}_k^\perp)$
\[ \mc{Q}_{k,\beta}^\perp(u):=\mc{Q}_k^\perp(u)-\tfrac{\beta^2}{2}\|\rho'u\|^2_2+\tfrac{\beta}{2} \cjg \rho''u,u\cjd_2+\beta\cjg \pl_cu,\rho'u\cjd_2.\]
Note that on $\mc{D}(\mc{Q}_k^\perp)$, we have 
\[ {\rm Re}(\mc{Q}_{k,\beta}^\perp(u))\geq \mc{Q}_k^\perp(u)-\tfrac{\beta^2}{2}\|u\|_2^2\geq 
\tfrac{1}{2}\| \pl_c u\|^2_2+(\tfrac{Q^2-\beta^2}{2}+\la_k)\|u\|_2^2+\|e^{\frac{\gamma c}{2}}V^{\frac{1}{2}}u\|^2_2.\]
This implies by Lax-Milgram, just as in the proof of Lemma \ref{resolventweighted}, that if 
${\rm Re}((\alpha-Q)^2)>\beta^2-2\la_k$, then 
for each $f\in \mc{D}'(\mc{Q}^\perp_{k})$, there is a unique 
$u\in \mc{D}(\mc{Q}_k^\perp)$ such that \begin{align} 
&e^{\beta \rho}(\mathbf{H}_k^\perp-2\Delta_{\alpha}) e^{-\beta \rho}u=f, \textrm{ and if }f\in L^2\\
&\|u\|_{2}\leq \frac{2\|f\|_{2}}{{\rm Re}((\alpha-Q)^2)+2\la_k-\beta^2},\quad& \|\pl_cu\|_{2}\leq \frac{2\|f\|_{2}}{\sqrt{{\rm Re}((\alpha-Q)^2)+2\la_k-\beta^2}}.\label{estdc}
\end{align}
In particular, this shows that, for ${\rm Re}((\alpha-Q)^2)>\beta^2-2\la_k$, 
$\mathbf{R}_k^\perp(\alpha)$ extends as a map 
\[ \mathbf{R}_k^\perp(\alpha):  e^{-\beta \rho}\mc{D}'(\mc{Q}_{k}^\perp)\to e^{-\beta \rho}\mc{D}(\mc{Q}_k^\perp)
\]
with $\|\mathbf{R}_k^\perp(\alpha)\|_{\mc{L}(e^{\beta \rho}L^2)}\leq 2({\rm Re}((\alpha-Q)^2)+2\la_k-\beta^2)^{-1}$. If we further impose that
${\rm Re}((\alpha-Q)^2)>\beta^2-2\la_k+1$ then, since $e^{\beta \rho}[\pl_c,e^{-\beta \rho}]=-\beta \rho'$ and using \eqref{estdc},  
\[ \|\mathbf{L}_k^\perp(\alpha)\|_{\mc{L}(e^{-\beta \rho}L^2)}\leq \frac{2|\chi'|_\infty(1+|\beta|) +|\chi''|_\infty}{\sqrt{{\rm Re}((\alpha-Q)^2)+2\la_k-\beta^2}}.\]
Finally, the same argument as above for $\mathbf{K}_k^\perp(\alpha)$ shows that for ${\rm Re}((\alpha-Q)^2)+2\la_k-\beta^2>1$, the operator $\mathbf{K}_k^\perp(\alpha)$ is compact from $e^{-\beta \rho}L^2(\R\times\Omega_\T)$ to $e^{-\beta\rho}L^2(\R^-;E_k)$.
\end{proof}
\begin{remark} We notice that the operators ${\bf R}_k^\perp(\alpha)$, $\mathbf{K}_k^\perp(\alpha)$, ${\bf L}_k^\perp(\alpha)$ lift as holomorphic family of operators to the region 
$\{\alpha\in \Sigma\, |\, {\rm Re}(\pi(\alpha))<Q, |\pi(\alpha)-Q|^2<\lambda_k\}$ by simply composing with the projection $\pi: \Sigma\to \C$.\end{remark}
 
\noindent \textbf{2) The region $c\geq -1$.}
Next, consider the operator $\mathbf{H}-\tfrac{Q^2+p^2}{2}$ on $L^2([-1,\infty);L^2(\Omega_\T))$ with Dirichlet condition at $c=-1$ (i.e. the extension associated to the quadratic form on functions supported in $c\geq -1$), and $\hat{\chi}\in C^\infty(\R; [0,1])$  such that $(1-\hat{\chi})=1$ on ${\rm supp}(1-\chi)$ and 
$1-\hat{\chi}$ supported in $(-1,\infty)$ (otherwise stated, $\hat{\chi}=0$ on $(-1+\delta,+\infty)$ and $\hat{\chi}=1$ on $(-\infty,-1)$ ).

We will construct a quasi-compact approximate inverse to ${\bf H}$ in $[-1,\infty)$ by using energy estimates and the properties of $V$, in particular the fact  the region where $V>0$ is small are somehow small. We show the following: 
\begin{lemma}\label{regimec>-1}
There is a uniform constant $C>0$ and a bounded operator, independent of $\alpha$,
\[
\mathbf{R}_+: L^2(\R\times \Omega_\T)\to \mc{D}(\mc{{\bf H}}) ,   
\]
satisfying  
\begin{align*}
& (1-\hat{\chi}) \mathbf{R}_+(1-\chi): \mc{D}'(\mc{Q})\to \mc{D}(\mc{Q})
\end{align*} 
and for $\alpha=Q+ip\in\C$ and $k\geq 1$
\[(\mathbf{H}-\tfrac{Q^2+p^2}{2})(1-\hat{\chi})\mathbf{R}_+ (1-\chi)= (1-\chi)+\mathbf{K}_{+,k}(\alpha)+\mathbf{L}_{+,k}(\alpha)\]
where $\mathbf{K}_{+,k}(\alpha):L^2(\R\times \Omega_\T)\to L^2([-1,\infty)\times \Omega_\T)$ compact and holomorphic in $\alpha\in \C$, and the operator $\mathbf{L}_{+,k}(\alpha):L^2(\R\times \Omega_\T)\to L^2([-1,\infty)\times \Omega_\T)$ is bounded and holomorphic  in $\alpha\in \C$, such that 
\begin{equation}\label{boundbis}
 \|\mathbf{L}_{+,k}(\alpha)\|_{\mc{L}(L^2)}\leq   C (1+|p|^2\lambda_k^{-1/2}),\quad  \|\mathbf{L}_{+,k}(\alpha)^2\|_{\mc{L}(L^2)}\leq  C (1+|p|^2)\lambda_k^{-1/2}+C(1+|p|^4)\lambda_k^{-1}
 \end{equation}
for some uniform constant $C$ depending only on $\hat{\chi}$. Moreover 
$\mathbf{L}_{+,k}(\alpha)$ and $\mathbf{K}_{+,k}(\alpha)$ are bounded as maps $\mc{D}'(\mc{Q})\to L^2([-1,\infty)\times \Omega_\T)$. 
\end{lemma}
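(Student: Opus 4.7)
The plan is to take $\mathbf{R}_+$ to be the inverse of the Friedrichs extension $\mathbf{H}_+$ of $\mathbf{H}$ on the half-line $[-1,\infty)$ with Dirichlet condition at $c=-1$. Concretely, let $\mc{Q}_+$ denote the restriction of $\mc{Q}$ to the completion in $\mc{D}(\mc{Q})$ of the functions in $\mc{C}$ supported in $(-1,\infty)$. The coercive lower bound
\[\mc{Q}_+(u) \geq \tfrac{1}{2}\|\pl_c u\|_2^2 + \tfrac{Q^2}{2}\|u\|_2^2 + \|\mathbf{P}^{1/2}u\|_2^2\]
shows that the associated self-adjoint operator $\mathbf{H}_+$ has spectrum in $[\tfrac{Q^2}{2},\infty)$, so that $\mathbf{R}_+:=\mathbf{H}_+^{-1}$ is bounded $L^2([-1,\infty)\times\Omega_\T) \to \mc{D}(\mathbf{H}_+)$. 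Extending by zero for $c<-1$ produces a bounded operator $L^2(\R\times\Omega_\T) \to \mc{D}(\mathbf{H})$ independent of $\alpha$, with the continuity into $\mc{D}(\mathbf{H})$ secured by the left cutoff $(1-\hat\chi)$ which vanishes near $c=-1$. Since $(1-\hat\chi)(1-\chi) = (1-\chi)$ by our hypotheses on $\chi,\hat\chi$, and since $\mathbf{H}_+\mathbf{R}_+=\mathrm{Id}$ on the range of $\mathbf{R}_+(1-\chi)$, a direct Leibniz computation gives
\begin{equation*}
(\mathbf{H} - \tfrac{Q^2+p^2}{2})(1-\hat\chi)\mathbf{R}_+(1-\chi) = (1-\chi) - \tfrac{Q^2+p^2}{2}(1-\hat\chi)\mathbf{R}_+(1-\chi) + [\mathbf{H},1-\hat\chi]\mathbf{R}_+(1-\chi),
\end{equation*}
where the commutator $[\mathbf{H},1-\hat\chi] = \tfrac{1}{2}\hat\chi'' + \hat\chi'\pl_c$ is a first-order differential operator with compact $c$-support.

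I would then split the error along the spectral projection of $\mathbf{P}$ by setting
\begin{align*}
\mathbf{L}_{+,k}(\alpha) &:= -\tfrac{Q^2+p^2}{2}(1-\hat\chi)(1-\Pi_k)\mathbf{R}_+(1-\chi),\\
\mathbf{K}_{+,k}(\alpha) &:= -\tfrac{Q^2+p^2}{2}(1-\hat\chi)\Pi_k\mathbf{R}_+(1-\chi) + [\mathbf{H},1-\hat\chi]\mathbf{R}_+(1-\chi).
\end{align*}
Both are holomorphic in $\alpha$ because $\mathbf{R}_+$ is $\alpha$-independent and the prefactor is polynomial. The estimate controlling $\mathbf{L}_{+,k}$ rests on
\begin{equation*}
\|(1-\Pi_k)\mathbf{R}_+ f\|_2^2 \leq \la_k^{-1}\|\mathbf{P}^{1/2}\mathbf{R}_+ f\|_2^2 \leq \la_k^{-1}\mc{Q}_+(\mathbf{R}_+ f) = \la_k^{-1}\cjg f\,|\,\mathbf{R}_+ f\cjd_2 \leq C\la_k^{-1}\|f\|_2^2,
\end{equation*}
yielding $\|(1-\Pi_k)\mathbf{R}_+\|_{\mc{L}(L^2)} \leq C\la_k^{-1/2}$ and hence the first bound in \eqref{boundbis}. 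The bound on $\mathbf{L}_{+,k}(\alpha)^2$ follows by collapsing the inner cutoffs via $(1-\chi)(1-\hat\chi) = (1-\chi)$ and applying the same estimate twice.

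The main obstacle is the compactness of $\mathbf{K}_{+,k}(\alpha)$. The commutator piece has compact $c$-support and maps into a bounded subset of $\mc{D}(\mc{Q}_+)$, so Rellich's theorem in the $c$-direction (from $H^1$ to $L^2$ on bounded intervals) combined with the compactness of the embedding $\mc{D}(\mathbf{P}^{1/2}) \hookrightarrow L^2(\Omega_\T)$ (valid since $\mathbf{P}$ has discrete spectrum with $\la_j \to \infty$) gives compactness on $L^2$. For the low-mode piece $(1-\hat\chi)\Pi_k\mathbf{R}_+(1-\chi)$, the range lies in $L^2(\R;E_k)$, finite-dimensional in $\Omega_\T$, so compactness reduces to controlling the $c$-direction as $c\to+\infty$. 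The growing potential $e^{\gamma c}V$ provides the needed decay on $\Pi_k\mathbf{R}_+ f$ uniformly on the unit ball of $L^2$: either via an Agmon-type exponential-decay estimate for solutions of $\mathbf{H}_+u = f$ at the classically forbidden region, or via the Feynman-Kac formula $\mathbf{R}_+ = \int_0^\infty e^{-t\mathbf{H}_+}\,dt$ together with the fact that the wall effect in $\E_\varphi[\exp(-\mu\int_0^t e^{\gamma(c+B_s)}V(\varphi_s)\,ds)]$ forces rapid decay as $c\to+\infty$. Combined with Rellich on bounded $c$-intervals, this yields compactness. Finally, the extension of $\mathbf{L}_{+,k}$ and $\mathbf{K}_{+,k}$ as bounded maps $\mc{D}'(\mc{Q}) \to L^2([-1,\infty)\times\Omega_\T)$ follows from the Lax-Milgram argument applied to $\mc{Q}_+$ (giving $\mathbf{R}_+:\mc{D}'(\mc{Q}_+)\to\mc{D}(\mc{Q}_+)$ boundedly) together with Lemma \ref{chiPikV} for the continuity of the cutoffs and of $\Pi_k$ on the distributional spaces.
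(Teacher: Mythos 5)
Your choice of $\mathbf{R}_+$ (the Dirichlet extension on $[-1,\infty)$, extended by zero) differs from the paper's choice $(\mathbf{H}-\tfrac{Q^2}{2}+1)^{-1}$; that is a legitimate alternative, although the statement asks for $\mathbf{R}_+:L^2\to\mc{D}(\mathbf{H})$ on the whole line, and the zero-extension across $c=-1$ typically fails to land in $\mc{D}(\mathbf{H})$ without the cutoff $(1-\hat\chi)$ already applied. More importantly, the splitting of the error terms reverses the paper's: you put
\[
[\mathbf{H},1-\hat\chi]\mathbf{R}_+(1-\chi)=\tfrac12\hat\chi''\mathbf{R}_+(1-\chi)+\hat\chi'\pl_c\mathbf{R}_+(1-\chi)
\]
into the compact piece $\mathbf{K}_{+,k}$, whereas the paper places it in the small-norm piece $\mathbf{L}_{+,k}$.

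This is where the argument genuinely breaks. Your assertion that ``the commutator piece \ldots maps into a bounded subset of $\mc{D}(\mc{Q}_+)$'' is false: you only know $\mathbf{R}_+f\in\mc{D}(\mc{Q})$, i.e., $\pl_c\mathbf{R}_+f$ and $\mathbf{P}^{1/2}\mathbf{R}_+f$ are in $L^2$, but there is no $\mathbf{P}^{1/2}$-control on $\pl_c\mathbf{R}_+f$ and no second $c$-derivative. The first-order term $\hat\chi'\pl_c\mathbf{R}_+(1-\chi)$ therefore has image only bounded in $L^2(I\times\Omega_\T)$, not in any space that embeds compactly. The singular potential $e^{\gamma c}V$ (a measure for $\gamma\geq\sqrt2$) prevents any elliptic gain of regularity in the $\Omega_\T$-direction, so one cannot argue via $\mathbf{R}_+:L^2\to\mc{D}(\mathbf{H}^0)$ or an $H^2_c\cap L^2(I;\mc{D}(\mathbf{P}))$ type estimate. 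The zero-order term $\tfrac12\hat\chi''\mathbf{R}_+(1-\chi)$ is compact (one can show the mixed embedding $H^1_c(I;L^2(\Omega_\T))\cap L^2(I;\mc{D}(\mathbf{P}^{1/2}))\hookrightarrow L^2(I\times\Omega_\T)$ is compact), but the first-order part has no such bound. Consequently your $\mathbf{K}_{+,k}(\alpha)$ is not established to be compact and the Fredholm argument downstream is unavailable.

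The paper sidesteps this entirely: it puts the commutator in $\mathbf{L}_{+,k}$, accepts that $\|\mathbf{L}_{+,k}\|$ is of order one (hence the shape of the first bound in \eqref{boundbis}), and instead controls $\mathbf{L}_{+,k}^2$. The key observation making that work is the nilpotency $\bigl(\tfrac12[\pl_c^2,\hat\chi]\mathbf{R}_+(1-\chi)\bigr)^2=0$, which holds simply because $\hat\chi'$ vanishes on the support of $1-\chi$. That disjointness of supports is the reason $\mathbf{L}_{+,k}^2$ only contains cross terms involving $(1-\Pi_k)$, each of which carries a factor $\la_k^{-1/2}$ from the spectral bound you correctly derived. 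If you want to salvage your route, you must move the commutator into $\mathbf{L}_{+,k}$ and invoke the same nilpotency; as it stands the compactness claim is a gap that the rest of the argument does not close.
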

\begin{proof}
We consider ${\bf R}_+:=({\bf H}-\tfrac{Q^2}{2}+1)^{-1}: L^2\to \mc{D}({\bf H})$, which we also view as 
a map $\mc{D}'(\mc{Q})\to \mc{D}(\mc{Q})$. We have 
\begin{equation}\label{boundanddefR+}
\tfrac{1}{2}\|\pl_c\mathbf{R}_+f\|^2_{2}+\|\mathbf{R}_+f\|^2_{2}+\|\mathbf{P}^{1/2}\mathbf{R}_+f\|^2_{2}+\mc{Q}_{e^{\gamma c}V}(\mathbf{R}_+f)\leq \|f\|^2_{2},
\end{equation}
We write
\begin{equation}\label{errorK+}
\begin{split} 
(\mathbf{H}-\tfrac{Q^2+p^2}{2})(1-\hat{\chi})\mathbf{R}_+ (1-\chi)=& (1-\chi)+\tfrac{1}{2}[\pl_c^2,\hat{\chi}]\mathbf{R}_+(1-\chi)-(\tfrac{p^2}{2}-1)(1-\hat{\chi})\mathbf{R}_+ (1-\chi).\\
 =:& (1-\chi)+\mathbf{K}_+^{1}+\mathbf{K}_+^{2}(\alpha).
\end{split}
\end{equation}
Notice that $\mathbf{K}_+^{1},\mathbf{K}_+^2(\alpha)$ are bounded as maps $\mc{D}'(\mc{Q})\to L^2(\R\times \Omega_\T)$ by using that $[\pl_c^2,\hat{\chi}]:\mc{D}(\mc{Q})\to L^2(\R\times \Omega_\T)$ is bounded. 
By Lemma \ref{decroissance_c_grand}, there is $\beta>0$ such that have (here $c_+:=c\mathbf{1}_{c>0}$.)
\begin{equation}\label{weightedPik}
\Pi_k\mathbf{R}_+ : L^2(\R\times \Omega_\T)\to e^{-\beta c_+}L^2(\R; E_k)
\end{equation}
is bounded. Let $\psi\in C_c^\infty(\R)$ non-negative, equal to $1$ in $[-A,A]$ and $0$ for $|c|>2A$ and so that $\|\psi\|_\infty+\|\psi'\|_\infty+\|\psi''\|_\infty\leq 1$ (we shall take $A\to \infty$ later). By using Lemma \ref{chiPikV}, we have 
\[  \Pi_k \psi :\mc{D}(\mc{Q})\to \mc{D}(\mc{Q})\]
is bounded: indeed, $\pl_c(\psi u)=\psi' u+\psi\pl_cu\in L^2$ if $u\in \mc{D}(\mc{Q})$ and 
\[ \int_{\R} \psi^2(c)e^{\gamma c}\mc{Q}_V(u,u)dc \leq C_{\gamma,\psi}\mc{Q}(u,u)\]
for some $C_{\gamma,\psi}$ depending on $\gamma$ and $\psi$.

We are going to show that for all such $\psi$, 
\begin{equation}\label{rayas2005}
\|(1-\Pi_k)\psi {\bf R}_+\|_{L^2\to L^2}\leq 3/\sqrt{\la_k},
\end{equation}
which by taking $A\to \infty$ shows that $(1-\Pi_k){\bf R}_+$ has $\mc{L}(L^2)$ norm 
bounded by $3/\sqrt{\la_k}$.
Let $u=\mathbf{R}_+f$ with $f\in L^2$, then we claim that
\begin{equation}\label{bound-Pik}
\begin{gathered} 
\la_k\|(1-\Pi_k)\psi u\|_{2}^2 +\tfrac{1}{2}\|\pl_c(1-\Pi_k)\psi u\|_{2}^2 + \tfrac{1}{2}\mc{Q}_{e^{\gamma c}V}((1-\Pi_k)\psi u)    \\
\leq \|(1-\Pi_k)\psi f\|_{2}\|(1-\Pi_k)\psi u\|_{2}+\tfrac{1}{2}\mc{Q}_{e^{\gamma c}V}(\Pi_k\psi u)+2\|f\|^2_2
\end{gathered}
\end{equation} 
To prove this, we note that, in $\mc{D}'(\mc{Q})$ 
\[  (\mathbf{H}-\tfrac{Q^2}{2}+1)(1-\Pi_k)\psi u=
\psi(1-\Pi_k)f+\psi(\Pi_ke^{\gamma c}V-e^{\gamma c}V\Pi_k)u-\tfrac{1}{2}\psi''(1-\Pi_k)u-\psi'(1-\Pi_k)\pl_c \] 
thus pairing with $(1-\Pi_k)\psi u\in \mc{D}(\mc{Q})$, this gives 
\[\begin{split}
\mc{Q}((1-\Pi_k)\psi u, (1-\Pi_k)\psi u)=& \cjg (1-\Pi_k)\psi f,(1-\Pi_k)\psi u\cjd_2-
\mc{Q}_{e^{\gamma c}V}\Big(\psi \Pi_k u,\psi(1-\Pi_k)u\Big)\\
& -\tfrac{1}{2}\cjg \psi''(1-\Pi_k)u,(1-\Pi_k)\psi u\cjd_2-\cjg \psi'(1-\Pi_k)\pl_c u,(1-\Pi_k)\psi u\cjd_2
\end{split}\]
 This gives the bound \eqref{bound-Pik}, using Cauchy-Schwarz, $\|{\bf P}^{1/2}(1-\Pi_k)u\|^2_{L^2(\Omega_\T)}\geq \la_k\|u\|^2_{L^2(\Omega_\T)}$ and the fact that $\|u\|_{\mc{D}(\mc{Q})}\leq \|f\|_2$ (by \eqref{boundanddefR+}).
Now we do the same computation with $(1-\Pi_k)$ replaced by $\Pi_k$: 
\begin{equation}\label{bound1-Pik}
\begin{gathered}
\|\Pi_k \psi u\|_{2}^2 +\tfrac{1}{2}\|\pl_c\Pi_k\psi u\|_{2}^2 + \tfrac{1}{2}\mc{Q}_{e^{\gamma c}V}(\Pi_k\psi u)    \\
\leq \|\Pi_k\psi f\|_{2}\|\Pi_k\psi u\|_{2}+\tfrac{1}{2}\mc{Q}_{e^{\gamma c}V}((1-\Pi_k)\psi u)+2\|f\|^2_2.
\end{gathered}
\end{equation}
Combining \eqref{bound1-Pik} and \eqref{bound-Pik} and using $\|u\|_{2}\leq \|f\|_{2}$, we obtain
\begin{equation}\label{final1-Pik} 
\|(1-\Pi_k)\psi\mathbf{R}_{+} f\|_{2}=\|(1-\Pi_k)\psi u\|_{2}\leq \frac{3}{\sqrt{\la_k}}\|f\|_2
\end{equation}
Since $[\pl_c^2,\hat{\chi}]=\hat{\chi}''+2\hat{\chi}'\pl_c$ and $\hat{\chi}'=0$ on 
${\rm supp}(1-\chi)$, we have 
$(\mathbf{K}_+^1)^2=0$ and $\|\mathbf{K}_+^1\|_{\mc{L}(L^2)}\leq C$ (using \eqref{boundanddefR+}) for some uniform $C$ depending only on $\hat{\chi}$. 
By combining with \eqref{final1-Pik}, we deduce that 
\[ \begin{gathered}
\|(\mathbf{K}_+^1+ (1-\Pi_k)\mathbf{K}_+^2(\alpha))\|_{\mc{L}(L^2)}\leq C(1+|p|^2\lambda_k^{-1/2}), \\ 
\|(\mathbf{K}_+^1+ (1-\Pi_k)\mathbf{K}_+^2(\alpha))^2\|_{\mc{L}(L^2)}\leq C ((1+|p|^2)\lambda_k^{-1/2}+(1+|p|^2)^2\lambda_k^{-1})
\end{gathered}\] 
for some uniform $C$ depending only on $\hat{\chi}$.
Next we  consider the operator $\Pi_k\mathbf{K}_+^2(\alpha)$. 
Recall that, by \eqref{boundanddefR+}, 
\begin{equation}\label{weightedPik+der}
\pl_c\Pi_k\mathbf{R}_+=\Pi_k \pl_c{\bf R}_+: L^2(\R\times \Omega_\T)\to L^2(\R\times \Omega_\T)
\end{equation}
is bounded. Now we claim that the injection 
\begin{equation}\label{injectioncpt}
 F_k:=\{u\in e^{-\beta c}L^2([-1,\infty); E_k)\,|\, 
\pl_cu\in L^2([-1,\infty); E_k)\}\to e^{-\frac{\beta c}{2}}L^2([-1,\infty)\times \Omega_\T)
\end{equation}
is compact if we put the norm $\|u\|_{F_k}:=\|e^{\beta c}u\|_{2}+\|\pl_c u\|_{2}$
on $F_k$. Indeed, consider the operator $\eta_T{\rm Id}:F_k\to e^{-\frac{\beta c}{2}}L^2([-1,\infty); E_k)$ where $\eta_T(c)=\eta(c/T)$ if $\eta\in C_0^\infty((-2,2))$ is equal to $1$ on $(-1,1)$ and $0\leq\eta\leq 1$. Since $E_k$ has finite dimension, this is a compact operator by the compact embedding $H^1([-1,T);E_k)\to e^{-\frac{\beta c}{2}}L^2([-1,\infty);E_k)$,  and as $T\to \infty$ we have for $u\in F_k$
\[ \| e^{\frac{\beta}{2}c}(\eta_Tu-u)\|^2_{2}\leq e^{-\beta T}\int_{-1}^\infty (1-\eta_T)^2e^{2\beta c}\|u\|_{L^2(\Omega_\T)}^2 dc\leq e^{-\beta T}\|u\|^2_{F_k}
\]
thus the injection \eqref{injectioncpt} is a limit of compact operators for the operator norm topology, therefore is compact. 
By \eqref{weightedPik} and \eqref{weightedPik+der}, the operator $\Pi_k\mathbf{R}_+: 
L^2(\R\times \Omega_\T)\to e^{-\frac{\beta}{2}c_+}L^2([-1,\infty)\times \Omega_\T)$ is compact. 
We get that  
\[
\Pi_k \mathbf{K}_+^2(\alpha): L^2(\R \times \Omega_\T)\to L^2(\R \times E_k)
\]
is compact. This complete the proof by setting $\mathbf{K}_{+,k}(\alpha):=\Pi_k\mathbf{K}_+^2(\alpha)$ and 
$\mathbf{L}_{+,k}(\alpha):=\mathbf{K}_+^1+ (1-\Pi_k)\mathbf{K}_+^2(\alpha)$.
The holomorphy in $\alpha\in \C$ is clear since $\mathbf{K}_+^2(\alpha)$ is polynomial in $\alpha$.
\end{proof}

\begin{lemma}\label{decroissance_c_grand}
For $\alpha=Q+ip$ with $p\in i\R^*$, there is $\beta>0$ such that the resolvent $\Pi_k\mathbf{R}(\alpha): L^2(\R \times \Omega_\T)\to  e^{-\beta c_+}L^2(\R \times \Omega_\T)$ is bounded.
\end{lemma}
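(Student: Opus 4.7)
Writing $p = is$ with $s \in \R\setminus\{0\}$, I may assume $s > 0$ so that $\alpha = Q - s \in (-\infty, Q)$ lies on the physical sheet. The crucial feature is the positive mass gap
\[ \tfrac{Q^2}{2} - 2\Delta_\alpha = \tfrac{s^2}{2} > 0\]
between the spectral parameter and the bottom of $\sigma_{\mathrm{ess}}(\mathbf{H})$, which should drive exponential decay of the resolvent at $c\to+\infty$. My approach combines an Agmon/Combes--Thomas conjugated energy estimate (exploiting the mass gap $s$) with the coercivity of $\mathcal{Q}_V$ on the finite-dimensional subspace $E_k$, which is the special input specific to the low-mode projection.

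The coercivity on $E_k$ comes from the fact that the Cameron--Martin shift $\psi \mapsto \psi_{\mathrm{shift}}(\theta,\cdot)$ is injective on polynomials: a non-zero polynomial in finitely many variables cannot vanish identically under all real translations. Hence by compactness of the unit sphere in the finite-dimensional space $E_k$ there is $\lambda_k > 0$ with $\mathcal{Q}_V(\psi) \geq \lambda_k \|\psi\|^2_{L^2(\Omega_\T)}$ for every $\psi \in E_k$. In particular, for any $v \in L^2(\R; E_k)$ one has $\mathcal{Q}_{e^{\gamma c} V}(v) \geq \lambda_k \int e^{\gamma c}\|v(c,\cdot)\|^2_{L^2(\Omega_\T)}\,dc$, so that the diagonal part of $e^{\gamma c} V$ on $E_k$ behaves like an exponentially growing scalar potential.

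Concretely, fix $\beta \in (0, \min(s, \gamma/2))$ and let $\phi_N \in C^\infty(\R)$ satisfy $\phi_N(c) = 0$ for $c \leq 0$, $0 \leq \phi_N' \leq \beta$, and $\phi_N(c) = \min(\beta c, \beta N)$ (smoothed) for $c \geq 0$, so that $e^{\phi_N}$ is bounded. Given $f \in L^2$, set $u = \mathbf{R}(\alpha) f$ and $v_N = e^{\phi_N} \Pi_k u \in L^2(\R; E_k)$. Projecting the equation $(\mathbf{H} - 2\Delta_\alpha) u = f$ onto $E_k$ and conjugating by $e^{\phi_N}$, one obtains
\[ \Big(\mathbf{H}^0 + e^{\gamma c}\Pi_k V \Pi_k - 2\Delta_\alpha + \phi_N' \partial_c - \tfrac{(\phi_N')^2 - \phi_N''}{2}\Big) v_N = e^{\phi_N} \Pi_k f - e^{\gamma c} \Pi_k V (1 - \Pi_k) e^{\phi_N} u. \]
Pairing with $v_N$, taking real parts and integrating by parts (the first-order contribution $\mathrm{Re}\langle \phi_N' \partial_c v_N, v_N\rangle_2 = -\tfrac{1}{2}\langle \phi_N'' v_N, v_N\rangle_2$ cancels the $\phi_N''$ term from the conjugation), and using $\mathbf{P} \geq 0$ and $|\phi_N'| \leq \beta$, I arrive at
\[\tfrac{s^2 - \beta^2}{2} \|v_N\|_2^2 + \mathcal{Q}_{e^{\gamma c} V}(v_N) \leq \|e^{\phi_N} \Pi_k f\|_2 \, \|v_N\|_2 + |\langle e^{\gamma c}\Pi_k V (1-\Pi_k) e^{\phi_N} u, v_N\rangle_2|.\]
The coupling term rewrites as $\mathcal{Q}_{e^{\gamma c + 2\phi_N} V}((1 - \Pi_k) u, \Pi_k u)$; Cauchy--Schwarz on the sesquilinear form gives an upper bound of $\tfrac{1}{2}\mathcal{Q}_{e^{\gamma c + 2\phi_N} V}((1 - \Pi_k) u) + \tfrac{1}{2}\mathcal{Q}_{e^{\gamma c} V}(v_N)$, and the latter is absorbed into the LHS via the coercivity on $E_k$.

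The main obstacle is controlling the remaining coupling term $\mathcal{Q}_{e^{\gamma c + 2\phi_N} V}((1-\Pi_k) u)$ uniformly in $N$, since we have no a priori exponential decay on the high-mode part $(1-\Pi_k) u$. The resolution I would implement is a splitting $f = \mathbf{1}_{c\leq A} f + \mathbf{1}_{c \geq A} f$ together with an iteration: the Agmon estimate applied without projection to the compactly supported part yields $\mathbf{R}(\alpha): e^{-\beta c_+} L^2 \to e^{-\beta c_+} L^2$ boundedly, which controls the $\mathbf{1}_{c\leq A}f$ contribution to $(1-\Pi_k)u$, while on $c \geq A$ the exponential growth of $e^{\gamma c}\Pi_k V \Pi_k$ on $E_k$ forces the low-mode resolvent kernel to decay super-exponentially and absorbs the weight $e^{2\phi_N}$. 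Taking $N \to \infty$ by monotone convergence (or Fatou on $v_N$) yields $\Pi_k u \in e^{-\beta c_+} L^2(\R; E_k)$ with norm bounded by a multiple of $\|f\|_2$, which is the claim.
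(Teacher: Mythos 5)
Your route (Combes--Thomas conjugation plus coercivity of $\mathcal{Q}_V$ on the finite-dimensional space $E_k$) is completely different from the paper's, which proves the lemma by combining the Feynman--Kac representation $\mathbf{R}(\alpha)=\int_0^\infty e^{-t\mathbf{H}+2\Delta_\alpha t}\,dt$ with negative-moment bounds for GMC on small annuli ($\E[V_t^{-\theta}]\leq C\,t^{\psi(-\theta)}$ with $\psi(-\theta)>-1$), the elementary bound $x^\theta e^{-x}\leq C_\theta$ to extract the decaying weight $e^{-\theta\gamma c}$ from the Feynman--Kac suppression, H\"older's inequality, and boundedness of $\Pi_k$ on $L^q(\Omega_\T)$ for $q\in(1,2)$. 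Your coercivity observation, that $\mathcal{Q}_V(\psi)\geq\lambda_k\|\psi\|_{L^2(\Omega_\T)}^2$ on $E_k$, is correct: a non-zero polynomial in finitely many Gaussian coordinates cannot vanish after translation, and compactness of the unit sphere in the finite-dimensional $E_k$ upgrades positivity to a uniform lower bound.

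The genuine gap, which you flag yourself, is the coupling term, and your plan does not close. After conjugation and Cauchy--Schwarz you must control $\int e^{(\gamma+2\beta)c_+}\mathcal{Q}_V\big((1-\Pi_k)u\big)\,dc$ uniformly in $N$, which requires an a priori exponential decay of the high-mode part $(1-\Pi_k)u$ at $c\to+\infty$. For a source $f\in L^2(\R\times\Omega_\T)$ that does not decay at $c\to+\infty$ (precisely the case of interest), that decay is simply not available. The proposed splitting $f=\mathbf{1}_{c\leq A}f+\mathbf{1}_{c\geq A}f$ does not help: the unprojected Agmon estimate $\mathbf{R}(\alpha):e^{-\beta c_+}L^2\to e^{-\beta c_+}L^2$ that you invoke for the first piece needs the \emph{source} to lie in $e^{-\beta c_+}L^2$, and $\mathbf{1}_{c\geq A}f$ does not. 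The assertion that the potential's growth ``forces the low-mode resolvent kernel to decay super-exponentially'' refers to the kernel of the \emph{decoupled} operator $\Pi_k(\mathbf{H}^0+e^{\gamma c}V)\Pi_k$, not of $\Pi_k\mathbf{R}(\alpha)$; the latter is contaminated precisely by the off-diagonal $\Pi_k V(1-\Pi_k)$ one needs to control, so the dependency is circular. The Feynman--Kac argument in the paper sidesteps this by deriving the decay directly from the super-exponential suppression of paths entering the wall $e^{\gamma c}$, prior to any mode decomposition; $\Pi_k$ is used there only to upgrade $L^q(\Omega_\T)$ to $L^2(\Omega_\T)$-norms, not to produce the decay in $c$.
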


\begin{proof} We estimate the norm for $ e^{-\beta c_+}L^2(\R_+ \times \Omega_\T)$ since the estimate for   $ e^{-\beta c_+}L^2(\R^- \times \Omega_\T)$ results from Proposition \ref{l2alphamu}.  
Recall \eqref{resolvvspropagator} with the representation for the resolvent $
\mathbf{R}(\alpha)=\int_0^{\infty} e^{-t\mathbf{H}+t2 \Delta_{\alpha}}\dd t
$, valid for the range of $\alpha$ we consider. A key observation to establish our lemma is the following estimate on $  e^{-t\mathbf{H}}$, based on the  Feynman-Kac formula \eqref{FKgeneral}. Here we write for simplicity $V_t:=\int_{\D_t^c} |z|^{-\gamma Q}M_\gamma (\dd z)$. By using the    Markov property, we get that for $f\in L^2(\R \times \Omega_\T)$ and $a\in (0,1)$, 
\begin{align}
|e^{-t\mathbf{H}}f|=&| e^{-\frac{Q^2}{2}t}\E_{\varphi}\big[ f(c+B_t,\varphi_t)e^{-\mu e^{\gamma c}V_t}\big]| \nonumber\\
\leq & e^{-\frac{Q^2}{2}t} \E_{\varphi}\big[ |f(c+B_t,\varphi_t)|e^{-\mu e^{\gamma c}V_{(1-a)t}}\big]\nonumber \\
 =& e^{-\frac{Q^2}{2}(1-a)t} \E_{\varphi}\big[ e^{-at \mathbf{H}_0}|f|(c+B_{(1-a)t},\varphi_{(1-a)t}) e^{-\mu e^{\gamma c}V_{(1-a)t}}\big].  \label{markineg}
  \end{align}
 
 Now we want to exploit this relation in the integral representation of the resolvent.
Take $q\in[1,2)$. By using in turn the fact that $\Pi_k$ has finite dimensional  rank (in particular we have equivalence of norms on its rank) and the continuity of the map $\Pi_k: L^q(\Omega_\T)\to L^q(\Omega_\T)$ (see \cite[Th 5.14]{SJ}),  we obtain for some constant $C=C(t_0,\alpha,k,q,\mu,\gamma)$ which may change along the lines below
\begin{align*}
\int_0^\infty &e^{2\beta c}\E\Big[\Big(\Pi_k\int_0^{\infty} e^{-t\mathbf{H}+t2 \Delta_{\alpha}}f\dd t\Big)^2\Big]\,\dd c\\
\leq & C \int_0^\infty  e^{2\beta c}\E\Big[\big(\Pi_k\int_0^{\infty} e^{-t\mathbf{H}+t2 \Delta_{\alpha}}f\dd t\big)^q\Big] ^{2/q}\,\dd c\\
\leq & C \int_0^\infty  e^{2\beta c}\E\Big[\big| \int_0^{\infty} e^{-t\mathbf{H}+t2 \Delta_{\alpha}}f\dd t\big|^q\Big] ^{2/q}\,\dd c.
\end{align*}
 We are going to estimate this quantity by splitting the $\dd t$-integral above in two parts, $\int_0^{t_0}\dots$ and $\int_{t_0}^{\infty}\dots$ for some $t_0>0$, which we call respectively $A_1$ and $A_2$.
 
 Let us first focus on the first part.  Using  the relation \eqref{markineg} with $a=1/2$ produces the bound
\begin{align*}
A_1\leq C \int_0^\infty  e^{2 \beta  c}\E\Big[ \Big|\int_0^{t_0}   \E_{\varphi}\big[  e^{-\frac{t}{2}\mathbf{H}_0}|f|(c+B_{t/2},\varphi_{t/2})  e^{- \mu e^{\gamma c}V_{t/2}}  \big]  \,\dd t\Big|^q\Big]^{2/q} \dd c.
\end{align*}
 Then Jensen gives the estimate
 \begin{align*}
A_1\leq C \int_0^\infty  e^{2 \beta  c} \int_0^{t_0} \E\Big[   \big(  e^{-\frac{t}{2}\mathbf{H}_0}|f|(c+B_{t/2},\varphi_{t/2}) \big)^q e^{- q\mu e^{\gamma c}V_{t/2}}     \Big]^{2/q} \,\dd t\dd c.
\end{align*}
Now we use   H\"older's inequality  with conjugate exponents $2/q,  2/ (2-q)$ to estimate the above quantity by
 $$
A_1\leq C \int_0^\infty\int_0^{t_0}   e^{2\beta  c} \E\Big[  \big(e^{-\frac{t}{2}\mathbf{H}_0}|f|(c+B_{t/2},\varphi_{t/2})\big)^2\Big]    \Big(\E\big[e^{- \frac{2q }{2-q}\mu e^{\gamma c}V_{t/2}}     \big]\Big)^{\frac{2-q}{q}}\,\dd t\dd c.
$$
Using the elementary inequality $x^\theta e^{-x}\leq C$ for $\theta>0$, we deduce
 $$
A_1\leq C \int_0^\infty\int_0^{t_0}   e^{(2\beta-\theta \frac{2-q}{q}\gamma) c} \E\Big[  \big(e^{-\frac{t}{2}\mathbf{H}_0}|f|(c+B_{t/2},\varphi_{t/2})\big)^2\Big]    \Big(\E\big[  V_{t/2}^{-\theta}    \big]\Big)^{\frac{2-q}{q}}\,\dd t\dd c.
$$
 It remains to estimate the quantity $\E\big[  V_{t/2}^{-\theta}    \big]$. Notice that $V_{t/2}$ is a GMC over an annulus of radii $1$ and $e^{-t/2}$. Hence the quantity to be investigated is less than the same expression with $V_{t/2}$ replaced by a GMC over a ball of diameter $t/2$ contained in   this annulus. Standard  computations of GMC multifractal spectrum \cite[Th 2.14]{review} (also valid for negative moments) then yields that
\begin{equation}
\E\Big(  V_{t/2}^{-\theta}    \Big)\leq C t^{\psi(-\theta)}
\end{equation}
where $\psi(u):=(2+\tfrac{\gamma^2}{2})u-\tfrac{\gamma^2}{2}u^2 $ is the multifractal spectrum. Since $\psi$ is continuous and $\psi(0)=0$, we can find   $\theta>0$ such that $\psi(-\theta)>-1$. Then, for $2\beta=\frac{2-q}{q}\theta\gamma$, we deduce that
\begin{align*}
A_1
\leq & C  \int_0^\infty\int_0^{t_0}   \E\Big[ \big( e^{-\frac{t}{2}\mathbf{H}_0}|f|(c+B_{t/2},\varphi_{t/2})\big)^2\Big]  t^{\psi(-\theta)}\,\dd t\dd c\\
\leq & C  \int_0^{t_0}  \|e^{-\frac{t}{2}\mathbf{H}_0}|f|\|_2^2  t^{\psi(-\theta)}\,\dd t\\
\leq  & C\|f\|_2^2 \int_0^{t_0}   t^{\psi(-\theta)}\,\dd t,
\end{align*}
which proves our claim for $A_1$, namely the part corresponding to $\int_0^{t_0}$.

Concerning  $A_2$, we start as above
\begin{align*}
A_2:=&\int_0^\infty  e^{2\beta c}\E\Big[\Big(\Pi_k\int_{t_0}^{\infty} e^{-t\mathbf{H}+t2 \Delta_{\alpha}}f\dd t\Big)^2\Big]\,\dd c\\
\leq & C \int_0^\infty  e^{2\beta c}\Big(\E\Big[\big( \int_{t_0}^{\infty}  e^{-t\mathbf{H}+t2 \Delta_{\alpha}}f\dd t\big)^q\Big]\Big)^{2/q}\,\dd c
\end{align*}
for $q\in (1,2)$. For the values of $\alpha$ we consider, we have $2\Delta_\alpha\leq \frac{Q^2}{2}-\frac{|p|^2}{2}$ so that, using Jensen,  
\begin{align*}
A_2\leq C \int_0^\infty  e^{2\beta c}\Big(\E\big[ \int_{t_0}^{\infty}e^{-\frac{|p|^2}{2}t}  \big(\E_{\varphi}\big[ |f(c+B_t,\varphi_t)|e^{-\mu e^{\gamma c}V_{t}}\big] \big)^q   \dd t \big]\Big)^{2/q}\,\dd c
\end{align*}
Let us now fix $a\in (0,1)$ small  enough  such that $aQ^2<|p|^2/2$. Then, using  \eqref{markineg},  Jensen and the inequality $x^\theta e^{-x}\leq C$ for $\theta>0$, we deduce
\begin{align*}
A_2\leq   &  C \int_0^\infty  e^{2(\beta-\gamma\theta/q) c}\Big(\E\Big[ \int_{t_0}^{\infty}e^{-\frac{|p|^2}{4}t}   \E_{\varphi}\big[ \big(e^{-at\mathbf{H}_0}|f|(c+B_{(1-a)t},\varphi_{(1-a)t} )\big)^q V_{(1-a)t}^{- \theta}\big]    \dd t\Big] \Big)^{2/q}\,\dd c.
\end{align*}
Now we can proceed similarly  as above by using the fact that for $a<1$ and all $q>0$ $\sup_{t>t_0}\E[V_{(1-a)t}^{-\theta  }]<\infty$ (this quantity is decreasing in $t$ and GMC has negative moments) to get
\begin{align*}
A_2\leq &  C \int_0^\infty  e^{2(\beta-\gamma\theta/q) c}\Big(\int_{t_0}^{\infty}e^{-\frac{|p|^2}{4}t} \E\big[  \big(   e^{-at\mathbf{H}_0}|f|(c+B_{(1-a)t},\varphi_{(1-a)t} )\big)^2\big]^{q/2} \E\big[ V_{(1-a)t}^{-\frac{2\theta}{2-q}}\big] ^{\frac{2-q}{2}}   \dd t \Big)^{2/q}\,\dd c\\
\leq &  C \int_0^\infty  e^{2(\beta-\gamma\theta/q) c}\Big(\int_{t_0}^{\infty}e^{-\frac{|p|^2}{4}t} \E\Big[ \big(   e^{-at\mathbf{H}_0}|f|(c+B_{(1-a)t},\varphi_{(1-a)t} )\big)^2\Big]^{q/2}  \dd t \Big)^{2/q}\,\dd c.
\end{align*}
For $\beta=\theta\gamma/q$ and using Jensen, the above quantity is less than
$$ C  \int_{t_0}^{\infty}e^{-\frac{|p|^2}{4}t} \|e^{-at\mathbf{H}_0}|f|\|_2^2  \dd t  \leq C\|f\|_2^2\int_{t_0}^{\infty}e^{-\frac{|p|^2}{4}t}\,\dd t.$$
Hence the result.
 \end{proof}

\begin{remark} As above, the operators ${\bf R}_+, \mathbf{K}_{+,k}(\alpha)$ and $\mathbf{L}_{+,k}(\alpha)$, lift as holomorphic family of operators to $\Sigma$.\end{remark}

\noindent \textbf{3) Small $\mathbf{P}$ eigenmodes in the region $c\leq 0$, where there is scattering.} 
We will view the operator ${\bf H}$ as a perturbation of the free Hamiltonian 
$\mathbf{H}_{0}:= -\tfrac{1}{2}\pl_c^2 +\tfrac{Q^2}{2}+\mathbf{P}$ on $L^2(\R^-\times \Omega_\T)$ with Dirichlet condition at $c=0$. We show (recall that $\pi:\Sigma\to \C$ is the covering map)
\begin{lemma}\label{modelres}
1) Fix $k$ and $0<\beta<\gamma/2$. The operators  
\begin{equation}
\mathbf{R}_k(\alpha):= (\mathbf{H}_{0}-\tfrac{Q^2+p^2}{2})^{-1}\Pi_k: e^{\beta\rho}L^2(\R^{-}\times \Omega_\T)\to e^{-\beta \rho}L^2(\R^-;E_k)
\end{equation}
defined for ${\rm Im}(p)>0$ can be holomorphically continued to the region 
\begin{equation}\label{region_holo_R_k}\{\alpha=Q+ip \in \Sigma\,|\, \forall j\leq k, {\rm Im}\sqrt{p^2-2\la_j}>-\beta\}.
\end{equation} 
This continuation, still denoted  $\mathbf{R}_k(\alpha):e^{\beta \rho}L^2(\R\times\Omega_\T)\to e^{-\beta \rho}L^2(\R^-;E_k)$, satisfies 
\[\tilde{\chi}\mathbf{R}_k(\alpha)\chi:e^{\beta \rho}L^2(\R\times\Omega_\T)\to e^{-\beta \rho}(L^2(\R^-;E_k)\cap \mc{D}(\mc{Q})),\]
\[ (\mathbf{H}-\tfrac{Q^2+p^2}{2})\tilde{\chi}\mathbf{R}_k(\alpha)\chi =\Pi_k \chi+\mathbf{K}_{k,1}(\alpha)+{\bf K}_{k,2}(\alpha)\]
where $\mathbf{K}_{k,1}(\alpha)$, $\mathbf{K}_{k,2}(\alpha)$ are such that for ${\rm Im}\sqrt{p^2-2\la_j}>-\min(\beta,\gamma/2-\beta)$
\begin{align*}
& \mathbf{K}_{k,1}(\alpha):e^{\beta \rho}L^2(\R\times\Omega_\T)\to e^{\beta \rho}L^2(\R\times \Omega_\T)
\\
& {\bf K}_{k,2}(\alpha):e^{\beta \rho}L^2(\R\times \Omega_\T)\to e^{\beta \rho}\mc{D}'(\mc{Q})
\end{align*}
are holomorphic families of compact operators in \eqref{region_holo_R_k}, and we have  $\mathbf{K}_{k,i}(\alpha)(1-\Pi_k)=0$ for $i=1,2$, $(1-\Pi_k)\mathbf{K}_{k,1}(\alpha)=0$ and $\Pi_k\mathbf{K}_{k,2}(\alpha)=0$.\\
2) If $f\in e^{\beta \rho}L^2$, then there is $C_k>0$ depending on $k$, some $a_j(\alpha,f)$ and 
$G(\alpha,f)\in H^1(\R;E_k)$ depending linearly on $f$ and holomorphic in $\{\alpha=Q+ip \in \Sigma\,|\,  \forall j\leq k, {\rm Im}\sqrt{p^2-2\la_j}>-\min(\beta,\gamma/2-\beta)\}$ such that 
in the region $c\leq 0$
\begin{equation}\label{asympR_-}
 (\mathbf{R}_k(\alpha)f)=\sum_{\la_j\leq \la_k}a_j(\alpha,f)
e^{-ic\sqrt{p^2-2\la_j}}+G(\alpha,f),
\end{equation}
\[ \|G(\alpha,f)(c)\|_{L^2(\Omega_\T)}+\|\pl_cG(\alpha,f)(c)\|_{L^2(\Omega_\T)}\leq C_k e^{\beta \rho}
\|e^{-\beta \rho}\Pi_kf\|_{2}.\]
3) For each $\beta\in \R$, the operator $\mathbf{R}_k(\alpha)$ extends as a bounded analytic family 
\[ \mathbf{R}_k(\alpha): e^{-\beta \rho}L^2(\R^-\times \Omega_\T)\to e^{-\beta \rho}(L^2(\R^-;E_k)\cap \mc{D}(\mc{Q}))\]
in the region ${\rm Im}(p)>|\beta|$ and $\mathbf{K}_{k,1}(\alpha): e^{-\beta \rho}L^2(\R\times \Omega_\T)\to e^{-\beta \rho}L^2(\R\times\Omega_\T)$, $\mathbf{K}_{k,2}(\alpha): e^{-\beta \rho}L^2(\R\times \Omega_\T)\to e^{-\beta \rho}\mc{D}'(\mc{Q})$ are compact analytic families  in that same region.
\end{lemma}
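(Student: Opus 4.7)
The plan is to reduce everything to an explicit one-dimensional computation on each finite-dimensional eigenspace of $\mathbf{P}$, and then patch via the cutoff $\chi$. Since $\mathbf{H}_0=-\tfrac{1}{2}\pl_c^2+\tfrac{Q^2}{2}+\mathbf{P}$ commutes with the projectors $\Pi_j-\Pi_{j-1}$ and with multiplication by $c$-dependent functions, we can decompose
\[ \mathbf{R}_k(\alpha)=\sum_{j=0}^{k}\mathbf{G}_j(\alpha)\otimes (\Pi_j-\Pi_{j-1}),\]
where $\mathbf{G}_j(\alpha)$ is the resolvent at energy $(Q^2+p^2)/2$ of the one-dimensional Dirichlet operator $-\tfrac{1}{2}\pl_c^2+\tfrac{Q^2}{2}+\la_j$ on $L^2(\R^-)$. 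Writing $\mu_j:=\sqrt{p^2-2\la_j}$ (with the branch prescribed on $\Sigma$, so that ${\rm Im}(\mu_j)>0$ on the physical sheet and single-valued on $\Sigma$), one computes explicitly the Schwartz kernel
\[ G_j(c,c';\alpha)=\frac{i}{\mu_j}\big(e^{i\mu_j|c-c'|}-e^{-i\mu_j(c+c')}\big),\qquad c,c'<0,\]
up to normalization constants. This kernel is meromorphic in $\mu_j$ with at worst a simple pole at $\mu_j=0$, which after multiplication by $\Pi_j$ corresponds to the branch point $p=\pm\sqrt{2\la_j}$ of $\Sigma$. The bound of boundedness $e^{\beta\rho}L^2(\R^-)\to e^{-\beta\rho}L^2(\R^-)$ follows by Schur's lemma from the exponential decay/growth ${\rm Im}(\mu_j)>-\beta$, which is precisely the condition in \eqref{region_holo_R_k}. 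This establishes the analytic dependence of $\mathbf{R}_k(\alpha)$ as a family $e^{\beta\rho}L^2\to e^{-\beta\rho}L^2(\R^-;E_k)$.

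The asymptotic expansion \eqref{asympR_-} in part (2) is then read off the explicit kernel: for $c\to-\infty$ and $f$ with weight $e^{\beta\rho}$, the first term $e^{i\mu_j|c-c'|}=e^{-i\mu_j c}e^{i\mu_j c'}$ produces the ``outgoing'' coefficients $a_j(\alpha,f)$ by integrating $e^{i\mu_jc'}$ against the $j$-th component of $f$, while the second term $e^{-i\mu_j(c+c')}$ decays like $e^{{\rm Im}(\mu_j)c}$ as $c\to-\infty$ and contributes to the remainder $G(\alpha,f)$. The derivative bound on $G(\alpha,f)$ comes from differentiating the kernel in $c$, giving an extra factor $\mu_j$ which is harmless away from branch points and inside a bounded region of $\Sigma$. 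For part (3), one notes that ${\rm Im}(p)>|\beta|$ together with Lemma \ref{racine} forces ${\rm Im}(\mu_j)\geq{\rm Im}(p)>|\beta|$ for every $j\leq k$, and the Schur bounds on the same explicit kernels yield the boundedness on $e^{-\beta\rho}L^2$ directly.

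Next, using $\mathbf{H}=\mathbf{H}_0+e^{\gamma c}V$, $[\Pi_k,\mathbf{H}_0]=0$ and $\til\chi\chi=\chi$ so that $\til\chi \Pi_k\chi=\Pi_k\chi$, the identity in (1) is a straightforward commutator calculation giving
\[ (\mathbf{H}-\tfrac{Q^2+p^2}{2})\til\chi \mathbf{R}_k(\alpha)\chi= \Pi_k \chi+\underbrace{[\mathbf{H}_0,\til\chi]\mathbf{R}_k(\alpha)\chi}_{=:\mathbf{K}_{k,1}(\alpha)}+\underbrace{e^{\gamma c}V\til\chi\mathbf{R}_k(\alpha)\chi}_{=:\mathbf{K}_{k,2}(\alpha)}.\]
Since $\Pi_k\mathbf{K}_{k,i}=\mathbf{K}_{k,i}$ on the relevant ranges and $\mathbf{R}_k(\alpha)(1-\Pi_k)=0$, the projection identities in the statement hold automatically. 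The operator $[\mathbf{H}_0,\til\chi]=-\tfrac{1}{2}\til\chi''-\til\chi'\pl_c$ has compactly supported coefficients in $c$, and $\mathbf{R}_k(\alpha)\chi$ lands in a space of functions valued in the finite-dimensional $E_k$ with one derivative in $c$ controlled; the embedding of such a space into $e^{\beta\rho}L^2$ on a compact interval is compact, whence compactness of $\mathbf{K}_{k,1}(\alpha)$, holomorphic in $\alpha$ by holomorphy of $\mathbf{R}_k(\alpha)$.

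The main obstacle is the compactness of $\mathbf{K}_{k,2}(\alpha)$: the multiplication by $e^{\gamma c}V$ is singular, and the target weight $e^{\beta\rho}\mc{D}'(\mc{Q})$ has to be reached from a source weighted only by $e^{\beta\rho}L^2$. Here the condition ${\rm Im}\sqrt{p^2-2\la_j}>-\min(\beta,\gamma/2-\beta)$ enters decisively: it ensures through Lemma \ref{chiPikV} that $e^{\gamma c}V\Pi_k:e^{\beta\rho}L^2\to e^{(\beta-\gamma/2)\rho}\mc{D}'(\mc{Q})$ is bounded and that $\mathbf{R}_k(\alpha)\chi$ produces enough weight $e^{-\beta\rho}$ for the composition to land back in $e^{\beta\rho}\mc{D}'(\mc{Q})$, with an extra factor of exponential decay in $c\to-\infty$ yielding compactness (again by the finite-dimensionality of $E_k$ after projection). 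Holomorphy in $\alpha\in\Sigma$ is inherited from that of $\mathbf{R}_k(\alpha)$, and this finishes part (1). Part (3) then repeats the same commutator identity with the weighted boundedness provided by the larger imaginary part of the $\mu_j$'s, the compactness being proved in exactly the same way.
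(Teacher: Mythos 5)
Your overall skeleton matches the paper: decompose onto the finite set of $\mathbf P$-eigenspaces in $E_k$, write out the Dirichlet resolvent kernel on $\R^-$ explicitly, prove the Schur bounds and analytic continuation in the $\sqrt{p^2-2\la_j}$ variables, and produce the error terms by commuting a cutoff through $\mathbf H$. However, there is a genuine gap in your decomposition of the error term, and it matters.

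You define
\[\mathbf{K}_{k,1}(\alpha)=[\mathbf{H}_0,\til\chi]\,\mathbf{R}_k(\alpha)\chi, \qquad \mathbf{K}_{k,2}(\alpha)=e^{\gamma c}V\,\til\chi\,\mathbf{R}_k(\alpha)\chi,\]
and assert that the projection identities of the lemma ``hold automatically''. They do not. The statement requires $\Pi_k\mathbf{K}_{k,2}(\alpha)=0$, whereas your $\mathbf{K}_{k,2}$ contains the term $\Pi_k e^{\gamma c}V\Pi_k\til\chi\,\mathbf{R}_k(\alpha)\chi$, which is nonzero in general since $V$ does not commute with $\Pi_k$. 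The correct decomposition first expands
\[e^{\gamma c}V\,\Pi_k\til\chi\,\mathbf{R}_k(\alpha)\chi = \Pi_k\big(e^{\gamma c}V\Pi_k\til\chi\,\mathbf{R}_k(\alpha)\chi\big) + (1-\Pi_k)\big(e^{\gamma c}V\Pi_k\til\chi\,\mathbf{R}_k(\alpha)\chi\big),\]
pushes the first summand into $\mathbf{K}_{k,1}$ (together with the commutator term), and keeps only the second as $\mathbf{K}_{k,2}$. This is not a cosmetic relabeling: in the parametrix assembly (Proposition~\ref{extensionresolvent}), $\mathbf{K}_{k,2}(\alpha)$ is precomposed with $\mathbf{R}_k^\perp(\alpha)$, which is defined only on $(1-\Pi_k)L^2$; the range condition $\Pi_k\mathbf{K}_{k,2}=0$ is what makes this composition meaningful.

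A secondary gap is the compactness of the $\Pi_k$ piece that you have silently left inside $\mathbf{K}_{k,2}$: the operator $\Pi_k e^{\gamma c}V\Pi_k\til\chi\,\mathbf{R}_k(\alpha)\chi:e^{\beta\rho}L^2\to e^{\beta\rho}L^2(\R^-;E_k)$ is not compact merely because $E_k$ is finite-dimensional — the $c$-variable is noncompact and the operator is far from finite rank. What the proof actually needs is a Hilbert--Schmidt bound on the projected, reweighted kernel $e^{(\beta-\gamma/2)|c|}\Pi_k\til\chi\,\mathbf{R}_k(\alpha)\chi\, e^{-\beta|c|}$, and this is exactly where the two constraints $\mathrm{Im}\sqrt{p^2-2\la_j}>-\beta$ and $\mathrm{Im}\sqrt{p^2-2\la_j}>\beta-\gamma/2$ (i.e.\ $>-\min(\beta,\gamma/2-\beta)$) both arise. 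Your proposal invokes the right condition but does not explain that it comes from an explicit Hilbert--Schmidt estimate on the kernel. Filling these two gaps essentially reproduces the paper's proof.
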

\begin{proof}
We first consider $\mathbf{H}_{0}$ on $(-\infty,0]$ with Dirichlet condition at $c=0$.  Using the diagonalisation of $\mathbf{P}$ on $E_k$, we can compute the resolvent 
$(\mathbf{H}_{0}-\tfrac{Q^2+p^2}{2})^{-1}$ on $E_k$ by standard ODE methods (Sturm-Liouville theory): for ${\rm Im}(p)>0$, this is the diagonal operator given for $j\leq k$ and $f\in L^2(\R^-)$ and $\phi_j\in \ker (\mathbf{P}-\la_j)$
\[\begin{split} 
(\mathbf{H}_{0}-\tfrac{Q^2+p^2}{2})^{-1}f(c)\phi_j =\frac{2}{\sqrt{p^2-2\la_j}}& \phi_j\Big(\int_{-\infty}^c \sin(c\sqrt{p^2-2\la_j})e^{-ic'\sqrt{p^2-2\la_j}}f(c')dc'\\
 & +\int_{c}^0 e^{-ic\sqrt{p^2-2\la_j}}\sin(c'\sqrt{p^2-2\la_j})f(c')dc'\Big)
\end{split}\]
where our convention is that $\sqrt{z}$ is defined with the cut on $\R^+$, so that $\sqrt{p^2}=p$
if ${\rm Im}(p)>0$. For $j=0$, that is $\phi_0=1$, for each $\beta>0$ the resolvent restricted to $E_0$ admits an analytic continuation from ${\rm Im}(p)>0$ to ${\rm Im}(p)>-\beta$, as a map
\[ (\mathbf{H}_{0}-\tfrac{Q^2+p^2}{2})^{-1}\Pi_0: e^{-\beta |c|}L^2(\R^-\times \Omega_\T)\to e^{\beta |c|}L^2(\R^-;E_0).\]
This is easy to see by using Schur's lemma and the analyticity in $p$ for the Schwartz kernel 
\[ \kappa_0(c,c'):= \indic_{\{c\geq c'\}}e^{-\beta (|c|+|c'|)} \sin(cp)e^{-ic'p} +\indic_{\{c'\geq c\}}e^{-\beta (|c'|+|c|)} \sin(c'p)e^{-icp}\]
of the operator $\Pi_0 e^{-\beta |c|}(\mathbf{H}_{0}-\tfrac{Q^2+p^2}{2})^{-1}e^{-\beta |c|}\Pi_0$ that we view as on operator on $L^2(\R^-)$. Moreover, one directly also 
obtains that it maps $e^{-\beta |c|}L^2(\R^-\times \Omega_\T)\to e^{-\beta |c|}H^2(\R^-;E_0)\cap H_0^1(\R^-;E_0)$.
 Similarly, the operators 
\[ (\mathbf{H}_{0}-\tfrac{Q^2+p^2}{2})^{-1} \phi_j\cjg \phi_j,\cdot\cjd:e^{-\beta |c|}L^2(\R^-\times \Omega_\T)\to e^{\beta |c|}L^2(\R^-;\C \phi_j)\]
are analytic in $p$, which implies that 
\[\mathbf{R}_k(\alpha):= (\mathbf{H}_{0}-\tfrac{Q^2+p^2}{2})^{-1}\Pi_k: e^{-\beta |c|}L^2(\R^-\times \Omega_\T)\to e^{\beta |c|}H^2(\R^-;E_k)\cap  H_0^1(\R^-;E_k)\]
admits an analytic extension in $p$ to the region $\{p\in Z\, |\, \forall j\geq 0, {\rm Im}(\sqrt{p^2-2\la_j})>-\beta\}$ of the ramified Riemann surface $Z$. The proof of Lemma \ref{chiPikV} also shows  that $\mc{Q}_{e^{\gamma c}V}(\til{\chi}\Pi_ku)<\infty$ if $u\in L^2(\R\times \Omega_\T)$ and clearly also ${\bf P}^{\frac{1}{2}}\Pi_k\in \mc{L}(L^2(\Omega_\T))$, thus we deduce that 
\[\tilde{\chi} \mathbf{R}_k(\alpha)\chi:e^{-\beta |c|}L^2(\R^-\times \Omega_\T)\to e^{\beta |c|} (L^2(\R^-;E_k)\cap \mc{D}(\mc{Q}))\]
is bounded.
We have (using $\Pi_k\mathbf{R}_k(\alpha)=\mathbf{R}_k(\alpha)$ and Lemma \ref{chiPikV})
\[\begin{split} 
(\mathbf{H}-\tfrac{Q^2+p^2}{2})\tilde{\chi} \mathbf{R}_k(\alpha)\chi= &\, \Pi_k\chi-\tfrac{1}{2} [\pl_c^2,\tilde{\chi}]\Pi_k \mathbf{R}_k(\alpha)\chi+
e^{\gamma c}V\Pi_k\tilde{\chi}\mathbf{R}_k(\alpha)\chi.\\
=& \, \Pi_k\chi+ \tfrac{1}{2} [\pl_c^2,\tilde{\chi}]\Pi_k \mathbf{R}_k(\alpha)\chi+
\Pi_ke^{\gamma c} V\Pi_k\tilde{\chi}\mathbf{R}_k(\alpha)\chi+\tilde{\chi}(1-\Pi_k)e^{\gamma c}V\Pi_k\tilde{\chi}\mathbf{R}_k(\alpha)\chi\\
= & \, \Pi_k \chi+\mathbf{K}_{k,1}(\alpha)+\mathbf{K}_{k,2}(\alpha)
\end{split}\]
where $\mathbf{K}_{k,2}(\alpha):=\tilde{\chi}(1-\Pi_k)e^{\gamma c}V\Pi_k\tilde{\chi}\mathbf{R}_k(\alpha)\chi$
satisfies $\Pi_k {\bf K}_{k,2}(\alpha)=0$. 
The operator $[\pl_c^2,\tilde{\chi}]\Pi_k \mathbf{R}_k(\alpha)\chi$ is compact on $e^{-\beta|c|}L^2(\R;L^2(\Omega_\T))$ since $e^{\beta |c|}[\pl_c^2,\tilde{\chi}]$ is a compactly supported first order operator in $c$, $E_k={\rm Im}(\Pi_k)$ is finite dimensional in $L^2(\Omega_\T)$
and $\mathbf{R}_k(\alpha): e^{-\beta|c|}L^2(\R^-\times \Omega_\T)\to e^{\beta |c|}H^2(\R^-;E_k)$ (this amounts to the compact injection $H^2([-1,0];E_k)\to H^1(\R^-;E_k)$). 

The operator $e^{(\beta-\gamma/2)|c|}\Pi_k\tilde{\chi}\mathbf{R}_k(\alpha)\chi e^{-\beta|c|}$ is also compact as maps 
$L^2(\R \times \Omega_\T)\to L^2(\R; E_k)$ and $L^2\to \mc{D}(\mc{Q})$ by using the same type of argument as for proving 
the compact injection \eqref{injectioncpt}: indeed, one has the pointwise bound on its Schwartz kernel restricted to $\ker (\mathbf{P}-\la_j)$
\[ \begin{split}
|\kappa_j(c,c')|\leq & Ce^{(\beta-\gamma/2)|c|-\beta|c'|}(e^{{\rm Im}(\sqrt{p^2-2\la_j})(|c|-|c'|)}+e^{-{\rm Im}(\sqrt{p^2-2\la_j})(|c|+|c'|)})\indic_{|c'|\geq |c|}\\
& + Ce^{(\beta-\gamma/2)|c'|-\beta|c|}(e^{{\rm Im}(\sqrt{p^2-2\la_j})(|c'|-|c|)}+e^{-{\rm Im}(\sqrt{p^2-2\la_j})(|c|+|c'|)})\indic_{|c|\geq |c'|}.
\end{split}\]
We see that for ${\rm Im}(\sqrt{p^2-2\la_j})\geq 0$, if $0<\beta<\gamma/2$, this is bounded by 
$C\max(e^{(\beta-\gamma/2)|c|-\beta|c'|},e^{(\beta-\gamma/2)|c'|-\beta|c|})$, and is thus the integral kernel of a compact operator on $L^2(\R^-)$ since it is Hilbert-Schmidt (the kernel being in $L^2(\R\times\R)$). If now ${\rm Im}(\sqrt{p^2-2\la_j})<0$, the same argument shows that a sufficient condition to be compact is that 
\[ {\rm Im}\sqrt{p^2-2\la_j}>-\beta \textrm{ and } {\rm Im}\sqrt{p^2-2\la_j}>\beta-\gamma/2.\]  
Combining with Lemma \ref{chiPikV}, we deduce that if $0<\beta<\gamma/2$, then 
\[e^{\gamma c}V\Pi_k \tilde{\chi}{\bf R}_k(\alpha)\chi : e^{\beta \rho}L^2(\R\times\Omega_\T)\to e^{\beta \rho}\mc{D}'(\mc{Q}) \] 
\[\textrm{ and } \Pi_k e^{\gamma c}V\Pi_k \tilde{\chi}{\bf R}_k(\alpha)\chi : e^{\beta\rho }L^2(\R\times\Omega_\T)\to e^{\beta\rho}L^2(\R^-; E_k)\]
are compact operators. Thus ${\bf K}_{k,1}(\alpha):e^{\beta\rho }L^2(\R\times\Omega_\T)\to e^{\beta\rho}L^2(\R^-; E_k)$ is also compact. This proves 1).\\
  
Now, if $f\in e^{\beta \rho}L^2(\R;E_k)$ we have for $c\leq 0$ and writing $f=\sum_{j\leq k}f_j$
with $f_j\in \ker (\mathbf{P}-\la_j)$
\begin{equation}\label{asymptRk}
\begin{split}
(\mathbf{R}_k(\alpha)\chi f)(c)=& 2\sum_{j\leq k}
\frac{e^{-ic\sqrt{p^2-2\la_j}}}{\sqrt{p^2-2\la_j}}\int_{-\infty}^0 \sin{(c'\sqrt{p^2-2\la_j})}\chi(c')f_j(c')dc'\\
& + 2\sum_{j\leq k}
\frac{e^{-ic\sqrt{p^2-2\la_j}}}{\sqrt{p^2-2\la_j}}\int_{-\infty}^c\sin\Big((c-c')\sqrt{p^2-2\la_j}\Big)f_j(c')dc'
\end{split}\end{equation}
and the term in the second line, denoted $G(c)$, satisfies for $c<-1$
\[\begin{split}
 \|G(c)\|_{L^2(\Omega_\T)}\leq & 2\sum_{j\leq k} \int_{-\infty}^c 
e^{|{\rm Im}\sqrt{p^2-2\la_j}|(c-c')+\beta c'} \|e^{-\beta \rho(c')}f_j(c')\|_{L^2(\Omega_\T)}dc'\\
\leq & C_k\|e^{-\beta \rho}f\|_{2} e^{-\beta|c|}.
\end{split}
\]
and the same bounds hold for $\|\pl_cG(c)\|_{L^2(\Omega_\T)}$; this completes the proof of 2).

We remark that if ${\rm Im}(p)>0$, then the operator 
\[ (\mathbf{H}_{0}-\tfrac{Q^2+p^2}{2})^{-1}\Pi_k: e^{\beta |c|}L^2(\R^-\times \Omega_\T)\to e^{\beta |c|}L^2(\R^-;E_k)\]
is bounded and analytic in $p$, provided $0<\beta<\min_{j\leq k}{\rm Im}(\sqrt{p^2-2\la_j})={\rm Im}(p)$. Indeed, 
this follows again from Schur's lemma applied to the Schwartz kernel 
\[ \indic_{\{c\geq c'\}}e^{-\beta (|c|-|c'|)} \sin(c\sqrt{p^2-2\la_j})e^{-ic'\sqrt{p^2-2\la_j}} +\indic_{\{c'\geq c\}}e^{-\beta (|c'|-|c|)} \sin(c'\sqrt{p^2-2\la_j})e^{-ic\sqrt{p^2-2\la_j}}.\]
Using again that $\Pi_k: E_k\to L^2(\Omega_\T)$ is bounded, this implies that 
\[\mathbf{R}_k(\alpha): e^{\beta |c|}L^2(\R^-\times \Omega_\T)\to e^{\beta |c|}
(L^2(\R^-;E_k)\cap \mc{D}(\mc{Q}))\]
is an analytic bounded family in $p$ in the region $0<\beta<{\rm Im}(p)$. The same argument works with $0<-\beta<{\rm Im}(p)$ in case $\beta<0$.
The operator $\mathbf{K}_{k,1}(\alpha):e^{-\beta \rho}L^2(\R\times \Omega_\T)\to e^{-\beta \rho}L^2(\R \times \Omega_\T)$ is compact by the same argument as above since it is Hilbert-Schmidt and 
$\mathbf{K}_{k,2}(\alpha):e^{-\beta \rho}L^2(\R\times \Omega_\T)\to e^{-\beta \rho}\mc{D}'(\mc{Q})$ is compact.
\end{proof}

\noindent \textbf{4) Proof of Proposition \ref{extensionresolvent}}. 
For $\beta\in \R$, define the Hilbert space for fixed $k$
\[\mc{H}_{k,\beta}:=e^{\beta \rho}L^2(\R;E_k)\oplus L^2(\R;E_k^\perp)\]
with scalar product
\[ \cjg f,f'\cjd_{\mc{H}_{k,\beta}}:= \int_{\R}e^{-2\beta \rho(c)}\cjg \Pi_k f,\Pi_kf'\cjd_{L^2(\Omega_\T)}dc +\cjg(1-\Pi_k)f,(1-\Pi_k)f' \cjd_{L^2(\R\times\Omega_\T)}.\] 
We now fix $\beta$ and $\beta'$ as in the statement of Proposition \ref{extensionresolvent}. 
We now use the operators of Lemma \ref{LemmaRk}, Lemma \ref{regimec>-1} and Lemma \ref{modelres}: let $\chi;\tilde{\chi},\hat{\chi}$ be the cutoff functions of these Lemmas and let $\check{\chi}\in C^\infty(\R)$ equal to $1$ on ${\rm supp}(\tilde{\chi})$ and supported in $\R^-$. 
We define 
\[ \til{\mathbf{R}}(\alpha):=\tilde{\chi}\mathbf{R}_k^\perp(\pi(\alpha))\chi+(1-\hat{\chi})\mathbf{R}_+(1-\chi)+\tilde{\chi}\mathbf{R}_k(\alpha)\chi-\check{\chi}{\bf R}_k^\perp(\alpha){\bf K}_{k,2}(\alpha)\]
which in $\{\alpha=Q+ip \in \Sigma \,|\, \forall j\leq k, {\rm Im}\sqrt{p^2-2\la_j}>-\beta\}$
is bounded and holomorphic (in $\alpha$) as a map $\til{{\bf R}}(\alpha): \mc{H}_{k,\beta}\to \mc{H}_{k,-\beta}\cap e^{-\beta \rho}\mc{D}(\mc{Q})$. It moreover satisfies the identity
\[ \begin{split}
(\mathbf{H}-2\Delta_{\pi(\alpha)})\til{\mathbf{R}}(\alpha)=& 1+\mathbf{\mathbf{L}}_k^\perp(\pi(\alpha))+\mathbf{K}_k^\perp(\pi(\alpha))+\mathbf{K}_{+,k}(\pi(\alpha))+\mathbf{L}_{+,k}(\pi(\alpha))+\mathbf{K}_{k,1}(\alpha)\\
& -(\check{\mathbf{L}}_k^\perp(\pi(\alpha))+\check{\mathbf{K}}_k^\perp(\pi(\alpha))){\bf K}_{k,2}(\alpha)
\end{split}\]
where $\check{\mathbf{L}}_k^\perp(\pi(\alpha))$ and $\check{\mathbf{K}}_k^\perp(\pi(\alpha))$ are the operators of Lemma  \ref{LemmaRk} with $\tilde{\chi}$ (resp. $\chi$) is replaced by $\check{\chi}$ (resp. $\tilde{\chi}$). 
Let us define 
\begin{equation}\label{def:tildeKk}
\tilde{\mathbf{K}}_k(\alpha):=\mathbf{K}_{k,1}(\alpha)+\mathbf{K}_{+,k}(\pi(\alpha))+\mathbf{K}_k^\perp(\pi(\alpha))-(\check{\mathbf{L}}_k^\perp(\pi(\alpha))+\check{\mathbf{K}}_k^\perp(\pi(\alpha))){\bf K}_{k,2}(\alpha).
\end{equation} 
By Lemma \ref{LemmaRk}, Lemma \ref{regimec>-1} and Lemma \ref{modelres}, $\tilde{\mathbf{K}}_k(\alpha):\mc{H}_{k,\beta}\to \mc{H}_{k,\beta}$ is compact  and holomorphic in $\alpha$ (recall that $\mathbf{K}_{k,j}(\alpha)(1-\Pi_k)=0$). We claim that for each $\psi\in C^\infty(\R)\cap L^\infty(\R)$ satisfying $\psi'\in L^\infty$ and ${\rm supp}(\psi)\subset (-\infty,A)$ for some $A\in \R$, 
\begin{equation}\label{boundonD'(Q)prop3.10}
\tilde{\mathbf{K}}_k(\alpha)(1-\Pi_k):\mc{D}'(\mc{Q})\to \mc{H}_{k,\beta} \quad \textrm{ and }\til{{\bf R}}(\alpha)(1-\Pi_k)\psi:\mc{D}'(\mc{Q})\to e^{-\beta \rho}\mc{D}(\mc{Q})
\end{equation} 
are bounded. 
Indeed, Lemma \ref{LemmaRk} and Lemma \ref{regimec>-1} show that $\mathbf{K}_{+,k}(\pi(\alpha))+\mathbf{K}_k^\perp(\pi(\alpha))$
is bounded as a map $\mc{D}'(\mc{Q})\to \mc{H}_{k,\beta}$ and that $\tilde{\chi}\mathbf{R}_k^\perp(\pi(\alpha))\chi+(1-\hat{\chi})\mathbf{R}_+(1-\chi)$ is bounded as a map $\mc{D}'(\mc{Q})\to \mc{D}(\mc{Q})$, while Lemma \ref{chiPikV} shows that $(1-\Pi_k)\psi:\mc{D}'(\mc{Q})\to \mc{D}'(\mc{Q})$.

Now  if $|\pi(\alpha)-Q|^2\leq \la_k^{1/4}$ and  if $k$ is  large enough, the operator $\til{\mathbf{L}}_k(\alpha):= \mathbf{L}_k^\perp(\pi(\alpha))+\mathbf{L}_{+,k}(\alpha)$ is bounded as map 
\begin{align}\label{def:tildeLk}
&\til{\mathbf{L}}_k(\alpha):\mc{H}_{k,\beta}\to L^2(\R;E_k^\perp) 
& \til{\mathbf{L}}_k(\alpha):  \mc{D}'(\mc{Q})\to \mc{H}_{k,\beta}
\end{align} 
with holomorphic dependance in $\alpha$, and with bound  (recall \eqref{boundonK1} and \eqref{boundbis})
\[ \|\tilde{\mathbf{L}}_k(\alpha)^2\|_{\mc{H}_{k,\beta}\to L^2(\R;E_k^\perp)}<1/2.\]

In particular, $(1+\tilde{\mathbf{L}}_k(\alpha))(1-\tilde{\mathbf{L}}_k(\alpha))=1-\tilde{\mathbf{L}}_k(\alpha)^2$ is invertible on $\mc{H}_{k,\beta}$ with holomorphic inverse given by the Neumann series $\sum_{j=0}^\infty \tilde{\mathbf{L}}_k(\alpha)^{2j}$; 
we write $(1+\mathbf{T}_k(\alpha)):=(1-\tilde{\mathbf{L}}_k(\alpha))(1-\tilde{\mathbf{L}}_k(\alpha)^2)^{-1}$, with $\mathbf{T}_k(\alpha)$ mapping boundedly  $\mc{H}_{k,\beta}\to \mc{H}_{k,\beta}$. Moreover  we have 
\[ (\mathbf{H}-2\Delta_{\pi(\alpha)})\til{\mathbf{R}}(\alpha)(1+\mathbf{T}_k(\alpha))=1+\tilde{\mathbf{K}}_k(\alpha)(1+\mathbf{T}_k(\alpha))\]
 and the remainder $\hat{\mathbf{K}}(\alpha):=\tilde{\mathbf{K}}_k(\alpha)(1+\mathbf{T}_k(\alpha))$ is now compact on $\mc{H}_{k,\beta}$, 
 and $1+\hat{\mathbf{K}}(\alpha)$ is thus Fredholm of index $0$. 
 
 Let $p_0=iq$ for some $q\gg \beta$, the operator $\mathbf{H}-\tfrac{Q^2}{2}$ being 
 self-adjoint on its domain $\mc{D}(\mathbf{H})$ and non-negative, $\mathbf{H}-\tfrac{Q^2+p_0^2}{2}$ is invertible with inverse denoted $\mathbf{R}(\alpha_0)$ if $\alpha_0=Q+ip_0=Q-q$. 
 Now, let $(\psi_j)_{j\leq J}\subset \mc{H}_{k,\beta}$  be an orthonormal basis of 
$\ker (1+\hat{\mathbf{K}}(\alpha_0)^*)$, and 
$(\varphi_j)_{j\leq J}\subset \mc{H}_{k,\beta}$ an orthonormal basis  of $\ker (1+\hat{\mathbf{K}}(\alpha_0))$. 
For each $j$, there is $w_j\in \mc{D}(\mathbf{H})$ such that $(\mathbf{H}-\tfrac{Q^2+p_0^2}{2})w_j=\psi_j$. 
If $\theta\in C^\infty(\R)$ equal $1$ in $c\in (-\infty,-1)$ and is supported in $c\in \R^-$, we have in $\mc{D}'(\mc{Q})$
\begin{equation}\label{paris} 
(\mathbf{H}_0-\tfrac{Q^2+p_0^2}{2})\theta w_j=\theta \psi_j-\theta e^{\gamma c}Vw_j-\tfrac{1}{2}[\pl_c^2,\theta]w_j
\end{equation}
and this implies by projecting this relation on $E_k$ with $\Pi_k$  that, setting $\psi_{j,k}=\Pi_k\psi_j$ and $\psi_{j,k}^\perp=(1-\Pi_k)\psi_j$, similarly $w_{j,k}=\Pi_kw_j$ and $w_{j,k}^\perp=(1-\Pi_k)w_j$
\begin{equation}\label{projPik}
(\mathbf{H}_0-\tfrac{Q^2+p_0^2}{2})\theta w_{j,k}=\theta \psi_{j,k}-\theta \Pi_k(e^{\gamma c}Vw_j)-\tfrac{1}{2}[\pl_c^2,\theta]w_{j,k}.
\end{equation}
By Lemma \ref{chiPikV} with $\beta'=\beta=\gamma/2$, we see that $\Pi_k(e^{\gamma c} Vw_j)\in e^{\gamma \rho/2}L^2$. Since $[\pl_c^2,\theta]$ is a first order differential operator with compact support, we get $[\pl_c^2,\theta]w_{j,k}\in L^2$ as $\til{\theta} w_{j,k}\in \mc{D}(\mc{Q})$ if $\til{\theta}\in C_c^\infty(\R)$ by Lemma \ref{chiPikV}. The right hand side of \eqref{projPik} is then in $e^{\beta\rho}L^2$. 
This shows in particular that 
$\theta w_{j,k}\in H^2(\R^-;E_k)\cap H_0^1(\R^-;E_k)$ and since $E_k$ is finite dimensional, it is
direct to check that 
\[\mathbf{R}_{k}(Q+ip_0)(\mathbf{H}_0-\tfrac{Q^2+p_0^2}{2})\theta w_{j,k}=\theta w_{j,k}\] 
with $\mathbf{R}_{k}(Q+ip_0)$ the operator of Lemma \ref{modelres}.
We obtain in the region $c\in \R^-$
\[\theta w_{j,k}=\mathbf{R}_k(Q+ip_0)\big(\theta \psi_{j,k}-\theta e^{\gamma c}\Pi_k(Vw_j)-\tfrac{1}{2}[\pl_c^2,\theta]w_{j,k}\big).\]
Using the properties of $\mathbf{R}_k(\alpha)$ in Part 3) of Lemma \ref{modelres} (applied with $-\beta$ instead of $\beta$), 
we see that for each $0<\beta<\gamma/2$  
\[ \theta w_{j,k}\in e^{\beta \rho}L^2(\R^-;E_k)\] 
and thus we deduce that
\[  w_j\in e^{\beta \rho}L^2(\R;E_k)\oplus L^2(\R;E_k^\perp)=\mc{H}_{k,\beta}.\]
If we consider the finite rank operator $\mathbf{W}$ defined by $\mathbf{W}f:=\sum_{j=1}^Jw_j \cjg f,\varphi_j\cjd_{  \mc{H}_{k,\beta}}$ 
for $f\in \mc{H}_{k,\beta}$, 
we have 
\[ (\mathbf{H}-\tfrac{Q^2+p_0^2}{2})\mathbf{W}=\mathbf{Y} \quad \textrm{ with } \mathbf{Y}f:=\sum_{j=1}^J\psi_j\cjg f,\varphi_j\cjd_{ \mc{H}_{k,\beta}}.\]
But now it is direct to check that $1+\hat{\mathbf{K}}(\alpha_0)+\mathbf{Y}$ is invertible on 
$\mc{H}_{k,\beta}$ and we obtain that  
\[(\mathbf{H}-2\Delta_{\pi(\alpha)})\big(\til{\mathbf{R}}(\alpha)(1+\mathbf{T}_k(\alpha))+\mathbf{W}\big)=1+\hat{\mathbf{K}}(\alpha)+\mathbf{Y}-2(\Delta_{\pi(\alpha)}-\Delta_{\alpha_0}){\bf W}.\]
The remainder ${\bf K}(\alpha):=\hat{\mathbf{K}}(\alpha)+\mathbf{Y}-2(\Delta_{\pi(\alpha)}-\Delta_{\alpha_0}){\bf W}$ is compact on $\mc{H}_{k,\beta}$, analytic in $\alpha$ in the desired region and 
$1+\mathbf{K}(\alpha)$ is invertible for 
$\alpha=\alpha_0$, therefore we can apply the Fredholm analytic theorem to conclude that 
the family of operator $(1+\mathbf{K}(\alpha))^{-1}$ exists as a  meromorphic family of bounded operators on $\mc{H}_{k,\beta}$ for $\alpha$ in 
$\{\alpha=Q+ip \in \Sigma\,|\, |\pi(\alpha)-Q|^2\leq \la_k^{1/4}, \forall j\leq k, {\rm Im}\sqrt{p^2-\la_j}>-\beta\}$  except on a discrete set of poles with finite rank polar part. We can thus set 
\begin{equation}\label{formulefinaleRalpha} 
\mathbf{R}(\alpha):=\big(\til{\mathbf{R}}(\alpha)(1+\mathbf{T}_k(\alpha))+ \mathbf{W}\big)(1+\mathbf{K}(\alpha))^{-1}
\end{equation}
which satisfies the desired properties. To prove the boundedness of ${\bf R}(\alpha)(1-\Pi_k)\psi:\mc{D}'(\mc{Q}) \to e^{-\beta\rho}\mc{D}(\mc{Q})$ for $\psi\in L^\infty(\R)\cap C^\infty(\R)$ with $\psi'\in L^\infty$ and ${\rm supp}(\psi)\subset (-\infty,A)$ for $A\in \R$, we write 
\[ {\bf R}(\alpha)=\til{{\bf R}}(\alpha)-{\bf R}(\alpha)(\tilde{{\bf K}}_k(\alpha)+\tilde{{\bf L}}_k(\alpha))\]
and we have seen in \eqref{boundonD'(Q)prop3.10} and \eqref{def:tildeLk} that $(\tilde{{\bf K}}_k(\alpha)+\tilde{{\bf L}}_k(\alpha)): \mc{D}'(\mc{Q}) \to \mc{H}_{k,\beta}$ is bounded and $\til{{\bf R}}(\alpha)(1-\Pi_k)\psi:\mc{D}'(\mc{Q})\to e^{-\beta\rho}\mc{D}(\mc{Q})$ is bounded, while $(1-\Pi_k)\psi:\mc{D}'(\mc{Q})\to \mc{D}'(\mc{Q})$ by Lemma \ref{chiPikV}. Finally ${\bf R}(\alpha):\mc{H}_{k,\beta}\to e^{\beta \rho}\mc{D}(\mc{Q})$ and the same holds for ${\bf W}$. This shows the announced property of ${\bf R}(\alpha)(1-\Pi_k)\psi$.
We can use the mapping properties of $\til{\mathbf{R}}(\alpha)$ and $\mathbf{W}$, together with \eqref{asympR_-} and \eqref{formulefinaleRalpha} to deduce \eqref{asymptoticRf}.

We finally need to prove that there is no pole in the half plane ${\rm Re}(\alpha)\leq Q$ except possibly at the points $\alpha=Q\pm i\sqrt{2\lambda_j}$.
First, by the spectral theorem, one has for each $f\in e^{\beta \rho}L^2\subset L^2$ with 
$\beta>0$ and each $\alpha$ satisfying ${\rm Re}(\alpha)<Q$
\[  \|{\bf R}(\alpha)f\|_{e^{\-\beta \rho}L^2}\leq C\|{\bf R}(\alpha)f\|_{2}\leq \frac{C\|f\|_{2}}{|{\rm Im}(\alpha)|.|{\rm Re}(\alpha)-Q|}\]
which  implies that a pole $\alpha_0=Q+ip_0$ with $p_0\notin \{\pm\sqrt{2\lambda_j}\,| \,j\geq 0\}$ on ${\rm Re}(\alpha)=Q$ must be at most of order $1$, while at $p_0=\pm \sqrt{2\lambda_j}$ it can be at most of order $2$ on $\Sigma$. Since $\til{\mathbf{R}}(\alpha)(1+\mathbf{T}_k(\alpha))+ \mathbf{W}$ is analytic, a pole of $\til{\mathbf{R}}(\alpha)$ can only come from a pole of $(1+\mathbf{K}(\alpha))^{-1}$, with polar part being a finite rank operator. We now assume that $p_0\notin \{\pm\sqrt{2\lambda_j}\,| \,j\geq 0\}$.
Let us denote by ${\bf Z}$ the finite rank residue ${\bf Z}={\rm Res}_{\alpha_0}{\bf R}(\alpha)$. Then $(\mathbf{H}-\tfrac{Q^2+p_0^2}{2}){\bf Z}=0$, which means that each element in ${\rm Ran}(\bf Z)$ is a $w\in e^{-\beta\rho}\mc{D}({\bf H})$ such that $(\mathbf{H}-\tfrac{Q^2+p_0^2}{2})w=0$. There are finite rank operators ${\bf Z}_0,\dots,
{\bf Z}_{N}$ on $\mc{H}_{k,\beta}$ so that for $\psi\in C^\infty((-\infty,-2)_c;[0,1])$
\[\psi{\bf Z}=\sum_{n=0}^N \psi\pl_{\alpha}^{n}\tilde{\mathbf{R}}(\alpha_0){\bf Z}_n=\sum_{n=1}^N \psi \pl_{\alpha}^{n}\mathbf{R}_k(\alpha_0)\chi{\bf Z}_n+\psi\tilde{\mathbf{R}}(\alpha_0){\bf Z}_0+\psi{\bf Z}_{L^2}\]
where ${\bf Z}_{L^2}$ is a finite rank operator mapping to $\mc{H}_{k,\beta}\subset L^2$.
For $f\in \mc{H}_{k,\beta}$, the expression of $\pl_{\alpha}^{j}\mathbf{R}_k(\alpha_0)f$ is explicit 
from \eqref{asymptRk}, and one directly checks by differentiating \eqref{asymptRk} in $\alpha$ that it is of the form (for $c<-2$)
\[ (\pl_{\alpha}^{n}\tilde{\mathbf{R}}(\alpha_0)f)(c)=\sum_{\la_j\leq \la_k}\sum_{m\leq n}\til{a}_{j,m}(\alpha,f)c^{m}
e^{-ic\sqrt{p_0^2-2\la_j}}+\til{G}(\alpha_0,f)\] 
for some $\til{a}_{j,m}(\alpha_0,f)\in \ker ({\bf P}-\la_j)$ and $\tilde{\chi}\til{G}(\alpha_0,f)\in \mc{H}_{k,\beta}$ satisfying ${\bf H}\tilde{\chi}\til{G}(\alpha_0,f)\in \mc{H}_{k,\beta}$. This implies that, in $c<-2$,  $w\in {\rm Ran}({\bf Z})$ is necessarily of the form 
\[w=\sum_{j\leq k}\sum_{m\leq N}b_{j,m}c^{m}
e^{-ic\sqrt{p_0^2-2\la_j}}+\hat{G}\]
for some $b_{j,m}\in\ker ({\bf P}-\la_j)$ and $\hat{G}\in \mc{H}_{k,\beta}$ with ${\bf H}\hat{G}\in \mc{H}_{k,\beta}$. Using that $\tilde{\chi}(c)e^{\gamma c}\Pi_k(Vw)\in e^{\beta c}L^2$, we see from the equation $(\mathbf{H}_0-\tfrac{Q^2+p_0^2}{2})\Pi_k(w)=-e^{\gamma c}\Pi_k(Vw)$ that 
\[ (\mathbf{H}_0-\tfrac{Q^2+p_0^2}{2})\Big(\sum_{\la_j\leq \la_k}\sum_{m\leq N}b_{j,m}c^{m}
e^{-ic\sqrt{p_0^2-2\la_j}}\Big)\Big|_{c\leq 0}\in e^{\beta c}L^2\]
and by using the explicit expression of ${\bf H}_0$, it is clear that necessarily 
$b_{j,m}=0$ for all $m\not=0$. Then we may apply Lemma \ref{boundarypairing} with $u_1=u_2=w$ to deduce that $b_{j,0}=0$, and therefore $w\in \mc{D}(\bf{H})$, which implies $w=0$ by Lemma \ref{embedded}.

It remains to show that $\alpha=Q\pm i\sqrt{2\lambda_j}$ is a pole of order  at most $1$. To simplify, we write the argument for $\alpha=Q$, the proof is the same for all $j$. The method is basically the same as in the proof of \cite[Proposition 6.28]{Mel}: the resolvent has Laurent expension ${\bf R}(\alpha)=(\alpha-Q)^{-2}{\bf Q}+(\alpha-Q)^{-1}{\bf R}'(\alpha)$ for some holomorphic operator ${\bf R}'(\alpha)$ near $\alpha=Q$ and ${\bf Q}$ has finite rank, then we also have $\|{\bf R}(\alpha)\phi\|_{L^2}\leq |Q-\alpha|^{-2}$ for $\alpha<Q$ and all $\phi\in \mc{C}$, thus we can deduce that 
\[ {\bf Q}\phi=\lim_{\alpha\to Q^-}(\alpha-Q)^2{\bf R}(\alpha)\phi.\]
The limit holds in $e^{-\delta\rho}L^2$ for all $\delta>0$ small, but the right hand side has actually a bounded $L^2$-norm, so ${\bf Q}\phi\in L^2$ and thus ${\rm Ran}({\bf Q})\subset L^2$. Since we also have ${\bf H}{\bf Q}=0$ from Laurent expanding $({\bf H}-2\Delta_\alpha){\bf R}(\alpha)={\rm Id}$ at $\alpha=Q$, we conclude that ${\bf Q}=0$ by using Lemma \ref{embedded}.
\qed

\subsubsection{The resolvent in the physical sheet on weighted spaces} 

We shall conclude this section on the resolvent of ${\bf H}$ by analyzing its boundedness on weighted spaces $e^{-\beta \rho}L^2$  in the half-plane $\{{\rm Re}(\alpha)<Q\}$. We recall that 
Lemma \ref{resolventweighted} was precisely proving such boundedness but the region of validity in $\alpha$  of this Lemma was not covering the whole physical-sheet, and in particular not the region close to the line ${\rm Re}(\alpha)=Q$. Just as in Lemma \ref{firstPoisson}, 
the main application of such boundedness on weighted spaces is to define the Poisson operator 
$\mc{P}_\ell(\alpha)$, and we aim to define it in a large connected region of 
$\{{\rm Re}(\alpha)\leq Q\}$ relating the probabilistic region and the line $\alpha \in Q+i\R$ corresponding to the $L^2$-spectrum of ${\bf H}$.

\begin{proposition}\label{physicalsheet} 
Let $\beta\in \R$ and ${\rm Re}(\alpha)<Q$, then the resolvent $\mathbf{R}(\alpha)$ of $\mathbf{H}$ extends as an analytic family of bounded operators 
\begin{align*}
\mathbf{R}(\alpha): e^{-\beta \rho}L^2(\R\times \Omega_\T)\to e^{-\beta \rho}\mc{D}(\mc{Q})  
\end{align*}
in the region ${\rm Re}(\alpha)<Q-|\beta|$, and it satisfies for each $\psi\in C^\infty(\R)\Cap L^\infty(\R)$ such that $\psi'\in L^\infty$ and ${\rm supp}(\psi)\subset (-\infty,A)$ for some $A\in \R$,
\[\mathbf{R}(\alpha)(1-\Pi_k)\psi: e^{-\beta \rho}\mc{D}'(\mc{Q})\to e^{-\beta \rho}\mc{D}(\mc{Q}).\]
\end{proposition}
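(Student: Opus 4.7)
My plan is to extend $\mathbf{R}(\alpha)$ from the probabilistic region to the full half-plane $\{{\rm Re}(\alpha) < Q - |\beta|\}$ by rerunning the parametrix construction used in the proof of Proposition~\ref{extensionresolvent}, this time on the Hilbert space $e^{-\beta \rho}L^2$. First, note that the real half-line $(-\infty, Q-|\beta|)$ is contained in the probabilistic region $\{{\rm Re}((\alpha-Q)^2) > \beta^2\}$, so Lemma \ref{resolventweighted} already gives $\mathbf{R}(\alpha): e^{-\beta\rho}L^2 \to e^{-\beta\rho}\mc{D}(\mc{Q})$ bounded analytic on a non-empty open subset; it is this family that we must continue analytically.

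The three ingredients of the parametrix all carry over to the weighted setting. For the large $\mathbf{P}$-mode piece, Part 2 of Lemma \ref{LemmaRk} gives $\mathbf{R}_k^\perp(\alpha): e^{-\beta\rho}\mc{D}'(\mc{Q}) \to e^{-\beta\rho}(L^2(\R^-; E_k^\perp)\cap \mc{D}(\mc{Q}))$ bounded analytic in $\{{\rm Re}((\alpha-Q)^2) > \beta^2 - 2\la_k + 1\}$, which, for $k$ sufficiently large, covers all of ${\rm Re}(\alpha) < Q - |\beta|$; the associated error $\mathbf{L}_k^\perp(\alpha)$ has norm $O(\la_k^{-1/2})$ and $\mathbf{K}_k^\perp(\alpha)$ is compact analytic. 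For the region $c \geq -1$, the weight $e^{-\beta\rho}$ is trivially $1$, so Lemma \ref{regimec>-1} applies verbatim. For the small (scattering) modes, Part 3 of Lemma \ref{modelres} provides $\mathbf{R}_k(\alpha): e^{-\beta\rho}L^2 \to e^{-\beta\rho}(L^2(\R^-;E_k)\cap \mc{D}(\mc{Q}))$ bounded analytic in exactly the region ${\rm Im}(p) > |\beta|$, i.e., ${\rm Re}(\alpha) < Q - |\beta|$, with compact analytic error $\mathbf{K}_{k,1}(\alpha), \mathbf{K}_{k,2}(\alpha)$. Assembling these pieces produces $\tilde{\mathbf{R}}(\alpha): e^{-\beta\rho}L^2 \to e^{-\beta\rho}\mc{D}(\mc{Q})$ bounded analytic, satisfying $(\mathbf{H}-2\Delta_\alpha)\tilde{\mathbf{R}}(\alpha) = 1 + \tilde{\mathbf{L}}_k(\alpha) + \tilde{\mathbf{K}}_k(\alpha)$ with $\|\tilde{\mathbf{L}}_k(\alpha)^2\|_{\mc{L}(e^{-\beta\rho}L^2)} < 1/2$ for $k$ large and $\tilde{\mathbf{K}}_k(\alpha)$ compact analytic. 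A Neumann series absorbs $\tilde{\mathbf{L}}_k$, and the analytic Fredholm theorem, together with an auxiliary finite-rank correction $\mathbf{W}$ as in the proof of Proposition \ref{extensionresolvent} built using the known invertibility on $L^2$ for $\alpha_0 \in (-\infty, Q-|\beta|)$, yields a meromorphic inverse on the full region, which by uniqueness in the probabilistic region coincides with $\mathbf{R}(\alpha)$.

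The hard step is ruling out poles inside $\{{\rm Re}(\alpha) < Q-|\beta|\}$. A pole at $\alpha_0 = Q + ip_0$ with ${\rm Im}(p_0) > |\beta|$ would produce a non-zero $w \in e^{-\beta\rho}\mc{D}(\mc{Q})$ satisfying $(\mathbf{H}-2\Delta_{\alpha_0})w = 0$. By Lemma \ref{racine} one has ${\rm Im}\sqrt{p_0^2 - 2\la_j} \geq {\rm Im}(p_0) > |\beta|$ for every $j$. Decomposing $w$ on the eigenspaces of $\mathbf{P}$ and performing an ODE analysis in $c \to -\infty$ (where $e^{\gamma c}Vw$ is a subleading forcing), each mode must be asymptotic to a linear combination of $e^{\pm ic\sqrt{p_0^2-2\la_j}}$; the growing solution blows up at rate $e^{-{\rm Im}\sqrt{p_0^2-2\la_j}\,c}$, strictly faster than the weight $e^{-\beta\rho} \sim e^{-\beta c}$ tolerates, so its coefficient must vanish, and a boundary pairing argument in the spirit of Lemma \ref{boundarypairing} (exactly as in the end of the proof of Proposition \ref{extensionresolvent}, Part 3) forces the coefficients of the decaying modes to vanish too. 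Hence $w \in L^2$, and Lemma \ref{embedded} gives $w=0$. This is the main obstacle: the strict inequality ${\rm Im}\sqrt{p_0^2 - 2\la_j} > |\beta|$, hence the strict inequality ${\rm Re}(\alpha) < Q - |\beta|$ in the statement, is precisely what rules out the growing asymptotic mode. Finally, the refined bound $\mathbf{R}(\alpha)(1-\Pi_k)\psi: e^{-\beta \rho}\mc{D}'(\mc{Q}) \to e^{-\beta \rho}\mc{D}(\mc{Q})$ follows from the same parametrix by observing that $(1-\Pi_k)\psi$ is bounded $e^{-\beta \rho}\mc{D}'(\mc{Q}) \to e^{-\beta \rho}\mc{D}'(\mc{Q})$ via Lemma \ref{chiPikV} (using the support and boundedness assumptions on $\psi$), together with the enhanced mapping properties of the parametrix pieces already recorded in \eqref{boundonD'(Q)prop3.10}.
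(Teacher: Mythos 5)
Your parametrix construction mirrors the paper's and is essentially correct in that phase; the point of divergence is how one rules out poles, and there you take a genuinely more intricate route than the paper does.

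On the parametrix side, one small detail you elide: a single $k$ will not cover all of $\{{\rm Re}(\alpha)<Q-|\beta|\}$. The bound from part 2 of Lemma~\ref{LemmaRk} is $\|\mathbf{L}_k^\perp(\alpha)\|_{\mc{L}(e^{-\beta\rho}L^2)}\lesssim (1+|\beta|)({\rm Re}((\alpha-Q)^2)+2\la_k-\beta^2)^{-1/2}$, and ${\rm Re}((\alpha-Q)^2)$ can be arbitrarily negative as $|{\rm Im}(\alpha)|\to\infty$. So the paper works on an exhaustion $\{|Q-\alpha|<A_0,\ {\rm Re}(\alpha)<Q-|\beta|\}$ with $k=k(A_0)$ chosen via the condition \eqref{dingue}, and then lets $A_0\to\infty$. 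Your statement ``for $k$ sufficiently large, covers all of ${\rm Re}(\alpha)<Q-|\beta|$'' should be replaced by this compact-exhaustion argument; analyticity is a local property so there is no obstruction, but as written the claim is not literally true.

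On pole removal, the paper uses a clean soft argument: for ${\rm Re}(\alpha)<Q$, $2\Delta_\alpha\notin[Q^2/2,\infty)$, so by the spectral theorem $\mathbf{R}(\alpha)$ is already a holomorphic bounded family on $L^2$. Since $e^{|\beta|\rho}L^2$ is dense in both $L^2$ and $e^{-\beta\rho}L^2$, the meromorphic weighted family produced by Fredholm theory must coincide with the $L^2$-holomorphic one on the dense subspace, and hence has no poles; a pole would force a nonzero finite-rank residue that must vanish on a dense set. No $c\to-\infty$ asymptotic analysis is needed at all. Your alternative --- ruling out a putative eigenfunction $w$ by inspecting its asymptotic modes --- can be made to work, but as written it conflates two regimes. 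A boundary-pairing identity in the style of Lemma~\ref{boundarypairing} is a tool for the limiting case $p\in\R$ (purely oscillatory modes, where energy flux is the distinguishing criterion); when ${\rm Im}(p_0)>|\beta|>0$ the modes $e^{\pm ic\sqrt{p_0^2-2\la_j}}$ are genuinely growing/decaying, and the argument is purely about growth rates: the growing modes violate membership in $e^{-\beta\rho}L^2$ because ${\rm Im}\sqrt{p_0^2-2\la_j}\geq{\rm Im}(p_0)>|\beta|$ (by Lemma~\ref{racine}), and once those coefficients vanish the decaying modes place $w$ in $L^2$, whereupon Lemma~\ref{embedded} gives $w=0$. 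The citation of Lemma~\ref{boundarypairing} is thus superfluous and slightly misplaced; the route via growth rates alone closes, but the paper's density argument is simpler, avoids any assertion about the precise form of the $c\to-\infty$ expansion of a putative pole residue (which itself requires justification for parameters off the probabilistic region), and sidesteps the mode analysis entirely.
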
 
\begin{proof} We proceed as in the proof of Proposition \ref{extensionresolvent}: 
we let for ${\rm Re}(\alpha)<Q$
\[ \tilde{{\bf R}}(\alpha):=\tilde{\chi}\mathbf{R}_k^\perp(\alpha)\chi+(1-\hat{\chi})\mathbf{R}_+(1-\chi)+\tilde{\chi}\mathbf{R}_k(\alpha)\chi-\check{\chi}{\bf R}_k^\perp(\alpha){\bf K}_{k,2}(\alpha)\]
and we get 
\[ (\mathbf{H}-\tfrac{Q^2+p^2}{2})\tilde{{\bf R}}(\alpha)={\rm Id}+\tilde{{\bf K}}_k(\la)+\tilde{{\bf L}}_k(\alpha)\]
where we used the operators of the proof of Proposition \ref{extensionresolvent} (see \eqref{def:tildeKk} and \eqref{def:tildeLk}). Now we take ${\rm Re}(\alpha)<Q$ and $|Q-\alpha|<A_0$ for some fixed constant $A_0>0$ that can be chosen arbitrarily large, and we let $k>0$ large enough so that 
\begin{equation}\label{dingue}
A_0^2 +1<\min (\frac{\la_k^{1/2}}{16(1+C_2)},\la_k^{1/4}), \quad \la_k> (16^2C_{1}^2(1+C_{2}^2)+1)(|\beta|+1)^2,
\end{equation}
where the constant $C_{1}$, $C_{2}$ above are the constants respectively given in Lemma \ref{LemmaRk} and Lemma \ref{regimec>-1}. 
The conditions in \eqref{dingue} ensures both the condition ${\rm Re}((\alpha-Q)^2)>\beta^2-2\lambda_k+1$ of Lemma  \ref{LemmaRk} is satisfied and  the operator 
$\tilde{\chi}\mathbf{R}_k^\perp(\alpha)\chi:e^{-\beta \rho} \mc{D}'(\mc{Q})\to e^{-\beta \rho}(L^2(\R^-;E_k^\perp)\cap \mc{D}(\mc{Q}))$ is a bounded holomorphic family,  and the norm estimate appearing in 2) of Lemma  \ref{LemmaRk} gives
 \begin{equation} \label{borne1}
 \| \mathbf{L}_k^\perp(\alpha)\|_{\mc{L}(e^{-\beta \rho}L^2)}\leq \frac{C_{1}(1+|\beta|)}{\sqrt{{\rm Re}((\alpha-Q)^2)+2\la_k-\beta^2}}\leq \frac{1}{16(1+C_{2})}.
  \end{equation}
The condition  $|\beta|<Q-{\rm Re}(\alpha)$ (equivalent to ${\rm Im}(p)>|\beta|$) makes sure that we can apply 3)  of Lemma \ref{modelres}: in particular  the operator $\mathbf{R}_k(\alpha): e^{-\beta \rho}L^2(\R^-\times \Omega_\T)\to e^{-\beta \rho}(L^2(\R^-;E_k)\cap \mc{D}(\mc{Q}))$ is a bounded holomorphic family.
Also, Lemma \ref{regimec>-1} ensures that $(1-\hat{\chi})\mathbf{R}_+(1-\chi): e^{-\beta \rho}\mc{D}'(\mc{Q})\to e^{-\beta \rho}\mc{D}(\mc{Q})$ is a bounded holomorphic family (note that both cutoff functions $(1-\hat{\chi})$ and $(1-\chi)$ kill the $c\to -\infty$ behaviour and this is why Lemma \ref{regimec>-1} extends to weighted spaces $e^{-\beta \rho}L^2(\R\times \Omega_\T)$). Also the first condition in \eqref{dingue} ensures the norm estimate (as given by \eqref{boundbis})
 \begin{equation} \label{borne2}
 \|\mathbf{L}_{+,k}(\alpha)\|_{\mc{L}(L^2)}\leq  2 C_2,\quad  \|\mathbf{L}_{+,k}(\alpha)^2\|_{\mc{L}(L^2)}\leq  \frac{1}{8}.
 \end{equation}
As a consequence
\[ \tilde{\mathbf{R}}(\alpha): e^{-\beta \rho}L^2\to e^{-\beta \rho}\mc{D}(\mc{Q})\]
is bounded and holomorphic in $U:=\{\alpha\in \C\,|\,  |Q-\alpha|<A_0, {\rm Re}(\alpha)<Q-|\beta|\}$.
Furthermore \eqref{borne1} and \eqref{borne2} provide the estimate
\[ \|(\mathbf{L}_{+,k}(\alpha)+\mathbf{L}_k^\perp(\alpha))^2\|_{\mc{L}(e^{-\beta \rho}L^2)}<1/2.\]
Moreover, 2) of Lemma \ref{LemmaRk}, Lemma \ref{regimec>-1} and 3) of Lemma \ref{modelres} also give that 
$\tilde{\mathbf{K}}_k(\alpha)$ is compact on the Hilbert space 
$e^{-\beta \rho}L^2(\R\times \Omega_\T)$. Exactly the  same  argument as in the proof of Proposition \ref{extensionresolvent} gives that 
\[ (\mathbf{H}-\frac{Q^2+p^2}{2})\tilde{{\bf R}}(\alpha)(1+\mathbf{T}_k(\alpha))=1+ \tilde{{\bf K}}_k(\alpha)(1+\mathbf{T}_k(\alpha))\]
for some $\mathbf{T}_k$ bounded holomorphic on $e^{-\beta \rho}L^2$ in $U$. Since  by Lemma \ref{resolventweighted} we know that $(\mathbf{H}-2\Delta_\alpha)$ is invertible on $e^{-\beta \rho}L^2$
for some $\alpha_0\in U$, one can always add a finite rank operator $\mathbf{W}: e^{-\beta \rho}L^2\to e^{-\beta \rho}\mc{D}(\mathbf{H})$, so that 
\[(\mathbf{H}-2\Delta_\alpha)(\mathbf{\til{R}}(\alpha)(1+\mathbf{T}_k(\alpha))+\mathbf{W})=1+\mathbf{K}(\alpha)\]
for some compact remainder $\mathbf{K}(\alpha)$ on $e^{-\beta \rho}L^2$, analytic in $\alpha\in U$ in the desired region and $1+\mathbf{K}(\alpha)$ being invertible for $\alpha=\alpha_0\in U$. This implies by analytic Fredholm theorem that 
\[ \mathbf{R}(\alpha)=(\tilde{\mathbf{R}}(\alpha)(1+\mathbf{T}_k(\alpha))+\mathbf{W})(1+\mathbf{K}(\alpha))^{-1}: e^{-\beta \rho}L^2(\R\times \Omega_\T)\to e^{-\beta \rho}\mc{D}(\mc{Q})\]
is meromorphic for $\alpha\in U$. Now, using the density of the embeddings 
$e^{|\beta| \rho}L^2\subset L^2\subset e^{-|\beta| \rho}L^2$ and using that $\mathbf{R}(\alpha)$ is holomorphic in $U$ as a bounded operator on $L^2$, it is direct to check that 
$\mathbf{R}(\alpha):e^{-\beta \rho}L^2\to e^{-\beta \rho}\mc{D}(\mc{Q})$ is analytic in $U$. Since $A_0$ (and thus $U$) can be chosen arbitrarily large as long as the constraint ${\rm Re}(\alpha)<Q-|\beta|$ is satisfied, we obtain our desired result. To prove that ${\bf R}(\alpha)(1-\Pi_k)\psi$ maps 
$e^{-\beta \rho}\mc{D}'(\mc{Q})$ to $e^{-\beta \rho}\mc{D}(\mc{Q})$, we proceed as in the proof of Proposition \ref{extensionresolvent}  and write
\[ {\bf R}(\alpha)=\til{{\bf R}}(\alpha)-{\bf R}(\alpha)(\tilde{{\bf K}}_k(\alpha)+\tilde{{\bf L}}_k(\alpha)).\]
We have seen that $\til{{\bf R}}(\alpha)(1-\Pi_k)\psi:e^{-\beta \rho}\mc{D}'(\mc{Q})\to e^{-\beta \rho}\mc{D}(\mc{Q})$. The same arguments (just as in the proof of Proposition \ref{extensionresolvent}) also prove that that operators 
$\tilde{{\bf K}}_k(\alpha),\tilde{{\bf L}}_k(\alpha)$ are bounded as operators 
$e^{-\beta \rho}\mc{D}'(\mc{Q})\to e^{-\beta \rho}L^2$, thus we obtain that ${\bf R}(\alpha)(1-\Pi_k)\psi:e^{-\beta \rho}\mc{D}'(\mc{Q})\to e^{-\beta \rho}\mc{D}(\mc{Q})$ is bounded.
\end{proof}

\subsection{The Poisson operator} 

We have seen in Lemma \ref{firstPoisson} that it is possible to construct a family of Poisson operators $\mc{P}_\ell(\alpha)$ in what we called the \emph{probabilistic region}, which contains a half line $(-\infty,Q-c_\ell)$ for some $c_\ell\geq 0$ depending on $\ell$. The construction was using the resolvent acting on weighted $L^2$-spaces.  In this section, we will use Proposition \ref{extensionresolvent} and Proposition \ref{physicalsheet} to prove that the Poisson operators 
extend holomorphically in $\alpha$ in a connected region of ${\rm Re}(\alpha)\leq Q$ containing the probabilistic region and the line $Q+i\R$.

\subsubsection{Regime close to the continuous spectrum of $H$}

We first start with a technical lemma that allows to define the Poisson operator on the continuous spectrum $Q+i\R$:
\begin{lemma}\label{boundarypairing}
Let $p\in \R$ and for $m=1,2$,   let $u_m\in e^{-\delta \rho}L^2(\R\times\Omega_\T)$ with $\delta>0$ such that:\\
1) for each $\theta\in C^\infty(\R;[0,1])$ supported in $(a,+\infty)$ for some $a\in\R$ then $\theta u_m\in \mc{D}(\mc{Q})$\\
2)  $u_m$ satisfies 
\[ (\mathbf{H}-\tfrac{Q^2+p^2}{2})u_m=r_m \in e^{\delta \rho}L^2(\R\times \Omega_\T).\]
Set $k=\max\{j\geq0\, |\, 2\la_j\leq p^2\}$. Then $u_m$ has asymptotic behaviour
\begin{equation}\label{decompui}
u_m=\sum_{j,2 \la_j\leq p^2}\Big(a_m^j 
e^{-ic\sqrt{p^2-2\la_j}}+b_m^j 
e^{ic\sqrt{p^2-2\la_j}}\Big)+G_m
\end{equation}
with $a_m^j,b_m^j\in \ker (\mathbf{P}-\la_j)$, and both $G_m,\pl_cG_m\in e^{\delta \rho}L^2(\R\times \Omega_\T)+L^2(\Omega_\T;E_k^\perp)$.
Then we have 
\[\cjg u_1\,|\,r_2\cjd-\cjg r_1\,|\,u_2\cjd=i \sum_{j, 2\la_j\leq p^2}\sqrt{p^2-2\la_j}\Big(\cjg a_1^j\,|\,a_2^j\cjd_{L^2(\Omega_\T)}-
\cjg b_1^j\,|\,b_2^j\cjd_{L^2(\Omega_\T)}\Big).\]
\end{lemma}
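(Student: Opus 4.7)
My plan is to derive the boundary pairing by a cutoff--and--Leibniz computation that localizes away from $c=-\infty$, and then extract the leading contribution from the asymptotics \eqref{decompui}. Fix $\chi\in C^\infty(\R;[0,1])$ with $\chi=0$ on $(-\infty,0]$ and $\chi=1$ on $[1,+\infty)$, and set $\chi_R(c):=\chi((c+2R)/R)$, so that $\chi_R=1$ on $[-R,+\infty)$, ${\rm supp}\,\chi_R\subset[-2R,+\infty)$, and $\chi_R'$ is supported in $[-2R,-R]$ with $\|\chi_R'\|_\infty=O(R^{-1})$. Introduce also $\hat\chi_R\in C^\infty(\R;[0,1])$ equal to $1$ on ${\rm supp}\,\chi_R$ and supported in $[-2R-1,+\infty)$, so that ${\rm supp}\,\hat\chi_R'\cap{\rm supp}\,\chi_R=\emptyset$. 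By hypothesis 1), $\chi_Ru_m,\hat\chi_Ru_m\in\mc{D}(\mc{Q})$. Using $({\bf H}-\tfrac{Q^2+p^2}{2})u_m=r_m$ tested against compactly-supported elements of $\mc{D}(\mc{Q})$, a direct Leibniz expansion of $\mc{Q}(\chi_Ru_1,\hat\chi_Ru_2)-\mc{Q}(\hat\chi_Ru_1,\chi_Ru_2)$ annihilates the $\tfrac{Q^2}{2}+{\bf P}+e^{\gamma c}V$ contributions (these commute with multiplication by functions of $c$), and the kinetic part collapses --- using $\chi_R'\hat\chi_R=\chi_R'$ and $\hat\chi_R'\chi_R=0$ --- to
\[ \cjg \chi_Ru_1\,|\,r_2\cjd_2-\cjg r_1\,|\,\chi_Ru_2\cjd_2=\tfrac{1}{2}\int\chi_R'(c)\,\E\big[u_1\pl_c\bar u_2-\pl_cu_1\,\bar u_2\big]\,dc.\]

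As $R\to\infty$, the left-hand side tends to $\cjg u_1|r_2\cjd-\cjg r_1|u_2\cjd$ by dominated convergence, using the duality pairing between $e^{-\delta\rho}L^2$ and $e^{\delta\rho}L^2$. For the right-hand side, I substitute $u_m=v_m^-+G_m$ on ${\rm supp}\,\chi_R'\subset[-2R,-R]$ (valid for $R$ large), writing $v_m^-(c):=\sum_{j\leq k}(a_m^j e^{-ic\nu_j}+b_m^je^{ic\nu_j})\in E_k$ with $\nu_j:=\sqrt{p^2-2\la_j}\in\R^+$. The leading bilinear $v_1^-\pl_c\bar v_2^--\pl_cv_1^-\bar v_2^-$ expands into Fourier modes of frequencies $\pm(\nu_j\pm\nu_k)$: the diagonal $j=k$ terms are non-oscillating and contribute $2i\nu_j(a_1^j\bar a_2^j-b_1^j\bar b_2^j)$, whereas the off-diagonal terms carry a non-vanishing frequency and their integrals against $\chi_R'$ decay rapidly because $\int\chi_R'(c)e^{ic\omega}dc=e^{-2iR\omega}\widehat{\chi'}(-R\omega)=O(R^{-N})$ for any $N$ when $\omega\neq 0$. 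Since $\int\chi_R'\,dc=1$, the diagonal part converges in the limit to precisely $i\sum_{2\la_j\le p^2}\sqrt{p^2-2\la_j}(\cjg a_1^j|a_2^j\cjd_{L^2(\Omega_\T)}-\cjg b_1^j|b_2^j\cjd_{L^2(\Omega_\T)})$.

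It remains to show the three cross-type contributions $v_1^-\cdot\pl_cG_2$, $G_1\cdot\pl_cv_2^-$ and $G_1\cdot\pl_cG_2$ vanish as $R\to\infty$. Writing $G_m=G_m^\delta+G_m^\perp$ and $\pl_cG_m=(\pl_cG_m)^\delta+(\pl_cG_m)^\perp$ with $\delta$-parts in $e^{\delta\rho}L^2$ and $\perp$-parts in $L^2(\Omega_\T;E_k^\perp)$, the mixed $v_m^-\cdot(\pl_cG_n)^\perp$ and $G_m^\perp\cdot\pl_cv_n^-$ terms are killed \emph{pointwise in $c$} by the orthogonality $E_k\perp E_k^\perp$ in $L^2(\Omega_\T)$, since $v_m^-(c)\in E_k$ for every $c$; the $v_m^-\cdot(\pl_cG_n)^\delta$ and $G_m^\delta\cdot\pl_cv_n^-$ cross terms contribute $O(e^{-\delta R})$ from the exponential weight on $[-2R,-R]$; and the $G_1\cdot\pl_cG_2$ term is $O(R^{-1})$ (respectively $O(e^{-\delta R})$ for the $\delta$-components) after Cauchy--Schwarz, using $\|\chi_R'\|_\infty=O(R^{-1})$. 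The main obstacle will be this bookkeeping: naively neither the weight $e^{\delta\rho}$ nor the finite-codimensional projection onto $E_k^\perp$ alone gives enough control, and cancelling the $v_m^-\cdot\pl_cG_n$ cross terms forces one to split the remainder along the two summands in \eqref{decompui} and use the orthogonality and the exponential decay in tandem.
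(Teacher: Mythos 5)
Your proposal is correct and follows essentially the same route as the paper: a Green's‑formula computation with nested cutoffs localizing near $c\to-\infty$ reduces the left‑hand side to a commutator boundary term $\tfrac12\int\chi_R'\E[u_1\pl_c\bar u_2-\pl_cu_1\bar u_2]\,dc$, and splitting $u_m=v_m^-+G_m$ via \eqref{decompui} then isolates the asymptotic contribution. The only difference is cosmetic: the paper evaluates the leading term by noting that $\cjg\pl_cv_1^-,v_2^-\cjd_{L^2(\Omega_\T)}-\cjg v_1^-,\pl_cv_2^-\cjd_{L^2(\Omega_\T)}$ is a $c$-independent Wronskian (since $({\bf H}^0-\tfrac{Q^2+p^2}{2})v_m^-=0$), which sidesteps your oscillatory-decay estimate — valid but superfluous, because the $e^{\pm 2ic\nu_j}$ terms cancel identically in the Wronskian and the $j\neq j'$ terms already vanish by orthogonality of the $\ker(\mathbf{P}-\la_j)$ in $L^2(\Omega_\T)$.
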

\begin{proof} Let $\theta\in C^\infty(\R)$ be non-negative 
satisfying $\theta_{T}=1$ on $[-T,\infty)$ and ${\rm supp}(\theta_{T})\subset [-T-\eps,\infty)$
where $T>0$ is a large parameter and $\eps>0$ small, and let $\til{\theta}_{T}(\cdot)=\theta_T(\cdot\,+1)$. In particular we have $\til{\theta}_{T}\theta_{T}=\theta_{T}$. First, $\til{\theta}_{T} u_m\in H^1(\R; L^2(\Omega_\T))$  satisfies 
\[(\mathbf{H}- \tfrac{Q^2+p^2}{2})(\til{\theta}_{T} u_m)=\til{\theta}_{T}r_m-\tfrac{1}{2}[\pl_c^2,\til{\theta}_{T}]u_m \in L^2(\R\times \Omega_\T)\] 
thus $\til{\theta}_Tu_m\in \mc{D}(\mathbf{H})$ (we used that $[\pl_c^2,\til{\theta}_{T}]$ is a first order differential operator with compactly supported coefficients). This implies, using $[\mathbf{H},\til{\theta}_T]\theta_T=0=\theta_T[\mathbf{H},\til{\theta}_T]$, that 
\begin{equation}\label{Greensformula}
\begin{split}
\cjg u_1,r_2\cjd-\cjg r_1,u_2\cjd = & \lim_{T\to  \infty}
\cjg \theta_{T} u_1, \mathbf{H}(\til{\theta}_{T}u_2)\cjd-\cjg \mathbf{H}(\til{\theta}_{T}u_1),\theta_{T} u_2\cjd \\ =
 & -\lim_{T\to  \infty}\tfrac{1}{2}
\cjg [\pl_c^2,\theta_{T}] u_1, u_2\cjd. 
\end{split}\end{equation}
We write $u_m=u_m^0+G_m$ by using \eqref{decompui}. Then we claim that, as $T\to \infty$,
\[ |\cjg [\pl_c^2,\theta_{T}] u_1^0, G_2\cjd|+|\cjg [\pl_c^2,\theta_{T}] G_1, u_2\cjd|\to 0.\]
Indeed, we have $(|u_1^0|+|\pl_cu_1^0|)G_2\in L^1(\R\times \Omega_\T)$ and $(|G_1|+|\pl_cG_1|)u_2\in L^1(\R\times \Omega_\T)$, and the support of $[\pl_c^2,\theta_{T}]$ is contained in $[-T-1,-T]$. We are left in \eqref{Greensformula} to study the limit of $\cjg [\pl_c^2,\theta_{T}] u_1^0, u_2^0\cjd$.
But now we have $[\pl_c^2,\theta_{T}] u^0_1=\theta''_{T}u^0_1+2\theta'_{T}\pl_cu^0_1$ and for fixed $T>0$ 
it is direct to check, using integration by parts and the fact that $(\mathbf{H}_0-\tfrac{Q^2+p^2}{2})u_m^0=0$ that
\[\begin{split} 
\cjg [\pl_c^2,\theta_{T}] u_1^0, u_2^0\cjd=& \int_{-T-1}^{-T}\pl_c\big(\theta_T\cjg \pl_cu^0_1,u^0_2\cjd_{L^2(\Omega_\T)}-\theta_T\cjg u^0_1,\pl_cu^0_2\cjd_{L^2(\Omega_\T)}\big)dc\\
=& \cjg \pl_cu_1^0(-T),u_2^0(-T)\cjd_{L^2(\Omega_\T)}-\cjg u_1^0(-T),\pl_cu_2^0(-T)\cjd_{L^2(\Omega_\T)}.
\end{split}\]
A direct computation gives that this is equal to 
\[ 2i \sum_{j, \la_j\leq p^2}\sqrt{p^2-2\la_j}\Big(
\cjg b_1^j,b_2^j\cjd_{L^2(\Omega_\T)}-\cjg a_1^j,a_2^j\cjd_{L^2(\Omega_\T)}\Big).\]  
This completes the proof. 
\end{proof}

Now we extend the construction of the Poisson operator \eqref{definPellproba} in a neighborhood of the line spectrum $\alpha\in Q+i\R$.
\begin{proposition}\label{poissonprop}
Let $0<\beta<\gamma/2$ and $\ell\in\N$.
Then there is an analytic family of operators $\mc{P}_\ell(\alpha)$  
\[\mc{P}_\ell(\alpha): E_\ell\to e^{-\beta\rho}\mc{D}(\mc{Q})\]
in the region 
\begin{equation}\label{regionvalide}
\Big\{\alpha\in \C\,\Big|\, {\rm Re}(\alpha)< Q, {\rm Im}\sqrt{p^2-2\la_\ell}<\beta\Big\}\cup \Big\{Q+ip\in Q+i\R\, \Big| \, |p|\in \bigcup_{j\geq \ell} (\sqrt{2\la_j},\sqrt{2\la_{j+1}})\Big\},
\end{equation}
continous at each $Q\pm i\sqrt{2\la_j}$ for $j\geq \ell$, satisfying $(\mathbf{H}-\tfrac{Q^2+p^2}{2})\mc{P}_\ell(\alpha)F=0$ and 
\begin{equation}\label{expansionP} 
\mc{P}_\ell(\alpha)F=\sum_{j\leq \ell}\Big(F^-_je^{ic\sqrt{p^2-2\la_j}}+F_j^+(\alpha)
e^{-ic\sqrt{p^2-2\la_j}}\Big)+G_\ell(\alpha,F)
\end{equation}
with $F^-_j=\Pi_{\ker(\mathbf{P}-\la_j)}F$, $F_j^+(\alpha)\in \ker (\mathbf{P}-\la_j)$, and 
$G_\ell(\alpha,F),\partial_c G_\ell(\alpha,F)\in e^{\frac{\beta}{2}\rho(c)}L^2(\R\times \Omega_\T)+L^2(\R;  E_\ell^{\perp})$. In particular, $\mc{P}_\ell(\alpha)\in e^{-({\rm Im}(\sqrt{p^2-\la_\ell})+\eps)\rho}\mc{D}(\mc{Q})$ for all $\eps>0$.
Moreover,  
for each $\theta\in C_c^\infty(\R)$, one has $\theta \mc{P}_\ell(\alpha)F\in \mc{D}(\mathbf{H})$. Such a solution $u\in e^{-\beta\rho}L^2(\R\times\Omega_\T)$ to the equation $(\mathbf{H}-\tfrac{Q^2+p^2}{2})u=0$ with the asymptotic expansion \eqref{expansionP} is unique.
The operator $\mc{P}_\ell(\alpha)$ admits a meromorphic extension to the region  
\begin{equation}\label{regiondextension}
\Big\{\alpha=Q+ip \in \Sigma\, |\, \forall j \in\N \cup\{0\}, {\rm Im}\sqrt{p^2-2\la_j}\in (\beta/2-\gamma,\beta/2)\Big\}\end{equation}
and $\mc{P}_\ell(\alpha)F$ satisfies \eqref{expansionP} in that region. Finally, $F_j^+(\alpha)$ depends meromorphically on $\alpha$ in the region above.  
\end{proposition}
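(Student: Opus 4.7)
The plan is to build $\mc{P}_\ell(\alpha)$ throughout \eqref{regionvalide} by the same explicit formula used in Lemma \ref{firstPoisson},
\[ \mc{P}_\ell(\alpha)F := \chi F_-(\alpha) - \mathbf{R}(\alpha)\big((\mathbf{H}-2\Delta_\alpha)(\chi F_-(\alpha))\big), \quad F_-(\alpha) = \sum_{j\leq \ell} F_j\, e^{ic\sqrt{p^2-2\la_j}}, \]
and to enlarge its domain of analyticity by feeding it the meromorphic continuation of $\mathbf{R}(\alpha)$ supplied by Propositions \ref{extensionresolvent} and \ref{physicalsheet}. The source term splits as $-\tfrac{1}{2}\chi'' F_-(\alpha) - \chi' \pl_c F_-(\alpha) + \chi e^{\gamma c}V F_-(\alpha)$; the first two pieces have compact support in $c$ and take values in $E_\ell$, while Lemma \ref{chiPikV} together with the pointwise bound $\|F_-(\alpha)(c)\|_{L^2(\Omega_\T)}\leq C e^{-{\rm Im}(\sqrt{p^2-2\la_\ell})c}$ places the potential term in $e^{\beta'\rho}\mc{D}'(\mc{Q})$ for every $\beta' < \gamma$, the factor $e^{\gamma c}$ swallowing the growth of $F_-(\alpha)$ at $-\infty$.

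For $\alpha$ in the open subset of the physical sheet picked out by ${\rm Im}\sqrt{p^2-2\la_\ell}<\beta$, the monotonicity of Lemma \ref{racine} upgrades this to all $j\leq \ell$, so $\chi F_-(\alpha)\in e^{-\beta\rho}\mc{D}(\mc{Q})$ and the source term lies in $e^{-\beta\rho}\mc{D}'(\mc{Q})$; Proposition \ref{physicalsheet} (in the subregion where ${\rm Re}(\alpha)<Q-|\beta|$) combined with Proposition \ref{extensionresolvent} (applied locally for $\alpha$ closer to $Q+i\R$, with $k$ chosen so that $|\pi(\alpha)-Q|^2\leq \la_k^{1/4}$) then yields analyticity of $\mc{P}_\ell(\alpha)$ there. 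For $\alpha$ on the spectral line away from thresholds, one fixes $k\geq \ell$ with $|\pi(\alpha)-Q|^2\leq \la_k^{1/4}$, places the source term in $e^{\beta\rho}\mc{D}'(\mc{Q})$ with its $\Pi_k$-projection in $e^{\beta\rho}L^2(\R;E_k)$, and applies the meromorphic extension from Proposition \ref{extensionresolvent} to produce a continuous boundary value at $Q\pm i\sqrt{2\la_j}$, $j\geq\ell$. The various constructions agree on their overlaps since all versions of $\mathbf{R}(\alpha)$ are analytic continuations of the original $L^2$ resolvent.

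The asymptotic expansion \eqref{expansionP} is produced from two ingredients: the term $\chi F_-(\alpha)$ directly contributes the incoming part $\sum_{j\leq\ell} F_j e^{ic\sqrt{p^2-2\la_j}}$ for $c$ sufficiently negative, while the correction $-\mathbf{R}(\alpha)(\cdots)$ generates an outgoing piece $\sum_{j\leq\ell} F_j^+(\alpha)e^{-ic\sqrt{p^2-2\la_j}}$ plus an $L^2$-type remainder, directly from the expansion \eqref{asymptoticRf} of Proposition \ref{extensionresolvent}; the coefficients $F_j^+(\alpha)$ inherit the meromorphy of the $a_j(\alpha,f)$ there. The local regularity $\theta \mc{P}_\ell(\alpha)F \in \mc{D}(\mathbf{H})$ for $\theta \in C_c^\infty(\R)$ follows from the eigenvalue equation $(\mathbf{H}-2\Delta_\alpha)\mc{P}_\ell(\alpha)F=0$ together with standard elliptic regularity in the $c$-variable.

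For uniqueness, I would apply Lemma \ref{boundarypairing} to the difference $u$ of two putative solutions: $u$ carries only outgoing waves and the boundary pairing identity (with $u_1=u_2=u$, $r_1=r_2=0$) forces $\sum_{j}\sqrt{p^2-2\la_j}\|b^j\|^2_{L^2(\Omega_\T)} = 0$ on the spectral line (the off-line case being automatic since outgoing waves then decay), hence $b^j=0$ whenever $\sqrt{p^2-2\la_j}\neq 0$, so that $u\in \mc{D}(\mathbf{H})$ is an $L^2$-eigenfunction and $u=0$ by Lemma \ref{embedded}. The meromorphic continuation to \eqref{regiondextension} then transcribes the meromorphic continuation of $\mathbf{R}(\alpha)$ on $\Sigma$ from Proposition \ref{extensionresolvent}, with $k$ enlarged as $|p|$ grows. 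The main obstacle I foresee is the bookkeeping of weighted spaces on both sides of the resolvent identity: $e^{ic\sqrt{p^2-2\la_j}}$ changes exponential type as $\alpha$ varies across $\Sigma$, and one must match these weights precisely to the ranges of Propositions \ref{extensionresolvent} and \ref{physicalsheet}, especially near the threshold points $Q\pm i\sqrt{2\la_j}$ where continuity but not analyticity is claimed and the branches of $\sqrt{p^2-2\la_j}$ together with the projector $\Pi_k$ play a delicate role.
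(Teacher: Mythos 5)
Your proposal is correct and mirrors the paper's own proof: same formula $\mc{P}_\ell(\alpha)F = \chi F_-(\alpha) - \mathbf{R}(\alpha)(\mathbf{H}-2\Delta_\alpha)(\chi F_-(\alpha))$, same use of Lemma \ref{chiPikV} to place the source term in the right weighted spaces, same invocation of the continued resolvent from Proposition \ref{extensionresolvent} together with its expansion \eqref{asymptoticRf}, and the same uniqueness mechanism (boundary pairing on the line, Lemma \ref{embedded} off it). The only cosmetic discrepancy is that your labels for the incoming/outgoing coefficients in Lemma \ref{boundarypairing} are swapped relative to the paper's ($a^j$ there multiplies $e^{-ic\sqrt{p^2-2\la_j}}$, the outgoing wave), but the argument is unaffected.
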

\begin{proof}
We start by setting $u_-(\alpha):=\sum_{j=0}^\ell F^-_je^{ic\sqrt{p^2-2\la_j}}$, and let $\chi\in C^\infty(\R)$ equal to $1$ in $(-\infty,-1)$ and with ${\rm supp}(\chi)\subset \R^-$.
We get 
\[(\mathbf{H}-\tfrac{Q^2+p^2}{2})(\chi u_-(\alpha))= -\frac{1}{2}\chi''(c)u_-(\alpha)- \chi'(c)\pl_cu_-(\alpha)+
e^{\gamma c}V\chi u_-(\alpha).\]
The first two terms are in $e^{N\rho}L^2(\R\times \Omega_\T)$ for all $N$, the term $e^{\gamma c}V\chi u_-(\alpha)$ can be decomposed (for each $k\geq \ell$) as 
\[ \Pi_k(e^{\gamma c}V\chi u_-(\alpha))+(1-\Pi_k)(e^{\gamma c}V\chi u_-(\alpha)).\]
Using that $u_-(\alpha)=\Pi_k u_-(\alpha)$ if $k\geq \ell$ together with Lemma \ref{chiPikV}, we see that the first term satisfies $\Pi_k(e^{\gamma c}V\chi u_-(\alpha))\in e^{\beta \rho}L^2(\R^-;E_k)$
 and the second term $(1-\Pi_k)(e^{\gamma c}V\chi u_-(\alpha))\in e^{\beta \rho}\mc{D}'(\mc{Q})$, provided that $\beta<\gamma/2$ and that ${\rm Im}\sqrt{p^2-\la_j})\leq \gamma/2$ for $j\leq \ell$.
We can thus define, using Proposition \ref{extensionresolvent} (with $k\gg \ell$ large enough), 
if ${\rm Im}(\sqrt{p^2-2\la_j})\in (-\min(\beta,\frac{\gamma}{2}-\beta),\gamma/2)$  for all $j\leq k$ and $|\pi(p)|^2\leq \la_k^{1/4}$
\[u_+(\alpha):=\mathbf{R}(\alpha)(\mathbf{H}-\tfrac{Q^2+p^2}{2})(\chi u_-(\alpha))\in e^{-\beta\rho}\mc{D}(\mc{Q})\]
so that $u(\alpha):=\chi u_-(\alpha)-u_+(\alpha)$ solves $(\mathbf{H}-\tfrac{Q^2+p^2}{2})u(\alpha)=0$ in $e^{-\frac{\gamma}{2}\rho}\mc{D}'(\mc{Q})$.
We use Proposition \ref{extensionresolvent} with $k\gg \ell$ large enough, and we see 
that $u_+(\alpha)$ is of the form, in $c\leq 0$,
\[
u_+(\alpha)= \sum_{j\leq k}\til{a}_j(\alpha,F)e^{-ic\sqrt{p^2-2\la_j}}+G(\alpha,F)= \sum_{j\leq \ell}\til{a}_j(\alpha,F)e^{-ic\sqrt{p^2-2\la_j}}+G_\ell(\alpha,F)
\]
with $\til{a}_j(\alpha,F)\in \ker (\mathbf{P}-\la_j)$ and $G(\alpha,F), \pl_cG(\alpha,F)\in e^{\beta\rho}L^2(\R^-\times \Omega_\T)+ L^2(\R^-;E_k^\perp)$ and 
$G_\ell (\alpha,F)\in e^{\beta\rho}L^2(\R^-\times \Omega_\T)+L^2(\R^-;E_\ell^\perp)$ if ${\rm Re}(\alpha)<Q$.  Here we have used the fact that ${\rm Im}\sqrt{p^2-2\la_j}>0$ 
(since either ${\rm Re}(\alpha)<Q$ or $p^2<2\la_j$ for all $j>\ell$ if $\alpha=Q+ip$ with $p\in\R\setminus [-\sqrt{2\lambda_\ell},\sqrt{2\lambda_\ell}]$) to place all terms corresponding to all $\ell<j\leq k$, which belong to $L^2(\R;E_\ell^\perp)$, in the remainder term $G_\ell(\alpha,F)$.
This shows that 
\begin{equation}\label{defPalpha}
\mc{P}_\ell(\alpha)F:=u(\alpha)=\chi(c)\sum_{j\leq \ell}F^-_je^{ic\sqrt{p^2-2\la_j}}-
\mathbf{R}(\alpha)(\mathbf{H}-2\Delta_\alpha )\Big(\chi(c)\sum_{j\leq \ell}F^-_je^{ic\sqrt{p^2-2\la_j}}\Big)
\end{equation}
satisfies all the required properties. The analyticity in $\alpha$ except possibly at the points  $Q\pm i\sqrt{2\la_j}$ for $j\in\N_0$ follows from Proposition \ref{extensionresolvent}, in particular 3) of that Proposition. At the points $Q\pm i\sqrt{2\la_j}$, the analyticity on the surface $\Sigma$ is a consequence of the Lemma \ref{Poissonaj}, in particular \eqref{expression P vs resolvante} and the fact that $Q\pm i\sqrt{2\la_j}$ is at most a pole of ordre $1$ of ${\bf R}(\alpha)$ and  $a_j(\alpha,\varphi)$. We notice that the expression of $\mc{P}_\ell(\alpha)$ is the same as in \eqref{definPellproba}, thus when the regions of $\alpha$ considered in Lemma 
\ref{firstPoisson} and here have an intersection, then this corresponds to the same operator, by analytic continuation.
 
The uniqueness of the solution with such an asymptotic is direct if ${\rm Re}(\alpha)<Q$: the difference of two such solutions  would be in $\mc{D}(\mc{Q})$ and the operator $\mathbf{H}$ has no 
$L^2$ eigenvalues (Lemma \ref{embedded}), hence the difference is identically $0$. For the case $\alpha=Q+ip$ with $p\in\R$, denote by $\hat u(\alpha)$ the difference of two such solutions. Then $\hat u(\alpha) $ can be written under the form
\[\hat u(\alpha)=\sum_{j\leq \ell} \hat F_j^+(\alpha)
e^{-ic\sqrt{p^2-2\la_j}} +\hat G_\ell(\alpha,F)\]
where  $\hat F_j^+(\alpha)\in \ker (\mathbf{P}-\la_j)$ and 
$\hat G_\ell(\alpha,F)\in e^{\beta\rho(c)}L^2(\R\times \Omega_\T)+L^2(\R;  E_\ell^{\perp})$.  We can split the sum above as $\sum_{j, 2\lambda_j\leq p^2}\cdots+\sum_{j, p^2< j \leq 2\lambda_\ell}\cdots $.  The sum $\sum_{j, p^2< j \leq 2\lambda_\ell}\cdots $ belongs to some $e^{\delta \rho}L^2$ as well as its $\partial_c$ derivative. We can use 
Lemma \ref{boundarypairing} to see that $\sum_{j, 2\lambda_j\leq p^2}\|\hat F_j^+(\alpha)\|_{L^2(\Omega_\T)}^2=0$, hence again $\hat u(\alpha)\in L^2$ and we can conclude as previously.

The meromorphic extension of $\mc{P}_\ell(\alpha)$ is a direct consequence of the meromorphic extension of ${\bf R}(\alpha)$ in Proposition \ref{extensionresolvent}.
\end{proof}

We notice that for $\alpha=Q+ip$ with $p\in\R$, the function $\bbar{\mc{P}(\bbar{\alpha})F}$ is another solution of $(\mathbf{H}-\tfrac{Q^2+p^2}{2})u=0$ satisfying 
\[\bbar{\mc{P}_\ell(\bbar{\alpha})F}=\sum_{j\leq \ell}\Big(\bbar{F^-_j}e^{-ic\sqrt{p^2-\la_j}}+\bbar{F_j^+(\bbar{\alpha})}
e^{ic\sqrt{p^2-\la_j}}\Big)+\bbar{G_\ell(\bbar{\alpha},F)}.\]
This implies that for each $F=\sum_{j\leq \ell}F_j^-\in E_\ell$, there is a unique solution $u=\widehat{\mc{P}}_\ell(\alpha)F$ to $(\mathbf{H}-\tfrac{Q^2+p^2}{2})u=0$ of the form 
\begin{equation}\label{regimebar}
\widehat{\mc{P}}_\ell(\alpha)F=\sum_{j\leq \ell}\Big(F^-_je^{-ic\sqrt{p^2-\la_j}}+\widehat{F}_j^+(\alpha)
e^{ic\sqrt{p^2-\la_j}}\Big)+\widehat{G}_\ell(\alpha,F)
\end{equation}
with $\widehat{G}_\ell(\alpha,F)\in e^{\frac{\beta}{2}\rho(c)}L^2(\R\times \Omega_\T)+L^2(\R;  E_\ell^{\perp})$ and $\widehat{F}_j^+\in \C\rho_j$, and $\hat{\mc{P}}_\ell(\alpha)$ extends meromorphically on an open set of $\Sigma$ just like $\mc{P}_\ell(\alpha)$. 

\begin{lemma}\label{Poissonaj}
Let $\ell\in \N$, $0<\beta<\gamma$ and $\alpha$ in \eqref{regionvalide}, 
the Poisson operator $\mc{P}_{\ell}(\alpha)$ can be obtained from the resolvent as follows: 
for $F=\sum_{j\leq \ell}F_j^- \in E_\ell$ and $\varphi\in e^{\beta\rho}L^2$ 
\begin{equation}\label{expression P vs resolvante} 
\cjg \mc{P}_\ell(\alpha)F,\varphi\cjd_2=  i\sum_{j\leq \ell} \sqrt{p^2-2\la_j}\Big\cjg F_j^-,a_j(\bbar{\alpha},\varphi)\Big\cjd_{L^2(\Omega_\T)}
\end{equation}
where $a_j(\alpha,\varphi)$ are the functionals obtained from \eqref{asymptoticRf}, holomorphic in $\alpha$ and linear in $\varphi$.
\end{lemma}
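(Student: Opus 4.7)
The identity to be proved is a boundary-pairing formula that turns the $L^2(\R\times\Omega_\T)$ inner product of the generalized eigenfunction $\mc{P}_\ell(\alpha)F$ against a test vector $\varphi$ into a finite sum over the finitely many scattering modes $j\leq \ell$. The natural tool is Lemma \ref{boundarypairing}, which is exactly a Green-type identity for pairs of solutions of $({\bf H}-\tfrac{Q^2+p^2}{2})u = r$ that have the asymptotic expansions furnished by Proposition \ref{poissonprop} (for Poisson operators) and by part~2 of Proposition \ref{extensionresolvent} (for the resolvent). So the plan is first to set the identity up on the portion of the spectrum line where Lemma \ref{boundarypairing} applies verbatim, then extend it to the full region \eqref{regionvalide} by analytic continuation.

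The first step is to restrict to the ``open'' part of the spectrum line in Case~2 of \eqref{regionvalide}, namely $\alpha = Q+ip$ with $p\in\R$ and $|p|\in\bigcup_{j\geq \ell}(\sqrt{2\la_j},\sqrt{2\la_{j+1}})$. On this set $2\Delta_\alpha = 2\Delta_{\bar\alpha} = (Q^2+p^2)/2\in\R$, so
\[
u_1:=\mc{P}_\ell(\alpha)F,\quad r_1 :=({\bf H}-\tfrac{Q^2+p^2}{2})u_1=0,
\]
\[
u_2:={\bf R}(\bar\alpha)\varphi,\quad r_2:= ({\bf H}-\tfrac{Q^2+p^2}{2})u_2=\varphi,
\]
solve the same spectral equation and are candidates for Lemma \ref{boundarypairing}. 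Proposition \ref{poissonprop} gives the expansion \eqref{expansionP} of $u_1$ with incoming amplitudes $F_j^-$ and outgoing amplitudes $F_j^+(\alpha)$ for $j\leq \ell$ (and nothing for $j>\ell$, so the corresponding $a_1^j,b_1^j$ of Lemma \ref{boundarypairing} vanish), while Proposition \ref{extensionresolvent}.2 gives the purely outgoing asymptotic \eqref{asymptoticRf} of $u_2$ with amplitudes $a_j(\bar\alpha,\varphi)$. In Case~2 the threshold constraint $2\la_j\leq p^2$ of Lemma \ref{boundarypairing} is automatic for every $j\leq \ell$.

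Before invoking Lemma \ref{boundarypairing} I would check its hypotheses: $u_m\in e^{-\delta\rho}L^2$ for every $\delta>0$, since the oscillatory modes $e^{\pm ic\sqrt{p^2-2\la_j}}$ already belong to this weighted space (with the sign convention of $\rho$) and the remainders $G_\ell(\alpha,F)$ and $G(\bar\alpha,\varphi)$ are better-behaved; for $\theta\in C^\infty(\R;[0,1])$ supported in $(a,+\infty)$, $\theta u_m\in \mc{D}(\mc{Q})$ because $\theta$ amputates the non-decaying oscillatory tail at $c\to -\infty$ and because the resolvent/Poisson output lies in $\mc{D}(\mc{Q})$ once the explicit plane-wave part is subtracted (this uses the mapping properties from Propositions \ref{poissonprop} and \ref{extensionresolvent} together with Lemma \ref{chiPikV}); and $r_1=0$, $r_2=\varphi\in e^{\beta\rho}L^2$ sit in $e^{\delta\rho}L^2$. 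Plugging the matched coefficients into Lemma \ref{boundarypairing} and using $r_1=0$ yields the desired identity on Case~2 of the spectrum line, once the $(a_m^j,b_m^j)$ of Lemma \ref{boundarypairing} are identified with the amplitudes appearing in the Propositions.

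Finally I would extend the identity to all of \eqref{regionvalide} — in particular into the probabilistic sub-region $\mathrm{Re}(\alpha)<Q$, where Lemma \ref{boundarypairing} does \emph{not} apply because $2\Delta_\alpha\notin\R$ — by analytic continuation on the Riemann surface $\Sigma$. The LHS $\cjg \mc{P}_\ell(\alpha)F,\varphi\cjd_2$ is holomorphic in $\alpha$ on \eqref{regionvalide} by Proposition \ref{poissonprop} combined with the $e^{\beta\rho}L^2$ decay of $\varphi$, while $\sqrt{p^2-2\la_j}$ is single-valued holomorphic on $\Sigma$ by construction and $a_j(\bar\alpha,\varphi)$ is holomorphic in $\bar\alpha$ by Proposition \ref{extensionresolvent} (continuous at the thresholds $Q\pm i\sqrt{2\la_j}$, which by part 3 are at worst simple poles and which in any case are excluded from \eqref{regionvalide} for $j\geq \ell$). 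Two holomorphic functions on \eqref{regionvalide} that coincide on the open Case~2 arcs of the spectrum line must therefore coincide on the whole region.

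The main obstacle I anticipate is the bookkeeping in the boundary-pairing step: correctly identifying which amplitudes in the expansions \eqref{expansionP} and \eqref{asymptoticRf} play the roles of the $(a_m^j, b_m^j)$ of Lemma \ref{boundarypairing}, and checking that the spectator indices $j>\ell$ contribute nothing because the corresponding coefficients of $u_1$ vanish even though those of $u_2$ do not. A secondary technical point is the verification that $\theta u_m\in\mc{D}(\mc{Q})$, which requires combining the weighted $L^2$ control of $u_m$ with the $\mc{D}(\mc{Q})$-boundedness of $\Pi_k e^{\gamma c}V$ and related operators supplied by Lemma \ref{chiPikV}.
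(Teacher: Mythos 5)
Your strategy is essentially the same as the paper's: both proofs are Green-type boundary pairings between a solution of $({\bf H}-\tfrac{Q^2+p^2}{2})u=0$ associated to $F$ and the resolvent $\mathbf{R}(\bar\alpha)\varphi$, evaluated first on the open arcs of the spectrum line and then continued analytically on $\Sigma$. The one operational difference worth noting is that the paper does \emph{not} pair $u_1=\mc{P}_\ell(\alpha)F$ itself against $\mathbf{R}(\bar\alpha)\varphi$; instead it starts from the explicit defining formula $\mc{P}_\ell(\alpha)F=\chi F_{-}-\mathbf{R}(\alpha)(\mathbf{H}-2\Delta_\alpha)(\chi F_{-})$, uses $\mathbf{R}(\alpha)^{*}=\mathbf{R}(\bar\alpha)$ to move the resolvent onto $\varphi$, and then runs the $\theta_{T}$-cutoff/Wronskian computation (essentially a manual rerun of the Lemma \textup{\ref{boundarypairing}} argument) with the \emph{known, trivial} asymptotics of the incoming piece $\chi F_{-}$; the outgoing amplitudes $F_j^{+}(\alpha)$ and the threshold modes with $\ell<j$ never enter, and cross terms between different $j$'s drop by orthogonality of the $\ker(\mathbf{P}-\la_j)$ in $L^2(\Omega_\T)$. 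Your route instead invokes Lemma \textup{\ref{boundarypairing}} as a black box with $u_1=\mc{P}_\ell(\alpha)F$, which requires you to feed in the full expansion \eqref{expansionP} of $\mc{P}_\ell(\alpha)F$, in particular the assertion that all amplitudes $a_1^j,b_1^j$ with $\ell<j$ and $2\la_j\leq p^2$ vanish. That assertion is what Proposition \textup{\ref{poissonprop}} encodes (via $G_\ell\in e^{\beta\rho/2}L^2+L^2(\R;E_\ell^\perp)$), so your argument is correct, but it carries more dependence on the fine structure of the Poisson expansion; the paper's rearrangement quietly sidesteps it. A secondary point is that you should be explicit, when matching the $(a_m^j,b_m^j)$ of Lemma \textup{\ref{boundarypairing}}, that the resolvent $\mathbf{R}(\bar\alpha)\varphi$ on the spectrum line carries a single outgoing family $a_j(\bar\alpha,\varphi)e^{ic\sqrt{p^2-2\la_j}}$ (so $a_2^j=0$) — the sign flip $\sqrt{\bar p^{\,2}-2\la_j}=-\sqrt{p^2-2\la_j}$ coming from the branch convention is what causes the exponent to change sign, and this is easy to get wrong.
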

\begin{proof} 
Let $\alpha=Q+ip$ with $p\in \R\setminus ([-\sqrt{2\la_\ell},\sqrt{2\la_\ell}]\cup_{j\geq \ell}\{\pm\sqrt{2\la_j}\})$ and  
let us take $F=\sum_{j\leq \ell} F_j^-$ 
with $ F_j^-\in {\rm Ker}(\mathbf{P}-\la_j)$ for $j\leq \ell$. Then from the construction of 
$\mc{P}_\ell(\alpha) F$ (with a function $\chi=\chi(c)$) in the proof of Proposition \ref{poissonprop}   
\[ \begin{split}
\cjg \mc{P}_\ell(\alpha)F,\varphi\cjd & = \Big\cjg \sum_{j\leq \ell}F^-_je^{ic\sqrt{p^2-2\la_j}}\chi, \varphi\Big\cjd- \Big\cjg \mathbf{R}(\alpha)(\mathbf{H}-2\Delta_\alpha)\sum_{j\leq \ell}F^-_je^{ic\sqrt{p^2-2\la_j}}\chi,\varphi\Big\cjd\\
& =\Big\cjg \sum_{j\leq \ell}F^-_je^{ic\sqrt{p^2-2\la_j}}\chi, \varphi\Big\cjd- \Big\cjg ( \mathbf{H}-2\Delta_\alpha)\sum_{j\leq \ell}F^-_je^{ic\sqrt{p^2-2\la_j}}\chi, \mathbf{R}(\bbar{\alpha})\varphi\Big\cjd.
\end{split}
\]
Here we used $ \mathbf{R}(\alpha)^*= \mathbf{R}(\bbar{\alpha})={\bf R}(2Q-\alpha)$. Let $\theta_T$ be as in the proof of Lemma \ref{boundarypairing}. 
We have  
  \begin{align*}
\lim_{T\to \infty}&\Big\cjg \theta_T(c)(\mathbf{H}-2\Delta_\alpha)\sum_{j\leq \ell}
F^-_je^{ic\sqrt{p^2-2\la_j}}\chi, \mathbf{R}(\bbar{\alpha})\varphi\Big\cjd\\
=& \Big\cjg \sum_{j\leq \ell}F^-_je^{ic\sqrt{p^2-2\la_j}}\chi,\varphi\Big\cjd - \frac{1}{2}\lim_{T\to \infty}\Big\cjg \sum_{j\leq \ell}F^-_je^{ic\sqrt{p^2-2\la_j}}\chi,
\theta''_T \mathbf{R}(\bbar{\alpha})\varphi\Big\cjd\\
&- \lim_{T\to \infty}\Big\cjg \sum_{j\leq \ell}F^-_je^{ic\sqrt{p^2-2\la_j}}\chi,
\theta'_T\pl_c \mathbf{R}(\bbar{\alpha})\varphi\Big\cjd.
\end{align*} 

Using now the asymptotic form \eqref{asymptoticRf},  the last two limits above
can be rewritten as 
\[
\lim_{T\to \infty}\Big\cjg \sum_{j\leq \ell}F^-_je^{ic\sqrt{p^2-2\la_j}}\chi,
\theta''_T\mathbf{R}(\bbar{\alpha})\varphi\Big\cjd=\lim_{T\to \infty} \sum_{j\leq \ell}\Big\cjg 
F^-_je^{ic\sqrt{p^2-2\la_j}},
\theta''_Ta_j(\bbar{\alpha},\varphi)e^{ic\sqrt{p^2-2\la_j}}\Big\cjd\]
\[
\lim_{T\to \infty}\Big\cjg \sum_{j\leq \ell}F^-_je^{ic\sqrt{p^2-2\la_j}}\chi,
\theta'_T\pl_c\mathbf{R}(\bbar{\alpha})\varphi\Big\cjd=\lim_{T\to \infty}\sum_{j\leq \ell}\Big\cjg F^-_je^{ic\sqrt{p^2-2\la_j}},
\theta'_Ta_j(\bbar{\alpha},\varphi)\pl_c(e^{ic\sqrt{p^2-2\la_j}})\Big\cjd
\]
and this easily yields 
\[ \cjg \mc{P}_\ell(\alpha)F,\varphi\cjd =i\sum_{j\leq \ell} \sqrt{p^2-\la_j}\Big\cjg F_j^-,a_j(\bbar{\alpha},\varphi)\Big\cjd_{L^2(\Omega_\T)}.
\]
where $a_j$ are the functional obtained from \eqref{asymptoticRf}. The result then extends holomorphically to the region \eqref{regionvalide} and meromorphically to \eqref{regiondextension}
\end{proof}

\subsubsection{The Poisson operator far from ${\rm Re}(\alpha)=Q$.}
We have seen in Lemma \ref{firstPoisson} that the Poisson operator can be defined far from the spectrum. The problem is that the region of analyticity of $\mc{P}_\ell(\alpha)$ in Lemma \ref{firstPoisson} does not intersect (for $\ell$ large at least) the region of analyticity 
of $\mc{P}_\ell(\alpha)$ from Proposition \ref{poissonprop}. The proposition below extends the construction of the Poisson operator to a region overlapping both regions in  Lemma \ref{firstPoisson}  and Proposition \ref{poissonprop} (see figure \ref{figure3}). 

\begin{proposition}\label{Poissonintermediaire}
For $\ell$ fixed, the Poisson operator $\mc{P}_\ell(\alpha)$ of Lemma \ref{firstPoisson} extends analytically to the region 
\begin{equation}\label{regionetendue} 
\Big\{\alpha=Q+ip\in \C\,\, |\, \,{\rm Re}(\alpha)<Q\, ,\, {\rm Im}(p)>{\rm Im}(\sqrt{p^2-2\la_\ell})-\gamma/2\Big\}
\end{equation}
as an function in $e^{-({\rm Im}(\sqrt{p^2-2\la_\ell})+\eps)\rho}\mc{D}(\mc{Q})$ for all $\eps>0$.
\end{proposition}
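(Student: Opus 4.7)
The plan is to define $\mc{P}_\ell(\alpha)F$ in the extended region \eqref{regionetendue} by the same formula \eqref{definPellproba} used in Lemma \ref{firstPoisson}, namely
\[ \mc{P}_\ell(\alpha)F := \chi F_-(\alpha) - \mathbf{R}(\alpha)(\mathbf{H}-2\Delta_\alpha)(\chi F_-(\alpha)),\]
but now interpreting $\mathbf{R}(\alpha)$ via the weighted-space extension supplied by Proposition \ref{physicalsheet} rather than by Lemma \ref{resolventweighted}. For $\alpha = Q+ip$ in \eqref{regionetendue}, set $\nu_\ell := {\rm Im}\sqrt{p^2-2\la_\ell}$, which by Lemma \ref{racine} dominates ${\rm Im}\sqrt{p^2-2\la_j}$ for $j\leq \ell$. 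Pick $\eps>0$ small and put $\beta := \max(0,\nu_\ell-\gamma/2)+\eps$. The hypothesis ${\rm Im}(p)>\nu_\ell-\gamma/2$ together with ${\rm Im}(p)>0$ gives $0\leq \beta < {\rm Im}(p)$, i.e.\ ${\rm Re}(\alpha)<Q-\beta$, which is exactly the range of validity of Proposition \ref{physicalsheet}.

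Next, since $(\mathbf{H}_0-2\Delta_\alpha)F_-(\alpha)=0$,
\[ (\mathbf{H}-2\Delta_\alpha)(\chi F_-(\alpha)) = -\tfrac{1}{2}\chi'' F_-(\alpha)-\chi'\partial_c F_-(\alpha)+ \chi e^{\gamma c}V F_-(\alpha).\]
The first two terms are compactly supported in $c$, hence lie in $L^2$. For the third, observe that $F_-(\alpha)\in e^{-(\nu_\ell+\eps)\rho}L^\infty(\R;E_\ell)$. Applying Lemma \ref{chiPikV} in its $L^\infty$-form with parameter $\beta_0=\gamma/2-2\eps>-\gamma/2$ (and decomposing via $\Pi_\ell$ and $1-\Pi_\ell$) shows that $\Pi_\ell(\chi e^{\gamma c}VF_-(\alpha))\in e^{-(\nu_\ell-\gamma/2+\eps)\rho}L^2(\R;E_\ell)$ while $(1-\Pi_\ell)(\chi e^{\gamma c}VF_-(\alpha))\in e^{-(\nu_\ell-\gamma/2+\eps)\rho}\mc{D}'(\mc{Q})$. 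Both lie respectively in $e^{-\beta\rho}L^2$ and in $e^{-\beta\rho}\mc{D}'(\mc{Q})$ by our choice of $\beta$.

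We may therefore apply $\mathbf{R}(\alpha)$ to the decomposition of $(\mathbf{H}-2\Delta_\alpha)(\chi F_-(\alpha))$: directly to the compactly supported and $\Pi_\ell$-projected pieces via the bound $\mathbf{R}(\alpha):e^{-\beta\rho}L^2\to e^{-\beta\rho}\mc{D}(\mc{Q})$ of Proposition \ref{physicalsheet}, and to the $(1-\Pi_\ell)$-piece via the bound $\mathbf{R}(\alpha)(1-\Pi_\ell)\psi: e^{-\beta\rho}\mc{D}'(\mc{Q})\to e^{-\beta\rho}\mc{D}(\mc{Q})$, where $\psi\in C^\infty(\R)\cap L^\infty(\R)$ is chosen to equal $1$ on the support of $\chi$ and vanish near $+\infty$. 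Each contribution is holomorphic in $\alpha$ throughout \eqref{regionetendue} and lies in $e^{-\beta\rho}\mc{D}(\mc{Q})\subset e^{-(\nu_\ell+\eps)\rho}\mc{D}(\mc{Q})$, since $\chi F_-(\alpha)\in e^{-(\nu_\ell+\eps)\rho}\mc{D}(\mc{Q})$ as well. The resulting operator $\mc{P}_\ell(\alpha)$ thus has the required mapping and analyticity properties.

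Finally, compatibility with Lemma \ref{firstPoisson} on the overlap of \eqref{regionetendue} with \eqref{regionLemma4.4} follows because the resolvent of Proposition \ref{physicalsheet} and that of Lemma \ref{resolventweighted} are two analytic continuations of the $L^2$-resolvent and hence agree wherever both are defined; the identical formula therefore produces the same operator, and Lemma \ref{uniqPell}-type uniqueness confirms independence of the choice of cutoff $\chi$. The main technical hurdle is the weight bookkeeping, in particular the choice $\beta_0=\gamma/2-2\eps$ in Lemma \ref{chiPikV} which is precisely what is needed to match the weight $\beta$ permitted by Proposition \ref{physicalsheet} under the hypothesis ${\rm Im}(p)>\nu_\ell-\gamma/2$; once this is set up correctly, the remainder of the argument is essentially a repetition of the construction of Lemma \ref{firstPoisson} in the new functional-analytic setting.
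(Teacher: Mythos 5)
Your proof is correct and follows essentially the same strategy as the paper's: define $\mc{P}_\ell(\alpha)F$ by the formula \eqref{definPellproba}, show via Lemma \ref{chiPikV} that $(\mathbf{H}-2\Delta_\alpha)(\chi F_-(\alpha))$ lies in the weighted spaces on which Proposition \ref{physicalsheet} makes $\mathbf{R}(\alpha)$ bounded and holomorphic, and check the resulting weight. Your explicit choice $\beta=\max(0,\nu_\ell-\gamma/2)+\eps$ is a slightly more careful handling of the absolute value in the hypothesis ${\rm Re}(\alpha)<Q-|\beta|$ of Proposition \ref{physicalsheet} than the paper makes explicit, but this is a bookkeeping refinement, not a different route.
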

\begin{proof}
As before, for $F=\sum_{j=0}^\ell F_j^-\in E_\ell$ with $F_j^-\in {\rm Ker}(\mathbf{P}-\lambda_j)$, we set $u_-(\alpha):=\sum_{j\leq \ell}F^-_je^{ic\sqrt{p^2-2\la_j}}$, and let $\chi\in C^\infty(\R)$ equal to $1$ in $(-\infty,-1)$ and with ${\rm supp}(\chi)\subset \R^-$.
We get 
\[(\mathbf{H}-\tfrac{Q^2+p^2}{2})(\chi u_-(\alpha))= -\tfrac{1}{2}\chi''(c)u_-(\alpha)- \chi'(c)\pl_cu_-(\alpha)+e^{\gamma c}V\chi u_-(\alpha).\]
The first two terms in the right hand side are in $e^{N\rho}L^2(\R\times \Omega_\T)$ for all $N>0$ (indeed, compactly 
supported in $c$), while the last term can be decomposed as 
\[  \Pi_ke^{\gamma c}V\chi u_-(\alpha)+(1-\Pi_k)\chi e^{\gamma c}V u_-(\alpha)\in e^{(-{\rm Im}\sqrt{p^2-2\la_\ell}+\gamma/2)\rho}L^2(\R;E_k)+e^{(-{\rm Im}\sqrt{p^2-2\la_\ell}+\gamma/2)\rho}\mc{D}'(\mc{Q})
\]
by using Lemma \ref{chiPikV}.
Using Proposition \ref{physicalsheet}, we can thus define, with the same formula as in Lemma \ref{definPellproba} and Proposition \ref{poissonprop}, the Poisson operator 
\[u_+(\alpha):=\mathbf{R}(\alpha)(\mathbf{H}-\tfrac{Q^2+p^2}{2})(\chi u_-(\alpha))\in e^{(-{\rm Im}\sqrt{p^2-2\la_\ell}+\gamma/2)\rho}\mc{D}(\mc{Q}),\]
\[ \mc{P}_\ell(\alpha)F:=u_-(\alpha)\chi - u_+(\alpha)\]
in the region 
\[   {\rm Im}(p)={\rm Re}(Q-\alpha)> \max_{j\leq \ell}{\rm Im}\sqrt{p^2-2\la_j}-\gamma/2={\rm Im}\sqrt{p^2-2\la_\ell}-\gamma/2.\]
\end{proof}
\begin{remark}\label{nonempty}
Notice that this region of holomorphy is non-empty and connected, as for $\ell$ and $|p|=R\gg \la_\ell$ 
\[ {\rm Im}(p)-{\rm Im}\sqrt{p^2-2\la_\ell}+\gamma/2=\gamma/2+\mc{O}(\frac{\la_\ell}{R^2})>0.\]\end{remark}

\begin{figure}
\centering
\begin{tikzpicture}
 \tikzstyle{PR}=[minimum width=2cm,text width=3cm,minimum height=0.8cm,rectangle,rounded corners=5pt,draw,fill=red!30,text=black,font=\bfseries,text centered,text badly centered]
  \tikzstyle{NS}=[minimum width=2cm,text width=3cm,minimum height=0.8cm,rectangle,rounded corners=5pt,draw,fill=blue!20,text=black,font=\bfseries,text centered,text badly centered]
    \tikzstyle{AR}=[minimum width=2cm,text width=3cm,minimum height=0.8cm,rectangle,rounded corners=5pt,draw,fill=SeaGreen!50,text=black,font=\bfseries,text centered,text badly centered]
 \tikzstyle{PRfleche}=[->,>= stealth,thick,red!60]
  \tikzstyle{NSfleche}=[->,>= stealth,thick,blue!60]
 \tikzstyle{Afleche}=[->,>= stealth,thick,SeaGreen]
 \node[inner sep=0pt] (F1) at (0,0)
    {\includegraphics[width=.6\textwidth]{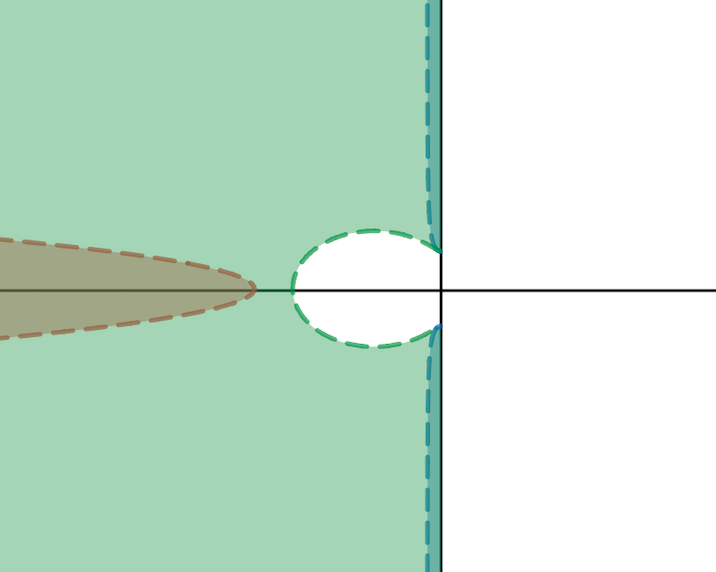}};
 \node (F) at (2,2.3){ ${\rm Re}(\alpha)=Q$};
\node[PR] (P) at (3,-2) {Probabilistic region  \eqref{regionLemma4.4}};
\node[NS] (NSR) at (5,3) {Near spectrum region \eqref{regionvalide}};
\node[AR] (AR) at (4,1) {Analyticity region  \eqref{regionetendue} for $\mc{P}_\ell(\alpha)$};
\node (P1) at (-3,-0.5) {};
\node (N1) at (1,3) {};
\node (A1) at (-2,2) {};
\draw[PRfleche] (P) to [bend left=25] (P1);
\draw[NSfleche] (NSR) to [bend right=25] (N1);
\draw[Afleche] (AR) to [bend left=25] (A1);
\end{tikzpicture}
  \caption{The green colored region correspond to the  region \eqref{regionetendue} of validity of Lemma \ref{Poissonintermediaire} for the analyticity of the Poisson operator $\mc{P}_\ell(\alpha)$ with $\ell>0$ (for the plot, we take $\la_\ell=4$, $\gamma=1/2$). It overlaps the probabilistic \eqref{regionLemma4.4} and near spectrum \eqref{regionvalide} regions.}
   \label{figure3}
\end{figure}


\subsection{The Scattering operator}\label{section_scattering}

\begin{definition}
Let $\ell\in \N$ and $\alpha=Q+ip$ with $p\in\R\setminus (-\sqrt{2\la_\ell},\sqrt{2\la_\ell})$. 
The scattering operator $\mathbf{S}_\ell(\alpha): E_\ell \to E_\ell$ for the $\ell$-th layer (also called $\ell$-scattering operator) is the operator defined as follows:
let $F=\sum_{j\leq \ell}F_j\in E_\ell$ (with $F_j\in {\rm Ker}(\mathbf{P}-\la_j)$) and let $F_j^-:=(p^2-2\la_j)^{-1/4}F_j$, then we set  
\[ \mathbf{S}_\ell(\alpha)F:=\left\{ \begin{array}{ll}
\sum_{j \leq \ell}F_j^+(\alpha)(p^2-2\la_j)^{1/4}, & \textrm{ if }p>\sqrt{2\la_\ell}, \\
\sum_{j\leq \ell}\widehat{F}_j^+(\alpha)(p^2-2\la_j)^{1/4}, & \textrm{ if }p<-\sqrt{2\la_\ell}.
\end{array}\right.\]
where $F_j^+(\alpha),\hat{F}_j^+(\alpha)$ are the functions in \eqref{expansionP} and \eqref{regimebar}. We will call more generally 
$$\mathbf{S}(\alpha):=\left\{\begin{array}{lll}
\bigcup_{\ell, 2\la_\ell<p^2}\ker ({\bf P}-\la_j) &\to & \bigcup_{\ell, 2\la_\ell<p^2}
\ker({\bf P}-\la_j)  \\
F\in E_\ell & \mapsto & \mathbf{S}_\ell(\alpha)F\end{array}\right.
$$
 the scattering operator, where we use  $\mathbf{S}_\ell(\alpha)_{|E_{\ell'}}= \mathbf{S}_{\ell'}(\alpha) $ for $\ell'<\ell$.
\end{definition} 
Let us define the map $\omega_\ell: \Sigma\to \Sigma$ by the following property: if 
$r_j(\alpha)=\sqrt{p^2-2\la_j}$ are the analytic functions on $\Sigma$ used to define 
this ramified covering (with $\alpha=Q+ip$ in the physical sheet),  
then $\omega_\ell(\alpha)$ is the point in $\Sigma$ so that 
\[ r_j(\omega_\ell(\alpha))=\left\{ \begin{array}{ll}
 -r_j(\alpha) & \textrm{ if }j \leq \ell, \\
 r_j(\alpha) & \textrm{ if }j>\ell
\end{array}\right..\]
As a consequence of Proposition \ref{poissonprop} and Lemma \ref{boundarypairing}, we obtain the 
\begin{corollary}\label{mainthscat}
For each $\ell\in\N$, the $\ell$-th scattering operator $\mathbf{S}_\ell(\alpha)$ is unitary on $E_\ell$ if $\alpha=Q+ip$ ($p\in\R$) is such that $\la_\ell<p^2<\min\{\la_j\, |\, \la_\ell<\la_j\}$.  
It also satisfies the functional equation 
\begin{equation}\label{fcteqS} 
\mathbf{S}_\ell(\alpha)\mathbf{S}_\ell(\omega_\ell(\alpha))={\rm Id}.
\end{equation}
Moreover it extends meromorphically in \eqref{regiondextension} if $\beta<\gamma$. It also satisfies the following functional equation for each $F=\sum_{j=0}^\ell F_j\in E_\ell$
\begin{equation}\label{fcteqP} 
\mc{P}_\ell(\alpha)\sum_{j=0}^\ell (p^2-2\la_j)^{-1/4}F_j=\mc{P}_\ell(\omega_\ell(\alpha))\mathbf{S}_\ell(\alpha)\sum_{j=0}^\ell F_j.
\end{equation} 
\end{corollary}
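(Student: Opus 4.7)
The proof will assemble three ingredients already established in the Section: the boundary pairing formula of Lemma \ref{boundarypairing}, the uniqueness of $\mc{P}_\ell(\alpha)F$ as the solution to $(\mathbf{H}-\tfrac{Q^2+p^2}{2})u=0$ with prescribed incoming data (Proposition \ref{poissonprop}), and the meromorphic extension of $\mc{P}_\ell(\alpha)$ on $\Sigma$. Throughout, I write $\nu_j:=\sqrt{p^2-2\la_j}>0$ for $j\leq \ell$ when $p^2\in (2\la_\ell,2\la_{\ell+1})$, and denote by $D_\pm:E_\ell\to E_\ell$ the diagonal operators acting by $(p^2-2\la_j)^{\pm 1/4}$ on $\ker(\mathbf{P}-\la_j)$, so that in this notation $\mathbf{S}_\ell(\alpha) = D_+\,M(\alpha)\,D_-$ where $M(\alpha):\sum_j F_j^-\mapsto \sum_j F_j^+(\alpha)$ is the coefficient-extraction map read off from the expansion \eqref{expansionP}.

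\textbf{Unitarity.} Fix $\alpha=Q+ip$ with $2\la_\ell<p^2<2\la_{\ell+1}$ so that $\nu_j\in\R$ for $j\leq \ell$ while $\sqrt{p^2-2\la_j}\in i\R^+$ for $j>\ell$. For $F,\tilde F\in E_\ell$ set $u_1=\mc{P}_\ell(\alpha)F$ and $u_2=\mc{P}_\ell(\alpha)\tilde F$; each $u_m$ belongs to $e^{-\beta\rho}L^2$, satisfies $(\mathbf{H}-\tfrac{Q^2+p^2}{2})u_m=0$, is in $\mc{D}(\mc{Q})$ after multiplication by any cutoff supported in $c\geq a$, and admits the expansion prescribed by \eqref{expansionP}. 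In particular the hypotheses of Lemma \ref{boundarypairing} are met with $r_1=r_2=0$; reading off the coefficients, $a_m^j=F_{m,j}^+(\alpha)$ and $b_m^j=F_{m,j}^-$ (terms with $j>\ell$ being absent since $\sqrt{p^2-2\la_j}\notin\R$ and are absorbed into $G_m$). The pairing identity collapses to
\[
\sum_{j\leq \ell}\nu_j\,\cjg F_{1,j}^+(\alpha),F_{2,j}^+(\alpha)\cjd_{L^2(\Omega_\T)}=\sum_{j\leq\ell}\nu_j\,\cjg F_{1,j}^-,F_{2,j}^-\cjd_{L^2(\Omega_\T)}.
\]
Inserting $F_{m,j}^-=(p^2-2\la_j)^{-1/4}F_{m,j}$ converts both sides into the standard $E_\ell$-inner product of $\mathbf{S}_\ell(\alpha)F_m=\sum_j (p^2-2\la_j)^{1/4}F_{m,j}^+(\alpha)$ (on the left) and of $F_m$ (on the right), proving the isometry $\|\mathbf{S}_\ell(\alpha)F\|^2_{E_\ell}=\|F\|^2_{E_\ell}$. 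Surjectivity follows from the analogous construction with $\widehat{\mc{P}}_\ell(\alpha)$ in \eqref{regimebar}, which provides the inverse.

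\textbf{Functional equation for the Poisson operator.} Set $F^-=D_-F=\sum_j(p^2-2\la_j)^{-1/4}F_j$ and $u:=\mc{P}_\ell(\alpha)F^-$, whose expansion reads
\[
u=\sum_{j\leq\ell}\bigl(F_j^-\,e^{ic\nu_j}+F_j^+(\alpha)\,e^{-ic\nu_j}\bigr)+G_\ell(\alpha,F^-).
\]
Now view $u$ from the sheet $\omega_\ell(\alpha)$: by definition of $\omega_\ell$ the holomorphic function $\sqrt{p^2-2\la_j}$ changes sign for $j\leq\ell$, so the role of incoming/outgoing waves is swapped and the ``incoming data'' at $\omega_\ell(\alpha)$ is $\sum_j F_j^+(\alpha)$. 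Proposition \ref{poissonprop} asserts that a solution to the eigenvalue equation with a prescribed such incoming expansion is unique, hence
\[
u=\mc{P}_\ell(\omega_\ell(\alpha))\Bigl(\sum_{j\leq\ell}F_j^+(\alpha)\Bigr).
\]
Since $\sum_j F_j^+(\alpha)=D_-\mathbf{S}_\ell(\alpha)F$ by the very definition of $\mathbf{S}_\ell(\alpha)$, this is precisely the intertwining relation \eqref{fcteqP} (with the normalization $D_-$ on both sides); meromorphicity of all quantities on the right extends the identity to the whole domain of meromorphy of $\mc{P}_\ell$ given by Proposition \ref{poissonprop}, yielding the meromorphic extension of $\mathbf{S}_\ell(\alpha)$ stated in the corollary as a by-product of the meromorphy of $M(\alpha)$.

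\textbf{Functional equation $\mathbf{S}_\ell(\alpha)\mathbf{S}_\ell(\omega_\ell(\alpha))={\rm Id}$.} By the same uniqueness, applying $\mathbf{S}_\ell(\omega_\ell(\alpha))$ to $\mathbf{S}_\ell(\alpha)F$ recovers the outgoing data of $u$ now viewed from $\omega_\ell(\alpha)$, which is exactly the initial incoming data $F$ scaled by $D_+D_-=\mathrm{Id}$; writing this out carefully using $\omega_\ell^2=\mathrm{Id}$ and symmetry in $\alpha\leftrightarrow \omega_\ell(\alpha)$ gives both $\mathbf{S}_\ell(\omega_\ell(\alpha))\mathbf{S}_\ell(\alpha)={\rm Id}$ and $\mathbf{S}_\ell(\alpha)\mathbf{S}_\ell(\omega_\ell(\alpha))={\rm Id}$.

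\textbf{Main obstacle.} The conceptual step is clean; the only real technical point is justifying that the uniqueness statement of Proposition \ref{poissonprop} can be freely invoked on the sheet $\omega_\ell(\alpha)$, because the weighted $L^2$-growth thresholds and the domain \eqref{regiondextension} are sheet-dependent. This requires checking that, in the window $2\la_\ell<p^2<2\la_{\ell+1}$, the modes with $j>\ell$ are genuinely in $L^2$ at $c\to-\infty$ and therefore belong to the error term $G_\ell$ from both viewpoints, so that no spurious ``incoming'' mode is created by swapping sheets. This is precisely ensured by the choice of the gap between consecutive $\la_j$'s and by Lemma \ref{racine}.
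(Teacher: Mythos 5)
Your proposal follows the same route as the paper: unitarity from the boundary pairing Lemma \ref{boundarypairing} with vanishing source terms, and both functional equations from the uniqueness statement in Proposition \ref{poissonprop} after swapping the roles of incoming and outgoing modes under $\omega_\ell$. The paper says all of this in a few lines; your version spells out the coefficient bookkeeping explicitly, and your final remark about sheet-dependent thresholds correctly identifies why the window $2\la_\ell<p^2<2\la_{\ell+1}$ matters.

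One point deserves more care than the parenthetical remark you give. Your derivation yields $\mc{P}_\ell(\alpha)D_-F=\mc{P}_\ell(\omega_\ell(\alpha))\bigl(D_-\mathbf{S}_\ell(\alpha)F\bigr)$, because the incoming data at $\omega_\ell(\alpha)$ is $\sum_j\tilde F^+_j(\alpha)=D_-\mathbf{S}_\ell(\alpha)F$, whereas \eqref{fcteqP} as printed is $\mc{P}_\ell(\alpha)D_-F=\mc{P}_\ell(\omega_\ell(\alpha))\mathbf{S}_\ell(\alpha)F$, with no $D_-$ on the right. These differ. You notice this ("with the normalization $D_-$ on both sides") but neither resolve the discrepancy nor verify that the two forms are consistent under iteration. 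A quick sanity check actually favors your version: applying your identity twice with $\omega_\ell^2=\mathrm{Id}$, the $D_-$ factors cancel and you recover $\mathbf{S}_\ell(\omega_\ell(\alpha))\mathbf{S}_\ell(\alpha)=\mathrm{Id}$ cleanly, whereas a naive iteration of \eqref{fcteqP} as printed forces $D_-=\mathrm{Id}$. So either the paper implicitly evaluates $(p^2-2\la_j)^{\pm 1/4}$ at $\omega_\ell(\alpha)$ so that the factor is absorbed into $\mathbf{S}_\ell(\alpha)$, or \eqref{fcteqP} has a typographical slip; your write-up should either state this convention or flag the missing $D_-$ factor rather than treating the two formulas as obviously identical.

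Two smaller remarks. First, Lemma \ref{boundarypairing} only gives you the isometry; to conclude unitarity you do need an invertibility input, and your appeal to $\widehat{\mc{P}}_\ell$ in \eqref{regimebar} is the right move (equivalently, \eqref{fcteqS} supplies the inverse explicitly), so spell out which one you use. Second, for the meromorphic extension you derive it from the meromorphy of $M(\alpha)$ through \eqref{fcteqP}; the paper instead cites the meromorphy of the coefficient functionals $a_j(\alpha,F)$ from Proposition \ref{extensionresolvent} directly, which is the same information but framed at the level of the resolvent expansion rather than the Poisson operator. Either route is fine.
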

\begin{proof}
The unitarity of $\mathbf{S}_\ell(\alpha)$ on the line ${\rm Re}(\alpha)=Q$ follows directly from Lemma \ref{boundarypairing} applied with $u_\ell=\mc{P}_\ell(\alpha)F$. The functional equation \eqref{fcteqS} reads $\mathbf{S}_\ell(\alpha)\mathbf{S}_\ell(2Q-\alpha)={\rm Id}$ on the line ${\rm Re}(\alpha)=Q$
and that comes directly from the uniqueness statement in Proposition \ref{poissonprop} on the line. The extension  of ${\bf S}(\alpha)$ with respect to $\alpha$ comes directly from the meromorphy 
of the $a_j(\alpha,F)$ in Proposition \ref{extensionresolvent}. The functional identity extends meromorphically under the formula \eqref{fcteqS}. 
The functional equation \eqref{fcteqS} also comes from uniqueness of the  Poisson operator. 
\end{proof}

\begin{theorem}\label{spectralmeasure}

For each $j\in \N$, let $(h_{jk})_{k=1,\dots,k(j)}$ be an orthonormal basis of 
$\ker_{L^2(\Omega_\T)}(\mathbf{P}-\la_j)$. The spectral resolution holds   for all $ \varphi,\varphi'\in  e^{\beta\rho}L^2(\Omega_\T\times\R)$ with $\beta>0$ 
\begin{equation}\label{spectraldecomposH}
 \cjg \varphi\,|\,\varphi'\cjd_2 = \frac{1}{2\pi}\sum_{j=0}^{\infty} \sum_{k=1}^{k(j)}\int_{0}^\infty
\Big\cjg \varphi \,|\,\mc{P}_j\Big(Q+i\sqrt{p^2+2\la_j}\Big)h_{jk}\Big\cjd 
\Big\cjg \mc{P}_j\Big(Q+i\sqrt{p^2+2\la_j}\Big)h_{jk}\,|\,\varphi'\Big\cjd {\rm d}p.
\end{equation}
As a consequence the spectrum of ${\bf H}$ is absolutely continuous.
\end{theorem}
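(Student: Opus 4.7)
The proof plan combines Stone's formula with the description of the boundary values of the resolvent from Proposition \ref{extensionresolvent}, together with the identification of the resolvent's asymptotic coefficients in terms of Poisson operators provided by Lemma \ref{Poissonaj}. Since $\mathbf{H}$ is self-adjoint and has no $L^2$-eigenvalues (Lemma \ref{embedded}), Stone's formula reads for $\varphi,\varphi' \in L^2(\R\times\Omega_\T)$
\[ \cjg \varphi\,|\,\varphi'\cjd_2 = \lim_{\eps \downarrow 0}\frac{1}{\pi}\int_{Q^2/2}^\infty \mathrm{Im}\cjg (\mathbf{H}-\lambda-i\eps)^{-1}\varphi\,|\,\varphi'\cjd_2 \,d\lambda.\]
Writing $\lambda = (Q^2+p^2)/2$ with $p>0$ (so $d\lambda = p\,dp$), I identify $(\mathbf{H}-\lambda-i\eps)^{-1}=\mathbf{R}(\alpha_{\lambda,\eps})$ where $\alpha_{\lambda,\eps}$ is chosen in the physical sheet with $2\Delta_{\alpha_{\lambda,\eps}} = \lambda + i\eps$, so that $\alpha_{\lambda,\eps} \to \alpha_p:=Q+ip$ as $\eps\downarrow 0$. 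The boundary value $\mathbf{R}(\alpha_p+i0):e^{\beta\rho}L^2\to e^{-\beta\rho}L^2$ constructed in Proposition \ref{extensionresolvent} allows one to exchange the $\eps\downarrow 0$ limit with the $d\lambda$-integration for $\varphi,\varphi' \in e^{\beta\rho}L^2$.

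To compute the imaginary part of the boundary resolvent, I shall apply the Green-type pairing of Lemma \ref{boundarypairing} to $u_1 = \mathbf{R}(\alpha_p+i0)\varphi$ and $u_2 = \mathbf{R}(\alpha_p+i0)\varphi'$. The asymptotic expansion \eqref{asymptoticRf} on the physical-sheet boundary contains only the outgoing plane-wave coefficients $a_j$; the incoming $b_m^j$ in \eqref{decompui} vanish. The identity then reads
\[ \cjg \mathbf{R}(\alpha_p+i0)\varphi\,|\,\varphi'\cjd_2 - \cjg \varphi\,|\,\mathbf{R}(\alpha_p+i0)\varphi'\cjd_2 = i\sum_{j:\,2\la_j<p^2}\sqrt{p^2-2\la_j}\,\cjg a_j(\alpha_p,\varphi)\,|\,a_j(\alpha_p,\varphi')\cjd_{L^2(\Omega_\T)}, \]
which represents $2i\,\mathrm{Im}\cjg \mathbf{R}(\alpha_p+i0)\varphi\,|\,\varphi'\cjd_2$ as a bilinear form in the asymptotic coefficients $a_j$, summed over open channels.

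The identification with the Poisson operators exploits equation \eqref{expression P vs resolvante} from Lemma \ref{Poissonaj}: applied at $\bar\alpha_p$ with $F$ ranging over the orthonormal basis $(h_{jk})_{k=1}^{k(j)}$ of $\ker(\mathbf{P}-\la_j)$, it expresses $a_j(\alpha_p,\varphi) \in \ker(\mathbf{P}-\la_j)$ as a linear combination of the $(h_{jk})_k$ with coefficients proportional to $\cjg \varphi\,|\,\mc{P}_j(\bar\alpha_p)h_{jk}\cjd_2$, with $(p^2-2\la_j)^{-1/2}$ prefactor. Substituting this expression into the identity above, inserting into Stone's formula, and performing the change of variable $q = \sqrt{p^2-2\la_j}$ (which turns $p\,dp/\sqrt{p^2-2\la_j}$ into $dq$ and maps $(\sqrt{2\la_j},\infty)$ onto $(0,\infty)$), the double sum-integral reorganizes as $\frac{1}{2\pi}\sum_{j,k}\int_0^\infty dq$ of a pairing of $\varphi,\varphi'$ with $\mc{P}_j$ evaluated at a point on $\Sigma$ of the form $Q\pm i\sqrt{q^2+2\la_j}$. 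The two possible signs give the same rank-$k(j)$ projection on $\ker(\mathbf{P}-\la_j)$ by the reality relation $\overline{\mathbf{R}(\alpha)\varphi} = \mathbf{R}(\bar\alpha)\bar\varphi$ and the resulting unitary equivalence between $\mc{P}_j(\bar\alpha_p)|_{\ker(\mathbf{P}-\la_j)}$ and $\mc{P}_j(\alpha_p)|_{\ker(\mathbf{P}-\la_j)}$ (one may also invoke the functional equation \eqref{fcteqP} from Corollary \ref{mainthscat}), producing the formula stated in the theorem.

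The main technical hurdles will be: (i) justifying the exchange of the $\eps\downarrow 0$ limit with the $\lambda$-integration, handled via the weighted boundedness of Proposition \ref{extensionresolvent} together with dominated convergence; (ii) controlling the integrand near the discrete threshold set $\{p=\sqrt{2\la_j}\}_j$, which is of Lebesgue measure zero and contributes nothing (the resolvent's polar parts at $Q\pm i\sqrt{2\la_j}$ from Proposition \ref{extensionresolvent}.3 are at most simple poles, yielding integrable singularities); (iii) the careful bookkeeping of sheets on the Riemann surface $\Sigma$ when relating $\mc{P}_j(\bar\alpha_p)$ to $\mc{P}_j(\alpha_p)$, which is the most delicate point. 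Absolute continuity of the spectrum of $\mathbf{H}$ follows immediately: formula \eqref{spectraldecomposH} exhibits the spectral measure associated to any $\varphi\in e^{\beta\rho}L^2$ as having the explicit density $\tfrac{1}{2\pi}\sum_{j,k}|\cjg \varphi\,|\,\mc{P}_j(Q+i\sqrt{p^2+2\la_j})h_{jk}\cjd_2|^2$ with respect to Lebesgue measure $dp$, and since $e^{\beta\rho}L^2$ is dense in $L^2(\R\times\Omega_\T)$ and the subspace of vectors with absolutely continuous spectral measure is closed, the entire spectrum of $\mathbf{H}$ is absolutely continuous.
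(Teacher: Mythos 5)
Your plan matches the paper's proof in structure: Stone's formula, the boundary--pairing (Green) identity, the relation between the resolvent's asymptotic coefficients and the Poisson operator from Lemma \ref{Poissonaj}, and finally the change of variable $q=\sqrt{p^2-2\la_j}$ with the consistency relation \eqref{egalitePell}. The density argument for absolute continuity at the end is also correct. However, there is one step where your version introduces a complication that is not actually under control as stated.

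You apply Lemma \ref{boundarypairing} to $u_m=\mathbf{R}(\alpha_p+i0)\varphi_m$, whose expansion has only the $a_j(\alpha_p,\cdot)$--type coefficients, and then invoke \eqref{expression P vs resolvante} at $\bar\alpha_p$; this produces $\mc{P}_j(\bar\alpha_p)$ rather than $\mc{P}_j(\alpha_p)$. To return to $\mc{P}_j(\alpha_p)$ you assert that the two give the same rank-$k(j)$ projection, via the reality relation or the functional equation \eqref{fcteqP}. That does not follow: \eqref{fcteqP} relates $\mc{P}_\ell(\alpha_p)$ and $\mc{P}_\ell(\bar\alpha_p)$ through the unitary $\mathbf{S}_\ell(\alpha_p)$, but $\mathbf{S}_\ell$ is a unitary on all of $E_\ell$ and unitarity only preserves the full $E_\ell$--sum, not the individual $\Pi_j$--blocks, unless one also knows $\mathbf{S}_\ell$ is block--diagonal across the $\ker(\mathbf{P}-\la_j)$; this diagonality is not established in the present paper (it is only announced in a remark as forthcoming). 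Likewise the reality relation $\overline{\mc{P}_\ell(\alpha)h_{jk}}=\mc{P}_\ell(\bar\alpha)h_{jk}$ converts $|\cjg\varphi|\mc{P}_\ell(\bar\alpha_p)h_{jk}\cjd_2|^2$ into $|\cjg\bar\varphi|\mc{P}_\ell(\alpha_p)h_{jk}\cjd_2|^2$, which replaces $\varphi$ by its complex conjugate and so does not close the argument. The fix is to avoid the detour entirely and do exactly what the paper does: write ${\rm Im}\cjg\mathbf{R}(\alpha_p)\varphi\,|\,\varphi\cjd_2={\rm Im}\cjg(\mathbf{H}-2\Delta_{\alpha_p})\mathbf{R}(\bar\alpha_p)\varphi\,|\,\mathbf{R}(\bar\alpha_p)\varphi\cjd_2$ using $\mathbf{R}(\bar\alpha_p)=\mathbf{R}(\alpha_p)^*$, so that the Green identity produces $a_j(\bar\alpha_p,\varphi)$, which is precisely what \eqref{Pell*} expresses as $\Pi_j\mc{P}_\ell(\alpha_p)^*\varphi$ up to a scalar. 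No sheet bookkeeping is then needed, and the rest of your plan goes through unchanged.
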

\begin{proof}
We recall the Stone formula: for $\varphi,\varphi' \in e^{\beta \rho}L^2$ for $\beta>0$
\[ \begin{split}
\cjg \varphi\,|\,\varphi'\cjd_2=& \frac{1}{2\pi i}\lim_{\eps\to 0^+}\int_{0}^{\infty}\cjg [(\mathbf{H}-\tfrac{Q^2}{2}-t-i\eps)^{-1}-(\mathbf{H}-\tfrac{Q^2}{2}-t+i\eps)^{-1}]\varphi\,|\,\varphi'\cjd {\rm d}t\\
=& \frac{1}{2\pi i}\lim_{\eps\to 0^+}\int_{0}^{\infty}\cjg [(\mathbf{H}-\tfrac{Q^2+p^2}{2}-i\eps)^{-1}-(\mathbf{H}-\tfrac{Q^2+p^2}{2}+i\eps)^{-1}]\varphi\,|\,\varphi'\cjd p{\rm d}p\\
&= \frac{1}{2\pi i}\int_0^\infty \cjg [(\mathbf{R}(Q+ip)-\mathbf{R}(Q-ip))]\varphi\,|\,\varphi'\cjd p{\rm d}p.
\end{split}\]
Here $\alpha=Q+ip$ (with $p>0$) has to be viewed as an element in $\Sigma$ obtained by limit $Q+ip-\eps$ as $\eps\to 0^+$ and, if $\ell$ is the largest integer such that $2\la_\ell\leq p^2$, we write $\bbar{\alpha}$ for the point $\omega_\ell(\alpha)$ on $\Sigma$. 
For $\alpha=Q+ip$ with $p\in \R^+$, we have for $\varphi \in e^{\beta \rho}L^2$ with $\beta>0$
\[ \begin{split}
\frac{1}{2i}\cjg (\mathbf{R}(\alpha)-\mathbf{R}(\bbar{\alpha}))\varphi\,|\,\varphi\cjd&= {\rm Im}\cjg \mathbf{R}(\alpha)\varphi\,|\,\varphi\cjd\\
&= {\rm Im}\cjg \mathbf{R}(\alpha)(\mathbf{H}-2\Delta_\alpha)\mathbf{R}(\bbar{\alpha})\varphi\,|\,\varphi\cjd \\
&= {\rm Im}\cjg (\mathbf{H}-2\Delta_\alpha)\mathbf{R}(\bbar{\alpha})\varphi\,|\,\mathbf{R}(\bbar{\alpha})\varphi\cjd \end{split}.\]
Here we have used that $(\mathbf{H}-2\Delta_\alpha)\mathbf{R}(\bbar{\alpha})={\rm Id}$ on $e^{\beta\rho} L^2$ provided $p\in \R$ and that 
$\cjg \mathbf{R}(\bbar{\alpha})\varphi\,|\,\varphi'\cjd=\cjg \varphi,{\bf R}(\alpha)\varphi'\cjd$ 
for $\varphi\,|\,\varphi'\in e^{\beta \rho}L^2$, this last fact coming from the identity 
$\mathbf{R}(\bbar{\alpha})=\mathbf{R}(\alpha)^*$ for ${\rm Re}(\alpha)<Q$ and passing to the limit ${\rm Re}(\alpha)\to Q$.  
Let $\theta_T(c)$ as in the proof of Lemma \ref{boundarypairing}. We have 
\[\begin{split} 
\cjg (\mathbf{H}-2\Delta_\alpha)\mathbf{R}(\bbar{\alpha})\varphi,\mathbf{R}(\bbar{\alpha})\varphi\cjd& =\lim_{T\to +\infty}
\cjg \theta_T(\mathbf{H}-2\Delta_\alpha)\mathbf{R}(\bbar{\alpha})\varphi,\mathbf{R}(\bbar{\alpha})\varphi\cjd\\
&= \lim_{T\to +\infty}
\cjg \theta_T\mathbf{R}(\bbar{\alpha})\varphi\,|\,\varphi\cjd+\lim_{T\to \infty}\tfrac{1}{2}\cjg [\pl^2_c,\theta_T]\mathbf{R}(\bbar{\alpha})\varphi\,|\,\mathbf{R}(\bbar{\alpha})\varphi\cjd .
\end{split}\]
 Using \eqref{asymptoticRf} and arguing as in the proof of Lemma \ref{boundarypairing}, as $T\to \infty$ we get
\[\begin{split}
\cjg (\mathbf{H}-2\Delta_\alpha)\mathbf{R}(\bbar{\alpha})\varphi,\mathbf{R}(\bbar{\alpha})\varphi\cjd
=& \cjg \mathbf{R}(\bbar{\alpha})\varphi,\varphi\cjd
+\tfrac{1}{2}\lim_{T\to \infty}\sum_{j\leq \ell}\|a_j(\bbar{\alpha},\varphi)\|^2_{L^2(\Omega_\T)}
  \pl_c(e^{ic\sqrt{p^2-2\la_j}})_{|c=-T}e^{iT\sqrt{p^2-2\la_j}}\\
& -\tfrac{1}{2}\lim_{T\to \infty}\sum_{j\leq \ell}\|a_j(\bbar{\alpha},\varphi)\|_{L^2(\Omega_\T)}^2 e^{-iT\sqrt{p^2-2\la_j}}\pl_c(e^{-ic\sqrt{p^2-2\la_j}})_{|c=-T}\\
 =& \cjg \mathbf{R}(\bbar{\alpha})\varphi,\varphi\cjd-i\sum_{j\leq \ell}{\sqrt{p^2-2\la_j}}\|a_j(\bbar{\alpha},\varphi)\|_{L^2(\Omega_\T)}^2.
\end{split}\]
We conclude that  
\begin{equation}\label{ImRalpha} 
-{\rm Im}\cjg \mathbf{R}(\bbar{\alpha})\varphi,\varphi\cjd= \tfrac{1}{2}\sum_{j\leq \ell}{\sqrt{p^2-2\la_j}}\|a_j(\bbar{\alpha},\varphi)\|^2_{L^2(\Omega_\T)}.
\end{equation}
By Lemma \ref{Poissonaj}, we have 
\begin{equation}\label{Pell*}
\mc{P}_\ell(\alpha)^*\varphi=-i\sum_{j\leq \ell}\,  a_j(\bbar{\alpha},\varphi)\sqrt{p^2-2\la_j}.
\end{equation}
By polarisation and by denoting $\Pi_j$ the orthogonal projectors on ${\rm Ker}(\mathbf{P}-\lambda_j)$, we deduce from \eqref{ImRalpha} and \eqref{Pell*} that 
$$-{\rm Im}\cjg \mathbf{R}(\bbar{\alpha})\varphi,\varphi\cjd= \tfrac{1}{2}\sum_{j\leq \ell}\frac{1}{\sqrt{p^2-2\la_j}}\cjg \Pi_j \mc{P}_\ell(\alpha)^*\varphi, \Pi_j \mc{P}_\ell(\alpha)^*\varphi' \cjd_{L^2(\Omega_\T)}.$$
Rewriting $\mc{P}_\ell(\alpha)^*\varphi=\sum_{j=0}^\ell\sum_{k=1}^{k(j)}\cjg\varphi,\mc{P}_\ell(\alpha)h_{jk}\cjd_2 h_{jk}$, we obtain
\[ \cjg \varphi,\varphi'\cjd_2 = \frac{1}{2\pi}\int_{0}^\infty \sum_{\ell=0}^{\infty}\indic_{[\sqrt{2\la_{\ell}},\sqrt{2\la_{\ell+1}})}(p)\sum_{j=0}^\ell\sum_{k=1}^{k(j)}
\cjg \varphi,\mc{P}_\ell(Q+ip)h_{jk}\cjd_2 \bbar{\cjg \varphi',\mc{P}_\ell(Q+ip)h_{jk}\cjd_2} \frac{p}{\sqrt{p^2-2\la_j}} dp.\]
This finally can be rewritten, using \eqref{egalitePell}, as 
\[\begin{split}
\cjg \varphi,\varphi'\cjd_2  = &\frac{1}{2\pi}\sum_{j=0}^{\infty}\sum_{\ell\geq j}\int_{0}^\infty \indic_{[\sqrt{2\la_{\ell}},\sqrt{2\la_{\ell+1}})}(p)\sum_{k=1}^{k(j)}
\cjg \varphi,\mc{P}_\ell(Q+ip)h_{jk}\cjd \bbar{\cjg \varphi',\mc{P}_\ell(Q+ip)h_{jk}\cjd} \frac{p}{\sqrt{p^2-2\la_j}} dp\\
= &\frac{1}{2\pi}\sum_{j=0}^{\infty}\int_{\sqrt{2\la_j}}^\infty \sum_{k=1}^{k(j)}
\cjg \varphi,\mc{P}_j(Q+ip)h_{jk}\cjd \bbar{\cjg \varphi',\mc{P}_j(Q+ip)h_{jk}\cjd} \frac{p}{\sqrt{p^2-2\la_j}}dp\\
=& \frac{1}{2\pi}\sum_{j=0}^{\infty} \sum_{k=1}^{k(j)}\int_{0}^\infty
\Big\cjg \varphi ,\mc{P}_j\Big(Q+i\sqrt{r^2+2\la_j}\Big)h_{jk}\Big\cjd 
\bbar{\Big\cjg \varphi',\mc{P}_j\Big(Q+i\sqrt{r^2+2\la_j}\Big)h_{jk}\Big\cjd} dr
\end{split}\]
where we performed the change of variables in the last line $r=\sqrt{p^2-2\la_j}$.
\end{proof}

\subsection{Holomorphic parametrization of the eigenstates}\label{sub:holomorphic}
We conclude this section by defining the generalized eigenfunctions $\Psi_{\alpha,{\bf k},{\bf l}}$ by using the standard orthonormal basis $(\psi_{{\bf kl}})_{{\bf k,l}\in \mc{N}}$  of $L^2(\Omega_\T) $ made of Hermite polynomials and introduced below \eqref{firstlength}. We set 
\[ \Psi_{\alpha,{\bf k},{\bf l}}:=\mc{P}_j(Q-\sqrt{(Q-\alpha)^2-2\la_{{\bf kl}}})\psi_{{\bf kl}}\] 
where $\ell$ is defined by $\la_\ell=\la_{{\bf kl}}$ and $\sqrt{z}$ is used here with the convention that the cut is on $\R^-$ (ie. $\sqrt{Re^{i\theta}}=\sqrt{R}e^{i\theta/2}$ for $\theta\in (-\pi,\pi)$). These are eigenfunctions of ${\bf H}$ with eigenvalues $2\Delta_\alpha+\la_{{\bf kl}}$. Using \eqref{regionvalide} and \eqref{regionetendue} for the holomorphy of $\mc{P}_\ell(\cdot)$, we obtain the 
\begin{proposition}\label{holomorphiceig}
Let $\ell\geq 0$ such that $\la_{{\bf kl}}=|{\bf k}|+|{\bf l}|=\la_\ell$. For each $\eps>0$, the function $\Psi_{\alpha,{\bf k},{\bf l}}\in e^{- \beta \rho}\mc{D}(\mc{Q})$ is an eigenfunction of ${\bf H}$ with eigenvalue $2\Delta_\alpha+\la_\ell$ for all $\beta>Q-{\rm Re}(\alpha)$, it is holomorphic on the set
\begin{equation}\label{defWell}
W_\ell:= \Big\{ \alpha \in \C \setminus \mc{D}_\ell\, |\,  
{\rm Re}(\alpha)\leq Q, {\rm Re}\sqrt{(Q-\alpha)^2-2\la_\ell}>{\rm Re}(Q-\alpha)-\gamma/2\Big\}
\end{equation}
where $\mc{D}_\ell:=\bigcup_{j\geq \ell}\{Q\pm i\sqrt{2(\la_j-\la_\ell)}\}$ is a discrete set where  
$\Psi_{\alpha,{\bf k},{\bf l}}$ is continuous in $\alpha$ with square root singularities.
The set $W_\ell$ is a connected subset of the half-plane ${\rm Re}(\alpha)\leq Q$, containing $(Q+i\R)\setminus \mc{D}_\ell$ and the real half-line $(-\infty,Q-\frac{2\la_\ell}{\gamma}-\frac{\gamma}{4})$. 
\end{proposition}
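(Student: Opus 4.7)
The plan is to realize $\Psi_{\alpha,{\bf k},{\bf l}}$ as a pullback of the Poisson operator already constructed. Set
\[
\tilde\alpha(\alpha) := Q - \sqrt{(Q-\alpha)^2 - 2\la_\ell}
\]
with the prescribed $\R^-$-cut convention, and define $\Psi_{\alpha,{\bf k},{\bf l}} := \mc{P}_\ell(\tilde\alpha(\alpha))\psi_{{\bf kl}}$, using that $\psi_{{\bf kl}}\in E_\ell$. Squaring yields $(Q-\tilde\alpha)^2 = (Q-\alpha)^2 - 2\la_\ell$, hence $2\Delta_{\tilde\alpha}=2\Delta_\alpha+\la_\ell$, and Proposition \ref{poissonprop} then gives $({\bf H}-2\Delta_\alpha-\la_\ell)\Psi_{\alpha,{\bf k},{\bf l}}=0$. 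For the weighted-space membership, the leading $c\to-\infty$ asymptotic in \eqref{expansionP} is $e^{ic\sqrt{\tilde p^2-2\la_\ell}}\psi_{{\bf kl}}$ with $\tilde\alpha=Q+i\tilde p$; the identity $\tilde p^2-2\la_\ell=-(Q-\alpha)^2$ together with the branch convention forces $\sqrt{\tilde p^2-2\la_\ell}=i(Q-\alpha)$, so this term equals $e^{(\alpha-Q)c}\psi_{{\bf kl}}$ and lies in $e^{-\beta\rho}\mc{D}(\mc{Q})$ precisely when $\beta>Q-{\rm Re}(\alpha)$.

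The core step is to show $\tilde\alpha$ maps $W_\ell\setminus\mc{D}_\ell$ into the holomorphy locus of $\mc{P}_\ell$, namely the union of the regions \eqref{regionvalide} and \eqref{regionetendue}. I split $W_\ell$ into the open subset $U_1:=W_\ell\cap\{{\rm Re}(\alpha)<Q\}$ and a one-sided neighbourhood $U_2$ of $(Q+i\R)\setminus\mc{D}_\ell$. On $U_1$, the defining inequality of $W_\ell$ translates directly, via $\tilde p=i\sqrt{(Q-\alpha)^2-2\la_\ell}$ and $\tilde p^2-2\la_\ell=-(Q-\alpha)^2$, into ${\rm Im}(\tilde p)>{\rm Im}\sqrt{\tilde p^2-2\la_\ell}-\gamma/2$, which is the defining condition of \eqref{regionetendue}, so Proposition \ref{Poissonintermediaire} applies. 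On $U_2$, approaching $\alpha=Q+ip$ from the physical sheet and tracking the branch through the $\pm i0$ prescription gives $\tilde p={\rm sgn}(p)\sqrt{p^2+2\la_\ell}$, which is real with $|\tilde p|>\sqrt{2\la_\ell}$; the exclusion $\alpha\notin\mc{D}_\ell$ translates into $|\tilde p|\notin\{\sqrt{2\la_j}\}_{j\geq\ell}$, placing $\tilde\alpha$ in \eqref{regionvalide} and ensuring $\mc{P}_\ell(\tilde\alpha)\psi_{{\bf kl}}$ is well-defined and holomorphic there by Proposition \ref{poissonprop}. The two definitions agree on $U_1\cap U_2$ by the uniqueness clause of Proposition \ref{poissonprop}, so they glue into a single holomorphic function on $W_\ell\setminus\mc{D}_\ell$. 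At each point of $\mc{D}_\ell$, $\tilde\alpha(\alpha)$ is a square-root branch point sending $\alpha$ to $\tilde\alpha\in\{Q\pm i\sqrt{2\la_j}\}$; by part 3 of Proposition \ref{extensionresolvent} the resolvent ${\bf R}(\tilde\alpha)$ has at most a simple pole there on $\Sigma$, so the composition keeps at worst a square-root type singularity in $\alpha$ and extends continuously through each point of $\mc{D}_\ell$.

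For the topological statements about $W_\ell$, set $w:=Q-\alpha$ and rewrite the condition as ${\rm Re}\sqrt{w^2-2\la_\ell}>{\rm Re}(w)-\gamma/2$. On the real ray $w>\sqrt{2\la_\ell}$ (after noting $w>\gamma/2$ in the relevant range), squaring reduces this to $w>2\la_\ell/\gamma+\gamma/4$, giving $(-\infty,Q-2\la_\ell/\gamma-\gamma/4)\subset W_\ell$. On the line $w\in i\R$ away from $\{\pm i\sqrt{2(\la_j-\la_\ell)}\}$, the left-hand side vanishes while the right-hand side equals $-\gamma/2<0$, so $(Q+i\R)\setminus\mc{D}_\ell\subset W_\ell$. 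Off the real axis, the strip $0<{\rm Re}(w)<\gamma/2$ satisfies the strict inequality since the right-hand side is negative while $\sqrt{w^2-2\la_\ell}$ has strictly positive real part there ($w^2-2\la_\ell$ being off the cut $\R^-$); this strip connects the spectrum to any far-left real point via a path which first moves to large $|{\rm Im}(w)|$ (avoiding $\mc{D}_\ell$), then sweeps rightwards at that height — staying in $W_\ell$ because ${\rm Re}\sqrt{w^2-2\la_\ell}\to{\rm Re}(w)$ as $|w|\to\infty$, so the inequality holds with margin $\gamma/2$ at large distance — and finally descends to the real ray $(-\infty,Q-2\la_\ell/\gamma-\gamma/4)$. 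This establishes connectedness of $W_\ell$. The main obstacle is the careful branch bookkeeping in matching the $\R^-$-cut convention used to define $\tilde\alpha$ with the $\R^+$-cut convention of Section \ref{sec:scattering} for $\sqrt{p^2-2\la_j}$, and in pinning down the precise nature of the singularity at $\mc{D}_\ell$ by combining the square-root behaviour of $\tilde\alpha$ with the pole structure of ${\bf R}(\tilde\alpha)$ at the ramification points $Q\pm i\sqrt{2\la_j}$.
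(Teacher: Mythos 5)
Your proof is correct and follows the approach the paper intends: the paper gives no written proof for this proposition, only the remark that it follows "using \eqref{regionvalide} and \eqref{regionetendue} for the holomorphy of $\mc{P}_\ell(\cdot)$," and you have faithfully carried out exactly that computation, including the change of variables $\tilde p = i\sqrt{(Q-\alpha)^2-2\la_\ell}$, the translation of the $W_\ell$ inequality into the region \eqref{regionetendue}, the matching with \eqref{regionvalide} on the spectrum line, and the identification of $\mc{D}_\ell$ with the preimage of the ramification points of $\Sigma$. The branch-cut bookkeeping and the topological claims are handled correctly.
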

Finally, one has for $P\in \R$, $\Psi_{Q+iP,{\bf k},{\bf l}}=\mc{P}_j(Q+i\sqrt{P^2+2\la_{{\bf kl}}})\psi_{{\bf kl}}$ and one can rewrite \eqref{spectraldecomposH} as 
\begin{equation}\label{spectraldecomposH2} 
\cjg \varphi\,|\,\varphi'\cjd_2 = \frac{1}{2\pi}\sum_{{\bf k,l}\in \mc{N}} \int_{0}^\infty
\cjg \varphi \,|\,\Psi_{Q+iP,{\bf k},{\bf l}}\cjd 
\cjg \Psi_{Q+iP,{\bf k},{\bf l}}\,|\,\varphi'\cjd {\rm d}P.
\end{equation}

\section{Probabilistic representation of the Poisson operator}\label{sec:proba}
In Section \ref{sec:scattering}, we constructed the generalized eigenstates $\Psi_{\alpha,{\bf k},{\bf l}}$ by means of the Poisson operator (see Prop \ref{holomorphiceig}) of the Liouville Hamiltonian $\mathbf{H}$ on the spectrum line $\alpha\in Q+i\R$ and showed that these generalized eigenstates can be analytically continued in the parameter $\alpha $ over the region $W_\ell$ defined by \eqref{defWell} for $|{\bf k}|+|{\bf l}|=\la_\ell$. As in the case of the $\mathbf{H}^0$-eigenstates, we will need to perform a change of basis. So, similarly to Prop \ref{prop:mainvir0} item 4, we set for $\nu,\tilde{\nu}\in\mc{T}$ Young diagrams,  with $N:=|\nu|+|\tilde\nu|$,
\begin{equation}\label{defdescendents}
\begin{gathered}
\Psi_{\alpha,\nu,\tilde{\nu}}: =  \sum_{\k,\l, |{\bf k}|+|{\bf l}|=N}M^{N}_{\alpha,\k\l,\nu\tilde\nu}\Psi_{\alpha,\k,\l},
\end{gathered}
\end{equation}  
with the convention that  $\Psi_{\alpha,\emptyset, \emptyset}=\Psi_{\alpha,0,0}$, which will be denoted by $\Psi_{\alpha}$. The $(\Psi_{\alpha,\nu,\tilde{\nu}})_{\nu,\tilde\nu}$ (with $\nu\not=\emptyset$ or $\tilde\nu\not=\emptyset$) will be called descendant states. Since the coefficients $M^{N}_{\alpha,\k\l,\nu\tilde\nu}$ are analytic  in $\alpha\in\C$ (see Prop \ref{prop:mainvir0}), $\Psi_{\alpha,\nu,\tilde{\nu}}$ satisfy the same holomorphy properties (namely Prop \ref{holomorphiceig}) as $\Psi_{\alpha,{\bf k},{\bf l}}$ provided that $|{\bf k}|+|{\bf l}|=|\nu|+|\tilde\nu|$. The main goal of this section is to compute the correlation functions of these descendant states and our strategy can be summarized as follows.

 It turns out that for $\alpha$ real in the physical region, which we will call probabilistic region, we will be able to give a probabilistic representation for the descendant states thanks to  the {\it intertwining} construction based on the descendant states $\Psi^0_{\alpha,\nu,{\tilde{\nu}}}$ of the GFF theory $\mu=0$:
\begin{equation} \label{secward:intert}
 \lim_{t\to +\infty}e^{t (2\Delta_\alpha+|\nu|+|{\tilde{\nu}}|)}e^{-t\mathbf{H}}\Psi^0_{\alpha,\nu,{\tilde{\nu}}}=\Psi_{\alpha,\nu,{\tilde{\nu}}}
\end{equation}
for $\alpha$ real and negative enough (see Proposition \ref{descconvergence}). Indeed, in Subsection \ref{SET},  we express  the free descendant states in terms of contour integrals of the Stress Energy Tensor (SET) of the GFF theory, which we plug then in \eqref{secward:intert}. Applying    the Feynman-Kac formula for the propagator $e^{-t\mathbf{H}}$ in the expression \eqref{secward:intert}, we obtain a probabilistic expression for the descendant states $\Psi_{\alpha,\nu,\tilde{\nu}}$   in terms of contour integrals of SET in LCFT\footnote{Recast in the language of CFTs, this is somewhat equivalent to stating that descendant states can be obtained via the action of Virasoro generators on primary states.}   (see Corollary \ref{propcontour}).   The Ward identities (Proposition \ref{proofward}) then allows us to translate these contour integrals
in terms of differential operators: more precisely, correlation functions of descendant states $\Psi_{\alpha,\nu,\tilde{\nu}}$ can be obtained in terms of differential operators acting on the correlation functions of primary states $\Psi_{\alpha}$. Finally  we will analytically continue these relations from the region $\{\alpha\in\R;\alpha<Q\}$ back to the spectrum  line and obtain the consequences in Subsection \ref{sub:desc3point} about the structure of 3 point correlation functions involving descendant states.
 
\subsection{Highest weight states}\label{sub:HW}
Recall the definition \eqref{psialphadef}  of the highest weight state $\Psi^0_\alpha(c,\varphi)=e^{(\alpha-Q)c}$   for $\alpha\in\C$ of the $\mu=0$ theory. From Proposition \ref{Pellproba} item 2 (applied with $F=1$), we know that for $\alpha\in\R$ with $\alpha<Q$, the state 
\begin{align}\label{Psialpha}
\Psi_\alpha=\mathcal{P}_0(\alpha)1
\end{align}
is given by the large time limit 
\begin{equation}\label{intertprimary}
\Psi_\alpha=\lim_{t\to\infty}e^{2t\Delta_\alpha}e^{-t\mathbf{H}}\Psi^0_\alpha,\quad dc\otimes \P_\T \text{ a.e.}
\end{equation}
where $\Delta_\alpha$ denotes the conformal weight \eqref{deltaalphadef}. In physics (or representation theory)  terminology  the state
$\Psi_\alpha$ is the highest weight state corresponding to the  {\it primary field} $V_\alpha$. Combining \eqref{intertprimary} with the Feynman-Kac formula \eqref{fkformula} leads to the probabilistic representation  for $\alpha<Q$
\begin{equation}\label{defvalpha}
\Psi_\alpha(c,\varphi)  :=e^{(\alpha-Q)c} \E_\varphi\Big[ \exp\Big(-\mu e^{\gamma c}\int_{\D} |x|^{-\gamma\alpha }M_\gamma(\dd x)\Big)\Big].
\end{equation} 
We recall here that the integrability of $ |x|^{-\gamma\alpha }$ with respect to $M_\gamma(\dd x)$ is detailed in  \cite{DKRV}.
\begin{remark}
In forthcoming work, we will show that, for $\alpha\in (\tfrac{2}{\gamma},Q)$, we have as $c\to-\infty$
$$ \Psi_\alpha (c,\varphi) = e^{(\alpha-Q)c}+e^{(Q-\alpha)c} {R}(\alpha) +e^{(Q-\alpha)c}o(1 )$$
with $o(1 )\to 0$   in $L^2(\Omega_\T)$ as $c\to-\infty$ with $R$  the reflection coefficient defined in \cite{dozz}, which thus appears as the scattering coefficient of constant functions: for $\ell=0$ we have $\mathbf{S}_0(\alpha)=R(\alpha) \mathbf{Id}$. More generally, we will show that the scattering matrix is diagonal.
\end{remark}

\subsection{Descendant states}\label{sub:desc}
Recall (Subsection \ref{repth}) that for $\mu=0$ we have descendant states given by
\begin{align*}
\Psi^0_{\alpha,\nu,\tilde\nu}(c,\varphi)=\mathcal{Q}_{\alpha,\nu,\tilde{\nu}}(\varphi)e^{(\alpha-Q)c}
\end{align*}
where $\mathcal{Q}_{\alpha,\nu,\tilde{\nu}}$ are eigenstates of the operator $\bf P$:
\begin{align*}
{\bf P}\mathcal{Q}_{\alpha,\nu,\tilde{\nu}}=(|\nu|+|\tilde{\nu}|)\mathcal{Q}_{\alpha,\nu,\tilde{\nu}}
\end{align*}
so that 
\begin{align*}
{\bf H}^0\Psi^0_{\alpha,\nu,\tilde{\nu}}=(2\Delta_\alpha+|\nu|+|\tilde{\nu}|)\Psi^0_{\alpha,\nu,\tilde{\nu}}.
\end{align*}
From  Proposition \ref{Pellproba} we infer the following   
\begin{proposition}\label{descconvergence}
Let $\alpha<\big(Q-\frac{2(|\nu|+|\tilde{\nu}|)}{\gamma}-\frac{\gamma}{4}\big)\wedge (Q-\gamma)$. Then the limit
\begin{equation}\label{descremi}
 \lim_{t\to +\infty}e^{t (2\Delta_\alpha+|\nu|+|{\tilde{\nu}}|)}e^{-t\mathbf{H}}\Psi^0_{\alpha,\nu,{\tilde{\nu}}}=\Psi_{\alpha,\nu,{\tilde{\nu}}}
\end{equation}
holds in $e^{-(\beta+\gamma/2)\rho}L^2(\R\times\Omega_\T)$ for $\beta>Q-\alpha-\gamma/2$.
\end{proposition}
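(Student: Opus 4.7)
The plan is to reduce Proposition~\ref{descconvergence} to Proposition~\ref{Pellproba}(1), applied componentwise in the expansion \eqref{defqalpha}, and then reassemble the sum via the basis-change definition \eqref{defdescendents}. Set $N:=|\nu|+|\tilde\nu|$. By item~4 of Proposition~\ref{prop:mainvir0},
\[
\Psi^0_{\alpha,\nu,\tilde\nu}(c,\varphi) \;=\; \sum_{|\mathbf{k}|+|\mathbf{l}|=N} M^{N}_{\alpha,\mathbf{k}\mathbf{l},\nu\tilde\nu}\,\psi_{\mathbf{k}\mathbf{l}}(\varphi)\, e^{(\alpha-Q)c},
\]
and the coefficients $M^{N}_{\alpha,\mathbf{k}\mathbf{l},\nu\tilde\nu}$ are independent of $t$. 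Combined with \eqref{defdescendents} and linearity of the propagator, the conclusion of the proposition reduces to proving, for each single pair $(\mathbf{k},\mathbf{l})$ with $|\mathbf{k}|+|\mathbf{l}|=N$, the intertwining identity
\begin{equation}\label{eqplanaux}
\lim_{t\to\infty} e^{t(2\Delta_\alpha+N)}\,e^{-t\mathbf{H}}\bigl(\psi_{\mathbf{k}\mathbf{l}}\,e^{(\alpha-Q)c}\bigr)\;=\;\Psi_{\alpha,\mathbf{k},\mathbf{l}}
\]
in $e^{-(\beta+\gamma/2)\rho}L^2(\R\times\Omega_\T)$ for $\beta>Q-\alpha-\gamma/2$.

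The main step is to put \eqref{eqplanaux} in the exact form required by Proposition~\ref{Pellproba}(1) through a careful choice of spectral parameter. By Proposition~\ref{holomorphiceig}, $\Psi_{\alpha,\mathbf{k},\mathbf{l}}=\mc{P}_\ell(\alpha')\psi_{\mathbf{k}\mathbf{l}}$ with $\la_\ell=N$ and $\alpha':=Q-\sqrt{(Q-\alpha)^2-2N}$. Under the hypothesis $\alpha<Q-\tfrac{2N}{\gamma}-\tfrac{\gamma}{4}$, one has in particular $(Q-\alpha)^2>2N$, so $\alpha'$ is a well-defined real number in $(-\infty,Q)$; writing $\alpha'=Q+ip'$ gives $p'=i\sqrt{(Q-\alpha)^2-2N}$, an element of the upper half-plane. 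Using the convention of Section~\ref{sec:scattering} for $\sqrt{\cdot}$ (cut on $\R^+$), direct computation yields
\[
(p')^{2}-2\la_\ell=-(Q-\alpha)^{2},\quad \sqrt{(p')^{2}-2\la_\ell}=i(Q-\alpha),\quad {\rm Im}\sqrt{(p')^{2}-2\la_\ell}=Q-\alpha,
\]
so that $e^{ic\sqrt{(p')^{2}-2\la_\ell}}=e^{(\alpha-Q)c}$ and $2\Delta_{\alpha'}=2\Delta_\alpha+N$; the left-hand side of \eqref{eqplanaux} is therefore literally $\lim_{t\to\infty}e^{2t\Delta_{\alpha'}}e^{-t\mathbf{H}}\bigl(\psi_{\mathbf{k}\mathbf{l}}\,e^{ic\sqrt{(p')^2-2\la_\ell}}\bigr)$.

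I now apply Proposition~\ref{Pellproba}(1) at the parameter $\alpha'$ with $j=\ell$ and $F=\psi_{\mathbf{k}\mathbf{l}}\in L^2(\Omega_\T)\cap\ker(\mathbf{P}-\la_\ell)$. The two threshold hypotheses translate exactly into the two assumptions of the proposition: the condition ${\rm Im}\sqrt{(p')^{2}-2\la_\ell}>\gamma$ that allows one to take $\chi\equiv 1$ in the statement of Proposition~\ref{Pellproba}(1) is simply $\alpha<Q-\gamma$, while ${\rm Re}((\alpha'-Q)^{2})>\beta^{2}$, which reads $(Q-\alpha)^{2}-2N>\beta^{2}$, is compatible with the prescribed range $\beta>Q-\alpha-\gamma/2$ if and only if $(Q-\alpha-\gamma/2)^{2}<(Q-\alpha)^{2}-2N$, i.e.\ $\alpha<Q-\tfrac{2N}{\gamma}-\tfrac{\gamma}{4}$. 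This precisely matches the upper bound on $\alpha$ in the statement.

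The one remaining subtlety is that Proposition~\ref{Pellproba}(1) is formulated with convergence in $e^{-(\beta+\gamma/2)\rho}\mc{D}'(\mc{Q})$, whereas Proposition~\ref{descconvergence} asks for convergence in $e^{-(\beta+\gamma/2)\rho}L^{2}$. I would address this by inspecting the Duhamel expansion used in the proof of Proposition~\ref{Pellproba}: the initial datum $\chi(c)e^{(\alpha-Q)c}\psi_{\mathbf{k}\mathbf{l}}$ already lies in $e^{-(\beta+\gamma/2)\rho}L^{2}$ under our constraint on $\beta$; the two perturbative contributions $\mathbf{R}(\alpha')(e^{\gamma c}V\chi F)$ and $\mathbf{R}_0(\alpha')(e^{(\alpha-Q)c}\tilde\chi F)$ lie in $e^{-\beta\rho}\mc{D}(\mc{Q})\subset e^{-\beta\rho}L^2\subset e^{-(\beta+\gamma/2)\rho}L^2$; and the relevant Duhamel integrals $\int_{0}^{t}e^{-u(\mathbf{H}-2\Delta_{\alpha'})}(\cdot)\,du$ converge to $\mathbf{R}(\alpha')$, respectively $\mathbf{R}_0(\alpha')$, in the operator norm of $\mc{L}(e^{-\beta\rho}L^{2})$ by Lemma~\ref{resolventweighted} (and its $V=0$ analogue) since ${\rm Re}((\alpha'-Q)^{2})>\beta^{2}$. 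Thus each individual term in the Duhamel decomposition converges in the stronger $L^{2}$ topology, which upgrades the $\mc{D}'(\mc{Q})$ convergence in Proposition~\ref{Pellproba}(1) to the desired $L^{2}$ convergence. This bookkeeping on weighted spaces is really the only technical point; beyond it, the proof is a direct substitution into an already established result.
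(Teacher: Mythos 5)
Your proof is correct and follows essentially the same route as the paper's: identify $\alpha'=Q+ip'$ with $2\Delta_{\alpha'}=2\Delta_\alpha+|\nu|+|\tilde\nu|$ so that $\sqrt{(p')^2-2\la_\ell}=i(Q-\alpha)$, apply Proposition~\ref{Pellproba}(1) with $\chi\equiv 1$, and check the two thresholds $\alpha<Q-\gamma$ and $(Q-\alpha)^2-2N>(Q-\alpha-\gamma/2)^2$. The paper applies Proposition~\ref{Pellproba}(1) directly to $F=\mathcal{Q}_{\alpha,\nu,\tilde{\nu}}\in\ker(\mathbf{P}-\la_j)$ rather than decomposing into the $\psi_{\mathbf{k}\mathbf{l}}$ and reassembling via \eqref{defdescendents}, but these two routes are the same by linearity of $\mathcal{P}_\ell(\alpha')$. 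Your closing remark on upgrading the convergence from $e^{-(\beta+\gamma/2)\rho}\mathcal{D}'(\mathcal{Q})$ (as literally stated in Proposition~\ref{Pellproba}(1)) to $e^{-(\beta+\gamma/2)\rho}L^2$ is a genuine and correct observation: the paper's proof of Proposition~\ref{descconvergence} silently takes this for granted, and the term-by-term Duhamel argument you sketch is indeed what justifies it.
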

\begin{proof}
Write  $|\nu|+|{\tilde{\nu}}|=\lambda_j$ for some $j$. We apply  Proposition \ref{Pellproba} item 1 but we have to make a small notational warning: indeed recall that in Section  \ref{sec:scattering}, eigenvalues are parametrized by $2\Delta_\alpha$ whereas here eigenvalues correspond to $ 2\Delta_\alpha+|\nu|+|{\tilde{\nu}}| = 2\Delta_\alpha+\lambda_j $. So let us call $\alpha'$ the $\alpha$ in the statement of Proposition  \ref{Pellproba}, and write it as $\alpha'=Q+ip$ with $p^2=2\la_j-(Q-\alpha)^2$ ($p\in i\R$) in such a way that 
$$2\Delta_{\alpha'}=2\Delta_{\alpha}+\lambda_j,\quad\text{otherwise stated }\sqrt{p^2-2\lambda_j}=i(Q-\alpha).$$
In particular ${\rm Im}\sqrt{p^2-2\lambda_j}=Q-\alpha>\gamma$ so that we can choose   $\chi=1$ in Proposition \ref{Pellproba} item 1.
Let  $\ell\geq 1$, $j\leq \ell$ and $F=\mathcal{Q}_{\alpha,\nu,{\tilde{\nu}}}$. Then the limit \eqref{descremi} exists in $e^{-(\beta+\gamma/2) \rho}L^2$  if
$\beta>{\rm Im}\sqrt{p^2-2\la_j}-\gamma/2=Q-\alpha-\gamma/2>0$ and $-p^2>\beta^2$.  In conclusion we get $\alpha<Q-\gamma/2=2/\gamma$   and  $-p^2>(Q-\alpha-\gamma/2)^2$. Substituting $ p^2=2\la_j-(Q-\alpha)^2$ in the latter, we arrive at the relation $(Q-\alpha)^2-2\lambda_j>(Q-\alpha-\gamma/2)^2$, which can be solved to find our  condition.
 \end{proof}

  \subsection{Stress Energy Field}\label{SET}

In this section we construct a probabilistic representation for the Virasoro descendants \eqref{psibasis}. This can be done in terms of a local field,  the {\it stress-energy tensor}, formally given for $z\in\D$ by
\begin{equation}\label{stress}
T(z):=Q\partial^2_{z}X(z)-(\partial_z X(z))^2+\E[(\partial_zX(z))^2].
\end{equation}
The stress tensor does not make sense as a random field but can be given sense at the level of correlation functions as the limit of   $T_\epsilon(z)$ defined by \eqref{stress} with $X$ replaced by  a regularized field $X_\epsilon$ which we take in the form
\begin{align}\label{XepsDdef}
X_{\epsilon}(z):=  \langle X, f_{\epsilon,z}\rangle_{\D}
\end{align}
for suitable test function $f_{\epsilon,z}$.  
We will use two regularizations in what follows. For the first one we take $f_{\epsilon,z}(u)=\frac{1}{\epsilon^2} \varrho(\frac{z-u}{\epsilon} )$ with non-negative $\varrho\in C_c^\infty(\C)$ with $\varrho(0)=1$ and  $\int_{\C} \varrho(x) \dd x=1$.  For this regularization we have for $t\geq 0$ the following scaling relation
\begin{align}\label{scalingrelation}
S_{-t}T_\epsilon(z)=e^{-2t}T_{e^{-t}\epsilon}(e^{-t}z).
\end{align}
For the second regularization we write $z=e^{-t+i\theta}$ and take  $f_{\epsilon,z}(e^{-s+i\theta'})=\frac{1}{2\pi\epsilon}\rho(\frac{t-s}{\epsilon})\sum_{|n|<\epsilon^{-1}}e^{in(\theta-\theta')}$ with $\rho\in C_c^\infty(\R)$, $\rho(0)=1$ and  $\int_{\R} \rho(x) \dd x=1$. This regularization has the property that $T_\epsilon(z)$ depends only on the Fourier components $\varphi_n$ with $|n|<1/\epsilon$ when we decompose $X=P\varphi+X_\D$. We denote also by $\bar T(z)$  the complex conjugate of $T(z)$.

Our goal is  to express the action of the Virasoro generators \eqref{virassoro} and \eqref{virassorotilde} on the state $U_0F$  in terms of the states
\begin{align}\label{TbarT}
U_0\big(\prod_{i=1}^kT(u_i)\prod_{j=1}^l \bar T(v_j) F\big) 
\end{align}
which will be defined as limits of regularized expressions.
We start by specifiying a suitable class of $F$ for which \eqref{TbarT} makes sense. 
 Let $\delta<1$ .We introduce the set
$\caE_\delta$ defined by  
\begin{equation}\label{defEcall} 
\caE_\delta:=\big\{f\in C_0^\infty(\delta \D) \mid f(e^{-t+i\theta})=\sum_{n\in\Z}f_n(t)e^{in\theta}, \text{ with }f_n\in C_0^\infty((-\ln\delta,\infty)) \text{ and }f_n=0\text{ for } |n| \text{ large enough}\big\}.
\end{equation}
Define $\caF_\delta\subset  \caF_{  \D}$ by  
\begin{equation}\label{Fform}
 \caF_\delta= {\rm span}\ \{ \prod_{i=1}^l \langle g_i,  c+X\rangle_\D e^{\langle f,c+X \rangle _\D}; \; l \geq 0, \:  f,g_i \in \caE_\delta \big \} .
\end{equation}
with $l \geq 0$, $f,g_i \in \caE_\delta$.
We note that $U_0F\in e^{\beta|c|}L^2(\R\times\Omega_\T)$ for $\beta>|f_0-Q|$ where   $f_0=\langle f,1 \rangle_\D$ and is in the domain of the Virasoro operators  $\mathbf{L}_{-\nu}^0\tilde{\mathbf{L}}_{-\tilde\nu}^0$ defined in Subsection \ref{repth} since it depends on a finite number of $\varphi_n$. Let  
 \begin{align}\label{caOt}
 \caO_\delta=\{({\bf u},{\bf v})\in\C^{m+n}\,|\, \delta<|u_j|,|v_j|<1,\; \forall j\neq j', \:  \  |u_j|\neq |u_{j'}|, |v_j|\neq |v_{j'}|, |u_j|\neq |v_{j'}| \}.
\end{align}
We have the simple:

\begin{proposition}\label{basicstresss}
Let $F\in\caF_{\delta}$. 
Then the functions
\begin{align*}
({\bf u},{\bf v})\to 
U_0\big(\prod_{i=1}^kT_{\epsilon_i}(u_i)\prod_{j=1}^l \bar T_{\epsilon'_j}(v_j) F\big)
\end{align*}
are continuous on  $\caO_\delta$ with values  in $e^{\beta|c|}L^2(\R\times\Omega_\T)$ for $\beta>| f_0-Q|$ and converge uniformly on compact subsets of $\caO_\delta$  as $(\epsilon_i)_i$ and $(\epsilon'_j)_j$ tend successively to $0$ in whatever order. The limit is independent of the regularization or the order along which limits have been taken and denoted by \eqref{TbarT} and it   defines a  function holomorphic in $\bf u$ and anti-holomorphic in $\bf v$ in the region $\caO_\delta$ taking values in  $e^{\beta|c|}L^2(\R\times\Omega_\T)$.
\end{proposition}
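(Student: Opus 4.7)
The plan is to reduce the claim to a finite Gaussian calculation via Cameron--Martin and Wick's theorem, and then extract holomorphy and $L^2$-estimates from the explicit form of the disk Green function.

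First, I would apply Cameron--Martin to the exponential factor $e^{\langle f, c+X\rangle_\D}$ inside $F$. Writing $X|_\D = P\varphi + X_\D$ and conditioning on $\varphi$, the identity $\E_\varphi[G(X_\D) e^{\langle f, X_\D\rangle_\D}] = e^{\frac{1}{2}\langle f, G_\D f\rangle_\D}\E_\varphi[G(X_\D + G_\D f)]$ (with $G_\D f(\cdot) := \int_\D G_\D(\cdot,y) f(y)\,dy$) pulls out an explicit deterministic factor $e^{(f_0-Q)c}e^{\langle f, P\varphi\rangle_\D}e^{\frac{1}{2}\langle f, G_\D f\rangle_\D}$, with $f_0:=\langle f,1\rangle_\D$, and reduces $U_0\bigl(\prod_i T_{\epsilon_i}(u_i)\prod_j \bar T_{\epsilon'_j}(v_j) F\bigr)(c,\varphi)$ to a Gaussian expectation of a product of stress-tensor insertions and linear factors $\langle g_i, c + X + G_\D f\rangle_\D = c\,g_{i,0} + \langle g_i, X + G_\D f\rangle_\D$, where $g_{i,0}=\langle g_i,1\rangle_\D$. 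Note that the $c$-dependence is now explicit and polynomial.

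Next, I would evaluate this expectation by Wick's theorem. Each $T_\epsilon(u)$ is quadratic and each $\langle g_i,\cdot\rangle_\D$ is linear in the Gaussian field, so the expectation expands into a finite sum of products of contractions of three types: Green-function pairings $\int \partial_u^a G_\D(u,x) g(x)\,dx$ between a stress tensor and a linear factor, mixed double-derivative pairings $\partial_u^a \partial_{u'}^b G_\D(u,u')$ between two stress tensors at distinct insertion points (together with the analogous mixed holomorphic/antiholomorphic pairings for the $T/\bar T$ cross terms), and self-contractions at coincident points of a single $T_\epsilon$, which are precisely cancelled by the Wick counter-term $\E[(\partial_z X_\epsilon(z))^2]$ built into the definition of $T_\epsilon$. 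The conditions defining $\mathcal{O}_\delta$ are exactly what is needed: the radii inequalities force all insertion points to be pairwise distinct, and the support condition $f,g_i \in \mathcal{E}_\delta\subset C_c^\infty(\delta\D)$ together with $|u_i|,|v_j|>\delta$ keep every cross contraction away from its diagonal. Hence every term converges to its $\epsilon=0$ value uniformly on compacts of $\mathcal{O}_\delta$, in any order of the limits, and independently of the mollifier.

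Holomorphy in $\mathbf{u}$ and antiholomorphy in $\mathbf{v}$ then follow from the explicit formula $\partial_u G_\D(u,x) = -\tfrac12 \bar x(1-u\bar x)^{-1} - \tfrac12 (u-x)^{-1}$, which is holomorphic on $\D\setminus\{x\}$, together with the analogous statement for $\partial_u^a\partial_{u'}^b G_\D(u,u')$ away from $u=u'$; uniform convergence on compacts of $\mathcal{O}_\delta$ then transmits this holomorphy to the limit. Finally, the full expression takes the form $e^{(f_0-Q)c}$ times a polynomial in $c$ whose coefficients are polynomials in the finitely many Fourier modes of $\varphi$ probed by $f$ and the $g_i$ (recall elements of $\mathcal{E}_\delta$ have finite angular Fourier support), so it lies in $e^{\beta|c|}L^2(\R\times\Omega_\T)$ for every $\beta>|f_0-Q|$. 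The main subtlety to pin down carefully is the uniformity of the Wick-expansion bounds in $(\mathbf{u},\mathbf{v})$ at the level of the $L^2(\Omega_\T)$-norm of the coefficients; this reduces to uniform smoothness of the Green-function kernels on compact subsets of $\mathcal{O}_\delta$ combined with finite Gaussian moments, both routine but requiring careful bookkeeping.
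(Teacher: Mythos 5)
Your proof is correct and follows essentially the same route as the paper's: reduce (by linearization in $\lambda_i$) to the pure exponential $F$, apply Cameron--Martin to absorb $e^{\langle f,c+X\rangle_\D}$, expand the conditional expectation by Gaussian integration by parts (Wick), and read off holomorphy of the $\boldsymbol{\epsilon},\boldsymbol{\epsilon}'\to 0$ limit together with its uniformity on compacts of $\caO_\delta$ from the explicit form of $\partial_z G_\D$. One inaccuracy worth correcting: the final expression is \emph{not} a function of only finitely many Fourier modes of $\varphi$ --- the stress-tensor contractions with the harmonic background produce factors $\partial^a P\varphi(u_i)$ and $\bar\partial^a P\varphi(v_j)$ which involve all modes; the $e^{\beta|c|}L^2(\R\times\Omega_\T)$ membership instead follows because these factors are jointly Gaussian with moments bounded uniformly on compacts $\{|u_i|,|v_j|\le r<1\}$, and the remaining $\varphi$-dependence is the lognormal factor $e^{\langle P\varphi,f\rangle_\D}$ which has all $L^p$ moments.
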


\begin{proof}
To keep the notation simple we consider only the case $j=0$ in \eqref{TbarT} and the case where $F\in \caF_{\delta  }$ is of the form $F=e^{\langle f,c+X \rangle _\D}$ with $f\in  \caE_\delta$. By replacing $f$ by $f+\sum_i\lambda_i g_i$ with $g_i \in \caE_\delta$ and differentiating at $\lambda_i=0$ we can deduce the result for general $F$.  For such $F$ we have
\begin{align}\label{ozerof}
(U_0F)(c,\varphi)=e^{(f_0-Q)c}e^{\langle P\varphi,f \rangle_\D}e^{\hf \langle f, G_\D f \rangle_\D}.
\end{align}
By Gaussian integration by parts (see \eqref{basicipp}) the right-hand side of \eqref{TbarT} is a sum of terms 
\begin{align}\label{messy}
{\rm const}\times\prod_i(\partial^{a_i}P\varphi_{\epsilon_i}(u_i))^{b_i}\prod_{j<k}(\partial^{b_{jk}}_{u_j}\partial^{c_{jk}}_{u_k}(f_{\epsilon_j,u_j},G_\D f_{\epsilon_k,u_k})_\D
^{d_{jk}})\prod_l\partial^{d_l}_{u_l}(f_{\epsilon_l,u_l},G_\D f)_\D
^{e_l}U_0F
\end{align}
where $a_i,b_{jk}, c_{jk}\in \{1,2\}$, $b_i,d_{jk},e_l\in \{0,1,2\}$, $d_l\in \{1,2\}$ and $G_\D$ is the Dirichlet Green function \eqref{dirgreen}. 
The functions $(f_{\epsilon_j,u_j},G_\D f_{\epsilon_k,u_k})_\D
$ and $(f_{\epsilon_l,u_l},G_\D f)_\D$  converge uniformly on compacts of $\caO_\delta$ to smooth functions.  From \eqref{dirgreen}  
 we get
\begin{align}\label{delG}
\partial_zG_\D (z,u)=-\hf(\frac{1}{z-u}-\frac{1}{z-\frac{1}{\bar u}}).
\end{align}
Hence the limit of the second product in \eqref{messy} is holomorphic since $b_{jk}, c_{jk}>0$. For the third product, we get convergence to terms of the form 
$$\int_\D \partial_u G_\D(u,z)f(z) \dd z=-\tfrac{1}{2}\sum_{n=0}^\infty (f_n u^{-n-1}+f_{-n-1}u^n)
$$
or its $\partial_u$ derivative where $f_n=\langle f,u^n \rangle_\D$ and $f_{-n}=\langle f,\bar u^n \rangle_\D$ and the sum is finite and holomorphic.  
Finally,  recalling \eqref{harmonic} it is easy to check that the first product converges in $L^2(\Omega_\T)$ uniformly on compacts of $\caO_\delta$ to a holomorphic function $g({\bf u})$   and $g({\bf u})U_0F\in  e^{\beta|c|}L^2(\R\times\Omega_\T)$.
\end{proof}


Now we will consider contour integrals of observables of the type \eqref{TbarT}, for which we use the following notation: for $f:\D\to\C$ and  $\delta>0$ 
\begin{align*}
\oint_{|z|=\delta }f(z) \dd z: =i \delta \int_0^{2\pi}f( \delta e^{i\theta}) e^{i \theta} \dd \theta,\ \  \oint_{|z|={\delta}}f(z)\dd \bar z:=i {\delta} \int_0^{2\pi}f({\delta} e^{i\theta})e^{-i \theta} \dd \theta.
\end{align*}
Then we have

\begin{lemma}\label{keepcooldontStress}
 Let $F\in\caF_{\delta}$   and $\delta'>\delta$.  
Then for  all $n>0$
\begin{align}\label{basicT}
\tfrac{1}{2\pi i}\oint_{|z|=\delta'} z^{1-n}U_0(T(z)F) \dd z=\mathbf{L}_{-n}^0U_0F,\ \ \tfrac{1}{2\pi i}\oint_{|z|=\delta'} \bar z^{1-n}U_0(\bar{T}(z)F)d\bar z=\tilde{\mathbf{L}}_{-n}^0U_0F.
\end{align}
\end{lemma}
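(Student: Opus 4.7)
My plan is to first reduce the general case to $F$ of a special exponential form, then explicitly compute both sides of \eqref{basicT} via Gaussian integration by parts and Laurent expansion, and finally match Fourier modes.

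For the reduction, I would use that by polarization/differentiation in auxiliary parameters (exactly as in the proof of Proposition \ref{basicstresss}, where we replace $f$ by $f+\sum_i \lambda_i g_i$ and differentiate at $\lambda_i=0$), both sides of \eqref{basicT} are continuous multilinear functions of the data defining $F\in\caF_\delta$. It therefore suffices to establish \eqref{basicT} on vectors of the form $F=e^{\langle f,c+X\rangle_\D}$ with $f\in\caE_\delta$, for which $U_0F(c,\varphi)=e^{(f_0-Q)c}e^{\langle P\varphi,f\rangle_\D}e^{\frac{1}{2}\langle f,G_\D f\rangle_\D}$ by \eqref{ozerof}.

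Next I would compute $U_0(T(z)F)$. Writing $X|_\D=P\varphi+X_\D$ and $T_\epsilon=Q\pl_z^2X_\epsilon-:\!(\pl_zX_\epsilon)^2\!:$ with the Wick subtraction, Cameron--Martin (equivalently Gaussian IBP) shifts $X_\D\mapsto X_\D+\int G_\D(\cdot,w)f(w)\,dw$ under $e^{\langle f,X_\D\rangle}$, so that setting $\Phi(z):=\int G_\D(z,w)f(w)\,dw$ and passing to the limit in $\epsilon$ (justified by Proposition \ref{basicstresss}) yields the pointwise identity
\[
U_0(T(z)F)(c,\varphi)=\bigl[\,Q\pl_z^2(P\varphi(z)+\Phi(z))-(\pl_zP\varphi(z)+\pl_z\Phi(z))^2\,\bigr]\,U_0F(c,\varphi),
\]
valid in $e^{\beta|c|}L^2(\R\times\Omega_\T)$ for $\delta<|z|<1$. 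The key computations here use $\E_\varphi[\pl_zX_{\D,\epsilon}(z)\,e^{\langle f,X_\D\rangle}]\propto \pl_z\Phi_\epsilon(z)$, that $\E[(\pl_zP\varphi(z))^2]=0$ (since only positive Fourier modes appear in $P\varphi$), and that $\,P\varphi\,$ and $X_\D$ are independent.

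For the contour integral $\frac{1}{2\pi i}\oint_{|z|=\delta'}z^{1-n}\cdot dz$, which extracts the coefficient of $z^{n-2}$, I would Laurent-expand on $|z|=\delta'>\delta$: $\pl_z P\varphi(z)=\sum_{m\ge 1}m\varphi_mz^{m-1}$, and from \eqref{delG} and $f\in\caE_\delta$,
\[
\pl_z\Phi(z)=-\tfrac{1}{2}\sum_{k\ge 0}f^{(k)}z^{-k-1}-\tfrac{1}{2}\sum_{k\ge 0}\bar f^{(k+1)}z^{k},
\]
with $f^{(k)}:=\int w^kf(w)\,dw$, $\bar f^{(k)}:=\int\bar w^kf(w)\,dw$. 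This gives $\pl_zP\varphi+\pl_z\Phi=\sum_{m\in\Z}\alpha_mz^{m-1}$ with explicit $\alpha_m$. The coefficient of $z^{n-2}$ then reads
\[
Qn(n-1)\varphi_n-\tfrac{Q}{2}(n-1)\bar f^{(n)}-\sum_{m+m'=n}\alpha_m\alpha_{m'}.
\]

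In parallel I would compute $\mathbf{L}_{-n}^0U_0F$ directly from \eqref{virassoro}. Since $U_0F$ is the exponential of a quantity linear in $(c,\varphi_n,\bar\varphi_n)$, each $\mathbf{A}_m$ acts as multiplication by $\frac{i}{2}\beta_m$ where $\beta_m=f^{(m)}$ for $m\ge 0$ and $\beta_{-k}=\bar f^{(k)}-2k\varphi_k$ for $k>0$; moreover, for $n>0$ the commutator $[\mathbf{A}_{-n-m},\mathbf{A}_m]$ vanishes identically, so all normal-ordered pairs act as $-\frac{1}{4}\beta_{-n-m}\beta_m$. Thus
\[
\mathbf{L}_{-n}^0U_0F=\Bigl[-\tfrac{(n-1)Q}{2}\beta_{-n}-\tfrac{1}{4}\sum_{p+q=-n}\beta_p\beta_q\Bigr]U_0F.
\]
A direct term-by-term comparison using $\beta_{-n}=\bar f^{(n)}-2n\varphi_n$ and splitting $\sum_{m+m'=n}\alpha_m\alpha_{m'}$ and $\sum_{p+q=-n}\beta_p\beta_q$ into the cases (both indices positive/zero/negative) matches the two expressions. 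The anti-holomorphic identity for $\tilde{\mathbf{L}}_{-n}^0$ follows by the same argument applied to $\bar T$, which only probes the $\bar\varphi_m$ modes of $P\varphi$.

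The main obstacle here is not conceptual but algebraic: the careful bookkeeping of the three cases (both indices of the same sign, mixed, and with one zero) to verify the mode-by-mode identity $\sum_{m+m'=n}\alpha_m\alpha_{m'}=\frac{1}{4}\sum_{p+q=-n}\beta_p\beta_q$, together with ensuring that the passage to the limit $\epsilon\to 0$ and the contour integration commute (which is legitimate by Proposition \ref{basicstresss}, since convergence holds uniformly in $z$ on $|z|=\delta'$ with values in $e^{\beta|c|}L^2$). Independence of $\delta'\in(\delta,1)$ is automatic by holomorphy of $U_0(T(z)F)$ in that annulus, again from Proposition \ref{basicstresss}.
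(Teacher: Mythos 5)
Your proof is correct and follows essentially the same route as the paper: reduce to exponential functionals $F=e^{\langle f,c+X\rangle_\D}$, apply Gaussian integration by parts (Cameron--Martin) to write $U_0(T(z)F)$ as an explicit multiplication operator acting on $U_0F$, and match against the Virasoro action. The only organizational difference is that the paper recognizes $U_0(\pl_z X(z)F)=i\sum_n z^{n-1}\mathbf{A}_{-n}U_0F$ as a generating-function identity and builds the normal-ordered $:\mathbf{A}_{-n}\mathbf{A}_{-m}:$ directly from the double integration by parts (so no mode-by-mode case analysis is needed), whereas you Laurent-expand and compare coefficients, using the observation that $[\mathbf{A}_{-n-m},\mathbf{A}_m]=0$ for $n>0$ to make the normal ordering irrelevant; both are valid, and your shortcut is specific to $n>0$, which is exactly the range of the lemma.
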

\begin{proof}
Here again (for simplicity) we consider the case where $F\in \caF_{\delta \D}$ is of the form $F=e^{\langle f,c+X \rangle _\D}$ with $f\in  \caE_\delta$. 
Let us write the  integration by parts terms explicitly. First
\begin{align*}
U_0(\partial_zX(z)F) 
=&\partial_zP\varphi(z)U_0 F+\int_\D  
\partial_zG_\D (z,u)
f(u) \dd u\,U_0F. 
\end{align*}
From \eqref{delG}  
 we get
\begin{align}\label{delG2}
\partial_zG_\D (z,u)
=-\hf\sum_{n=0}^\infty(u^nz^{-n-1}+\bar u^{n+1}z^n)
\end{align}
which converges since $|u|<\delta<\delta'=|z|<1$. Therefore
$$\int_\D \partial_z G_\D(z,u)f(u) \dd u=-\tfrac{1}{2}\sum_{n=0}^\infty (f_n z^{-n-1}+f_{-n-1}z^n)$$
where $f_n=\langle f,u^n \rangle_\D$ and $f_{-n}=\langle f,\bar u^n \rangle_\D$ for $n\geq 0$.
Recalling \eqref{harmonic}, we have obtained
\begin{align}\label{lll}
U_0(\partial_zX(z)F)&=\sum_{n\geq 0}(n\varphi_nz^{n-1}-\hf (f_n z^{-n-1}+f_{-n-1}z^n))U_0F\\&=\sum_{n\in\Z}z^{n-1}(n\varphi_n1_{n>0}-\hf f_{-n} )U_0F.\nonumber
\end{align}
By \eqref{ozerof}
\begin{align}\label{ozerof1}
(U_0F)(c,\varphi)=e^{(f_0-Q)c}e^{\sum_{n>0}(\varphi_nf_{n}+\varphi_{-n}f_{-n})}e^{\hf \langle f, G_\D f \rangle_\D}
\end{align}
so that
\begin{align}\nonumber
U_0(\partial_zX(z)F)&=(-\hf f_0z^{-1}+\sum_{n\neq 0}z^{n-1}(n\varphi_n1_{n>0}-\hf \partial_{-n}))U_0F\\&=i\sum_{n\in\Z}z^{n-1}{\bf A}_{-n}U_0F \label{llll}
\end{align}
where we recall that ${\bf A}_0=\frac{i}{2}(\partial_c+Q)$. Hence
\begin{align}\label{QtermT}
U_0(Q\partial^2_zX(z)F)=&-iQ \sum_{n\in \Z}(n+1)z^{-n-2} \mathbf{A}_{n}U_0 F.
\end{align}

Next consider the quadratic terms in $T$ and use Gaussian integrating by parts twice to get  
\begin{align*}
U_0(((\partial_z X(z))^2-\E[(\partial_zX(z))^2])F)
&= \Big(\partial_z P\varphi(z)+\int_\D \partial_z G_\D(z,u)f(u) \dd u
\Big)^2U_0F\\&=(\sum_{n\in\Z}z^{n-1}(n\varphi_n1_{n>0}-\hf f_{-n} ))^2U_0F\\&=\sum_{n,m}z^{n+m-2} (-\mathbf{A}_{-n} \mathbf{A}_{-m}+\tfrac{m}{2}\delta_{m,-n}1_{m>0})U_0F\\&=-\sum_{n,m}z^{n+m-2} :\mathbf{A}_{-n} \mathbf{A}_{-m}:U_0F
\end{align*}
where we used \eqref{lll} in the second step,  \eqref{llll} in the third step and $\mathbf{A}_{m} \mathbf{A}_{-m}=\mathbf{A}_{-m} \mathbf{A}_{m}+\frac{m}{2}$ for $m>0$ in the last step. The sum converges in $e^{\beta|c|}L^2(\R\times\Omega_\T)$. Combining this with \eqref{QtermT} the claim \eqref{basicT} follows upon doing the contour integral. The claim for $\bar T$ is proved in the same way.
\end{proof}

%

Let us now introduce some notation for the general correlations \eqref{TbarT}. Denote ${\bf u}=(u_1,\dots u_k)\in\D^k$. 
We define  nested contour integrals for $f:\D^k\times\D^j\to\C$  by 
\begin{align}\label{def_contour}
\oint_{|\mathbf{u}|=\boldsymbol{\delta} }\oint_{|\mathbf{v}|=\tilde{\boldsymbol{\delta}}}f ({\bf u},{\bf v})\dd {\bf \bar v}\dd {\bf u}:=\oint_{|u_k|=\delta_k}\dots \oint_{|u_1|=\delta_1} \oint_{|v_j|=\tilde{\delta}_{j}}\dots  \oint_{|v_1|=\tilde{\delta}_1}f({\bf u},{\bf v}) \dd \bar v_1\dots \dd \bar v_{j}  \dd u_1\dots \dd u_k.
\end{align}
where $\boldsymbol{\delta} :=(\delta_1,\dots, \delta_k)$ with $0<\delta_1<\dots<\delta_k<1$ and similarly for $\tilde{\boldsymbol{\delta}}$. Furthermore we always suppose $\delta_i\neq \tilde\delta_j$ for all $i,j$.
Next, for $\boldsymbol{\epsilon}\in (\R^+)^k$ and $\boldsymbol{\epsilon}'\in (\R^+)^{j}$ we set 
$T_{\boldsymbol{\epsilon}}( \mathbf{u}):=\prod_{i=1}^k T_{\epsilon_i}(u_i)$ (and similarly for the anti-holomorphic part) 
and given two Young diagrams $\nu=(\nu_i)_{1\leq i\leq k},\tilde\nu=(\tilde \nu_i)_{1\leq i\leq j},$ we denote $\mathbf{u}^{1-\nu}=\prod u_i^{1-\nu_i}$ and  $\bar{\mathbf{v}}^{1-\tilde{\nu}}=\prod \bar {v}_i^{1-\tilde{\nu}_i}$. 
 With these notations we have:

\begin{proposition}\label{FinalTbarT}
 Let $F\in\caF_{\delta}$ and 
 $\delta <\delta_1 \wedge \tilde{\delta}_1$. Then 
 \begin{align}\label{tbart11111}
(2\pi i)^{-k-j}\oint_{|\mathbf{u}|=\boldsymbol{\delta}}\oint_{|\mathbf{v}|=\tilde{\boldsymbol{\delta}}} \mathbf{u}^{1-\nu}\bar{\mathbf{v}}^{1-\tilde{\nu}} U_0\big(T({\bf u})\bar T({\bf v}) F\big)\dd {\bf \bar v}\dd {\bf u}={\bf L}_{-\nu }^0 {\bf L}_{-\tilde\nu}^0U_0 F.
\end{align}
\end{proposition}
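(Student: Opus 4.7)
The plan is to proceed by induction on $k+j$, iterating the contour integrations from the largest radius inward; the base case $k+j=1$ is Lemma~\ref{keepcooldontStress} together with its antiholomorphic analogue (which I would prove by the same Gaussian integration by parts as Lemma~\ref{keepcooldontStress}, replacing $\partial_zP\varphi,\partial_zG_\D,\mathbf{A}_{-n}$ by $\bar\partial_{\bar z}P\varphi,\bar\partial_{\bar z}G_\D,\widetilde{\mathbf{A}}_{-n}$).

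The technical heart of the induction is the following extension of Lemma~\ref{keepcooldontStress} allowing additional ``spectator'' insertions strictly inside the contour: I aim to show that for $F\in\caF_\delta$, parameters $(\mathbf{u}',\mathbf{v}')$ satisfying the hypotheses of Proposition~\ref{basicstresss}, and $\rho'>\tau:=\max(\delta,\max_i|u_i'|,\max_l|v_l'|)$, one has
\[
\frac{1}{2\pi i}\oint_{|z|=\rho'}z^{1-n}U_0\bigl(T(z)T(\mathbf{u}')\bar T(\mathbf{v}')F\bigr)\dd z=\mathbf{L}^0_{-n}U_0\bigl(T(\mathbf{u}')\bar T(\mathbf{v}')F\bigr),
\]
and analogously with $T(z)$ replaced by $\bar T(z)$, yielding $\widetilde{\mathbf{L}}^0_{-n}$. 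To prove this I would rerun the computation of Lemma~\ref{keepcooldontStress} with $G:=T(\mathbf{u}')\bar T(\mathbf{v}')F$ in place of $F$, the goal being to establish the identity $U_0(\partial_zX(z)G)=i\sum_{n\in\Z}z^{n-1}\mathbf{A}_{-n}U_0G$ for $|z|>\tau$. The new contractions arising from Gaussian integration by parts against the insertions $T(u_i')$ and $\bar T(v_l')$ produce factors $\partial_z^aG_\D(z,u_i')$ and $\partial_z^aG_\D(z,v_l')$; because $|u_i'|,|v_l'|<|z|$, each of these admits a Laurent expansion in $z$ of the same form as \eqref{delG2}, and Wick's theorem identifies them with the factors already implicit in $U_0G$, so they are absorbed into that object without altering the operator $\mathbf{A}_{-n}$. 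The step from $\partial X$ to the full stress tensor $T=Q\partial^2X-:(\partial X)^2:$ via the normal-ordered square is carried out exactly as in Lemma~\ref{keepcooldontStress}, and Proposition~\ref{basicstresss} supplies all the required analyticity and $L^2$-integrability.

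Granted this extension, I would iterate as follows. Enumerating the $k+j$ radii $\{\delta_i,\tilde\delta_l\}$ in strictly decreasing order (possible since they are all distinct), I would perform the corresponding integrations in that order. At each step the current contour has the largest remaining radius, so all other insertion points together with the support of $F$ lie strictly inside it; the extended Lemma thus applies and extracts either $\mathbf{L}^0_{-\nu_i}$ or $\widetilde{\mathbf{L}}^0_{-\tilde\nu_l}$. Since these Virasoro generators act on $L^2(\R\times\Omega_\T)$ only in the $(c,\varphi)$ variables, they commute with the remaining integrations (over the parameters $u_{i'},v_{l'}$) and may be pulled outside them. After $k+j$ steps the result is the composition of all extracted operators, ordered with the generator from the largest radius on the far left and the one from the smallest on the far right. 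Using $[\mathbf{L}^0_n,\widetilde{\mathbf{L}}^0_m]=0$ to disentangle the holomorphic and antiholomorphic factors, this product is exactly $\mathbf{L}^0_{-\nu_k}\cdots\mathbf{L}^0_{-\nu_1}\widetilde{\mathbf{L}}^0_{-\tilde\nu_j}\cdots\widetilde{\mathbf{L}}^0_{-\tilde\nu_1}U_0F=\mathbf{L}^0_{-\nu}\widetilde{\mathbf{L}}^0_{-\tilde\nu}U_0F$, as claimed. The main obstacle will be the extended Lemma itself: the composite $T(\mathbf{u}')\bar T(\mathbf{v}')F$ does not lie in $\caF_{\delta'}$ for any $\delta'$, so Lemma~\ref{keepcooldontStress} cannot be invoked as a black box; one must redo the Wick bookkeeping ``with the extra insertions as spectators,'' exploiting that the new contractions remain Laurent series of the same type in $z$ as those originating from~$F$.
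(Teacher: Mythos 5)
Your overall induction scheme — iterating the contour integrations from the largest radius inward, extracting one Virasoro generator at each stage, and commuting it through the remaining inner integrals — is exactly the scheme the paper uses. The divergence, and the place where you have a real gap, is in the single induction step.

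You plan to prove a version of Lemma~\ref{keepcooldontStress} directly for the unregularized object $G:=T(\mathbf{u}')\bar T(\mathbf{v}')F$. But the entire mechanism of that Lemma relies on $F$ being of the explicit exponential form $e^{\langle f,c+X\rangle_\D}$: after Gaussian integration by parts, the factor $\int_\D \partial_z G_\D(z,u)\frac{\delta F}{\delta X(u)}\dd u$ collapses to a scalar Laurent series times $U_0F$ precisely because $\delta F/\delta X(u)=f(u)F$, and this is exactly what matches the $\partial_{-n}$ part of $\mathbf{A}_{-n}$ acting on $U_0F$. For $G=T(\mathbf{u}')\bar T(\mathbf{v}')F$ the variational derivative $\delta G/\delta X(u)$ is \emph{not} a scalar multiple of $G$ — it produces new objects involving $\partial T(u_i')$, $\partial^2 T(u_i')$, etc. — so the claimed identity $U_0(\partial_zX(z)G)=i\sum_n z^{n-1}\mathbf{A}_{-n}U_0G$ does not follow by "rerunning" the same bookkeeping; the phrase "Wick's theorem identifies them with the factors already implicit in $U_0G$" is not a proof. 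Moreover $G$ itself is only defined as a limit of regularized composites, so to integrate by parts at all you have to introduce regularizations $G_\epsilon$ — at which point you are back to needing to commute $\mathbf{L}^0_{-n}$ with the $\epsilon\to 0$ limit, which is the genuine technical difficulty.

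The paper resolves this differently from what you sketch: it keeps a regularization on the inner (spectator) insertions, observes that (with the second, Fourier-truncated regularization) $T_{\boldsymbol{\epsilon}^{(k)}}(\mathbf{u}^{(k)})F$ \emph{does} lie in $\caF_{\delta_k}$, and then applies Lemma~\ref{keepcooldontStress} to it as a black box for the outermost contour. The price it pays — and where most of the length of the proof goes — is justifying that $\mathbf{L}^0_{-\nu_k}$ commutes with the $\boldsymbol{\epsilon}^{(k)}\to 0$ limit, via the observation that $I_{\boldsymbol{\epsilon}^{(k)}}=Q_k(\boldsymbol{\epsilon}^{(k)},\varphi)U_0F$ for a polynomial $Q_k$ in a uniformly bounded number of Fourier modes $\varphi_n$ with uniformly bounded degree and converging coefficients; this in turn rests on an explicit graph/Laurent analysis of the Wick contractions. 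You acknowledge that the composite does not lie in $\caF_{\delta'}$ and flag the issue honestly at the end of your write-up, but you do not supply the argument that bridges it, and the bridge you propose (redo the Wick computation directly on the unregularized $G$) would not go through as stated. To close the gap you should either (i) adopt the paper's regularize-spectators-and-commute-limits strategy, including the finitely-many-modes/bounded-degree estimate on $Q_k$, or (ii) prove the asserted identity $U_0(\partial_zX(z)G)=i\sum_n z^{n-1}\mathbf{A}_{-n}U_0G$ for your class of $G$ by an independent argument that handles the non-scalar variational derivative honestly.
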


\begin{proof}
For simplicity consider again the case with only $T$ insertions. We proceed by induction in $k$. By Lemma \ref{keepcooldontStress} the claim holds for $k=1$. Suppose it holds for $k-1$.  We use the second regularization introduced above. This entails that $T_\epsilon(u_l)\in \caF_{\delta_{l+1}} $ for $\epsilon$ small enough.  By Proposition \ref{basicstresss}  we have
\begin{align*}
\oint_{|\mathbf{u}|=\boldsymbol{\delta}} \mathbf{u}^{1-\nu} U_0\big(T({\bf u}) F\big)\dd  {\bf u}&=\lim_{\boldsymbol{\epsilon}^{(k)}\to 0} \lim_{\epsilon_k\to 0}
\oint_{|\mathbf{u}|=\boldsymbol{\delta}} \mathbf{u}^{1-\nu} U_0\big(T_{\epsilon_k}(u_k)T_{\boldsymbol{\epsilon}^{(k)}}({\bf u}^{(k)}) F\big)\dd  {\bf u}\\
&=\lim_{\boldsymbol{\epsilon}^{(k)}\to 0}
\oint_{|\mathbf{u}|=\boldsymbol{\delta}} \mathbf{u}^{1-\nu} U_0\big(T(u_k)T_{\boldsymbol{\epsilon}^{(k)}}({\bf u}^{(k)}) F\big)\dd  {\bf u}
\end{align*}
where we introduced the notation 
$\boldsymbol{\epsilon}^{(k)}=(\epsilon_1,\dots, \epsilon_{k-1})$, ${\bf u}^{(k)}=(u_1,\dots, u_{k-1})$. We have, for $\boldsymbol{\epsilon}^{(k)}$ small enough that $T_{\boldsymbol{\epsilon}^{(k)}}({\bf u}^{(k)})F\in \caF_{\delta_k}$.
Hence by 
Lemma \ref{keepcooldontStress}
\begin{align*}
\frac{1}{2\pi i}\oint_{|u_k|=\delta_k}u_k^{1-\nu_k}U_0\big(T(u_k)T_{\boldsymbol{\epsilon}^{(k)}}({\bf u}^{(k)}) F\big)du_k={\bf L}_{-\nu_k}U_0\big(T_{\boldsymbol{\epsilon}^{(k)}}({\bf u}^{(k)}) F\big) .
\end{align*}
 From \eqref{messy} we infer that $U_0\big(T_{\boldsymbol{\epsilon}^{(k)}}({\bf u}^{(k)}) F)=P_k(\boldsymbol{\epsilon}^{(k)},\varphi)U_0F$ where $P_k(\boldsymbol{\epsilon}^{(k)},\varphi)$ is a polynomial in finitely many variables $\varphi_n$ (depending on $k$)  with coefficients continuous in ${\bf u}^{(k)}$ (and depending on $\boldsymbol{\epsilon}^{(k)}$).
Therefore we may commute ${\bf L}_{-\nu_k} $ and the integration to get
\begin{equation*}
(2\pi i)^{-k} \oint_{|\mathbf{u}|=\boldsymbol{\delta}} \mathbf{u}^{1-\nu} U_0\big(T(u_k)T_{\boldsymbol{\epsilon}^{(k)}}({\bf u}^{(k)}) F\big)\dd  {\bf u}= {\bf L}_{-\nu_k} \left ( (2\pi i)^{-k+1} \oint_{|\mathbf{u}^{(k)}|=\boldsymbol{\delta}^{(k)}} (\mathbf{u}^{(k)})^{1-\nu^{(k)}} U_0\big(T_{\boldsymbol{\epsilon}^{(k)}}({\bf u}^{(k)}) F\big)\dd  {\bf u}^{(k)}   \right ).
\end{equation*}
By the induction  hypothesis the term 
\begin{equation}\label{lala}
I_{\boldsymbol{\epsilon}^{(k)}}:=(2\pi i)^{-k+1} \oint_{|\mathbf{u}^{(k)}|=\boldsymbol{\delta}^{(k)}} (\mathbf{u}^{(k)})^{1-\boldsymbol{\nu}^{(k)}} U_0\big(T_{\boldsymbol{\epsilon}^{(k)}}({\bf u}^{(k)} )F\big)\dd  {\bf u}^{(k)}  
\end{equation}
converges to $\mathbf{L}_{-\boldsymbol{\nu}^{(k)}} U_0F$ as $\boldsymbol{\epsilon}^{(k)}$ goes to $0$. 
Moreover we claim that, for all fixed $k$, there exists $M,N<\infty$ s.t. $I_{\epsilon^{(k)}}=Q_k(\boldsymbol{\epsilon}^{(k)},\varphi)U_0F$ where $Q_k(\boldsymbol{\epsilon}^{(k)},\varphi)$ is a polynomial in  $\varphi_n$ with $|n|<N$ and of degree less than $M$. Furthermore the coefficients of   $Q_k(\boldsymbol{\epsilon}^{(k)},\varphi)$ converge as $\boldsymbol{\epsilon}^{(k)}\to 0$.  Therefore we can commute  ${\bf L}_{-\nu_k}$ and the limit to get 
\begin{align*}
\lim_{\boldsymbol{\epsilon}^{(k)}\to 0} {\bf L}_{-\nu_k}I_{\epsilon^{(k)}}={\bf L}_{-\nu_k}\lim_{\boldsymbol{\epsilon}^{(k)}\to 0}I_{\epsilon^{(k)}}={\bf L}_{-\nu_k}\mathbf{L}_{-\nu^{(k)}} U_0(F)=\mathbf{L}_{-\boldsymbol{\nu}} U_0(F)
\end{align*}
which completes the induction step.  

To prove the above claim we use \eqref{messy}. Let $\caG$ be the graph with vertex set $\{1,\dots,k\}$ and edges $\{i,j\}$ with $d_{ij}>0$. Let $\Gamma$ be a connected component of $\caG$. Then the number of vertices $i$ in $\Gamma$ s.t. $b_i>0$ is no more than two. Consider a connected component for which the number is two. This corresponds to a subproduct in \eqref{messy} of the form
\begin{align}\label{moremess}
f_{\boldsymbol{\epsilon}}(u_{i_1},\dots,u_{i_l})=\partial P\varphi_{\epsilon_{i_1}}(u_{i_1})\partial P\varphi_{\epsilon_{i_l}}(u_{i_l})\prod_{a=1}^{l-1}\partial_{u_{i_a}}\partial_{u_{i_{a+1}}}(f_{\epsilon_{i_a},u_{i_a}},G_\D f_{\epsilon_{i_{a+1}},u_{i_{a+1}}})_\D.
\end{align}
We have 
\begin{align*}
\partial P\varphi_{\epsilon}(u)=\sum_{n=1}^{M_\varepsilon}a_n(\epsilon)\varphi_nu^{n-1}
\end{align*}
where $M_\epsilon<\infty$ if $\epsilon>0$ and $a_n(\epsilon)$ converges as $\epsilon\to 0$.
Similarly, for $|u|<|u'|$,
\begin{align*}
\partial_{u}\partial_{u'}(f_{\epsilon,u},G_\D f_{\epsilon',u'})_\D=\sum_{m=0}^{N_{\epsilon,\epsilon'}}b_m(\epsilon,\epsilon')u^m{u'}^{-m-2}
\end{align*}
where $N_{\epsilon,\epsilon'}<\infty$ if $\epsilon,\epsilon'>0$ and $b_n(\epsilon,\epsilon')$ converge as $\epsilon,\epsilon'\to 0$. Insert these to \eqref{moremess}. 
To simplify notation denote  $u_{i_a}=v_a$, $\epsilon_{i_a}=\varepsilon_a$, $\delta_{i_a}=d_a $ and $\nu_{i_a}=\eta_a$.
The contour integral of $f_{\boldsymbol{\epsilon}}$ becomes
\begin{align*}
\oint_{|\mathbf{v}|=\bf d} \mathbf{v}^{1-\boldsymbol{\eta}} f_{\boldsymbol{\varepsilon}}({\bf v})d{\bf v}=\sum_{n_1,n_l} \varphi_{n_1}\varphi_{n_l}a_{n_1,n_l}(\boldsymbol{\varepsilon})
\end{align*}
where 
\begin{align*}
a_{n_1,n_l}(\boldsymbol{\varepsilon})=\sum_{m_1,\dots, m_{l-1}>0} a_{n_1,n_l}({\bf m},\boldsymbol{\varepsilon})
\int_{[0,2\pi]^l} e^{i(n_1\theta_1+n_l\theta_l+\sum_{i=1}^{l-1}\alpha_i(m_i\theta_i-(m_{i}+2)\theta_{i+1})+\sum_{i=1}^l(1-\eta_i)\theta_i}\dd\boldsymbol{\theta}
\end{align*}
and $\alpha_i=\pm 1$ depending on whether $|v_a|<|v_{a+1}|$ or the opposite. The $\boldsymbol{\theta}$ integral gives the constraints $n_1+\alpha_1m_1+1-\eta_1=0$,  $n_l-\alpha_{l-1}(m_{l-1}+2)+1-\eta_l=0$ and 
$\alpha_im_i-\alpha_{i-1}(m_{i-1}+2)+(1-\eta_i)=0$ for $i=2,\dots,l-1$. The last two imply
 $n_l-\alpha_1m_1+C(\boldsymbol{\alpha},\boldsymbol{\eta})=0$
  and combining this with the first constraint we get $a_{n_1,n_l}=0$ if $|n_1|, |n_l|>C(\boldsymbol{\alpha},\boldsymbol{\eta})$, where $C(\boldsymbol{\alpha},\boldsymbol{\eta})$ denotes a generic constant  depending only on $\boldsymbol{\alpha},\boldsymbol{\eta}$. Convergence of the $a_{n_1,n_l}(\boldsymbol{\varepsilon})$ as $\boldsymbol{\varepsilon}\to 0$ then follows from the convergence of the $(a_n)_n$'s and $(b_m)$'s and the fact that $a_{n_1,n_l}(\boldsymbol{\varepsilon})$ is a polynomial of these coefficients. This finishes the proof for the case $\Gamma$ has two vertices with $b_i>0$. The two other cases are similar.
\end{proof}


In the sequel we will apply Proposition \ref{FinalTbarT} to the function 
$$F=S_{e^{-t}}U_0^{-1}\Psi^0_\alpha=e^{\alpha \int_0^{2\pi} (c+X(e^{-t+i\theta}))\frac{d\theta}{2\pi}-\alpha Q t}
$$ where $\Psi^0_\alpha(c,\varphi)=e^{(\alpha-Q)c}$ (in this case integration against a function $f \in  \caE_\delta$ is replaced by an average on a circle but the previous considerations apply also). Then $U_0F=e^{-t{\bf H}^0}\Psi^0_\alpha=e^{-2t\Delta_\alpha}\Psi^0_\alpha$. Thus we arrive to the representation for the Virasoro descendants 
\begin{align}\label{finalcontour}
\Psi^0_{\alpha,\nu,\tilde\nu}=e^{2t\Delta_\alpha}
(2\pi i)^{-k-j}\oint_{|\boldsymbol{u}|=\boldsymbol{\delta}}\oint_{|\boldsymbol{v}|=\tilde{\boldsymbol{\delta}}} \mathbf{u}^{1-\nu}\bar{\mathbf{v}}^{1-\tilde\nu} U_0\big(T({\bf u})\bar T({\bf v}) S_{e^{-t}}U_0^{-1} \Psi^0_\alpha\big)\dd{\bf \bar v} \dd{\bf u}
\end{align}
where now $e^{-t}<\delta_1\wedge \tilde{\delta}_1$.

\subsection{Conformal Ward Identities}\label{sub:ward}
  
In this section we state the main identity relating a LCFT correlation function with a $V_\alpha$ insertion to a scalar product with the descendant states $\Psi_{\alpha,\nu,\tilde\nu}$ given by \eqref{defdescendents}.
 We have 
\begin{lemma} \label{T-lemma} We have 
 \begin{align*}
e^{-t{\bf H}}
U_0\big(T({\bf u})\bar T({\bf v}) S_{e^{-s}}U_0^{-1}\Psi^0_\alpha\big)=
\lim_{{\boldsymbol{\epsilon}\to 0}}\lim_{{\boldsymbol{\epsilon}'\to 0}}e^{-t{\bf H}}
U_0\big(T_{\boldsymbol{\epsilon}}({\bf u})\bar T_{\boldsymbol{\epsilon'}}({\bf v}) S_{e^{-s}}U_0^{-1}\Psi^0_\alpha\big)
\end{align*}
 where the limit is in $e^{-\beta \rho }L^2(\R\times \Omega_\T)$ for all $ \beta >Q-\alpha$, uniformly in $({\bf u},{\bf v})\in\caO_{e^{-s}}$  (recall \eqref{caOt}) and the LHS is  analytic in ${\bf u}$  and anti-analytic in ${\bf v}$ on $\caO_{e^{-s}}$.
 \end{lemma}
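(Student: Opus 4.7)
The plan is to reduce convergence of the whole vector to convergence of a polynomial-in-$\varphi$ factor in $L^2(\Omega_\T)$ (already provided by Proposition \ref{basicstresss}), and then to push the limit through the bounded semigroup $e^{-t\mathbf{H}}$ acting on an appropriate weighted $L^2$-space.

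I would first make $U_0F$ explicit when $F=S_{e^{-s}}U_0^{-1}\Psi^0_\alpha$. From \eqref{hstar}--\eqref{u00identity} at $\mu=0$ one has $U_0F=e^{-s\mathbf{H}^0}\Psi^0_\alpha$, and a direct Gaussian computation using the Feynman--Kac formula \eqref{u00identity} gives
\[
U_0F(c,\varphi)=e^{-2s\Delta_\alpha}\,e^{(\alpha-Q)c}.
\]
The factorisation \eqref{messy} from the proof of Proposition \ref{basicstresss} then yields
\[
U_0\bigl(T_{\boldsymbol{\epsilon}}({\bf u})\bar T_{\boldsymbol{\epsilon}'}({\bf v})F\bigr)(c,\varphi)=e^{-2s\Delta_\alpha}\,e^{(\alpha-Q)c}\,P_{\boldsymbol{\epsilon},\boldsymbol{\epsilon}'}(\varphi,{\bf u},{\bf v}),
\]
where $P_{\boldsymbol{\epsilon},\boldsymbol{\epsilon}'}$ is a polynomial in finitely many $\varphi_n$ whose coefficients are built from regularized derivatives of $P\varphi$ and the Dirichlet Green function evaluated at the $u_i,v_j$. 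By Proposition \ref{basicstresss}, $P_{\boldsymbol{\epsilon},\boldsymbol{\epsilon}'}\to P$ in $L^2(\Omega_\T)$ uniformly on compacts of $\caO_{e^{-s}}$ as $\boldsymbol{\epsilon},\boldsymbol{\epsilon}'\to 0$ in either order, with the limit $P(\cdot,{\bf u},{\bf v})$ holomorphic in ${\bf u}$ and anti-holomorphic in ${\bf v}$.

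The crucial bookkeeping step is that the $c$-dependence factors out as $e^{(\alpha-Q)c}$. For any $\beta>Q-\alpha$ a direct computation shows $\int_\R e^{2\beta\rho(c)}e^{2(\alpha-Q)c}\,\dd c<\infty$, so the factored form upgrades $L^2(\Omega_\T)$-convergence to convergence in $e^{-\beta\rho}L^2(\R\times\Omega_\T)$, uniformly on compacts of $\caO_{e^{-s}}$. Since $e^{-t\mathbf{H}}$ is a bounded operator on $e^{-\beta\rho}L^2$ by Lemma \ref{resolventweighted}, one commutes it with the limit to obtain the claimed identity. The analyticity of the left-hand side in ${\bf u}$ and anti-analyticity in ${\bf v}$ follow from Proposition \ref{basicstresss} applied to the pre-$e^{-t\mathbf{H}}$ vector, combined with the fact that bounded linear operators commute with complex differentiation on $e^{-\beta\rho}L^2$-valued holomorphic maps (via Cauchy's formula).

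The only subtle point, and the one I expect to require the most care, is reconciling the convergence space: Proposition \ref{basicstresss} states convergence in $e^{\beta|c|}L^2$, which is not contained in $e^{-\beta\rho}L^2$ at $c\to+\infty$. This gap is inessential here because the explicit prefactor $e^{(\alpha-Q)c}$ already provides exponential decay at $+\infty$ (as $\alpha<Q$), so convergence in $e^{-\beta\rho}L^2$ is strictly stronger than the generic statement of Proposition \ref{basicstresss} but follows immediately from the same factorised computation. Beyond this reconciliation the argument is essentially bookkeeping.
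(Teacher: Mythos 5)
Your proof is correct and takes the same route as the paper's one-line proof, which cites Proposition \ref{basicstresss} together with the operator bound \eqref{normetHweight}. You have usefully spelled out the point the paper glosses over: Proposition \ref{basicstresss} nominally gives convergence in $e^{\beta|c|}L^2(\R\times\Omega_\T)$, which is \emph{not} contained in $e^{-\beta\rho}L^2$ near $c\to+\infty$, but the explicit factorisation $U_0\bigl(T_{\boldsymbol{\epsilon}}({\bf u})\bar T_{\boldsymbol{\epsilon}'}({\bf v})F\bigr)=e^{-2s\Delta_\alpha}e^{(\alpha-Q)c}P_{\boldsymbol{\epsilon},\boldsymbol{\epsilon}'}(\varphi,{\bf u},{\bf v})$ with $\alpha<Q$ supplies the missing decay at $+\infty$ and so upgrades the convergence to $e^{-\beta\rho}L^2$ for $\beta>Q-\alpha$, after which $e^{-t\mathbf{H}}$ is bounded there by Lemma \ref{resolventweighted} and commutes with the limit and with complex differentiation.
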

 \begin{proof}
This  follows from Proposition \ref{basicstresss} and \eqref{normetHweight} applied with $ \beta >Q-\alpha$.
\end{proof}
The following lemma gives a probabilistic expression for $e^{-t{\bf H}}\Psi^0_{\alpha,\nu,\tilde\nu} $ (recall our convention for contour integrals in \eqref{def_contour}):
\begin{lemma}\label{TTLemma} Let $\delta_k\wedge \tilde\delta_{\tilde k}<e^{-t}$. Then 
\begin{align*}
 e^{-t\mathbf{H}}\Psi^0_{\alpha,\nu,\tilde\nu}
 =& \frac{e^{-(2\Delta_{\alpha}+|\nu|+|\tilde\nu|)t}}{(2\pi i)^{k+j}}
 \oint_{|\mathbf{u}|=\boldsymbol{\delta}}   \oint_{|\mathbf{v}|=\boldsymbol{\tilde\delta}}  
\mathbf{u}^{1-\nu}\bar{\mathbf{v}}^{1-\tilde\nu}  e^{-Q c} \E_\varphi\Big( T(\mathbf{u})\bar T(\mathbf{v}) V_\alpha(0) e^{-\mu e^{\gamma c}M_\gamma(\D\setminus\D_t)}\Big) \dd   \bar{\mathbf{v}}\dd   \mathbf{u}
 \end{align*}
where
\begin{align*}
 \E_\varphi\Big( T(\mathbf{u})\bar T(\mathbf{v}) V_\alpha(0) e^{-\mu e^{\gamma c}M_\gamma(\D\setminus\D_t)}\Big) :=
\lim_{{\boldsymbol{\epsilon}\to 0}}\lim_{{\boldsymbol{\epsilon}'\to 0}} \E_\varphi\Big( T_{\boldsymbol{\epsilon}}(\mathbf{u})\bar T_{\boldsymbol{\epsilon'}}(\mathbf{v}) V_\alpha(0) e^{-\mu e^{\gamma c}M_\gamma(\D\setminus\D_t)}\Big) 
 \end{align*}
and the limit exists in $e^{-\beta \rho }L^2(\R\times \Omega_\T)$ for all $\beta >Q-\alpha$ and is analytic in $\bf u$ and  anti-analytic in $\bf v$ in the region $\caO_{e^{-t}}$.
  \end{lemma}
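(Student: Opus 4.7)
\textbf{Proof plan for Lemma \ref{TTLemma}.} The plan is to start from the contour representation \eqref{finalcontour} with a parameter $s>t$, apply $e^{-t\mathbf{H}}$, move it inside the contour integrals using Lemma \ref{T-lemma} (valid because the integrand is continuous in $(\mathbf{u},\mathbf{v})$ and $e^{-t\mathbf{H}}$ is bounded on $e^{-\beta\rho}L^2$ for $\beta>Q-\alpha$ by \eqref{normetHweight}), and then rewrite the resulting integrand through a combined Feynman--Kac, Girsanov, and Markov argument. First I would observe that $S_{e^{-s}}U_0^{-1}\Psi^0_\alpha=e^{-2s\Delta_\alpha}V_{\alpha,s}(0)$, where $V_{\alpha,s}(0)(\phi):=e^{\alpha\bar{\phi}_s(0)-\frac{\alpha^2}{2}s}$ is the circle--average regularization of $V_\alpha(0)$ at scale $e^{-s}$, which absorbs the factor $e^{2s\Delta_\alpha}$ in \eqref{finalcontour} into the integrand $U_0(T(\mathbf{u})\bar T(\mathbf{v})V_{\alpha,s}(0))$.

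I would then compute $e^{-t\mathbf{H}}U_0(T_{\boldsymbol\epsilon}(\mathbf{u})\bar T_{\boldsymbol\epsilon'}(\mathbf{v})V_{\alpha,s}(0))(c,\varphi)$ via Proposition \ref{prop:FK}, expanding the inner factor $(U_0G)(c+B_t,\varphi_t)$ as an expectation over a fresh Dirichlet GFF $X'$ with boundary data $\varphi_t$. This produces an outer factor $e^{(\alpha-Q)B_t}$ coming from $V_{\alpha,s}(0)[c+B_t+X']$; applying Girsanov to absorb it into the outer GMC integral converts $|z|^{-\gamma Q}$ into $|z|^{-\gamma\alpha}$, which is precisely the weight associated with an insertion of $V_\alpha(0)$, and yields the clean prefactor $e^{-Q^2t/2+(\alpha-Q)^2t/2}=e^{-2t\Delta_\alpha}$. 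Next I would invoke the Markov property of the sphere GFF to stitch outer and inner expectations into a single expectation over the full sphere GFF $\hat X$; the pullback $X'(w)=\hat X(e^{-t}w)-B_t$ forces the stress--tensor insertions to transform by $T(u)[X']=e^{-2t}T(e^{-t}u)[\hat X]$ (analogously for $\bar T$), contributing a factor $e^{-2t(k+j)}$.

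At this stage the contour integrand involves insertions $T(e^{-t}\mathbf{u}),\bar T(e^{-t}\mathbf{v})$ in a genuine LCFT expectation with $M_\gamma$ on $\D\setminus\D_t$ and $V_{\alpha,s}(0)$ near the origin. Performing the change of variables $\mathbf{u}'=e^{-t}\mathbf{u}$, $\mathbf{v}'=e^{-t}\mathbf{v}$ transforms the contour radii from $\delta_i\in(e^{-s},1)$ to $\delta_i'\in(e^{-t-s},e^{-t})$ (which is in the admissible regime $\delta_k\wedge\tilde\delta_{\tilde k}<e^{-t}$ of the statement), and the Jacobian combined with the weights $\mathbf{u}^{1-\nu}\bar{\mathbf{v}}^{1-\tilde\nu}$ yields $e^{t(2k-|\nu|+2j-|\tilde\nu|)}$. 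Collecting:
\[
e^{-2t\Delta_\alpha}\cdot e^{-2t(k+j)}\cdot e^{t(2k-|\nu|+2j-|\tilde\nu|)}=e^{-(2\Delta_\alpha+|\nu|+|\tilde\nu|)t},
\]
which is exactly the prefactor on the right-hand side. Letting $s\to\infty$ replaces $V_{\alpha,s}(0)$ by the GMC vertex operator $V_\alpha(0)$ of \cite{DKRV} (the $s$-dependence otherwise cancels), and the limits $\boldsymbol\epsilon,\boldsymbol\epsilon'\to 0$ are controlled uniformly in $(\mathbf{u},\mathbf{v})$ by Proposition \ref{basicstresss} together with Lemma \ref{T-lemma}, giving the stated formula and the claimed $L^2$-convergence/analyticity properties.

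The main obstacle is the careful bookkeeping of the Girsanov/Markov identification: one must verify that the residual exponent $e^{\alpha(\hat B_{t+s}-\hat B_t)-\frac{\alpha^2}{2}s}$ arising after splicing the outer and inner fields does indeed combine with the scaling factors to produce $V_\alpha(0)$ in the $s\to\infty$ limit without any leftover terms, and that the final LCFT expectation $\E_\varphi[T(\mathbf{u})\bar T(\mathbf{v})V_\alpha(0)e^{-\mu e^{\gamma c}M_\gamma(\D\setminus\D_t)}]$ makes sense probabilistically for $|u_i|,|v_j|<e^{-t}$, which is where the hypothesis $\delta_k\wedge\tilde\delta_{\tilde k}<e^{-t}$ plays a role (it separates the $T,\bar T$ insertions from the annulus $\D\setminus\D_t$ supporting the Liouville interaction).
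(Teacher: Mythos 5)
Your proposal is correct and arrives at the same formula, but by a genuinely different (more explicitly probabilistic) route than the paper. The paper's proof is almost purely algebraic: it uses the fundamental relation $e^{-t\mathbf{H}}U=US_{e^{-t}}$ from \eqref{hstar}, commutes $S_{e^{-t}}$ through the regularized stress tensor via the dilation scaling $S_{e^{-t}}T_\epsilon(z)=e^{-2t}T_{e^{-t}\epsilon}(e^{-t}z)$ of \eqref{scalingrelation}, and then writes out $U$ explicitly, at which point $e^{-\mu e^{\gamma c}M_\gamma(\D)}\,S_{e^{-t}}\bigl(e^{\mu e^{\gamma c}M_\gamma(\D)}\bigr)=e^{-\mu e^{\gamma c}M_\gamma(\D\setminus\D_t)}$ appears at once; the prefactor $e^{-(2\Delta_\alpha+|\nu|+|\tilde\nu|)t}$ then comes from the change of variables $\mathbf{u}\mapsto e^{-t}\mathbf{u}$, $\mathbf{v}\mapsto e^{-t}\mathbf{v}$ in the contour integral, and analyticity from Proposition~\ref{basicstresss} / Lemma~\ref{T-lemma} lets one move the contours anywhere in $\caO_{e^{-t}}$. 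Your plan unwinds the same operator identity at the path level: Feynman--Kac for $e^{-t\mathbf{H}}$ (Proposition~\ref{prop:FK}), the domain Markov property to splice the outer and inner Gaussian expectations, and Girsanov on the radial Brownian motion $B_t$ to absorb $e^{(\alpha-Q)B_t}$ into $e^{-2t\Delta_\alpha}$ together with a reweighted GMC on the annulus. This is precisely the probabilistic content of the paper's proof of Proposition~\ref{prop:FK} and of the $S_q$-covariance of $M_\gamma$ and $T_\epsilon$, so you end up re-deriving those facts under the composite observable $T_{\boldsymbol\epsilon}\bar T_{\boldsymbol\epsilon'}V_{\alpha,s}$ rather than invoking them operator-theoretically; the bookkeeping of factors agrees. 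Two small points that you should make explicit to fully match the lemma as stated: (i) after your Girsanov step the GMC carries the weight $|z|^{-\gamma\alpha}$ and the $V_\alpha(0)$ factor is gone, whereas the lemma's expression keeps $V_\alpha(0)$ inside $\E_\varphi$ with the plain $M_\gamma(\D\setminus\D_t)$; these are related by a conditional Girsanov shift of $X_\D$ by $\alpha G_\D(\cdot,0)$ and you should note that identification at the end; (ii) your change of variables lands the contours at the specific radii $e^{-t}\delta_i\in(e^{-t-s},e^{-t})$, so to allow \emph{any} radii with $\delta_k\wedge\tilde\delta_{\tilde k}<e^{-t}$ you must still invoke the analyticity in $\caO_{e^{-t}}$ (which you do get from the uniform $\boldsymbol\epsilon,\boldsymbol\epsilon'\to0$ limit) to move the contours, as the paper does in its final line.
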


 \begin{proof} For the sake of readability we write the proof in the case when $\tilde\nu=0$.  
Thus, consider
\begin{equation}
\Psi^0_{\alpha,\nu,0}=\frac{e^{2\Delta_{\alpha} s}}{(2\pi i)^{k}}
 \oint_{|\mathbf{u}|=\boldsymbol{\delta}}  
\mathbf{u}^{1-\nu} 
U_0\Big(  T(\mathbf{u})   S_{e^{-s}}U_0^{-1}\Psi^0_\alpha\Big) \dd   \mathbf{u} .
\end{equation}
We use the regularisation $T_\epsilon$ where \eqref{scalingrelation} holds. By Lemma \ref{T-lemma}
 \begin{align*}
 e^{-t\mathbf{H}}U_0\Big(  T(\mathbf{u})   S_{e^{-s}}U_0^{-1}\Psi^0_\alpha\Big)=&\lim_{{\boldsymbol{\epsilon}\to 0}} e^{-t\mathbf{H}}U_0\Big(  T_{\boldsymbol{\epsilon}}(\mathbf{u})   S_{e^{-s}}U_0^{-1}\Psi^0_\alpha\Big)
 \\ 
 = &\lim_{{\boldsymbol{\epsilon}\to 0}}e^{-t\mathbf{H}}U\Big(T_{\boldsymbol{\epsilon}}(\mathbf{u})(S_{e^{-s}}U_0^{-1}\Psi^0_\alpha)e^{\mu e^{\gamma c}M_\gamma(\D)}\Big)
 \\
  =& \lim_{{\boldsymbol{\epsilon}\to 0}} U\Big( S_{e^{-t}}(T_{\boldsymbol{\epsilon}}(\mathbf{u})(S_{e^{-s}}U_0^{-1}\Psi^0_\alpha)e^{\mu e^{\gamma c}M_\gamma(\D)})\Big) 
  \\
 =&\lim_{{\boldsymbol{\epsilon}\to 0}} e^{-2t} U\Big(  T_{e^{-t}\boldsymbol{\epsilon}}(e^{-t}\mathbf{u}) (S_{e^{-s-t}}U_0^{-1}\Psi^0_\alpha)S_{e^{-t}}(e^{\mu e^{\gamma c} M_\gamma(\D)})\Big) \\
 = &e^{-2t}  e^{-Q c}\lim_{{\boldsymbol{\epsilon}\to 0}} \E_\varphi\Big(  T_{e^{-t}\boldsymbol{\epsilon}}(e^{-t}\mathbf{u}) (S_{e^{-s-t}}U_0^{-1}\Psi^0_\alpha)e^{-\mu e^{\gamma c} M_\gamma(\D\setminus \D_t)})\Big)
 \\
 := &e^{-2t}  e^{-Q c} \E_\varphi\Big(  T(e^{-t}\mathbf{u}) (S_{e^{-s-t}}U_0^{-1}\Psi^0_\alpha)e^{-\mu e^{\gamma c} M_\gamma(\D\setminus \D_t)})\Big)
  \end{align*}
 where we used \eqref{scalingrelation} in the fourth identity. 
    By Lemma \ref{T-lemma} the last expression is analytic in $ \mathbf{u}$   and since $e^{2\Delta_{\alpha} s} e^{-t\mathbf{H}}U_0\Big(  T(\mathbf{u})   S_{e^{-s}}U_0^{-1}e_\alpha\Big)$ is independent on $s$
we can take the limit $s\to\infty$.  For this we note that
$$
S_{e^{-s-t}}U_0^{-1}\Psi^0_\alpha=e^{\alpha (c-(t+s)Q)}e^{\alpha (1,X( e^{-t-s}\,\cdot))_\T}=e^{-2\Delta_{\alpha}(s+t)}e^{\alpha c}e^{\alpha (1,X( e^{-t-s}\,\cdot))_\T-\hf\alpha^2\E (1,X( e^{-t-s}\,\cdot))_\T^2}
$$
so that
\begin{align*}
e^{2\Delta_{\alpha} s} \E_\varphi\Big(  T(e^{-t}\mathbf{u}) (S_{e^{-s-t}}U_0^{-1}\Psi^0_\alpha)e^{-\mu e^{\gamma c} M_\gamma(\D\setminus \D_t)})\Big)=e^{-2\Delta_{\alpha} t}\E_\varphi\Big(  T(e^{-t}\mathbf{u}) V_\alpha(0)e^{-\mu e^{\gamma c} M_\gamma(\D\setminus \D_t)})\Big)
\end{align*} 
 and the last expression is analytic in $ \mathbf{u}$.
 Hence by a change of variables in the $\bf u$-integral
 \begin{align*}
 e^{-t\mathbf{H}}\Psi^0_{\alpha,\nu,0}
 =&
  \frac{e^{2\Delta_{\alpha} s}}{(2\pi i)^{k}}
 \oint_{|\mathbf{u}|=\boldsymbol{\delta}}  
\mathbf{u}^{1-\nu}  e^{-t\mathbf{H}}
U_0\Big(  T(\mathbf{u})   S_{e^{-s}}U_0^{-1}\Psi^0_\alpha\Big) \dd   \mathbf{u} \\= & 
  \frac{e^{-(2\Delta_{\alpha}+|\nu|)t}}{(2\pi i)^{k}}
 \oint_{|\mathbf{u}|=\boldsymbol{e^{-t}\delta}}  
\mathbf{u}^{1-\nu}  e^{-Q c} \E_\varphi\Big( T(\mathbf{u}) V_\alpha(0) e^{-\mu e^{\gamma c}M_\gamma(\D\setminus\D_t)}\Big) \dd   \mathbf{u}\\
 =& 
  \frac{e^{-(2\Delta_{\alpha}+|\nu|)t}}{(2\pi i)^{k}}
 \oint_{|\mathbf{u}|=\boldsymbol{\delta}}  
\mathbf{u}^{1-\nu}  e^{-Q c} \E_\varphi\Big( T(\mathbf{u}) V_\alpha(0) e^{-\mu e^{\gamma c}M_\gamma(\D\setminus\D_t)}\Big) \dd   \mathbf{u}
 \end{align*}
where in the last step we used analyticity to move the contours to ${|\mathbf{u}|=\boldsymbol{\delta}}$. 
\end{proof}

In what follows, for fixed $n\geq 1$, we will denote
\begin{equation}\label{defZ}
 \mathcal{Z} :=\{\mathbf{z}=(z_1,\dots,z_n)\,|\,\forall i\not = j,\,\, z_i\not =z_j\text{ and }\forall i,\,\, |z_i|<1\}  .
 \end{equation}
Denoting $\theta (\mathbf{z})=(\theta(z_1),\dots,\theta(z_n))\in  \C^n$ we have $\theta\mathcal{Z}=\{(z_1,\dots,z_n)\,|\,\forall i\not = j,\,\, z_i\not =z_j\text{ and }\forall i,\,\, |z_i|>1
 \} . $

 For    $\boldsymbol{\alpha}=(\alpha_1,\dots,\alpha_n)\in\R^n$ such that $\alpha_i<Q$ for all $i$ we define the function  $U_{\boldsymbol{\alpha}}(\mathbf{z}): \R\times \Omega_\T\to \R$ by
 \begin{align}\label{defUward}
U_{\boldsymbol{\alpha}}(\mathbf{z},c,\varphi):=&\lim_{\epsilon\to 0}e^{-Qc}\E_\varphi \Big[\Big(\prod_{i=1}^nV_{\alpha_i,\epsilon}(z_i)\Big)e^{-\mu e^{\gamma c}M_\gamma(\D )} \Big], \quad \text{ for }  \mathbf{z} \in  \mathcal{Z}  
\end{align}
where $V_{\alpha_i,\epsilon}$ stands for the regularized vertex operator \eqref{Vregul}. 
Let us set 
\begin{equation}\label{defs}
s:=\sum_{i=1}^n\alpha_i.
\end{equation}

 \begin{remark}\label{correlalpha}
 It follows directly from the construction of correlation functions that for   $\mathbf{z} \in  \theta\mathcal{Z}$
 $$\langle V_{\alpha}(0)\prod_{i=1}^nV_{\alpha_i }(z_i)\rangle_{\gamma,\nu}=\Big(\prod_{i=1}^n|z_i|^{-4\Delta_{\alpha_i}}\Big)\langle\Psi_{\alpha} | U_{\boldsymbol{\alpha}}(\theta(\mathbf{z}))\rangle_2
  $$
 and these expressions are finite if $\alpha+s>2Q$ and $\alpha, \alpha_i<Q$.
 \end{remark}

 \begin{lemma}\label{integrcf}
 Let $  \mathbf{z} \in  \theta\mathcal{Z}$. Then almost everywhere in $c,\varphi$ and for all  $R>0$ 
 $$U_{\boldsymbol{\alpha}}(\theta(\mathbf{z}))(c,\varphi)\leq e^{(s-Q)(c\wedge 0)-R(c\vee 0) } A(\varphi)
 $$ where $A\in L^2 (\Omega_\T)$.
  \end{lemma}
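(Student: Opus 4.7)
The plan is to combine two standard LCFT tools: a Girsanov transform that absorbs the product of regularized vertex operators into a deterministic prefactor times an exponential moment of a modified GMC, and the elementary bound $e^{-x}\leq C_p x^{-p}$ ($x,p>0$) used to gain arbitrary power decay in $c$ for $c$ large. The two regimes $c\leq 0$ and $c\geq 0$ will be treated by different uses of these ingredients, but both bounds will ultimately be controlled by a single $A(\varphi)\in L^2(\Omega_\T)$.

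Write $\mathbf{w}:=\theta(\mathbf{z})\in\mathcal{Z}$, so $|w_i|<1$ and $|w_i|_+=1$; pulling out the $c$-dependence of each $V_{\alpha_i,\epsilon}(w_i)=e^{\alpha_i c}W_{\alpha_i,\epsilon}(w_i)$, we obtain
\[
U_{\boldsymbol{\alpha}}(\mathbf{w},c,\varphi)=e^{(s-Q)c}\lim_{\epsilon\to 0}\E_\varphi\Big[\prod_{i=1}^n W_{\alpha_i,\epsilon}(w_i)\,e^{-\mu e^{\gamma c}M_\gamma(\D)}\Big].
\]
Using the decomposition $X|_\D=P\varphi+X_\D$ and applying Girsanov to the Dirichlet GFF $X_\D$ (shifting by $h_\epsilon(\cdot)=\sum_i\alpha_i G_{\D,\epsilon}(\cdot,w_i)$) I would show, exactly as in \cite{DKRV}, that the $\epsilon\to 0$ limit equals
\[
U_{\boldsymbol{\alpha}}(\mathbf{w},c,\varphi)=e^{(s-Q)c}C(\mathbf{w})\,e^{\sum_i\alpha_i P\varphi(w_i)}\,\E_\varphi\Big[\exp\Big(-\mu e^{\gamma c}\widetilde{M}(\D)\Big)\Big],
\]
where $C(\mathbf{w}):=\prod_i(1-|w_i|^2)^{-\alpha_i^2/2}\prod_{i<j}e^{\alpha_i\alpha_j G_\D(w_i,w_j)}$ is a finite constant (the $w_i$ are distinct and strictly inside $\D$), and
\[
\widetilde{M}(\D):=\int_\D \prod_{i=1}^n\frac{|1-x\bar w_i|^{\gamma\alpha_i}}{|x-w_i|^{\gamma\alpha_i}}\,M_\gamma(\dd x),
\]
which is a.s.\ strictly positive and finite since $\alpha_i<Q$.

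For $c\leq 0$ one bounds the exponential by $1$ and is left with $e^{(s-Q)c}C(\mathbf{w})e^{\sum_i\alpha_i P\varphi(w_i)}$; since $P\varphi(w_i)$ is Gaussian in $\varphi$ with variance $-\ln(1-|w_i|^2)<\infty$ (and finite cross-covariances $-\ln|1-w_i\bar w_j|$), the exponential has finite moments of all orders, in particular lies in $L^2(\Omega_\T)$. For $c\geq 0$ and any $p>0$ one applies $e^{-\mu e^{\gamma c}\widetilde{M}(\D)}\leq C_p(\mu e^{\gamma c})^{-p}\widetilde{M}(\D)^{-p}$, giving the improved decay $e^{(s-Q-p\gamma)c}$. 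Given $R>0$, pick $p\geq(R+(s-Q)_+)/\gamma$; then for $c\geq 0$ one has $e^{(s-Q)c}\cdot e^{-p\gamma c}\leq e^{-Rc}$. It remains to check that $\E_\varphi[\widetilde{M}(\D)^{-p}]$ lies in $L^2(\Omega_\T)$: choosing a ball $B\subset\D$ at positive distance from all $w_i$, the factor $\prod_i|1-x\bar w_i|^{\gamma\alpha_i}/|x-w_i|^{\gamma\alpha_i}$ is bounded below on $B$, so $\widetilde{M}(\D)^{-p}\leq c_B^{-p}M_\gamma(B)^{-p}$; then by Jensen and Fubini
\[
\E\big[\E_\varphi[M_\gamma(B)^{-p}]^2\big]\leq \E[M_\gamma(B)^{-2p}]<\infty,
\]
the last bound being the standard finiteness of all negative moments of GMC on a compact subset of $\D$. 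Setting
\[
A(\varphi):=C(\mathbf{w})e^{\sum_i\alpha_i P\varphi(w_i)}\big(1+C_p\mu^{-p}\E_\varphi[\widetilde{M}(\D)^{-p}]\big)\in L^2(\Omega_\T)
\]
(with $p$ chosen as above) yields a single $A$ dominating both regimes, which is the claimed bound.

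No step is really a serious obstacle; the only delicate point is the $\epsilon\to 0$ limit in the Girsanov identity above, but this is exactly the limit constructed in the probabilistic definition of $n$-point correlation functions in \cite{DKRV}, so I would simply cite that argument rather than redo it. The $L^2(\Omega_\T)$ control of $\E_\varphi[\widetilde{M}(\D)^{-p}]$ via the ball $B$ is the key technical observation and the reason the bound is uniform in $\varphi$ only in the $L^2$ sense.
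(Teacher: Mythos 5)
Your overall strategy is essentially the paper's: split $c\leq 0$ (bound the exponential by $1$) from $c\geq 0$ (trade the exponential against arbitrarily many negative moments of the GMC mass), and control the $\varphi$-dependence by $L^2(\Omega_\T)$ estimates. You do this via an explicit Girsanov transform on $X_\D$, keeping the factors $e^{\sum_i\alpha_i P\varphi(w_i)}$ and $\widetilde M(\D)$ separately; the paper is slightly cruder and replaces $P\varphi$ on the disk $\D_r$ containing all the $w_i$ by its supremum $\sigma(\varphi)$ and infimum $\iota(\varphi)$ before doing anything else, so that the GMC piece becomes a purely Dirichlet object $Z$ and the $\varphi$-dependence of the final bound is the single factor $e^{s\sigma(\varphi)-n\gamma\iota(\varphi)}$, which is clearly in $L^2(\P_\T)$ by Gaussian (Borell--TIS) concentration of $\sup P\varphi$ and $\inf P\varphi$. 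Both routes work, and yours is a bit cleaner in that it tracks the $\varphi$-dependence exactly.

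There is, however, a small but genuine gap at the very last step. You define
\[
A(\varphi)=C(\mathbf{w})\,e^{\sum_i\alpha_i P\varphi(w_i)}\big(1+C_p\mu^{-p}\E_\varphi[\widetilde M(\D)^{-p}]\big)
\]
and assert $A\in L^2(\Omega_\T)$, having shown separately that $e^{\sum_i\alpha_i P\varphi(w_i)}\in L^q(\Omega_\T)$ for all $q$ and that $\E_\varphi[\widetilde M(\D)^{-p}]\in L^2(\Omega_\T)$. But $A$ is a \emph{product} of two random variables and a product of an $L^2$ function by something merely in $\bigcap_q L^q$ is a priori only in $L^{2-\epsilon}$, not $L^2$. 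You need H\"older: pick $q>2$ and bound $\E_\varphi[\widetilde M(\D)^{-p}]$ in $L^q(\Omega_\T)$, which goes exactly as your argument does --- the conditional Jensen step $\E\big[(\E_\varphi[M_\gamma(B)^{-p}])^q\big]\leq \E[M_\gamma(B)^{-qp}]<\infty$ works for every $q$, since GMC on a compact subset of $\D$ has all negative moments. With that one-line amendment the proof closes; the paper avoids this issue entirely because it produces a single exponential factor in $\varphi$ rather than a product.
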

\begin{proof}
Let $r=\max_i|\theta(z_i)|$ and $\iota(\varphi)=\inf_{x\in\D_r}P\varphi(x)$ and $\sigma(\varphi)=\sup_{x\in\D_r}P\varphi(x)$ with $\D_r$ the disk centered at $0$ with radius $r$.
Then
\begin{align*}
U_{\boldsymbol{\alpha}}(\theta(\mathbf{z}))(c,\varphi)
\leq Ce^{-Qc}e^{(c+\sigma(\varphi))s}\E e^{-\mu e^{\gamma (c+\iota(\varphi))}Z}
\end{align*}
where the expectation is over the Dirichlet GFF $X_\D$ and
\begin{align*}
Z=\int_{\D_r} (1-|z|^2)^{\frac{\gamma^2}{2}}e^{\sum_i\gamma\alpha_i G_\D(z,\theta(z_i))}M_{\gamma,\D}(\dd z )
\end{align*}
where $M_{\gamma,\D}$ is the GMC of $X_\D$. For $c<0$ we use the trivial bound  $$U_{\boldsymbol{\alpha}}(\theta(\mathbf{z}))(c,\varphi)\leq Ce^{(s-Q)c}e^{\sigma(\varphi)s}$$ and for $c>0$ we note that 
$Z$ has all negative moments so that for $a>0$
\begin{align*}
\E e^{-aZ}=\E (aZ)^{-n}(aZ)^{n}e^{-aZ}\leq n!\E (aZ)^{-n}\leq C_na^{-n}
\end{align*}
implying
\begin{align*}
U_{\boldsymbol{\alpha}}(\theta(\mathbf{z}))(c,\varphi)\leq C_ne^{c(s-Q-\gamma n)}e^{s\sigma(\varphi)-n\gamma \iota(\varphi)}
\end{align*}
Since $e^{s \sigma(\varphi)-n\iota(\varphi)}$ is in $L^2(\P)$ for all $s,n$ the claim follows.
\end{proof} 
Define now the modified Liouville expectation (with now $\D_t$ the disk centered at $0$ with radius $e^{-t}$)
\begin{align}\label{modifiedlcft}
\langle F\rangle_t=\int_\R e^{-2Qc}\E\Big[ F(c,X)e^{-\mu e^{\gamma c}M_{\gamma}(\C\setminus\D_t)}\Big]\dd c.
\end{align}
Also, in the contour integrals below,  for vectors $\boldsymbol{\delta}$, $\boldsymbol{\widetilde{ \delta}}$ defining the radii of the respective contours, we will put a subscript $t$ when these variables are multiplied by $e^{-t}$, namely $\boldsymbol{\delta}_t:=e^{-t}\boldsymbol{\delta}$ and similarly for  $\boldsymbol{\widetilde{ \delta}}_t$.
Then we get
\begin{corollary}\label{propcontour}
Let $  \mathbf{z} \in  \theta\mathcal{Z}$. For $\alpha\in\R$ such that $\alpha<\big(Q-\frac{2(|\nu|+|\tilde{\nu}|)}{\gamma}-\frac{\gamma}{4}\big)\wedge (Q-\gamma)$  and  $\alpha+\sum_i\alpha_i>2Q$, we have
\begin{align*} 
\langle&\Psi_{\alpha,\nu,{\nu'}} |  U_{\boldsymbol{\alpha}}(\theta(\mathbf{z}))\rangle_2\\
=&
\Big(\prod_{i=1}^n|z_i|^{4\Delta_{\alpha_i}}\Big)\times\frac{1}{(2\pi i)^{k+j}}\lim_{t\to \infty} \oint_{|\mathbf{u}|=\boldsymbol{\delta}_t}  
\oint_{| { \mathbf{v}}|=\boldsymbol{{\delta}'}_t}  \mathbf{u}^{1-\nu}\bar{\mathbf{v}}^{1-{\nu'}} \langle  T(\mathbf{u})\bar T({ \mathbf{v}}) V_{\alpha}(0)\prod_{i=1}^nV_{\alpha_i }(z_i) \rangle_t  \, \dd \bar{ \mathbf{v}}\dd    \mathbf{u}. 
\end{align*}
\end{corollary}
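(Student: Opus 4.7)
The plan is to reduce both sides to the same contour integral by combining the intertwining relation of Proposition \ref{descconvergence}, the probabilistic formula of Lemma \ref{TTLemma}, and the domain Markov property of the GFF together with the conformal invariance of LCFT. By Proposition \ref{descconvergence}, for the assumed range of $\alpha$,
\begin{equation*}
\Psi_{\alpha,\nu,\nu'}=\lim_{t\to\infty}e^{t(2\Delta_\alpha+|\nu|+|\nu'|)}e^{-t\mathbf{H}}\Psi^0_{\alpha,\nu,\nu'}\quad\text{in }e^{-(\beta+\gamma/2)\rho}L^2
\end{equation*}
for every $\beta>Q-\alpha-\gamma/2$. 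By Lemma \ref{integrcf} the dual vector $U_{\boldsymbol{\alpha}}(\theta(\mathbf{z}))$ is bounded by $e^{(s-Q)(c\wedge 0)-R(c\vee 0)}A(\varphi)$ with $A\in L^2(\Omega_\T)$ and $R>0$ arbitrary; the Seiberg hypothesis $\alpha+s>2Q$ opens the interval $Q-\alpha-\gamma/2<\beta<s-Q-\gamma/2$, on which $U_{\boldsymbol{\alpha}}(\theta(\mathbf{z}))$ belongs to the dual space $e^{(\beta+\gamma/2)\rho}L^2$. Hence the limit passes through the pairing and
\begin{equation*}
\langle\Psi_{\alpha,\nu,\nu'}\mid U_{\boldsymbol{\alpha}}(\theta(\mathbf{z}))\rangle_2=\lim_{t\to\infty}e^{t(2\Delta_\alpha+|\nu|+|\nu'|)}\langle e^{-t\mathbf{H}}\Psi^0_{\alpha,\nu,\nu'}\mid U_{\boldsymbol{\alpha}}(\theta(\mathbf{z}))\rangle_2.
\end{equation*}

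Inserting the contour representation of Lemma \ref{TTLemma} and exchanging orders of integration, which is legitimate by the uniform integrability furnished by Proposition \ref{basicstresss}, Lemma \ref{T-lemma} and the weighted $L^2$ bounds above, the right-hand scalar product becomes $(2\pi i)^{-(k+j)}\oint\oint\mathbf{u}^{1-\nu}\bar{\mathbf{v}}^{1-\nu'}I_t(\mathbf{u},\mathbf{v})\,d\bar{\mathbf{v}}\,d\mathbf{u}$, with contour radii in $\D_t\setminus\{0\}$, where
\begin{equation*}
I_t(\mathbf{u},\mathbf{v}):=\int_\R e^{-2Qc}\E_{\P_\T}\Big[\E_\varphi\big[T(\mathbf{u})\bar T(\mathbf{v})V_\alpha(0)e^{-\mu e^{\gamma c}M_\gamma(\D\setminus\D_t)}\big]\,\E_\varphi\big[{\textstyle\prod_i}V_{\alpha_i}(\theta(z_i))e^{-\mu e^{\gamma c}M_\gamma(\D)}\big]\Big]dc.
\end{equation*}
The crux is the identification $I_t(\mathbf{u},\mathbf{v})=\prod_i|z_i|^{4\Delta_{\alpha_i}}\langle T(\mathbf{u})\bar T(\mathbf{v})V_\alpha(0)\prod_i V_{\alpha_i}(z_i)\rangle_t$, which rests on two ingredients. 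First, the law invariance $X\stackrel{\rm law}{=}X\circ\theta$ (which also gives $M_\gamma(\D)[X\circ\theta]=M_\gamma(\D^c)[X]$, since $dx/|x|_+^4=dy/|y|_+^4$ under $y=\theta(x)$) combined with the $|z|_+$ normalization in \eqref{Vregul} moves the insertion points from $\theta(z_i)\in\D$ back to $z_i\in\D^c$ at the cost of the scaling factor $\prod_i|z_i|^{4\Delta_{\alpha_i}}$. Second, conditional independence of $X_\D$ and $X_{\D^c}$ given $\varphi$, together with the splitting $M_\gamma(\C\setminus\D_t)=M_\gamma(\D\setminus\D_t)+M_\gamma(\D^c)$, merges the two conditional expectations back into a single unconditional one, yielding precisely the modified correlation $\langle\cdot\rangle_t$. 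Analyticity of the integrand in $\mathbf{u},\mathbf{v}$ on $\D_t\setminus\{0\}$ (Proposition \ref{basicstresss}) lets us deform the contours to the prescribed radii $\boldsymbol{\delta}_t:=e^{-t}\boldsymbol{\delta}$, and the $t\to\infty$ limit completes the proof.

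The main technical obstacle is that the stress-energy insertions $T(\mathbf{u})$ and $\bar T(\mathbf{v})$ are not random fields but are defined only through regularization (Proposition \ref{basicstresss}). Every manipulation above must first be carried out for the regularized insertions $T_{\boldsymbol{\epsilon}}(\mathbf{u}),\bar T_{\boldsymbol{\epsilon}'}(\mathbf{v})$, for which Fubini and the domain Markov factorization are elementary; one then invokes the convergence provided by Proposition \ref{basicstresss} (uniformly on compacts of $\caO_\delta$ in $e^{\beta|c|}L^2$), together with the GMC dominating factor $e^{-\mu e^{\gamma c}M_\gamma(\D\setminus\D_t)}$ and the decay of $U_{\boldsymbol{\alpha}}(\theta(\mathbf{z}))$ from Lemma \ref{integrcf}, to pass to the limits $\boldsymbol{\epsilon},\boldsymbol{\epsilon}'\to 0$ in the prescribed nested order. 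The outer $t\to\infty$ limit is taken only after the regularization parameters have been sent to zero.
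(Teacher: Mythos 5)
Your proof is correct and follows essentially the same route as the paper: intertwining from Proposition \ref{descconvergence} combined with the weighted-space decay from Lemma \ref{integrcf} allows the $t\to\infty$ limit to pass through the pairing, and Lemma \ref{TTLemma} supplies the contour representation. The paper's own proof is far terser (two lines); you have usefully spelled out the step the paper leaves implicit, namely the identification of the paired conditional expectations with the modified correlation $\langle\cdot\rangle_t$ via the $\theta$-reflection and the domain Markov factorization, which is indeed where the factor $\prod_i|z_i|^{4\Delta_{\alpha_i}}$ arises.
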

\begin{proof}
Combining Proposition \ref{descconvergence} and Lemma \ref{integrcf} the existence of the limit 
\begin{align*}
\langle\Psi_{\alpha,\nu,{\nu'}} | U_{\boldsymbol{\alpha}}(\theta(\mathbf{z}))\rangle_2 =&\lim_{t\to\infty}e^{(2\Delta_{\alpha}+|\nu|+|\nu'|)t}\langle e^{-t\mathbf{H}}\Psi^0_{\alpha,\nu,\nu'} |  U_{\boldsymbol{\alpha}}(\theta(\mathbf{z}))\rangle_2
 \end{align*}
 follows. By Lemma \ref{TTLemma} the RHS is given by the RHS of \eqref{propcontour}.
\end{proof}

Here is the main result of this section: 
 
\begin{proposition}\label{proofward}
Let $  \mathbf{z} \in  \theta\mathcal{Z}$. For $\alpha\in\R$ such that $\alpha<\big(Q-\frac{2(|\nu|+|\tilde{\nu}|)}{\gamma}-\frac{\gamma}{4}\big)\wedge (Q-\gamma)$  and  $\alpha+\sum_i\alpha_i>2Q$ and for all $i$ $\alpha_i<Q$, we have  in the distributional sense 
$$\langle\Psi_{\alpha,\nu,\tilde{\nu}} | U_{\boldsymbol{\alpha}}(\theta(\mathbf{z}))\rangle_2=\Big(\prod_{i=1}^n|z_i|^{4\Delta_{\alpha_i}}\Big)\times
\mathbf{D}_{\nu}\tilde{\mathbf{D}}_{{\tilde{\nu}}}\langle V_{\alpha}(0)\prod_{i=1}^nV_{\alpha_i }(z_i)\rangle_{\gamma,\mu}   $$
where the differential operators $\mathbf{D}_{\nu}$, $\tilde{\mathbf{D}}_{\tilde{\nu}}$ are defined by 
\begin{equation}
\mathbf{D}_{\nu}= \mathbf{D}_{\nu_k}\dots \mathbf{D}_{\nu_1}\quad \text{ and } 
\quad \tilde{\mathbf{D}}_{\tilde{\nu}}= \tilde{\mathbf{D}}_{\tilde{\nu}_j}\dots \tilde{\mathbf{D}}_{\tilde{\nu}_1}
\end{equation}
where for $n\in\N$
\begin{align}
\mathbf{D}_{n}=&\sum_{i=1}^n\Big(-\frac{1}{z_i^{n-1}}\partial_{z_i}+\frac{(n-1) }{z_i^n}\Delta_{\alpha_i}\Big)\\
\tilde{\mathbf{D}}_{n}=&\sum_{i=1}^n\Big(-\frac{1}{\bar z_i^{n-1}}\partial_{\bar z_i}+\frac{(n-1) }{\bar z_i^n}\Delta_{\alpha_i}\Big)
\end{align}
 \end{proposition}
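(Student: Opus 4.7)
The starting point is Corollary \ref{propcontour}, which already expresses $\langle \Psi_{\alpha,\nu,\nu'}\,|\,U_{\boldsymbol\alpha}(\theta(\mathbf z))\rangle_2$ as a $t\to\infty$ limit of nested contour integrals of $\langle T(\mathbf u)\bar T(\mathbf v) V_\alpha(0)\prod_i V_{\alpha_i}(z_i)\rangle_t$. The first step is to pass to the limit: the modified Liouville measure \eqref{modifiedlcft} converges to $\langle\cdot\rangle_{\gamma,\mu}$ by monotone convergence of $M_\gamma(\C\setminus\D_t)\uparrow M_\gamma(\C)$, and one must check that the contour integrals and the $\epsilon$-regularisations of $T,\bar T$ commute with this limit. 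This reduces the claim to the following two identities for the (un-modified) correlation functions: a \emph{single-insertion Ward identity}
\begin{equation}\label{singleW}
\langle T(u) V_\alpha(0)\prod_i V_{\alpha_i}(z_i)\rangle_{\gamma,\mu}=\Big(\tfrac{\Delta_\alpha}{u^2}+\tfrac{\partial_{z_0}}{u}+\sum_{i=1}^n\big(\tfrac{\Delta_{\alpha_i}}{(u-z_i)^2}+\tfrac{\partial_{z_i}}{u-z_i}\big)\Big)\langle V_\alpha(0)\prod_i V_{\alpha_i}(z_i)\rangle_{\gamma,\mu},
\end{equation}
(where $\partial_{z_0}:=-\sum_i\partial_{z_i}$ via translation invariance of the correlation function), and its iterated analogue for products of $T$'s and $\bar T$'s.

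To establish \eqref{singleW} I would regularise $T_\epsilon$ as in Proposition \ref{basicstresss} and the vertex operators as in \eqref{Vregul}, then apply Gaussian integration by parts to each factor $\partial_z X_\epsilon(u)$ in $T_\epsilon(u)=Q\pl_z^2 X_\epsilon-(\pl_z X_\epsilon)^2+\E[(\pl_z X_\epsilon)^2]$, using the GFF covariance \eqref{hatGformula}. The contraction of $\pl_zX_\epsilon(u)$ with $V_{\alpha_i,\epsilon'}(z_i)$ produces $\alpha_i\pl_zG_\epsilon(u,z_i)V_{\alpha_i,\epsilon'}(z_i)$; together with the Wick counterterm this yields, in the limit $\epsilon,\epsilon'\to 0$, the familiar $\Delta_{\alpha_i}/(u-z_i)^2+(u-z_i)^{-1}\pl_{z_i}$ acting on the correlator, and similarly at $0$. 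The contraction against the GMC exponential $e^{-\mu e^{\gamma c}\int e^{\gamma\phi}}$ produces a term of the form $\gamma\mu e^{\gamma c}\int\pl_z G_\epsilon(u,x)(\cdots)e^{\gamma\phi_\epsilon(x)}dx$ which is a total $u$-derivative (of $T_\epsilon(u)$ replaced by its primitive), hence drops out after contour integration modulo terms vanishing as $\epsilon\to 0$; this is the step where one needs the analyticity of the $T$-insertion established in Proposition \ref{basicstresss} and the integrability of GMC with $\alpha$-singularities used in Lemma \ref{integrcf}. Expanding $1/(u-z_i)^k$ as a power series in $u$ (valid since $|u|=\delta e^{-t}<1<|z_i|$), the contour integral $\frac{1}{2\pi i}\oint u^{1-n}\cdot u^m\,du=\delta_{m,n-2}$ picks out exactly the coefficient giving $\mathbf D_n$: for $n\geq 2$ the $\Delta_\alpha/u^2$ and $u^{-1}\pl_{z_0}$ contributions vanish, while for $n=1$ the coefficient $\pl_{z_0}=-\sum_i\pl_{z_i}$ reproduces $\mathbf D_1$.

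For multiple insertions the plan is to iterate \eqref{singleW} starting from the outermost contour $|u_k|=\delta_k e^{-t}$. Applying Gaussian IBP to $T_\epsilon(u_k)$ splits into (i) contractions with the vertex operators, producing after the $u_k$-contour integral the operator $\mathbf D_{\nu_k}$ acting on the remaining correlator $\langle T(u_1)\cdots T(u_{k-1})V_\alpha(0)\prod V_{\alpha_i}(z_i)\rangle_{\gamma,\mu}$, (ii) contractions between $T_\epsilon(u_k)$ and $T(u_i)$ for $i<k$, which yield the $TT$-OPE $\frac{c_L/2}{(u_k-u_i)^4}+\frac{2T(u_i)}{(u_k-u_i)^2}+\frac{\pl T(u_i)}{u_k-u_i}$, and (iii) the total-derivative GMC term as before. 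Because the contours satisfy $|u_i|<|u_k|$, the OPE terms in (ii) are analytic in $u_k$ at $|u_k|=\delta_k e^{-t}$ and vanish under $\oint u_k^{1-\nu_k}\,du_k$. The computation then collapses to $\mathbf D_{\nu_k}$ acting on an integrand of the same form with one fewer insertion, and induction gives $\mathbf D_{\nu_k}\cdots\mathbf D_{\nu_1}=\mathbf D_\nu$. The $\bar T$ integrals are handled identically and independently by holomorphic/anti-holomorphic factorisation at the level of Gaussian IBP (the kernels $\pl_z G$ and $\pl_{\bar z}G$ being conjugate), producing the commuting operator $\tilde{\mathbf D}_{\tilde\nu}$.

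The main obstacle is the rigorous justification of step two: controlling the Gaussian IBP in the presence of the non-Gaussian GMC weight $e^{-\mu e^{\gamma c}M_\gamma(\D)}$, interchanging the $\epsilon\to 0$ limit for $T_\epsilon$ and $V_{\alpha_i,\epsilon'}$ with the contour integrations and with the $t\to\infty$ limit, and ensuring that the resulting distributional identity holds in the $z_i$ variables on $\theta\mc Z$. The integrability bounds provided by Lemma \ref{integrcf} and the analyticity statements in Proposition \ref{basicstresss} and Lemma \ref{T-lemma} are the essential tools for this, but the explicit cancellation of the GMC-generated total derivative, which encodes the conformal invariance of LCFT \eqref{KPZformula}, is the delicate part that requires careful $\epsilon$-uniform estimates.
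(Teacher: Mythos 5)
Your high-level template — start from Corollary \ref{propcontour}, do Gaussian integration by parts on each $T_\epsilon(u_k)$, observe that the $TT$-OPE terms are holomorphic inside the $u_k$-contour and drop out, and read off the coefficients of $u_k^{\nu_k-2}$ in the remaining singular parts to recover $\mathbf{D}_{\nu_k}$ — is indeed the skeleton of the paper's argument. However, the proposal contains a structural gap and a concrete error that together block the proof.

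First, the route ``pass to the limit $t\to\infty$, then invoke the single-insertion Ward identity \eqref{singleW} for the full LCFT correlator and iterate it'' is not available. The expression in Corollary \ref{propcontour} is a \emph{joint} limit: the GMC cutoff and the contour radii $|\mathbf{u}|=\boldsymbol{\delta}_t=e^{-t}\boldsymbol{\delta}$ both scale with $t$, so one cannot first send the measure $\langle\cdot\rangle_t\to\langle\cdot\rangle_{\gamma,\mu}$ while keeping the contours fixed. More importantly, the iterated version of \eqref{singleW} would require a rigorous definition and Ward identity for $\langle T(u_1)\cdots T(u_k)\bar T(v_1)\cdots V_\alpha(0)\prod V_{\alpha_i}(z_i)\rangle_{\gamma,\mu}$ with arbitrarily many SET insertions; as stated in the preamble of Section \ref{subproofward}, this is only established in \cite{KRV} for at most two insertions. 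The entire point of working with $\langle\cdot\rangle_t$ at finite $t$ — with SET insertions supported in the hole $\D_t$ where there is \emph{no} GMC mass — is that the IBP then becomes an essentially Gaussian computation (Proposition \ref{basicstresss}, Lemma \ref{T-lemma}), and one never needs full-LCFT multi-SET correlators.

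Second, the claim that the GMC-generated contribution ``is a total $u$-derivative \dots hence drops out after contour integration'' is false. Even accepting that $\int_{\C_t}\frac{f(x)}{(u-x)^2}\,dx=-\partial_u\int_{\C_t}\frac{f(x)}{u-x}\,dx$, the resulting $g(u):=\int_{\C_t}\frac{f(x)}{u-x}\,dx$ is analytic inside the contour but does not vanish to order $\nu_k-1$ at $u=0$; integrating by parts gives $\oint u^{1-\nu_k}\partial_u g(u)\,du=(\nu_k-1)\oint u^{-\nu_k}g(u)\,du\not=0$ for $\nu_k\geq 2$. In the paper, this nonzero GMC contribution (the term $N$ in \eqref{Nterm}) is carried along: after a Green's-formula IBP in $x$ it splits into a boundary term $B_t$ on $|x|=e^{-t}$ (\eqref{bryterm}) plus new bulk terms $\tilde N$; the $\tilde N$ and $T_i$ terms then cancel exactly against the GMC contribution arising when $\partial_{z_p}$ hits the $\langle\cdot\rangle_t$ weight in $I(\mathbf{u},\mathbf{z})$ (Lemma \ref{deriv2}, and the algebra \eqref{cancel1}--\eqref{cancel4}). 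The only piece left is the shrinking-circle boundary term $B_t$, for which the bound $|\caB_t|\le C e^{(\alpha+|\nu|-2)t}$ holds; this is where the hypothesis $\alpha+|\nu|<2$ (built into the assumption on $\alpha$) and the KPZ-type integrability of GMC singularities (Lemma \ref{kpzlemma}) enter. None of this happens if one pretends the GMC term vanishes under the contour integral.

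In short: you have the right cast of characters (Corollary \ref{propcontour}, Gaussian IBP, analyticity of $TT$-contractions, $\partial_{z_0}=-\sum\partial_{z_i}$ from translation invariance, the coefficient extraction that yields $\mathbf{D}_n$), but the proposed proof both assumes an unestablished multi-insertion Ward identity and misidentifies where the GMC term goes; the actual mechanism is a delicate term-by-term cancellation at finite $t$, leaving a single boundary term that vanishes under the stated hypothesis on $\alpha$ as $t\to\infty$.
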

 
\begin{proof}    Section \ref{subproofward} will be devoted to the proof of this proposition.
\end{proof}
  
   \subsection{Computing the 3-point correlation functions of descendant states}\label{sub:desc3point}
Now we exploit Proposition \ref{proofward} to give \emph{exact} analytic expressions for  the 3-point correlation functions of descendant states. For this, we first need to introduce some notation: for $\nu= (\nu_i)_{i \in \llbracket 1,k\rrbracket}$ a Young diagram and some real $\Delta,\Delta', \Delta''$ we set
\begin{equation}\label{poidsv}
v(\Delta,\Delta', \Delta'', \nu):=  \prod_{j=1}^k  (\nu_j \Delta'-\Delta+\Delta''+ \sum_{u<j} \nu_u )  .
\end{equation}
With this notation, we can state the following key result: 

\begin{proposition}[Conformal Ward identities]\label{Ward}
Assume $\alpha_1,\alpha_2<Q$ with $\alpha_1+\alpha_2>Q$ and $|z|<1$.  For all $P>0$, 
 the scalar product $\langle \Psi_{Q+iP, \nu,\tilde\nu}\, |\, U_{\alpha_1,\alpha_2}(0,z) \rangle_2$ is explicitly given by the following expression
 \begin{align}
\langle \Psi_{Q+iP, \nu,\tilde\nu}\, |\, U_{\alpha_1,\alpha_2}(0,z) \rangle_2 &=v(\Delta_{\alpha_1}, \Delta_{\alpha_2}, \Delta_{Q+iP},\tilde{\nu}) v(\Delta_{\alpha_1}, \Delta_{\alpha_2}, \Delta_{Q+iP},\nu)\\
& \times \frac{1}{2} C^{ \mathrm{DOZZ}}_{\gamma,\mu}( \alpha_1,\alpha_2, Q+iP  ) \bar{z}^{|\nu|} z^{|\tilde{\nu}|}  |z|^{2 (\Delta_{Q+iP}-\Delta_{\alpha_1}-\Delta_{\alpha_2})}   \nonumber
 \end{align}
where $\Delta_{\alpha}$ are conformal weights \eqref{deltaalphadef} and $ C^{ \mathrm{DOZZ}}_{\gamma,\mu}( \alpha_1,\alpha_2, Q+iP  )$ are the DOZZ structure constants (see appendix \ref{dozz} for the definition).
 \end{proposition}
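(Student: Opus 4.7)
The plan is to deduce Proposition \ref{Ward} from the conformal Ward identity of Proposition \ref{proofward} in two stages: first compute explicitly, for $\alpha$ real in the probabilistic region, the action of the differential operators $\mathbf{D}_\nu \tilde{\mathbf{D}}_{\tilde\nu}$ on the M\"obius-invariant three-point function, and then analytically continue the resulting identity in $\alpha$ to the spectrum line $Q+i\R$. To carry out the first stage I would apply Proposition \ref{proofward} with $n=2$ insertions at $z_1=R\in \R$ large and $z_2=1/\bar z$, so that $\theta(z_1)=1/R\to 0$ and $\theta(z_2)=z$, and then pass to the limit $R\to\infty$. By M\"obius covariance \eqref{KPZformula} and the DOZZ identification \cite{dozz}, the three-point function has the asymptotic
\[\langle V_\alpha(0)V_{\alpha_1}(R)V_{\alpha_2}(1/\bar z)\rangle_{\gamma,\mu}=R^{-4\Delta_{\alpha_1}}\cdot \tfrac12 C^{\mathrm{DOZZ}}_{\gamma,\mu}(\alpha,\alpha_1,\alpha_2)\,|z|^{2(\Delta_\alpha+\Delta_{\alpha_2}-\Delta_{\alpha_1})}(1+O(1/R));\]
the prefactor $|R|^{4\Delta_{\alpha_1}}|1/\bar z|^{4\Delta_{\alpha_2}}$ in Proposition \ref{proofward} exactly compensates $R^{-4\Delta_{\alpha_1}}$ while producing the weight $|z|^{-4\Delta_{\alpha_2}}$, and the $\pl_{z_1}$-components of each $\mathbf{D}_{\nu_j}$ contribute only at subleading order in $1/R$.

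In the limit one is reduced to computing $\mathbf{D}_\nu\tilde{\mathbf{D}}_{\tilde\nu}$ via derivatives in $w=z_2=1/\bar z$ alone, applied to $|w|^{-2\delta}$ with $\delta:=\Delta_\alpha+\Delta_{\alpha_2}-\Delta_{\alpha_1}$. The combinatorial core is the identity
\[\mathbf{D}_{\nu_k}\cdots\mathbf{D}_{\nu_1}(|w|^{-2\delta})=w^{-|\nu|}|w|^{-2\delta}\prod_{j=1}^k\Big(\nu_j\Delta_{\alpha_2}-\Delta_{\alpha_1}+\Delta_\alpha+\sum_{u<j}\nu_u\Big)=w^{-|\nu|}|w|^{-2\delta}\,v(\Delta_{\alpha_1},\Delta_{\alpha_2},\Delta_\alpha,\nu),\]
proved by induction on the length $k$ of the Young diagram: the base case uses $\pl_w|w|^{-2\delta}=-\delta w^{-1}|w|^{-2\delta}$ and the explicit formula for $\mathbf{D}_n$; for the induction step, computing $\mathbf{D}_{\nu_{k+1}}(w^{-m}|w|^{-2\delta})$ with $m=\sum_{j\leq k}\nu_j$ produces, via the product rule for $w^{-(\nu_{k+1}-1)}\pl_w$ acting on the accumulated prefactor $w^{-m}$, the extra term $m\,w^{-m-1}$ that accounts for the shift $\sum_{u<k+1}\nu_u$ appearing in \eqref{poidsv}. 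The analogous identity for $\tilde{\mathbf{D}}_{\tilde\nu}$ in the conjugate variable $\bar w=1/z$ yields the factor $v(\Delta_{\alpha_1},\Delta_{\alpha_2},\Delta_\alpha,\tilde\nu)$, and the substitutions $w^{-|\nu|}=\bar z^{|\nu|}$, $\bar w^{-|\tilde\nu|}=z^{|\tilde\nu|}$ combined with $|z|^{-4\Delta_{\alpha_2}}|w|^{-2\delta}=|z|^{2(\Delta_\alpha-\Delta_{\alpha_1}-\Delta_{\alpha_2})}$ give the announced formula for $\alpha$ real satisfying the hypotheses of Proposition \ref{proofward} and $\alpha+\alpha_1+\alpha_2>2Q$.

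For the analytic continuation I would invoke Proposition \ref{holomorphiceig}: with $\lambda_\ell=|\nu|+|\tilde\nu|$, the map $\alpha\mapsto\Psi_{\alpha,\nu,\tilde\nu}$ is holomorphic in a connected open set $W_\ell\subset\{{\rm Re}(\alpha)\leq Q\}$ containing both the real segment used above and $(Q+i\R)\setminus\mc{D}_\ell$. The scalar product $\langle \Psi_{\alpha,\nu,\tilde\nu}\,|\,U_{\alpha_1,\alpha_2}(0,z)\rangle_2$ is holomorphic on $W_\ell$ because integrability at $c=-\infty$ is ensured by the hypothesis $\alpha_1+\alpha_2>Q$ combined with the bound of Lemma \ref{integrcf} (which dominates $U_{\alpha_1,\alpha_2}(0,z)$ by $e^{(\alpha_1+\alpha_2-Q)c}$ on $c\leq 0$) and the controlled growth of $\Psi_{\alpha,\nu,\tilde\nu}$ in the appropriate weighted space. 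The right-hand side is meromorphic in $\alpha$ via the DOZZ formula and the polynomial $v$-factors in $\Delta_\alpha$, so uniqueness of analytic continuation extends the identity to every $\alpha=Q+iP$ with $P>0$.

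The principal obstacle I foresee is the careful bookkeeping in the inductive computation above: the shift $\sum_{u<j}\nu_u$ in $v$ must be tracked through successive commutations of $\pl_w$ with the accumulated prefactor, and the particular non-increasing ordering of the Young diagram entries plays a role in the combinatorial identity. A secondary technical point is the rigorous interchange of the limit $R\to\infty$ with the differential operators inside the scalar product, which should reduce to standard smoothness of the three-point function in its insertion positions together with the uniform integrability bounds already available from Lemma \ref{integrcf}.
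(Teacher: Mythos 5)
Your combinatorial induction for $\mathbf{D}_{\nu_k}\cdots\mathbf{D}_{\nu_1}$ acting on $|w|^{-2\delta}$ is correct and is precisely the ``elementary algebra'' the paper's proof alludes to: the paper applies $\mathbf{D}_\nu\tilde{\mathbf{D}}_{\tilde\nu}$ to the exact two-variable M\"obius three-point function and sends $z_2\to\infty$ at the very end, whereas you set $z_1=R$ and send $R\to\infty$ first, reducing to a single variable; both routes produce the same $v$-factors, and your inductive check of the base and step is sound.

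However the way you feed Proposition \ref{proofward} and the subsequent analytic continuation contain a genuine gap, and it is fatal for all but very small Young diagrams. Proposition \ref{proofward} is valid only under the pair of constraints
$\alpha<\big(Q-\tfrac{2(|\nu|+|\tilde\nu|)}{\gamma}-\tfrac{\gamma}{4}\big)\wedge(Q-\gamma)$ and $\alpha+\sum_i\alpha_i>2Q$. With $n=2$ the Seiberg bound forces $\alpha>2Q-\alpha_1-\alpha_2>0$ (since $\alpha_1,\alpha_2<Q$), while for $|\nu|+|\tilde\nu|$ large the first constraint forces $\alpha<0$: the two conditions are incompatible, so there is no $\alpha$ at which you may invoke the Ward identity with two insertions. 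The same conflict sinks the analytic-continuation step: on the real half-line $(-\infty,Q-\tfrac{2\lambda_\ell}{\gamma}-\tfrac{\gamma}{4})\subset W_\ell$ the pairing $\langle\Psi_{\alpha,\nu,\tilde\nu}\,|\,U_{\alpha_1,\alpha_2}(0,z)\rangle_2$ is simply not convergent---the decay of $U_{\alpha_1,\alpha_2}(0,z)$ at $c\to-\infty$ is of order $e^{(\alpha_1+\alpha_2-Q)c}$ by Lemma \ref{integrcf}, but $\Psi_{\alpha,\nu,\tilde\nu}$ grows like $e^{(\alpha-Q-\eps)c}$, so integrability requires $\alpha+\alpha_1+\alpha_2>2Q$ and not merely $\alpha_1+\alpha_2>Q$; for $\alpha$ in the Ward-identity region this never holds when $\lambda_\ell$ is large. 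There is therefore no overlap domain on which to initialize the continuation. This is exactly why the paper introduces $n-2$ extra insertions $V_{\alpha_3},\dots,V_{\alpha_n}$ with $n$ large enough that $(n-2)Q>-(Q-\tfrac{2\lambda_\ell}{\gamma}-\tfrac{\gamma}{4})\wedge(Q-\gamma)$, runs the Ward identity and the analytic continuation in the distributional sense with all $n$ insertions present, and only afterwards lets $\alpha_3,\dots,\alpha_n\to 0$ to reach the two-insertion formula. Your $n=2$ shortcut cannot be repaired without essentially reinstating that part of the paper's argument.
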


The remaining part of this subsection is devoted to the proof of this proposition. 
We first need the following lemma concerning analycity of $U_{\boldsymbol{\alpha}}(\mathbf{z})$ in the parameter $\boldsymbol{\alpha}\in\C^n$, proved in Appendix \ref{app:analytic}.

\begin{lemma}\label{lem:analytic}
For fixed $\mathbf{z}\in \mathcal{Z}$ the mapping $\boldsymbol{\alpha}\mapsto U_{\boldsymbol{\alpha}}(\mathbf{z})\in e^{ \beta'\rho}L^2(\R\times\Omega_\T)$ extends analytically to a complex neighborhood $\mathcal{A}^n_U$ in $\C^n$ of the set $\{\boldsymbol{\alpha}\in\R^n\mid \forall i, \alpha_i<Q\}$, for arbitrary $\beta'<{\rm Re}(s)-Q$ (recall \eqref{defs}). This analytic extension is continuous in $(\boldsymbol{\alpha},\boldsymbol{z})\in \mathcal{A}^n_U\times\mathcal{Z}$.
\end{lemma}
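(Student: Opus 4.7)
The plan is to derive an explicit Girsanov representation for $U_{\boldsymbol{\alpha}}(\mathbf{z})$ in which the $\boldsymbol{\alpha}$-dependence becomes manifestly analytic, and then to verify a locally uniform $e^{\beta'\rho}L^2$-bound on a complex neighborhood of $\{\alpha_i<Q\}$.

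First, using the decomposition $X|_\D=X_\D+P\varphi$ and applying the Cameron--Martin/Girsanov theorem to $X_\D$ inside the conditional expectation $\E_\varphi$, one evaluates explicitly the $\eps\to 0$ limit in \eqref{defUward} for real $\boldsymbol{\alpha}$ with $\alpha_i<Q$, yielding
\[
U_{\boldsymbol{\alpha}}(\mathbf{z},c,\varphi)= e^{(s-Q)c}\,C(\boldsymbol{\alpha},\mathbf{z})\,\mathcal{P}(\boldsymbol{\alpha},\mathbf{z},\varphi)\,\E_\varphi\!\left[e^{-\mu e^{\gamma c}Z_{\boldsymbol{\alpha}}(\mathbf{z})}\right],
\]
where $C(\boldsymbol{\alpha},\mathbf{z}):=\prod_{i<j}e^{\alpha_i\alpha_j G_\D(z_i,z_j)}$, $\mathcal{P}(\boldsymbol{\alpha},\mathbf{z},\varphi):=\exp\!\big(\sum_i\alpha_i P\varphi(z_i)-\tfrac12\sum_i\alpha_i^2\E[(P\varphi(z_i))^2]\big)$, and $Z_{\boldsymbol{\alpha}}(\mathbf{z}):=\int_\D\prod_i\big(\tfrac{|1-x\bar z_i|}{|x-z_i|}\big)^{\gamma\alpha_i}M_\gamma(\dd x)$. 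The right-hand side makes sense for any $\boldsymbol{\alpha}\in\C^n$ in a strip $\mathcal{A}_U^n:=\{\boldsymbol{\alpha}\in\C^n\,:\,{\rm Re}(\alpha_i)<Q,\,|{\rm Im}(\alpha_i)|<\eta\}$ with $\eta>0$ small: one has $|Z_{\boldsymbol{\alpha}}|\le Z_{{\rm Re}(\boldsymbol{\alpha})}$ pointwise, and $Z_{{\rm Re}(\boldsymbol{\alpha})}$ has finite positive moments of sufficient order by the GMC integrability criterion \cite[Lem.~3.10]{DKRV}. Differentiation under the expectation, justified by dominated convergence from these moment bounds, then gives pointwise (in $c,\varphi$) analyticity of $\boldsymbol{\alpha}\mapsto U_{\boldsymbol{\alpha}}(\mathbf{z},c,\varphi)$ throughout $\mathcal{A}_U^n$.

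The main obstacle is the locally uniform norm bound $\sup_{\boldsymbol{\alpha}\in K}\|U_{\boldsymbol{\alpha}}(\mathbf{z})\|_{e^{\beta'\rho}L^2}<\infty$ for every compact $K\subset\mathcal{A}_U^n$ with $\beta'<{\rm Re}(s(\boldsymbol{\alpha}))-Q$ uniformly on $K$. In the regime $c\to-\infty$ the prefactor $|e^{(s-Q)c}|=e^{({\rm Re}(s)-Q)c}$ beats the weight $e^{-\beta'c}$ since $\beta'<{\rm Re}(s)-Q$, and $|\mathcal{P}|$ has uniformly finite $L^p(\Omega_\T)$ norms on $K$. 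The hard regime is $c\to+\infty$: the crude bound $|\E_\varphi[e^{-\mu e^{\gamma c}Z_{\boldsymbol{\alpha}}}]|\le\E_\varphi[e^{\mu e^{\gamma c}Z_{{\rm Re}(\boldsymbol{\alpha})}}]$ diverges because ${\rm Re}(Z_{\boldsymbol{\alpha}})$ need not be non-negative when ${\rm Im}(\boldsymbol{\alpha})\neq 0$. The plan is to split $\D=\D^{\rm far}\cup\D^{\rm near}$, where $\D^{\rm far}$ is bounded away from $\{z_1,\dots,z_n\}$ so that $G_\D$ is bounded there, and for $\eta$ sufficiently small $\cos(\gamma\sum_i{\rm Im}(\alpha_i)G_\D(x,z_i))\ge c_\eta>0$, giving ${\rm Re}(Z^{\rm far}_{\boldsymbol{\alpha}})\ge c_\eta Z^{\rm far}_{{\rm Re}(\boldsymbol{\alpha})}>0$; while on $\D^{\rm near}$ the Seiberg criterion controls the $L^p$-norms of $Z^{\rm near}_{{\rm Re}(\boldsymbol{\alpha})}$ uniformly. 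Factoring $|e^{-\mu e^{\gamma c}Z_{\boldsymbol{\alpha}}}|=e^{-\mu e^{\gamma c}{\rm Re}(Z^{\rm far}_{\boldsymbol{\alpha}})-\mu e^{\gamma c}{\rm Re}(Z^{\rm near}_{\boldsymbol{\alpha}})}$ and applying Cauchy--Schwarz in $\E_\varphi$, the far piece provides the required exponential decay in $c$ which dominates any polynomial growth from the near piece.

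Finally, pointwise analyticity together with the locally uniform $e^{\beta'\rho}L^2$-bound yields analyticity of $\boldsymbol{\alpha}\mapsto U_{\boldsymbol{\alpha}}(\mathbf{z})$ as an $e^{\beta'\rho}L^2$-valued map by the standard vector-valued Morera/Dunford principle: weak analyticity (testing against a dense family $f\in e^{\beta'\rho}L^2$, with pointwise analyticity and dominated convergence giving analyticity of the scalar map $\boldsymbol{\alpha}\mapsto\langle U_{\boldsymbol{\alpha}}(\mathbf{z}),f\rangle$) combined with local boundedness forces norm analyticity. Joint continuity on $\mathcal{A}_U^n\times\mathcal{Z}$ follows from the same representation: $C(\boldsymbol{\alpha},\mathbf{z})$ is jointly continuous since $G_\D$ is smooth off the diagonal, $\mathcal{P}$ depends continuously on $(\boldsymbol{\alpha},\mathbf{z})$ in $L^p(\Omega_\T)$ via $L^p$-continuity of the Poisson extension $P\varphi(z)$ in $z$, and the conditional expectation inherits continuity in $\mathbf{z}$ from dominated convergence applied to $Z_{\boldsymbol{\alpha}}(\mathbf{z})$, all under the uniform bounds of the preceding step.
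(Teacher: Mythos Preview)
Your Girsanov representation and the general plan (pointwise analyticity plus a locally uniform weighted $L^2$ bound, then vector-valued Morera) are correct, but the treatment of the regime $c\to+\infty$ has a genuine gap. After your far/near split you need to control
\[
\E_\varphi\Big[e^{-\mu e^{\gamma c}{\rm Re}(Z^{\rm far}_{\boldsymbol{\alpha}})}\,e^{-\mu e^{\gamma c}{\rm Re}(Z^{\rm near}_{\boldsymbol{\alpha}})}\Big].
\]
Near each insertion $z_i$ one has $G_\D(x,z_i)\to+\infty$, so no matter how small $\eta=\sup_i|{\rm Im}\,\alpha_i|$ is, the phase $\gamma\sum_i{\rm Im}(\alpha_i)G_\D(x,z_i)$ oscillates unboundedly and ${\rm Re}(Z^{\rm near}_{\boldsymbol{\alpha}})$ can be as negative as $-Z^{\rm near}_{{\rm Re}(\boldsymbol{\alpha})}$. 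If you apply Cauchy--Schwarz to separate far and near, the near factor becomes $\E_\varphi[e^{+C e^{\gamma c}Z^{\rm near}_{{\rm Re}(\boldsymbol{\alpha})}}]^{1/2}$, which is infinite since GMC has no positive exponential moments. If instead you keep them together, there is no reason for $c_\eta Z^{\rm far}_{{\rm Re}(\boldsymbol{\alpha})}$ to dominate $Z^{\rm near}_{{\rm Re}(\boldsymbol{\alpha})}$ pathwise (the latter carries the singular weight $|x-z_i|^{-\gamma{\rm Re}(\alpha_i)}$). Either way there is no ``polynomial growth'' to dominate; the obstruction is the lack of exponential moments.

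The paper's proof avoids this issue by never placing the complex parameter inside the potential. It works with the ``holes'' approximation $U^{\rm holes}_{\boldsymbol{\alpha}+i\boldsymbol{\beta},k}$, in which the GMC integral is over $\D_k=\D\setminus\bigcup_i B(z_i,2^{-k})$ and the complex $\alpha_i+i\beta_i$ appear only in the regularized vertex factors $\epsilon_k^{(\alpha_i+i\beta_i)^2/2}e^{(\alpha_i+i\beta_i)X_{\D,\epsilon_k}(z_i)}$. At each finite $k$ the map is then manifestly entire in $\boldsymbol{\alpha}$. Taking moduli, the phase $e^{i\beta X_{\D,\epsilon_k}}$ drops out and the only cost of the imaginary part is a factor $2^{k\beta^2/2}$; after a Girsanov shift by the \emph{real} part, the increments $U^{\rm holes}_{k+1}-U^{\rm holes}_k$ are bounded by differences of exponentials of real positive GMC quantities $Y_k,\delta Y_k$. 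Splitting on $\{\delta Y_k>1\}$ versus $\{\delta Y_k\le 1\}$ and using FKG for the Dirichlet GFF together with small-ball moment estimates for $\delta Y_k$ gives a Cauchy bound $C\,2^{-\theta k}(1+e^{\gamma c})e^{({\rm Re}(s)-Q)c}e^{C'\sup P\varphi}\E_\varphi[e^{-\mu e^{\gamma c}Y_{k_0}}]$, uniform in $|\boldsymbol{\beta}|\le\eta$ once $\eta^2/2<\theta$. This simultaneously yields analyticity of the limit and the weighted $L^2$ bound, since the right-hand side lies in $e^{\beta'\rho}L^2$ by Lemma~\ref{integrcf}-type estimates.
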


The first conclusion we want to draw is the fact that the pairing of $\Psi_\alpha$ with  $U_{\boldsymbol{\alpha}}(\theta(\mathbf{z})) $ (in the case $n=2$) is related to the DOZZ formula when $\alpha$ is on the spectrum line $Q+i\R$.

\begin{lemma}
Here we fix $n=2$ and we consider $\mathbf{z}\in \theta\mathcal{Z}$. The mapping $(\alpha,\boldsymbol{\alpha})\mapsto \langle \Psi_\alpha | U_{\boldsymbol{\alpha}}(\theta(\mathbf{z}))\rangle_2$ is 
continuous in the set
$$\Xi:= \{(\alpha,\boldsymbol{\alpha})\in \C \times \mathcal{A}^2_U\mid   {\rm Re}(\alpha)\leq Q,{\rm Re}(\alpha+\alpha_1+\alpha_2)>2Q\}$$
 and analytic in the set $\Xi\cap \{\alpha\in \C\setminus\mathcal{D}_0\}$. Moreover, in  the set $\Xi$, we have the relation
$$\langle \Psi_\alpha | U_{\boldsymbol{\alpha}}(\theta(\mathbf{z}))\rangle_2= |z_1|^{2\Delta_{\alpha_2}-2\Delta_\alpha+2\Delta_{\alpha_1}}|z_1-z_2|^{2\Delta_{\alpha}-2\Delta_{\alpha_1}-2\Delta_{\alpha_2}}|z_2|^{2\Delta_{\alpha_1}-2\Delta_\alpha+2\Delta_{\alpha_2}}\tfrac{1}{2}C_{\gamma,\mu}^{{\rm DOZZ}} (\alpha,\alpha_1,\alpha_2 ) .$$
In particular this relation holds for $\alpha=Q+iP$ with $P\in (0,+\infty)$ and $\alpha_1,\alpha_2\in (-\infty,Q)$ with $\alpha_1+\alpha_2>Q$. 
\end{lemma}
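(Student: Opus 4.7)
The plan is to first establish the formula on the real ``probabilistic region''
$$\Xi_{\mathrm{prob}}:=\{(\alpha,\alpha_1,\alpha_2)\in\R^3\,|\,\alpha,\alpha_1,\alpha_2<Q,\ \alpha+\alpha_1+\alpha_2>2Q\}\subset \Xi,$$
and then propagate it by analytic continuation. On $\Xi_{\mathrm{prob}}$, the probabilistic representation \eqref{defvalpha} gives $\Psi_\alpha=U(V_\alpha(0))$, so Remark \ref{correlalpha} yields
$$\langle\Psi_\alpha\,|\,U_{\boldsymbol{\alpha}}(\theta(\mathbf{z}))\rangle_2 = |z_1|^{4\Delta_{\alpha_1}}|z_2|^{4\Delta_{\alpha_2}}\,\langle V_\alpha(0)V_{\alpha_1}(z_1)V_{\alpha_2}(z_2)\rangle_{\gamma,\mu}.$$
Applying the M\"obius covariance \eqref{KPZformula} together with the DOZZ identification from \cite{dozz}, the three-point correlation on the right equals
$|z_1|^{2(\Delta_{\alpha_2}-\Delta_\alpha-\Delta_{\alpha_1})}|z_2|^{2(\Delta_{\alpha_1}-\Delta_\alpha-\Delta_{\alpha_2})}|z_1-z_2|^{2(\Delta_\alpha-\Delta_{\alpha_1}-\Delta_{\alpha_2})}\cdot\tfrac{1}{2}C^{\mathrm{DOZZ}}_{\gamma,\mu}(\alpha,\alpha_1,\alpha_2)$. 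Collecting the powers of $|z_i|$ yields exactly the announced identity on $\Xi_{\mathrm{prob}}$.

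\textbf{Step 2: Joint continuity and holomorphy on $\Xi$.} The next step is to show that both sides of the identity are continuous on $\Xi$ and holomorphic on $\Xi\cap\{\alpha\notin\mathcal{D}_0\}$. By Proposition \ref{holomorphiceig} applied with $\ell=0$, the vector $\Psi_\alpha$ lies in $e^{-\beta\rho}\mathcal{D}(\mathcal{Q})$ for every $\beta>Q-\mathrm{Re}(\alpha)$, is holomorphic in $\alpha$ on $W_0\supset\{\mathrm{Re}(\alpha)\leq Q\}\setminus\mathcal{D}_0$, and is continuous at each point of $\mathcal{D}_0$. By Lemma \ref{lem:analytic}, for every $\beta'<\mathrm{Re}(\alpha_1+\alpha_2)-Q$ the function $U_{\boldsymbol{\alpha}}(\theta(\mathbf{z}))$ lies in $e^{\beta'\rho}L^2(\R\times\Omega_\T)$ and is jointly holomorphic in $\boldsymbol{\alpha}\in\mathcal{A}^2_U$ and continuous in $\mathbf{z}$. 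The condition $\mathrm{Re}(\alpha+\alpha_1+\alpha_2)>2Q$ defining $\Xi$ is precisely what allows one to choose $\beta,\beta'$ with $Q-\mathrm{Re}(\alpha)<\beta<\beta'<\mathrm{Re}(\alpha_1+\alpha_2)-Q$; the pairing
$\cjg \Psi_\alpha\,|\,U_{\boldsymbol{\alpha}}(\theta(\mathbf{z}))\cjd_2$ is then absolutely convergent, and (using that compatible choices of $\beta,\beta'$ can be made uniformly on compact subsets of $\Xi$) it depends continuously on $(\alpha,\boldsymbol{\alpha})\in\Xi$ and holomorphically on $\Xi\cap\{\alpha\notin\mathcal{D}_0\}$. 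The right-hand side of the claimed identity is visibly meromorphic in $(\alpha,\boldsymbol{\alpha})\in\C^3$: the prefactors $|z_i|^{2\Delta_\bullet}$, $|z_1-z_2|^{2\Delta_\bullet}$ are entire in the weights, and the DOZZ function $C^{\mathrm{DOZZ}}_{\gamma,\mu}(\alpha,\alpha_1,\alpha_2)$ is meromorphic with known pole structure.

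\textbf{Step 3: Conclusion by analytic continuation.} With both sides holomorphic on the connected open set $\Xi\cap\{\alpha\notin\mathcal{D}_0\}$ and agreeing on the real-analytic submanifold $\Xi_{\mathrm{prob}}$ (a uniqueness set for holomorphic functions on this region), the identity theorem forces agreement on all of $\Xi\cap\{\alpha\notin\mathcal{D}_0\}$. Continuity of both sides on $\Xi$ then extends the identity to the exceptional set $\mathcal{D}_0$, in particular to $\alpha=Q+iP$ with $P\in(0,\infty)$ and $\alpha_1,\alpha_2\in(-\infty,Q)$ with $\alpha_1+\alpha_2>Q$ (which lies in $\Xi$ since $\mathrm{Re}(\alpha+\alpha_1+\alpha_2)=Q+\alpha_1+\alpha_2>2Q$). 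The main technical point, and the place where care is needed, is ensuring that the compatible weight choice $\beta<\beta'$ in Step 2 can be made locally uniformly; this is a consequence of the strict inequality $\mathrm{Re}(\alpha+\alpha_1+\alpha_2)>2Q$ on compact subsets of $\Xi$, and it is what allows Lemma \ref{lem:analytic} and Proposition \ref{holomorphiceig} to be combined into joint continuity of the pairing.
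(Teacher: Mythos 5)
Your proof takes essentially the same route as the paper: establish the identity on the real probabilistic slice via the probabilistic representation of $\Psi_\alpha$, Remark \ref{correlalpha}, M\"obius covariance, and the DOZZ theorem of \cite{dozz}; obtain joint continuity/holomorphy on $\Xi$ by combining Proposition \ref{holomorphiceig} with Lemma \ref{lem:analytic} using the weight windows $Q-\mathrm{Re}(\alpha)<\beta<\beta'<\mathrm{Re}(\alpha_1+\alpha_2)-Q$; conclude by analytic continuation. The paper's proof is terser but the steps are identical; your Step~2 discussion of the locally uniform choice of $\beta,\beta'$ and your Step~3 observation that the real open slice $\Xi_{\mathrm{prob}}$ is a uniqueness set (being an open subset of the totally real submanifold $\R^3\subset\C^3$ on a connected domain) are correct and simply spell out what the paper leaves implicit.
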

   
\begin{proof}
By Proposition \ref{holomorphiceig}  
 the mapping $\alpha\mapsto \Psi_\alpha\in e^{-\frac{\beta}{2}\rho}L^2(\R\times\Omega_\T)$ is analytic on $W_0$, i.e.   in the region $\{\alpha\in\C\setminus \mathcal{D}_0 \mid Q-{\rm Re}(\alpha)<\beta \}$ and continuous on $\mathcal{D}_0$. Combining with Lemma \ref{lem:analytic} (with $n=2$) produces directly the region of analycity/continuity we claim.
 Furthermore when all the parameters $\alpha,\alpha_1,\alpha_2$ are real, by Remark \ref{correlalpha} we  have  
  $$\langle V_{\alpha}(0) V_{\alpha_1 }(z_1)V_{\alpha_2 }(z_2)\rangle_{\gamma,\nu}=\Big(\prod_{i=1}^2|z_i|^{-4\Delta_{\alpha_i}}\Big)\langle\Psi_{\alpha} | U_{\boldsymbol{\alpha}}(\theta(\mathbf{z}))\rangle_2.
$$  
Also, for real parameters, the LHS coincide with the DOZZ formula \cite{dozz}, namely
 $$\langle V_{\alpha}(0) V_{\alpha_1 }(z_1)V_{\alpha_2 }(z_2)\rangle_{\gamma,\nu}=|z_1|^{2\Delta_{\alpha_2}-2\Delta_\alpha-2\Delta_{\alpha_1}}|z_1-z_2|^{2\Delta_{\alpha}-2\Delta_{\alpha_1}-2\Delta_{\alpha_2}}|z_2|^{2\Delta_{\alpha_1}-2\Delta_\alpha-2\Delta_{\alpha_2}}\tfrac{1}{2}C_{\gamma,\mu}^{{\rm DOZZ}} (\alpha,\alpha_1,\alpha_2 ) .$$
This proves the claim.
\end{proof}

Now we would like to use the Ward identities, i.e. Proposition \ref{proofward}, to express the correlations of descendant fields with two insertions $\langle \Psi_{\alpha,\nu,\tilde{\nu}} | U_{\boldsymbol{\alpha}}(\theta(\mathbf{z}))\rangle_2$ (here with $n=2$) in terms of differential operators applied to to correlation of primaries $\langle \Psi_\alpha | U_{\boldsymbol{\alpha}}(\theta(\mathbf{z}))\rangle_2$ when the parameter $\alpha$ is close to the spectrum line $\alpha\in Q+i\R$. This is not straightforward because Proposition \ref{proofward} is not only restricted to real values of the parameter but also because the constraint on $\alpha$, which forces it to be negatively large, implies to have $n$ large  in order for the global Seiberg bound $\alpha+\sum_i\alpha_i>2Q$ to be satisfied. Transferring Ward's relations close to the   spectrum line is thus our next task.

For this, fix a pair of Young diagrams $\nu,\tilde\nu$ and $\ell $ such that $\lambda_\ell=|\nu|+|\tilde\nu|$. By Proposition   \ref{holomorphiceig} there exists a  connected open set $\mathcal{A}_{\nu,\tilde{\nu}}:=W_0\cap W_\ell\subset\C$ such that the mappings $\alpha\mapsto \Psi_{\alpha,\nu,\tilde{\nu}}\in e^{-\beta\rho}L^2(\R\times\Omega_\T)$ and $\alpha\mapsto \Psi_{\alpha}\in e^{-\beta\rho}L^2(\R\times\Omega_\T)$  for $\beta>Q- \mathrm{Re}(\alpha)$ are analytic on $\mathcal{A}_{\nu,\tilde{\nu}}$ and furthermore

\begin{itemize}
\item $\mathcal{A}_{\nu,\tilde{\nu}}$ contains a complex neighborhood of the spectrum line $ \{\alpha=Q+iP\mid P\in(0,+\infty)\}$, with the discrete set $\mathcal{D}_0\cup\mathcal{D}_\ell$ removed and the mappings extend continuously to 
 $\mathcal{D}_0\cup\mathcal{D}_\ell$.
\item  $\mathcal{A}_{\nu,\tilde{\nu}}$ contains a complex neighborhood of   the real half-line $(-\infty,Q-\frac{2\la_\ell}{\gamma}-\frac{\gamma}{4})$  
\end{itemize}
Therefore, for arbitrary fixed $n$ and $\mathbf{z}\in\theta\mathcal{Z}$,  the pairings $(\alpha,\boldsymbol{\alpha})\mapsto \langle\Psi_{\alpha,\nu,\tilde{\nu}} |U_{\boldsymbol{\alpha}}(\theta(\mathbf{z}))\rangle_2$ and $(\alpha,\boldsymbol{\alpha})\mapsto \langle\Psi_{\alpha} |U_{\boldsymbol{\alpha}}(\theta(\mathbf{z}))\rangle_2$ are holomorphic in the region $\mathcal{A}_{\nu,\tilde{\nu}}\odot\mathcal{A}^n_U:=\{(\alpha,\boldsymbol{\alpha})\in \mathcal{A}_{\nu,\tilde{\nu}}\times\mathcal{A}^n_U\mid 2Q<{\rm Re}(s+\alpha)\}$. 
Let us consider the subsets  
 $$\mathcal{S}:=\{(\alpha,\boldsymbol{\alpha}) \mid \forall i \,\,\alpha_i\in\R\text{ and } \alpha_i<Q,s-Q>0,\alpha\in Q+i (0,\infty)  \},\quad \mathcal{S}_\star=\mathcal{S}\cap\{(\alpha,\boldsymbol{\alpha}) \mid\alpha\not \in \mathcal{D}_0\cup\mathcal{D}_\ell\}$$ 
 and 
 $$\mathcal{R}_{\nu,\tilde{\nu}}:=\{(\alpha,\boldsymbol{\alpha}) \mid \forall i \,\,\alpha_i\in\R\text{ and } \alpha_i<Q,\alpha+s-2Q>0,\alpha\in \R,\alpha< (Q-\frac{2\la_\ell}{\gamma}-\frac{\gamma}{4})\wedge (Q-\gamma) \}.$$ 
They are both in the same connected component of $\mathcal{A}_{\nu,\tilde{\nu}}\odot\mathcal{A}^n_U$. Furthermore, $ \mathcal{S}_\star$ is obviously non-empty whereas the condition $(n-2)Q>-(Q-\frac{2\la_\ell}{\gamma}-\frac{\gamma}{4})\wedge (Q-\gamma)$ ensures that $\mathcal{R}_{\nu,\tilde{\nu}}$ is non-empty, which we assume from now on.

   Now we  exploit the Ward identities, valid on $\mathcal{R}_{\nu,\tilde{\nu}}$.  Let us consider a smooth compactly supported function $\varphi$ on $\theta\mathcal{Z}$. The mapping 
 $$(\alpha,\boldsymbol{\alpha})\in\mathcal{A}_{\nu,\tilde{\nu}}\odot\mathcal{A}_U^n\mapsto \int \langle\Psi_{\alpha,\nu,\tilde{\nu}} |U_{\boldsymbol{\alpha}}(\theta(\mathbf{z})) \rangle_2\bar{\varphi}(\mathbf{z})\,\dd \mathbf{z}$$
 is thus analytic. Furthermore on $\mathcal{R}_{\nu,\tilde{\nu}}$ and by Proposition \ref{proofward}, it coincides with the mapping  
 \begin{align*}
 (\alpha,\boldsymbol{\alpha})\in\mathcal{A}_{\nu,\tilde{\nu}}\odot\mathcal{A}_U^n
 \mapsto & 
 \int  \langle V_{\alpha}(0)\prod_{i=1}^nV_{\alpha_i }(z_i)\rangle_{\gamma,\mu}  \overline{\tilde{\mathbf{D}}_{{\tilde{\nu}}}^\ast \mathbf{D}_{\nu}^\ast  \Big( \varphi(\mathbf{z})  \prod_{i=1}^n|z_i|^{4\Delta_{\alpha_i}}\Big)}\,\dd \mathbf{z}\\
 =& \int \Big(\prod_{i=1}^n|z_i|^{-4\Delta_{\alpha_i}}\Big)\langle\Psi_{\alpha} | U_{\boldsymbol{\alpha}}(\theta(\mathbf{z}))\rangle_2  \overline{\tilde{\mathbf{D}}_{{\tilde{\nu}}}^\ast \mathbf{D}_{\nu}^\ast  \Big( \varphi(\mathbf{z})  \prod_{i=1}^n|z_i|^{4\Delta_{\alpha_i}}\Big)}\,\dd \mathbf{z}
 \end{align*}
  where  we have  introduced the (adjoint) operator $\mathbf{D}_\nu^\ast$ by
 $$\int_{\C^n} \mathbf{D}_\nu f(\mathbf{z})\bar{\varphi}(z)\,\dd \mathbf{z}=\int_{\C^n}  f(\mathbf{z})\overline{\mathbf{D}_\nu^\ast \varphi}(z)\,\dd \mathbf{z}$$
 for all functions $f$ in the domain of $\mathbf{D}_\nu$ and all smooth compactly supported functions $ \varphi$ in $\C^n$ (and similarly for $\tilde{\mathbf{D}}_{\tilde{\nu}}$). Therefore both mappings are analytic and coincide on $\mathcal{R}_{\nu,\tilde{\nu}}$, thus on the connected component of $\mathcal{A}_{\nu,\tilde{\nu}}\odot\mathcal{A}_U$ containing $\mathcal{R}_{\nu,\tilde{\nu}}$, therefore  on $\mathcal{S}_\star$ and finally on $\mathcal{S}$ by continuity. Notice that, on $\mathcal{S}$, we can take all the $\alpha_i$'s equal to $0$ but the first two of them provided they satisfy $\alpha_1+\alpha_2>Q$. This fact being valid for all test function $\varphi$, we deduce that the relation 
\begin{align*}
\langle \Psi_{Q+iP,\nu,\tilde{\nu}} |U(V_{\alpha_1}&(\theta(z_1))V_{\alpha_2}(\theta(z_2)))\rangle_2\\
=&\tfrac{1}{2}C_{\gamma,\mu}^{{\rm DOZZ}} (Q+iP,\alpha_1,\alpha_2 )|z_1|^{4\Delta_{\alpha_1}}|z_2|^{4\Delta_{\alpha_2}}\\
&\times \mathbf{D}_{\nu}  \tilde{\mathbf{D}}_{{\tilde{\nu}}}  \Big( |z_1|^{2\Delta_{\alpha_2}-2\Delta_{Q+iP}-2\Delta_{\alpha_1}}|z_1-z_2|^{2\Delta_{Q+iP}-2\Delta_{\alpha_1}-2\Delta_{\alpha_2}}|z_2|^{2\Delta_{\alpha_1}-2\Delta_{Q+iP}-2\Delta_{\alpha_2}}\Big)
\end{align*}
holds for almost every $z_1,z_2$ and $\alpha_1,\alpha_2<Q$ such that $\alpha_1+\alpha_2>Q$, and thus for every $z_1,z_2\in\theta\mathcal{Z}$ as both sides are continuous in these variables. From this relation and after some elementary algebra to compute the last term, sending $z_2\to\infty$, we end up with the claimed relation.\qed


\section{Proof of Theorem \ref{bootstraptheoremintro}}\label{sectionproofbootstrap}

\subsection{Definition of Conformal Blocks}\label{sub:defblock}
Before proving Theorem \ref{bootstraptheoremintro}, we give the definition of the conformal blocks \eqref{blocksintro} which is based on material introduced in subsections \ref{sectionDiagvirasoro}  and 
\ref{sub:desc3point}. 
The conformal blocks are defined as the formal power series 
\begin{equation}\label{defblocks}
 \mathcal{F}_{Q+iP}(\Delta_{\alpha_1}, \Delta_{\alpha_2},\Delta_{\alpha_3},\Delta_{\alpha_4}z)= \sum_{n=0}^\infty  \beta_n(\Delta_{Q+iP},\Delta_{\alpha_1}, \Delta_{\alpha_2},\Delta_{\alpha_3},\Delta_{\alpha_4})z^n
 \end{equation}
with
\begin{equation}\label{expressionbeta}
\beta_n(\Delta_{Q+iP},\Delta_{\alpha_1}, \Delta_{\alpha_2},\Delta_{\alpha_3},\Delta_{\alpha_4}):= \sum_{|\nu|, |\nu'|=n}  v(\Delta_{\alpha_1}, \Delta_{\alpha_2}, \Delta_{Q+iP},\nu) F_{Q+iP}^{-1} (\nu,\nu') v(\Delta_{\alpha_4},\Delta_{\alpha_3}, \Delta_{Q+iP}, \nu').
\end{equation}
where the matrix $(F_{Q+iP}^{-1} (\nu,\nu'))_{|\nu|,|\nu'|=n}$  is the inverse of the scalar product matrix \eqref{scapo} and 
the function $v$ is explicitely defined by expression \eqref{poidsv}.
The definition of $\beta_n$ is not explicit because there is no known explicit formula for the inverse matrix  $F_{Q+iP}^{-1}$ and the convergence of  \eqref{defblocks} is an open problem. Below we'll prove the series converges  in the unit disc a.e. in $P$.

\subsection{Proof of Theorem \ref{bootstraptheoremintro}}
We start with a Lemma on the spectral decompostion:
\begin{lemma}\label{spclemma}
Let $u_1,u_2\in e^{\delta c_-}L^2(\R\times \Omega_\T)$ with $\delta>0$. 
Then
\begin{align*}
\langle u_1 \,|\, u_2\rangle_2=\frac{1}{2\pi}
 \lim_{N,L\to \infty}
 \sum_{|\nu'|=|\nu|\leq N} \sum_{|\tilde\nu'|=|\tilde\nu|\leq N}
\int_0^L \langle   u_1\,|\, \Psi_{Q+iP,\nu ,\tilde{\nu}}\rangle  _{2}\langle \Psi_{Q+iP,\nu' , \tilde{\nu}'}\,|\, u_2\rangle_{2}F^{-1}_{Q+iP}(\nu,\nu')F^{-1}_{Q+iP}(\tilde\nu,\tilde\nu')\, \dd P.
 \end{align*} 
\end{lemma}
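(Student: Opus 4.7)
The plan is to derive the formula from the spectral resolution \eqref{spectraldecomposH2} of Theorem \ref{spectralmeasure}, which is already written in the orthonormal Hermite basis $\{\psi_{{\bf kl}}\}$ of $L^2(\Omega_\T)$, by a finite-dimensional change of basis on each bigraded eigenspace of $\mathbf{P}$. First, one decomposes $L^2(\Omega_\T)=\bigoplus_{m,n\geq 0}H_{m,n}$ with $H_{m,n}:=\mathrm{Span}\{\psi_{{\bf kl}}\,|\,|{\bf k}|=m,\,|{\bf l}|=n\}$, and observes that both the Hermite polynomials (orthonormal) and the family $\{\mathcal{Q}_{Q+iP,\nu,\tilde\nu}\}_{|\nu|=m,\,|\tilde\nu|=n}$ (linearly independent in $H_{m,n}$ for $P$ outside a discrete Kac-exceptional set, by items 3--4 of Proposition \ref{prop:mainvir0} and the preservation of the bi-grading by $\mathbf{L}^{0,\alpha}_{-\nu}\tilde{\mathbf{L}}^{0,\alpha}_{-\tilde\nu}$) are bases of $H_{m,n}$, related by the coefficients $M^{m+n}_{Q+iP,{\bf kl},\nu\tilde\nu}$ which are polynomial in $P$. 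By linearity of the Poisson operator $\mc{P}_j(Q+iP)$, this change of basis lifts through \eqref{defdescendents} to the generalized eigenfunctions.

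The core step is the standard finite-dimensional Plancherel identity: on any Hermitian inner product space with orthonormal basis $\{e_i\}$ and any other basis $\{\Phi_\alpha\}=\{\sum_i M_{\alpha i}e_i\}$ with Gram matrix $G_{\alpha\beta}:=\langle\Phi_\alpha\,|\,\Phi_\beta\rangle=(MM^*)_{\alpha\beta}$, one has
\[
\sum_i\langle v\,|\,e_i\rangle\langle e_i\,|\,w\rangle=\sum_{\alpha,\beta}\overline{(G^{-1})_{\alpha\beta}}\,\langle v\,|\,\Phi_\alpha\rangle\langle\Phi_\beta\,|\,w\rangle.
\]
Applying this pointwise in $P$ with $V=H_{m,n}$, the Shapovalov formula \eqref{scapo} yields $G_{(\nu,\tilde\nu),(\nu',\tilde\nu')}=F_{Q+iP}(\nu,\nu')F_{Q+iP}(\tilde\nu,\tilde\nu')\delta_{|\nu|,|\nu'|}\delta_{|\tilde\nu|,|\tilde\nu'|}$, which is real symmetric for $P\in\R$ (since $\Delta_{Q+iP}=(Q^2+P^2)/4$ and $c_L$ are real), so $\overline{G^{-1}}=G^{-1}$. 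One obtains, for a.e. $P\in\R^+$ and for each bi-level $(m,n)$, the identity rewriting the level-$(m,n)$ slice of \eqref{spectraldecomposH2} in the descendant basis with coefficients $F^{-1}_{Q+iP}(\nu,\nu')F^{-1}_{Q+iP}(\tilde\nu,\tilde\nu')$ and with the level-matching $|\nu|=|\nu'|$, $|\tilde\nu|=|\tilde\nu'|$ forced by the block-diagonal structure of $G$.

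Summing this level-by-level rewriting over $m,n\leq N$, integrating in $P\in(0,L)$, and substituting into \eqref{spectraldecomposH2} yields the truncated formula of the lemma. Passing to the limits $N,L\to\infty$ follows from the absolute convergence of the unrestricted sum-integral in Theorem \ref{spectralmeasure}, since at each bi-level the truncated descendant sum and the truncated Hermite sum are equal by the finite-dimensional identity just stated. The pairings $\langle u_1\,|\,\Psi_{Q+iP,\nu,\tilde\nu}\rangle_2$ are well-defined because, by Proposition \ref{holomorphiceig}, $\Psi_{Q+iP,\nu,\tilde\nu}\in e^{-\delta'\rho}\mc{D}(\mc{Q})$ for every $\delta'>0$ (since $\mathrm{Re}(Q+iP)=Q$), which combined with the hypothesis $u_1\in e^{\delta c_-}L^2$ gives an absolutely convergent pairing whenever $0<\delta'<\delta$. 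The main subtlety is essentially bookkeeping: keeping track of the block structure of the Shapovalov matrix and discarding the measure-zero set of Kac values of $P$ (where $G$ degenerates), both of which are harmless for the $dP$-integral.
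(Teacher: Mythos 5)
The overall strategy (change of basis in the spectral resolution \eqref{spectraldecomposH2}, then pass to the limit) matches the paper's, but there are two real gaps, the first of which breaks the argument as written.

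\textbf{The bi-grading claim is false.} You assert that $\{\mathcal{Q}_{Q+iP,\nu,\tilde\nu}\}_{|\nu|=m,\,|\tilde\nu|=n}$ is a basis of $H_{m,n}:=\mathrm{Span}\{\psi_{{\bf kl}}\,|\,|{\bf k}|=m,\,|{\bf l}|=n\}$, citing the preservation of a bi-grading by the Virasoro operators. The generators $\mathbf{L}^{0,\alpha}_{-n}$, $\tilde{\mathbf{L}}^{0,\alpha}_{-n}$ do preserve a bi-grading, but it is the one associated with the holomorphic/antiholomorphic modes $\varphi_n$ vs.~$\varphi_{-n}$, i.e.~with the basis $\pi_{\bf kl}$ of \eqref{fbasis}, \emph{not} with the Hermite basis $\psi_{\bf kl}$ in the real coordinates $(x_n,y_n)$. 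These two bi-gradings genuinely differ. For example, at total degree $1$ one has $\mathcal{Q}_{\alpha,(1),\emptyset}=\mathbf{L}^{0,\alpha}_{-1}1=i(\alpha-2Q)\mathbf{A}_{-1}1$, which is proportional to $\varphi_1=\tfrac{1}{2}(x_1+iy_1)$, whereas $H_{1,0}=\C\,{\rm He}_1(x_1)=\C\,x_1$; so $\mathcal{Q}_{\alpha,(1),\emptyset}\notin H_{1,0}$. Consequently your pointwise-in-$P$, bi-level-$(m,n)$ slice of the completeness relation cannot be rewritten in the descendant basis as stated, because the two index sets parametrize bases of \emph{different} subspaces of $\ker(\mathbf{P}-(m+n))$. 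Note also that the expansion \eqref{defqalpha} you cite runs over $|{\bf k}|+|{\bf l}|=N$, not over $|{\bf k}|=|\nu|$, $|{\bf l}|=|\tilde\nu|$, which is precisely the point. The paper avoids this pitfall by performing the change of basis on the whole eigenspace $\ker(\mathbf{P}-N)$ (via the auxiliary vectors $H_{Q+iP,\nu,\tilde\nu}$ built from $F^{-1/2}$), never decomposing $\ker(\mathbf{P}-N)$ into bi-graded pieces in the Hermite basis.

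\textbf{The passage to the limit is not justified.} You invoke ``absolute convergence of the unrestricted sum-integral in Theorem~\ref{spectralmeasure}'' to justify reorganizing and truncating, but this is not given by the theorem and is, in fact, the substance of the proof. The spectral resolution \eqref{spectraldecomposH2} is a priori an iterated limit $N,L\to\infty$ with the truncation by total degree $|{\bf k}|+|{\bf l}|\le N$; the lemma asks for the different truncation $\max(|\nu|,|\tilde\nu|)\le N$. To pass from one to the other one must argue. The paper's route is a sandwiching argument: for $u_1=u_2=u$ the integrand of the descendant-basis truncated sum is non-negative, so the truncation $\max(|\nu|,|\tilde\nu|)\le N$ is squeezed between total-degree $\le N$ and total-degree $\le 2N$ (equation \eqref{threeterms}), both of which converge to $2\pi\langle u|u\rangle_2$, and then polarization handles $u_1\neq u_2$. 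Your ``absolute convergence'' would follow \emph{from} this non-negativity and a Cauchy--Schwarz step; you cannot take it as input. So the clause that you call ``essentially bookkeeping'' is where the actual work lies; your proposal skips it.
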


\proof 
We write the spectral representation  \eqref{spectraldecomposH2} as
\begin{equation}\label{keyidentity}
\begin{split}
\cjg u_1\,|\,u_2\cjd_2 
=&\lim_{N,L\to\infty} \frac{1}{2\pi}\sum_{ |\k|+|\l|\leq N}  \int_{0}^L     
\cjg u_1\,|\,\Psi_{Q+iP,\k,\l}\cjd_2   \cjg \Psi_{Q+iP,\k,\l}\,|\,u_2\cjd_{2} {\rm d}P.
\end{split}
\end{equation}
Let 
  $F_{Q+iP}^{-1/2}(\nu,\nu')$ 
be the square root of the positive definite matrix
$(F_{Q+iP}^{-1}(\nu,\nu'))_{|\nu|=|\nu'|=N}$ and set
 for $|\nu|=N$, $|\tilde\nu|=N'$ 
\begin{equation*}
H_{Q+iP,\nu,\tilde{\nu}}: =   \sum_{|\nu_1|=N, |\nu_2|= N'}  F_{Q+iP}^{-1/2}(\nu, \nu_1)F_{Q+iP}^{-1/2}(\tilde{\nu}, \nu_2) \Psi_{Q+iP,\nu_1,\nu_2} .
\end{equation*}
where $ \Psi_{Q+iP,\nu_1,\nu_2} $ are defined in
 \eqref{defdescendents}.  We get
 the identity
\begin{align*}
\sum_{ |\k|+|\l |=N }      
\cjg u_1\,|\,\Psi_{Q+iP,\k,\l}\cjd_2   \cjg \Psi_{Q+iP,\k,\l}\,|\,u_2\cjd_{2}= \sum_{|\nu| + |\tilde \nu| = N}  \cjg u_1 \,| \, H_{Q+iP,\nu,\tilde{\nu}} \cjd_2 \cjg  \, H_{Q+iP,\nu,\tilde{\nu}}  \,| u_2\cjd_2 .
\end{align*}
Hence we have 
\begin{align}\label{equalityyy}  \sum_{|\nu| + |\tilde \nu| \leq N} \int_0^L   \cjg u_1 \,| \, H_{Q+iP,\nu,\tilde{\nu}} \cjd_2 \cjg  \, H_{Q+iP,\nu,\tilde{\nu}}  \,| u_2\cjd_2   {\rm d}P= \sum_{|\k|+ |\l| \leq N}  \int_0^L  \cjg u_1 \,| \, \Psi_{Q+iP,\k,\l} \cjd_2  \cjg  \Psi_{Q+iP,\k,\l}   \,|\, u_2\cjd_2    {\rm d}P .
\end{align}
On the other hand 
\begin{align}
 &  \sum_{|\nu|,|\tilde \nu| \leq N}\int_0^L  \cjg u_1 \,| \, H_{Q+iP,\nu,\tilde{\nu}} \cjd_2 \cjg  \, H_{Q+iP,\nu,\tilde{\nu}}  \,|\, u_2\cjd_2   {\rm d}P \label{casej,j'leqN}  \\
& =  \sum_{|\nu_1|=|\nu_3|\leq N}\sum_{  |\nu_2|=|\nu_4|\leq N} \int_0^L  \cjg u_1 \,| \, \Psi_{Q+iP,\nu_1,\nu_2} \cjd_2 \cjg  \, \Psi_{Q+iP,\nu_3, \nu_4}  \,|\, u_2\cjd_2 F_{Q+iP}^{-1}(\nu_1, \nu_3) F_{Q+iP}^{-1}(\nu_2, \nu_4)   {\rm d}P.\nonumber
\end{align}
Let $u_1=u_2=u$. Then
\begin{equation}\label{threeterms}
  \int_0^L \sum_{|\nu|+ |\tilde \nu| \leq N} | \cjg u \,| \, H_{Q+iP,\nu,\tilde{\nu}} \cjd_2 |^2   {\rm d}P \leq   \int_0^L \sum_{\substack{|\nu|\leq N, \\ |\tilde \nu| \leq N}} | \cjg u \,| \, H_{Q+iP,\nu,\tilde{\nu}} \cjd_2 |^2   {\rm d}P \leq   \int_0^L \sum_{|\nu|+ |\tilde \nu| \leq 2N} | \cjg u \,| \, H_{Q+iP,\nu,\tilde{\nu}} \cjd_2 |^2   {\rm d}P.
\end{equation}
By \eqref{equalityyy} and \eqref{keyidentity} the limits  as $L,N\to \infty$ of all the three sums in \eqref{threeterms} are $2\pi \cjg u\,|\,u\cjd_2$. Hence by polarisation the  limits  as $L,N\to \infty$ of the left hand sides of \eqref{casej,j'leqN}  and \eqref{equalityyy} exist and equal $2\pi \cjg u_1\,|\,u_2\cjd_2$. This proves the claim. \qed

Let  $|z|<1$, $t\in (0,1)$ and $ |u|>1$ with $u\neq t^{-1}$ .The $4$-point correlation function is given as a scalar product
\begin{equation}\label{scalarproduct}
\langle  V_{\alpha_1}(0)  V_{\alpha_2}(z) V_{\alpha_3}(t^{-1}) V_{\alpha_4}(u)  \rangle_{\gamma,\mu}  =   t^{4 \Delta_{\alpha_3}} |u|^{-4 \Delta_{\alpha_4}}  \Big\langle  U_{\alpha_1, \alpha_2}(0,z) \,|\, U_{\alpha_4, \alpha_3}\big({\bar{u}}^{-1},t\big)   \Big\rangle_2
\end{equation}
where  $U_{\alpha_1, \alpha_2}$ is defined in  \eqref{defUward}.
As in \eqref{4pointinfty} we have
\begin{align*}
 \langle  V_{\alpha_1}(0)  V_{\alpha_2}(z) V_{\alpha_3}(t^{-1}) V_{\alpha_4}(\infty)  \rangle_{\gamma,\mu} 
 = t^{4 \Delta_{\alpha_3}} \Big\langle  U_{\alpha_1, \alpha_2}(0,z\big) \,|\, U_{\alpha_4, \alpha_3}\big(0,t\big)   \Big\rangle_2.
\end{align*}
 Recall from Lemma \ref{integrcf} that for $|z|<1$ the vector $U_{\alpha_1, \alpha_2}(0,z) \in e^{\delta c_-}L^2(\R\times \Omega_\T)$ for some $\delta>0$  if $\alpha_1+\alpha_2>Q$, and the same holds for $U_{\alpha_4, \alpha_3}(0,t)$ if $\alpha_3+\alpha_4>Q$.  
Using Lemma \ref{spclemma} we get
\begin{align}\label{doublelimite}
 \Big\langle  U_{\alpha_1, \alpha_2}(0,z\big) \,|\, U_{\alpha_4, \alpha_3}\big(0,t\big)   \Big\rangle_2=\frac{1}{2\pi} \lim_{N,L\to \infty} I_{N,L}(\boldsymbol{\alpha},z,t)
 \end{align} 
 where
 \begin{align}\label{doublelimite1}
I_{N,L}(\boldsymbol{\alpha},z,t)
= \sum_{\substack{
 |\nu'|=|\nu|\leq N,\\
 |\tilde{\nu}'|=|\tilde{\nu}|\leq N}}\int_0^L \langle    U_{\alpha_1,\alpha_2}(0,z)\,|\, \Psi_{Q+iP,\nu ,\tilde{\nu}}\rangle  _{2}\langle \Psi_{Q+iP,\nu' , \tilde{\nu}'}\,|\, U_{\alpha_4,\alpha_3}(0,t)\rangle_{2}F^{-1}_{Q+iP}(\nu,\nu')F^{-1}_{Q+iP}(\tilde\nu,\tilde\nu')\, \dd P.
 \end{align} 
Using Proposition \ref{Ward} for the  scalar products we can write this as
\begin{align*}
I_{N,L}&(\boldsymbol{\alpha},z,t)
=  \tfrac{1}{4} t^{4 \Delta_{\alpha_3}} t^{-2 \Delta_{\alpha_3}-2 \Delta_{\alpha_4}   }|z|^{-2\Delta_{\alpha_1}-2\Delta_{\alpha_2}}
\int_{0}^L  \bbar{ C_{\gamma,\mu}^{ \mathrm{DOZZ}}( \alpha_1,\alpha_2, Q+iP)}C_{\gamma,\mu}^{ \mathrm{DOZZ}}( \alpha_4,\alpha_3, Q+iP)
 \\&
  |tz|^{2\Delta_{Q+iP} }\sum_{ |\nu'|=|\nu|\leq N}
 z^{|\nu|}
t^{ |\nu'|}v(\Delta_{\alpha_1}, \Delta_{\alpha_2}, \Delta_{Q+iP},\nu) F_{Q+iP}^{-1}(\nu,\nu')v(\Delta_{\alpha_4}, \Delta_{\alpha_3}, \Delta_{Q+iP},\nu')\\&
 \sum_{
 |\tilde{\nu}'|=|\tilde{\nu}|\leq N}
\bar{z}^{|\tilde \nu|}t^{|\tilde{\nu'}|}v(\Delta_{\alpha_1}, \Delta_{\alpha_2}, \Delta_{Q+iP},\tilde{\nu}) 
F_{Q+iP}^{-1}(\tilde{\nu}, \tilde{\nu}')  v(\Delta_{\alpha_4}, \Delta_{\alpha_3}, \Delta_{Q+iP},\tilde{\nu}') 
dP\\&=\tfrac{1}{4} t^{2 \Delta_{\alpha_3}-2 \Delta_{\alpha_4}   }|z|^{-2\Delta_{\alpha_1}-2\Delta_{\alpha_2}}
\int_{0}^L  \bbar{ C_{\gamma,\mu}^{ \mathrm{DOZZ}}( \alpha_1,\alpha_2, Q+iP)}C_{\gamma,\mu}^{ \mathrm{DOZZ}}( \alpha_4,\alpha_3, Q+iP) |tz|^{2\Delta_{Q+iP} }\\& \left |  \sum_{n=1}^N \beta_n(\Delta_{Q+iP},\Delta_{\alpha_1}, \Delta_{\alpha_2},\Delta_{\alpha_3},\Delta_{\alpha_4}) (zt)^{n}    \right |^2 dP
\end{align*}
where we noted that the $v$ factors are real. The coefficients $\beta_n$ are given by \eqref{defblocks} so
 formally  the bootstrap formula follows now by taking $N,L\to\infty$. Yet, rigorously, there is still a gap to bridge since we do not know the convergence of the series \eqref{defblocks}. We adapt here a Cauchy-Schwarz type argument \footnote{Communicated to us by S. Rychkov.} to establish the convergence  a.s. in $P$.
For this, we take first $\alpha_3=\alpha_2$ and $\alpha_1=\alpha_4$, with $\alpha_1+\alpha_2>Q$. 
Then 
\begin{align*}
I_{N,L}&(\alpha_1,\alpha_2,,\alpha_2,\alpha_1,z,t)
=  \frac{1}{4} t^{4 \Delta_{\alpha_2}}|tz|^{-2\Delta_{\alpha_1}-2\Delta_{\alpha_2}}\\&
\int_{0}^L   |C_{\gamma,\mu}^{ \mathrm{DOZZ}}( \alpha_1,\alpha_2, Q-iP  )|^2  |tz|^{2 \Delta_{Q+iP}} \left |  \sum_{n=0}^N \beta_n(\Delta_{Q+iP},\Delta_{\alpha_1}, \Delta_{\alpha_2},\Delta_{\alpha_2},\Delta_{\alpha_1}  ) (zt)^{n}    \right |^2 dP
\end{align*}
If $z\in(0,1)$ thi expression   is increasing in the variables $N,L$ 
and $zt$ since $\beta_n\geq 0$; moreover, it converges as $N,L$ goes to infinity. This implies that the series 
\[ \sum_{n=0}^\infty z^n \beta_n(\Delta_{Q+iP},\Delta_{\alpha_1}, \Delta_{\alpha_2},\Delta_{\alpha_2},\Delta_{\alpha_1})\] 
is absolutely converging for $|z|<1$ for almost all $P\geq 0$ and 
\begin{align*}
& \langle  V_{\alpha_1}(0)  V_{\alpha_2}(z) V_{\alpha_2}(t^{-1}) V_{\alpha_1}(\infty)  \rangle_{\gamma,\mu}= \\
&\frac{1}{8\pi} t^{4 \Delta_{\alpha_2}}|tz|^{-2\Delta_{\alpha_1}-2\Delta_{\alpha_2}} \int_{0}^\infty   |C_{\gamma,\mu}^{ \mathrm{DOZZ}}( \alpha_1,\alpha_2, Q-iP  )|^2 |tz|^{2 \Delta_{Q+iP}} \left |  \sum_{n=0}^\infty \beta_n(\Delta_{Q+iP},\Delta_{\alpha_1}, \Delta_{\alpha_2},\Delta_{\alpha_2},\Delta_{\alpha_1}  ) (zt)^{n}    \right |^2 dP.
\end{align*}
This leads to the formula \eqref{bootstrapidentityintro} for the case $\alpha_3=\alpha_2$ and $\alpha_1=\alpha_4$  by using the identity 
\begin{equation*}
\langle  V_{\alpha_1}(0)  V_{\alpha_2}(tz) V_{\alpha_2}(1) V_{\alpha_1}(\infty)  \rangle_{\gamma,\mu}= t^{-4 \Delta_{\alpha_2}} \langle  V_{\alpha_1}(0)  V_{\alpha_2}(z) V_{\alpha_2}(t^{-1}) V_{\alpha_1}(\infty)  \rangle_{\gamma,\mu}.  
\end{equation*}

For the general case  we use
Cauchy-Schwartz
\[
\begin{split}
|\beta_n(\Delta_{Q+iP}P,\Delta_{\alpha_1}, \Delta_{\alpha_2},\Delta_{\alpha_3},\Delta_{\alpha_4}  ) |
& \leq {\beta_n(\Delta_{Q+iP},\Delta_{\alpha_1}, \Delta_{\alpha_2},\Delta_{\alpha_2},\Delta_{\alpha_1}  )}^\hf {\beta_n(\Delta_{Q+iP},\Delta_{\alpha_4}, \Delta_{\alpha_3},\Delta_{\alpha_3},\Delta_{\alpha_4}  )}^\hf    \\
& \leq \frac{1}{2}  (\beta_n(\Delta_{Q+iP},\Delta_{\alpha_1}, \Delta_{\alpha_2},\Delta_{\alpha_2},\Delta_{\alpha_1}  )+ \beta_n(\Delta_{Q+iP},\Delta_{\alpha_4}, \Delta_{\alpha_3},\Delta_{\alpha_3},\Delta_{\alpha_4}  )).   
\end{split}\]
so that the general case follows from the  case $\alpha_1=\alpha_4$ and $\alpha_2=\alpha_3$. \qed

\section{Proof of Proposition \ref{proofward}}\label{subproofward}

\subsection{ Preliminary remarks}
 
   Before proceeding to  computations, we stress that the reader should keep in mind that the SET field $T(u)$  is not a proper random field. In particular the expectation in \eqref{propcontour} is a notation for the object constructed in the limit as $\boldsymbol{\epsilon}\to0$ and $t\to\infty$. In LCFT the construction of the correlation functions of the  SET is subtle. This was done in \cite{KRV} only for one or two SET insertions. However the situation is here  much simpler and we will not have to rely on \cite{KRV}. The reason is that  we need to deal with correlation functions with a regularized LCFT expectation $\langle \cdot\rangle_t$ where we have replaced $M_\gamma(\C)$ by $M_\gamma(\C\setminus \D_t)$ in \eqref{modifiedlcft}, and all the  SET insertions that we will consider are located in $\D_t$ which as we will see makes them much more regular than in the full LCFT. 
   
The regularized SET  field $T_\epsilon(u)$ is a proper random field and  its correlation functions in the presence of the vertex operators  are defined as limits of the corresponding ones with regularized vertex operators
\begin{align}\label{regular}
\langle  T_{\boldsymbol{\epsilon}}(\mathbf{u})\bar T_{\boldsymbol{\epsilon'}}({ \mathbf{v}}) V_{\alpha}(0)\prod_{i=1}^nV_{\alpha_i }(z_i) \rangle_t =\lim_{\epsilon''\to 0}\langle  T_{\boldsymbol{\epsilon}}(\mathbf{u})\bar T_{\boldsymbol{\epsilon'}}({ \mathbf{v}}) V_{\alpha,\epsilon''}(0)\prod_{i=1}^nV_{\alpha_i ,\epsilon''}(z_i) \rangle_t .
\end{align}
The existence of this limit follows from the representation of the expectation on the RHS as a  GFF expectation of an explicit function of a a GMC integral \cite{DKRV}.  And in particular the limit is independent of the regularization procedure used for the vertex operators.  For simplicity we will use in this section  the following:
\begin{align}\label{Vregulward}
V_{\alpha,\epsilon}(x)=\epsilon^{\frac{\alpha^2}{2}}e^{\alpha \phi_\epsilon(x)}=|x|_+^{-4\Delta_\alpha}e^{\alpha c}e^{\alpha X_\epsilon(x)-\frac{\alpha^2}{2}\E X_\epsilon(x)^2}(1+\caO(\epsilon))
\end{align}
where $X_\epsilon$ is the same regularization as in $T_\epsilon$.  The $\caO(\epsilon)$ will drop out from all terms in the $\epsilon\to 0$ limit and will not be displayed below.

 The proof of Proposition \ref{proofward} consists of using Gaussian integration by parts to the the $ T_{\boldsymbol{\epsilon}}(\mathbf{u})$ and $\bar T_{\boldsymbol{\epsilon'}}({ \mathbf{v}}) $ factors in \eqref{regular} to which we now turn.


\subsection{ Gaussian integration by parts}
 
\eqref{regular} is analysed using Gaussian integration by parts.  For a centered Gaussian vector $(X,Y_1,\dots, Y_N)$ and a smooth function $f$ on $\R^N$, the Gaussian integration by parts formula is $$\E[X \,f(Y_1,\dots,Y_N)]=\sum_{k=1}^N\E[XY_k]\E[\partial_{k}f(Y_1,\dots,Y_N)]. $$
Applied to the LCFT this leads to the following formula. Let $\phi=c+X-2Q\log|z|_+$ be the Liouville field  and $F$ a smooth function on $\R^N$.  Define for $u,v\in\C$
    \begin{align}\label{greenderi}
    C(u,v)=-\frac{1}{2}\frac{1}{u-v},\ \  C_{\epsilon,\epsilon'}(u,v)=\int \rho_{\epsilon}(u-u') \rho_{\epsilon'}(v-v')C(u',v')\dd u\dd v
 \end{align}
 with $( \rho_{\epsilon})_\epsilon$ a mollifying family of the type $\rho_\epsilon(\cdot)=\epsilon^{-2}\rho(|\cdot|/\epsilon)$.
Then   for $z,x_1,\dots,x_N\in\C$
\begin{align} \label{basicipp}
\langle \partial_z\phi_\epsilon(z) F(\phi_{\epsilon'}(x_1),\dots,\phi_{\epsilon'}(x_N))\rangle_t
&=
\sum_{k=1}^NC_{\epsilon,\epsilon'}(z,x_k)\langle \partial_{k}F(\phi_{\epsilon'}(x_1),\dots,\phi_{\epsilon'}(x_N))\rangle_t
\\
&-\mu\gamma\int_{\C_t} C_{\epsilon,0}(z,x)\langle V_\gamma(x) F(\phi_{\epsilon'}(x_1),\dots,\phi_{\epsilon'}(x_N))\rangle_t \dd x  \nonumber
\end{align}
where $F$ in the applications below is such that all the terms here are well defined. Note that $ \partial_uG(u,v)=C(u,v)+\hf u^{-1}1_{|u|>1}$. The virtue of the Liouville field is that the annoying metric dependent terms $u^{-1}1_{|u|>1}$ drop out from the formulae. This fact is nontrivial and it was proven in \cite[Subsection 3.2]{KRV},  for the case $t=\infty$ with $F$ corresponding to product of vertex operators. The proof goes the same way to produce \eqref{basicipp} with a finite $t$.

The first application of this formula is a direct proof of the existence of the $\boldsymbol{\epsilon},\boldsymbol{\epsilon}'\to 0$ limit of \eqref{regular} which will be useful also later in the proof.  We have
\begin{proposition}
The functions \eqref{regular} converge uniformly on compact subsets of $({\bf u},{\bf v},{\bf z})\in \D_t\times  \D_t \times \theta\caZ$
\begin{align}\label{regularlimit}
\lim_{\boldsymbol{\epsilon},\boldsymbol{\epsilon}'\to 0}\langle  T_{\boldsymbol{\epsilon}}(\mathbf{u})\bar T_{\boldsymbol{\epsilon'}}({ \mathbf{v}}) V_{\alpha}(0)\prod_{i=1}^nV_{\alpha_i }(z_i) \rangle_t :=\langle  T(\mathbf{u})\bar T({ \mathbf{v}}) V_{\alpha}(0)\prod_{i=1}^nV_{\alpha_i }(z_i) \rangle_t 
\end{align}
where the limit is analytic in ${\bf u}\in \caO_{e^{-t}}$ and anti-analytic in ${\bf v}\in \caO_{e^{-t}}$.
\end{proposition}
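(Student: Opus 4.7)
The plan is to reduce the correlation function to an explicit combinatorial expression via iterated Gaussian integration by parts, and then read off convergence and analyticity from the structure of the resulting formula. First, I would expand each regularized SET factor as $T_\epsilon(u_i)=Q\pl_u^2X_\epsilon(u_i)-(\pl_uX_\epsilon(u_i))^2+\E[(\pl_uX_\epsilon(u_i))^2]$, and similarly for $\bar T_{\epsilon'}(v_j)$, so that the expectation in \eqref{regularlimit} becomes a sum of Liouville expectations of monomials in the Gaussian variables $\pl X_\epsilon(u_i)$ and $\bar\pl X_{\epsilon'}(v_j)$, multiplied by the regularized vertex operators at $z_k$ and by $e^{-\mu e^{\gamma c}M_\gamma(\C_t)}$. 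I would then apply \eqref{basicipp} successively to each factor $\pl X_\epsilon(u_i)$ and $\bar\pl X_{\epsilon'}(v_j)$. Each such step produces two types of contributions: a contraction term pairing the field with another $\pl X$-factor or with the $X_{\epsilon''}$ inside one of the regularized vertex operators at $z_k$, through a derivative of $C_{\epsilon,\cdot}$ from \eqref{greenderi}, plus a bulk GMC term $-\mu\gamma\int_{\C_t}C_{\epsilon,0}(\cdot,x)\cjg V_\gamma(x)\cdots\cjd_t\dd x$ coming from differentiating the cosmological exponential. The subtraction $\E[(\pl X_\epsilon)^2]$ inside each $T_\epsilon$ exactly cancels the self-contraction produced when IPP is applied to one of the two $\pl X_\epsilon(u_i)$'s in a given $T_\epsilon(u_i)$, so only contractions between distinct insertion points survive, and the apparently divergent $\epsilon\to 0$ pieces drop out.

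After finitely many IPP steps (at most $2(k+j)$, as each step removes one $\pl X$-factor and none is produced), I obtain a finite sum of terms, each being a polynomial in smoothed Green-function derivatives $\pl_u^a\pl_{u'}^bC_{\epsilon,\epsilon'}(\cdot,\cdot)$ with arguments among $\{u_i,v_j,z_k\}$, multiplied by at most $(k+j)$-fold nested GMC integrals of the form $\int_{\C_t^N}\big(\prod_\ell \pl^{a_\ell}C_{\epsilon,0}(\cdot,x_\ell)\big)\cjg\prod_\ell V_\gamma(x_\ell)\prod_k V_{\alpha_k}(z_k)\cjd_t\dd x_1\cdots \dd x_N$. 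On a compact subset of $\D_t\times\D_t\times\theta\caZ$ (intersected with $\caO_{e^{-t}}$ to ensure the non-coincidence conditions), the points $u_i,v_j$ are uniformly separated from one another, from $\{z_k\}\subset\theta\caZ$, and from the GMC domain $\C_t=\C\setminus\D_t$; consequently the mollified kernels converge uniformly on compacta to $C=-1/(2(u-v))$ and its anti-holomorphic analogue, together with all of their derivatives. The nested GMC integrals converge as $\epsilon,\epsilon'\to 0$ by dominated convergence: the kernels $\pl^{a}C(u_i,x_\ell)$ are smooth and bounded on the domain of integration, while the integrability of $\cjg \prod V_\gamma(x_\ell)\prod V_{\alpha_k}(z_k)\cdots\cjd_t$ near $x_\ell=z_k$ is ensured by the Seiberg bound $\gamma\alpha_k<2$, in exactly the same way as in \cite{DKRV}. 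This yields the claimed uniform convergence on compact subsets.

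Analyticity is then immediate from the final formula: every $u_i$-dependence enters only through factors of the form $\pl_u^a C(u_i,\cdot)=\frac{(-1)^{a+1}a!}{2(u_i-\cdot)^{a+1}}$ and through the kernels inside the GMC integrals, all holomorphic in $u_i$ throughout the region of interest since the singularities of $C$ are located at points lying at positive distance from $u_i$ on the compact. Similarly each $v_j$-dependence enters only through the antiholomorphic $\bar\pl^a\bar C(v_j,\cdot)$. The crucial decoupling between the holomorphic and antiholomorphic variables is ensured by the identity $\pl_u\bar\pl_v G(u,v)=0$ off the diagonal, so no mixed cross-terms are generated by the IPP in the limit. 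Differentiation under the GMC integrals is justified by the absolute convergence established above. The main technical obstacle in carrying this program out rigorously is the combinatorial bookkeeping of the IPP reduction together with the uniform-in-$\epsilon$ control of the nested GMC integrals near the vertex insertions; both are handled by combining the geometric separation of $\mathbf{u},\mathbf{v}$ from $\C_t$ and from $\{z_k\}$, the Seiberg bounds $\alpha_k<Q$, and the standard positive- and negative-moment estimates on GMC already used throughout the paper.
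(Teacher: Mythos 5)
Your proposal follows essentially the same strategy as the paper: iterate Gaussian integration by parts on the regularized SET factors via \eqref{basicipp}, express the result as a finite sum of (products of derivatives of the smoothed kernel $C_{\epsilon,\cdot}$) $\times$ (nested GMC integrals with extra $V_\gamma$ insertions over $\C_t$), and deduce uniform convergence on compacta and (anti)analyticity from the structure of the limit. The observation that only the $\partial^a C(u_i,\cdot)$ (holomorphic) and $\bar\partial^a \bar C(v_j,\cdot)$ (anti-holomorphic) survive, so no mixed cross-terms appear, is also the mechanism in the paper's argument.

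The one place you are not fully rigorous is the justification that the nested GMC integrals are finite uniformly in $\epsilon$. You invoke local integrability near $x_\ell = z_k$ from the Seiberg bounds (and write the exponent as ``$\gamma\alpha_k<2$'', which is not the correct threshold; the relevant condition is $\alpha_k<Q$, i.e.\ $\gamma\alpha_k<2+\tfrac{\gamma^2}{2}$, as in \eqref{conising}), but that only addresses behaviour near the fixed insertions: you also need control at coincidences $x_\ell=x_{\ell'}$ and at infinity. The paper avoids this entirely by Lemma~\ref{kpzlemma}, a KPZ-type identity obtained by differentiating the correlator $m$ times in $\mu$, which shows the $m$-fold integral of $V_\gamma$'s over $\C_t$ equals an explicit constant times the base correlator; this gives the dominant for your dominated-convergence step in a single line. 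You should cite this device explicitly rather than appealing to ``standard GMC estimates'', since without it the a priori bound \eqref{aprior} does not obviously close.
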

\begin{proof}
We consider for simplicity  only the case $\tilde{\nu}=0$. The LHS is defined as the limit $\epsilon''\to 0$ in \eqref{regular} but we will  for clarity work directly with $\epsilon''=0$ as it will be clear below that this limit trivially exists. Indeed, the functions $C_{\epsilon,\epsilon'}(u,v)$
are smooth  for $\epsilon, \epsilon'>0$ and they  converge  together with their derivatives uniformly on  $|u|<e^{-s}$, $|v|\geq e^{-t}$ for all $s>t$  to the derivatives of $C(u,v)$. 

We will now apply \eqref{basicipp} to all the $\phi_\epsilon$ factors in the SET tensors in\eqref{regularlimit} one after the other. To make this systematic let us introduce the notation
 $$(\caO_1,\caO_2,\caO_3)  :=( \partial_z\phi_\epsilon, \partial_z^2\phi_\epsilon, (\partial_z\phi_\epsilon)^2-\E(\partial_z\phi_\epsilon)^2).$$
 Applying the integration by parts formula to $\partial_z\phi_\epsilon(u_k)$ (or to $\partial_z^2\phi_\epsilon(u_k)$ if $i_k=2$ below) we obtain
 \begin{align*}
 \langle \prod_{j=1}^k \caO_{i_j}(u_j)V_\alpha(0)\prod_{i=1}^nV_{\alpha_i }(z_i) \rangle_t=&\sum_{l=1}^{k-1}  \langle \{\caO_{i_k}(u_k)\caO_{i_l}(u_l)\} \prod_{j\neq k,l} \caO_{i_j}(u_j)V_\alpha(0)\prod_{i=1}^nV_{\alpha_i }(z_i) \rangle_t \\+&\sum_{l=0}^n \alpha_l \langle \{\caO_{i_k}(u_k)\phi(z_l)\}\prod_{j\neq k} \caO_j(u_j)V_\alpha(0)\prod_{i\neq l}V_{\alpha_i }(z_i) \rangle_t\\-&\mu\gamma \int_{\C_t} \langle \{\caO_{i_k}(u_k)\phi(x)\} \prod_{j\neq k} \caO_j(u_j)V_\alpha(0)V_\gamma(x)\prod_{i\neq l}V_{\alpha_i }(z_i) \rangle_t\dd x
 \end{align*}
where $\alpha_0=\alpha$ and $z_0=0$ and we introduced the following notation
\begin{align*}
 \{\caO_1(u)\caO_1(v)\} &:=\partial_vC_{\epsilon,\epsilon}(u,v) & & \{\caO_2(u)\caO_2(v)\} :=\partial_u\partial^2_{v}C_{\epsilon,\epsilon}(u,v)\\
 \{\caO_1(u)\caO_3(v)\} &:=2\caO_1(v)\partial_vC_{\epsilon,\epsilon}(u,v) & &  \{\caO_1(u)\caO_2(v)\} :=\partial^2_{v}C_{\epsilon,\epsilon}(u,v)\\
 \{\caO_3(u)\caO_1(v)\} &:=\caO_1(u)\partial_vC_{\epsilon,\epsilon}(u,v) & & \{\caO_2(u)\caO_1(v)\} :=\partial_{u}\partial_{v}C_{\epsilon,\epsilon}(u,v) \\
\{\caO_3(u)\caO_3(v)\} &:=2\caO_1(u)\caO_1(v)\partial_vC_{\epsilon,\epsilon}(u,v)& &  \{\caO_2(u)\caO_3(v)\} :=2\caO_1(v)\partial_{u}\partial_{v}C_{\epsilon,\epsilon}(u,v)\\
\{\caO_3(u)\caO_2(v)\} &:=\caO_1(u)\partial^2_{v}C_{\epsilon,\epsilon}(u,v)  & &  
\end{align*}
etc. Similarly 
$$\{\caO_1(u)\phi(x)\} := C_{\epsilon,0}(u,x), \quad\{\caO_2(u)\phi(x)\} := \partial_u C_{\epsilon,0}(u,x), \quad\{\caO_3(u)\phi(x)\} :=\caO_1(u)C_{\epsilon,0}(u,x). $$

Since $T_\epsilon=Q\caO_2-\caO_3$ we can iterate the integration by parts formula  to obtain an expression for  $\langle  T_{\boldsymbol{\epsilon}}(\mathbf{u})
 V_{\alpha}(0)\prod_{i=1}^nV_{\alpha_i }(z_i) \rangle_t $ as a sum of terms 
 of the form
\begin{align}\label{basicterm}
C\prod_{\alpha, \beta} \partial^{i_{\alpha\beta}}C_{\epsilon,\epsilon}(u_\alpha,u_\beta)\prod_{\alpha,l}  \partial^{j_{\alpha,l }}C_{\epsilon,0}(u_\alpha,z_l)\int_{\C_t^m}\prod_{\alpha ,k} \partial^{l_{\alpha,k }}C_{\epsilon,0}(u_{\alpha},x_k) \langle  V_\alpha(0)\prod_{j=1}^mV_{\gamma }(x_j)\prod_{i=1}^nV_{\alpha_i }(z_i) \rangle_t\dd{\bf x}
\end{align}
where the products run through some subsets of the index values. We have for all compact $K\subset \D_t$ 
$$ \sup_{u\in K, x\in\C_t}\sup_\epsilon|\partial_u^{l}\partial_{\bar u}^{l'}C_{\epsilon,0}(u,x)| <\infty.$$
Hence the  expression \eqref{basicterm} is bounded together with all its derivatives in $\bf u$ uniformly in $\epsilon$ by
\begin{align}\label{aprior}
C\int_{\C_t^m}\langle  V_\alpha(0)\prod_{j=1}^mV_{\gamma }(x_j)\prod_{i=1}^nV_{\alpha_i }(z_i) \rangle_t \dd{\bf x}.
\end{align}
which by the lemma below is finite. The limit of \eqref{basicterm} and all its derivatives exist by dominated convergence. Clearly the $\partial_{\bar u}$ derivatives vanish so the limit   is analytic in $\bf u$.
\end{proof}

We need the following KPZ identity for the a priori bound \eqref{aprior} (recall \eqref{defs})
\begin{lemma}\label{kpzlemma}
The functions ${\bf x}\in \C_t^n\to \langle  V_\alpha(0)\prod_{j=1}^mV_{\gamma }(x_i)\prod_{i=1}^nV_{\alpha_i }(z_i) \rangle_t$ are integrable and
\begin{align}\label{KPZ}
\int_{\C_t^m} \langle  V_\alpha(0)\prod_{j=1}^mV_{\gamma }(x_i)\prod_{i=1}^nV_{\alpha_i }(z_i) \rangle_t\dd{\bf x}=C\int_{\C_t^m} \langle  V_\alpha(0)\prod_{i=1}^nV_{\alpha_i }(z_i) \rangle_t\dd{\bf x}
\end{align}
where $C=(\mu\gamma)^{-m}\prod_{l=0}^{m-1}(\alpha+s+ \gamma l-2Q)$.
\end{lemma}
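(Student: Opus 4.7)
\textbf{Plan for the proof of Lemma \ref{kpzlemma}.}
The key elementary observation is the pointwise identity
\[ V_\gamma(x)\,dx = e^{\gamma c}\, M_\gamma(dx), \]
which follows directly from the definition \eqref{Vregulward} of $V_\gamma$, the definition \eqref{GMCintro} of $M_\gamma$, and the arithmetic identity $-4\Delta_\gamma+4=0$ (which uses $Q=\tfrac{2}{\gamma}+\tfrac{\gamma}{2}$). Integrating over the region $\C_t=\C\setminus \D_t$ yields
\[ \mu\int_{\C_t}V_\gamma(x)\,dx = \mu\, e^{\gamma c}\, M_\gamma(\C_t), \]
so that inserting a $V_\gamma$ and integrating in $x$ amounts, inside the correlation, to multiplying the integrand by exactly the quantity that appears in the exponent of the interaction.

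First I would use Fubini (justified below by the probabilistic representation) to move $\int_{\C_t} dx$ inside the expectation defining $\langle\cdot\rangle_t$ in \eqref{modifiedlcft}, substitute the identity above, and then use $y\,e^{-\mu y}=-\partial_\mu e^{-\mu y}$ with $y=e^{\gamma c}M_\gamma(\C_t)$ to obtain
\[ \int_{\C_t}\langle V_\gamma(x)F\rangle_t \,dx = -\partial_\mu \langle F\rangle_t \]
for every $F$ for which both sides make sense. Iterating this identity $m$ times gives
\[ \int_{\C_t^m}\Big\langle \prod_{j=1}^m V_\gamma(x_j)\, F\Big\rangle_t d\mathbf{x} = (-1)^m\partial_\mu^m \langle F\rangle_t. \]

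Next I would apply this with $F=V_\alpha(0)\prod_{i=1}^n V_{\alpha_i}(z_i)$ and use the standard change of variables $c\mapsto c+\gamma^{-1}\log(\mu Z_t)$ on the $c$-integral, exactly as in the derivation of \eqref{probarepresentation} in \cite{DKRV}, but with $M_\gamma(\C)$ replaced by its restriction $M_\gamma(\C_t)$. Under the assumption $\alpha+s>2Q$ (so that $s':=(\alpha+s-2Q)/\gamma>0$) and $\alpha,\alpha_i<Q$, this shows that
\[ \langle V_\alpha(0)\prod_{i=1}^n V_{\alpha_i}(z_i)\rangle_t = A(\mathbf{z},\alpha,\boldsymbol{\alpha},t)\,\mu^{-s'}\,\Gamma(s'), \]
where $A$ is an explicit moment of GMC, independent of $\mu$. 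Differentiating the explicit $\mu$-dependence then gives
\[ (-1)^m\partial_\mu^m \big[A\mu^{-s'}\Gamma(s')\big] = \prod_{l=0}^{m-1}(s'+l)\,\mu^{-m}\,A\mu^{-s'}\Gamma(s') = (\mu\gamma)^{-m}\prod_{l=0}^{m-1}(\alpha+s+\gamma l-2Q)\,\langle V_\alpha(0)\prod V_{\alpha_i}(z_i)\rangle_t, \]
which is the claimed identity (interpreting the right-hand side of \eqref{KPZ} as the stated constant $C$ times the correlation function itself, i.e.\ without a further integration in $\mathbf{x}$).

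The main technical obstacle is justifying the interchange $\partial_\mu\,\langle F\rangle_t = -\langle e^{\gamma c}M_\gamma(\C_t)F\rangle_t$ and the application of Fubini. Both follow from dominated convergence once one checks that the relevant integrals are finite, which is exactly what the probabilistic representation and the Seiberg bound $\alpha+s>2Q$ guarantee, using that GMC has all negative moments. The mild singularities of $\prod_j|x_j-z_i|^{-\gamma\alpha_i}\prod_j|x_j|^{-\gamma\alpha}$ appearing after Girsanov are absorbed by the same mechanism: they are exactly the singularities controlled by the existence of $\E[Z_t^{-(s'+m)}]$, obtained by iterating the Girsanov transform $m$ times on the $V_\gamma(x_j)$ insertions. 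Hence the identity extends, by analytic continuation in $\mu$ if desired, to the full range of parameters in the statement.
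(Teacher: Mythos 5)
Your proof is correct and takes essentially the same route as the paper's: the paper's own argument (which just cites \cite{KRV} and sketches) is precisely the observation that the left-hand side equals $(-\partial_\mu)^m$ of the $m=0$ correlation, combined with the explicit $\mu^{-(\alpha+s-2Q)/\gamma}$ scaling of $\langle\cdot\rangle_t$. You have also correctly identified that the displayed right-hand side of \eqref{KPZ} contains a spurious $\int_{\C_t^m}d\mathbf{x}$ (the integrand is constant and $\C_t$ has infinite measure), and that the intended statement is $C\,\langle V_\alpha(0)\prod_i V_{\alpha_i}(z_i)\rangle_t$ without the extra integral, which matches how the lemma is actually invoked later in the paper.
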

 \begin{proof}
 See \cite[Lemma 3.3]{KRV}. Briefly, 
 $$\langle  V_\alpha(0)\prod_{i=1}^nV_{\alpha_i }(z_i) \rangle_t=\mu^{\frac{2Q-s-\alpha}{\gamma}}\langle  V_\alpha(0)\prod_{i=1}^nV_{\alpha_i }(z_i) \rangle_t\big|_{\mu=1}$$
 and the LHS of \eqref{KPZ} equals $(-\partial_\mu)^m\langle  V_\alpha(0)\prod_{i=1}^nV_{\alpha_i }(z_i) \rangle_t$.
 \end{proof}

 The representation \eqref{basicterm} will not be useful for a direct proof of the Ward identity due to the $V_\gamma$ insertions. We will rather use the integration by parts inductively, first to $T_\epsilon(u_k)$  which corresponds to the largest contour 
 in the contour integrals in expression \eqref{propcontour}, and by showing then that at each step the   $V_\gamma$ insertions give rise to the derivatives in the Proposition \ref{proofward}. We give now the inductive step to prove this claim, stated for simplicity for the case $\tilde{\nu}=0$. For this, we introduce the (adjoint) operator $\mathbf{D}_n^\ast$
 defined (by duality) by
 $$\int_{\C^n} \mathbf{D}_n f(\mathbf{z})\bar{\varphi}(z)\,\dd \mathbf{z}=\int_{\C^n}  f(\mathbf{z})\overline{\mathbf{D}_n^\ast \varphi}(z)\,\dd \mathbf{z}$$
 for all functions $f$ in the domain of $\mathbf{D}_n$ and all smooth compactly supported functions $ \varphi$ in $\C^n$.
  Then 
 
 \begin{proposition}\label{wardinduction}
  Let $\varphi\in C_0^\infty(\theta\caZ)$ be a smooth compactly supported function in $\theta\caZ\subset\C^n$ and define
  \begin{align*}
\caT_t(\nu,\varphi):=\int \Big(\frac{1}{(2\pi i)^{k}}\oint_{|\mathbf{u}|=\boldsymbol{\delta}_t}   \mathbf{u}^{1-\nu}  \langle   T(\mathbf{u}) V_\alpha(0)\prod_{i=1}^nV_{\alpha_i }(z_i) \rangle_t\dd{\bf u}\Big)\,\bar{\varphi}({\bf z})\dd{\bf z}.
\end{align*}
Then for $\nu^{(k)}=(\nu_1,\dots,\nu_{k-1})$
\begin{align*}
\caT_t(\nu,\varphi)=\caT_t(\nu^{(k)},{\bf D}^\ast_{\nu_k}\varphi)+\caB_t(\nu,\varphi)
\end{align*}
where
\begin{align}\label{btbound}
|\caB_t(\nu,\varphi)|\leq Ce^{(\alpha+|\nu|-2)t}\int \langle   V_\alpha(0)\prod_{i=1}^nV_{\alpha_i }(z_i) \rangle_t \,|\varphi({\bf z})|\dd{\bf z}.
\end{align}
 \end{proposition}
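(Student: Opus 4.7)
The plan is to apply Gaussian integration by parts to the outermost stress-energy tensor $T(u_k)$ in the correlator, carry out the $u_k$-contour integral, and split the result into a principal piece (producing $\mathbf{D}^\ast_{\nu_k}\varphi$) plus a remainder which will form $\caB_t(\nu,\varphi)$. Using formula \eqref{basicipp} on the $\partial_u\phi_\epsilon(u_k)$-factors inside $T_\epsilon(u_k)=Q\partial_u^2X_\epsilon(u_k)-(\partial_uX_\epsilon(u_k))^2+\E[(\partial_uX_\epsilon(u_k))^2]$, and taking $\epsilon\to 0$ (justified via Proposition \ref{basicstresss}), one obtains the standard CFT Ward identity: the correlator $\langle T(u_k)\cdots\rangle_t$ decomposes into double and simple poles at each $u_k=z_i$ (with residues $\Delta_{\alpha_i}$ and $\partial_{z_i}$), the analogous pole structure at $u_k=0$ coming from $V_\alpha(0)$, poles up to order four at each $u_k=u_j$ encoding the Virasoro algebra with central charge $c_L=1+6Q^2$, plus a bulk remainder $\mu\gamma\int_{\C_t}\partial_u^pC(u_k,x)\langle V_\gamma(x)\cdots\rangle_t\,dx$ arising from integration by parts against the GMC exponential.

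The key step is to compute the contour integral against the $V_{\alpha_i}(z_i)$-pole terms. Since $|z_i|>1>\delta_{k,t}$, the contour $|u_k|=\delta_{k,t}$ does not enclose any $z_i$, so one Taylor-expands $(u_k-z_i)^{-p}$ in powers of $u_k/z_i$ and picks out the coefficient of $u_k^{-1}$ in $u_k^{1-\nu_k}\big[\tfrac{\Delta_{\alpha_i}}{(u_k-z_i)^2}+\tfrac{\partial_{z_i}}{u_k-z_i}\big]$. A direct computation gives $\tfrac{(\nu_k-1)\Delta_{\alpha_i}}{z_i^{\nu_k}}-\tfrac{\partial_{z_i}}{z_i^{\nu_k-1}}$; summing over $i$ yields exactly the differential operator $\mathbf{D}_{\nu_k}$ acting on the remaining $(k-1)$-SET correlator. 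Integrating by parts in $\mathbf{z}$ against $\bar\varphi$ (legitimate since $\varphi\in C_c^\infty(\theta\mathcal{Z})$) moves $\mathbf{D}_{\nu_k}$ onto $\varphi$ as $\mathbf{D}^\ast_{\nu_k}\varphi$, producing the main term $\caT_t(\nu^{(k)},\mathbf{D}^\ast_{\nu_k}\varphi)$.

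All other contributions combine into $\caB_t(\nu,\varphi)$: (i) the residues at $u_k=u_j$ from $T$--$T$ OPEs (involving derivatives of $u^{1-\nu_k}$ evaluated at $u=u_j$ times the central term and the subleading $T(u_j)$, $\partial_{u_j}T(u_j)$ insertions), (ii) the residue at $u_k=0$ from the $V_\alpha(0)$ contact (nonzero only when $\nu_k\leq 1$), and (iii) the bulk $\mu\gamma\int_{\C_t}$ term. To get the bound $Ce^{(\alpha+|\nu|-2)t}\int|\varphi|\langle V_\alpha(0)\prod V_{\alpha_i}(z_i)\rangle_t\,d\mathbf{z}$, I would exploit the scaling of the nested contours $|u_j|=e^{-t}\delta_j$: each propagator-derivative $\partial_u^pC(u_k,\cdot)$ evaluated at an inner contour or at the origin produces an $e^{(p+1)t}$-type factor that, combined with the contour measures $du_j\sim e^{-t}$ and factors $u_j^{-\nu_k}\sim e^{\nu_k t}$, gives the $(|\nu|-2)t$-exponent; the $\alpha t$ contribution arises from the OPE between $V_\alpha(0)$ and fields near the origin through the logarithmic correlations of the GFF. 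The bulk term is then handled using the KPZ identity (Lemma \ref{kpzlemma}) to convert the $V_\gamma(x)$-insertion back into the reference correlator up to a finite multiplicative constant, valid under the present assumption $\alpha<\big(Q-\tfrac{2(|\nu|+|\tilde\nu|)}{\gamma}-\tfrac{\gamma}{4}\big)\wedge(Q-\gamma)$ which guarantees the relevant Seiberg-type bound.

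The main obstacle will be controlling the $T$--$T$ residue contributions and the bulk term uniformly in $t$. For the $T$--$T$ residues, after picking up the residue at $u_k=u_j$, the surviving $(k-1)$-fold contour integral is no longer of the form $\caT_t(\nu^{(k)},\cdot)$---its integrand is altered by residue factors---so the bound must be established by iteratively closing each of the remaining contours while tracking the propagator-derivative scaling through the explicit Wick-expansion form \eqref{basicterm} and using the a priori estimate \eqref{aprior} at each step. For the bulk term, the kernel $\partial_u^pC(u_k,x)\sim|u_k-x|^{-p-1}$ is singular for $x$ close to the small contour, and uniformity requires estimates on negative GMC moments over small annuli, in the spirit of \eqref{conising}, to absorb the near-diagonal contribution.
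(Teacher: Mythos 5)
Your proposal assumes as known the local OPE/Ward identity form
\[
\langle T(u_k)\,V_\alpha(0)\textstyle\prod_i V_{\alpha_i}(z_i)\rangle_t \;\sim\; \sum_i\Big(\frac{\Delta_{\alpha_i}}{(u_k-z_i)^2}+\frac{\partial_{z_i}}{u_k-z_i}\Big)\langle V_\alpha(0)\textstyle\prod_i V_{\alpha_i}(z_i)\rangle_t+\dots
\]
but that pole structure is precisely what needs to be established and is the nontrivial content of the argument. The raw Gaussian integration by parts \eqref{basicipp} applied to $T(u_k)$ produces only the double pole at each $z_p$ (the term $M(\mathbf{u},\mathbf{z})$ of \eqref{IPPstressM}, yielding the zero-order piece of $\mathbf{D}_{\nu_k}$), together with higher poles at $u_k=u_\ell$ and $u_k=0$, mixed cross-poles (the $T_i$'s in $D$), and a bulk GMC integral $N$ with kernel $\partial_x^pC(u_k,x)$. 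There is \emph{no} simple pole with residue $\partial_{z_p}$ in this decomposition. Reconstructing it is the crux: the paper introduces the auxiliary quantity
\[
I(\mathbf{u},\mathbf{z})=\sum_{p}\frac{1}{u_k-z_p}\,\partial_{z_p}\langle T(\mathbf{u}^{(k)})V_\alpha(0)\textstyle\prod V_{\alpha_i}(z_i)\rangle_t,
\]
applies integration by parts to it separately, and compares term by term with $N+D$. Only after this comparison do all the $I$-terms match up with cross-terms of $D$ and with the bulk integral, leaving the boundary term $B_t$ (on $|x|=e^{-t}$) as the surviving remainder. Without this $I$-mechanism, your plan to directly read the $\partial_{z_i}$-residue off the correlator is circular.

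A second concrete issue is your claim that the $T$-$T$ residues at $u_k=u_j$ contribute to $\caB_t$ and must be controlled by closing the remaining contours iteratively. In fact the pure $T$-$T$ (and $T$-$V_\alpha(0)$) contractions are collected in $R(\mathbf{u},\mathbf{z})$, and the resulting $u_k$-contour integral is \emph{identically zero}: each such summand has the form $u_k^{1-\nu_k}(u_k-v)^{-a}(u_k-w)^{-b}$ with $v,w$ \emph{inside} the small contour, $\nu_k\geq 1$ and $a+b\geq 2$, so the integrand decays sufficiently at infinity that the full contour integral vanishes (see \eqref{Rcontour}). The remaining mixed poles (the $T_i$) cancel exactly against corresponding pieces from $I$ via the identities \eqref{cancel1}--\eqref{cancel4} (resp. \eqref{tinuk1}--\eqref{tinuk1'} when $\nu_k=1$). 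None of these end up in $\caB_t$; the only surviving contribution is the Green-formula boundary term $B_t$, which is bounded using the Wick expansion \eqref{basicterm} and the KPZ identity Lemma \ref{kpzlemma}. Your proposal does not identify the boundary term and instead proposes to bound the bulk $\mu\gamma\int_{\C_t}$ piece directly, but this piece is not small on its own---it is only after the cancellation with $I$ that the bulk contribution reduces to the boundary circle $|x|=e^{-t}$, which gives the $e^{(\alpha+|\nu|-2)t}$ decay.
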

 \vskip 2mm
\noindent{\it Proof of Proposition \ref{proofward}}. Iterating Proposition \ref{wardinduction} we get 
 \begin{align*}
 \caT_t(\nu,\varphi)=\int \langle   V_\alpha(0)\prod_{i=1}^nV_{\alpha_i }(z_i) \rangle_t \,\overline{{\bf D}^\ast_\nu\varphi}({\bf z})\dd{\bf z}+\caB_t(\varphi)
\end{align*}
 where $\caB_t(\varphi)$ satisfies  
 \begin{equation*}
 \caB_t(\varphi) \leq Ce^{(\alpha+|\nu|-2)t} \sum_{\ell=1}^k\int \langle   V_\alpha(0)\prod_{i=1}^nV_{\alpha_i }(z_i) \rangle_t \,  | {\bf D}^\ast_{\nu_{\ell+1}} \cdots {\bf D}^\ast_{\nu_k} \varphi({\bf z})|\dd{\bf z}
 \end{equation*}
where  by convention $ {\bf D}^\ast_{\nu_{\ell+1}} \cdots {\bf D}^\ast_{\nu_k} \varphi=\varphi$ if $\ell=k$. The functions ${\bf z}\to \langle   V_\alpha(0)\prod_{i=1}^nV_{\alpha_i }(z_i) \rangle_t$ are continuous on   $\theta\caZ$ and converge uniformly as $t\to\infty$ on compact subsets of $\theta\caZ$ to  the function $ \langle   V_\alpha(0)\prod_{i=1}^nV_{\alpha_i }(z_i) \rangle$. Hence $ \caT_t(\nu,\cdot)$ converges in the Frechet topology of $\caD'(\theta\caZ)$ to the required limit since $ \caB_t(\varphi)$ goes to $0$ as $t$ goes to infinity (recall that $\alpha+|\nu|-2<0$). \qed
 
\subsection{ Proof of Proposition \ref{wardinduction}}
 
 We start the proof of Proposition \ref{wardinduction} by
 applying Gaussian integration by parts formula twice to the $T(u_k)$. 
This produces plenty of terms which we group in four contributions:
  \begin{align}\label{IPPstress}
   \langle   T(\mathbf{u})& V_\alpha(0)\prod_{i=1}^nV_{\alpha_i }(z_i) \rangle_t   =R(\mathbf{u},\mathbf{z})  + M(\mathbf{u},\mathbf{z})  + N(\mathbf{u},\mathbf{z})+D(\mathbf{u},\mathbf{z})  .
  \end{align}
  In $R(\mathbf{u})$ we group all the contractions with $C(u_k,u_l)$ between $u_k$ and $u_l$, $l<k$ and  $u_k$ and $0$. These terms do not contribute to the contour integral of the $u_k$ variable since they give rise to integrals of the form
 \begin{align*}
 \oint_{|u_k|=e^{-t}\delta_k}u_k^{1-\nu_k}(u_k-v)^{-a}(u_k-w)^{-b}\dd u_k
\end{align*} 
  where $v,w\in\{0,u_1,\dots,u_{k-1}\}$ and $a+b\geq 2$. Since $|v|,|w|<e^{-t}\delta_k$ and $\nu_k>0$, this integral vanishes. We conclude
  \begin{align}\label{Rcontour}
 \oint_{|u_k|=e^{-t}\delta_k}u_k^{1-\nu_k}R({\bf u},\mathbf{z})\dd u_k=0.
\end{align} 
For the benefit of the reader we display all the terms in $R({\bf u},\mathbf{z})$ in the Appendix \ref{ippjunk}.

 Let us now introduce the notations ${\bf u}^{(k)}:=(u_1,\dots, u_{k-1})$ and  ${\bf u}^{(k,\ell)}:=(u_1,\dots,u_{\ell-1},u_{\ell+1},\dots, u_{k-1})$. The second contribution in \eqref{IPPstress} collects the contractions hitting only one $V_{\alpha_p}$:
\begin{align}\label{IPPstressM}
 M(\mathbf{u},\mathbf{z})=  \sum_{p=1}^{n} (\frac{Q\alpha_p}{2}- \frac{\alpha_p^2}{4}) \frac{1}{(u_k-z_p)^2}  \langle    T(\mathbf{u}^{(k)})  V_\alpha(0)\prod_{i=1}^nV_{\alpha_i }(z_i) \rangle_t . 
\end{align}
We can then do the $u_k$ integral explicitly to obtain
 \begin{align}\label{confweightward}
& \int\Big(\frac{1}{(2\pi i)^{k}}\oint_{|\mathbf{u}|=\boldsymbol{\delta}_t}  
   \mathbf{u}^{1-\nu}\M(\mathbf{u},\mathbf{z}) \,  d   \mathbf{u}\Big)\bar{\varphi}({\bf z})d{\bf z}  \\
   & = \int \left ( \sum_{p=1}^n\frac{\nu_k-1}{z_p^{\nu_k}}\Delta_{\alpha_p} \right ) 
 \langle    T(\mathbf{u}^{(k)})  V_\alpha(0)\prod_{i=1}^nV_{\alpha_i }(z_i) \rangle_t \bar{\varphi}({\bf z})\dd{\bf z}  
\end{align}
Note that this term contains the constant part of the differential operator $\mathbf{D}_{\nu_k}$. 

The third contribution is given by terms where all contractions hit $V_\gamma$:
\begin{align}\nonumber
N(\mathbf{u},\mathbf{z})=&(\frac{\mu\gamma^2}{4}-\frac{\mu\gamma Q}{2}) \int_{\C_t} \frac{1}{(u_k-x)^2}\langle   T(\mathbf{u}^{(k)})  V_\alpha(0)V_\gamma(x)\prod_{i=1}^nV_{\alpha_i }(z_i) \rangle_t \,\dd x
\\=
  & - \mu   \int_{\C_t} \frac{1}{(u_k-x)^2}\langle   T(\mathbf{u}^{(k)})  V_\alpha(0)V_\gamma(x)\prod_{i=1}^nV_{\alpha_i }(z_i) \rangle_t \,\dd x. \label{Nterm} 
\end{align}

Finally $D$ gathers all the other terms
\begin{align}\label{IPPstressrest}
 D(\mathbf{u},\mathbf{z}) = \sum_{i=1}^9T_i(\mathbf{u},\mathbf{z})
 \end{align}
 with
\begin{align} 
 T_1(\mathbf{u},\mathbf{z})&=
 -\sum_{\ell=1}^{k-1}\sum_{p=1}^n      \frac{Q\alpha_p}{(u_k-u_\ell)^3(u_k-z_p)} \langle   T(\mathbf{u}^{(\ell,k)})  V_\alpha(0)\prod_{i=1}^nV_{\alpha_i }(z_i) \rangle_t 
\nonumber \\
 T_2(\mathbf{u},\mathbf{z})  &= \sum_{\ell=1}^{k-1} \sum_{p=1}^{n} \frac{\alpha_p}{(u_k-u_\ell)^2(u_k-z_p)} \langle   \partial_{z}X(u_{\ell})  T(\mathbf{u}^{(\ell,k)})  V_\alpha(0)\prod_{i=1}^nV_{\alpha_i }(z_i) \rangle_t  
\nonumber\\
T_3(\mathbf{u},\mathbf{z}) &=-  \sum_{p=1}^{n}  \frac{\alpha\alpha_p}{2} \frac{1}{(u_k-z_p)u_k}  \langle    T(\mathbf{u}^{(k)})  V_\alpha(0)\prod_{i=1}^nV_{\alpha_i }(z_i) \rangle_t  
\nonumber\\
T_4(\mathbf{u},\mathbf{z})& =\frac{\mu\gamma}{2} \sum_{p=1}^{n} \alpha_p\int_{\C_t} \frac{1}{(u_k-z_p)(u_k-x)} \langle    T(\mathbf{u}^{(k)})  V_\alpha(0)V_\gamma(x)\prod_{i=1}^nV_{\alpha_i }(z_i) \rangle_t  \,\dd x  
\nonumber\\
T_5(\mathbf{u},\mathbf{z}) &=-  \sum_{p\not=p'=1}^{n}  \frac{\alpha_p\alpha_{p'}}{4} \frac{1}{(u_k-z_p)(u_k-z_{p'})}  \langle    T(\mathbf{u}^{(k)})  V_\alpha(0)\prod_{i=1}^nV_{\alpha_i }(z_i) \rangle_t 
\nonumber\\ 
T_6(\mathbf{u},\mathbf{z}) &   =\mu Q\gamma \sum_{\ell=1}^{k-1}\int_{\C_t}    \frac{1}{(u_k-u_\ell)^3(u_k-x)} \langle   T(\mathbf{u}^{(\ell,k)})  V_\alpha(0)V_\gamma(x)\prod_{i=1}^nV_{\alpha_i }(z_i) \rangle_t \,\dd x
\nonumber\\
T_7(\mathbf{u},\mathbf{z}) &=-\mu\gamma \sum_{\ell=1}^{k-1} \int_{\C_t} \frac{1}{(u_k-u_\ell)^2(u_k-x)} \langle   \partial_{z}X(u_{\ell})  T(\mathbf{u}^{(\ell,k)})  V_\alpha(0)V_\gamma(x)\prod_{i=1}^nV_{\alpha_i }(z_i) \rangle_t  \,\dd x
\nonumber\\
T_8(\mathbf{u},\mathbf{z}) &=\frac{\mu\gamma \alpha}{2}   \int_{\C_t} \frac{1}{u_k(u_k-x)} \langle    T(\mathbf{u}^{(k)})  V_\alpha(0)V_\gamma(x)\prod_{i=1}^nV_{\alpha_i }(z_i) \rangle_t  \,\dd x
\nonumber\\
T_9(\mathbf{u},\mathbf{z}) &= -\frac{\mu^2\gamma^2}{4}  \int_{\C_t}\int_{\C_t} \frac{1}{(u_k-x)(u_k-x')} \langle    T(\mathbf{u}^{(k)})  V_\alpha(0)V_\gamma(x)V_\gamma(x')\prod_{i=1}^nV_{\alpha_i }(z_i) \rangle_t  \,\dd x .
\label{Dterms}
  \end{align}
  
  We need to show that  $N$ and $D$   will give rise (after contour integration) to the  $\partial_{z_i}$-derivatives in the expression $\mathbf{D}_{\nu_k}\langle   T(\mathbf{u}^k)  V_\alpha(0)\prod_{i=1}^nV_{\alpha_i }(z_i) \rangle_t $. 
To show this we need to analyse  $N$ further. 

Regularizing the vertex insertions  (beside the $V_\gamma$ insertion, we also regularize the $V_{\alpha_i}$'s for later need) 
in $N(\mathbf{u},\mathbf{z})$ given by \eqref{Nterm}, and performing an integration by parts (Green formula) in the $x$ integral  we get  
\begin{align*} 
 N(\mathbf{u},\mathbf{z}) =& - \mu  \lim_{\epsilon\to 0} \int_{\C_t} \partial_x\frac{1}{u_k-x }\langle   T(\mathbf{u}^{(k)})  V_\alpha(0)V_{\gamma,\epsilon}(x)\prod_{i=1}^nV_{\alpha_i,\epsilon }(z_i) \rangle_t \,\dd x\\
=& B_t(\mathbf{u},\mathbf{z}) + \mu \gamma  \lim_{\epsilon\to 0} \int_{\C_t}  \frac{1}{u_k-x } \langle   T(\mathbf{u}^{(k)})  V_\alpha(0) \partial_x\phi_\epsilon(x)V_{\gamma,\epsilon}(x)\prod_{i=1}^nV_{\alpha_i ,\epsilon}(z_i) \rangle_t \,\dd x\\=: &B_t(\mathbf{u},\mathbf{z}) + \tilde N(\mathbf{u},\mathbf{z})
  \end{align*} 
 where the boundary term appearing in Green formula has   $\epsilon\to 0$ limit given by
 \begin{align}\label{bryterm}
 B_t(\mathbf{u},\mathbf{z}) :=
 i\mu\oint_{|x|=e^{-t}}
\frac{1}{u_k-x} \langle   T(\mathbf{u}^{(k)})  V_\alpha(0)V_\gamma(x)\prod_{i=1}^nV_{\alpha_i }(z_i) \rangle_t \,\dd\bar x
\end{align}
and we used 
  \eqref{Vregul} to write
\begin{align*}
\partial_xV_{\gamma,\epsilon}(x)=\alpha\partial_x\phi_\epsilon(x)V_{\gamma,\epsilon}(x).
\end{align*}

In $\tilde N(\mathbf{u},\mathbf{z})$ we integrate by parts the $ \partial_x\phi_\epsilon(x)$ and end up with
  \begin{align}\nonumber
\tilde N(\mathbf{u},\mathbf{z})
=&  - \mu Q \gamma\sum_{\ell=1}^{k-1}  \int_{\C_t}  \frac{1}{(u_k-x)(x-u_\ell)^3 } \langle   T(\mathbf{u}^{(\ell,k)})  V_\alpha(0)V_\gamma(x)\prod_{i=1}^nV_{\alpha_i }(z_i) \rangle_t \,\dd x
\\
& +\mu  \gamma\sum_{\ell=1}^{k-1}  \int_{\C_t}  \frac{1}{(u_k-x)(x-u_\ell)^2 } \langle \partial_zX(u_\ell)  T(\mathbf{u}^{(\ell,k)})  V_\alpha(0)V_\gamma(x)\prod_{i=1}^nV_{\alpha_i }(z_i) \rangle_t \,\dd x \nonumber
\\
& -\frac{\mu  \gamma \alpha}{2}  \int_{\C_t}  \frac{1}{(u_k-x) x } \langle  T(\mathbf{u}^{(k)})  V_\alpha(0)V_\gamma(x)\prod_{i=1}^nV_{\alpha_i }(z_i) \rangle_t \,\dd x \nonumber
\\
&  +\frac{\mu^2  \gamma^2  }{2}  \int_{\C_t} \int_{\C_t}  \frac{1}{(u_k-x) (x-x') } \langle  T(\mathbf{u}^{(k)})  V_\alpha(0)V_\gamma(x)V_\gamma(x')\prod_{i=1}^nV_{\alpha_i }(z_i) \rangle_t \,\dd x \nonumber
\\
&+ {\mu  \gamma  }\lim_{\epsilon\to 0}  \sum_{p=1}^n \int_{\C_t}  \frac{\alpha_p}{u_k-x }C_{\epsilon,0}(x,z_p)  \langle  T(\mathbf{u}^{(k)})  V_\alpha(0)V_\gamma(x)\prod_{i=1}^nV_{\alpha_i,\epsilon }(z_i) \rangle_t \,\dd x\nonumber
\\
&=: 
T'_6(\mathbf{u},\mathbf{z})+T'_7(\mathbf{u},\mathbf{z})+T'_{8}(\mathbf{u},\mathbf{z})+T'_{9}(\mathbf{u})+T'_{4}(\mathbf{u},\mathbf{z})\label{T4leq}
\end{align}
where again we took the  $\epsilon\to 0$ limit in the terms where it was obvious. In particular this identity proves that the limit on the RHS, denoted by $T'_{4}(\mathbf{u},\mathbf{z})$, exists. The numbering of these terms and the ones below will be used when comparing with \eqref{Dterms}.

\subsubsection{Derivatives of correlation functions}

We want to compare the expression \eqref{IPPstress} to  derivatives of the function $\langle   T(\mathbf{u}^{(k)})  V_\alpha(0)\prod_{i=1}^nV_{\alpha_i }(z_i) \rangle_t $. 
We need to treat separately the cases $\nu_k\geq 2$ and $\nu_k=1$.

\subsubsection{Case $\nu_k\geq 2$ }

 We have
\begin{lemma}\label{deriv2}
Let
   \begin{align*}
I_\epsilon(\mathbf{u},{\bf z}):=
\sum_{p=1}^n\frac{1}{u_k-z_p}&  \partial_{z_p}\langle   T(\mathbf{u}^k) V_\alpha(0)\prod_{i=1}^nV_{\alpha_i,\epsilon }(z_i) \rangle_t .
  \end{align*}  
  Then $\lim_{\epsilon\to 0}I_\epsilon(\mathbf{u},{\bf z}):= I(\mathbf{u},{\bf z})$
exists and defines a continuous function in ${\bf z}\in \theta\caZ$ satisfying
  \begin{align}\label{contourI}
  \int\Big(\frac{1}{(2\pi i)^{k}}\oint_{|\mathbf{u}|=\boldsymbol{\delta}_t}{\bf u}^{1-\nu} I(\mathbf{u},{\bf z})d{\bf u} \Big)\bar{\varphi}({\bf z})\dd {\bf z}=\caT_t(\nu^{(k)},\hat{\bf D}_{\nu_k}\varphi)
\end{align}
for all $\varphi\in C_0^\infty(\theta\caZ)$ with $\hat{\bf D}_{n}={\bf D}^\ast_{n}-(n-1)\sum_i\Delta_{\alpha_i}z_i^{-n}$.
\end{lemma}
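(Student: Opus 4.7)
The plan is to prove Lemma \ref{deriv2} in three steps: first establish existence and continuity of the limit $I(\mathbf{u},\mathbf{z})$, then compute the $u_k$ contour integral via a Laurent expansion, and finally identify the resulting expression with $\caT_t(\nu^{(k)},\hat{\mathbf{D}}_{\nu_k}\varphi)$ through integration by parts in the $z_p$ variables.

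First, I would establish existence of $I(\mathbf{u},\mathbf{z}):=\lim_{\epsilon\to 0}I_\epsilon(\mathbf{u},\mathbf{z})$. Observing that $\partial_{z_p}V_{\alpha_p,\epsilon}(z_p)=\alpha_p\partial_{z_p}\phi_\epsilon(z_p)V_{\alpha_p,\epsilon}(z_p)$ by \eqref{Vregulward}, applying the Gaussian integration by parts formula \eqref{basicipp} to this extra $\partial_{z_p}\phi_\epsilon(z_p)$ factor expresses $\partial_{z_p}\langle T(\mathbf{u}^{(k)})V_\alpha(0)\prod_i V_{\alpha_i,\epsilon}(z_i)\rangle_t$ as a finite sum of terms of the form \eqref{basicterm} with one additional contraction kernel of type $\partial_{z_p}C_{\epsilon,\epsilon'}(z_p,w)$ for $w$ one of the other insertion points, or $\partial_{z_p}C_{\epsilon,0}(z_p,x)$ integrated against an extra $V_\gamma(x)$ insertion. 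For $z_p\in\theta\mathcal{Z}$ and $w$ bounded away from $z_p$, all such kernels converge uniformly on compacts to the corresponding derivatives of $C(z_p,w)$. Combined with the a priori bound \eqref{aprior} and Lemma \ref{kpzlemma} for controlling the extra $V_\gamma$-insertion integral, dominated convergence yields the existence of $I(\mathbf{u},\mathbf{z})$, its smoothness in $\mathbf{z}$, and its joint continuity on $\mathcal{O}_{e^{-t}}\times\theta\mathcal{Z}$.

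Second, since $|u_k|=e^{-t}\delta_k<1<|z_p|$ on the $u_k$ contour, the Laurent expansion
\begin{equation*}
\frac{1}{u_k-z_p}=-\sum_{m\geq 0}\frac{u_k^m}{z_p^{m+1}}
\end{equation*}
converges uniformly. The residue at $u_k=0$ extracts the $m=\nu_k-2\geq 0$ term, giving
\begin{equation*}
\frac{1}{2\pi i}\oint_{|u_k|=e^{-t}\delta_k}u_k^{1-\nu_k}\frac{du_k}{u_k-z_p}=-z_p^{1-\nu_k}.
\end{equation*}
By the uniform convergence established in step one, we may exchange this contour integral with both the $\epsilon\to 0$ limit and with $\partial_{z_p}$, obtaining
\begin{equation*}
\frac{1}{(2\pi i)^k}\oint_{|\mathbf{u}|=\boldsymbol{\delta}_t}\mathbf{u}^{1-\nu}I(\mathbf{u},\mathbf{z})\,d\mathbf{u}=-\sum_{p=1}^n z_p^{1-\nu_k}\partial_{z_p}G(\mathbf{z}),
\end{equation*}
where $G(\mathbf{z})$ denotes the remaining $(k-1)$-fold contour integral of $\langle T(\mathbf{u}^{(k)})V_\alpha(0)\prod_i V_{\alpha_i}(z_i)\rangle_t$ against $(\mathbf{u}^{(k)})^{1-\nu^{(k)}}$.

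Third, since $\varphi\in C_c^\infty(\theta\mathcal{Z})$ and $G$ is smooth in $\mathbf{z}$ there, Stokes' theorem justifies integration by parts in each $z_p$, producing after summation the expression
\begin{equation*}
\sum_{p=1}^n\int G(\mathbf{z})\Big[(1-\nu_k)z_p^{-\nu_k}\bar\varphi(\mathbf{z})+z_p^{1-\nu_k}\partial_{z_p}\bar\varphi(\mathbf{z})\Big]d\mathbf{z}.
\end{equation*}
Using $\partial_{z_p}\bar\varphi=\overline{\partial_{\bar z_p}\varphi}$ and reorganising under the sesquilinear adjoint convention of Proposition \ref{wardinduction}, this is precisely $\caT_t(\nu^{(k)},\hat{\mathbf{D}}_{\nu_k}\varphi)$: the multiplicative $(\nu_k-1)\sum_i\Delta_{\alpha_i}z_i^{-\nu_k}$ part of $\mathbf{D}_{\nu_k}^*$ is subtracted off precisely because it has already been extracted as the $M$-contribution \eqref{confweightward}, leaving only the ``derivative plus Jacobian'' part of the adjoint, which is $\hat{\mathbf{D}}_{\nu_k}$. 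The main obstacle is the bookkeeping in the $z_p$ integration by parts together with a consistent treatment of the complex conjugation inherent in the sesquilinear pairing; once this is organised, the identification is routine.
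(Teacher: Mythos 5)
Your step~1 glosses over the genuinely delicate part of this lemma. When you integrate by parts the extra $\partial_{z_p}\phi_\epsilon(z_p)$ factor, the contraction with the GMC measure (the $-\mu\gamma\int_{\C_t}V_\gamma(x)\cdots\,\dd x$ term in \eqref{basicipp}) produces a kernel $C_{\epsilon,0}(z_p,x)$ that converges to $-\tfrac{1}{2(z_p-x)}$. Here $z_p\in\theta\caZ$ lies in the \emph{interior} of $\C_t$ (since $|z_p|>1>e^{-t}$), so $x$ is \emph{not} bounded away from $z_p$ and this kernel is not uniformly bounded. Consequently the a~priori bound \eqref{aprior} and Lemma~\ref{kpzlemma}, which handle only the correlation function itself (with bounded kernels of the form $\partial C(u_\alpha,x_k)$ for $u_\alpha$ in a compact of $\D_t$), do not cover this term, and dominated convergence does not follow ``routinely.'' This is exactly the difficulty the paper flags in a footnote (``this fact was shown in [dozz] without the SET insertions and could be proven here as well but we will follow another route because the recursion to prove this extension is painful'').

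The paper's proof of the lemma isolates this problematic piece as $L_\epsilon$ and handles it by the partial-fraction identity
\begin{equation*}
\frac{1}{u_k-z_p}\;=\;\frac{1}{u_k-x}+\frac{z_p-x}{(u_k-z_p)(u_k-x)},
\end{equation*}
which serves two purposes. In the second piece the factor $(z_p-x)$ cancels the $\tfrac{1}{z_p-x}$ singularity --- indeed the paper shows $(z_p-x)\,C_{\epsilon,0}(z_p,x)$ is uniformly bounded and converges a.e.\ to $-\tfrac12$ --- so dominated convergence applies there. The first piece is recognized to be exactly $T_4'$, whose existence was \emph{already} established through the identity \eqref{T4leq} expressing $T_4'$ in terms of quantities that manifestly converge. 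Without some argument of this type (or the harder ``recursion'' alluded to in the footnote) you cannot assert that $\lim_{\epsilon\to0}I_\epsilon(\mathbf{u},\mathbf{z})$ exists pointwise, and the rest of your steps~2 and~3 (Laurent expansion, residue at $u_k=0$, integration by parts in $z_p$ and identification of the adjoint operator) are built on sand. Those later steps are otherwise correct and essentially match the paper's contour computation, but the existence of the limit is the heart of the lemma and your proposal does not contain a proof of it.
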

\begin{proof}
We have
   \begin{align*}
I_\epsilon(\mathbf{u},{\bf z})=\sum_{p=1}^n\frac{\alpha_p}{u_k-z_p}&  \langle   T(\mathbf{u}^{(k)}) V_\alpha(0)\partial_{z_p}\phi_\epsilon(z_p)\prod_{i=1}^nV_{\alpha_i,\epsilon }(z_i) \rangle_t   = K_\epsilon(\mathbf{u},{\bf z})+L_\epsilon(\mathbf{u},{\bf z})
  \end{align*}  
  where we integrate by parts the $\partial_{z_p}\phi_\epsilon(z_p)$ and $K_\epsilon({\bf u},{\bf z})$ collects the terms with an obvious $\epsilon\to 0$ limit $K({\bf u},{\bf z})$:
    \begin{align*}
 K(\mathbf{u},{\bf z})  =&- \sum_{p=1}^n\sum_{\ell=1}^{k-1}\frac{Q\alpha_p}{(u_k-z_p)(z_p-u_\ell)^3}  \langle   T(\mathbf{u}^{(\ell,k)})  V_\alpha(0)\prod_{i=1}^nV_{\alpha_i }(z_i) \rangle_t   
   \\
&+ \sum_{p=1}^n\sum_{\ell=1}^{k-1}\frac{\alpha_p}{(u_k-z_p)(z_p-u_\ell)^2}  \langle  \partial_{z}X(u_\ell) T(\mathbf{u}^{(\ell,k)})  V_\alpha(0)\prod_{i=1}^nV_{\alpha_i }(z_i) \rangle_t     
 \\  
&- \sum_{p=1}^n \frac{\alpha_p\alpha}{2}\frac{1}{(u_k-z_p)z_p}  \langle   T(\mathbf{u}^{(k)})  V_\alpha(0)\prod_{i=1}^nV_{\alpha_i }(z_i) \rangle_t     
\\
&- \sum_{p\not= p'=1}^n \frac{\alpha_p\alpha_{p'}}{2}\frac{1}{(u_k-z_p)(z_p-z_{p'})}  \langle  T(\mathbf{u}^{(k)})  V_\alpha(0)\prod_{i=1}^nV_{\alpha_i }(z_i) \rangle_t
   \\
  =:&D_1(\mathbf{u},{\bf z})+D_2(\mathbf{u},{\bf z})+D_3(\mathbf{u},{\bf z})
  +D_5(\mathbf{u},{\bf z}) ,
  \end{align*}  
whereas 
\begin{align*}
 L_\epsilon(\mathbf{u},{\bf z}) 
=-\mu\gamma\sum_{p=1}^n  
\alpha_p\int_{\C_t}
\frac{1}{u_k-z_p}C_{\epsilon,0}(z_p,x)  \langle  T(\mathbf{u}^{(k)})  V_\alpha(0)V_\gamma(x)\prod_{i=1}^nV_{\alpha_i ,\epsilon}(z_i) \rangle_t     \,\dd x   .
  \end{align*}  
Since $C_{0,0}(z_p,x) =-\frac{1}{2}\frac{1}{z_p-x}$  and since it is not clear that $\frac{1}{z_p-x} \langle  T(\mathbf{u}^{k})  V_\alpha(0)V_\gamma(x)\prod_{i=1}^nV_{\alpha_i }(z_i) \rangle_t    $ is integrable the $\epsilon\to 0$ limit of $L_\epsilon$ is problematic\footnote{Actually, this fact was shown in \cite{dozz} without the SET insertions and could be proven here as well but we will follow another route because the recursion to prove this extension is painful.}. However, we can compare it with the term ${T_4'}$ in \eqref{T4leq}.  Writing 
$$\frac{1}{u_k-z_p}=\frac{1}{u_k-x}+\frac{z_p-x}{(u_k-z_p)(u_k-x)},
$$
we conclude that $ L_\epsilon$ converges:
\begin{align*}
\lim_{\epsilon\to 0} L_\epsilon(\mathbf{u},{\bf z})
 =&-\mu \gamma\lim_{\epsilon\to 0}\sum_{p=1}^n  
\alpha_p  \Big(\int_{\C_t}
\frac{1}{u_k-x}C_{\epsilon,0}(z_p,x)  \langle  T(\mathbf{u}^{(k)})  V_\alpha(0)V_\gamma(x)\prod_{i=1}^nV_{\alpha_i,\epsilon }(z_i) \rangle_t     \,\dd x
\\
& +  \int_{\C_t}
\frac{z_p-x}{(u_k-z_p)(u_k-x)}C_{\epsilon,0}(z_p,x)  \langle  T(\mathbf{u}^{(k)})  V_\alpha(0)V_\gamma(x)\prod_{i=1}^nV_{\alpha_i }(z_i) \rangle_t     \,\dd x\Big) 
\\
 =&T_4'(\mathbf{u},{\bf z})+T_4(\mathbf{u},{\bf z}).
  \end{align*}  
Indeed, setting $z=z_p-x$  the function 
\begin{align*}
(z_p-x)C_{\epsilon,0}(z_p,x)= -\hf\int \rho_\epsilon(y)\frac{z}{z+y}dy=\frac{i}{2\epsilon^2}\int_{\R^+}\rho(\tfrac{r}{\epsilon})\oint_{|u|=1} \frac{z}{z+ru}\frac{du}{u}rdr=-\pi \int_{\R^+}\rho(r)1_{r<|z|/\epsilon}rdr
\end{align*}
is uniformly bounded and converges almost everywhere to $-\hf$.

The same argument can be repeated to the smeared functions to show that (because convergence is uniform over compact subsets of $\theta\mathcal{Z}$)
\begin{align*}
\lim_{\epsilon\to 0}
\sum_{p=1}^n\int\Big(\frac{\alpha_p}{u_k-z_p}  \partial_{z_p}\langle   T(\mathbf{u}^{(k)}) V_\alpha(0)\prod_{i=1}^nV_{\alpha_i,\epsilon }(z_i) \rangle_t  \Big)\bar{\varphi}({\bf z})d{\bf z}:=\ell(\nu,\varphi)
\end{align*}
exists. Then integrating $\partial_{z_p}$ by parts and using that $\lim_{\epsilon\to 0}
\langle   T(\mathbf{u}^k) V_\alpha(0)\prod_{i=1}^nV_{\alpha_i,\epsilon }(z_i) \rangle_t  $ exists we conclude
\begin{align*}
\ell(\nu,\varphi)=
-\int\langle   T(\mathbf{u}^k) V_\alpha(0)\prod_{i=1}^nV_{\alpha_i,\epsilon }(z_i) \rangle_t  \sum_{p=1}^n  \partial_{z_p}(\frac{\alpha_p}{u_k-z_p} 
\varphi({\bf z}))\dd {\bf z}
\end{align*}
which proves \eqref{contourI}.
\end{proof}

We have obtained the relation
\begin{align}
N(\mathbf{u},\mathbf{z})-I(\mathbf{u},{\bf z})&=B_t(\mathbf{u},\mathbf{z})+\sum_{i=6}^9T_i(\mathbf{u},\mathbf{z})-\sum_{i=1}^3D_i(\mathbf{u},\mathbf{z})-D_5(\mathbf{u},\mathbf{z})
 -T_4(\mathbf{u},{\bf z}).\label{N-I}
\end{align}
Let us consider the expression
\begin{align*}
K(\mathbf{u},{\bf z}):=&\langle   T(\mathbf{u})V_\alpha(0)\prod_{i=1}^nV_{\alpha_i }(z_i) \rangle_t  -I(\mathbf{u},{\bf z})- M(\mathbf{u},\mathbf{z}) 
\\=&R(\mathbf{u},\mathbf{z})   + N(\mathbf{u},\mathbf{z})+D(\mathbf{u},\mathbf{z}) -I(\mathbf{u},{\bf z}).
\end{align*}
By \eqref{confweightward}  and \eqref{contourI} we have
 \begin{align}\label{contourI1}
  \int\Big(\oint_{|\mathbf{u}|=\boldsymbol{\delta}_t}{\bf u}^{1-\nu} K(\mathbf{u},{\bf z})d{\bf u} \Big)\varphi({\bf z})\dd {\bf z}&=\caT(\nu,\varphi)-\caT(\nu^{(k)},\mathbf{D}^\ast_{\nu_k}\varphi).
   \end{align}
On the other hand
combining \eqref{IPPstress}, \eqref{IPPstressrest} and \eqref{N-I} 
we obtain
\begin{align}
K&=R+\sum_{i=1}^3(T_i-D_i)+ T_5-D_5+\sum_{i=6}^9(T_i+T'_i)
 +B_t\label{finalsum}.
\end{align}
Now some simple algebra, see Appendix \ref{ippjunk} gives:
\begin{align}\label{cancel1}T'_{9}=-T_{9}, \ \ 
T_5&=D_5\\ \label{cancel3}
\oint_{|u_k|=e^{-t}\delta_k}u_k^{1-\nu_k}({T}_i(\mathbf{u},\mathbf{z})+{T'}_i(\mathbf{u},\mathbf{z}))\,\dd u_k&=0\ \ i=6,7,8,
\\ \label{cancel4}
\oint_{|u_k|=e^{-t}\epsilon_k}u_k^{1-\nu_k}\big(T_i(\mathbf{u},\mathbf{z}) -D_i(\mathbf{u},\mathbf{z})\big) \,\dd u_k&=0\ \ i=1,2,3.
\end{align}
Hence using  these relations and \eqref{Rcontour}  we conclude
 \begin{align*}
  \int\Big(\oint_{|\mathbf{u}|=\boldsymbol{\delta}_t}{\bf u}^{1-\nu} K(\mathbf{u},{\bf z})\dd{\bf u} \Big)\bar{\varphi}({\bf z})\dd {\bf z}&= \int\Big(\oint_{|\mathbf{u}|=\boldsymbol{\delta}_t}{\bf u}^{1-\nu} B_t(\mathbf{u},{\bf z})\dd{\bf u} \Big)\bar{\varphi}({\bf z})\dd{\bf z} =:B_t(\nu,\varphi).
 \end{align*}
Thus to prove Proposition \ref{proofward} for $\nu_k\geq 2$ we need to prove the bound    \eqref{btbound} for $B_t(\nu,\varphi)$.
 Recalling \eqref{bryterm} we get by residue theorem
\begin{align*}
\oint_{|\mathbf{u}|=\boldsymbol{\delta}_t}  
   \mathbf{u}^{1-\nu} B_t(\mathbf{u},{\bf z})  \,  \dd   \mathbf{u}
   =-2\pi\oint_{|\mathbf{u}^{(k)}|=\boldsymbol{\delta}^{(k)}_t}  
   (\mathbf{u}^{(k)})^{1-\nu^{(k)}} \oint_{|x|=e^{-t}} x^{1-\nu_k}\langle   T(\mathbf{u}^{(k)})  V_\alpha(0)V_\gamma(x)\prod_{i=1}^nV_{\alpha_i }(z_i) \rangle_t \,\dd\bar x  \,  \dd   \mathbf{u}.
\end{align*}
By \eqref{basicterm} (at $\epsilon=0$ and an extra $V_{\alpha_{n+1}}(z_{n+1})=V_\gamma(x)$) the expectation on the RHS is a sum of terms of the form
\begin{align*}
\int_{\C_t^m}I(\mathbf{u}^{(k)},{\bf x},{\bf z},x)\langle  V_\alpha(0)V_{\gamma }(x)\prod_{\ell=1}^mV_{\gamma }(x_\ell)\prod_{i=1}^{n}V_{\alpha_i }(z_i) \rangle_t\dd{\bf x}
\end{align*}
where
\begin{align*}
I(\mathbf{u}^{(k)},{\bf x},{\bf z},x)=
C\prod_{\alpha, \beta} \frac{1}{(u_\alpha-u_\beta)^{k_{\alpha\beta}}}\prod_{\alpha,i}\frac{1}{(u_\alpha-z_i)^{l_{\alpha i}}}\prod_{\alpha,\ell}\frac{1}{(u_\alpha-x_\ell)^{m_{\alpha \ell}} }\prod_{\alpha}\frac{1}{(u_\alpha-x)^{n_{\alpha }} }
\end{align*}
where $\sum k_{\alpha\beta}+\sum l_{\alpha k}+\sum m_{\alpha l}+\sum n_{\alpha }=2(k-1)$. Performing the $u$-integrals in the order $u_{k-1},u_{k-2},\dots$ by the residue theorem we get
\begin{align*}
\oint_{|\mathbf{u}^{(k)}|=\boldsymbol{\delta}^{(k)}_t}  
   (\mathbf{u}^{(k)})^{1-\nu^{(k)}} I(\mathbf{u}^{(k)},{\bf x},{\bf z},x) \dd   \mathbf{u}=\sum C(a,{\bf b},{\bf c})x^{-a}\prod_\ell x_\ell^{-b_\ell}\prod_iz_i^{-c_i}
\end{align*}
with $a+\sum b_\ell+\sum c_i=|\nu^{(k-1)}|$. Since $|x_\ell|\geq e^{-t}$ and $|x|= e^{-t}$ we conclude
\begin{align*}
|\oint_{|\mathbf{u}|=\boldsymbol{\delta}_t}  
   \mathbf{u}^{1-\nu} B_t(\mathbf{u},\mathbf{z})  \,  \dd   \mathbf{u}|\leq Ce^{t(|\nu|-2)}
 \max_{m\leq 2(k-1)}\sup_{|x|=e^{-t}}\int_{\C_t^m}  \langle  V_\alpha(0)V_\gamma(x)\prod_{\ell=1}^mV_{\gamma }(x_\ell)\prod_{i=1}^nV_{\alpha_i }(z_i) \rangle_t \dd{\bf x}.
   \end{align*}
By Lemma \ref{kpzlemma} 
\begin{align*}
\int_{\C_t^m}  \langle  V_\alpha(0)V_\gamma(x)\prod_{\ell=1}^mV_{\gamma }(x_\ell)\prod_{i=1}^nV_{\alpha_i }(z_i) \rangle_t d{\bf x}&=C \langle  V_\alpha(0)V_\gamma(x)\prod_{i=1}^nV_{\alpha_i }(z_i) \rangle_t\leq C |x|^{-\gamma\alpha}=Ce^{t\gamma\alpha}.
   \end{align*}
where we used the  formula \eqref{probarepresentation} and this estimate is uniform over the compact subsets of $\mathbf{z}\in\theta\mathcal{Z}$. Hence
\begin{align*}
|\oint_{|\mathbf{u}|=\boldsymbol{\delta}_t}  
   \mathbf{u}^{1-\nu} B_t(\mathbf{u},\mathbf{z})  \,  d   \mathbf{u}|\leq Ce^{t(|\nu|+\alpha-2)}
   \end{align*}
  as claimed.

\subsubsection{Case $\nu_k=1$}

Here we need to regularize also the Liouville expectation: let $\langle -\rangle_{t,\epsilon}$ be as in \eqref{modifiedlcft} except we replace $e^{\gamma c}M_\gamma(\C_t)$ by the regularized version $\int_{\C_t}V_{\gamma,\epsilon}(x)\dd x$. We use following variant of Lemma \ref{deriv2}. 
 \begin{lemma}\label{deriv2'}
Let $
I'_\epsilon(\mathbf{u},{\bf z}):=\frac{1}{u_k}
\sum_{p=1}^n  \partial_{z_p}\langle   T(\mathbf{u}^k) V_\alpha(0)\prod_{i=1}^nV_{\alpha_i,\epsilon }(z_i) \rangle_{t,\epsilon} 
  $. 
  Then $\lim_{\epsilon\to 0}I'_\epsilon(\mathbf{u},{\bf z}):= I'(\mathbf{u},{\bf z})$
exists and defines a continuous function in ${\bf z}\in \theta\caZ$ satisfying
  \begin{align}\label{contourI'}
  \int\Big(\frac{1}{(2\pi i)^{|\nu|}}\oint_{|\mathbf{u}|=\boldsymbol{\delta}_t}{\bf u}^{1-\nu} I'(\mathbf{u},{\bf z})\dd {\bf u} \Big)\bar{\varphi}({\bf z})\dd {\bf z}=\caT_t(\nu^{(k)},{\bf D}_{\nu_k}^\ast\varphi)
\end{align}
for all $\varphi\in C_0^\infty(\theta\caZ)$.
\end{lemma}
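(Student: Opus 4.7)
The plan is to parallel the proof of Lemma \ref{deriv2}, with the simplification that the fully regularised expectation $\langle\cdot\rangle_{t,\epsilon}$ smooths out the exponential potential and hence every Gaussian integration-by-parts term has an easily identifiable $\epsilon\to 0$ limit. I would start by bringing $\partial_{z_p}$ inside the expectation (it hits only $V_{\alpha_p,\epsilon}(z_p)$ and produces the factor $\alpha_p\partial_z\phi_\epsilon(z_p)$) and then apply the Gaussian integration by parts formula \eqref{basicipp} to each $\partial_z\phi_\epsilon(z_p)$. Since the potential is now regularised at the same scale $\epsilon$, the contraction with the potential term produces $C_{\epsilon,\epsilon}(z_p,x)$ (instead of $C_{\epsilon,0}(z_p,x)$) and the result is a finite sum of contractions whose $\epsilon\to 0$ limits are immediate, together with one integral term
\begin{equation*}
L'_\epsilon(\mathbf{u},\mathbf{z})=-\mu\gamma\sum_{p=1}^n\frac{\alpha_p}{u_k}\int_{\C_t}C_{\epsilon,\epsilon}(z_p,x)\,\langle T(\mathbf{u}^{(k)})V_\alpha(0)V_{\gamma,\epsilon}(x)\prod_{i=1}^n V_{\alpha_i,\epsilon}(z_i)\rangle_{t,\epsilon}\,\dd x
\end{equation*}
whose convergence requires a little care.

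For $L'_\epsilon$ I would replay exactly the argument of Lemma \ref{deriv2}: the pointwise identity $(z_p-x)C_{\epsilon,\epsilon}(z_p,x)=-\pi\int_{\R^+}\rho(r)\mathbf{1}_{r<|z_p-x|/\epsilon}r\,\dd r$ shows that this quantity is uniformly bounded and converges almost everywhere to $-\tfrac{1}{2}$, and the a priori integrability of the correlation with an extra $V_\gamma(x)$ insertion is furnished by Lemma \ref{kpzlemma} combined with the SET insertion representation \eqref{basicterm}. Crucially, the prefactor $\frac{1}{u_k}$ (as opposed to $\frac{1}{u_k-z_p}$) decouples $u_k$ from the integration variable $x$, which eliminates the need for the splitting trick used in Lemma \ref{deriv2}. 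Dominated convergence then gives the existence of $I'(\mathbf{u},\mathbf{z})=\lim_{\epsilon\to 0}I'_\epsilon(\mathbf{u},\mathbf{z})$ as a continuous function of $\mathbf{z}\in\theta\mathcal{Z}$.

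With $I'$ in hand, the contour integral is elementary. Since $\nu_k=1$, one has $u_k^{1-\nu_k}=1$, and $I'(\mathbf{u},\mathbf{z})$ depends on $u_k$ only through the factor $\frac{1}{u_k}$ (the remaining contributions are holomorphic at $u_k=0$), so the residue theorem yields
\begin{equation*}
\frac{1}{2\pi i}\oint_{|u_k|=e^{-t}\delta_k}I'(\mathbf{u},\mathbf{z})\,\dd u_k=\sum_{p=1}^n\partial_{z_p}\langle T(\mathbf{u}^{(k)})V_\alpha(0)\prod_{i=1}^n V_{\alpha_i}(z_i)\rangle_t.
\end{equation*}
Pairing against $\bar\varphi$ for $\varphi\in C_0^\infty(\theta\mathcal{Z})$, exchanging the remaining $\mathbf{u}^{(k)}$ contour integrals with the $\mathbf{z}$-integration by Fubini, and integrating by parts in each $z_p$ (licit since $\varphi$ is compactly supported) identifies the resulting expression with $\mathcal{T}_t(\nu^{(k)},\mathbf{D}_1^*\varphi)$, using that $\mathbf{D}_1=-\sum_p\partial_{z_p}$ so that $\mathbf{D}_1^*\varphi=\sum_p\partial_{z_p}\varphi$. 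The main obstacle throughout is the $\epsilon\to 0$ convergence of $L'_\epsilon$, but this is strictly simpler than the analogous analysis carried out for Lemma \ref{deriv2} thanks to the $u_k$-decoupling.
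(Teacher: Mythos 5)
Your reduction of $I'_\epsilon$ to $K'_\epsilon + L'_\epsilon$ by Gaussian integration by parts is correct, as is the observation that the contraction with the regularized potential produces $C_{\epsilon,\epsilon}(z_p,x)$, and the smearing/contour argument at the end is the right shape. But the treatment of $L'_\epsilon$ has a genuine gap.

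You assert that the prefactor $\tfrac{1}{u_k}$ ``decouples $u_k$ from $x$, which eliminates the need for the splitting trick used in Lemma \ref{deriv2},'' and then invoke dominated convergence. This misidentifies what the splitting in Lemma \ref{deriv2} was for. That algebraic identity $\tfrac{1}{u_k-z_p}=\tfrac{1}{u_k-x}+\tfrac{z_p-x}{(u_k-z_p)(u_k-x)}$ was not a device for disentangling $u_k$ from $x$; its purpose was to produce an extra factor of $(z_p-x)$ in front of $C_{\epsilon,0}(z_p,x)$ so as to kill the $|x-z_p|^{-1}$ singularity, while the uncompensated piece was absorbed into $T_4'$, whose existence as an $\epsilon\to 0$ limit had already been obtained by an independent argument from relation \eqref{T4leq}. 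With the constant-in-$x$ weight $\tfrac{1}{u_k}$ there is no analogous splitting: the integrand of $L'_\epsilon$ still carries the full singularity $C_{\epsilon,\epsilon}(z_p,x)\sim -\tfrac{1}{2(z_p-x)}$ multiplied by the $V_\gamma(x)$-insertion correlation, which blows up like $|x-z_p|^{-\gamma\alpha_p}$ near $x=z_p$. The combined singularity $|x-z_p|^{-1-\gamma\alpha_p}$ is not integrable in the plane for general $\alpha_p<Q$, and Lemma \ref{kpzlemma} gives integrability only of the bare correlation, not of its product with $|z_p-x|^{-1}$. So there is no dominating function, and the dominated convergence step fails. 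The pointwise identity for $(z_p-x)C_{\epsilon,\epsilon}(z_p,x)$ that you quote bounds a quantity that never appears in your expression.

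The paper's actual proof proceeds by a different mechanism: it introduces the boundary integral $B'_t(\mathbf{u},\mathbf{z}) = -\tfrac{i\mu}{u_k}\oint_{|x|=e^{-t}}\langle T(\mathbf{u}^{(k)})V_\alpha(0)V_\gamma(x)\prod_i V_{\alpha_i}(z_i)\rangle_t\,\dd\bar x$, unpacks it via Green's formula and a second integration by parts on the $V_{\gamma,\epsilon}(x)$ factor, and obtains the decomposition $B'_t = P_6 + P_7 + P_8 + P_4$ in which $P_6,P_7,P_8$ have manifest $\epsilon\to 0$ limits. Since $B'_t$ itself is explicit, this identity is what \emph{proves} the existence of $\lim_{\epsilon\to 0}L'_\epsilon = P_4$, and it simultaneously writes $I' = C_1+C_2+C_3+C_5 + B'_t - P_6 - P_7 - P_8$, which is then fed into the subsequent contour-cancellation relations \eqref{tinuk1}--\eqref{tinuk1'}. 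Without some such comparison term, the convergence of $L'_\epsilon$ is left unestablished in your proposal.
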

 \begin{proof} The proof is similar to Lemma \ref{deriv2} but cancellations occur for other reasons and we explain how. First, the integration by parts gives
 \begin{align*}
 I'_\epsilon(\mathbf{u},{\bf z})=K'_\epsilon(\mathbf{u},{\bf z})+L'_\epsilon(\mathbf{u},{\bf z})
\end{align*}
 where $\lim_{\epsilon\to 0}K'_\epsilon=K'$ exists and is given by
 \begin{align*}
 K'(\mathbf{u},{\bf z})   =&- \sum_{p=1}^n\sum_{\ell=1}^{k-1}\frac{Q\alpha_p}{u_k(z_p-u_\ell)^3}  \langle   T(\mathbf{u}^{(\ell,k)})  V_\alpha(0)\prod_{i=1}^nV_{\alpha_i }(z_i) \rangle_t   
   \\
&+ \sum_{p=1}^n\sum_{\ell=1}^{k-1}\frac{\alpha_p}{u_k(z_p-u_\ell)^2}  \langle  \partial_{z}X(u_\ell) T(\mathbf{u}^{(\ell,k)})  V_\alpha(0)\prod_{i=1}^nV_{\alpha_i }(z_i) \rangle_t      
 \\  
&- \sum_{p=1}^n \frac{\alpha_p\alpha}{2}\frac{1}{u_kz_p}  \langle   T(\mathbf{u}^{(k)})  V_\alpha(0)\prod_{i=1}^nV_{\alpha_i }(z_i) \rangle_t       
\\
&- \sum_{p\not= p'=1}^n \frac{\alpha_p\alpha_{p'}}{2}\frac{1}{u_k(z_p-z_{p'})}  \langle  T(\mathbf{u}^{(k)})  V_\alpha(0)\prod_{i=1}^nV_{\alpha_i }(z_i) \rangle_t    
\\
 & =:C_1(\mathbf{u},{\bf z})+C_2(\mathbf{u},{\bf z})+C_3(\mathbf{u},{\bf z})+
  C_5(\mathbf{u},{\bf z})
  \end{align*}
whereas
\begin{align*}
L'_\epsilon(\mathbf{u},{\bf z})= -
\frac{\mu\gamma }{u_k} \sum_{p=1}^n  {\alpha_p }\int_{\C_t}C_{\epsilon,\epsilon}( z_p,x)\langle  T(\mathbf{u}^{(k)})  V_\alpha(0)V_{\gamma,\epsilon}(x)\prod_{i=1}^nV_{\alpha_i,\epsilon }(z_i) \rangle_{t,\epsilon}     \,\dd x 
\end{align*} 
 is the term that needs analysis.  Let us define
\begin{align*}
B'_t(\mathbf{u},{\bf z}):=-\frac{i\mu }{u_k}\oint_{|x|=e^{-t}} \langle  T(\mathbf{u}^{(k)})  V_\alpha(0)V_\gamma(x)\prod_{i=1}^nV_{\alpha_i }(z_i) \rangle_t     \,\dd\bar x.
\end{align*} 
 Then
 \begin{align*}
B'_t(\mathbf{u},{\bf z})=&-\frac{i\mu }{u_k}\lim_{\epsilon\to 0} \oint_{|x|=e^{-t}} \langle  T(\mathbf{u}^{(k)})  V_{\alpha}(0)V_{\gamma,,\epsilon}(x)\prod_{i=1}^nV_{\alpha_i,\epsilon }(z_i) \rangle_{t,\epsilon}     \,\dd\bar x\\=&\frac{\mu}{u_k}\lim_{\epsilon\to 0} \int_{\C_t} \partial_x\langle  T(\mathbf{u}^{(k)})  V_{\alpha}(0)V_{\gamma,,\epsilon}(x)\prod_{i=1}^nV_{\alpha_i,\epsilon }(z_i) \rangle_{t,\epsilon}     \,\dd x\\
 =& -\frac{\mu Q\gamma }{u_k}\sum_{\ell=1}^{k-1} \int_{\C_t} \frac{1}{(x-u_\ell)^3}  \langle  T(\mathbf{u}^{(\ell,k)})  V_\alpha(0)V_\gamma(x)\prod_{i=1}^nV_{\alpha_i }(z_i) \rangle_t     \,\dd x
\\
&+\frac{\mu\gamma}{u_k} \sum_{\ell=1}^{k-1} \int_{\C_t} \frac{1}{(x-u_\ell)^2}  \langle \partial_zX(u_\ell) T(\mathbf{u}^{(\ell,k)})  V_\alpha(0)V_\gamma(x)\prod_{i=1}^nV_{\alpha_i }(z_i) \rangle_t     \,\dd x
\\
&-\mu\frac{ \gamma \alpha}{2u_k}   \int_{\C_t} \frac{1}{x} \langle    T(\mathbf{u}^{(k)})  V_\alpha(0)V_\gamma(x)\prod_{i=1}^nV_{\alpha_i }(z_i) \rangle_t  \,\dd x
\\ 
&-\frac{\mu\gamma}{u_k} \lim_{\epsilon\to 0}\sum_{p=1}^n  {\alpha_p }\int_{\C_t}C_{\epsilon,\epsilon}( z_p,x) \langle  T(\mathbf{u}^{(k)})  V_\alpha(0)V_{\gamma,\epsilon}(x)\prod_{i=1}^nV_{\alpha_i ,\epsilon}(z_i) \rangle_{t,\epsilon}     \,\dd x   \\
&=:P_6(\mathbf{u},{\bf z})+P_7(\mathbf{u},{\bf z})+P_{8}(\mathbf{u},{\bf z})+P_4(\mathbf{u},{\bf z}) 
\end{align*}
This proves the existence of $ \lim_{\epsilon\to 0}L'_\epsilon=L'=P_4$ and furthermore
\begin{align}\label{l4p4}
I'=\lim_{\epsilon\to 0}I'_\epsilon=C_1+C_2+C_3+C_5+B'_t-P_6-P_7-P_{8}.
\end{align}
 The claim \eqref{contourI'} follows as in Lemma \ref{deriv2}.
 \end{proof}
 Let us consider the expression
\begin{align*}
K'(\mathbf{u},{\bf z}):=&\langle   T(\mathbf{u})V_\alpha(0)\prod_{i=1}^nV_{\alpha_i }(z_i) \rangle_t  +I'(\mathbf{u},{\bf z})- M(\mathbf{u},\mathbf{z}) 
\\=&R(\mathbf{u},\mathbf{z})   + N(\mathbf{u},\mathbf{z})+D(\mathbf{u},\mathbf{z}) +I'(\mathbf{u},{\bf z}).
\end{align*}
By \eqref{confweightward}  and \eqref{contourI'} we have
 \begin{align}\label{contourI12}
  \int\Big(\oint_{|\mathbf{u}|=\boldsymbol{\delta}_t}{\bf u}^{1-\nu} K(\mathbf{u},{\bf z})\dd{\bf u} \Big)\bar{\varphi}({\bf z})\dd{\bf z}&=\caT(\nu,\varphi)-\caT(\nu^{(k)},\mathbf{D}^\ast_{\nu_k}\varphi).
   \end{align}
On the other hand
combining \eqref{IPPstress}, \eqref{IPPstressrest} and \eqref{N-I} 
we obtain
\begin{align}
K&=R+N +\sum_{i=1}^9 T_i +\sum_{i=1}^3 C_i+C_5  -\sum_{i=6}^8 P_i
 +B'_t\label{finalsum2}
\end{align}
As before it is easy to  check that the following relations hold (see Appendix \ref{ippjunk})
 \begin{align}\label{tinuk1}
\rcircleleftint_{|u_k|=e^{-t}\epsilon_k} T_i(\mathbf{u},{\bf z})\,\dd u_k=&0\quad \text{ for }i=4,5,9 
\\
\rcircleleftint_{|u_k|=e^{-t}\epsilon_k}\big(T_i(\mathbf{u},{\bf z})+C_i(\mathbf{u},{\bf z})\big)\,\dd u_k=&0\quad \text{ for }i=1,2,3\label{tinuk2}
\\
\rcircleleftint_{|u_k|=e^{-t}\epsilon_k}\big(T_i(\mathbf{u},{\bf z})-P_i(\mathbf{u},{\bf z})\big)\,\dd u_k=&0\quad \text{ for }i=6,7,8\label{tinuk2'}
\\
\rcircleleftint_{|u_k|=e^{-t}\epsilon_k} C_5(\mathbf{u},{\bf z}) \,\dd u_k=&0  \label{tinuk3}\\
\rcircleleftint_{|u_k|=e^{-t}\epsilon_k} N(\mathbf{u},{\bf z}) \,\dd u_k=&0 . \label{tinuk1'}
\end{align}
We can now conclude as in the case $\nu_k>1$. \qed



\appendix
\section{Elementary lemmas on the Virasoro algebra}
\begin{lemma}\label{LemAppendixVir}
Let $t_1, \cdots, t_k \in \Z$ be such that $t_1+\cdots+t_k>0$. Then
\begin{equation}\label{sumpositiveappendix}
\mathbf{L}^{0,\alpha}_{t_1} \cdots \mathbf{L}_{t_k}^{0,\alpha}1=0, \quad \tilde{\mathbf{L}}^{0,\alpha}_{t_1} \cdots \tilde{\mathbf{L}}_{t_k}^{0,\alpha}1=0.
\end{equation}
\end{lemma}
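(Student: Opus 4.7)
My plan is to exploit the grading of the polynomial space $\caP$ by eigenvalues of $\mathbf{P}$. By the construction recalled in Section \ref{sec:fock}, $\caP=\bigoplus_{k\geq 0}E_k$ with $E_k=\ker(\mathbf{P}-k)$, the constant function $1$ lies in $E_0$, and crucially $E_k=\{0\}$ for every $k<0$ since $\mathbf{P}$ is a nonnegative operator with integer eigenvalues. Once one knows that $\mathbf{L}^{0,\alpha}_n$ shifts this grading by $-n$, the lemma is immediate, so the whole task reduces to establishing this shift.

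The core step is the commutator identity $[\mathbf{P},\mathbf{A}_n]=-n\mathbf{A}_n$ for every $n\in\Z$. Writing $\mathbf{A}_n$ and $\mathbf{A}_{-n}$ (for $n>0$) in terms of $\mathbf{X}_n,\mathbf{Y}_n,\mathbf{X}_n^*,\mathbf{Y}_n^*$ via the formulas in Section \ref{repth} and using the canonical commutation relations \eqref{ccr} (in particular $[\mathbf{X}_n^*\mathbf{X}_n,\mathbf{X}_n]=-\mathbf{X}_n$, $[\mathbf{X}_n^*\mathbf{X}_n,\mathbf{X}_n^*]=\mathbf{X}_n^*$, and the analogous identities for $\mathbf{Y}_n$), one gets the claim by a direct calculation. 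Applying the derivation $[\mathbf{P},\cdot\,]$ term by term to the definition \eqref{virassoro} of $\mathbf{L}^{0,\alpha}_n$ then yields
\[
[\mathbf{P},\mathbf{L}^{0,\alpha}_n]=-n\,\mathbf{L}^{0,\alpha}_n,
\]
since each bilinear $\mathbf{A}_{n-m}\mathbf{A}_m$ contributes $-((n-m)+m)=-n$, the linear term $i(\alpha-Q-nQ)\mathbf{A}_n$ contributes the same factor $-n$, and the scalar piece at $n=0$ commutes trivially with $\mathbf{P}$. In particular $\mathbf{L}^{0,\alpha}_n:E_k\to E_{k-n}$ for every $k$.

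Iterating, the vector $\mathbf{L}^{0,\alpha}_{t_1}\cdots\mathbf{L}^{0,\alpha}_{t_k}1$ lies in $E_{-(t_1+\cdots+t_k)}$. By the hypothesis $t_1+\cdots+t_k>0$, this eigenspace is $\{0\}$, which proves the first identity. The second identity is obtained by applying verbatim the same argument to the tilded operators, using $[\mathbf{P},\tilde{\mathbf{A}}_n]=-n\tilde{\mathbf{A}}_n$ (which holds by a completely parallel computation, since $\tilde{\mathbf{A}}_n$ is built from the same harmonic oscillators as $\mathbf{A}_n$ but with the roles of the $n$-th and $(-n)$-th modes exchanged).

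I do not foresee any substantial obstacle here: the only real content is the single commutator computation $[\mathbf{P},\mathbf{A}_n]=-n\mathbf{A}_n$, after which the result is simply the vanishing of the negative-grading piece of $\caP$.
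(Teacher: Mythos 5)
Your proof is correct, and it follows a genuinely different route from the one in the paper. The paper argues by induction on $k$: it repeatedly uses the Virasoro commutation relations \eqref{virasoro} to move the rightmost $\mathbf{L}^{0,\alpha}_{t_j}$ with $t_j>0$ past the lower-index factors until it sits at the end, where it annihilates $1$; the extra commutator and central terms produced along the way are handled by the induction hypothesis. You instead exploit the $\mathbf{P}$-grading directly: the single commutator $[\mathbf{P},\mathbf{A}_m]=-m\,\mathbf{A}_m$ (which one checks from \eqref{ccr} exactly as you describe — for $n>0$, $\mathbf{A}_n=\tfrac{i\sqrt{n}}{2}(\mathbf{X}_n-i\mathbf{Y}_n)$ and $\mathbf{A}_{-n}=-\tfrac{i\sqrt{n}}{2}(\mathbf{X}_n^*+i\mathbf{Y}_n^*)$) immediately gives $[\mathbf{P},\mathbf{L}^{0,\alpha}_n]=-n\,\mathbf{L}^{0,\alpha}_n$, hence $\mathbf{L}^{0,\alpha}_n:E_k\to E_{k-n}$, and the iterated product lands in $E_{-(t_1+\cdots+t_k)}=\{0\}$ since $\mathbf{P}\geq 0$. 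Your argument is more conceptual and arguably cleaner, as it isolates the one structural fact (the degree-shift property of the $\mathbf{L}^{0,\alpha}_n$) that makes the lemma an instance of a positive-energy vanishing statement, rather than reproving this implicitly through the commutator bookkeeping. The paper's approach has the advantage of being self-contained in terms of the Virasoro algebra relations, without invoking the $\mathbf{P}$-grading of $\caP$, but both proofs are complete and both use only facts established earlier in the paper.
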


\proof
We will prove this relation by recursion on $k$ and work with $\mathbf{L}^{0,\alpha}_{n}$ (the case of the $\tilde{\mathbf{L}}^{0,\alpha}_{n} $ is identical). For $k=1$, the relation comes from the fact that $\mathbf{L}^{0,\alpha}_{t}1=0$ for $t>0$. For $k \geq 2$, let $j$ be the biggest index such that $t_j>0$. If $j=k$ we are done. Otherwise, we have using \eqref{virasoro} on $ \mathbf{L}^{0,\alpha}_{t_j},  \mathbf{L}^{0,\alpha}_{t_{j+1}}$
\begin{align*}
 \mathbf{L}^{0,\alpha}_{t_1} \cdots \mathbf{L}_{t_k}^{0,\alpha}1   =& \mathbf{L}^{0,\alpha}_{t_1} \cdots  \mathbf{L}^{0,\alpha}_{t_{j-1}}  \mathbf{L}^{0,\alpha}_{t_{j+1}}  \mathbf{L}^{0,\alpha}_{t_{j}} \mathbf{L}^{0,\alpha}_{t_{j+2}}    \cdots\mathbf{L}_{t_k}^{0,\alpha}1     \\
  = & (t_j-t_{j+1}) \mathbf{L}^{0,\alpha}_{t_1} \cdots  \mathbf{L}^{0,\alpha}_{t_{j-1}}  \mathbf{L}^{0,\alpha}_{t_j+t_{j+1}} \mathbf{L}^{0,\alpha}_{t_{j+2}}    \cdots\mathbf{L}_{t_k}^{0,\alpha}1    \\
& + \frac{c_L}{12} \delta_{t_j=-t_{j+1}} (t_j^3-t_j) \mathbf{L}^{0,\alpha}_{t_1} \cdots \mathbf{L}^{0,\alpha}_{t_{j-1}} \mathbf{L}^{0,\alpha}_{t_{j+2}}  \cdots \mathbf{L}_{t_k}^{0,\alpha}1.   
\end{align*}
The second term in the above equality is equal to $0$ by the recursion hypothesis. The third term in the above equality is equal to $0$ since if $t_j=-t_{j+1}$ then we can apply the recursion hypothesis on $(t_1, \cdots, t_{j-1}, t_{j+2}, \cdots, t_k)$ since in this case $t_1+ \cdots+t_{j-1}+ t_{j+2}+\cdots+ t_k= t_1+\cdots+t_k$. Therefore by iterating the above procedure with the couple $(t_j,t_{j+2})$, etc... we end up with
\[
 \mathbf{L}^{0,\alpha}_{t_1} \cdots \mathbf{L}_{t_k}^{0,\alpha}1=  \mathbf{L}^{0,\alpha}_{t_1} \cdots \mathbf{L}^{0,\alpha}_{t_{j-1}} \mathbf{L}^{0,\alpha}_{t_{j+1}}    \cdots\mathbf{L}_{t_k}^{0,\alpha} \mathbf{L}_{t_j}^{0,\alpha}1  =0.  \qedhere
\]

\begin{lemma}\label{LemAppendixVir1}
We suppose that $t_1, \cdots, t_k \in \Z$ are such that $t_1+\cdots t_k=0$.
\begin{equation*}
\mathbf{L}_{t_1}^{0,\alpha} \cdots \mathbf{L}_{t_k}^{0,\alpha}1 = \sum_{k \geq 0} a_k ( \mathbf{L}_{0}^{0,\alpha})^k 1
\end{equation*}
where the $a_k$ only depend on the commutation relations and not on $\alpha$.
\end{lemma}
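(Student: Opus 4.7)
The plan is to proceed by induction on the number of generators $k$, mirroring the structure of the proof of Lemma \ref{LemAppendixVir}, but now using the fact that when the indices $t_i$ sum to zero, the commutation relations cannot entirely eliminate the generators and some $\mathbf{L}_0^{0,\alpha}$ factors must survive. The essential observation is that the Virasoro relation
\begin{equation*}
[\mathbf{L}_{s}^{0,\alpha}, \mathbf{L}_{t}^{0,\alpha}] = (s-t)\, \mathbf{L}_{s+t}^{0,\alpha} + \tfrac{c_L}{12}(s^3 - s)\,\delta_{s,-t}
\end{equation*}
produces coefficients that depend only on $s,t$ and the central charge $c_L$, never on $\alpha$; combined with $\mathbf{L}_0^{0,\alpha} 1 = \Delta_\alpha \cdot 1$, this will give the $\alpha$-independence of the $a_m$.

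The base case $k=1$ forces $t_1=0$, so $\mathbf{L}_0^{0,\alpha} 1 = \Delta_\alpha \cdot 1 = (\mathbf{L}_0^{0,\alpha})^1 1$. For the inductive step, fix $k \geq 2$ and assume the result for all shorter products whose indices sum to zero. First, if $t_i \leq 0$ for every $i$, then $\sum t_i = 0$ forces $t_i = 0$ for every $i$ and the product is exactly $(\mathbf{L}_0^{0,\alpha})^k 1$. Otherwise, let $j$ be the largest index with $t_j>0$. If $j=k$ then $\mathbf{L}_{t_k}^{0,\alpha}1=0$ and the whole product vanishes, which trivially has the required form. So assume $j<k$ and apply the commutation relation to swap $\mathbf{L}_{t_j}^{0,\alpha}$ with its right neighbor:
\begin{align*}
\mathbf{L}_{t_1}^{0,\alpha}\cdots\mathbf{L}_{t_k}^{0,\alpha} 1
&= \mathbf{L}_{t_1}^{0,\alpha}\cdots\mathbf{L}_{t_{j-1}}^{0,\alpha}\mathbf{L}_{t_{j+1}}^{0,\alpha}\mathbf{L}_{t_j}^{0,\alpha}\mathbf{L}_{t_{j+2}}^{0,\alpha}\cdots\mathbf{L}_{t_k}^{0,\alpha} 1 \\
&\quad + (t_j-t_{j+1})\,\mathbf{L}_{t_1}^{0,\alpha}\cdots\mathbf{L}_{t_{j-1}}^{0,\alpha}\mathbf{L}_{t_j+t_{j+1}}^{0,\alpha}\mathbf{L}_{t_{j+2}}^{0,\alpha}\cdots\mathbf{L}_{t_k}^{0,\alpha} 1 \\
&\quad + \tfrac{c_L}{12}(t_j^3-t_j)\,\delta_{t_j,-t_{j+1}}\,\mathbf{L}_{t_1}^{0,\alpha}\cdots\mathbf{L}_{t_{j-1}}^{0,\alpha}\mathbf{L}_{t_{j+2}}^{0,\alpha}\cdots\mathbf{L}_{t_k}^{0,\alpha} 1.
\end{align*}

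The second and third terms on the right-hand side involve strictly shorter products of Virasoro generators whose indices again sum to zero, so the induction hypothesis expresses them as sums $\sum_m a_m (\mathbf{L}_0^{0,\alpha})^m 1$ with coefficients determined purely by the commutation relations. The first term still has $k$ generators, but the rightmost positive-index generator has moved one position to the right. Iterating the swap at most $k-j$ times eventually places $\mathbf{L}_{t_j}^{0,\alpha}$ next to the vacuum, where it annihilates $1$. At each iteration the residual pieces produced by the commutator have $k-1$ or $k-2$ generators with zero index sum, hence are covered by the induction hypothesis, and the coefficients introduced are rational expressions in the $t_i$ and $c_L$ alone.

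Combining all the pieces expresses $\mathbf{L}_{t_1}^{0,\alpha}\cdots\mathbf{L}_{t_k}^{0,\alpha}1$ as a finite sum $\sum_m a_m (\mathbf{L}_0^{0,\alpha})^m 1$ with each $a_m$ a universal polynomial in the $t_i$'s and $c_L$, independent of $\alpha$. The main (though modest) obstacle is the bookkeeping: one must verify that the double recursion really terminates and that no step introduces an $\alpha$-dependent coefficient. Termination is clear since each commutation either strictly reduces the number of generators or strictly increases the position of the rightmost positive-index generator (bounded by $k$); $\alpha$-independence is automatic because the structure constants of the Virasoro algebra do not involve $\alpha$ and we only invoke $\mathbf{L}_0^{0,\alpha}$ through the symbolic operator, not through its eigenvalue.
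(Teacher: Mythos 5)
Your proof is correct and uses essentially the same commutator-pushing argument as the paper: push the rightmost positive-index generator toward the vacuum, collect the strictly shorter residual terms from the Virasoro bracket, and close the argument by an inductive hypothesis. The only difference is bookkeeping: you induct on the number $k$ of generators (permitting zero indices inside the hypothesis, which automatically absorbs the subcase $t_j+t_{j+1}=0$), whereas the paper inducts on $\sum_i|t_i|$ under the restriction $t_i\neq0$ and handles the $\mathbf{L}_0^{0,\alpha}$ factor arising from $t_j+t_{j+1}=0$ by a separate interchange identity; both variants go through.
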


\proof
We suppose that for all $t_1, \cdots, t_r$ such that $|t_i| \not = 0$ for all $i$, $t_1+ \cdots +t_r=0$ and $|t_1|+ \cdots + |t_r| \leq N$ the lemma holds. We consider $t_1, \cdots, t_k$ such that $t_1+ \cdots+ t_k=0$, $|t_i| \not = 0$ for all $i$ and $|t_1|+ \cdots + |t_k| \leq  N+1$. We consider $j$ the largest integer such that $t_j>0$. Hence for all $l >j$ we have $t_l<0$.  
We have
\begin{align*}
 \mathbf{L}_{t_1}^{0,\alpha}  \cdots \mathbf{ L}_{t_j}^{0,\alpha} \mathbf{L}_{t_{j+1}}^{0,\alpha}  \cdots \mathbf{L}_{t_k}^{0,\alpha}  & =    \mathbf{L}_{t_1}^{0,\alpha}  \cdots  \mathbf{L}_{t_{j+1}}^{0,\alpha}\mathbf{L}_{t_{j}}^{0,\alpha}  \cdots \mathbf{L}_{t_k}^{0,\alpha} \\
 & + (t_j-t_{j+1})  \mathbf{L}_{t_1}^{0,\alpha}  \cdots  \mathbf{L}_{t_j+t_{j+1}}^{0,\alpha}   \cdots \mathbf{L}_{t_k}^{0,\alpha}  \\
&+ \frac{c_L}{12} (t_j^3-t_j) \delta_{t_j=-t_{j+1}} \mathbf{ L}_{t_1}^{0,\alpha}  \cdots  \mathbf{L}_{t_{j-1}}^{0,\alpha }\mathbf{L}_{t_{j+2}}^{0,\alpha}  \cdots \mathbf{L}_{t_k}^{0,\alpha} .
\end{align*}
If $t_j=-t_{j+1}$ then the third term is of the desired form by the recursive assumption; if  $t_j \not =-t_{j+1}$ then the third term is $0$. If $t_j+t_{j+1} \not = 0$ then the second term is of the desired form by the recursive assumption. If not then one can use $\mathbf{L}_n^{0,\alpha}L_0^{0,\alpha}=\mathbf{L}_0^{0,\alpha}\mathbf{L}_n^{0,\alpha} +n \mathbf{L}_{n}^{0,\alpha}$ to get
\begin{equation}\label{L0}
 \mathbf{L}_{t_1}^{0,\alpha}  \cdots  \mathbf{L}_{t_{j-1}}^{0,\alpha}  \mathbf{L}_{0}^{0,\alpha}   \mathbf{L}_{t_{j+2}}^{0,\alpha} \cdots \mathbf{L}_{t_k}^{0,\alpha} =  (t_1+ \cdots +t_{j-1})  \mathbf{L}_{t_1}^{0,\alpha}  \cdots  \mathbf{L}_{t_{j-1}}^{0,\alpha}  \mathbf{ L}_{t_{j+2}}^{0,\alpha} \cdots \mathbf{L}_{t_k}^{0,\alpha} +\mathbf{L}_0^{0,\alpha}  \mathbf{L}_{t_1}^{0,\alpha}  \cdots  \mathbf{L}_{t_{j-1}}^{0,\alpha}   \mathbf{L}_{t_{j+2}}^{0,\alpha} \cdots \mathbf{L}_{t_k}^{0,\alpha}  
\end{equation}
and one gets the result by the recursive assumption. To conclude the recursion step, one just needs to iterate the above procedure of shifting the $\mathbf{L}^{0,\alpha}_{t_j}$ to the right on the first term $ \mathbf{L}_{t_1}^{0,\alpha}  \cdots  \mathbf{L}_{t_{j+1}}^{0,\alpha}\mathbf{L}_{t_{j}}^{0,\alpha}  \cdots \mathbf{L}_{t_k}^{0,\alpha}$ and using $\mathbf{L}^{0,\alpha}_{t_j}1=0$. 

In the general case (i.e. when some $t_i$ can be equal to $0$), one can shift the $L_{0}^{0,\alpha}$ terms to the left using \eqref{L0}.   \qed

\section{The DOZZ formula}\label{dozz}
We set $\ell(z)=\frac{\Gamma (z)}{\Gamma (1-z)}$ where $\Gamma$ denotes the standard Gamma function. We introduce Zamolodchikov's special holomorphic function $\Upsilon_{\frac{\gamma}{2}}(z)$ by the following expression for $0<{\rm Re} (z)< Q$
\begin{equation}\label{def:upsilon}
\ln \Upsilon_{\frac{\gamma}{2}} (z)  = \int_{0}^\infty  \left ( \Big (\frac{Q}{2}-z \Big )^2  e^{-t}-  \frac{( \sinh( (\frac{Q}{2}-z )\frac{t}{2}  )   )^2}{\sinh (\frac{t \gamma}{4}) \sinh( \frac{t}{\gamma} )}    \right ) \frac{dt}{t}.
\end{equation}
The function $\Upsilon_{\frac{\gamma}{2}}$  is then defined on all $\C$ by analytic continuation of the expression \eqref{def:upsilon} as expression \eqref{def:upsilon}  satisfies the following remarkable functional relations: 
\begin{equation}\label{shiftUpsilon}
\Upsilon_{\frac{\gamma}{2}} (z+\frac{\gamma}{2}) = \ell\big( \frac{\gamma}{2}z\big) (\frac{\gamma}{2})^{1-\gamma z}\Upsilon_{\frac{\gamma}{2}} (z), \quad
\Upsilon_{\frac{\gamma}{2}} (z+\frac{2}{\gamma}) = \ell\big(\frac{2}{\gamma}z\big) (\frac{\gamma}{2})^{\frac{4}{\gamma} z-1} \Upsilon_{\frac{\gamma}{2}} (z).
\end{equation}
The function $\Upsilon_{\frac{\gamma}{2}}$ has no poles in $\C$ and the zeros of $\Upsilon_{\frac{\gamma}{2}}$ are simple (if $\gamma^2 \not \in \Q$) and given by the discrete set $(-\frac{\gamma}{2} \N-\frac{2}{\gamma} \N) \cup (Q+\frac{\gamma}{2} \N+\frac{2}{\gamma} \N )$. 
With these notations, the DOZZ formula is defined for $\alpha_1,\alpha_2,\alpha_3 \in \C$ by the following formula where we set $\bar{\alpha}=\alpha_1+\alpha_2+\alpha_3$ 
\begin{equation}\label{theDOZZformula}
C_{\gamma,\mu}^{{\rm DOZZ}} (\alpha_1,\alpha_2,\alpha_3 )  = \Big(\pi \: \mu \:  \ell\big(\frac{\gamma^2}{4}\big)  \: \big(\frac{\gamma}{2}\big)^{2 -\frac{\gamma^2}{2}} \Big)^{\frac{2 Q -\bar{\alpha}}{\gamma}}   \frac{\Upsilon_{\frac{\gamma}{2}}'(0)\Upsilon_{\frac{\gamma}{2}}(\alpha_1) \Upsilon_{\frac{\gamma}{2}}(\alpha_2) \Upsilon_{\frac{\gamma}{2}}(\alpha_3)}{\Upsilon_{\frac{\gamma}{2}}(\frac{\bar{\alpha}}{2}-Q) 
\Upsilon_{\frac{\gamma}{2}}(\frac{\bar{\alpha}}{2}-\alpha_1) \Upsilon_{\frac{\gamma}{2}}(\frac{\bar{\alpha}}{2}-\alpha_2) \Upsilon_{\frac{\gamma}{2}}(\frac{\bar{\alpha}}{2} -\alpha_3)   }
\end{equation}      
 The DOZZ formula is meromorphic with poles corresponding to the zeroes of the denominator of expression \eqref{theDOZZformula}. Note that it is symmetric in $\alpha_1,\alpha_2,\alpha_3$ and real valued when $\alpha_j$ are real.

\section{Integration by parts calculations
}\label{ippjunk}
{\bf R-terms.} Here we list explicitly the terms in the integration by parts formula in the proof of Proposition \ref{wardinduction} giving zero contribution to the contour integral:  
  \begin{align*}
 R(\mathbf{u},\mathbf{z}) :=&  3Q^2\sum_{\ell=1}^{k-1}  \frac{1}{(u_k-u_\ell)^4} \langle   T(\mathbf{u}^{(\ell,k)})  V_\alpha(0)\prod_{i=1}^nV_{\alpha_i }(z_i) \rangle_t
  \\
  &   -2Q\sum_{\ell=1}^{k-1}  \frac{1}{(u_k-u_\ell)^3} \langle   T(\mathbf{u}^{(\ell,k)}) \partial_{z}X(u_\ell) V_\alpha(0)\prod_{i=1}^nV_{\alpha_i }(z_i) \rangle_t  \\
   &+\frac{Q\alpha}{2}\frac{1}{u_k^2} \langle   T(\mathbf{u}^{(k)})  V_\alpha(0)\prod_{i=1}^nV_{\alpha_i }(z_i) \rangle_t
    \\
     &  - Q^2\sum_{\ell,\ell'\not=1}^{k-1}  \frac{1}{(u_k-u_\ell)^3(u_k-u_{\ell'})^3} \langle  T(\mathbf{u}^{\ell,\ell',k})  V_\alpha(0)\prod_{i=1}^nV_{\alpha_i }(z_i) \rangle_t 
     \\
     & +2Q\sum_{\ell,\ell'=1}^{k-1}  \frac{1}{(u_k-u_\ell)^3(u_k-u_{\ell'})^2} \langle   \partial_{z}X(u_{\ell'}) T(\mathbf{u}^{\ell,\ell',k})  V_\alpha(0)\prod_{i=1}^nV_{\alpha_i }(z_i) \rangle_t  
\\
& - Q\alpha \sum_{\ell=1}^{k-1}  \frac{1}{(u_k-u_\ell)^3u_k} \langle   T(\mathbf{u}^{(\ell,k)})  V_\alpha(0)\prod_{i=1}^nV_{\alpha_i }(z_i) \rangle_t 
\\
& - \sum_{\ell,\ell'=1}^{k-1}  \frac{1}{(u_k-u_\ell)^2(u_k-u_{\ell'})^2} \langle   \partial_{z}X(u_{\ell}) \partial_{z}X(u_{\ell'}) T(\mathbf{u}^{(\ell,\ell',k)})  V_\alpha(0)\prod_{i=1}^nV_{\alpha_i }(z_i) \rangle_t   
\\
& + \alpha\sum_{\ell=1}^{k-1} \frac{1}{(u_k-u_\ell)^2 u_k } \langle   \partial_{z}X(u_{\ell})  T(\mathbf{u}^{(\ell,k)})  V_\alpha(0)\prod_{i=1}^nV_{\alpha_i }(z_i) \rangle_t   
\\
&-   \frac{\alpha^2}{4}   \frac{1}{u_k^2} \langle    T(\mathbf{u}^{(k)})  V_\alpha(0)\prod_{i=1}^nV_{\alpha_i }(z_i) \rangle_t .
 \end{align*}
 
\vskip 2mm
\noindent {\bf Proof of \eqref{cancel1}-\eqref{cancel4}}. The claim 
$T'_{9}=-T_{9}$  results from the relation 
$$\frac{1}{(u_k-x)(u_k-x')}=\frac{1}{x-x'}\Big(\frac{1}{u_k-x}-\frac{1}{u_k-x'}\Big)$$
and the fact that the mapping $(x,x')\mapsto \langle  T(\mathbf{u}^{(k)})  V_\alpha(0)V_\gamma(x)V_\gamma(x')\prod_{i=1}^nV_{\alpha_i }(z_i) \rangle_t$ is symmetric.
 
  Also, $T_5=D_5$ comes from the relation 
$$\frac{1}{(u_k-z_p)(u_k-z_{p'})}=\frac{1}{z_p-z_{p'}}\Big(\frac{1}{u_k-z_p}-\frac{1}{u_k-z_{p'}}\Big)$$ and a re-indexation of the double sum.

Using 
$$ \frac{1}{(u_k-u_\ell)^2(u_k-x)}= \frac{1}{(u_k-u_\ell)^2(u_\ell-x)} - \frac{1}{(u_k-u_\ell)(x-u_\ell)^2}+\frac{1}{(x-u_\ell)^2(u_k-x)} $$
we find that
\begin{align*}
T_7+T'_7 
=-\mu  \gamma\sum_{\ell=1}^{k-1}  \int_{\C_t} \Big( \frac{1}{(u_k-u_\ell)^2(u_\ell-x)} - \frac{1}{(u_k-u_\ell)(x-u_\ell)^2}\Big)\langle \partial_zX(u_\ell)  T(\mathbf{u}^{(\ell,k)})  V_\alpha(0)V_\gamma(x)\prod_{i=1}^nV_{\alpha_i }(z_i) \rangle_t \,\dd x
\end{align*}
satisfies
$$
\oint_{|u_k|=e^{-t}\delta_k}u_k^{1-\nu_k}({T}_7(\mathbf{u},\mathbf{z})+{T}'_{7}(\mathbf{u},\mathbf{z}))\,\dd u_k=0$$
by moving the contour to $\infty$ ($\nu_k>1$ is used here).

Using that
$$\frac{1}{u_k(u_k-x)}-\frac{1}{x(u_k-x)}=-\frac{1}{u_kx} $$
and $\oint_{|u_k|=e^{-t}\delta_k}u_k^{1-\nu_k}\frac{1}{u_k}\dd u_k=0$ for $\nu_k>1$
we deduce that 
$$
\oint_{|u_k|=e^{-t}\delta_k}u_k^{1-\nu_k}({T}_{8}(\mathbf{u},\mathbf{z})+{T}'_{8}(\mathbf{u},\mathbf{z}))\,\dd u_k=0.$$

The relation 
$$ \frac{1}{(u_k-u_\ell)^3(u_k-x)}= \frac{1}{(u_k-u_\ell)^3(u_\ell-x)} - \frac{1}{(u_k-u_\ell)^2(x-u_\ell)^2}+\frac{1}{(u_k-u_\ell)(u_\ell-x)^3}-\frac{1}{(u_k-x)(u_\ell-x)^3} $$
entails in the same way \eqref{cancel3} for $i=6$.

Finally  the relations  \eqref{cancel4} follow by computing residues at the pole $u_k=z_p$.

\vskip 2mm
\noindent {\bf Proof of \eqref{tinuk1}-\eqref{tinuk1'}}. 

The relations \eqref{tinuk1} and \eqref{tinuk1'} holds because all the corresponding  $T_i(\mathbf{u})$ are holomorphic in $u_k\in \D_t$.  For \eqref{tinuk3} we observe that
\begin{align*}
\frac{1}{2\pi i}\rcircleleftint_{|u_k|=e^{-t}\epsilon_k} C_5(\mathbf{u}) \,\dd u_k= - \sum_{p\not= p'=1}^n \frac{\alpha_p\alpha_{p'}}{2}\frac{1}{ (z_p-z_{p'})}  \langle  T(\mathbf{u}^{k})  V_\alpha(0)\prod_{i=1}^nV_{\alpha_i }(z_i) \rangle_t   
\end{align*}
and that this expression is null for antisymmetry reasons. \eqref{tinuk2} and  \eqref{tinuk2'} follow from the residue at $z_p$ and $x$ respectively
 \begin{align*}
\rcircleleftint_{|u_k|=e^{-t}\epsilon_k} T_1(\mathbf{u},\mathbf{z})\,\dd u_k=&  2\pi i\sum_{\ell=1}^{k-1}\sum_{p=1}^n      \frac{Q\alpha_p}{(z_p-u_\ell)^3} \langle   T(\mathbf{u}^{(\ell,k)})  V_\alpha(0)\prod_{i=1}^nV_{\alpha_i }(z_i) \rangle_t  =-\rcircleleftint_{|u_k|=e^{-t}\epsilon_k} C_1(\mathbf{u},\mathbf{z})\,\dd u_k
\\
\rcircleleftint_{|u_k|=e^{-t}\epsilon_k} T_2(\mathbf{u},\mathbf{z})\,\dd u_k=& 
-2\pi i\sum_{\ell=1}^{k-1} \sum_{p=1}^{n} \frac{\alpha_p}{(z_p-u_\ell)^2} \langle   \partial_{z}X(u_{\ell})  T(\mathbf{u}^{(\ell,k)})  V_\alpha(0)\prod_{i=1}^nV_{\alpha_i }(z_i) \rangle_t  =-\rcircleleftint_{|u_k|=e^{-t}\epsilon_k} C_2(\mathbf{u},\mathbf{z})\,\dd u_k
\\
\rcircleleftint_{|u_k|=e^{-t}\epsilon_k} T_3(\mathbf{u},\mathbf{z})\,\dd u_k=&    2\pi i  \sum_{p=1}^{n}  \frac{\alpha\alpha_p}{2} \frac{1}{z_p}  \langle    T(\mathbf{u}^{(k)})  V_\alpha(0)\prod_{i=1}^nV_{\alpha_i }(z_i) \rangle_t    =-\rcircleleftint_{|u_k|=e^{-t}\epsilon_k} C_3(\mathbf{u},\mathbf{z})\,\dd u_k,
\end{align*}
thus proving \eqref{tinuk2}. 
Finally, we compute
\begin{align*}
\frac{1}{2\pi i}\rcircleleftint_{|u_k|=e^{-t}\epsilon_k} T_6(\mathbf{u},\mathbf{z})\,\dd u_k=&   - \mu Q\gamma \sum_{\ell=1}^{k-1}\int_{\C_t}    \frac{1}{(x-u_\ell)^3 } \langle   T(\mathbf{u}^{(\ell,k)})  V_\alpha(0)V_\gamma(x)\prod_{i=1}^nV_{\alpha_i }(z_i) \rangle_t \,\dd x =P_6(\mathbf{u},\mathbf{z})
\\
\frac{1}{2\pi i}\rcircleleftint_{|u_k|=e^{-t}\epsilon_k} T_7(\mathbf{u},\mathbf{z})\,\dd u_k=&  \mu\gamma \sum_{\ell=1}^{k-1} \int_{\C_t} \frac{1}{(x-u_\ell)^2 } \langle   \partial_{z}X(u_{\ell})  T(\mathbf{u}^{(\ell,k)})  V_\alpha(0)V_\gamma(x)\prod_{i=1}^nV_{\alpha_i }(z_i) \rangle_t  \,\dd x=P_7(\mathbf{u},\mathbf{z})
\nonumber
\\
\frac{1}{2\pi i}\rcircleleftint_{|u_k|=e^{-t}\epsilon_k} T_{8}(\mathbf{u},\mathbf{z})\,\dd u_k=& -\frac{\mu\gamma \alpha}{2}   \int_{\C_t} \frac{1}{x} \langle    T(\mathbf{u}^{(k)})  V_\alpha(0)V_\gamma(x)\prod_{i=1}^nV_{\alpha_i }(z_i) \rangle_t  \,\dd x=P_{8}(\mathbf{u},\mathbf{z})
\\ 
\frac{1}{2\pi i}\rcircleleftint_{|u_k|=e^{-t}\epsilon_k} C_4(\mathbf{u},\mathbf{z})\,\dd u_k=&\mu\gamma\sum_{p=1}^n  \frac{\alpha_p }{2}\int_{\C_t}\frac{1}{ (z_p-x)}  \langle  T(\mathbf{u}^{k})  V_\alpha(0)V_\gamma(x)\prod_{i=1}^nV_{\alpha_i }(z_i) \rangle_t     \,\dd x=P_4(\mathbf{u},\mathbf{z}) . 
\end{align*}

\subsection{Analyticity of the vertex operators}\label{app:analytic}
Recall the definition of $U_{\boldsymbol{\alpha}}(\mathbf{z})$ in \eqref{defUward}   for   $\mathbf{z}\in\mathcal{Z}$ and real $\alpha_i$'s such that $\alpha_i<Q$. It is plain to see that $U_{\boldsymbol{\alpha}}(\mathbf{z})$ agrees with the following slightly different regularization
\[\begin{gathered}
U^{{\rm holes}}_{\boldsymbol{\alpha}}(\mathbf{z}) = \underset{ k \to \infty}{\lim} U^{{\rm holes}}_{\boldsymbol{\alpha},k}(\mathbf{z}) 
\\
\textrm{ with }\,\, U^{{\rm holes}}_{\boldsymbol{\alpha},k}(\mathbf{z}) =e^{\sum_i\alpha_i-Qc}e^{\sum_i\alpha_i P\varphi(z_i)}\E_\varphi \Big[\Big(\prod_{i=1}^n\epsilon_k^{\frac{\alpha_i^2}{2}}e^{\alpha_i X_{\D,\epsilon_k}(z_i)}\Big)e^{-\mu e^{\gamma c}M_\gamma(\D_k )} \Big]
\end{gathered}\]
where $\epsilon_k= 2^{-k}$, $X_{\D,\epsilon_k}$ is the circle average of the Dirichlet GFF and $\D_k$ is the unit disk with small holes removed around each insertion, namely $\D_k:=\D\setminus \bigcup_{i=1}^nB(z_i,\epsilon_k)$. Recall that   we get the following explicit expression by using the Girsanov theorem:
\begin{align}
  U^{{\rm holes}}_{\boldsymbol{\alpha}}(\mathbf{z})  
&=   e^{(\sum_{j=1}^n \alpha_j-Q)c} e^{\sum_i \alpha_i P \varphi(z_i)+ \sum_{i<j} \alpha_i \alpha_j G_{\D}(z_i,z_j)  }  \E_\varphi \left [   e^{- \mu e^{\gamma c}\int_{\D}  e^{ \gamma \sum_{i=1}^n \alpha_i G_{\D}(x,z_i)} M_\gamma(\dd x) }    \right ].
\label{explicitexpression}   
\end{align}
We fix $k_0$ such that the open balls $B(z_i, 2^{-k_0})$ are disjoint and included in $\D$. Set $\mathcal{O}^n:=\lbrace  \boldsymbol{\alpha} \in \R^n;  \: \alpha_i<Q, \: \forall i  \rbrace $. Then we have the following analyticity result:
\begin{proposition}
The (random) function $\boldsymbol{\alpha} \to U^{{\rm holes}}_{\boldsymbol{\alpha}}(\mathbf{z})$ admits an analytic extension in a complex neighborhood of $  \mathcal{O}^n $  such that for all real $\boldsymbol{\alpha} \in \mathcal{O}^n$ there exists some $\epsilon>0$ (depending on $\boldsymbol{\alpha} $) and (non random) $C, \tilde{C}>0$ satisfying 
\begin{align*}
& \sup_{\boldsymbol{\beta} \in  [-\epsilon, \epsilon]^n}  |U^{{\rm holes}}_{\boldsymbol{\alpha}+i\boldsymbol{\beta}} (\mathbf{z})- U^{{\rm holes}}_{\boldsymbol{\alpha}+i\boldsymbol{\beta},k_0} (\mathbf{z}) |   \\
& \leq  C (1+e^{\gamma c})e^{(\sum_{j=1}^n \alpha_j-Q)c} e^{\tilde{C} \sup_i\sup_{u \in B(z_i, 2^{-k_0})} P \varphi(u)}  \E_\varphi \left [   e^{- \mu e^{\gamma c}\int_{\D_{k_0}}  e^{ \gamma \sum_{i=1}^n \alpha_i G_{\D}(x,z_i)} M_\gamma(\dd x) }    \right ].
\end{align*}
\end{proposition}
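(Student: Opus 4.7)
The plan proceeds in two stages: first establish analyticity of the regularized approximants $U^{\rm holes}_{\boldsymbol{\alpha},k}$, then prove the quantitative bound on the difference, from which analyticity of the limit follows via Vitali's theorem on locally uniformly bounded sequences of holomorphic functions. For the first stage, each $\boldsymbol{\alpha}\mapsto U^{\rm holes}_{\boldsymbol{\alpha},k}(\mathbf{z})$ is entire on $\C^n$: the chaos weight $e^{-\mu e^{\gamma c}M_\gamma(\D_k)}$ is $\boldsymbol{\alpha}$-independent, and the remaining $\boldsymbol{\alpha}$-dependence in $\prod_i\epsilon_k^{\alpha_i^2/2}e^{\alpha_iX_{\D,\epsilon_k}(z_i)}$ satisfies the modulus bound $|\epsilon_k^{\alpha_i^2/2}e^{\alpha_iX_{\D,\epsilon_k}(z_i)}|=\epsilon_k^{\mathrm{Re}(\alpha_i^2)/2}e^{\alpha_i^{(0)}X_{\D,\epsilon_k}(z_i)}$, which together with Gaussian exponential integrability of the finite-dimensional vector $(X_{\D,\epsilon_k}(z_i))_i$ allows one to exchange $\E_\varphi$ and complex differentiation via Morera and dominated convergence.

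For the second stage, apply Girsanov in $X_\D$ to both $U^{\rm holes}_{\boldsymbol{\alpha}}$ and $U^{\rm holes}_{\boldsymbol{\alpha},k_0}$, bringing them into the explicit form \eqref{explicitexpression}: the former integrates the GMC over $\D$, the latter over $\D_{k_0}$ with $G_\D$ mildly regularized at the insertions (an analytic multiplicative perturbation in $\boldsymbol{\alpha}$ that is absorbed into $C$). Setting $J(\boldsymbol{\alpha}):=\int_\D e^{\gamma\sum_i\alpha_iG_\D(x,z_i)}M_\gamma(dx)$, $J_{k_0}(\boldsymbol{\alpha})$ its analogue on $\D_{k_0}$, and $\Delta J(\boldsymbol{\alpha}):=J(\boldsymbol{\alpha})-J_{k_0}(\boldsymbol{\alpha})=\sum_i\int_{B(z_i,2^{-k_0})\cap\D}\cdots$, the crucial identity $|e^{\gamma\sum_j\alpha_jG_\D(x,z_j)}|=e^{\gamma\sum_j\alpha_j^{(0)}G_\D(x,z_j)}$ (valid since $G_\D$ is real) yields $|\Delta J(\boldsymbol{\alpha})|\leq \Delta J(\boldsymbol{\alpha}^{(0)})$ uniformly in $\boldsymbol{\beta}$; factoring $M_\gamma(dx)=e^{\gamma P\varphi(x)}\tilde M^{\D}_\gamma(dx)$ conditionally on $\varphi$ and bounding $P\varphi$ on each ball $B(z_i,2^{-k_0})$ by its supremum extracts precisely the factor $e^{\tilde C\sup_i\sup_{u\in B(z_i,2^{-k_0})}P\varphi(u)}$ appearing in the target bound.

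To convert the modulus estimate on $\Delta J$ into a bound on exponentials, write $e^{-\mu e^{\gamma c}J(\boldsymbol{\alpha})}-e^{-\mu e^{\gamma c}J_{k_0}(\boldsymbol{\alpha})}=e^{-\mu e^{\gamma c}J_{k_0}(\boldsymbol{\alpha})}\bigl(e^{-\mu e^{\gamma c}\Delta J(\boldsymbol{\alpha})}-1\bigr)$ and restrict $|\boldsymbol{\beta}|$ to a ball so small (depending only on $\boldsymbol{\alpha}^{(0)}$ and $\sup_{x\in\D_{k_0},i}|G_\D(x,z_i)|$) that the complex integrand on $\D_{k_0}$ is a controlled rotation of its modulus; this gives $\mathrm{Re}(J_{k_0}(\boldsymbol{\alpha}))\geq \tfrac{1}{2}J_{k_0}(\boldsymbol{\alpha}^{(0)})$ and hence $|e^{-\mu e^{\gamma c}J_{k_0}(\boldsymbol{\alpha})}|\leq e^{-\frac{1}{2}\mu e^{\gamma c}J_{k_0}(\boldsymbol{\alpha}^{(0)})}$. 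Combining with $|e^{-z}-1|\leq |z|e^{|z|}$ applied to $z=\mu e^{\gamma c}\Delta J(\boldsymbol{\alpha})$ (whose modulus is dominated by $\mu e^{\gamma c}\Delta J(\boldsymbol{\alpha}^{(0)})$) and assembling under $\E_\varphi$ yields the claimed inequality; analyticity of $U^{\rm holes}_{\boldsymbol{\alpha}}$ on a complex neighborhood of $\mathcal{O}^n$ then follows from Vitali applied to the locally uniformly bounded sequence of entire functions $(U^{\rm holes}_{\boldsymbol{\alpha},k})_{k\geq k_0}$, whose pointwise limit on the real locus $\mathcal{O}^n$ is $U^{\rm holes}_{\boldsymbol{\alpha}}$.

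The main obstacle is the control of the oscillatory exponential $e^{-\mu e^{\gamma c}\Delta J(\boldsymbol{\alpha})}$ near the insertions: the real part of $\Delta J$ is not sign-definite, so the naive modulus bound produces the potentially dangerous multiplier $e^{\mu e^{\gamma c}\Delta J(\boldsymbol{\alpha}^{(0)})}$ that the decay $e^{-\frac{1}{2}\mu e^{\gamma c}J_{k_0}(\boldsymbol{\alpha}^{(0)})}$ must tame. Reconciling this with the $e^{-\mu e^{\gamma c}J_{k_0}(\boldsymbol{\alpha}^{(0)})}$ factor on the right-hand side, and exploiting the $(1+e^{\gamma c})$ slack to cover the large-$c$ regime, is where the bulk of the delicate bookkeeping lies.
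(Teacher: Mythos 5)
Your overall architecture (analyticity of approximants, quantitative approximation bound, pass to the limit by Vitali) is sound in principle, and your use of Girsanov to reduce to the explicit form \eqref{explicitexpression}, together with the observation that $|e^{\gamma\sum_j\alpha_j G_\D(x,z_j)}| = e^{\gamma\sum_j \alpha_j^{(0)}G_\D(x,z_j)}$, is a reasonable starting point. The restriction to a small $\boldsymbol{\beta}$-ball to guarantee $\mathrm{Re}\,J_{k_0}(\boldsymbol{\alpha})\ge\tfrac12 J_{k_0}(\boldsymbol{\alpha}^{(0)})$ is also correct because on $\D_{k_0}$ the kernels $G_\D(\cdot,z_i)$ are bounded.

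However, the central step of your plan fails. You invoke $|e^{-z}-1|\leq |z|e^{|z|}$ with $z=\mu e^{\gamma c}\Delta J(\boldsymbol{\alpha})$, producing the factor $e^{\mu e^{\gamma c}\Delta J(\boldsymbol{\alpha}^{(0)})}$. Gaussian multiplicative chaos has no exponential moments, so this factor is not integrable under $\E_\varphi$, and the claimed bound cannot emerge from it. Your suggestion that the decay $e^{-\frac12\mu e^{\gamma c}J_{k_0}(\boldsymbol{\alpha}^{(0)})}$ ``must tame'' this multiplier is not a bookkeeping issue but an impossibility: $\Delta J(\boldsymbol{\alpha}^{(0)})$ and $J_{k_0}(\boldsymbol{\alpha}^{(0)})$ are GMC integrals over \emph{disjoint regions} (the punctured balls around the $z_i$ versus $\D_{k_0}$), and the conditional measure $\E_\varphi$ over the Dirichlet field $X_\D$ admits events on which $\Delta J$ is arbitrarily large while $J_{k_0}$ is arbitrarily small; the two are not comparable. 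The alternative inequality $|e^{-z}-1|\le C\min(1,|z|)$ requires $\mathrm{Re}(z)\ge 0$, which is also unavailable: on the small balls $G_\D$ is unbounded, so the phase $\gamma\sum_i\beta_i G_\D(x,z_i)$ oscillates without control and $\mathrm{Re}\,\Delta J(\boldsymbol{\alpha})$ is not sign-definite (as you yourself note).

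The paper avoids this entirely by a different structure. It telescopes over consecutive scales, estimating $|U^{\rm holes}_{\boldsymbol{\alpha}+i\boldsymbol{\beta},k+1}-U^{\rm holes}_{\boldsymbol{\alpha}+i\boldsymbol{\beta},k}|$, where the increment $\delta Y_k$ lives on an annulus of shrinking radius. It then splits the event into $\{\delta Y_k>1\}$ and $\{\delta Y_k\le1\}$; in both cases the elementary real inequalities $\mathbf 1_{x>1}\le x^\eta$ and $x\mathbf 1_{x\le1}\le x^\eta$ replace the exponential $e^{|z|}$ by a harmless $\eta$-th-moment factor $(\delta Y_k)^\eta$. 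The FKG inequality for the Dirichlet field then decouples this from $e^{-\mu e^{\gamma c}Y_k}$, and multifractal scaling of GMC shows that $\E_\varphi[(\delta Y_k)^\eta]$ decays like $2^{-\theta k}$ for small enough $\eta$, beating the $2^{(k+1)\beta^2/2}$ from the regularization scale of the vertex factor once $\beta$ is chosen small. Summing the geometric series yields the bound with $k_0$. Without a mechanism replacing $|e^{-z}-1|\le|z|e^{|z|}$ by something that does not feed the GMC into an exponential, or without the consecutive-scale telescoping, your argument cannot close.
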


\proof To simplify we will   suppose that $n=1$ and $z_1=0$ and set $G_{\D,k}(0,u)= \E[ X_{\D, \epsilon_k}(0) X_{\D}(u) ] $. This is no restriction as the same analysis can be performed around each insertion in case $n>1$. In this context, we get by using the Markov property of the Dirichlet GFF that:
\begin{align*}
& | U^{{\rm holes}}_{\boldsymbol{\alpha}+i\boldsymbol{\beta},k+1} -U^{{\rm holes}}_{\boldsymbol{\alpha}+i\boldsymbol{\beta},k} (\mathbf{z}) |  \\
&  = e^{(\alpha-Q)c} |  \E_\varphi \left [  \epsilon_{k+1}^{(\alpha+i \beta)^2/2} e^{ (\alpha+i \beta) X_{\D,\epsilon_{k+1}}(0) }  \left (  e^{- \mu e^{\gamma c}M_\gamma(\D_{k+1})}   - e^{- \mu e^{\gamma c}M_\gamma(\D_k) } \right )   \right ]   |  
\\
&  \leq   e^{(\alpha-Q)c}   2^{ (k+1) \beta^2/2 }|  \E_\varphi \left [  \left (   e^{- \mu e^{\gamma c}\int_{\D_{k+1}}  e^{ \gamma   \alpha_1 G_{\D,k+1}(x,0)} M_\gamma(\dd x) }  - e^{- \mu e^{\gamma c}\int_{\D_{k}}  e^{ \gamma  \alpha_1 G_{\D,k+1}(x,0)} M_\gamma(\dd x) }  \right )   \right ]   |  \\
& =   e^{(\alpha-Q)c}2^{ (k+1) \beta^2/2 }|  \E_\varphi \left [  \left (  e^{- \mu e^{\gamma c}  (Y_k+\delta Y_k) } - e^{- \mu e^{\gamma c} Y_k } \right )   \right ]   |.  
\end{align*}
where we have set 
$$Y_k=  \int_{\D_k}  e^{\gamma P\varphi (x)}  e^{  \gamma \alpha_1 G_{\D,\epsilon_{k+1} } (0,x)  } M_\gamma(\dd x)\quad \text{ and }\quad \delta Y_k= \int_{\D_{k}\setminus\D_{k+1}}  e^{\gamma P\varphi (x)}  e^{  \gamma \alpha_1 G_{\D,k+1 } (0,u)  } M_\gamma(\dd x).$$
Now, we consider the cases $\delta Y_k>1$ and $\delta Y_k\leq 1$. By  FKG inequality for the Dirichlet GFF, we have  
\begin{align*}
  \E_\varphi &\left [  \ind_{\delta Y_k>1}  |   e^{- \mu e^{\gamma c} Y_k }- e^{- \mu e^{\gamma c}  (Y_k+\delta Y_k) }  |   \right ]    \\
& \leq 2   \E_\varphi \left [  \ind_{\delta Y_k>1}     e^{- \mu e^{\gamma c} Y_k }   \right ]  \\
& \leq 2 \E_\varphi \left [  \ind_{\delta Y_k>1}  \right ]  \E_\varphi  \left [   e^{- \mu e^{\gamma c} Y_k }   \right ]  \\
& \leq 2   \E_\varphi \left [ ( \delta Y_k)^\eta  \right ]  \E_\varphi  \left [   e^{- \mu e^{\gamma c} Y_k}   \right ] .
\end{align*}
Next, we choose $\beta>0$ and $\eta>0$ such that $2^{ (k+1) \beta^2/2 } \E_\varphi \left [ ( \delta Y_k)^\eta  \right ] \leq C e^{\gamma \eta \sup_{|u| \leq \epsilon_k} P\varphi (u)}2^{-\theta k}$ with $\theta>0$.

In the case $\delta Y_k \leq 1$, we get (using the inequality $x \ind_{\{x \leq 1\}}  \leq  x^\eta$ for $x>0$ and then FKG for the Dirichlet GFF) 
\begin{align*}
  2^{ (k+1) \beta^2/2 } \E_\varphi &\left [  \ind_{\delta Y_k \leq 1}  |   e^{- \mu e^{\gamma c} Y_k }- e^{- \mu e^{\gamma c}  (Y_k+\delta Y_k) }  |   \right ]     \\
& \leq  2^{ (k+1) \beta^2/2 } \mu e^{\gamma c} \E_\varphi \left [  \ind_{\delta Y_k \leq 1}   \delta Y_n    e^{- \mu e^{\gamma c} Y_k }   \right ]  \\
& \leq  2^{ (k+1) \beta^2/2 } \mu e^{\gamma c} \E_\varphi \left [  (\delta Y_k)^\eta     e^{- \mu e^{\gamma c} Y_k }   \right ]  \\
& \leq    2^{ (k+1) \beta^2/2 } \mu e^{\gamma c}  \E_\varphi \left [ ( \delta Y_k)^\eta  \right ]  \E_\varphi  \left [   e^{- \mu e^{\gamma c} Y_k }   \right ]    \\
& \leq    C 2^{-k \theta}  e^{\gamma c}  e^{  \gamma \eta \sup_{|u| \leq \epsilon_k} P\varphi (u)} \E_\varphi  \left [   e^{- \mu e^{\gamma c} Y_k }   \right ] .
\end{align*}
Gathering the above considerations, we get 
\begin{equation*}
 |  U^{{\rm holes}}_{\boldsymbol{\alpha}+i\boldsymbol{\beta},k+1} -U^{{\rm holes}}_{\boldsymbol{\alpha}+i\boldsymbol{\beta},k} (\mathbf{z})  |  \leq  C e^{(\alpha-Q)c} 2^{-k \theta}  (1+ e^{\gamma c})  e^{  \gamma \eta \sup_{|u| \leq \epsilon_n} P\varphi (u)} \E_\varphi  \left [   e^{- \mu e^{\gamma c} Y_n }   \right ]
\end{equation*} 
This shows that the (random) analytic function $U^{{\rm holes}}_{\boldsymbol{\alpha}+i\boldsymbol{\beta},k} (\mathbf{z})$ converges as $k\to\infty$ with probability $1$ and for all $c$ towards an analytic function that satisfies
\[
 |   U^{{\rm holes}}_{\boldsymbol{\alpha}+i\boldsymbol{\beta},k+1} -U^{{\rm holes}}_{\boldsymbol{\alpha}+i\boldsymbol{\beta},k_0} (\mathbf{z})|  \leq   Ce^{(\alpha-Q)c} (1+ e^{\gamma c})  e^{  \gamma \eta \sup_{|u| \leq \frac{1}{2}} P\varphi (u)} \E_\varphi  \left [   e^{- \mu e^{\gamma c} Y_{k_0} }   \right ].\qedhere
\]


\end{document}